\newtheorem{theorem}{Theorem} [section]
\newtheorem{lemma}[theorem]{Lemma}
\newtheorem{corollary}[theorem]{Corollary}
\newtheorem{open}{Open Question}
\begin{document}

Sangaku Journal of Mathematics (SJM) \copyright SJM \\
ISSN 2534-9562 \\
Volume 7 (2023), pp. xx--yy \\
Received XX September 2023 Published on-line XX XXX 2023 \\ 
web: \url{http://www.sangaku-journal.eu/} \\
\copyright The Author(s) This article is published 
with open access.\footnote{This article is distributed under the terms of the Creative Commons Attribution License which permits any use, distribution, and reproduction in any medium, provided the original author(s) and the source are credited.} \\
\bigskip
\bigskip

\newcommand\note[1]{\textbf{\textcolor{red}{#1}}}

\begin{center}
	{\Large \textbf{Properties of Ajima Circles}} \\
	\bigskip
	\textsc{Stanley Rabinowitz$^a$ and Ercole Suppa$^b$} \\

	$^a$ 545 Elm St Unit 1,  Milford, New Hampshire 03055, USA \\
	e-mail: \href{mailto:stan.rabinowitz@comcast.net}{stan.rabinowitz@comcast.net}\footnote{Corresponding author} \\
	web: \url{http://www.StanleyRabinowitz.com/} \\
	$^b$ Via B. Croce 54, 64100 Teramo, Italia \\
	e-mail: \href{mailto:ercolesuppa@gmail.com}{ercolesuppa@gmail.com} \\
	web: \url{http://www.esuppa.it/} \\

\end{center}

\bigskip

\textbf{Abstract.} We study properties of certain circles associated with a triangle.
Each circle is inside the triangle, tangent to two sides of the triangle, and
externally tangent to the arc of a circle erected internally on the third side.

\medskip
\textbf{Keywords.} Apollonius circle, Gergonne point, tangent circles.

\medskip
\textbf{Mathematics Subject Classification (2020).} 51-02, 51M04.

\long\edef\void#1{}

\newcommand{\ifTexTures}[2]{\ifthenelse{\equal{\fmtversion}{2005/12/01}}{#1}{#2}}

\def\myarc#1{\setbox13=\hbox{$#1$}%
\setbox14=\hbox{$\displaystyle\frown\vphantom{x}$}%
\dimen15=1pt\advance\dimen15 by -.5\wd13\advance\dimen15 by -.5\wd14%
\dimen16=2pt\advance\dimen16 by 1ex%
\box13\kern\dimen15\raise\dimen16\box14}
\newcommand{\arc}{\ifTexTures{\myarc}{\wideparen}}

\newcommand{\degrees}{^\circ}
\newcommand{\W}{\mathbb{W}}



\section{Introduction}

The following figure appears in a Sangaku described in \cite{Honma}
and reprinted in \cite{Okumura}. 

\begin{figure}[ht]
\centering
\includegraphics[scale=0.5]{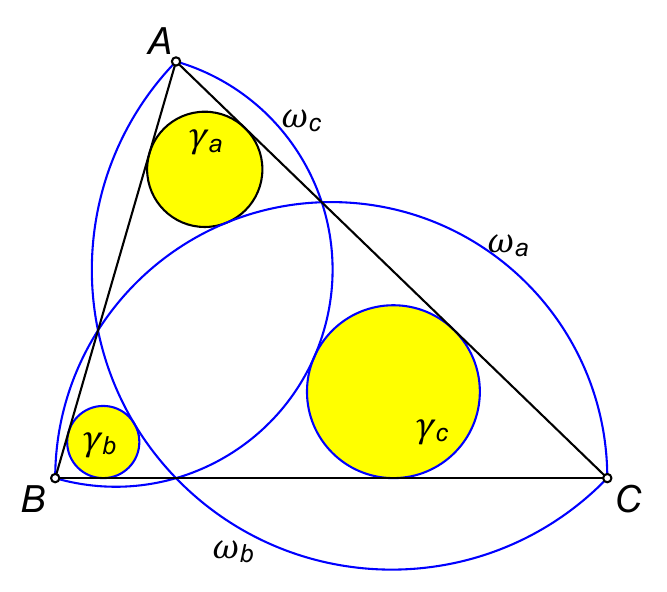}
\caption{Sangaku configuration}
\label{fig:Sangaku}
\end{figure}

In this figure, the semicircle erected inwardly on side $BC$ is named $\omega_a$.
Semicircles $\omega_b$ and $\omega_c$ are defined similarly.
The circle inside $\triangle ABC$, tangent to sides $AB$ and $AC$, and externally tangent to
semicircle $\omega_a$ is named $\gamma_a$.
Circles $\gamma_b$ and $\gamma_c$ are defined similarly.
The sangaku gave a relationship involving the radii of the three circles.

Additional properties of this configuration were given in \cite{Suppa} and \cite{RabSup}.
For example, in Figure~\ref{fig:properties} (left), the three blue common tangents are all congruent.
Their common length is $2r$, twice the inradius of $\triangle ABC$.
In Figure~\ref{fig:properties} (right), the six touch points lie on a circle with center $I$,
the incenter of $\triangle ABC$.

\begin{figure}[ht]
\centering
\includegraphics[scale=0.5]{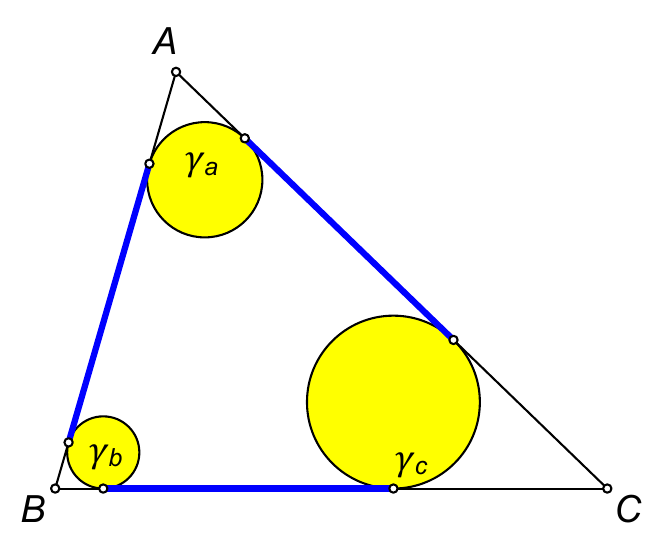}
\quad
\includegraphics[scale=0.5]{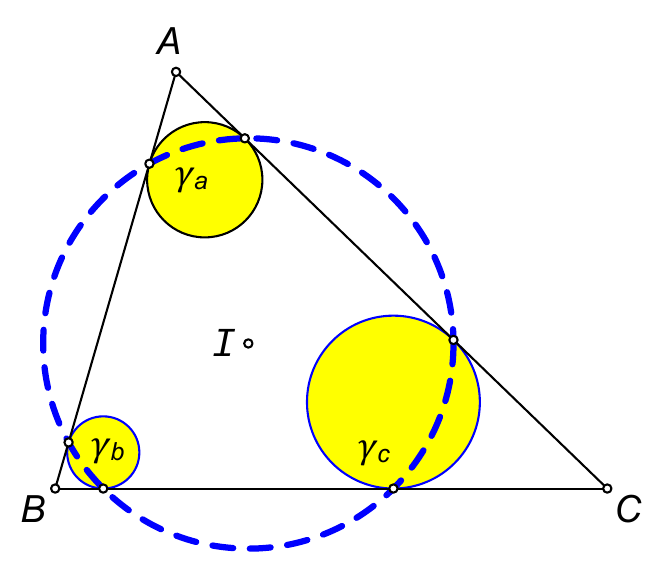}
\caption{properties}
\label{fig:properties}
\end{figure}

It is the purpose of this paper to present properties of circles such as $\gamma_a$ and also to
generalize these results by replacing the semicircles with
arcs having the same angular measure.

\section{Properties of $\omega_a$ and $\gamma_a$}

In this section we will discuss properties of the configuration shown in Figure~\ref{fig:configuration}
in which $\omega_a$ is any circle passing through vertices $B$ and $C$ of $\triangle ABC$.
The circle $\gamma_a$ is inside $\triangle ABC$, tangent to sides $AB$ and $AC$ and tangent to $\omega_a$ at $T$.
This circle is sometimes known in the literature as an Ajima circle \cite{Dergiades}.

\begin{figure}[ht]
\centering
\includegraphics[scale=0.45]{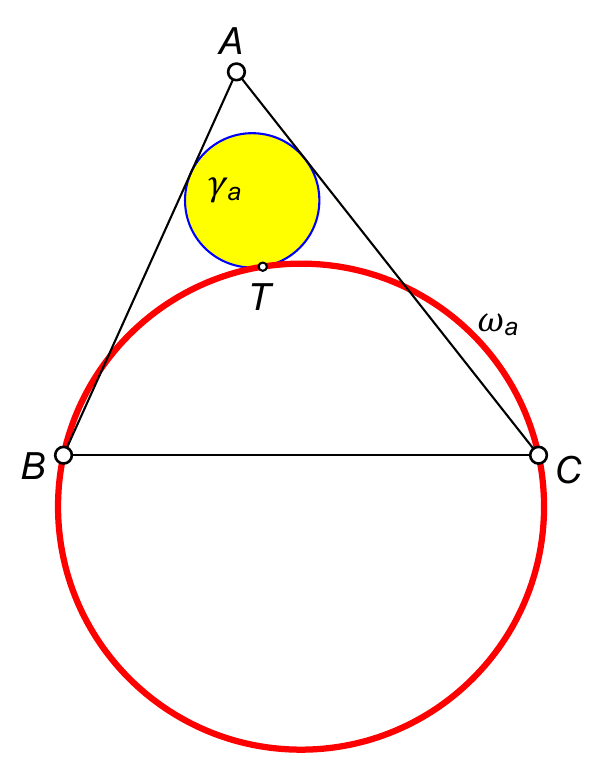}
\caption{The configuration we are studying}
\label{fig:configuration}
\end{figure}

An \emph{Ajima circle} of a triangle is a circle ($\gamma$) that is tangent to two sides of the triangle and
also tangent to a circle ($\omega$) passing through the endpoints of the third side.
In this paper, we are primarily interested in Ajima circles that lie inside the triangle and
for which $\gamma$ and $\omega$ are \emph{externally} tangent as shown in Figure~\ref{fig:configuration}.

Occasionally, we will generalize a result and present a theorem in which $\gamma_a$ is any circle tangent to $AB$
and $AC$ (not necessarily tangent to $\omega_a$). To help the reader recognize when a result applies to an Ajima circle,
we will color all Ajima circles yellow.

The standard notation related to our configuration that we use
throughout this paper is shown in the following table.

\begin{center}
\begin{tabular}{|c|l|}
\hline
\multicolumn{2}{|c|}{\color{red} \textbf{Standard Notation}}\\ \hline
\textbf{Symbol}&\textbf{Description} \\ \hline
$a, b, c$&lengths of sides of $\triangle ABC$\\ \hline
$\omega_a$&circle through points $B$ and $C$\\ \hline
$\gamma_a$&Ajima circle inscribed in $\angle BAC$ tangent to $\omega_a$\\ \hline
$D$&center of $\gamma_a$\\ \hline
$T$&Unless specified otherwise, $T$ is the point where $\gamma_a$ touches $\omega_a$.\\ \hline
$O_a$&center of $\omega_a$\\ \hline
$I$&incenter of $\triangle ABC$\\ \hline
$r$&inradius of $\triangle ABC$\\ \hline
$R$&circumradius of $\triangle ABC$\\ \hline
$p$&semiperimeter of $\triangle ABC$ $=(a+b+c)/2$\\ \hline
$\Delta$&area of $\triangle ABC$\\ \hline
$S$&twice the area of $\triangle ABC$ (i.e. $2\Delta$)\\ \hline
$G_e$&Gergonne point of $\triangle ABC$\\ \hline
$\theta$&angular measure of arc $\arc{BTC}$\\ \hline
\end{tabular}
\end{center}

\bigskip

\void{
The incenter of $\triangle ABC$ will be denoted by the letter $I$
and the inradius of $\triangle ABC$ will be denoted by the letter $r$.
The center of $\gamma_a$ will be denoted by the letter $D$ and
the center of $\omega_a$ will be denoted by $O_a$.
}

Without loss of generality, we will assume $AB<AC$.

We will survey some known results and give some new properties of this configuration.

The following result is due to Protasov \cite{Protassov}.
Proofs can be found in \cite{Ayme} and \cite[pp.~90--94]{Andreescu}.

\begin{theorem}[Protasov's Theorem]\label{thm:Protasov}
The segment $TI$ bisects $\angle BTC$ (Figure~\ref{fig:Protasov}).
\end{theorem}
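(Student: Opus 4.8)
The plan is to convert the angular claim into a collinearity that is easier to attack. Recall the elementary fact that in any circle through $B$ and $C$, the bisector of an inscribed angle $\angle BTC$ passes through the midpoint of the arc $BC$ that does not contain $T$. Since $T$ lies on the arc $\arc{BTC}$ facing $A$, let $M$ denote the midpoint of the \emph{opposite} arc $BC$ of $\omega_a$. Then proving Protasov's Theorem is equivalent to proving that $T$, $I$, and $M$ are collinear. This is the statement I would aim for, because $M$ depends only on $\omega_a$ (it is the point of $\omega_a$ on the perpendicular bisector of $BC$, on the far side of $BC$ from $A$), while $I$ depends only on the triangle; all of the hypothesis about the Ajima tangency is thereby concentrated in the single point $T$.

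To exploit the tangency, I would bring in the homothety $h$ centered at $T$ that carries $\gamma_a$ onto $\omega_a$. Because the two circles are \emph{externally} tangent, $h$ has negative ratio, so it sends the contact points $P=\gamma_a\cap AB$ and $Q=\gamma_a\cap AC$ to the two points of $\omega_a$ whose tangent lines are parallel to $AB$ and to $AC$, with $T$ lying between each contact point and its image. This locates $T$ rigidly with respect to $\omega_a$ and the two sides. Rather than chase these arcs directly, I would then run the argument backwards and reduce to a single length identity: let $T_0$ be the second intersection of line $MI$ with $\omega_a$, let $D_0$ be the intersection of line $O_aT_0$ with the bisector $AI$, and write $\angle A=2\alpha$. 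The circle of center $D_0$ through $T_0$ is automatically tangent to $\omega_a$ at $T_0$; it coincides with the Ajima circle $\gamma_a$ (forcing $T_0=T$) precisely when its radius equals its distance to the sides, that is, when $D_0T_0=AD_0\sin\alpha$. Verifying this identity — using $MB=MC$ together with the positions of $I$ and $O_a$ relative to the bisector — completes the proof.

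The step I expect to be the main obstacle is exactly this last verification, and it is instructive to see why no shortcut is available. When $\omega_a$ happens to be the circumcircle, $\gamma_a$ is the $A$-mixtilinear incircle and one has the clean lemma that $I$ is the \emph{midpoint} of the contact chord $PQ$; the collinearity $T,I,M$ then follows almost for free. For a general circle $\omega_a$ through $B$ and $C$ this breaks down: the contact length of $\gamma_a$ is not the mixtilinear value $2r/\sin A$, so $I$ need not even lie on $PQ$, and $M$ satisfies only $MB=MC$ rather than the incenter–excenter relation $MB=MC=MI$. Consequently the incenter must be reintroduced through its defining bisector property at $B$ and $C$ (equivalently, through $M$ lying on the perpendicular bisector of $BC$), and the heart of the matter is showing that the second intersection of line $TI$ with $\omega_a$ lands on that perpendicular bisector. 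I would carry out this final computation trigonometrically, which is routine once the configuration above is set up.
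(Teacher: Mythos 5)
Your proposal is a plan rather than a proof: everything is reduced to the single metric identity $D_0T_0=AD_0\sin\alpha$, and that identity is never established. The reduction itself is sound. The equivalence between ``$TI$ bisects $\angle BTC$'' and ``$T$, $I$, and the midpoint $M$ of the far arc are collinear'' is exactly the content of Theorem~\ref{thm:RG13069}, which the paper deduces \emph{from} Protasov's Theorem, and your backwards setup (define $T_0$ as the second intersection of $MI$ with $\omega_a$, let $D_0=O_aT_0\cap AI$, note that the circle centered at $D_0$ through $T_0$ is automatically tangent to $\omega_a$ at $T_0$ and is tangent to both sides of $\angle A$ precisely when $D_0T_0=AD_0\sin\alpha$) is a legitimate strategy. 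But the entire content of the theorem is concentrated in that one identity; you yourself call it ``the main obstacle'' and then dismiss it as ``routine'' without carrying it out. It is not routine: both $AD_0$ and $D_0T_0$ depend on where the line $MI$ re-meets $\omega_a$, which is exactly the coupling between the incenter and an arbitrary circle through $B$ and $C$ that makes the theorem nontrivial. The paper itself offers no proof to compare against -- it cites Protasov, Ayme, and Andreescu--Feng -- but a submission that ends with ``verifying this identity completes the proof'' has a genuine gap at its central step. The homothety paragraph, while correct, is abandoned and plays no role in the final plan.

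A secondary point you would need to address even after supplying the computation: your argument requires the circle centered at $D_0$ through $T_0$ to be \emph{the} Ajima circle $\gamma_a$, so you must check that $T_0$ falls on the arc on the same side of $BC$ as $A$, that the tangency at $T_0$ is external, and that the resulting circle lies inside the triangle. The paper's final section points out that there are four circles inscribed in $\angle BAC$ and tangent to $\omega_a$, two of them externally tangent, so uniqueness only holds after these normalizations. This is minor compared with the missing identity, but it is part of making the ``run it backwards'' step airtight.
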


\begin{figure}[ht]
\centering
\includegraphics[scale=0.6]{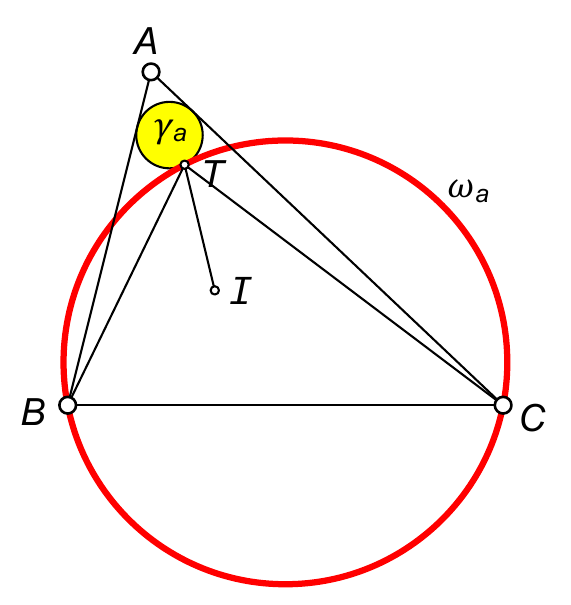}
\caption{$TI$ bisects $\angle BTC$}
\label{fig:Protasov}
\end{figure}

\newpage

The following result comes from \cite{Suppa13233}.

\begin{lemma}\label{lemma:Suppa'sLemma}
Let $\Gamma$ and $\Omega$ be two circles that are externally tangent at $T$.
Let $B$ and $C$ be points on $\Omega$ and let $BU$ and $CV$ be tangents to $\Gamma$
as shown in Figure~\ref{fig:Suppa'sLemma}.
Then
$$\frac{BU}{CV}=\frac{BT}{CT}.$$
\end{lemma}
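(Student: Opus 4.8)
The plan is to reduce the whole statement to the power of a point with respect to $\Gamma$. First I would note that, since $BU$ is tangent to $\Gamma$ at $U$, the tangent length satisfies $BU^2 = \operatorname{pow}(B,\Gamma)$, and likewise $CV^2 = \operatorname{pow}(C,\Gamma)$. Dividing, the desired identity $BU/CV = BT/CT$ is equivalent to $\operatorname{pow}(B,\Gamma)/\operatorname{pow}(C,\Gamma) = BT^2/CT^2$. So it suffices to prove that $\operatorname{pow}(P,\Gamma) = \lambda\,PT^2$ for a single constant $\lambda$ that is the same for every point $P$ on $\Omega$; that constant then cancels in the ratio.

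The key step is to evaluate this power using the external tangency at $T$. Let $r_\Gamma$ and $r_\Omega$ denote the radii of $\Gamma$ and $\Omega$. Because the two circles are externally tangent at $T$, the homothety $h$ centered at $T$ with ratio $-r_\Omega/r_\Gamma$ carries $\Gamma$ onto $\Omega$. For a point $P$ on $\Omega$, the line $TP$ meets $\Gamma$ a second time at some point $P'$; since $h$ fixes this line and sends $\Gamma$ to $\Omega$, it must send $P'$ to $P$, whence $TP' = (r_\Gamma/r_\Omega)\,TP$ with $T$ lying between $P$ and $P'$. Computing the power of $P$ along this secant then gives $\operatorname{pow}(P,\Gamma) = PT\cdot PP' = PT\,(PT + TP') = \big(1 + r_\Gamma/r_\Omega\big)\,PT^2$, which is exactly the claimed relation with $\lambda = (r_\Omega + r_\Gamma)/r_\Omega$ independent of $P$.

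Applying this with $P = B$ and $P = C$ yields $BU^2 = \lambda\,BT^2$ and $CV^2 = \lambda\,CT^2$; taking the ratio cancels $\lambda$, and a square root gives $BU/CV = BT/CT$, as required.

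I expect the main obstacle to be the bookkeeping hidden in the power computation rather than any deep idea. One must check that $P$ lies outside $\Gamma$, so that the tangent length is real and the power is positive, and that the points occur in the order $P, T, P'$ along the secant, so that $PP' = PT + TP'$ and not a difference. Both facts follow from external tangency, since any $P\in\Omega$ lies on the far side of the common tangent at $T$ from $\Gamma$, but they should be stated explicitly. As an alternative to invoking the homothety, the proportion $TP' = (r_\Gamma/r_\Omega)\,TP$ can instead be read off from the tangent--chord angle at $T$: the common tangent makes equal angles with the collinear chords $TP$ of $\Omega$ and $TP'$ of $\Gamma$, so the two chord lengths are proportional to the respective diameters.
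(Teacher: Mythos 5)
Your proof is correct, but it reaches the conclusion by a genuinely different key step than the paper. The paper also reduces everything to the power of a point ($BU^2=BT\cdot BQ$ and $CV^2=CT\cdot CP$, where $Q$ and $P$ are the second intersections of lines $BT$ and $CT$ with $\Gamma$), but it obtains the needed relation $BQ/CP=BT/CT$ by an angle chase: the tangent--chord angle at the common tangent shows $\triangle PQT\sim\triangle CBT$, and the ratio falls out of comparing the two secants against each other. You instead isolate a single-point lemma --- for every $P$ on $\Omega$ one has $\operatorname{pow}(P,\Gamma)=\lambda\,PT^2$ with $\lambda=(r_\Omega+r_\Gamma)/r_\Omega$ --- proved via the homothety centered at the tangency point (or, as you note, via the tangent--chord angle making chords through $T$ proportional to the diameters). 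This is slightly stronger than what the paper extracts, since you compute the constant of proportionality explicitly rather than merely cancelling it between $B$ and $C$, and it is a reusable fact (it is essentially the lemma underlying Casey's theorem, and carries over verbatim to internal tangency with a different $\lambda$). The paper's route never mentions radii or homotheties and stays entirely within elementary angle-chasing. Your care about the ordering $P$, $T$, $P'$ on the secant and about $P$ lying outside $\Gamma$ is exactly the right bookkeeping, and both points do follow from external tangency as you argue.
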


\begin{figure}[ht]
\centering
\includegraphics[scale=0.5]{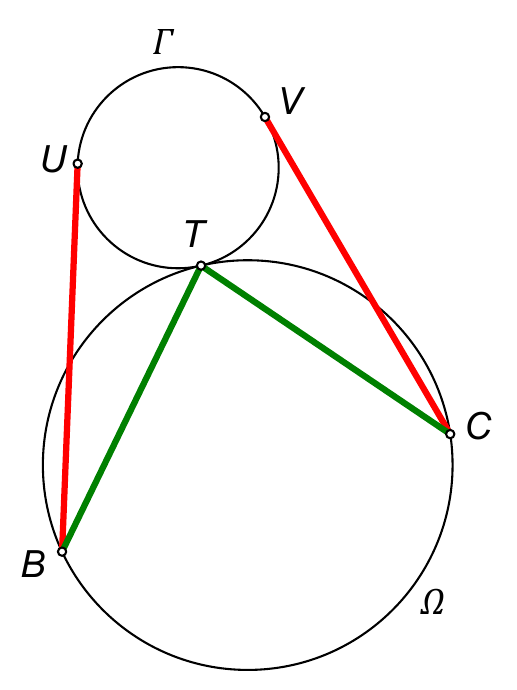}
\caption{$BU/CV=BT/CT$}
\label{fig:Suppa'sLemma}
\end{figure}

\begin{proof}
Let $BT$ meet $\Gamma$ at $Q$ and let $CT$ meet $\Gamma$ at $P$.
Let $XY$ be the tangent to both circles at $T$ (Figure~\ref{fig:Suppa'sLemmaProof}).
\begin{figure}[ht!]
\centering
\includegraphics[scale=0.5]{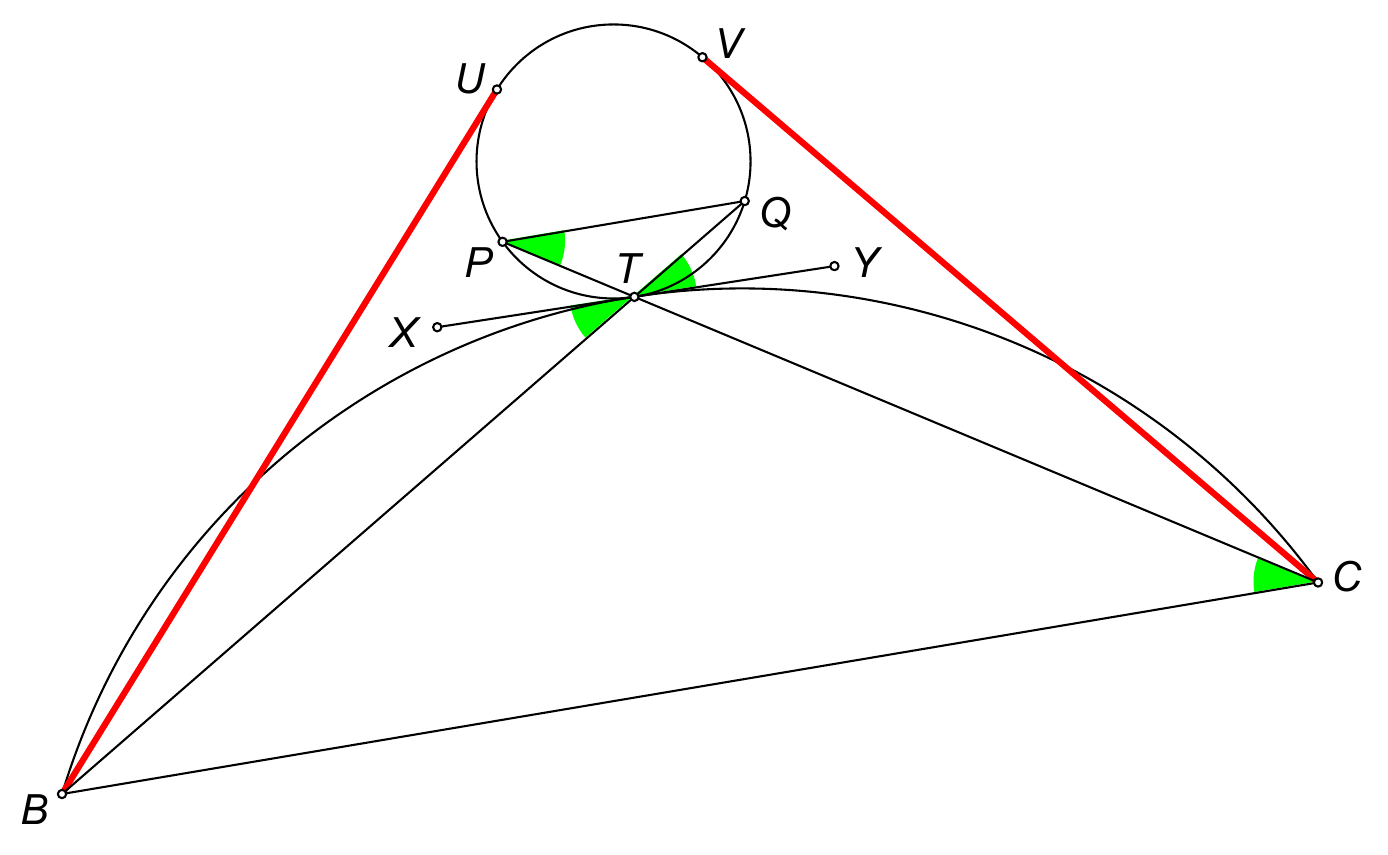}
\caption{}
\label{fig:Suppa'sLemmaProof}
\end{figure}

We have
$$\angle TPQ=\frac12\arc{QT}=\angle YTQ=\angle XTB=\frac12\arc{BT}=\angle TCB.$$
Since $\angle QTP=\angle BTC$, we find that $\triangle PQT\sim\triangle CBT$. Thus,
$$\frac{BT}{QT}=\frac{CT}{PT}$$
which implies
$$\frac{BT}{BQ}=\frac{CT}{CP}\quad\hbox{or}\quad \frac{BQ}{CP}=\frac{BT}{CT}.$$
Since $BU$ and $CV$ are tangents, we have $(BU)^2=BT\cdot BQ$ and $(CV)^2=CT\cdot CP$.
Combining, we get
$$\frac{(BU)^2}{(CV)^2}=\frac{BT\cdot BQ}{CT\cdot CP}=\left(\frac{BT}{CT}\right)\left(\frac{BQ}{CP}\right)
=\left(\frac{BT}{CT}\right)\left(\frac{BT}{CT}\right)=\frac{(BT)^2}{(CT)^2}$$
which implies $BU/CV=BT/CT$.
\end{proof}


\begin{theorem}\label{thm:result12}
Let $\gamma_a$ touch $AB$ and $AC$ at $F$ and $E$, respectively.
Let $TI$ meet $BC$ at $Z$ (Figure~\ref{fig:result12}).
Then $$\frac{BF}{CE}=\frac{BZ}{CZ}.$$
\end{theorem}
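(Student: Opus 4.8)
The plan is to reduce both sides of the claimed identity to the single ratio $BT/CT$ and observe that they agree there. The two results already in hand — Lemma~\ref{lemma:Suppa'sLemma} and Theorem~\ref{thm:Protasov} — are exactly tailored to the two halves of the argument, so the proof amounts to splicing them together.

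First I would treat the left-hand side using Lemma~\ref{lemma:Suppa'sLemma}. Because $\gamma_a$ is tangent to line $AB$ at $F$, the segment $BF$ is the length of the tangent from $B$ to $\gamma_a$; likewise $CE$ is the tangent length from $C$. Since $B$ and $C$ lie on $\omega_a$, and $\gamma_a$ is externally tangent to $\omega_a$ at $T$, the hypotheses of Lemma~\ref{lemma:Suppa'sLemma} hold with $\Gamma=\gamma_a$, $\Omega=\omega_a$, $U=F$, and $V=E$. The lemma then gives at once
$$\frac{BF}{CE}=\frac{BT}{CT}.$$

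Next I would treat the right-hand side using Protasov's Theorem (Theorem~\ref{thm:Protasov}), which states that $TI$ bisects $\angle BTC$. Since $Z$ is the intersection of line $TI$ with $BC$, the cevian $TZ$ is the internal bisector of $\angle BTC$ in triangle $BTC$, and $Z$ lies on segment $BC$. The angle bisector theorem applied to $\triangle BTC$ then yields
$$\frac{BZ}{CZ}=\frac{TB}{TC}.$$
Combining the two displays completes the proof, as both $BF/CE$ and $BZ/CZ$ equal $BT/CT$.

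I do not anticipate a genuine obstacle, since the argument is a direct combination of the two quoted results. The only points needing care are confirming that $BF$ and $CE$ really are the tangent lengths demanded by Lemma~\ref{lemma:Suppa'sLemma}, which follows from $\gamma_a$ touching the two sides at $F$ and $E$, and noting that it is the \emph{internal} bisector of $\angle BTC$ that is relevant, so that $Z$ divides $BC$ internally and the angle bisector theorem applies in its internal-division form.
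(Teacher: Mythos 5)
Your proposal is correct and matches the paper's proof exactly: it applies Lemma~\ref{lemma:Suppa'sLemma} to get $BF/CE=BT/CT$, then Protasov's Theorem together with the internal angle bisector theorem in $\triangle BTC$ to get $BZ/CZ=BT/CT$. No further comment is needed.
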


\begin{figure}[ht]
\centering
\includegraphics[scale=0.45]{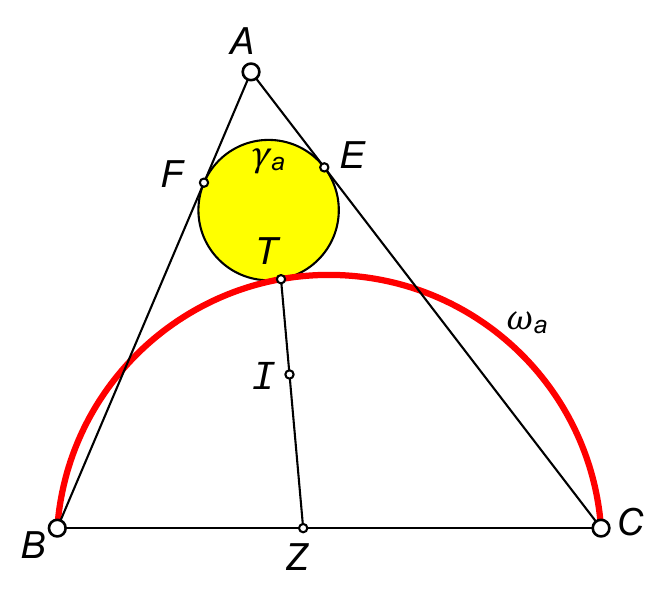}
\caption{$BF/CE=BZ/CZ$}
\label{fig:result12}
\end{figure}

\begin{proof}
By Lemma~\ref{lemma:Suppa'sLemma}, $BF/CE=BT/CT$.
By Protasov's Theorem, $TI$ bisects $\angle BTC$.
Since $TZ$ is an angle bisector of $\triangle BTC$, we have $BT/TC=BZ/CZ$.
Hence $BF/CE=BT/CT=BZ/CZ$.
\end{proof}


The following result comes from \cite{Yetti}.
A nice geometric proof can be found in \cite{Ayme}.
See also \cite{Pascual}.

\begin{theorem}[The Catalytic Lemma]\label{thm:catalytic}
Let $E$ be the point where $\gamma_a$ touches $AC$.
Then $E$, $T$, $I$, and $C$ are concyclic (Figure~\ref{fig:catalytique}).
\end{theorem}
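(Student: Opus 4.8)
The plan is to work with directed angles modulo $180^\circ$, writing $\angle(\ell,m)$ for the angle from a line $\ell$ to a line $m$; this lets a single computation cover every configuration of $\omega_a$ and sidesteps the question of which side of $TC$ each point lies on. By the inscribed-angle criterion, $E$, $T$, $I$, $C$ are concyclic if and only if $\angle(TE,TI)=\angle(CE,CI)$. The right-hand side is immediate: $E$ lies on segment $CA$, so $CE$ is the line $CA$, and $CI$ bisects $\angle BCA$; hence $\angle(CE,CI)=\angle(CA,CI)=\tfrac12\angle C$. Thus the whole theorem reduces to the single identity $\angle(TE,TI)=\tfrac12\angle C$, and the work is to compute the left-hand side.

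To evaluate $\angle(TE,TI)$ I would introduce the common tangent $t$ to $\gamma_a$ and $\omega_a$ at $T$ and split the angle as $\angle(TE,TI)=\angle(TE,t)+\angle(t,TI)$. The first piece is governed by $\gamma_a$: since $CA$ is tangent to $\gamma_a$ at $E$ and $t$ is tangent at $T$, the tangent–chord angles on the chord $TE$ at its two ends are equal, which in directed form gives $\angle(TE,t)=\angle(CA,ET)$. For the second piece I would use $\omega_a$: the tangent–chord relation at $T$ expresses $\angle(t,TB)$ and $\angle(t,TC)$ as the inscribed angles $\angle(CT,CB)$ and $\angle(BT,BC)$, after which Protasov's Theorem (Theorem~\ref{thm:Protasov}) enters as the statement that $TI$ bisects $\angle BTC$, i.e. $\angle(TB,TI)=\angle(TI,TC)$. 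Feeding the bisector condition into the tangent–chord expression pins down $\angle(t,TI)$ in terms of the angles of $\triangle BTC$.

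The crux is the cancellation: both pieces carry information about the position of $T$ (equivalently about the arc measure $\theta$), yet their sum must collapse to $\tfrac12\angle C$, a quantity depending only on the triangle. The chord direction $TE$ is genuinely not determined by $A$, $B$, $C$ alone—it moves as $\omega_a$ varies—so the term $\angle(CA,ET)$ and the $\omega_a$-tangent terms are each configuration-dependent, and the heart of the argument is organizing them, together with the two bisector facts (Protasov at $T$ and the incenter bisector at $C$), so that every $\theta$-dependent quantity drops out. I expect this bookkeeping—and checking that the identity holds as an equality of lines, not merely of magnitudes—to be the main obstacle. A metric alternative is available should the angle chase prove unwieldy: the tangent length from $C$ to $\gamma_a$ gives $CE^2=CT\cdot CP$, where $P$ is the second intersection of line $CT$ with $\gamma_a$ (as in the proof of Lemma~\ref{lemma:Suppa'sLemma}), and one can try to convert this power-of-a-point relation, together with Protasov's Theorem, into the concyclicity of $E$, $T$, $I$, $C$.
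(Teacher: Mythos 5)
The paper offers no proof of this theorem at all --- it only cites \cite{Yetti}, \cite{Ayme}, and \cite{Pascual} --- so there is no internal argument to compare yours against; your proposal must stand on its own, and as written it does not. Your reduction is fine: $E,T,I,C$ are concyclic iff $\angle(TE,TI)=\angle(CE,CI)$, and the right side is $\tfrac12\angle C$ because $CI$ bisects $\angle ACB$. But the entire content of the theorem is the identity $\angle(TE,TI)=\tfrac12\angle C$, and you explicitly defer the computation that would establish it (``I expect this bookkeeping \dots to be the main obstacle''). Worse, the decomposition you propose does not actually evaluate anything: writing $\angle(TE,TI)=\angle(TE,t)+\angle(t,TI)$ and then replacing $\angle(TE,t)$ by the tangent--chord value $\angle(CA,ET)$ reintroduces the unknown direction of the chord $TE$ on the right-hand side. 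What that substitution really yields is the self-referential relation $2\angle(CA,TE)=\angle(CA,t)$, i.e.\ line $TE$ is an angle bisector of the two tangent lines $CA$ and $t$ --- which is true but determines the direction of $TE$ only modulo $90\degrees$, since a directed-angle equation $2x=c$ has two solutions mod $180\degrees$.

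This mod-$90\degrees$ ambiguity is not a bookkeeping nuisance but the central obstruction to your framework: Protasov's Theorem likewise enters only as $2\angle(TB,TI)=\angle(TB,TC)$, so every bisector fact you invoke pins down its line only up to the same ambiguity, and directed angles mod $180\degrees$ cannot by themselves distinguish the internal from the external bisector. Resolving which bisector occurs in each instance requires exactly the case-by-case configuration analysis that directed angles were introduced to avoid, and none of that is done. The known proofs (e.g.\ Ayme's) take a structurally different and complete route: the homothety at $T$ carrying $\gamma_a$ to $\omega_a$ sends $E$ to the point $N$ of $\omega_a$ whose tangent is parallel to $AC$, so $T$, $E$, $N$ are collinear (compare Lemma~\ref{LTNlemma}), and one then closes with a power-of-a-point/isosceles-triangle argument at $N$. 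Your concluding metric suggestion ($CE^2=CT\cdot CP$ plus Protasov) is likewise only a pointer, not an argument; note that the most tempting way to finish it --- showing $\angle IEC=\tfrac12\angle BTC$ --- is Theorem~\ref{thm:result3}, which the paper \emph{derives from} the Catalytic Lemma, so it cannot be used here without circularity.
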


\begin{figure}[ht]
\centering
\includegraphics[scale=0.47]{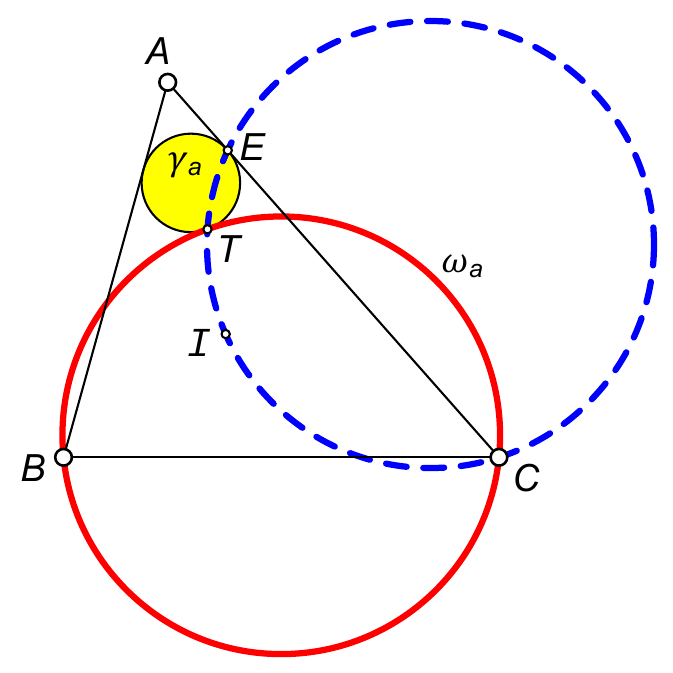}
\caption{$E$, $T$, $I$, $C$ are concyclic}
\label{fig:catalytique}
\end{figure}

\newpage

\begin{theorem}\label{thm:result3}
Let $E$ be the point where $\gamma_a$ touches $AC$.
Then $\angle BTC = 2\angle IEC$ (Figure~\ref{fig:result3}).
\end{theorem}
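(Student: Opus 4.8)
The plan is to combine the two named results already available, Protasov's Theorem and the Catalytic Lemma, turning the factor of $2$ in the conclusion into an inscribed-angle statement. The factor of $2$ strongly suggests that $\angle IEC$ should be ``half'' of $\angle BTC$, so my first move is to produce a natural halving of $\angle BTC$ and then recognize the half as $\angle IEC$.

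First I would invoke Protasov's Theorem (Theorem~\ref{thm:Protasov}): the segment $TI$ bisects $\angle BTC$. Since $I$ lies inside $\angle BTC$, the ray $TI$ splits the angle into two equal parts, so that
$$\angle BTC = 2\,\angle ITC.$$
This isolates $\angle ITC$ as exactly half of the target angle, and it remains only to identify $\angle ITC$ with $\angle IEC$. For that I would apply the Catalytic Lemma (Theorem~\ref{thm:catalytic}), which tells us that $E$, $T$, $I$, and $C$ are concyclic. On their common circle, both $\angle ITC$ and $\angle IEC$ are inscribed angles standing on the same chord $IC$; provided $T$ and $E$ lie on the same arc determined by $IC$, these inscribed angles are equal, giving $\angle ITC = \angle IEC$. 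Substituting into the displayed equation yields $\angle BTC = 2\,\angle IEC$.

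The one point requiring care — and the only genuine obstacle — is the configuration check hidden in the last step: I must confirm that $T$ and $E$ see the chord $IC$ from the same side, equivalently that the cyclic order of the four points is $C, E, T, I$, so that the relevant inscribed angles come out \emph{equal} rather than supplementary. This follows from the standing hypotheses on the figure, with $E$ lying on segment $AC$ between $A$ and $C$ and with $T$ interior to the triangle; once this is verified, the argument is complete and everything else is immediate from the two cited theorems.
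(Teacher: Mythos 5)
Your proposal is correct and follows essentially the same route as the paper: Protasov's Theorem gives $\angle BTC = 2\angle ITC$, and the Catalytic Lemma makes $E$, $T$, $I$, $C$ concyclic so that $\angle ITC = \angle IEC$ as inscribed angles on the chord $IC$. Your explicit remark about checking that $T$ and $E$ lie on the same arc is a configuration detail the paper's proof passes over silently, but it does not change the argument.
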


\begin{figure}[ht]
\centering
\includegraphics[scale=0.33]{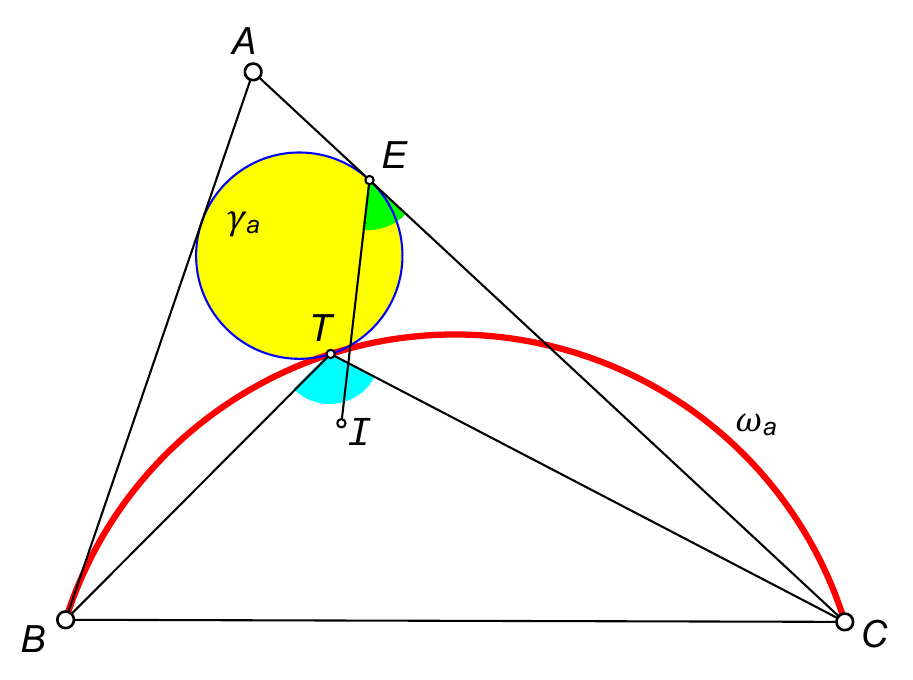}
\caption{blue angle is twice green angle}
\label{fig:result3}
\end{figure}

\begin{proof}
By the Catalytic Lemma,
$E$, $T$, $I$, and $C$ are concyclic (Figure~\ref{fig:result3proof}).
By Protasov's Theorem,
$TI$ bisects $\angle BTC$,
so $\angle BTC=2\angle ITC$.
But $\angle ITC=\angle IEC$ because both angles subtend the same arc $\arc{IC}$.
\begin{figure}[ht]
\centering
\includegraphics[scale=0.37]{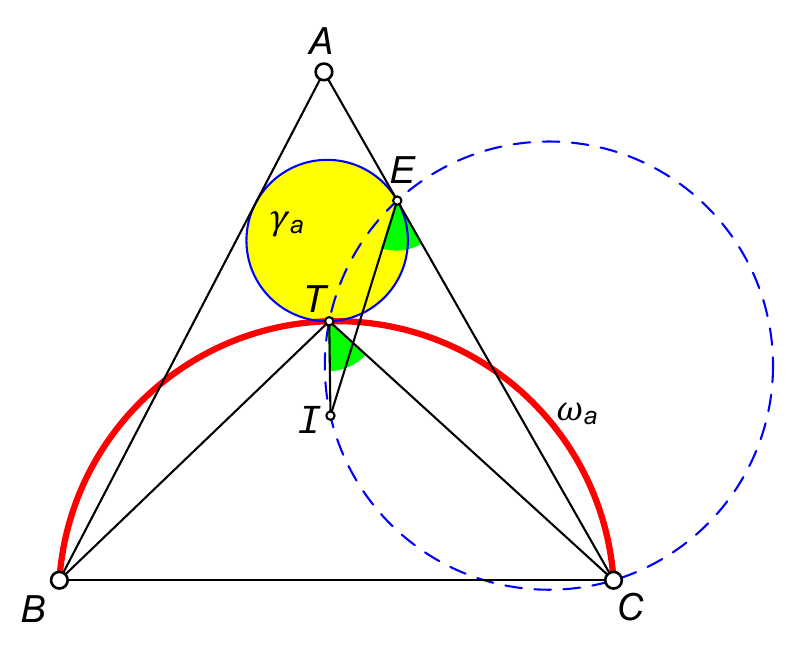}
\caption{green angles are equal}
\label{fig:result3proof}
\end{figure}
\end{proof}


The following result comes from \cite{Ayme}.

\begin{theorem}\label{thm:Thailand}
Let the touch points of circle $\gamma_a$ with $AC$ and $AB$ be $E$ and $F$, respectively.
Suppose $\omega_a$ meets $AC$ at $J$ between $A$ and $C$.
Let $X$ be the center of the excircle of $\triangle BJC$ opposite $C$.
Then $X$, $F$, and $E$ are collinear (Figure~\ref{fig:Thailand}).
\end{theorem}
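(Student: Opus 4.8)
The plan is to reduce the collinearity of $E$, $F$, and $X$ to a single metric relation among the points $B$, $C$, $J$, $T$ on $\omega_a$, and then to prove that relation using Lemma~\ref{lemma:Suppa'sLemma} together with Ptolemy's theorem.

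First I would carry out the reduction in barycentric coordinates with respect to $\triangle ABC$. Writing $AE=AF=t$ for the equal tangent lengths from $A$ to $\gamma_a$, the touch points are $E=(b-t:0:t)$ and $F=(c-t:t:0)$. Since $J$ lies on segment $AC$ we have $\angle BCJ=\angle BCA$, so the excenter $X$ of $\triangle BJC$ opposite $C$ lies on the internal bisector of $\angle C$ of $\triangle ABC$, i.e.\ on line $CI$. Computing $X$ from the excenter formula for $\triangle BJC$ (whose relevant side lengths are $BJ$, $JC=b-AJ$, and $BC=a$) and re-expressing it in coordinates relative to $\triangle ABC$ gives $X=\bigl(a(b-j):b(b-j):aj-ub\bigr)$, where $j=AJ$ and $u=BJ$; the first two coordinates are in the ratio $a:b$, confirming $X\in CI$. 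Expanding the $3\times 3$ collinearity determinant of $E$, $F$, $X$ and cancelling the common factor then reduces the theorem to the length identity
$$BJ\cdot CE = JC\cdot BF + BC\cdot JE,$$
where $JE=|AJ-AE|$ (equivalently, $t(a+b-j-u)=j(a-c)+b(c-u)$).

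The heart of the proof is to establish this identity, and the key point is that it does not require computing $t$ explicitly. Since $\gamma_a$ and $\omega_a$ are externally tangent at $T$ and the points $B$, $C$, $J$ all lie on $\omega_a$, I would apply Lemma~\ref{lemma:Suppa'sLemma} three times: the tangent lengths from $B$, $C$, $J$ to $\gamma_a$ are exactly $BF$, $CE$, and $JE$ (the last because $J$ lies on the tangent line $AC$, which touches $\gamma_a$ at $E$), so
$$BF:CE:JE = BT:CT:JT.$$
Because $B$, $T$, $J$, $C$ lie on $\omega_a$ in this cyclic order, Ptolemy's theorem applied to the quadrilateral $BTJC$ gives $BJ\cdot CT = JC\cdot BT + BC\cdot JT$. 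Substituting the proportion $BT:CT:JT=BF:CE:JE$ into this equation clears the common factor and yields precisely $BJ\cdot CE = JC\cdot BF + BC\cdot JE$, which by the previous paragraph is equivalent to the desired collinearity.

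I expect the main obstacle to be bookkeeping of the configuration rather than any single hard step. Two points need care. First, the Ptolemy step must use the correct cyclic order of $B$, $T$, $J$, $C$ on $\omega_a$ so that $BJ$ and $CT$ are the \emph{diagonals}; a different order would produce a different, incorrect combination of products, so one must verify from the figure that $T$ lies between $B$ and $J$ and that $J$ lies between $T$ and $C$ along the arc. Second, the signs and orientations in the length identity (in particular whether $J$ is farther from $A$ than $E$, so that $JE=AJ-AE$) must be tracked so that the positive tangent lengths supplied by Lemma~\ref{lemma:Suppa'sLemma} match the signed lengths coming out of the determinant. Once the cyclic order and the orientations are pinned down, both halves of the argument are short.
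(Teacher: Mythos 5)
Your proposal is correct, and it is worth noting that the paper itself offers no proof of this theorem at all --- it is merely stated with a citation to \cite{Ayme} --- so your argument is a genuine, self-contained addition rather than a variant of something in the text. I checked the two halves. The barycentric reduction is right: with $E=(b-t:0:t)$, $F=(c-t:t:0)$ and the excenter of $\triangle BJC$ opposite $C$ computed as $(b-j:a:-u)$ relative to $\triangle BJC$ and converted to $\bigl(a(b-j):b(b-j):aj-ub\bigr)$ relative to $\triangle ABC$, the collinearity determinant factors as $bt$ times exactly the relation $u(b-t)=(b-j)(c-t)+a(j-t)$, i.e.\ $BJ\cdot CE=JC\cdot BF+BC\cdot JE$. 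The second half also holds: the tangent lengths from $B$, $C$, $J$ to $\gamma_a$ are $BF$, $CE$, $JE$ (the last since $J$ lies on the tangent line $AC$ touching at $E$), so three applications of Lemma~\ref{lemma:Suppa'sLemma} give $BF:CE:JE=BT:CT:JT$, and Ptolemy for $BTJC$ turns this into the required identity. The two configuration points you flag do resolve favorably: the arc of $\omega_a$ from $B$ to $J$ is the portion inside $\triangle ABC$, and $T$, being the contact point with a circle interior to the triangle, lies on that portion, which forces the cyclic order $B,T,J,C$ (so $BJ$ and $TC$ are the diagonals); and $E$ lies between $A$ and $J$ because $\gamma_a$ sits in the region of the triangle cut off by the arc, so $JE=AJ-AE>0$. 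The structural payoff of your route is attractive: it exhibits the collinearity as literally equivalent to Ptolemy's equality for the cyclic quadrilateral $BTJC$, transported by Suppa's Lemma, which is arguably more illuminating than the purely synthetic argument the paper points to in its reference.
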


\begin{figure}[ht]
\centering
\includegraphics[scale=0.5]{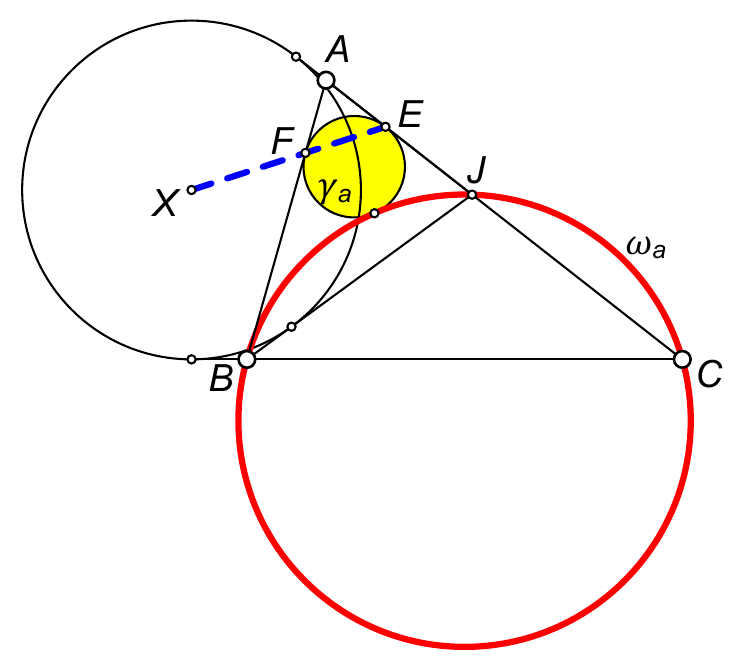}
\caption{$X$, $F$, and $E$ are collinear}
\label{fig:Thailand}
\end{figure}

\void{
The following result is well known \cite{AymeReim}.

\begin{lemma}[Reim's Theorem]\label{thm:Reim}
Let $C_1$ and $C_2$ be two circles intersecting at points $X$ and $Y$.
A line through $X$ meets $C_1$ again at $P_1$ and meets $C_2$ again at $P_2$.
A line through $Y$ meets $C_1$ again at $Q_1$ and meets $C_2$ again at $Q_2$.
Then $P_1Q_1\parallel P_2Q_2$ (Figure~\ref{fig:Reim}).
\end{lemma}

\begin{figure}[ht]
\centering
\includegraphics[scale=0.56]{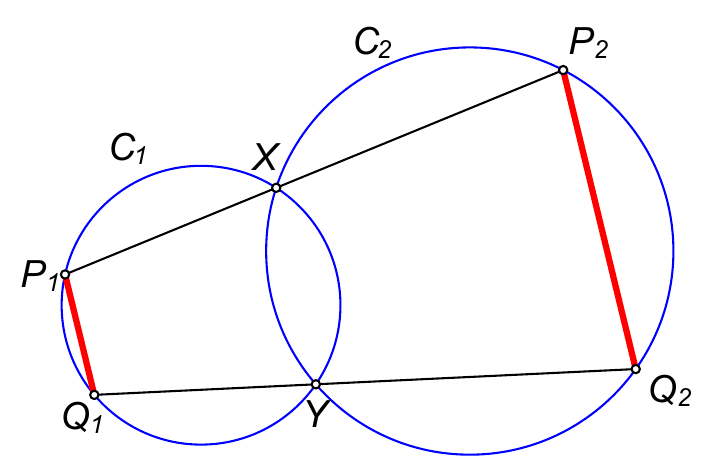}
\quad
\includegraphics[scale=0.56]{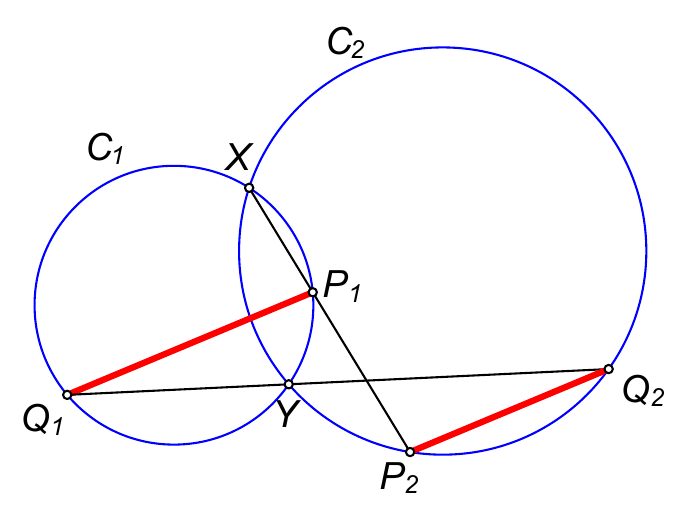}
\caption{red lines are parallel}
\label{fig:Reim}
\end{figure}
}

The following result comes from \cite{Suppa13069}.

\begin{theorem}\label{thm:RG13069}
The perpendicular bisector of $BC$ meets $\omega_a$ on the opposite side from $T$
at $N$ as shown in Figure~\ref{fig:RG13069}.
Then $T$, $I$, and $N$ are collinear.
\end{theorem}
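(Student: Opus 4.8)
The plan is to reduce everything to the single fact that $N$ is the midpoint of one of the two arcs $BC$ of $\omega_a$. Since $\omega_a$ passes through $B$ and $C$, the chord $BC$ has its perpendicular bisector passing through the center $O_a$, and this perpendicular bisector meets $\omega_a$ in exactly two points, namely the midpoints of the two arcs cut off by $BC$. The point $N$ is specified to lie on the opposite side of $BC$ from $T$, so $N$ is precisely the midpoint of the arc $BC$ that does \emph{not} contain $T$. I would state this identification explicitly at the outset, since it is the geometric content that makes the whole argument work.

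Next I would show that the ray $TN$ is the internal bisector of $\angle BTC$. Because $N$ is the midpoint of the arc $\arc{BC}$ not containing $T$, the two arcs $\arc{BN}$ and $\arc{NC}$ are equal. The inscribed angles $\angle BTN$ and $\angle NTC$ subtend these equal arcs and are therefore equal, which says exactly that $TN$ bisects $\angle BTC$. (Here one uses that $T$ and $N$ lie on opposite arcs so that the bisector through $T$ genuinely passes to the far arc midpoint, not to the near one; this is the only place where the side condition on $N$ is used.)

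Finally I would invoke Protasov's Theorem (Theorem~\ref{thm:Protasov}), which tells us that $TI$ also bisects $\angle BTC$. Two internal bisectors of the same angle, both emanating from the vertex $T$, must coincide as rays, so the points $I$ and $N$ both lie on the bisecting line through $T$. Hence $T$, $I$, and $N$ are collinear. The argument is essentially immediate once the two bisector facts are in place; the only step requiring care is the first, namely correctly identifying $N$ as the arc midpoint on the far side and confirming that the bisector of $\angle BTC$ from $T$ lands on that particular midpoint rather than its antipode. I do not anticipate any serious obstacle beyond making this identification precise.
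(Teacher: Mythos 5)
Your proposal is correct and uses essentially the same argument as the paper: both rest on Protasov's Theorem together with the fact that the internal bisector of $\angle BTC$ from $T$ meets $\omega_a$ at the midpoint of the arc $\arc{BC}$ not containing $T$, which is where the perpendicular bisector of $BC$ crosses that arc. You merely run the identification in the opposite direction (first pinning down $N$ as the arc midpoint, then matching the two bisectors), which is a fine and slightly more explicit presentation of the same proof.
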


\begin{figure}[ht]
\centering
\includegraphics[scale=0.45]{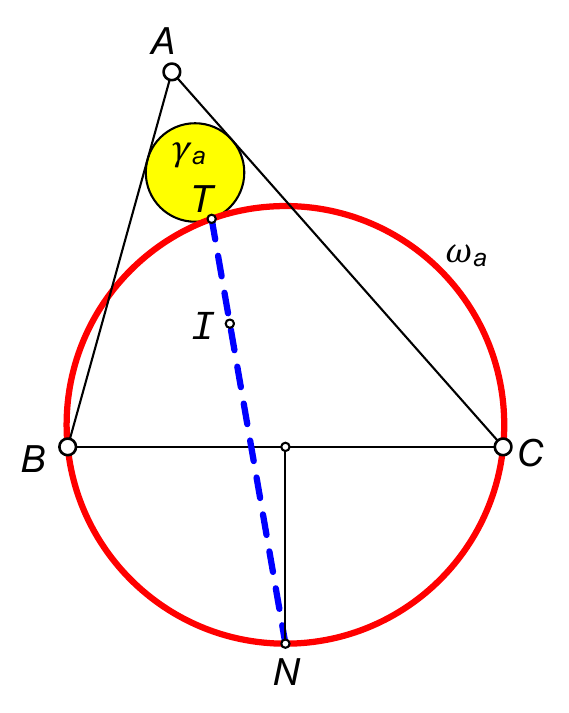}
\caption{}
\label{fig:RG13069}
\end{figure}

\begin{proof}
From Protasov's Theorem, $TI$ bisects $\angle BTC$.
Thus, $TI$ intersects the arc $\arc{BC}$ (not containing $T$) at its midpoint. This midpoint lies on
the perpendicular bisector of $BC$ and we are done.
\end{proof}

\textbf{Note.} This theorem provides a nice method for constructing $\gamma_a$.
First construct $N$ as the intersection of the perpendicular bisector of $BC$ with $\omega_a$.
Then construct $T$ as the intersection of $NI$ with $\omega_a$.
Finally, the center of $\gamma_a$ is found as the intersection of the line joining
the center of $\omega_a$ and $T$ with $AI$.

\begin{corollary}\label{cor:midarc}
We have $\angle NBC=\angle BCN$.
\end{corollary}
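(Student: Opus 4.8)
The plan is to read off the result directly from the defining property of $N$. In Theorem~\ref{thm:RG13069}, the point $N$ is constructed as the intersection of the perpendicular bisector of $BC$ with $\omega_a$ (on the side opposite $T$). The key observation is simply that any point on the perpendicular bisector of a segment is equidistant from its endpoints, so $NB=NC$.

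First I would invoke this construction to conclude $NB=NC$, which makes $\triangle NBC$ isosceles with apex at $N$. Then the base angles opposite the equal sides are equal, giving
$$\angle NBC=\angle NCB=\angle BCN,$$
which is exactly the claim.

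There is no real obstacle here: the corollary is an immediate consequence of the isosceles-triangle base-angle theorem applied to the perpendicular-bisector characterization of $N$ established in the preceding theorem. The only point requiring care is to cite the correct defining feature of $N$ — that it lies on the perpendicular bisector of $BC$ — rather than its collinearity with $T$ and $I$, which plays no role in this particular statement.
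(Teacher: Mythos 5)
Your proof is correct and is exactly the intended argument: the paper states this corollary without proof as an immediate consequence of $N$ lying on the perpendicular bisector of $BC$ (equivalently, $N$ being the arc midpoint), and your appeal to $NB=NC$ plus the isosceles base-angle theorem fills that in precisely.
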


The following result is suggested by \cite{Istvan}.

\begin{theorem}\label{thm:RG13164}
Let $N$ be the midpoint of arc $\arc{BC}$ opposite $T$.
Let $E$ be the point where $\gamma_a$ touches side $AC$ (Figure~\ref{fig:RG13164}).
Let $J$ be the point where $\omega_a$ meets $AC$.
Then $IE\parallel NJ$.
\end{theorem}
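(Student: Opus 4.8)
The plan is to prove $IE\parallel NJ$ by a short angle chase that transfers the angle $\angle IEC$ over to the angle $\angle NJC$, after which the parallelism can be read off directly from the common transversal $AC$.

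First I would invoke the Catalytic Lemma (Theorem~\ref{thm:catalytic}): since $E$, $T$, $I$, $C$ are concyclic, the inscribed angles on chord $IC$ give $\angle IEC=\angle ITC$. Next, by Theorem~\ref{thm:RG13069} the points $T$, $I$, $N$ are collinear with $I$ between $T$ and $N$, so the ray $TI$ coincides with the ray $TN$ and hence $\angle ITC=\angle NTC$. I would then pass into the circle $\omega_a$, on which $N$, $T$, $J$, and $C$ all lie: the angles $\angle NTC$ and $\angle NJC$ are inscribed angles subtending the same chord $NC$, and because $T$ and $J$ lie on the same arc cut off by $NC$, they are equal, $\angle NTC=\angle NJC$. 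Chaining the three identities yields $\angle IEC=\angle NJC$.

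Finally, $E$, $J$, and $C$ all lie on line $AC$, so $AC$ is a common transversal of the lines $IE$ and $NJ$. The equality $\angle IEC=\angle NJC$ exhibits equal corresponding angles: the rays $EC$ and $JC$ point the same way along $AC$, and $I$ and $N$ lie on the same side of $AC$ (the interior side). Equal corresponding angles force $IE\parallel NJ$, which is the claim.

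I expect the only delicate point to be the configuration bookkeeping inside the angle chase, namely checking that $T$ and $J$ lie on the same arc determined by $NC$ (so the two inscribed angles are equal rather than supplementary) and that $I$ and $N$ lie on the same side of line $AC$ (so equal angles give parallel rather than anti-parallel lines). Both facts follow from the standing configuration -- $T$ on the arc of $\omega_a$ facing $A$, $N$ the midpoint of the opposite arc, $J$ the second intersection of $AC$ with $\omega_a$ lying between $A$ and $C$, and $I$ interior to $\triangle ABC$ -- and I would confirm them from Figure~\ref{fig:RG13164}; the entire metric content of the theorem is carried by the three angle identities $\angle IEC=\angle ITC=\angle NTC=\angle NJC$.
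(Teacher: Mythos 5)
Your proof is correct and is essentially the paper's own argument: both establish the chain $\angle IEC=\angle NTC=\angle NJC$ and conclude parallelism from the transversal $AC$, using the same ingredients (the Catalytic Lemma, Protasov's theorem via the arc-midpoint $N$, and inscribed angles in $\omega_a$). The only cosmetic difference is that you inline the step $\angle IEC=\angle ITC=\angle NTC$ from the Catalytic Lemma and the collinearity of $T$, $I$, $N$, whereas the paper cites Theorem~\ref{thm:result3} (which packages exactly those two facts) and the arc-midpoint property of $N$.
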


\begin{figure}[ht]
\centering
\includegraphics[scale=0.4]{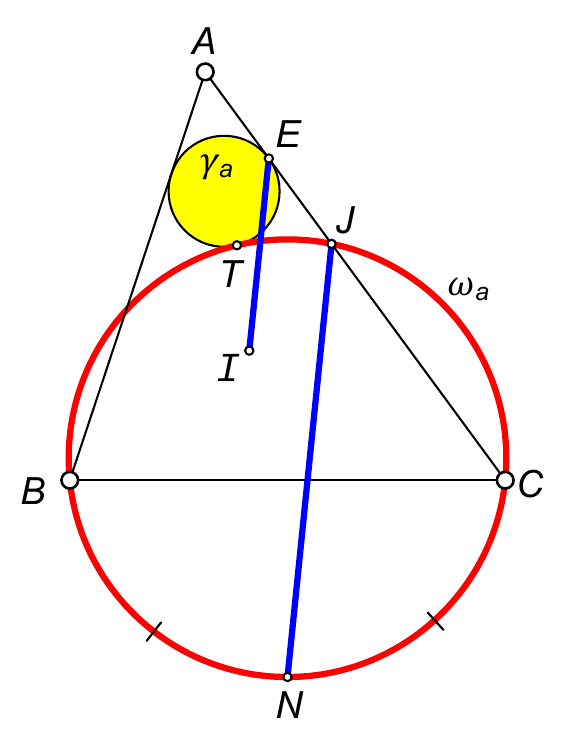}
\caption{blue lines are parallel}
\label{fig:RG13164}
\end{figure}

\begin{proof}
By Theorem~\ref{thm:result3}, $\angle IEC$ is half of $\angle BTC$.
But half of $\angle BTC$ is equal to $\angle NTC$ and $\angle NTC=\angle NJC$
because both angles are inscribed in arc $\arc{NC}$.
Thus, $\angle IEC=\angle NJC$ which makes $IE\parallel NJ$.
\end{proof}

The following result comes from \cite{Suppa13066}.

\begin{theorem}\label{thm:RG13066}
Let $\omega_a$ meet $AC$ at $J$ and let the line through $I$ parallel to $BJ$ meet $AC$ at $F$.
Let $E$ be the point where $\gamma_a$ touches $AC$.
Then $IF=FE$ (Figure~\ref{fig:RG13066}).
\end{theorem}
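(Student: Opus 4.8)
The plan is to show that $\triangle IFE$ is isosceles with apex $F$, i.e.\ that its base angles $\angle FIE$ and $\angle FEI$ are equal. Since $FI$ and $FE$ are the sides opposite these two angles, equality of the base angles is exactly the desired conclusion $IF=FE$. So the whole problem reduces to comparing two angles.

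The engine of the proof is a pair of parallelisms. By construction $IF\parallel BJ$, and by Theorem~\ref{thm:RG13164} we have $IE\parallel NJ$, where $N$ is the midpoint of the arc $\arc{BC}$ opposite $T$. Because the two lines $IF,IE$ through $I$ are respectively parallel to the two lines $JB,JN$ through $J$, the angles they enclose agree: $\angle FIE=\angle BJN$. Reading the parallel $IE\parallel NJ$ against the transversal $AC$—which meets line $IE$ at $E$ and line $NJ$ at $J$—gives, by corresponding angles, $\angle FEI=\angle IEC=\angle NJC$. Thus the statement collapses to the single claim $\angle BJN=\angle NJC$, that is, that $JN$ bisects $\angle BJC$.

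This last claim follows from the position of $N$ on $\omega_a$. Since $J$ lies on segment $AC$, it sits on the same arc $\arc{BC}$ as the tangency point $T$; hence $N$, the midpoint of the \emph{opposite} arc, is precisely the midpoint of the arc $\arc{BC}$ not containing $J$. The chord joining $J$ to the midpoint of the far arc bisects the inscribed angle $\angle BJC$, which is exactly $\angle BJN=\angle NJC$. Combining the three relations yields $\angle FIE=\angle FEI$, and therefore $IF=FE$.

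The step needing the most care is the orientation bookkeeping: one must verify from the configuration that the angles matched above are the corresponding (equal) angles rather than their supplements—equivalently, that $F$ lies on the same side of $E$ as $C$, so that $\angle FEI$ coincides with $\angle IEC$ and not with its supplement. Once the figure fixes these choices the argument is immediate. Alternatively, running everything in directed angles modulo $\pi$ removes the ambiguity entirely: the congruence $\angle FIE\equiv\angle FEI\pmod{\pi}$ forces genuine equality because both base angles lie in $(0,\pi)$.
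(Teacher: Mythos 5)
Your proof is correct and follows essentially the same route as the paper's: both reduce the claim to showing $\triangle FIE$ is isosceles by combining the given parallel $IF\parallel BJ$ with $IE\parallel NJ$ from Theorem~\ref{thm:RG13164} and the fact that $N$ is the midpoint of the arc $\arc{BC}$ not containing $J$. Your middle step is slightly more streamlined---you invoke directly that $JN$ bisects $\angle BJC$ because $N$ is the far-arc midpoint, whereas the paper reaches the same conclusion by chaining inscribed angles through $\angle NBC=\angle BCN$ (Corollary~\ref{cor:midarc})---but the underlying argument is the same.
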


\begin{figure}[ht]
\centering
\includegraphics[scale=0.7]{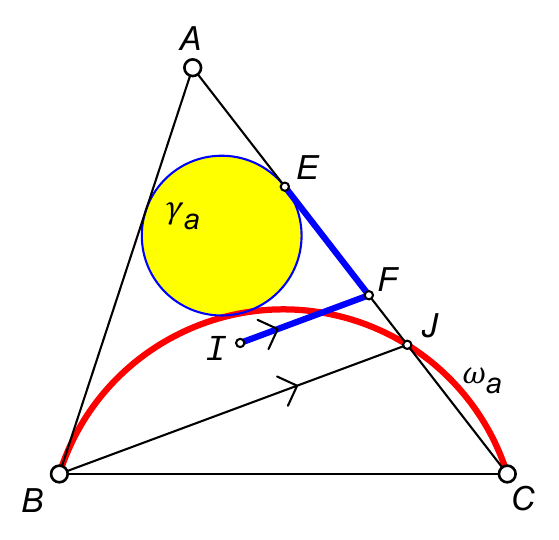}
\caption{blue segments are congruent}
\label{fig:RG13066}
\end{figure}



\begin{proof}
Let $TI$ meet $\omega_a$ again at $N$.
\begin{figure}[ht]
\centering
\includegraphics[scale=0.6]{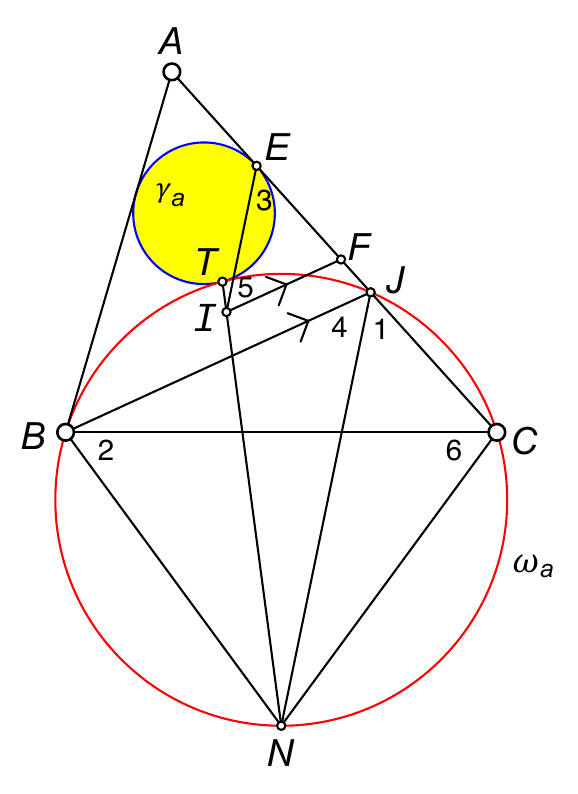}
\caption{}
\label{fig:RG13066proof}
\end{figure}
By Theorem~\ref{thm:RG13164}, $NJ\parallel IE$ (Figure~\ref{fig:RG13066proof}). Thus $\angle 3=\angle 1$. But $\angle 1=\angle 2$
since both subtend arc $\arc{NC}$ in circle $\omega_a$. Hence,
\begin{equation}
\angle 3=\angle 2.\label{eq:3=2}
\end{equation}
\void{
It is given that $IF\parallel BE$, so $\angle 4=\angle 5$. Since $\angle 5$ is measured by
half arc $\arc{AEC}$, we can deduce that $\angle 6=\angle 5$.
}
By Corollary~\ref{cor:midarc}, we have $\angle 2=\angle 6$. But $\angle 6=\angle 4$ since both
subtend arc $\arc{BN}$. Thus,
\begin{equation}
\angle 2=\angle 4.\label{eq:2=4}
\end{equation}
Since $IE\parallel NJ$ and $IF\parallel BJ$, we can conclude that
\begin{equation}
\angle 4=\angle 5.\label{eq:4=5}
\end{equation}
Combining equations (\ref{eq:3=2}), (\ref{eq:2=4}), and (\ref{eq:4=5}), we find that
$$\angle 3=\angle 2=\angle 4=\angle 5,$$
so $\angle 3=\angle 5$. Thus, $\triangle FIE$ is isosceles with $IF=FE$.
\end{proof}

For other proofs, see \cite{Ayme2} and \cite{Ayme3}.

This theorem provides another simple way to construct circle $\gamma_a$.
Draw the line through $I$ parallel to $BJ$ to get point $F$ where this line meets $AC$.
With center $F$, draw a circle with radius $FI$. Let this circle meet $AC$ (nearer $A$) at point $E$.
This is the touch point for circle $\gamma_a$.
Erect a perpendicular at $E$ to $AC$. This perpendicular meets $AI$
at the center of $\gamma_a$.


\begin{theorem}\label{thm:result8}
Let $T$ be any point on arc $\arc{BC}$.
Let $F$ be the foot of the perpendicular from $T$ to $BC$ (Figure~\ref{fig:result8}).
Then $\angle BTF=\angle O_aTC$.
\end{theorem}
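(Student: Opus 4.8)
The plan is to reduce the whole statement to the inscribed-angle theorem together with the fact that $\triangle O_aTC$ is isosceles. The claim $\angle BTF=\angle O_aTC$ is really the assertion that the altitude $TF$ from $T$ and the radius $TO_a$ are \emph{isogonal} with respect to $\angle BTC$; I would prove it by expressing both angles in terms of the arc $\arc{TC}$ and observing that they agree.

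First I would work in the right triangle $TFB$. Since $TF\perp BC$ and $F$ lies on line $BC$, the angle of this triangle at $B$ equals $\angle TBC$, so from the right angle at $F$ we obtain
$$\angle BTF = 90^\circ - \angle TBC.$$
Now $\angle TBC$ is an inscribed angle of $\omega_a$ standing on the chord $TC$, hence it is half of the corresponding central angle:
$$\angle TBC=\tfrac12\,\angle TO_aC.$$
Next I would turn to $\triangle O_aTC$. Because $O_aT$ and $O_aC$ are both radii of $\omega_a$, this triangle is isosceles, so its base angles at $T$ and $C$ are equal, giving
$$\angle O_aTC = \tfrac12\bigl(180^\circ - \angle TO_aC\bigr)=90^\circ-\tfrac12\,\angle TO_aC = 90^\circ-\angle TBC.$$
Comparing this with the formula for $\angle BTF$ shows $\angle BTF=\angle O_aTC$, as desired.

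The computation itself is immediate, so the only real obstacle is configuration bookkeeping: one must check that $F$ lies on ray $BC$ emanating from $B$ toward $C$, so that the angle of $\triangle TFB$ at $B$ is genuinely $\angle TBC$ and not its supplement, and that the central angle $\angle TO_aC$ is the one cut off by the same arc $\arc{TC}$ that subtends the inscribed angle $\angle TBC$. For the configuration drawn in Figure~\ref{fig:result8} these coincidences hold. The only degenerate case, where the angle of $\triangle BTC$ at $B$ is obtuse and $F$ falls outside segment $BC$, is handled by the identical computation read with directed angles, so no separate argument is needed.
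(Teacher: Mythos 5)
Your proposal is correct and is essentially the paper's own argument: both reduce to showing that $\angle BTF$ and $\angle O_aTC$ are each complements of half the central angle $\angle TO_aC$ (the paper realizes the halving by dropping a perpendicular from $O_a$ to the chord $TC$, which is just the isosceles-triangle computation you perform directly). No substantive difference.
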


\begin{figure}[ht]
\centering
\includegraphics[scale=0.45]{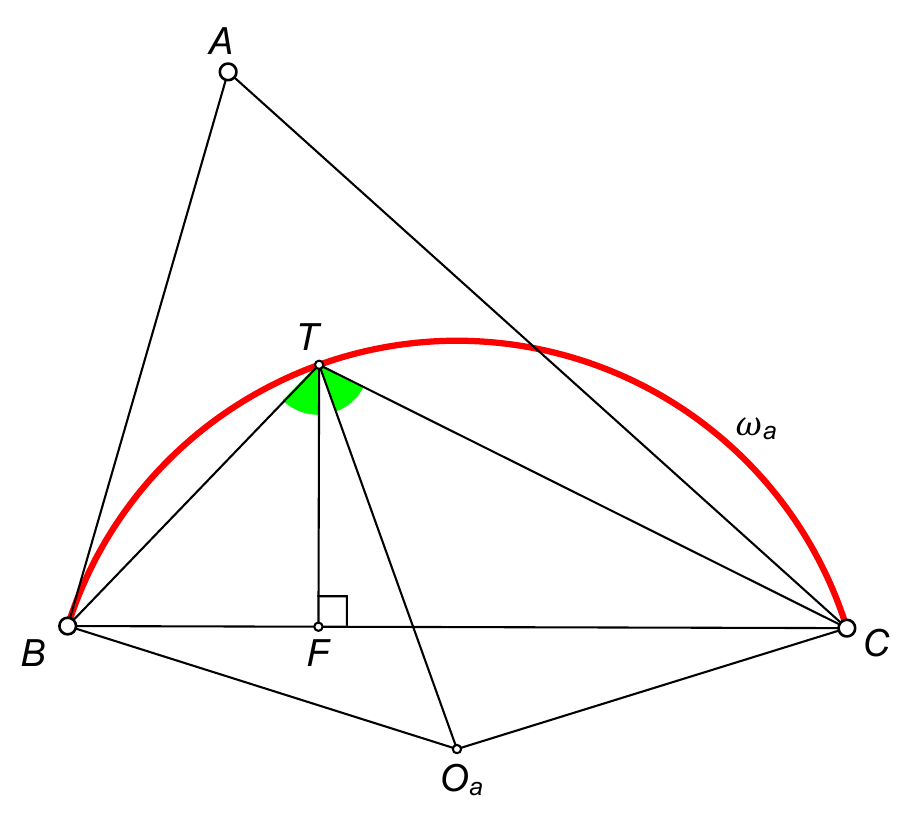}
\caption{green angles are equal}
\label{fig:result8}
\end{figure}

\bigskip

\begin{minipage}{3in}
\begin{proof}
Let $G$ be the foot of the perpendicular from $O_a$ to $TC$.
Since $\angle CBT$ is measured by half the measure of $\arc{TC}$ and 
$\angle TO_aC$ equals the measure of $\arc{TC}$, we have
$$\angle FBT=\frac12\angle CO_aT=\angle GO_aT.$$
Complements of equal angles are equal, so $\angle BTF=\angle O_aTC$.
\end{proof}
\end{minipage}
\hfil
\raisebox{-1.2in}{\includegraphics[scale=0.45]{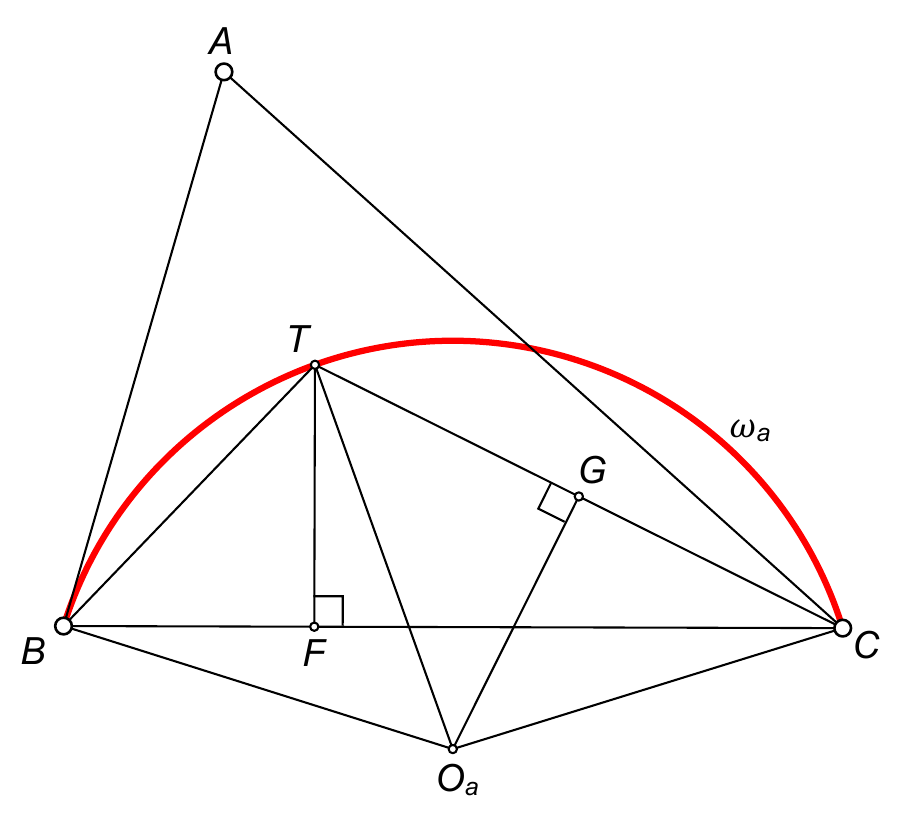}}

\newpage

\begin{theorem}\label{thm:result6}
Let $M$ be the midpoint of $BC$ (Figure~\ref{fig:result6}).
Then $\angle MO_aT=2\angle ITO_a$.
\end{theorem}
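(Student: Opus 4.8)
The plan is to reduce everything to the isosceles triangle $O_aTN$, where $N$ is the midpoint of the arc $\arc{BC}$ not containing $T$ — exactly the point introduced in Theorem~\ref{thm:RG13069}. The incenter $I$ is awkward to compute with directly, so the first move is to trade it for a point lying on $\omega_a$ itself.

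First I would invoke Theorem~\ref{thm:RG13069}, which states that $T$, $I$, and $N$ are collinear. Since $I$ lies on the chord $TN$, the ray $TI$ coincides with the ray $TN$, and therefore
$$\angle ITO_a=\angle NTO_a.$$
Next I would record two elementary facts. Because $O_a$ is equidistant from $B$ and $C$ while $M$ is the midpoint of the chord $BC$, the points $O_a$, $M$, and $N$ all lie on the perpendicular bisector of $BC$; in the configuration of Figure~\ref{fig:result6}, $O_a$ lies between $M$ and $N$. Moreover $O_aT=O_aN$, both being radii of $\omega_a$, so triangle $O_aTN$ is isosceles with $\angle O_aTN=\angle O_aNT$.

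The conclusion then follows from the exterior angle theorem. Since $M$ lies on the ray $NO_a$ extended beyond $O_a$, the angle $\angle MO_aT$ is the exterior angle of triangle $O_aTN$ at the vertex $O_a$, hence equal to the sum of the two remote interior angles:
$$\angle MO_aT=\angle O_aTN+\angle O_aNT=2\,\angle O_aTN=2\,\angle ITO_a.$$

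I expect the only delicate point to be the configuration, namely confirming that $O_a$ genuinely lies between $M$ and $N$ on the perpendicular bisector, so that $\angle MO_aT$ is the exterior angle rather than the apex angle $\angle NO_aT$ of the isosceles triangle. This is exactly where the placement of the arc $\arc{BTC}$ relative to $BC$ enters (the center $O_a$ lying on the far side of $BC$ from $A$); were $M$ instead to separate $O_a$ from $N$, the same bookkeeping would yield $\angle MO_aT=180\degrees-2\,\angle ITO_a$. Everything else is routine isosceles-triangle and exterior-angle reasoning, and the substantive geometric input is entirely carried by the collinearity of $T$, $I$, $N$ supplied by Theorem~\ref{thm:RG13069}.
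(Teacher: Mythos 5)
Your proof is correct, but it takes a genuinely different route from the paper's. The paper works entirely at the vertex $T$: it drops the foot $F$ of the perpendicular from $T$ to $BC$, invokes Theorem~\ref{thm:result8} to get $\angle BTF=\angle O_aTC$, subtracts this from Protasov's equality $\angle BTI=\angle ITC$ to conclude $\angle FTI=\angle ITO_a$, and then transfers $\angle FTO_a$ to $\angle MO_aT$ via the parallels $TF\parallel MO_a$. You instead route through Theorem~\ref{thm:RG13069} (itself a corollary of Protasov) to replace the incenter by the arc midpoint $N$, and then everything reduces to the isosceles triangle $O_aTN$ and the exterior angle theorem --- in effect the central-angle/inscribed-angle relationship in disguise. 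Your version avoids Theorem~\ref{thm:result8} altogether and is arguably cleaner, at the cost of importing the collinearity of $T$, $I$, $N$. Both arguments carry the same configurational caveat: the paper's step ``$\frac12\angle FTO_a=\frac12\angle MO_aT$ since $TF\parallel MO_a$'' reads the two angles as alternate interior angles, which requires $O_a$ and $T$ to lie on opposite sides of $BC$ (i.e.\ $\theta<180\degrees$), and this is exactly the same hypothesis as your requirement that $O_a$ lie between $M$ and $N$ on the perpendicular bisector; when $\theta>180\degrees$ both proofs (and indeed the stated equality) give way to $\angle MO_aT=180\degrees-2\angle ITO_a$. You are the only one of the two to flag this explicitly, which is a point in your favor.
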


\begin{figure}[ht]
\centering
\includegraphics[scale=0.4]{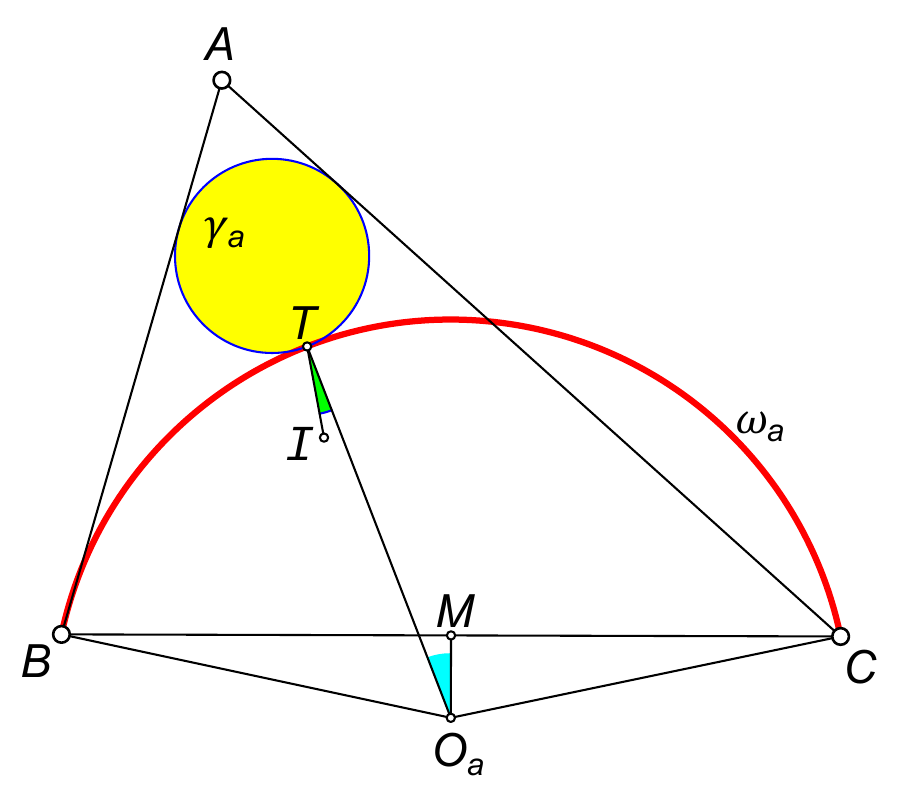}
\caption{blue angle = twice green angle}
\label{fig:result6}
\end{figure}

\begin{minipage}{3.2in}
\begin{proof}
Let $F$ be the foot of the perpendicular from $T$ to $BC$.
By Theorem~\ref{thm:result8}, $\angle 1=\angle 2$ in the figure to the right.

\medskip
By Protasov's Theorem, $\angle BTI=\angle ITC$.
Therefore $\angle FTI=\angle ITO_a$ or
$$\angle ITO_a=\frac12\angle FTO_a=\frac12\angle MO_aT$$
since $TF\parallel MO_a$.

\medskip
Hence, $\angle MO_aT=2\angle ITO_a$.
\end{proof}
\end{minipage}
\hfil
\raisebox{-1in}{\includegraphics[scale=0.4]{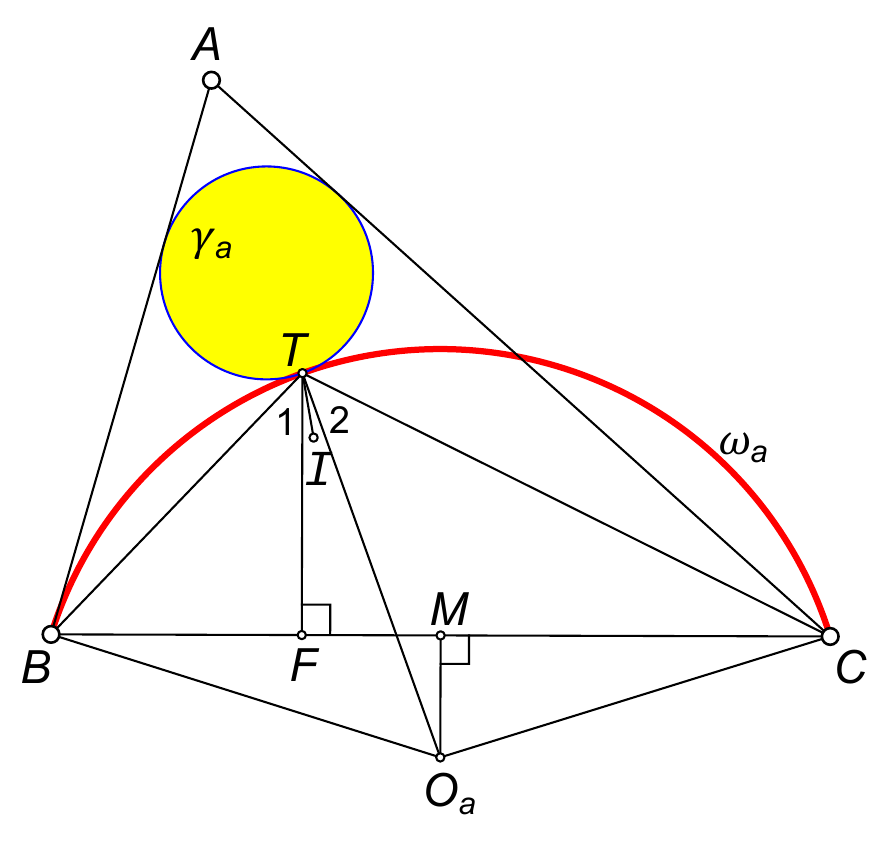}}

\begin{theorem}\label{thm:result9}
Let $IT$ meet $\gamma_a$ again at $T'$ (Figure~\ref{fig:result9}).
Then $T'D\perp BC$.
\end{theorem}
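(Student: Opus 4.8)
The plan is to reduce the statement to a claim about parallel lines. Since $O_a$ is the center of $\omega_a$, which passes through $B$ and $C$, it lies on the perpendicular bisector of $BC$; hence the line $O_aM$ is perpendicular to $BC$, where $M$ is the midpoint of $BC$. It therefore suffices to prove $T'D\parallel O_aM$. I would read off the direction of $T'D$ from the isosceles triangle $DTT'$, read off the direction of $O_aM$ from Theorem~\ref{thm:result6}, and then match the two using the line of centers $DO_a$ as a common transversal.

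First I would record the consequences of the external tangency at $T$: the points $D$, $T$, $O_a$ are collinear with $T$ between $D$ and $O_a$, so ray $TD$ is opposite to ray $TO_a$, and ray $O_aD$ passes through $T$. In $\gamma_a$ the segments $DT$ and $DT'$ are radii, so $\triangle DTT'$ is isosceles; writing $\beta$ for its base angle gives $\angle DTT'=\angle DT'T=\beta$ and apex angle $\angle T'DT=180\degrees-2\beta$. Because $T$ lies between $I$ and $T'$ on line $IT$ (as in Figure~\ref{fig:result9}), ray $TT'$ is opposite to ray $TI$; combined with ray $TD$ being opposite to ray $TO_a$, this makes $\angle ITO_a$ and $\angle DTT'$ vertical angles, so $\angle ITO_a=\beta$.

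Next I would invoke Theorem~\ref{thm:result6}, which gives $\angle MO_aT=2\angle ITO_a=2\beta$. Since ray $O_aD$ passes through $T$, this reads $\angle MO_aD=2\beta$, while the apex computation (using ray $DT=\,$ray $DO_a$) gives $\angle T'DO_a=180\degrees-2\beta$. Taking line $DO_a$ as a transversal cutting $DT'$ at $D$ and $O_aM$ at $O_a$, the interior angles on the side containing $T'$ and $M$ satisfy $\angle T'DO_a+\angle DO_aM=(180\degrees-2\beta)+2\beta=180\degrees$, which forces $T'D\parallel O_aM$ and hence $T'D\perp BC$.

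The main obstacle is orientation bookkeeping rather than any deep idea. The essential point to justify is that $T$ separates $I$ from $T'$, which validates the vertical-angle identification $\angle ITO_a=\angle DTT'$. This I would establish by a tangent-line argument: $B$ and $C$ lie on $\omega_a$, hence strictly on the $O_a$-side of the common tangent to the two circles at $T$; by Protasov's Theorem the ray $TI$ bisects $\angle BTC$ and so enters that same side, placing $I$ on the $O_a$-side, whereas $T'\in\gamma_a$ lies on the $D$-side, so $T$ falls between them. One must likewise confirm that $T'$ and $M$ lie on the same side of line $DO_a$, so that the two interior angles are genuinely co-interior and their supplementarity yields parallelism; this is clear from Figure~\ref{fig:result9} and follows because $\beta$ is acute.
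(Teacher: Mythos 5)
Your proof is correct, and it reaches the same intermediate goal as the paper --- showing that $T'D$ is parallel to a line through $O_a$ perpendicular to $BC$ --- but by a different route. The paper extends $TI$ to its second intersection $N$ with $\omega_a$, invokes Theorem~\ref{thm:RG13069} to identify $N$ as the arc midpoint (so $O_aN\perp BC$), and transfers the base angle of the isosceles triangle $DTT'$ to the base angle of the isosceles triangle $O_aNT$ via the same vertical-angle identity $\angle T'TD=\angle ITO_a$ that you use, concluding from equal alternate angles on the transversal $T'N$. You instead work with the midpoint $M$ of $BC$, quote Theorem~\ref{thm:result6} for $\angle MO_aT=2\angle ITO_a$, and close with supplementary co-interior angles ($(180\degrees-2\beta)+2\beta$) on the transversal $DO_a$. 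Both supporting lemmas are downstream of Protasov's Theorem and both precede this result in the paper, so either is legitimate; the paper's version is marginally shorter because the equal-angles criterion avoids the $180\degrees-2\beta$ bookkeeping, while yours avoids introducing the auxiliary point $N$ and a second isosceles triangle. Your explicit justification that $T$ separates $I$ from $T'$ (via the common tangent at $T$ and Protasov) is a point the paper silently assumes, and is welcome; the remaining same-side condition for the co-interior angles is asserted from the figure, which matches the paper's own level of rigor on orientation questions.
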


\begin{figure}[ht]
\centering
\includegraphics[scale=0.5]{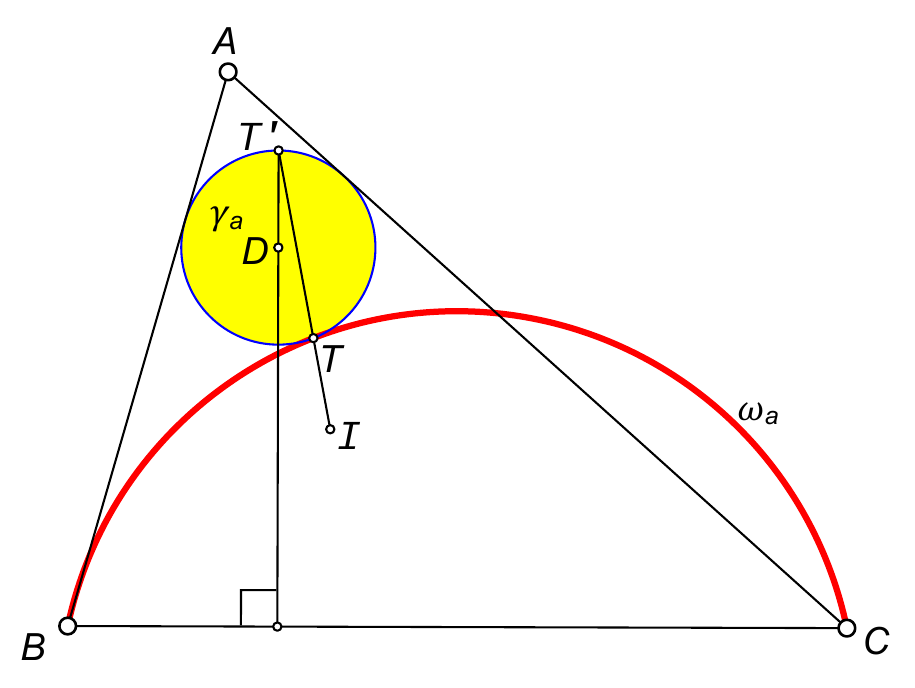}
\caption{$T'D\perp BC$}
\label{fig:result9}
\end{figure}


The following proof is due to Biro Istvan.

\newpage

\begin{proof}
Since $\gamma_a$ and $\omega_a$ are tangent at $T$, this means $D$, $T$, and $O_a$ are collinear.
Since $I$, $T$, and $T'$ are also collinear, we find $\angle T'TD=\angle ITO_a$.
Extend $TI$ until it meets $\omega_a$ again at $N$ (Figure~\ref{fig:result9proof}).

\begin{figure}[ht]
\centering
\includegraphics[scale=0.4]{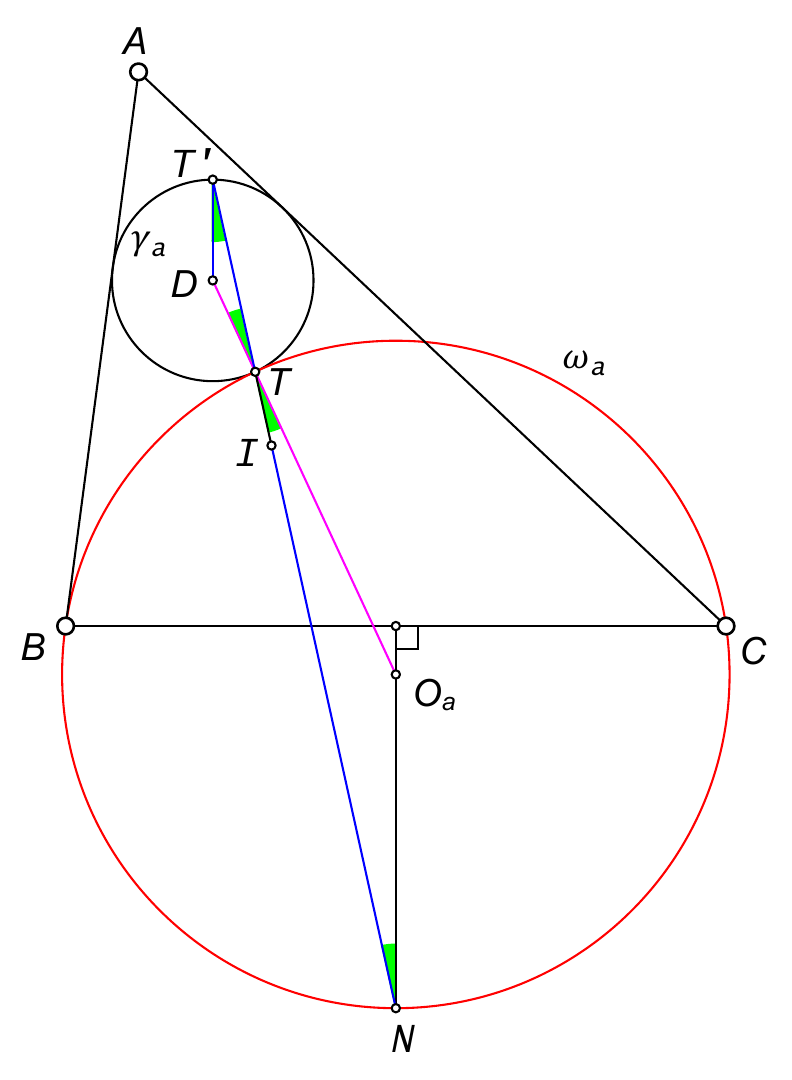}
\caption{}
\label{fig:result9proof}
\end{figure}

By Theorem~\ref{thm:RG13069}, $NO_a\perp BC$.
Base angles of an isosceles triangle are equal and vertical angles are equal,
so $\angle DT'T=\angle T'TD=\angle ITO_a=\angle O_aNT$.
So $T'D\parallel O_aN$ because $\angle DT'N=\angle O_aNT$.
Thus, $T'D\perp BC$.
\end{proof}


\begin{theorem}\label{thm:result10}
Let $IT$ meet $\gamma_a$ again at $T'$.
Let $E$ be the point where $\gamma_a$ touches side $AC$ (Figure~\ref{fig:result10}).
Then $\angle EDT'=\angle ACB$.
\end{theorem}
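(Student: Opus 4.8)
The plan is to exploit the two perpendicularity facts that the radii of $\gamma_a$ to its boundary points already provide. First, since $\gamma_a$ is tangent to $AC$ at $E$, the radius $DE$ is perpendicular to $AC$. Second, Theorem~\ref{thm:result9} tells us that $DT'\perp BC$. Thus the two sides $DE$ and $DT'$ of the angle $\angle EDT'$ are perpendicular, respectively, to the sides $CA$ and $CB$ of the angle $\angle ACB$, and the whole result will reduce to the elementary principle that two angles with respectively perpendicular sides are either equal or supplementary.

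To extract the exact value I would introduce the foot $P$ of the perpendicular from $D$ to line $BC$ and examine the quadrilateral $DECP$. It has a right angle at $E$ (because $DE\perp CA$ while $EC$ lies along $CA$) and a right angle at $P$ (because $DP\perp CB$ while $PC$ lies along $CB$), and its angle at $C$ is exactly $\angle ECP=\angle ACB$. Since the four interior angles sum to $360^\circ$, the remaining angle satisfies $\angle EDP=180^\circ-\angle ACB$.

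It then remains to pass from $\angle EDP$ to the angle actually sought, $\angle EDT'$. Both $DP$ and $DT'$ are perpendicular to $BC$, so $T'$ is one of the two endpoints of the diameter of $\gamma_a$ perpendicular to $BC$, and I must decide which. Here I would use the definition of $T'$ as the second intersection of line $IT$ with $\gamma_a$: the external tangency point $T$ of $\gamma_a$ with $\omega_a$ lies on the side of $\gamma_a$ nearest $BC$ (it is the point of $\gamma_a$ closest to the arc $\arc{BTC}$), whereas $I$ lies on the opposite side, so the chord $TT'$ carries $T'$ to the endpoint of that diameter lying \emph{away} from $BC$. Hence the ray $DT'$ is opposite to the ray $DP$, giving $\angle EDT'=180^\circ-\angle EDP=\angle ACB$, as claimed.

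The main obstacle is precisely this final orientation check. The perpendicular-sides principle, and equally the quadrilateral computation, pin down $\angle EDT'$ only up to taking a supplement; it is the location of $T'$ relative to the foot $P$, namely that $T'$ is the endpoint of the perpendicular diameter farther from $BC$, that selects the correct alternative. Everything else is immediate from the tangency of $\gamma_a$ to $AC$ and from Theorem~\ref{thm:result9}.
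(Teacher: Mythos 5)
Your proof is correct and follows essentially the same route as the paper's: both rest on Theorem~\ref{thm:result9} ($DT'\perp BC$) together with $DE\perp AC$, and then finish with an elementary ``perpendicular sides'' angle chase (the paper extends $DT'$ to meet $AC$ at $T_1$ and $BC$ at $T_2$ and uses the two right triangles $T_1ED$ and $CT_2T_1$ sharing the angle at $T_1$, where you use the quadrilateral $DECP$ and then reverse the ray $DP$). The one shaky spot is your justification of the orientation of $T'$: the assertion that ``$I$ lies on the opposite side'' is wrong as stated, since in this configuration $I$ lies \emph{between} $\gamma_a$ and $BC$, on the same side as $T$; the correct reason is that $I$ is outside $\gamma_a$ on the $BC$ side and $T$ is the intersection of line $IT$ with $\gamma_a$ nearer to $I$, so the second intersection $T'$ is indeed the endpoint of the perpendicular diameter farther from $BC$ (this also falls out of the proof of Theorem~\ref{thm:result9}, where $DT'$ is parallel to $NO_a$ with $N$ on the far side of $BC$). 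Since the paper simply reads this orientation off the figure, your treatment is if anything more careful; just repair the stated reason.
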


\begin{figure}[ht]
\centering
\includegraphics[scale=0.45]{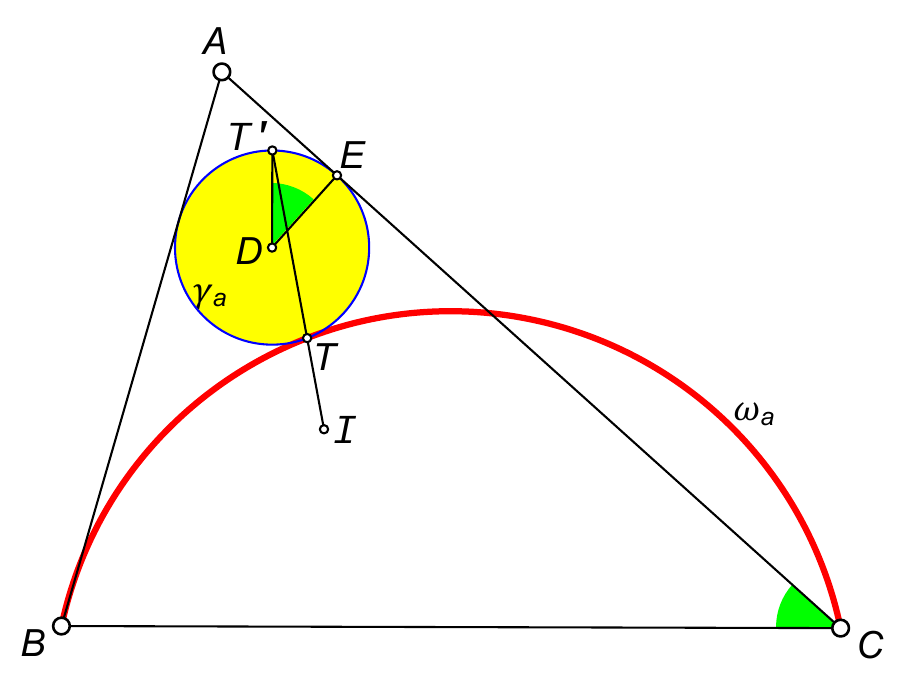}
\caption{green angles are equal}
\label{fig:result10}
\end{figure}

\begin{minipage}{3in}
\begin{proof}
Let $DT'$ meet $AC$ at $T_1$ and let $DT'$ meet $BC$ at $T_2$.
By Theorem~\ref{thm:result9}, $T_1T_2\perp BC$.
From right triangles $T_1ED$ and $CT_2T_1$, we see that $\angle EDT'=\angle ACB$
since they are both complementary to $\angle T_2T_1C$.
\end{proof}
\end{minipage}
\hfill
\raisebox{-1in}{\includegraphics[scale=0.4]{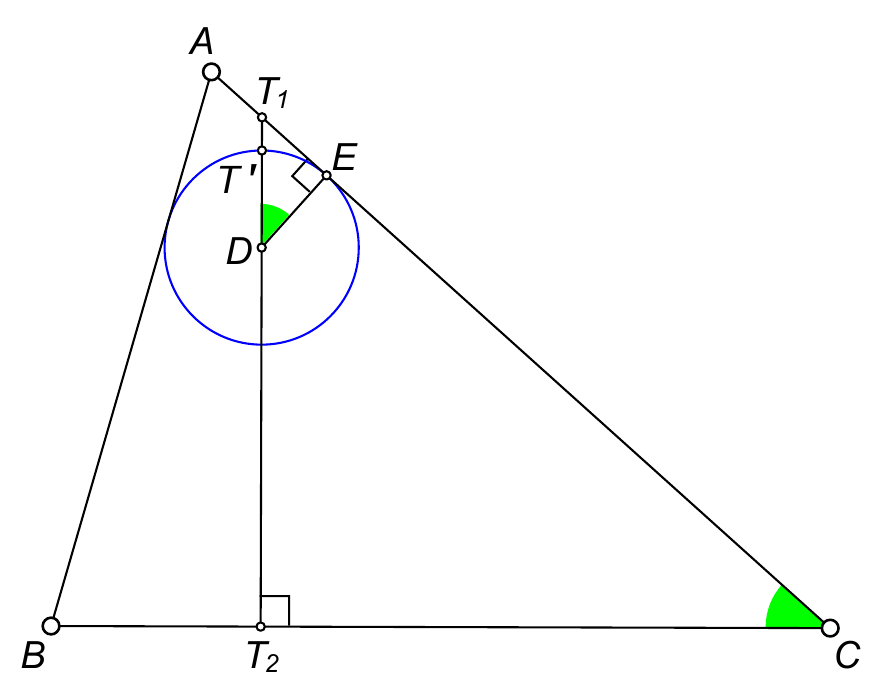}}

\newpage

\section{Properties Related to the Incircle}
\label{section:L}

In this section, we will discuss properties of Ajima circles that are related to the incircle.
As before, $I$ will denote the incenter of $\triangle ABC$.
Obviously, $IL\perp BC$.

Throughout this section, points will be labeled as shown in Figure~\ref{fig:configWithL}
and described in the following table.

\begin{figure}[ht]
\centering
\includegraphics[scale=0.4]{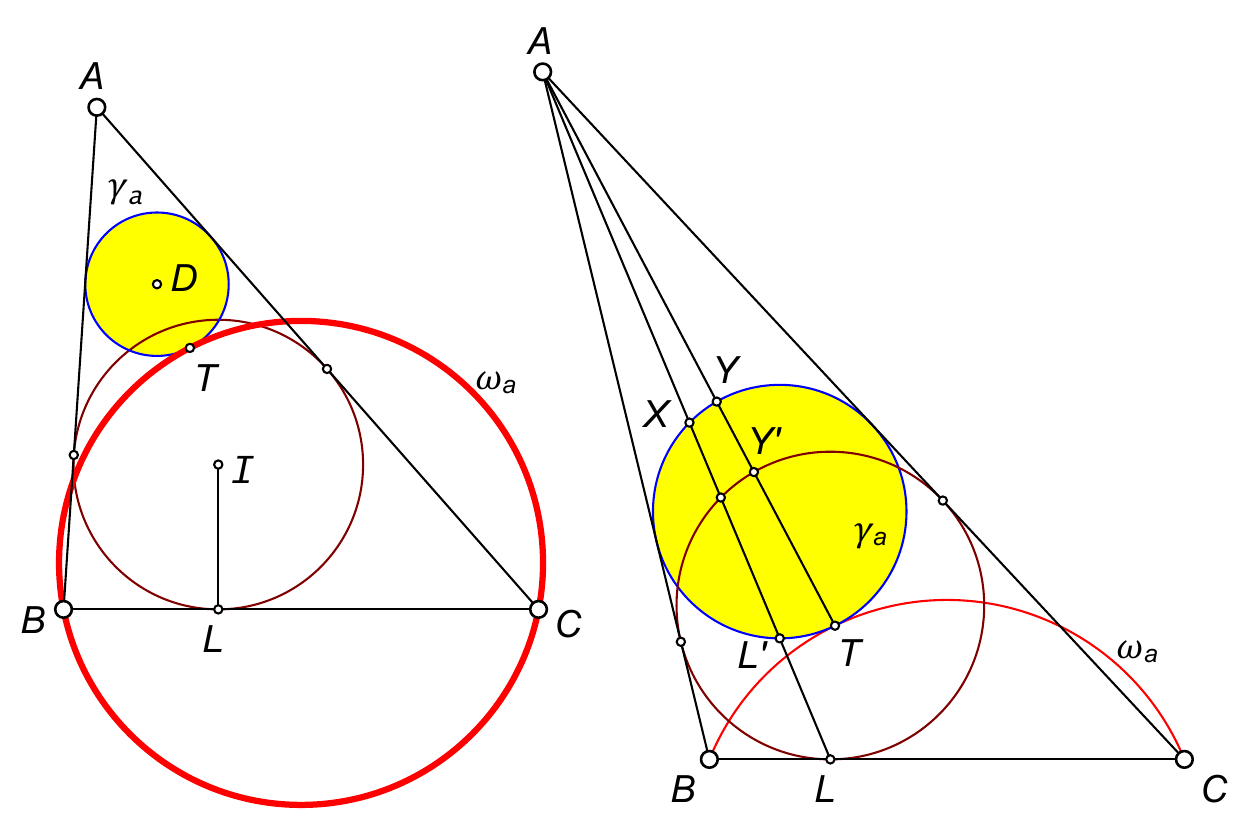}
\caption{basic configuration plus incircle}
\label{fig:configWithL}
\end{figure}

\begin{center}
\begin{tabular}{|c|l|}
\hline
\multicolumn{2}{|c|}{\color{red} \textbf{Notation for this Section}}\\ \hline
\textbf{Symbol}&\textbf{Description} \\ \hline
$I$&incenter of $\triangle ABC$\\ \hline
$D$&center of $\gamma_a$\\ \hline
$T$&Unless specified otherwise, $T$ is the point where $\gamma_a$ touches $\omega_a$.\\ \hline
$L$&point where incircle touches $BC$\\ \hline
$L'$&point closer to $L$ where $AL$ meets $\gamma_a$\\ \hline
$X$&point closer to $A$ where $AL$ meets $\gamma_a$\\ \hline
$Y$&point where $AT$ meets $\gamma_a$ again\\ \hline
$Y'$&point where $AT$ meets the incircle\\ \hline
\end{tabular}
\end{center}


\begin{theorem}\label{thm:result24}
Let $\gamma_a$ be any circle inscribed in $\angle BAC$.
Let $AL$ meet $\gamma_a$ at $L'$ (closer to $L$).
Let $D$ be the center of $\gamma_a$.
Then $DL'\perp BC$ (Figure~\ref{fig:result24}).
\end{theorem}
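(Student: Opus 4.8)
The plan is to exploit the homothety centered at $A$ that carries the incircle onto $\gamma_a$. Both the incircle (center $I$, inradius $r$) and $\gamma_a$ (center $D$) are tangent to the two lines $AB$ and $AC$, so both are inscribed in $\angle BAC$ on the same side of $A$; hence there is a homothety $h$ with center $A$ and positive ratio $k=\rho/r$ (where $\rho$ is the radius of $\gamma_a$) that maps the incircle onto $\gamma_a$, and in particular $h(I)=D$. Since the ratio is positive, the centers $I$ and $D$ lie on the same ray from $A$ along the bisector $AI$, which is consistent with $D$ lying on $AI$.

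First I would track the image of $L$. Because $L$ lies on the ray $AL$ and $h$ fixes every line through its center $A$, the image $L^{*}=h(L)$ again lies on line $AL$; moreover $L^{*}$ lies on $\gamma_a=h(\text{incircle})$, so $L^{*}$ is one of the two points in which $AL$ meets $\gamma_a$. Next I would obtain the perpendicularity essentially for free: the incircle touches $BC$ at $L$, so its radius satisfies $IL\perp BC$ (as already noted in this section). A homothety sends a segment to a parallel segment, and $DL^{*}=h(IL)$ is parallel to $IL$; hence $DL^{*}\perp BC$. Thus $L^{*}$ is exactly the point of $\gamma_a$ whose radius from $D$ is perpendicular to $BC$, namely the point of $\gamma_a$ nearest to $BC$.

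The remaining, and most delicate, step is to identify $L^{*}$ with $L'$, the intersection of $AL$ with $\gamma_a$ that lies closer to $L$; this is precisely the bookkeeping the statement's phrase ``closer to $L$'' is designed to resolve. Here I would compare positions along the ray $AL$. Writing $L_0$ for the second point in which $AL$ meets the incircle, the two intersections of $AL$ with $\gamma_a$ are $h(L_0)$ and $h(L)=L^{*}$, occurring at distances $k\cdot AL_0$ and $k\cdot AL$ from $A$. Since $L$ is the farther of $L_0,L$ from $A$ and $h$ has positive ratio (so it preserves order along the ray), $L^{*}$ is the farther of the two intersections from $A$; and because $\gamma_a$ lies inside the triangle its lowest point does not dip below $BC$, i.e.\ $k\le 1$, so $k\cdot AL\le AL$ and this farther-from-$A$ point is exactly the one nearer $L$. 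Therefore $L'=L^{*}$, and combining with the previous paragraph gives $DL'=DL^{*}\perp BC$. I expect the homothety and the perpendicularity to be routine; the genuine obstacle is this final ordering argument, which is what pins down that the intersection meant by $L'$ is indeed $h(L)$ rather than $h(L_0)$.
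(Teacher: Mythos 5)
Your proof is correct and follows essentially the same route as the paper's: the homothety centered at $A$ carrying the incircle onto $\gamma_a$ sends $I$ to $D$ and $L$ to $L'$, so $DL'\parallel IL\perp BC$. The only difference is that you carefully justify identifying $h(L)$ with the intersection point of $AL$ and $\gamma_a$ nearer $L$ (using the positivity of the ratio and $k\le 1$), a step the paper simply asserts.
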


\begin{figure}[ht]
\centering
\includegraphics[scale=0.5]{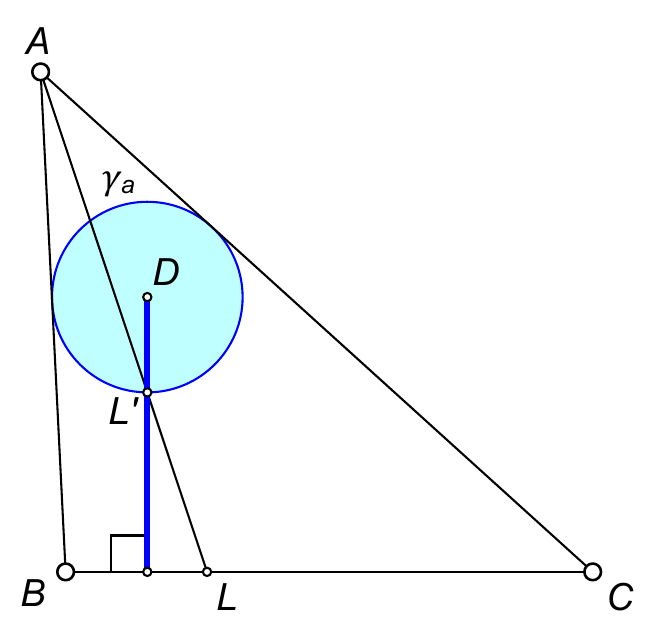}
\caption{$DL'\perp BC$}
\label{fig:result24}
\end{figure}

\begin{minipage}{3.4in}
\begin{proof}
The incircle and circle $\gamma_a$ are homothetic with $A$ being the center of the homothety.
This homothety maps $D$ to $I$ and maps $L'$ to $L$.
Since a homothety maps lines into parallel lines, we can conclude that $DL'\parallel IL$.
Since $IL\perp BC$, we therefore have $DL'\perp BC$.
\end{proof}
\end{minipage}
\hfil
\raisebox{-1in}{\includegraphics[scale=0.45]{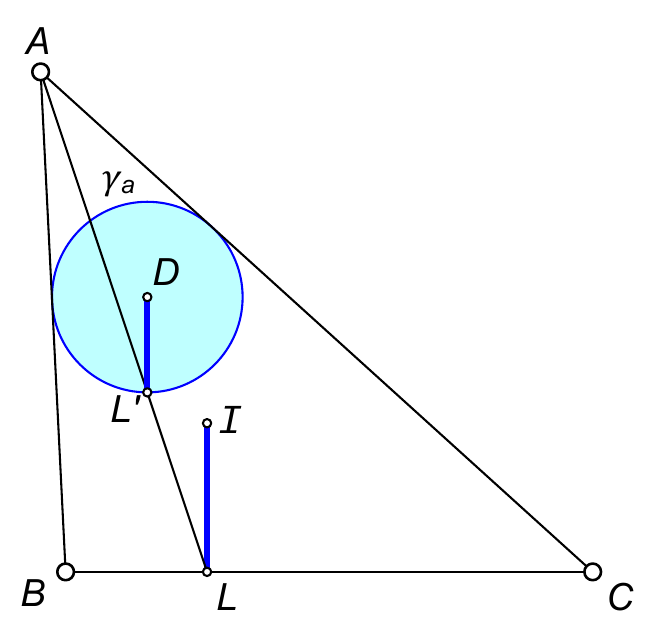}}

\begin{theorem}\label{thm:result27}
Let $\gamma_a$ be any circle inscribed in $\angle BAC$.
Let $AL$ meet $\gamma_a$ at $L'$ (closer to $L$).
Then the tangent to $\gamma_a$ at $L'$ is parallel to $BC$ (Figure~\ref{fig:result27}).
\end{theorem}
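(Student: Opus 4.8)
The plan is to derive this immediately as a corollary of the preceding result, Theorem~\ref{thm:result24}. The key observation is that the tangent line to a circle at a point is perpendicular to the radius drawn to that point. Since $L'$ lies on $\gamma_a$ and $D$ is its center, the segment $DL'$ is a radius, and therefore the tangent to $\gamma_a$ at $L'$ is perpendicular to $DL'$.

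First I would invoke Theorem~\ref{thm:result24}, which asserts that $DL' \perp BC$. This is exactly the fact we need, and it holds under the same hypotheses (namely that $\gamma_a$ is any circle inscribed in $\angle BAC$ and $L'$ is the intersection of $AL$ with $\gamma_a$ closer to $L$). Next I would note the elementary circle property mentioned above: the tangent $t$ to $\gamma_a$ at $L'$ satisfies $t \perp DL'$.

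The conclusion then follows by transitivity of perpendicularity: both $BC$ and the tangent $t$ are perpendicular to the line $DL'$. Two distinct lines in the plane that are each perpendicular to a common line must be parallel to one another. Hence $t \parallel BC$, which is precisely the assertion of the theorem.

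There is no real obstacle in this argument; the entire content has been front-loaded into Theorem~\ref{thm:result24}, and the present statement is a one-line consequence. The only point requiring any care is to make sure the hypotheses of Theorem~\ref{thm:result24} are met verbatim, but since both theorems open with the identical phrasing (\emph{``Let $\gamma_a$ be any circle inscribed in $\angle BAC$. Let $AL$ meet $\gamma_a$ at $L'$ (closer to $L$)''}), this is automatic, and no appeal to the tangency of $\gamma_a$ with $\omega_a$ is needed.
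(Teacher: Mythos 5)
Your proposal is correct and is essentially identical to the paper's own proof: both deduce the result from Theorem~\ref{thm:result24} together with the fact that the tangent at $L'$ is perpendicular to the radius $DL'$, so the tangent and $BC$ are parallel as common perpendiculars to $DL'$. Nothing further is needed.
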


\begin{figure}[ht]
\centering
\includegraphics[scale=0.4]{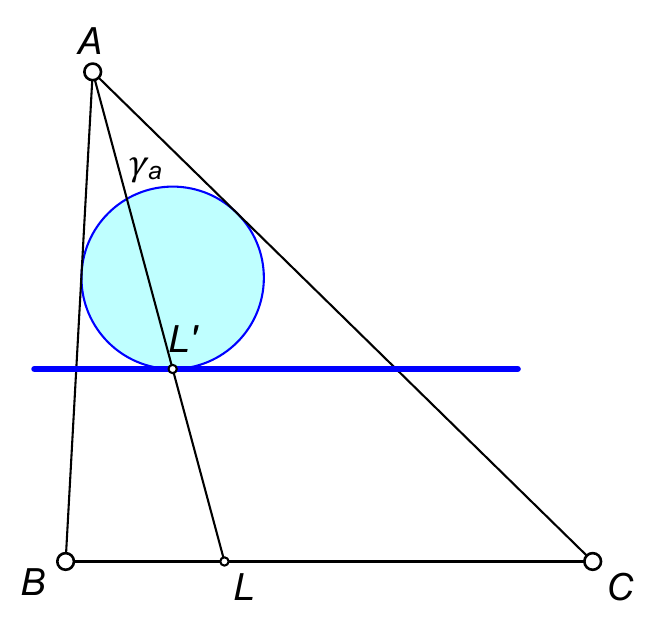}
\caption{blue tangent is parallel to $BC$}
\label{fig:result27}
\end{figure}

\begin{proof}
The tangent at $L'$ is perpendicular to $DL'$ (Figure~\ref{fig:result24}) which
is also perpendicular to $BC$ by Theorem~\ref{thm:result24}.
\end{proof}


\begin{theorem}\label{thm:result25}
Let $\gamma_a$ be any circle inscribed in $\angle BAC$.
Let $D$ be the center of $\gamma_a$.
Let $T$ be any point on $\gamma_a$.
Let $AL$ meet $\gamma_a$ at $L'$ (closer to $L$).
Let $AT$ meet $\gamma_a$ at $Y$ and $Y'$ with $Y$ closer to $A$.
Let $AT$ extended meet the incircle again at $T'$ (Figure~\ref{fig:result25}).
Then $YL'\parallel Y'L$, $L'T\parallel LT'$, and $YD\parallel Y'I$.
\end{theorem}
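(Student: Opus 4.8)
The plan is to obtain all three parallelisms at once from the single homothety $h$ centered at $A$ that carries $\gamma_a$ onto the incircle. Such an $h$ exists, and has positive ratio, because $\gamma_a$ and the incircle are both inscribed in $\angle BAC$, i.e.\ each is tangent to both $AB$ and $AC$; this is precisely the homothety used in the proof of Theorem~\ref{thm:result24}. (Following the notation of this section I read $Y$ as the second intersection of line $AT$ with $\gamma_a$, and $Y'$, $T'$ as the two intersections of line $AT$ with the incircle.) Once the $h$-images of the four points $Y$, $T$, $L'$, $D$ are identified, every assertion will follow from the elementary fact that a homothety sends each segment to a parallel segment.

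First I would pin down the four point-correspondences. Because $h$ takes center to center, $h(D)=I$. As already noted in the proof of Theorem~\ref{thm:result24}, line $AL$ meets $\gamma_a$ at $L'$ and the incircle at $L$, and these are homologous, so $h(L')=L$. For the other two, observe that line $AT$ passes through the homothety center $A$ and is therefore invariant under $h$; hence $h$ carries the two points in which $AT$ meets $\gamma_a$, namely $T$ and $Y$, to the two points in which $AT$ meets the incircle, namely $T'$ and $Y'$. Matching them by their order along the ray from $A$ --- which $h$ preserves since its ratio is positive --- yields $h(Y)=Y'$ and $h(T)=T'$.

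Granting the correspondences $h(Y)=Y'$, $h(T)=T'$, $h(L')=L$, $h(D)=I$, the three conclusions are immediate: the chord $YL'$ of $\gamma_a$ has image $Y'L$, so $YL'\parallel Y'L$; the chord $L'T$ has image $LT'$, so $L'T\parallel LT'$; and the segment $YD$ has image $Y'I$, so $YD\parallel Y'I$.

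I expect the only genuine subtlety --- and so the step I would justify most carefully --- to be the bookkeeping that fixes $h(Y)=Y'$ and $h(T)=T'$ rather than the opposite pairing; equivalently, that the incircle point named $Y'$ is the homologue of $Y$ and the point named $T'$ the homologue of $T$. This is exactly what order-preservation of a positive homothety provides, both circles lying on the same side of the vertex $A$ inside the angle. After that matching is settled there is nothing left to compute: each of the three relations is simply an instance of ``the homothetic image of a segment is parallel to it.''
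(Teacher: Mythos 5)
Your proof is correct and is essentially the paper's own argument: the paper likewise invokes the homothety centered at $A$ carrying $\gamma_a$ to the incircle, notes that it sends $D\mapsto I$, $L'\mapsto L$, $Y\mapsto Y'$, $T\mapsto T'$, and concludes all three parallelisms from the fact that a homothety maps a line to a parallel line. Your added care in matching the image points by order along the ray (and your reading of $Y'$, $T'$ as the incircle intersections, consistent with the section's notation table) only makes explicit what the paper leaves tacit.
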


\begin{figure}[ht]
\centering
\includegraphics[scale=0.6]{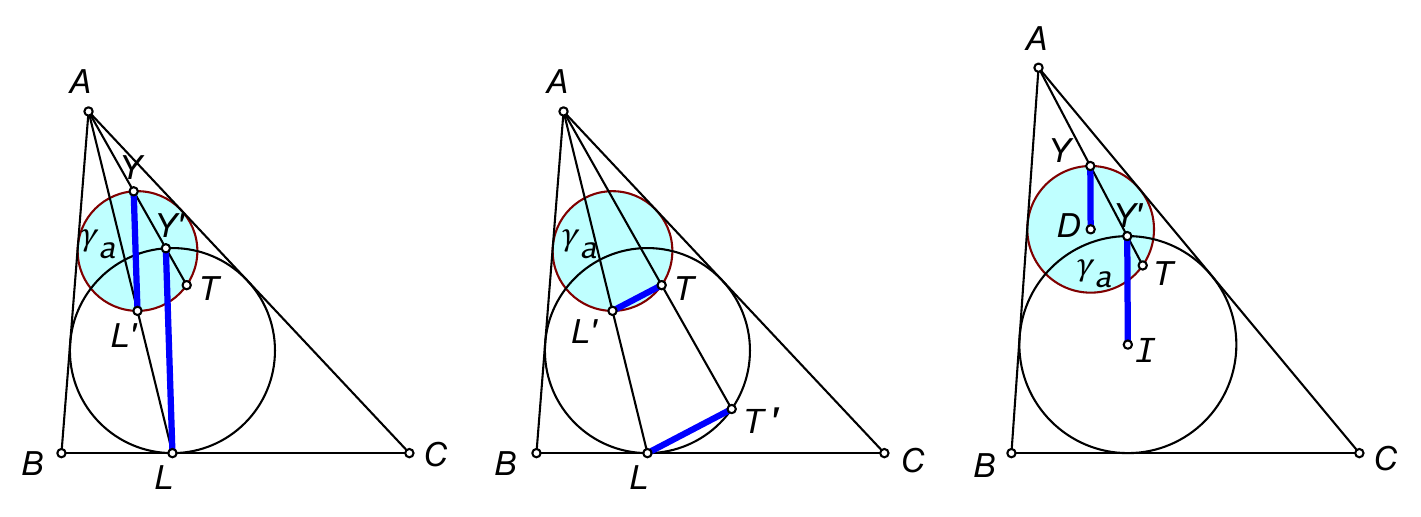}
\caption{blue lines are parallel}
\label{fig:result25}
\end{figure}

\begin{proof}
The incircle and circle $\gamma_a$ are homothetic with $A$ being the center of the homothety.
This homothety maps $D$ to $I$, $L'$ to $L$, $Y$ to $Y'$, and $T$ to $T'$.
These results then follow because a homothety maps a line into a parallel line.
\end{proof}

\begin{theorem}\label{thm:result13}
We have $\angle XTL'=\angle XLB$ (Figure~\ref{fig:result13}).

\end{theorem}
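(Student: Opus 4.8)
The plan is to recognize $\angle XTL'$ as an inscribed angle in $\gamma_a$ and convert it, via the tangent--chord (alternate segment) theorem, into the angle that the chord $L'X$ makes with the tangent to $\gamma_a$ at $L'$; then to exploit the fact, already established in Theorem~\ref{thm:result27}, that this tangent is parallel to $BC$.

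First I would observe that $X$, $L'$, and $T$ all lie on $\gamma_a$, so $\angle XTL'$ is a genuine inscribed angle subtending the chord $L'X$. Applying the tangent--chord theorem at the point $L'$, this inscribed angle equals the angle between the chord $L'X$ and the tangent line $t$ to $\gamma_a$ at $L'$, the angle being taken in the arc alternate to the one on which $T$ lies.

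Next I would invoke Theorem~\ref{thm:result27}, which gives $t \parallel BC$. Since $X$, $L'$, and $L$ are collinear — all three lie on the cevian $AL$ — the chord $L'X$ lies along the line $LX$. Thus $AL$ is a transversal cutting the two parallel lines $t$ and $BC$, so the angle it makes with $t$ at $L'$ equals the angle it makes with $BC$ at $L$; the latter, taken on the side toward $B$, is exactly $\angle XLB$. Chaining the two equalities yields $\angle XTL' = \angle XLB$.

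The hard part will be the orientation bookkeeping: the tangent--chord theorem produces one of two supplementary angles according to which arc $L'X$ contains $T$, and the transversal produces either $\angle XLB$ or its supplement $\angle XLC$ according to direction. I would resolve this by reading off the configuration in Figure~\ref{fig:result13}: both $T$ and $L'$ sit on the arc of $\gamma_a$ nearer $BC$, which places $T$ in precisely the alternate segment for which the tangent--chord angle opens toward $B$, so the two choices align and the stated equality (rather than its supplement) is the one that holds.
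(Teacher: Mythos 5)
Your proposal is correct and is essentially the paper's own argument: the paper proves this as a special case of Theorem~\ref{thm:genresult13}, whose proof likewise draws the tangent $L'Z$ to $\gamma_a$ at $L'$, uses Theorem~\ref{thm:result27} to get $L'Z\parallel LB$ (hence $\angle AL'Z=\angle ALB$), and equates $\angle XTL'$ with the tangent--chord angle $\angle XL'Z$ via the arc $\arc{XL'}$. Your added care about which alternate segment contains $T$ is a reasonable refinement of the same reasoning, not a different route.
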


\begin{figure}[ht!]
\centering
\includegraphics[scale=0.5]{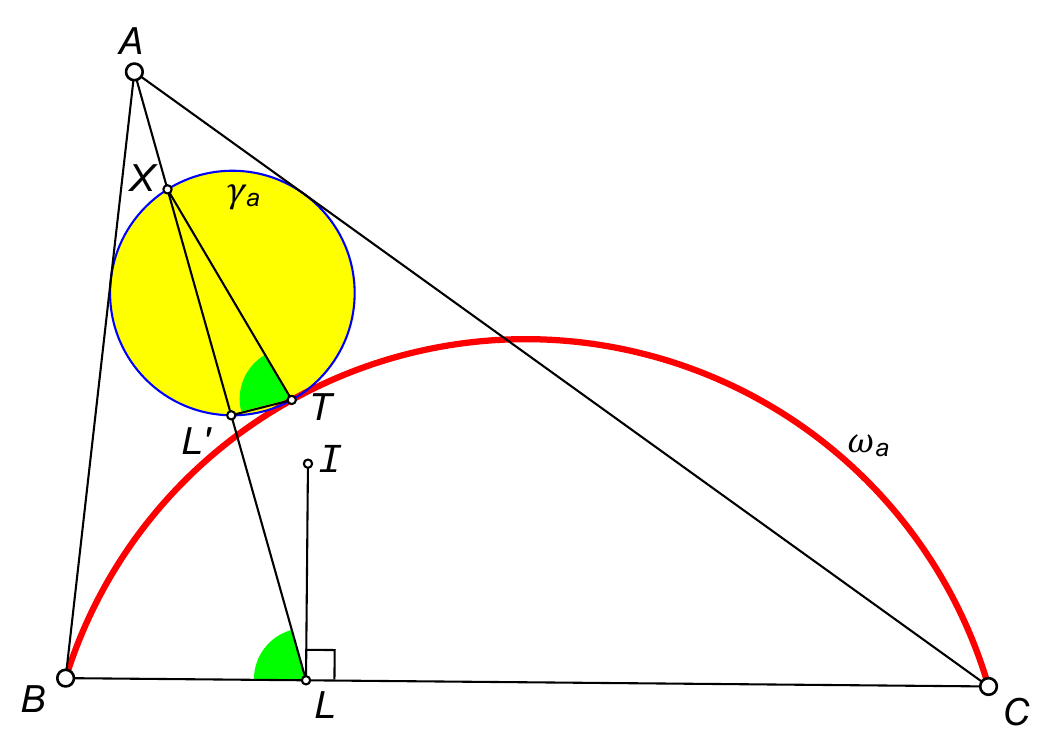}
\caption{green angles are equal}
\label{fig:result13}
\end{figure}

\begin{proof}
This is a special case of the following more general theorem.
\end{proof}


\begin{theorem}\label{thm:genresult13}
Let $T$ be any point on $\gamma_a$, on the opposite side of $AL$ from $B$.
Let $AL$ meet $\gamma_a$ at $X$ and $L'$ (with $X$ nearer $A$).
Then $\angle XTL'=\angle XLB$.
\end{theorem}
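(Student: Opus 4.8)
The plan is to reduce the whole statement to the tangent--chord angle at $L'$, exploiting the fact, already established in Theorem~\ref{thm:result27}, that the tangent to $\gamma_a$ at $L'$ is parallel to $BC$. First I would record the collinearity on the line $AL$: the points $A$, $X$, $L'$, $L$ all lie on this line, with $X$ and $L'$ between $A$ and $L$ (since $\gamma_a$ sits inside $\angle BAC$ on the $A$-side of $BC$). Consequently the ray $LX$ coincides with the ray $LA$, so that $\angle XLB = \angle ALB$, and the chord $XL'$ of $\gamma_a$ lies along the line $AL$.

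Next I would compare the angles cut by the transversal $AL$ on the two parallel lines $BC$ (through $L$) and $t$ (the tangent to $\gamma_a$ at $L'$). Because $t \parallel BC$, corresponding angles show that the angle between the chord $L'X$ and the tangent ray at $L'$ measured on the side of $B$ is exactly $\angle ALB = \angle XLB$. In other words, this particular tangent--chord angle at $L'$ equals $\angle XLB$. I would then invoke the tangent--chord theorem (inscribed angle in the alternate segment): the tangent--chord angle subtending the chord $L'X$ equals the inscribed angle $\angle XTL'$ taken from the arc $XL'$ lying on the opposite side of the chord. Since $X$, $T$, $L'$ all lie on $\gamma_a$, the angle $\angle XTL'$ is genuinely an inscribed angle on that chord, so combining the two displays yields $\angle XTL' = \angle XLB$.

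The hypothesis that $T$ lies on the opposite side of $AL$ from $B$ is precisely what makes the argument work, and the main obstacle is to use it correctly. The chord $XL'$ (which lies along $AL$) splits $\gamma_a$ into two arcs, one on the $B$-side and one on the opposite side; the two supplementary tangent--chord angles at $L'$ match the inscribed angles from these two arcs respectively. The placement of $T$ on the side away from $B$ guarantees that $T$ lies on the arc whose inscribed angle equals the $B$-side tangent--chord angle, so we obtain the stated equality rather than its supplement. I would therefore make the side-bookkeeping explicit, tracking on which side of line $AL$ each of $B$, the chosen tangent ray, and $T$ lies, which is the only delicate point; everything else is a direct application of parallel-line angle chasing and the tangent--chord relation.
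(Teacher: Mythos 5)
Your proposal is correct and follows essentially the same route as the paper's own proof: invoke Theorem~\ref{thm:result27} to get the tangent at $L'$ parallel to $BC$, transfer $\angle XLB=\angle ALB$ to the tangent--chord angle at $L'$ via the parallels, and finish with the tangent--chord (alternate segment) relation for the inscribed angle $\angle XTL'$. Your extra bookkeeping about which arc $T$ lies on is a welcome clarification of a point the paper leaves implicit, but it does not change the argument.
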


\begin{proof}
Let $L'Z$ be the tangent to $\gamma_a$ at $L'$ as shown in Figure~\ref{fig:result13proof}.

\begin{figure}[ht]
\centering
\includegraphics[scale=0.5]{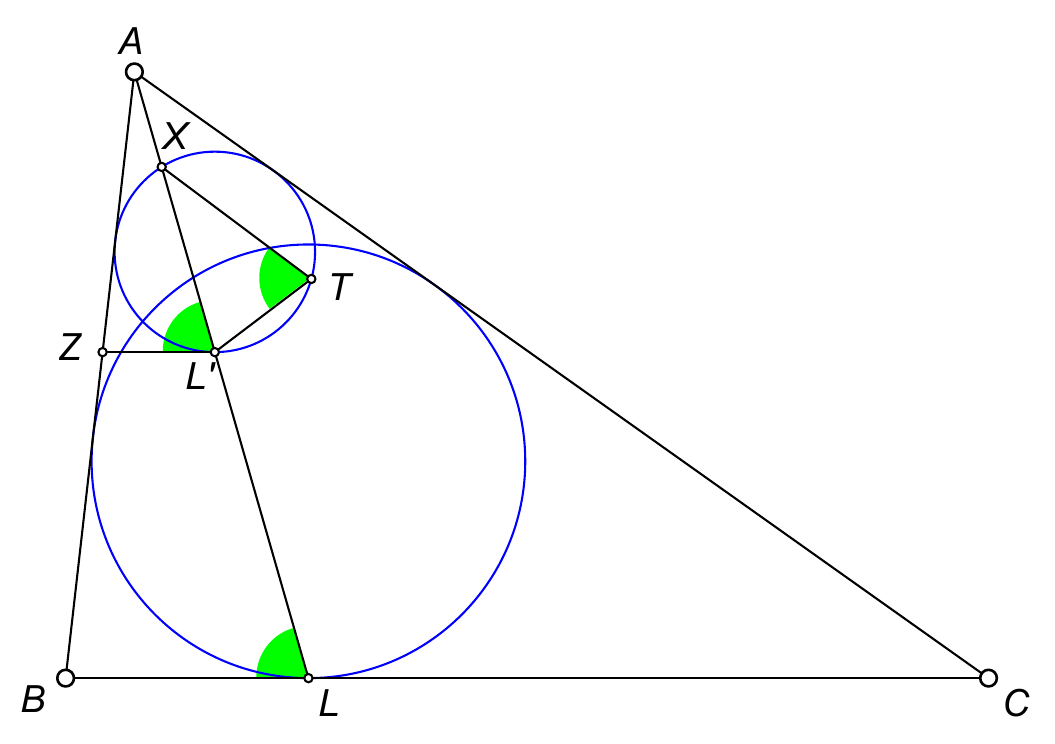}
\caption{green angles are equal}
\label{fig:result13proof}
\end{figure}
From Theorem~\ref{thm:result27}, $L'Z\parallel LB$, so $\angle AL'Z=\angle ALB$.
But $\angle XTL'=\angle XL'Z$ since both are measured by half of arc $\arc{XL'}$.
Thus $\angle XTL'=\angle ALB=\angle XLB$.
\end{proof}

\newpage

\begin{theorem}\label{thm:result17}
Let $\gamma_a$ be any circle inscribed in $\angle BAC$.
Let $T$ be any point on $\gamma_a$, on the opposite side of $AL$ from $B$.
Let $AL$ meet $\gamma_a$ at $X$ and $L'$ (with $X$ nearer $A$) as shown in Figure~\ref{fig:result17}.
Let $TL'$ meet $CB$ at $K$. Then $X$, $T$, $L$, and $K$ are concyclic.
\end{theorem}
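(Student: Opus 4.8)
The plan is to funnel the entire statement through the single angle equality supplied by Theorem~\ref{thm:genresult13}, namely $\angle XTL'=\angle XLB$, and then read its two sides as an inscribed-angle (concyclicity) condition on the quadrilateral $XTLK$. The point $K$ is designed exactly for this purpose: by its definition $K$ lies on line $TL'$, so $T$, $L'$, $K$ are collinear, and $K$ also lies on line $CB$, so (recalling that $L$ lies on $BC$) the points $B$, $L$, $K$ are collinear as well.

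First I would use these two collinearities to rewrite the angles at $T$ and at $L$ in terms of $K$. Because $K$ lies on the line through $T$ and $L'$, the line $TK$ coincides with the line $TL'$, so $\angle XTK=\angle XTL'$. Because $K$, $L$, $B$ all lie on line $BC$, the line $LK$ coincides with the line $LB$, so $\angle XLK=\angle XLB$. Substituting the conclusion $\angle XTL'=\angle XLB$ of Theorem~\ref{thm:genresult13} then yields $\angle XTK=\angle XLK$. This says that the segment $XK$ subtends equal angles at $T$ and at $L$, so by the converse of the inscribed-angle theorem $X$, $T$, $L$, and $K$ are concyclic.

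The only genuine difficulty is configurational: passing from a segment to its supporting line can turn an angle into its supplement, so whether $\angle XTK$ literally equals $\angle XTL'$ rather than $\pi-\angle XTL'$ depends on whether $K$ falls on ray $TL'$ or its opposite ray, and similarly for $\angle XLK$ versus $\angle XLB$. To avoid a proliferation of cases I would carry out the whole computation with directed angles between lines taken modulo $\pi$; in that language each of the three equalities above holds exactly, with no sign or supplement ambiguity, and the directed-angle criterion $\angle(TX,TK)\equiv\angle(LX,LK)\pmod{\pi}$ is precisely the statement that $X$, $T$, $L$, $K$ are concyclic. Since $T$ is hypothesized to lie on the opposite side of $AL$ from $B$, a reader who prefers undirected angles can afterward check that the four points fall in the configuration drawn in Figure~\ref{fig:result17}, so that the naive undirected version of the argument also goes through without modification.
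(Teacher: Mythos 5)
Your proposal is correct and follows essentially the same route as the paper, which likewise derives $\angle XTK=\angle XLK$ from Theorem~\ref{thm:genresult13} via the collinearities defining $K$ and concludes by the converse of the inscribed-angle theorem. Your additional remarks on directed angles merely make explicit a configurational point the paper passes over with the phrase ``or equivalently.''
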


\begin{figure}[ht!]
\centering
\includegraphics[scale=0.5]{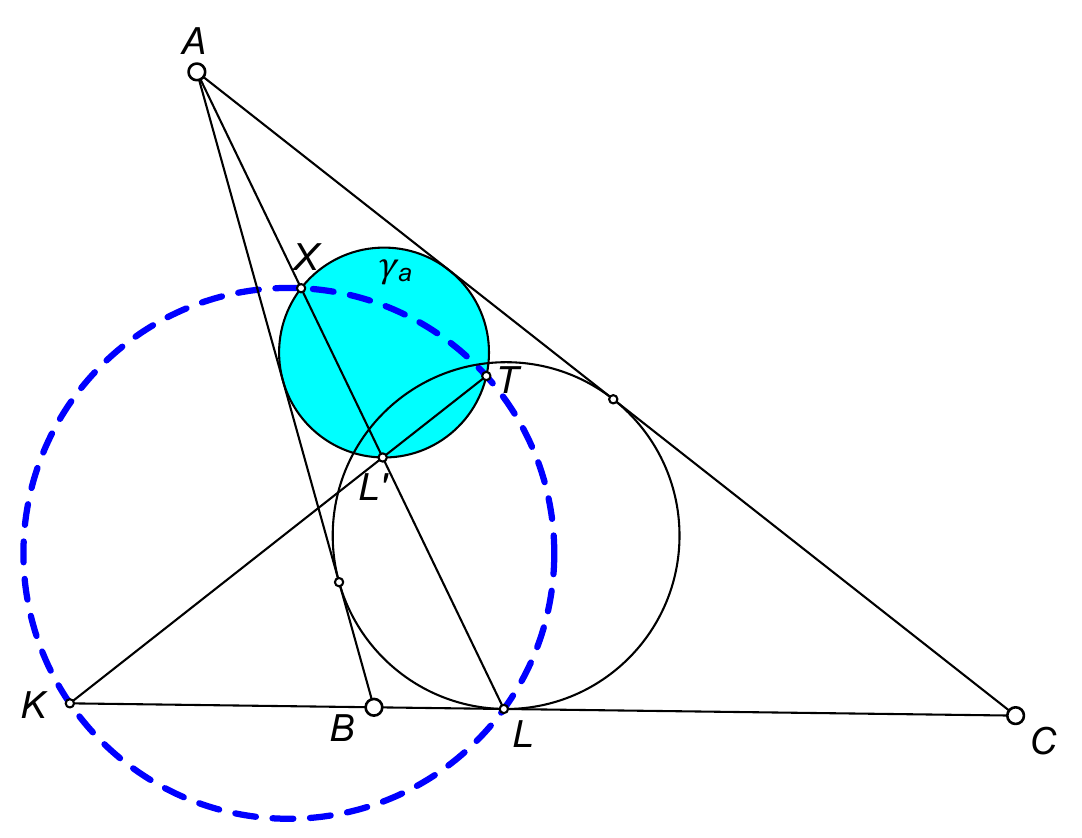}
\caption{$X,T,L,K$ lie on a circle.}
\label{fig:result17}
\end{figure}

\begin{proof}
From Theorem~\ref{thm:genresult13}, $\angle XTL'=\angle XLB$, or equivalently, $\angle XTK=\angle XLK$.
Thus, $X$, $T$, $L$, and $K$ are concyclic.
\end{proof}


The next five results have been suggested by Navid Safaei.

\begin{lemma}\label{angleBisectorLemma}
Let $N$ be the midpoint of arc $\arc{BC}$ of a circle.
Let $T$ be a point on arc $\arc{BN}$.
Then $TN$ is the external angle bisector of $\angle BTC$ (Figure~\ref{fig:angleBisectorLemma}).
\end{lemma}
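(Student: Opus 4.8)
The plan is to reduce this to the classical fact that the \emph{internal} bisector of $\angle BTC$ passes through the midpoint of the arc $\arc{BC}$ lying on the far side of $T$, and then to exploit the fact that the two arc-midpoints of $BC$ are antipodal. So I would not attack the external bisector directly; instead I would produce the internal bisector and rotate by a right angle.

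First I would introduce $N'$, the midpoint of the \emph{other} arc $\arc{BC}$, i.e.\ the arc not containing $N$. Since $T$ lies on arc $\arc{BN}$, the point $T$ is on the same arc as $N$, so $N'$ is the midpoint of the arc $\arc{BC}$ not containing $T$. Because $N'$ bisects that arc, the arcs $\arc{BN'}$ and $\arc{N'C}$ are equal, and the inscribed angles they subtend at $T$ are therefore equal: $\angle BTN'=\angle N'TC$. This shows $TN'$ is the internal bisector of $\angle BTC$. (This is the same arc-midpoint principle underlying Protasov's Theorem and Corollary~\ref{cor:midarc}.)

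Next I would note that both $N$ and $N'$ lie on the perpendicular bisector of $BC$, which passes through the center of the circle; hence $N$ and $N'$ are the endpoints of a diameter. Consequently $\angle NTN'$ is inscribed in a semicircle, so $\angle NTN'=90^\circ$, that is, $TN\perp TN'$. Since a line through $T$ perpendicular to the internal bisector of $\angle BTC$ is precisely the external bisector of that angle, and $TN'$ is the internal bisector with $TN\perp TN'$, the line $TN$ must be the external bisector of $\angle BTC$, as claimed.

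The computations here are trivial; the only real care needed is configurational. The hard part will be making sure the hypothesis ``$T$ on arc $\arc{BN}$'' is used correctly to identify $N'$ (rather than $N$) as the midpoint of the arc not containing $T$, so that $TN'$ is genuinely the \emph{internal} bisector and the perpendicular line $TN$ is the external one in the intended sense. Once the antipodality of $N$ and $N'$ and the semicircle right angle are in place, the conclusion is immediate.
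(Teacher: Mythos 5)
Your proof is correct, but it takes a different route from the paper's. The paper argues directly: it extends $BT$ beyond $T$ to a point $U$ and computes $\angle NTU=\tfrac12(\arc{BT}+\arc{TN})=\tfrac12\arc{BN}=\tfrac12\arc{CN}=\angle CTN$, so $TN$ bisects the exterior angle $\angle CTU$ in one line of arc arithmetic. You instead reduce to the internal bisector: you introduce the antipodal arc-midpoint $N'$, observe that $TN'$ is the internal bisector of $\angle BTC$ (the standard arc-midpoint fact), note that $NN'$ is a diameter so $\angle NTN'=90\degrees$ by Thales, and conclude that $TN$, being perpendicular to the internal bisector at $T$, is the external bisector. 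Both arguments are sound and elementary. The paper's computation is shorter and self-contained, needing only the formula for an angle formed by two chords; your version buys conceptual clarity by making visible the duality between the two arc midpoints (internal versus external bisector) and ties in naturally with Protasov's Theorem and Theorem~\ref{thm:result20}, where exactly this perpendicularity of the two bisectors is used. Your configurational care in identifying $N'$ (not $N$) as the midpoint of the arc not containing $T$ is exactly the right point to check, and you handle it correctly.
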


\begin{figure}[ht!]
\centering
\includegraphics[scale=0.5]{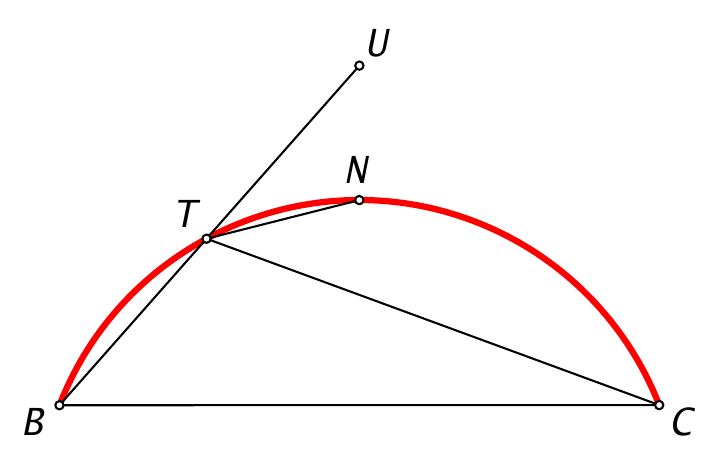}
\caption{}
\label{fig:angleBisectorLemma}
\end{figure}

\begin{proof}
Using properties of angles inscribed in a circle, we have
$$\angle NTU=\frac12(\arc{BT}+\arc{TN})=\frac12\arc{BN}=\frac12\arc{CN}=\angle CTN,$$
so $TN$ bisects $\angle CTU$.
\end{proof}

\newpage

\begin{lemma}\label{LTNlemma}
Let $N$ be the midpoint of arc $\arc{BC}$ of $\omega_a$.
Then $L'$, $T$, and $N$ are collinear (Figure~\ref{fig:LTNlemma}).
\end{lemma}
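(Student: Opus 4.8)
The plan is to reduce everything to the single orthogonality $TL'\perp TI$ and then read off the collinearity from the bisector results. By Protasov's Theorem (Theorem~\ref{thm:Protasov}), $TI$ is the \emph{internal} bisector of $\angle BTC$, so the external bisector of $\angle BTC$ at $T$ is exactly the line through $T$ perpendicular to $TI$. On the other hand, Lemma~\ref{angleBisectorLemma} identifies $TN$ (for $N$ the midpoint of the arc $\arc{BC}$ lying on the same side as $T$) as precisely that external bisector. Hence, once I show that $TL'$ is perpendicular to $TI$, the two lines $TL'$ and $TN$ are both the external bisector through $T$, so they coincide and $L'$, $T$, $N$ are collinear.

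To establish $TL'\perp TI$, I would introduce the point $T'$ where line $IT$ meets $\gamma_a$ a second time, as in Theorem~\ref{thm:result9}. That theorem gives $DT'\perp BC$, while Theorem~\ref{thm:result24} gives $DL'\perp BC$. Since $DL'$ and $DT'$ are both perpendicular to $BC$ and both emanate from the center $D$, the points $L'$, $D$, $T'$ are collinear; as $L'$ and $T'$ are distinct points of $\gamma_a$ lying on a line through its center, the segment $L'T'$ is a \emph{diameter} of $\gamma_a$. Because $T$ is a point of $\gamma_a$ different from the two endpoints of this diameter, Thales' theorem yields $\angle L'TT'=90^\circ$; and since $T'$ lies on line $IT$, this says exactly that $TL'\perp TI$. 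Combined with the reduction of the first paragraph, this completes the argument.

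The main obstacle is positional bookkeeping rather than any deep idea. First, I must confirm that the $N$ of the statement is the midpoint of the arc on the \emph{same} side as $T$ — the arc-midpoint reached by the external bisector, matching the hypothesis ``$T$ on arc $\arc{BN}$'' of Lemma~\ref{angleBisectorLemma}; with the opposite arc-midpoint the line $TN$ would be the internal bisector $TI$ and the conclusion could not hold, so this identification is forced. Second, I must check the non-degeneracy that makes $L'T'$ a genuine diameter, namely $L'\neq T'$, equivalently that $T$ does not lie on the perpendicular to $BC$ through $D$; this fails only in the symmetric situation where $D$ sits on the perpendicular bisector of $BC$ (there $T=L'$ and the collinearity is trivial). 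Once these points are settled, Theorems~\ref{thm:result9} and~\ref{thm:result24} collapse the whole problem to a one-line inscribed-angle argument.
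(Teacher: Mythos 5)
Your proof is correct, but it follows a genuinely different route from the paper's. The paper's argument is a two-line homothety observation: $T$ is the center of the homothety carrying $\gamma_a$ to $\omega_a$, the tangent to $\gamma_a$ at $L'$ is parallel to $BC$ (Theorem~\ref{thm:result27}) and so is the tangent to $\omega_a$ at the arc midpoint $N$, hence $L'$ and $N$ are corresponding points and $L'N$ passes through $T$. You instead characterize line $TN$ as the external bisector of $\angle BTC$ (Lemma~\ref{angleBisectorLemma}), equivalently the perpendicular to $TI$ at $T$ (Protasov's Theorem), and then obtain $TL'\perp TI$ directly: Theorems~\ref{thm:result9} and~\ref{thm:result24} place $T'$ and $L'$ at the two ends of the diameter of $\gamma_a$ perpendicular to $BC$, and Thales gives the right angle at $T$. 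This is non-circular, since Theorems~\ref{thm:result9} and~\ref{thm:result24} are established independently of this lemma; what your route buys is a free, independent proof of $IT\perp TL'$ (the paper's Theorem~\ref{thm:result20}, which the paper only derives later \emph{from} this lemma), at the cost of more machinery and more positional bookkeeping. One small inaccuracy in that bookkeeping: your claimed equivalence ``$L'\neq T'$ iff $T$ is not on the perpendicular to $BC$ through $D$'' is off --- when $T$ does lie on that perpendicular one gets $T=L'$ while $T'$ is still the antipode of $L'$, so $L'T'$ remains a diameter; the actual degeneracy is $T=L'$ (the isosceles case), where the angle $\angle L'TT'$ is undefined but the asserted collinearity is vacuous. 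The substantive fact you need, that $T'$ is the antipode of $L'$ rather than $L'$ itself, does hold (the far intersection of the secant from $I$ lies on the half of $\gamma_a$ facing away from $I$), and the paper's homothety proof requires a comparable, equally implicit, matching of which parallel-tangent point on each circle corresponds to which.
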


\begin{figure}[ht!]
\centering
\includegraphics[scale=0.5]{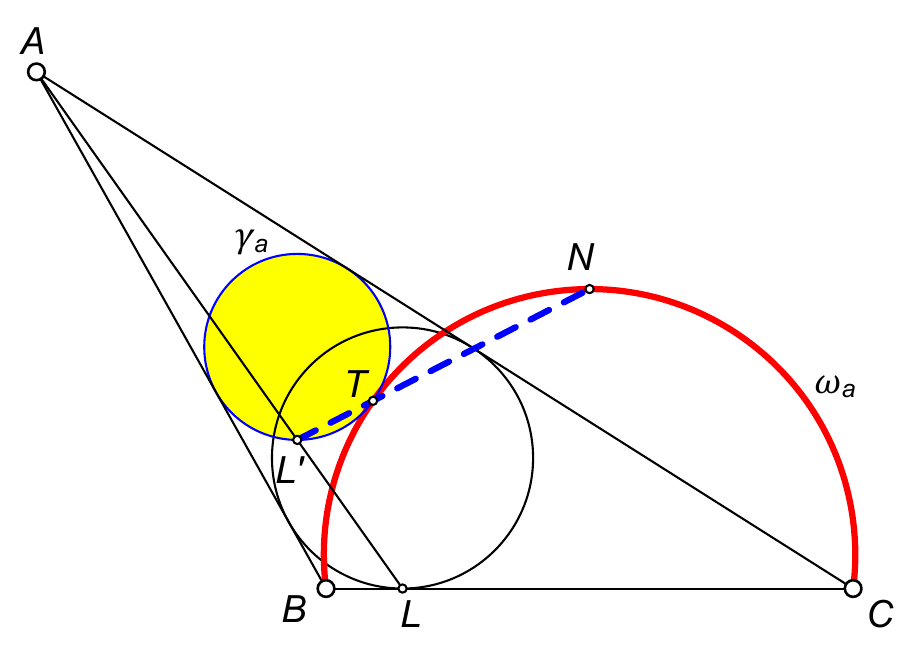}
\caption{}
\label{fig:LTNlemma}
\end{figure}

\begin{proof}
Note that $T$ is the center of a homothety between $\gamma_a$ and $\omega_a$.
Since the tangents at $L'$ and $N$ are parallel to $BC$ (by Theorem~\ref{thm:result27}),
this means that they are corresponding points of the homothety and hence $L'N$ passes through
the center of the homothety, $T$.
\end{proof}

\begin{theorem}\label{thm:extAngBisector}
The line $TL'$ is the exterior angle bisector of $\angle BTC$ in $\triangle BTC$. (Figure~\ref{fig:extAngBisector}).
\end{theorem}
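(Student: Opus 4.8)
The plan is to obtain this theorem as an immediate consequence of the two preceding lemmas, with essentially no new computation: Lemma~\ref{LTNlemma} identifies \emph{which} line $TL'$ is, and Lemma~\ref{angleBisectorLemma} tells us \emph{what} that line does to $\angle BTC$. In particular, the whole argument reduces to observing that $TL'$ and $TN$ are the same line.

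First I would apply Lemma~\ref{LTNlemma} to conclude that $L'$, $T$, and $N$ are collinear, where $N$ is the midpoint of the arc $\arc{BC}$ of $\omega_a$ produced by the tangency homothety at $T$; concretely this is the midpoint of the arc $\arc{BC}$ \emph{containing} $T$, since $L'$ and $N$ are corresponding points under that homothety (both carry a tangent parallel to $BC$, by Theorem~\ref{thm:result27}). Hence the line $TL'$ coincides with the line $TN$. I would then invoke Lemma~\ref{angleBisectorLemma} with this same $N$ and the tangency point $T$: that lemma asserts that if $N$ is the midpoint of arc $\arc{BC}$ and $T$ lies on arc $\arc{BN}$, then $TN$ is the exterior bisector of $\angle BTC$. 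Chaining the two statements gives that $TL' = TN$ is the exterior angle bisector of $\angle BTC$, which is exactly the claim.

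The one point requiring care — and the only real obstacle — is matching the hypotheses correctly: I must check that the midpoint $N$ furnished by Lemma~\ref{LTNlemma} is the midpoint of the arc \emph{containing} $T$ (not the opposite-side midpoint, which appeared under the same letter $N$ in Theorem~\ref{thm:RG13069}), and that $T$ actually lies on the sub-arc $\arc{BN}$ rather than $\arc{CN}$. Both follow from the standing assumption $AB<AC$, which pushes the tangency point $T$ toward $B$: on the arc $\arc{BC}$ containing $T$, the point $T$ then falls between $B$ and the arc-midpoint $N$, precisely as Lemma~\ref{angleBisectorLemma} demands. Once this positional bookkeeping is recorded, the result is an immediate corollary of the two lemmas.
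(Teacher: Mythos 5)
Your proposal is correct and follows exactly the paper's own argument: the paper's proof of this theorem is literally ``This follows immediately from Lemmas~\ref{angleBisectorLemma} and~\ref{LTNlemma}.'' Your additional bookkeeping (that the $N$ of Lemma~\ref{LTNlemma} is the midpoint of the arc \emph{containing} $T$, unlike the $N$ of Theorem~\ref{thm:RG13069}) is accurate and a worthwhile clarification, but it does not change the route.
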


\begin{figure}[ht!]
\centering
\includegraphics[scale=0.5]{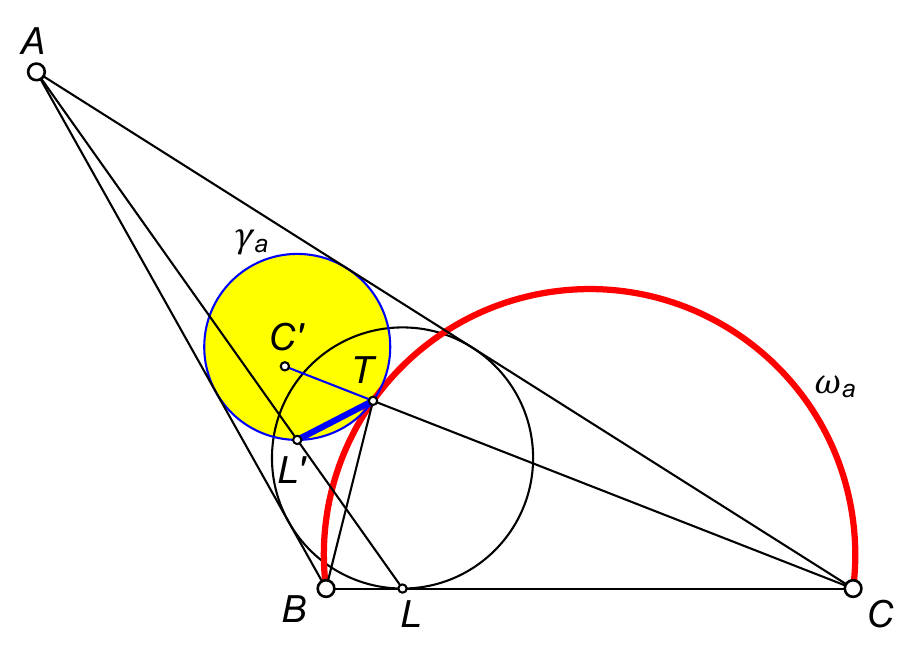}
\caption{$TL'$ bisects $\angle C'TB$}
\label{fig:extAngBisector}
\end{figure}

\begin{proof}
This follows immediately from Lemmas \ref{angleBisectorLemma} and \ref{LTNlemma}.
\end{proof}

\newpage

\begin{theorem}\label{thm:result18}
Let $E$ and $F$ be the points where $\gamma_a$ touches $AC$ and $AB$, respectively.
Then $EF$, $TL'$, and $CB$ are concurrent.
\end{theorem}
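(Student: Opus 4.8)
The plan is to show that line $EF$ and line $TL'$ each meet line $BC$ in the very same point, and then invoke the external-angle-bisector description of $TL'$ furnished by Theorem~\ref{thm:extAngBisector}. The main tool will be Menelaus's theorem for the intersection of $EF$ with $BC$, combined with the ratio computation already packaged in Theorem~\ref{thm:result12}.

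First I would let $K$ denote the intersection of line $EF$ with line $BC$ and apply Menelaus's theorem to $\triangle ABC$ cut by the transversal through $F$ (on $AB$), $E$ (on $CA$), and $K$ (on $BC$), giving
$$\frac{\overline{AF}}{\overline{FB}}\cdot\frac{\overline{BK}}{\overline{KC}}\cdot\frac{\overline{CE}}{\overline{EA}}=-1.$$
Since $AF$ and $AE$ are the two tangent segments from $A$ to $\gamma_a$, we have $AF=AE$, so the factors involving $A$ cancel in magnitude and the relation collapses to $\bigl|\overline{BK}/\overline{KC}\bigr|=BF/CE$. Because $F$ lies between $A$ and $B$ and $E$ lies between $C$ and $A$, the first and third signed ratios are positive, forcing $\overline{BK}/\overline{KC}<0$; that is, $K$ divides $BC$ \emph{externally} in the ratio $BF:CE$.

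Next I would pin down this ratio. By Theorem~\ref{thm:result12} (more precisely, by the chain $BF/CE=BT/CT$ established in its proof), the external division point $K$ divides $BC$ externally in the ratio $BT:CT$. On the other hand, Theorem~\ref{thm:extAngBisector} says that $TL'$ is the exterior bisector of $\angle BTC$ in $\triangle BTC$, and by the external angle bisector theorem the exterior bisector of $\angle T$ meets line $BC$ at exactly the point that divides $BC$ externally in the ratio $BT:CT$. Since the external division of a segment in a fixed ratio determines a unique point, this is the same point $K$. Hence $EF$, $TL'$, and $CB$ all pass through $K$, which is the desired concurrency.

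The one delicate point is the sign bookkeeping: I must confirm that the Menelaus intersection is genuinely the \emph{external} division matching the external bisector, rather than the internal point carrying the same absolute ratio. The sign analysis above, together with the standing assumption $AB<AC$, settles this. I would also record the degenerate case $BT=CT$ (equivalently $BF=CE$): then $EF\parallel BC$ and, by the same external-bisector description, $TL'\parallel BC$ as well, so the three lines are concurrent at the point at infinity of $BC$, and the statement persists projectively.
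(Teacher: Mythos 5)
Your proposal is correct and follows essentially the same route as the paper: Menelaus on $\triangle ABC$ with transversal $EF$ plus $AF=AE$ to get the external division ratio $BF:CE$, Theorem~\ref{thm:result12} to convert this to $BT:CT$, and the identification of $TL'$ as the external bisector of $\angle BTC$ to conclude it hits $BC$ at the same point. The only cosmetic difference is that you cite Theorem~\ref{thm:extAngBisector} directly where the paper re-derives the collinearity of $K$, $L'$, $T$, and the arc midpoint $N$ from Lemmas~\ref{angleBisectorLemma} and~\ref{LTNlemma}; your explicit sign bookkeeping and treatment of the degenerate case $BT=CT$ are welcome additions.
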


\begin{figure}[ht!]
\centering
\includegraphics[scale=0.55]{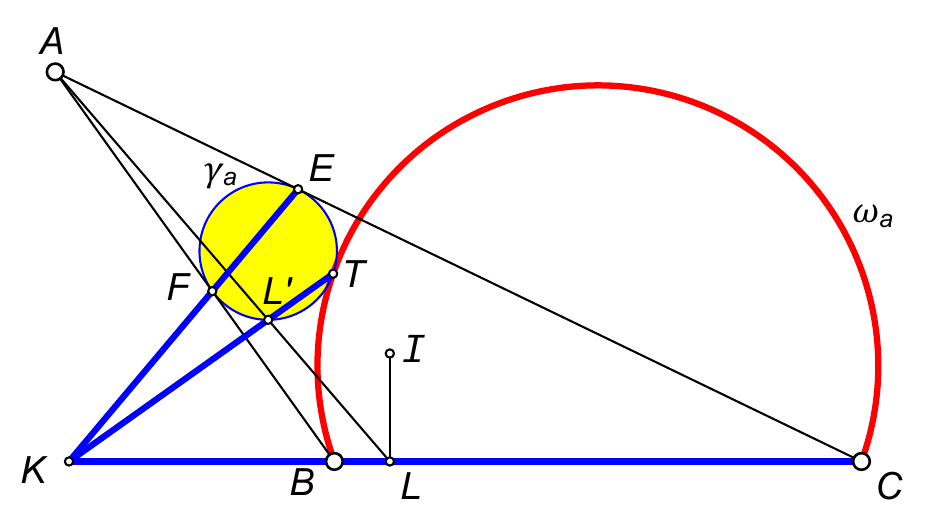}
\caption{Blue lines are concurrent.}
\label{fig:result18}
\end{figure}

\begin{proof}
Let $EF$ meet $CB$ at $K$.
From Theorem \ref{thm:result12}, we have 
\begin{equation}
   \frac{BT}{CT}=\frac{BF}{CE}.\label{eq:12}
\end{equation}

\begin{minipage}{3in}
By Menelaus' Theorem applied to $\triangle ABC$ and the transversal $FE$, we have
\begin{equation*}
   \frac{BF}{FA}\cdot \frac{AE}{EC}\cdot \frac{CK}{KB}=-1
\end{equation*}
from which we get 
\begin{equation}
   \frac{BF}{CE}=\frac{KB}{CK} \label{eq:13}
\end{equation}
because $FA=AE$.
From equations (\ref{eq:12}) and (\ref{eq:13}) we get
\begin{equation}
   \frac{KB}{CK}=\frac{BF}{CE}=\frac{BT}{CT}. \label{eq:14}
\end{equation}
\end{minipage}
\raisebox{-1in}{\includegraphics[scale=0.45]{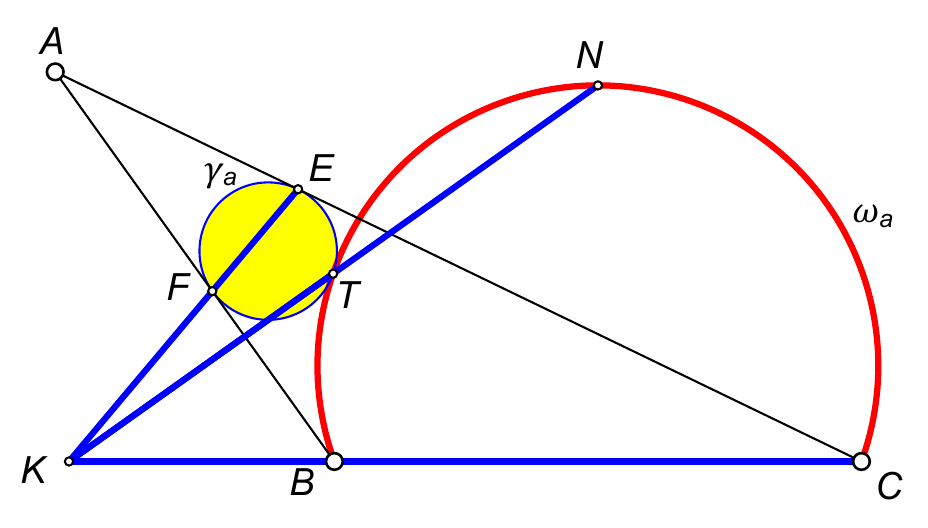}}

Hence (by a property of external angle bisectors), $TK$ is the external angle bisector of $\angle BTC$
in $\triangle BTC$.
Let $N$ be the midpoint of arc $\arc{BC}$ as shown in the figure above.
By Lemma~\ref{angleBisectorLemma}, $TN$ is also the external angle bisector of $\angle BTC$. 
It follows that $N$, $T$, and $K$ are collinear.
Since $TK$ passes through both $L'$ and $N$ (by Lemma~\ref{LTNlemma}), the four points, $K$, $L'$, $T$, and $N$
lie on a line, so $EF$, $TL'$, and $CB$ all pass through $K$.
\end{proof}

\newpage

\begin{theorem}\label{thm:result19}
The lines $YX$, $TL'$, and $CB$ are concurrent (Figure~\ref{fig:result19}).
\end{theorem}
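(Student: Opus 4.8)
The plan is to funnel all three lines through the single point $K$ of Theorem~\ref{thm:result18}, the common point of $EF$, $TL'$, and $CB$. Because that theorem already places $TL'$ and $CB$ through $K$, it remains only to show that the chord $XY$ passes through $K$ as well. The key starting remark is that the four points $X$, $L'$, $T$, $Y$ all lie on $\gamma_a$: the line $AL$ cuts $\gamma_a$ at $X$ and $L'$, and the line $AT$ cuts $\gamma_a$ at $Y$ and $T$.

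I would then read these four concyclic points as a complete quadrangle inscribed in $\gamma_a$ and invoke the classical fact that its diagonal triangle is self-polar with respect to the circle. Here the three diagonal points are
$$A=XL'\cap YT,\qquad P=XY\cap L'T,\qquad Q=XT\cap YL',$$
and self-polarity tells us that the polar of $A$ with respect to $\gamma_a$ is the line $PQ$; in particular $P$ lies on that polar. On the other hand, $AF$ and $AE$ are the two tangents from $A$ to $\gamma_a$ (with $F$ on $AB$ and $E$ on $AC$), so the polar of $A$ is precisely the chord of contact $EF$. Hence $P\in EF$.

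To finish, I observe that $P$ lies on both $EF$ and $L'T$, so $P=EF\cap L'T$, which is exactly the point $K$ on $CB$ furnished by Theorem~\ref{thm:result18}. Since $P$ lies on $XY$ by its very definition, the chord $XY$ passes through $K$. Consequently $YX$, $TL'$, and $CB$ all pass through $K$, and the three lines are concurrent.

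The step carrying all the weight is the self-polar property of the inscribed complete quadrangle; I would state it explicitly and, if a self-contained treatment is preferred over a citation, justify it through the harmonic-conjugate description of a pole--polar pair, noting that the polar of $A$ meets $XL'$ at the harmonic conjugate of $A$ with respect to $X,L'$ and meets $YT$ at the harmonic conjugate of $A$ with respect to $Y,T$, with $P$ the remaining diagonal point lying on that polar. Identifying the polar of $A$ with the chord of contact $EF$ is then entirely routine.
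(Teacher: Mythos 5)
Your proposal is correct and follows essentially the same route as the paper: both arguments reduce everything to the point $K$ of Theorem~\ref{thm:result18}, identify $EF$ as the polar of $A$ with respect to $\gamma_a$ via the tangents $AE$ and $AF$, and use the fact that the two secants $AXL'$ and $AYT$ force $XY\cap TL'$ to lie on that polar. Your explicit appeal to the self-polar diagonal triangle of the inscribed quadrangle $XL'TY$ is simply the standard justification of the step the paper states more tersely.
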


\begin{figure}[ht!]
\centering
\includegraphics[scale=0.55]{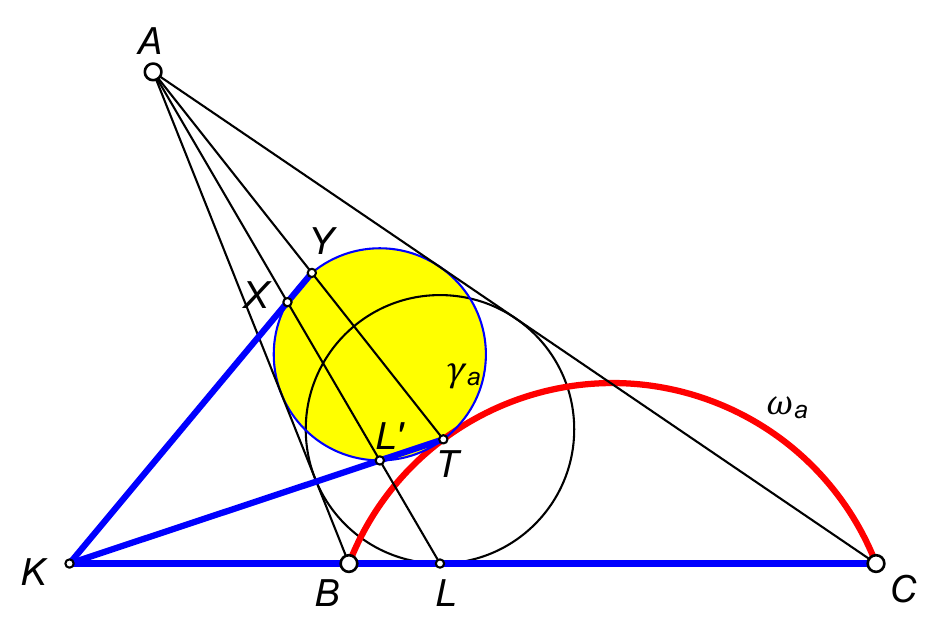}
\caption{Blue lines are concurrent.}
\label{fig:result19}
\end{figure}

\begin{proof}
Let $E$ and $F$ be the points where $\gamma_a$ touches $AC$ and $AB$, respectively.
Let $EF$ meet $CB$ at $K$.
Then $EF$, $TL'$, and $CB$ are concurrent at $K$
by Theorem~\ref{thm:result18} (Figure~\ref{fig:result19proof}).

\begin{figure}[ht!]
\centering
\includegraphics[scale=0.6]{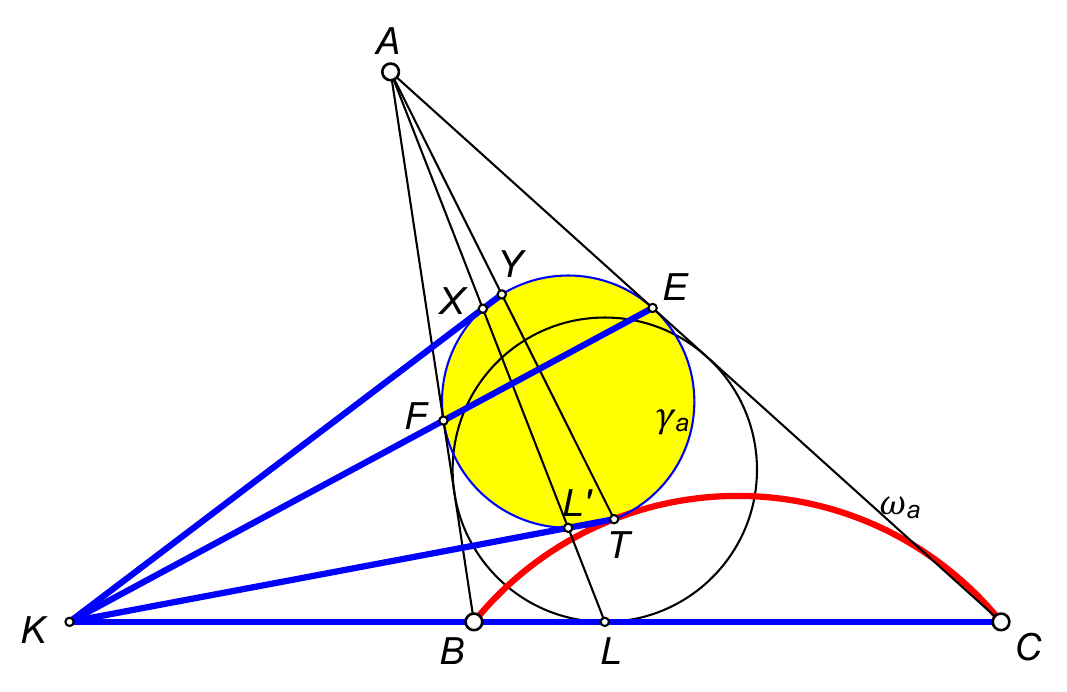}
\caption{Blue lines are concurrent.}
\label{fig:result19proof}
\end{figure}
Since $AF$ and $AE$ are tangents to circle $\gamma_a$, this means that $EF$ is the polar of $A$
with respect to $\gamma_a$.
Since $AXL'$ and $AYT$ are two secants from $A$, this means that $YX$ meets $TL'$ on the polar of $A$ (line $EF$). But $K$ is the only point on $TL'$ that lies on the polar of $A$.
Thus, $YX$ also passes through $K$.
\end{proof}

\newpage

The following result comes from \cite{Rabinowitz13268}.

\begin{theorem}\label{thm:result14}
The line $TL'$ bisects $\angle ATL$ (Figure~\ref{fig:result14}).
\end{theorem}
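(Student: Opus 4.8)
The plan is to pass to the incircle through the homothety of Theorem~\ref{thm:result25}. Let $h$ be the homothety centered at $A$ that carries $\gamma_a$ onto the incircle, let $T'=h(T)$ be the second intersection of line $AT$ with the incircle, and recall that $h$ sends $L'$ to $L$; Theorem~\ref{thm:result25} then gives the parallelism $L'T\parallel LT'$. I would reduce the whole bisection claim to the single metric fact that $\triangle TT'L$ is isosceles with $TT'=TL$. Once that is in hand, a short angle chase finishes the proof: since $h$ maps $\triangle ATL'$ onto $\triangle AT'L$ we have $\angle ATL'=\angle AT'L$, and because $A$, $T$, $T'$ are collinear (with $T$ between $A$ and $T'$ in the configuration of Figure~\ref{fig:result14}) this equals $\angle TT'L$; the isosceles triangle gives $\angle TT'L=\angle TLT'$; and since $TL'\parallel T'L$ with transversal $TL$, the alternate interior angles give $\angle TLT'=\angle L'TL$. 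Chaining these equalities yields $\angle ATL'=\angle L'TL$, and as $L'$ lies between $A$ and $L$ the ray $TL'$ genuinely lies inside $\angle ATL$, so $TL'$ bisects it.

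The heart of the argument, and the step I expect to be the main obstacle, is establishing $TT'=TL$; here the internal/external bisector machinery does the work. By Protasov's Theorem (Theorem~\ref{thm:Protasov}), $TI$ is the internal bisector of $\angle BTC$, while by Theorem~\ref{thm:extAngBisector} the line $TL'$ is the external bisector of the very same angle. Internal and external bisectors of an angle are perpendicular, so $TI\perp TL'$. Combining this with the homothety parallelism $TL'\parallel T'L$ immediately upgrades the perpendicularity to $TI\perp T'L$.

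Finally I would read off $TT'=TL$ from the incircle itself. Both $T'$ and $L$ lie on the incircle, whose center is $I$, so the perpendicular bisector of the chord $T'L$ is precisely the unique line through $I$ perpendicular to $T'L$. Since $TI$ passes through $I$ and is perpendicular to $T'L$, the line $TI$ must be that perpendicular bisector; hence $T$ is equidistant from $T'$ and $L$, which is exactly $TT'=TL$. The only remaining care is configuration bookkeeping, namely the order $A,T,T'$ along line $AT$ and the fact that $L'$ and $T'$ lie on opposite sides of $TL$ (so that the angle equalities are the genuine unsigned ones needed for a bisection rather than their supplements); both are routine to verify from Figure~\ref{fig:result14}.
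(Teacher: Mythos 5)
Your proof is correct, but it takes a genuinely different route from the paper's. The paper works through the point $K$ where $TL'$ meets $BC$: it invokes Theorem~\ref{thm:result19} (via the polar of $A$) to get $K$ on line $XY$, then chains the cyclic quadrilateral $XTLK$ of Theorem~\ref{thm:result17} with the cyclic quadrilateral $XYTL'$ on $\gamma_a$ to equate $\angle KTL$ and $\angle YTL'$. You instead prove the metric fact $TT'=TL$ first and deduce the bisection from it. Your key step --- $TI\perp TL'$ from Protasov's Theorem plus Theorem~\ref{thm:extAngBisector}, transported by the parallelism $TL'\parallel T'L$ of Theorem~\ref{thm:result25} to give $TI\perp T'L$, whence $TI$ is the perpendicular bisector of the incircle chord $T'L$ --- is sound, and it is worth noting that the perpendicularity you use is exactly the paper's Theorem~\ref{thm:result20}, proved the same way. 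A pleasant feature of your argument is that it establishes $TL=TT'$, which is the paper's Theorem~\ref{thm:result16}, as an intermediate step; the paper instead derives Theorem~\ref{thm:result16} \emph{from} Theorem~\ref{thm:result14}, so you have reversed the logical order. There is no circularity, since everything you rely on (Protasov, Lemmas~\ref{angleBisectorLemma} and \ref{LTNlemma}, and the homothety facts of Theorems~\ref{thm:result25} and \ref{thm:result27}) is independent of Theorem~\ref{thm:result14}. What each approach buys: the paper's is purely projective/cyclic and avoids any appeal to the external-bisector machinery, while yours is shorter on prerequisites (no polars, no Menelaus-based concurrence) at the cost of the configuration bookkeeping you correctly flag --- the order $A$, $T$, $T'$ follows from the homothety ratio $r/\rho_a>1$, and $L'$ lies strictly between $A$ and $L$ for the same reason, so the bisection is genuine.
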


\begin{figure}[ht]
\centering
\includegraphics[scale=0.5]{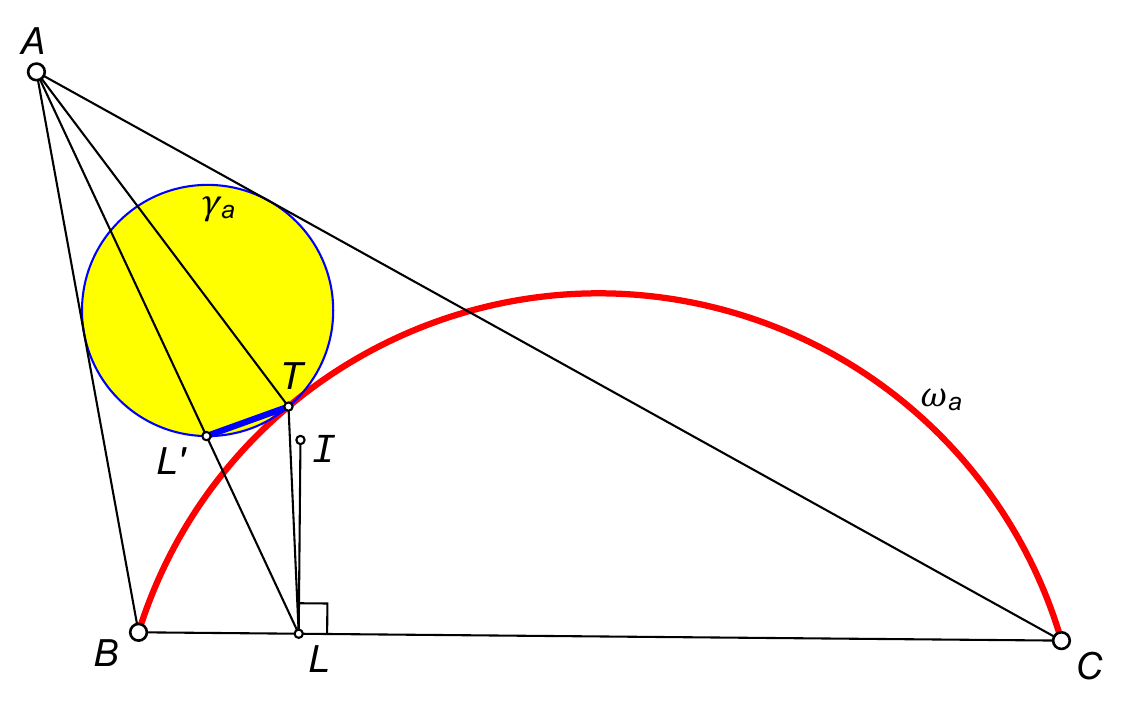}
\caption{$TL'$ bisects $\angle ATL$}
\label{fig:result14}
\end{figure}

\begin{proof}
Let $TL'$ meet $CB$ at $K$ (Figure~\ref{fig:result14proof}).
\begin{figure}[ht]
\centering
\includegraphics[scale=0.5]{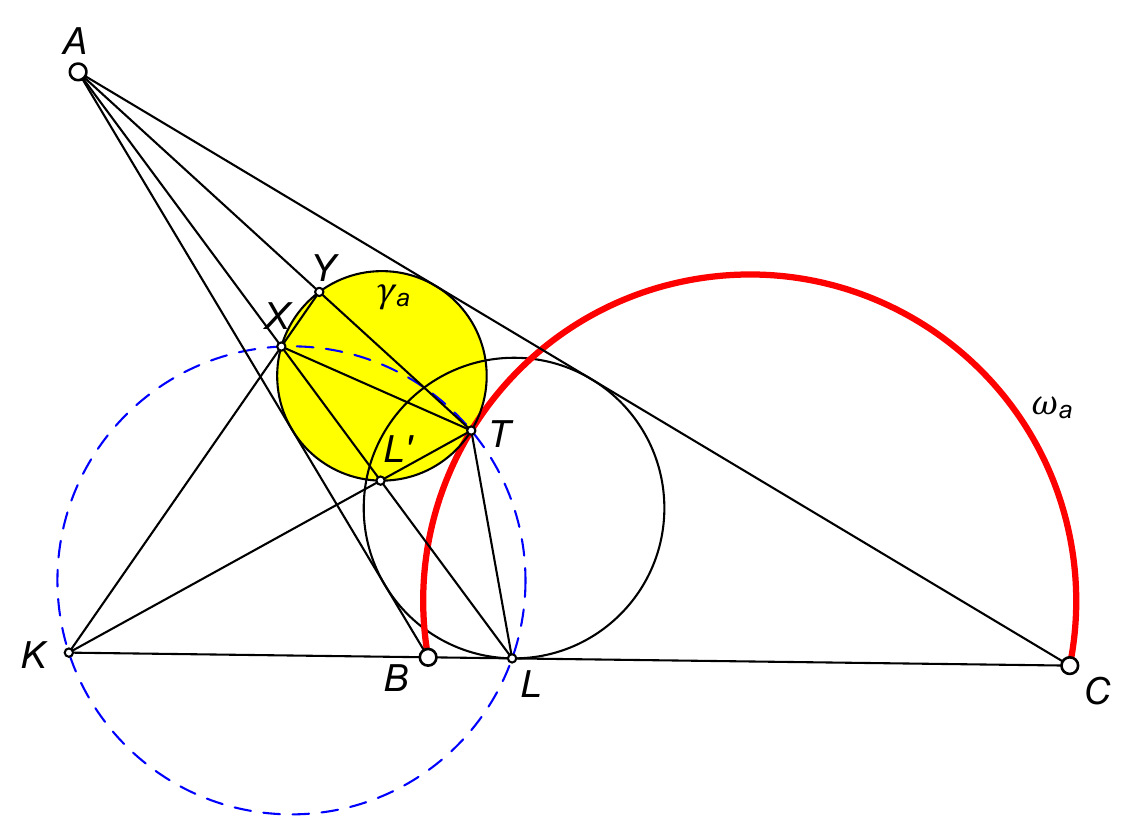}
\caption{}
\label{fig:result14proof}
\end{figure}

By Theorem~\ref{thm:result19}, $YX$ passes through $K$.
By Theorem~\ref{thm:result17}, $XTLK$ is a cyclic quadrilateral, so $\angle KTL=\angle KXL$.
But since $XYTL'$ is also a cyclic quadrilateral, $\angle KXL=\angle YTL'$.
Thus, $\angle KTL=\angle YTL'$ so $TL'$ bisects $\angle ATL$.
\end{proof}

\newpage

\begin{theorem}\label{thm:result16}
Extend $AT$ until it meets the incircle at $T'$ as shown in Figure~\ref{fig:result16}.
Then $TL=TT'$.
\end{theorem}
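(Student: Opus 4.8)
The plan is to combine the homothety centered at $A$ (the very one already exploited in Theorems~\ref{thm:result24} and~\ref{thm:result25}) with the angle-bisector property of $TL'$ established in Theorem~\ref{thm:result14}. The homothety $h$ carries $\gamma_a$ to the incircle, and by Theorem~\ref{thm:result25} it sends $L'\mapsto L$ and $T\mapsto T'$. Writing $k$ for its ratio, this gives $AL=k\cdot AL'$ and $AT'=k\cdot AT$, and in particular $T'$ lies on ray $AT$ with $TT'=AT'-AT=(k-1)\,AT$.

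Next I would turn to the length $TL$. By Theorem~\ref{thm:result14} the line $TL'$ bisects $\angle ATL$, and since $L'$ lies on segment $AL$ this is the internal bisector from $T$ in $\triangle ATL$. The angle-bisector theorem then yields
$$\frac{AL'}{L'L}=\frac{AT}{TL}.$$
Using $AL=k\cdot AL'$ we get $L'L=AL-AL'=(k-1)\,AL'$, so the left-hand ratio is $AL'/L'L=1/(k-1)$. Substituting gives
$$TL=(k-1)\,AT.$$
Comparing with $TT'=(k-1)\,AT$ yields $TL=TT'$, as desired.

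The one point that needs care is the identification of $T'$: the line $AT$ meets the incircle in two points, and the argument requires that the "extended" intersection named $T'$ is exactly the homothetic image $h(T)$. Because $\gamma_a$ is smaller than the incircle the ratio satisfies $k>1$, so $h(T)$ is the intersection lying beyond $T$ (further from $A$), which is the one reached by extending $AT$; this matches the configuration in Figure~\ref{fig:result16}. A secondary check is that $L'$ genuinely lies between $A$ and $L$, so that the \emph{internal} angle-bisector theorem applies; this holds because both intersections of line $AL$ with $\gamma_a$ lie on the segment $AL$. I expect the algebraic combination to be entirely routine, so the only real obstacle is pinning down these configuration details so that all the lengths point the right way.
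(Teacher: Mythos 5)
Your proof is correct. It rests on exactly the same two pillars as the paper's argument --- the homothety centered at $A$ carrying $\gamma_a$ to the incircle (so that $L'\mapsto L$, $T\mapsto T'$) and the bisector property of $TL'$ from Theorem~\ref{thm:result14} --- but you assemble them differently. The paper stays synthetic: from the homothety it extracts only the parallelism $L'T\parallel LT'$, transfers the two equal angles at $T$ given by Theorem~\ref{thm:result14} across that parallel, and concludes that $\triangle TLT'$ has equal base angles at $L$ and $T'$, hence is isosceles. You instead make the ratio $k=r/\rho_a$ explicit and convert Theorem~\ref{thm:result14} into a length statement via the angle-bisector theorem in $\triangle ATL$, obtaining $TL=(k-1)\,AT=TT'$. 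Your route is slightly more computational but has the virtue of producing the common value $(k-1)\,AT$ of the two segments as a by-product; the paper's angle chase avoids any mention of the ratio and needs no check that $L'$ lies between $A$ and $L$ in the strong sense required by the internal angle-bisector theorem (though, as you correctly note, that check does go through, since $\gamma_a$ is externally tangent to $\omega_a$ and hence disjoint from segment $BC$). Your identification of $T'$ with the homothetic image of $T$ (the farther of the two intersections of line $AT$ with the incircle) also matches the paper's convention, as used explicitly in the proof of Theorem~\ref{thm:result5}.
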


\begin{figure}[ht]
\centering
\includegraphics[scale=0.5]{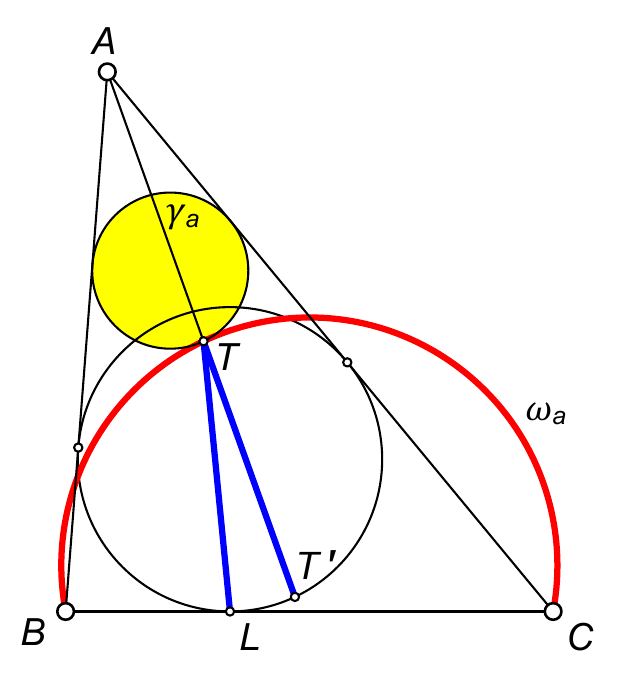}
\caption{blue lines are congruent}
\label{fig:result16}
\end{figure}

\begin{minipage}{2.5in}
\begin{proof}
Let $AL$ meet $\gamma_a$ at $L'$, closer to $L$, as shown in the figure to the right.
The incircle and $\gamma_a$ are homothetic with $A$ being the center of the homothety.
Since a homothety maps a line into a parallel line, $L'T\parallel LT'$.
Thus $\angle 2=\angle 3$ and $\angle 1=\angle 4$.
By Theorem~\ref{thm:result14}, $\angle 1=\angle 2$.
Hence $\angle 3=\angle 4$ making $\triangle TLT'$ an isosceles triangle.
\end{proof}
\end{minipage}
\hspace{-12pt}
\raisebox{-1in}{\includegraphics[scale=0.6]{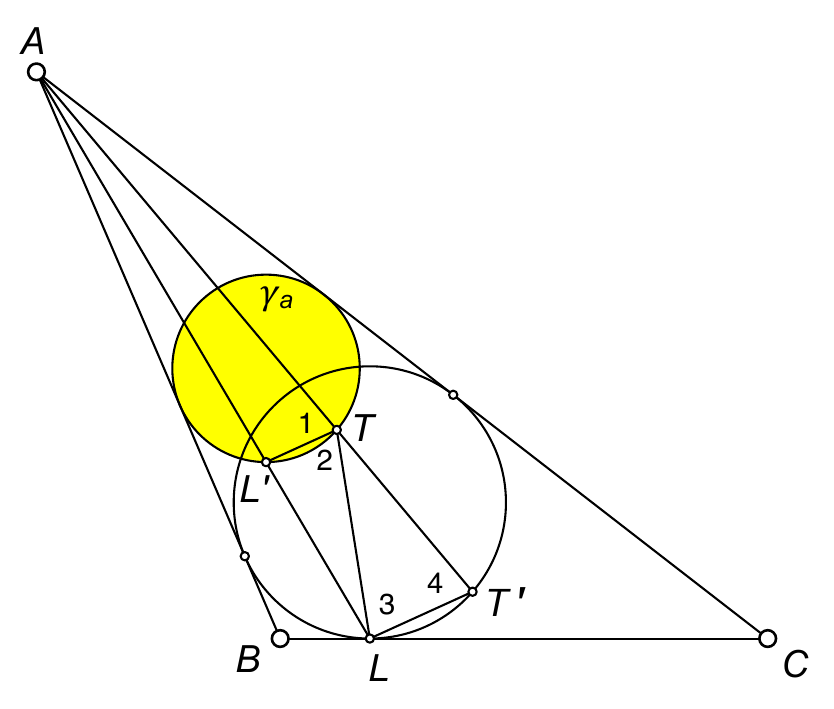}}


The following result comes from \cite{Suppa13196}.

\begin{theorem}\label{thm:result5} 
Let $AT$ meet $BC$ at $M$.
Then $TI$ bisects $\angle LTM$ (Figure~\ref{fig:result5}).
\end{theorem}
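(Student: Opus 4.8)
The plan is to reduce the entire claim to a single perpendicularity. Since $M$ lies on line $AT$ with $T$ between $A$ and $M$, the rays $TA$ and $TM$ are opposite, so the adjacent angles $\angle ATL$ and $\angle LTM$ are supplementary. The internal bisectors of two adjacent supplementary angles are always perpendicular; equivalently, a line through $T$ bisects $\angle LTM$ precisely when it is perpendicular to the internal bisector of $\angle ATL$. By Theorem~\ref{thm:result14} that internal bisector is exactly the line $TL'$, so it suffices to prove that $TI\perp TL'$.

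To establish this perpendicularity I would invoke two facts about the angle $\angle BTC$. By Protasov's Theorem (Theorem~\ref{thm:Protasov}), $TI$ is the \emph{internal} bisector of $\angle BTC$. By Theorem~\ref{thm:extAngBisector}, the line $TL'$ is the \emph{external} bisector of $\angle BTC$. Because the internal and external bisectors of any angle are perpendicular, $TI\perp TL'$ follows at once, with no computation.

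Combining the two observations finishes the proof: $TL'$ bisects $\angle ATL$ (Theorem~\ref{thm:result14}) and $TI\perp TL'$, so $TI$ lies along the line that bisects the supplementary angle $\angle LTM$. The one point that needs a word of justification is orientation, namely that the ray $TI$ (rather than its opposite) points into the region bounded by $TL$ and $TM$; this is immediate from the configuration, since $I$, $L$, and $M$ all lie on the $BC$-side of $T$, so $TI$ enters the interior of $\angle LTM$ and is therefore its internal bisector.

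I expect the main obstacle to be conceptual rather than computational: the difficulty is recognizing that the two available angle-bisector results, one phrased for $\triangle BTC$ (Protasov together with Theorem~\ref{thm:extAngBisector}) and one phrased for the cevian angle $\angle ATL$ (Theorem~\ref{thm:result14}), are tied together by the single relation $TI\perp TL'$. Once this bridge is spotted the argument is essentially free. A more pedestrian alternative would be to chase angles directly, writing $\angle LTM=\angle ATM-\angle ATL$ and expressing each piece through Theorems~\ref{thm:result14} and~\ref{thm:result16} (the latter giving the isosceles triangle $TLT'$); but tracking the signed angles there is error-prone, whereas the perpendicular-bisector route is clean and self-contained.
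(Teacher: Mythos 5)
Your proof is correct, but it takes a genuinely different route from the paper's. The paper introduces the second intersection $T'$ of $AT$ with the incircle, invokes Theorem~\ref{thm:result16} to get $TL=TT'$, and then concludes $\angle LTI=\angle ITT'$ by an SSS congruence of $\triangle LTI$ and $\triangle T'TI$ (using $IL=IT'=r$). You instead exploit the linear pair $\angle ATL$, $\angle LTM$: their internal bisectors are perpendicular, the first bisector is $TL'$ by Theorem~\ref{thm:result14}, and $TI\perp TL'$ by Theorem~\ref{thm:result20} (equivalently Protasov plus Theorem~\ref{thm:extAngBisector}), so line $TI$ must be the second bisector. Both arguments ultimately rest on Theorem~\ref{thm:result14} (the paper reaches it through Theorem~\ref{thm:result16}, you use it directly), so there is no circularity either way. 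What your route buys is the elimination of the auxiliary point $T'$ and the congruence; what it costs is the orientation check you flag at the end --- that the ray $TI$, not its opposite, enters the interior of $\angle LTM$ --- which the paper's congruence argument gets for free since it compares the two angles $\angle LTI$ and $\angle ITT'$ directly. Your dismissal of that check as ``immediate from the configuration'' is acceptable at the level of rigor of the paper (ray $TI$ meets segment $BC$ between $L$ and $M$), but it is the one step you should not leave entirely unsaid.
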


\begin{figure}[ht]
\centering
\includegraphics[scale=0.6]{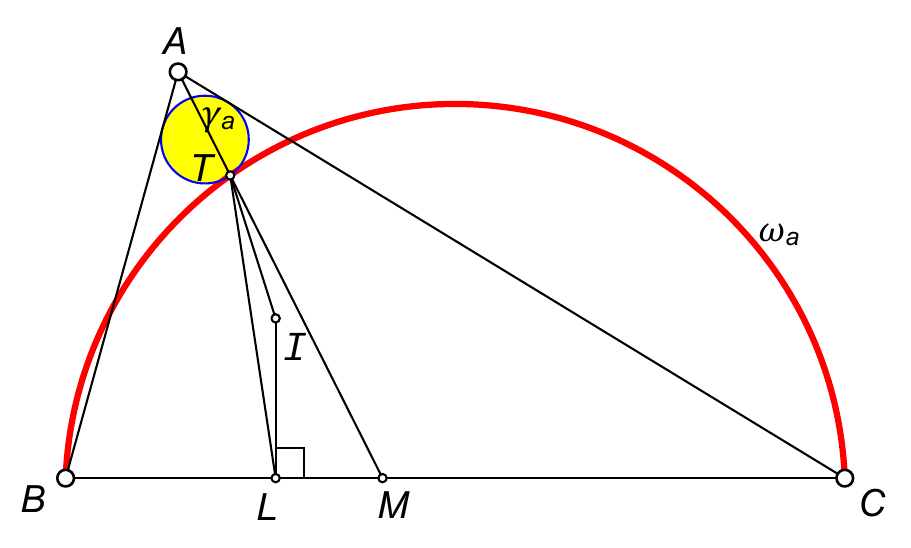}
\caption{$TI$ bisects $\angle LTM$}
\label{fig:result5}
\end{figure}

\begin{minipage}{3in}
\begin{proof}
Let $AM$ meet the incircle at $T'$ (closer to $M$)
as shown in the figure to the right.
By Theorem~\ref{thm:result16}, $TL=TT'$.
Since both $L$ and $T'$ lie on the incircle, we must have $IL=IT'$.
Thus $\triangle LTI\cong \triangle T'TI$ by SSS.
Hence $\angle LTI=\angle ITT'$.
\end{proof}
\end{minipage}
\raisebox{-1in}{\includegraphics[scale=0.5]{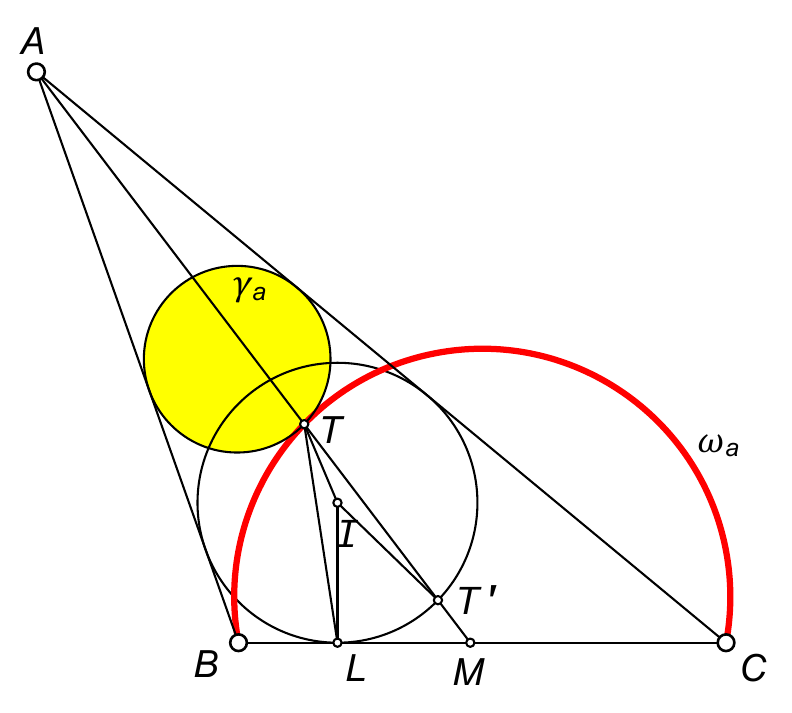}}

\begin{theorem}\label{thm:result1}
We have $\angle ATD=\angle ILT$ (Figure~\ref{fig:result1}).
\end{theorem}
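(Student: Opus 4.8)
The plan is to transport the angle $\angle ATD$, which lives on circle $\gamma_a$, onto the incircle using the homothety centred at $A$, and then to exploit the isosceles configuration coming from Theorem~\ref{thm:result16}. I would begin by letting $T'$ be the second intersection of line $AT$ with the incircle, that is, the image of $T$ under the homothety $h$ centred at $A$ that carries $\gamma_a$ to the incircle, exactly as in Theorems~\ref{thm:result24} and~\ref{thm:result25}.

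Since $h$ sends $D\mapsto I$ and $T\mapsto T'$, the segment $DT$ is carried to $IT'$, so $DT\parallel IT'$. The three points $A$, $T$, $T'$ are collinear (they lie on line $AT$), and the centres $D$ and $I$ both lie on the bisector $AI$ of $\angle BAC$, hence on the same side of line $AT$. Reading off corresponding angles for the parallel rays $TD$ and $T'I$ cut by the transversal $ATT'$, measured from the rays $TA$ and $T'A$ which point the same way along the line, gives $\angle ATD=\angle AT'I$. As $T$ lies between $A$ and $T'$ (this is the point at which $AT$ is \emph{extended} to meet the incircle), the ray $T'A$ coincides with the ray $T'T$, so $\angle AT'I=\angle IT'T$.

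It then remains to identify $\angle IT'T$ with $\angle ILT$. By Theorem~\ref{thm:result16} we have $TL=TT'$, and $IL=IT'$ because both are radii of the incircle. Consequently $TI$ is the perpendicular bisector of $LT'$: the quadrilateral $ILTT'$ is a kite with axis $TI$, and reflection in $TI$ fixes $I$ and $T$ while interchanging $L$ and $T'$. This reflection sends $\angle IT'T$ to $\angle ILT$, so the two are equal. Chaining the equalities yields $\angle ATD=\angle AT'I=\angle IT'T=\angle ILT$, as required.

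The step I expect to require the most care is the corresponding-angles identification $\angle ATD=\angle AT'I$: one must pin down the configuration, namely that $D$ and $I$ sit on the same side of line $AT$ (which follows from both lying on the ray $AI$) and that $T$ lies between $A$ and $T'$ so that the rays $T'A$ and $T'T$ coincide rather than being opposite. With that ordering read off from the figure, the remaining two equalities follow immediately from the homothety and from Theorem~\ref{thm:result16}.
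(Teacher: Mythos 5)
Your argument is correct, and it takes a genuinely different route from the paper. The paper proves this by an angle chase concentrated at the vertex $T$: it drops the foot $F$ of the perpendicular from $T$ to $BC$, invokes Theorem~\ref{thm:result8} ($\angle BTF=\angle O_aTC$), the parallelism $TF\parallel IL$, Theorem~\ref{thm:result5} ($TI$ bisects $\angle LTM$), and Protasov's Theorem, and then cancels terms in the resulting angle sum to conclude $\angle ATD=\angle ILT$. You instead transport the whole angle to the incircle: the homothety at $A$ (positive ratio, since $\gamma_a$ and the incircle are both inscribed in $\angle BAC$ with $\rho_a<r$) sends $D\mapsto I$ and $T\mapsto T'$, giving $\angle ATD=\angle AT'I=\angle IT'T$ because $T$ lies between $A$ and $T'$; then $TL=TT'$ (Theorem~\ref{thm:result16}) together with $IL=IT'$ forces $\triangle ITL\cong\triangle ITT'$, i.e.\ your kite reflection in line $TI$, whence $\angle IT'T=\angle ILT$. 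Note that this SSS congruence is exactly the one the paper uses to prove Theorem~\ref{thm:result5}; you are reading off the equal angles at $L$ and $T'$ where the paper reads off the equal angles at $T$, so the two proofs ultimately rest on the same ingredient (Theorem~\ref{thm:result16}) but your version bypasses Theorem~\ref{thm:result8}, Protasov's Theorem, and the auxiliary foot $F$ entirely. What you give up is only the need to pin down the configuration (that the homothety ratio exceeds $1$ so $T$ is between $A$ and $T'$, and that $D$, $I$ lie on the same side of line $AT$), which you address explicitly; what you gain is a shorter and more conceptual argument.
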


\begin{figure}[ht]
\centering
\includegraphics[scale=0.5]{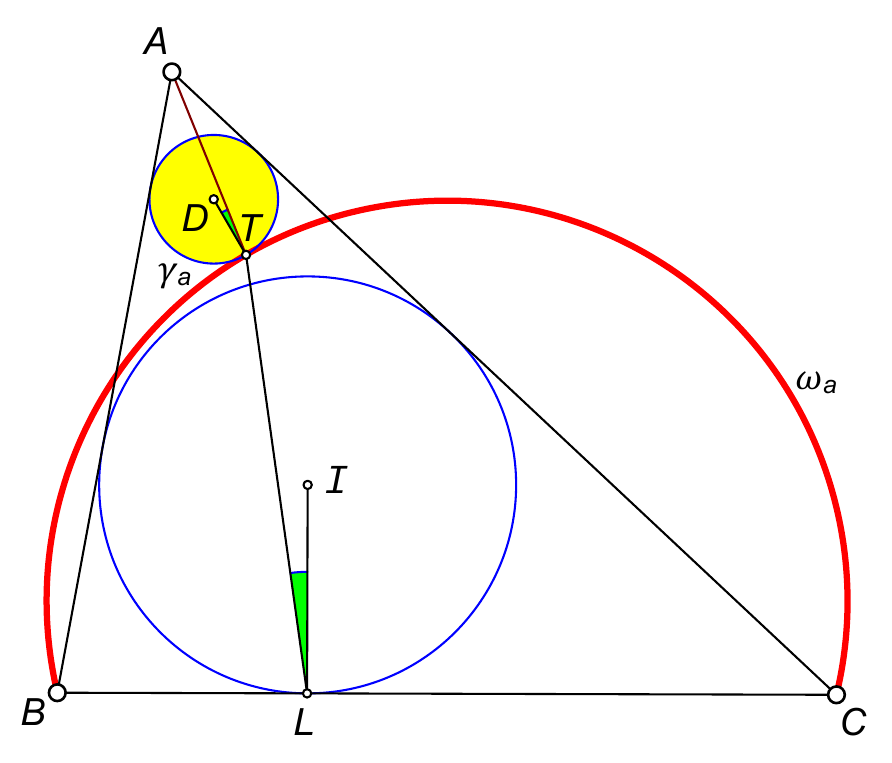}
\caption{green angles are equal}
\label{fig:result1}
\end{figure}

\begin{proof}
Let $F$ be the foot of the perpendicular from $T$ to $BC$.
Let $AT$ meet $BC$ at $M$.
Since $\gamma_a$ is tangent to $\omega_a$ at $T$,
this means that $DTO_a$ is a straight line.

\begin{minipage}{3in}

Number the resulting angles as shown in the figure to the right.
Lines $AM$ and $DO_a$ meet at $T$ forming equal vertical angles.
These are labeled $x$ in the figure.
From Theorem~\ref{thm:result8}, $\angle BTF=\angle O_aTC$.
These are labeled 1 in the figure.
Since $TF\parallel IL$, $\angle FTL=\angle ILT$.
These are labeled $y$ in the figure.
From Theorem~\ref{thm:result5}, $\angle LTI=\angle ITM$.
These are labeled 2 in the figure.

\end{minipage}
\hfill
\raisebox{-0.95in}{\includegraphics[scale=0.4]{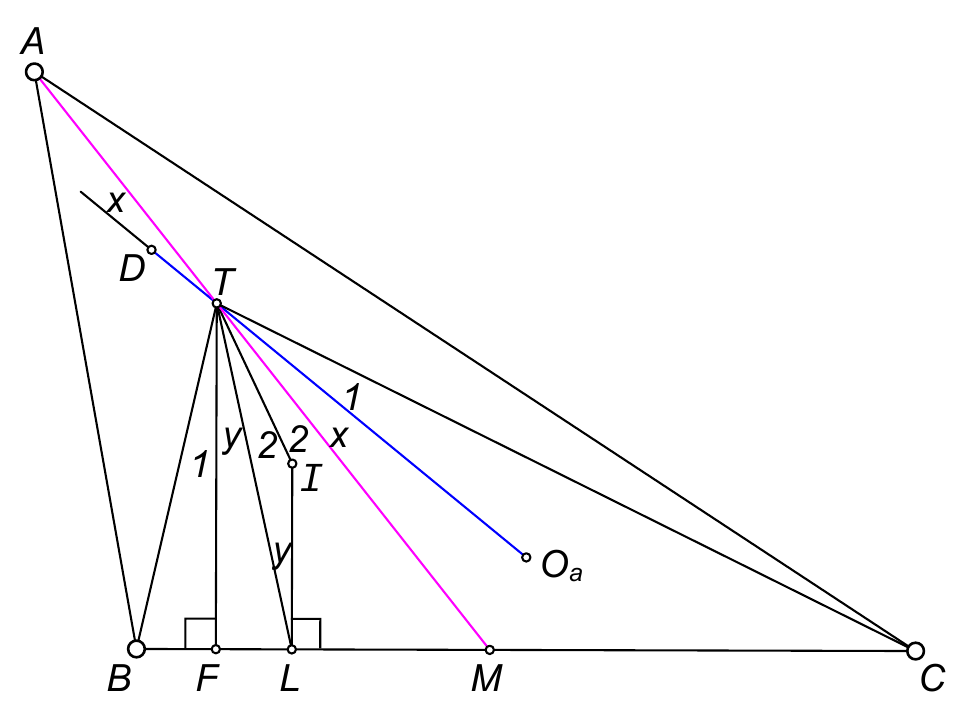}}

By Protasov's Theorem, $1+y+2=2+x+1$.
Thus $x=y$ and $\angle ATD=\angle ILT$.
\end{proof}

\newpage

\begin{lemma}\label{lemma:touchingCircles}
Two circles, $C_1$ and $C_2$, are internally tangent at $P$.
A chord $AB$ of $C_1$ meets $C_2$ at points $C$ and $D$ as shown in Figure~\ref{fig:equalAngles}.
Then $\angle APC=\angle DPB$.
\end{lemma}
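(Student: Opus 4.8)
The plan is to exploit the internal tangency directly through the homothety it induces, in the same spirit as the homothety arguments used earlier (e.g.\ Lemma~\ref{LTNlemma} and Theorem~\ref{thm:result24}). Since $C_1$ and $C_2$ are internally tangent at $P$, there is a homothety $h$ centered at $P$, with positive ratio equal to the ratio of the radii, carrying the inner circle $C_2$ onto $C_1$. First I would apply $h$ to the two intersection points, setting $C'=h(C)$ and $D'=h(D)$. Because a homothety centered at $P$ fixes every line through $P$, the points $P$, $C$, $C'$ are collinear and likewise $P$, $D$, $D'$ are collinear, and both $C'$ and $D'$ lie on $C_1$.

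Next I would use the fact that a homothety sends a line to a parallel line. The chord $CD$ of $C_2$ is mapped to the chord $C'D'$ of $C_1$, so $C'D'\parallel CD$. Since $C$ and $D$ lie on the line $AB$, this gives $C'D'\parallel AB$, so that $AB$ and $C'D'$ are two parallel chords of the single circle $C_1$.

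The key geometric step is then the standard fact that parallel chords of a circle subtend equal arcs between corresponding endpoints: from $AB\parallel C'D'$ I obtain $\arc{AC'}=\arc{BD'}$, with the endpoints paired on the same side. Finally, since $P$, $A$, $C'$ all lie on $C_1$, the angle $\angle APC'$ is the inscribed angle subtending $\arc{AC'}$, and likewise $\angle BPD'$ subtends $\arc{BD'}$; equal arcs give equal inscribed angles. Because $C$ lies on ray $PC'$ and $D$ on ray $PD'$, we conclude $\angle APC=\angle APC'=\angle BPD'=\angle DPB$, as required.

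The step I expect to require the most care is the bookkeeping of the configuration: one must check that the homothety has positive ratio, so that $C'$ and $D'$ fall on the far side of $AB$ (beyond $C$ and $D$ from $P$) rather than between $P$ and $AB$, and that the endpoints pair up as $A\leftrightarrow C'$ and $B\leftrightarrow D'$ rather than crosswise. Once the order $A,C,D,B$ along the chord is fixed as in the figure, a reflection argument across the line of centers forces this pairing and makes the arc equality unambiguous, but it is worth recording explicitly so that the two inscribed angles are seen to subtend the genuinely equal arcs.
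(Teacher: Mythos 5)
Your proof is correct. It follows the same overall skeleton as the paper's — produce a chord parallel to $AB$ whose endpoints lie on rays from $P$ through the relevant points, invoke the fact that parallel chords of a circle intercept equal arcs, and finish with equal inscribed angles at $P$ — but you obtain the parallel chord by a different mechanism and in the other circle. The paper lets $PA$ and $PB$ meet the inner circle $C_2$ again at $E$ and $F$, proves $EF\parallel AB$ by comparing tangent–chord angles at the common tangent at $P$, and reads the equal arcs off in $C_2$; you instead push $C$ and $D$ outward to $C'=h(C)$ and $D'=h(D)$ on $C_1$ via the tangency homothety, get $C'D'\parallel CD$ for free, and read the equal arcs off in $C_1$. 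The two configurations are homothetic images of one another, so the content is the same; what your route buys is that the parallelism requires no angle chase and no auxiliary tangent line, and it matches the homothety style the paper already uses elsewhere (e.g., Lemma~\ref{LTNlemma} and Theorem~\ref{thm:result24}). Your cautionary remark about the pairing of endpoints is well placed but easily discharged: internal tangency makes the ratio $R_1/R_2>1$ positive, so $C'$ and $D'$ are the second intersections of the rays $PC$ and $PD$ with $C_1$, and the order $A,C,D,B$ along the chord forces the order $A,C',D',B$ along the arc of $C_1$ not containing $P$; hence the equal arcs are indeed $\arc{AC'}$ and $\arc{D'B}$, and the inscribed angles at $P$ subtending them are exactly $\angle APC$ and $\angle DPB$.
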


\begin{figure}[ht]
\centering
\includegraphics[scale=0.5]{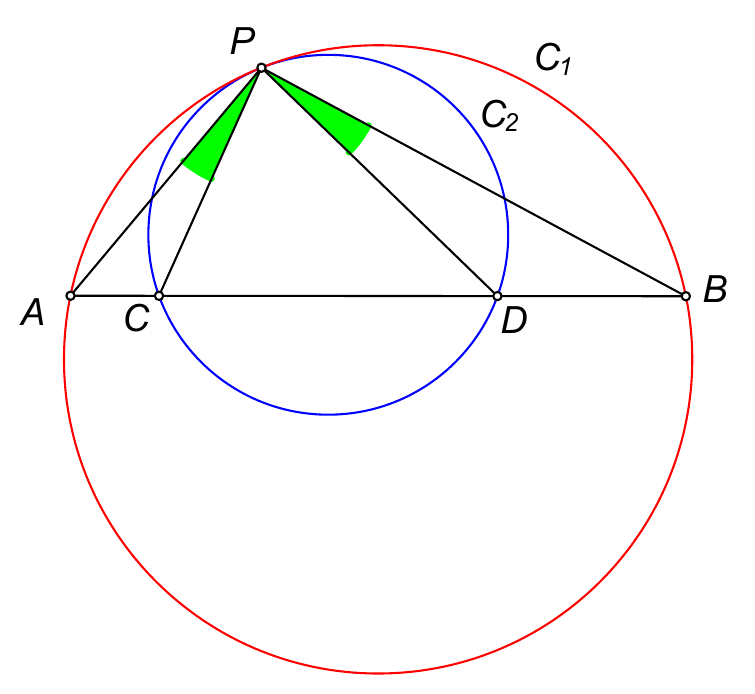}
\caption{green angles are equal}
\label{fig:equalAngles}
\end{figure}

\begin{minipage}{3in}
\begin{proof}
Let $t$ be the common tangent at $P$.
Let $PA$ meet $C_2$ at $E$ and let $PB$ meet $C_2$ at $F$.
Label the angles as shown in the figure to the right.

\smallskip
In the blue circle, $\angle 1=\angle 2$ since both are measured by half of arc $\arc{PF}$.
In the red circle, $\angle 1=\angle 3$ since both are measured by half of arc $\arc{PB}$.

\smallskip
Thus $\angle 2=\angle 3$ which makes $EF\parallel AB$.
Parallel chords intercept equal arcs, so $\arc{CE}=\arc{FD}$
which implies $\angle x=\angle y$.
\end{proof}
\end{minipage}
\raisebox{-1.2in}{\includegraphics[scale=0.5]{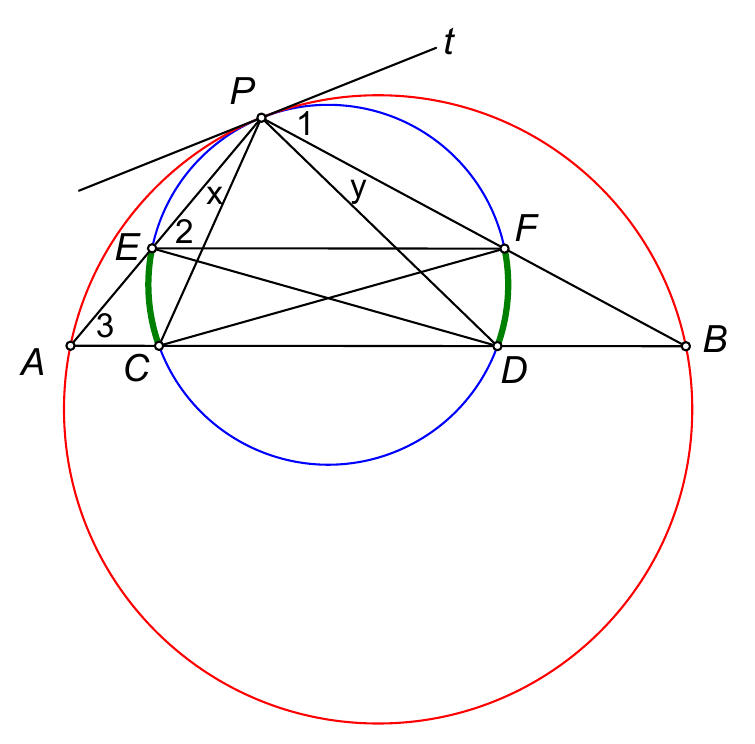}}


The following result comes from \cite{Coulon}.

\begin{theorem}
Let $AT$ meet $BC$ at $M$.
The $\odot TLM$ is tangent to $\gamma_a$ (Figure~\ref{fig:result15}).
\end{theorem}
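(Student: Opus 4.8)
The plan is to prove that the two circles are tangent at their common point $T$, by showing that the tangent line to $\gamma_a$ at $T$ is simultaneously tangent to $\odot TLM$ at $T$. Let $t$ denote the tangent to $\gamma_a$ at $T$. Since $D$ is the center of $\gamma_a$, the line $t$ is perpendicular to $DT$ (this is also the common tangent of $\gamma_a$ and $\omega_a$ at $T$). Because both $T$ and $M$ lie on line $AT$, and $L$ and $M$ both lie on $BC$, everything can be reduced to comparing two angles measured at $T$ against the fixed line $ATM$.

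First I would pin down the direction of $t$ relative to the line $ATM$. Since $t \perp DT$, the angle that $t$ makes with ray $TA$ equals $90^\circ - \angle ATD$. By Theorem~\ref{thm:result1} we have $\angle ATD = \angle ILT$, so this angle equals $90^\circ - \angle ILT$. Now $IL \perp BC$ and $L$ lies on $BC$ together with $M$, so $\angle ILT$ is the complement of the angle that $LT$ makes with $BC$; that is, $\angle ILT = 90^\circ - \angle TLM$. Combining these, the angle between $t$ and the line $ATM$ equals $\angle TLM$.

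Next I would invoke the converse of the tangent–chord (inscribed angle) theorem applied to the circle $\odot TLM$. The segment $TM$ is a chord of $\odot TLM$, and $\angle TLM$ is precisely the inscribed angle subtending $TM$ as seen from $L$. Since I have just shown that $t$ makes an angle equal to $\angle TLM$ with the chord $TM$ at the endpoint $T$, the converse tangent–chord theorem forces $t$ to be tangent to $\odot TLM$ at $T$. As $t$ is also the tangent to $\gamma_a$ at $T$, the two circles share a common tangent at their common point $T$ and are therefore tangent there, which is the claim.

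The main obstacle I anticipate is bookkeeping of the configuration rather than any substantive difficulty: I must verify that the angles are read off on the correct sides so that $\angle ILT = 90^\circ - \angle TLM$ holds with the right sign, and that the computed angle between $t$ and $TM$ falls in the alternate segment of $\odot TLM$ (so that the converse tangent–chord theorem yields genuine internal tangency rather than a spurious second intersection). Once the orientation is fixed, the angle chase is very short, resting entirely on Theorem~\ref{thm:result1} and the perpendicularity $IL \perp BC$.
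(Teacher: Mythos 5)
Your proof is correct, but it follows a genuinely different route from the paper's. The paper defines an auxiliary circle $\Gamma$ tangent to $\omega_a$ at $T$ and passing through $L$, lets it meet $BC$ again at $M'$, and then uses Lemma~\ref{lemma:touchingCircles} together with Protasov's Theorem to show that $TI$ bisects $\angle LTM'$; since Theorem~\ref{thm:result5} says $TI$ bisects $\angle LTM$, uniqueness of the bisector forces $M'=M$, so $\Gamma=\odot(TLM)$. You instead verify tangency directly at the common point $T$: using $t\perp DT$, Theorem~\ref{thm:result1} ($\angle ATD=\angle ILT$), and $IL\perp BC$, you show the common tangent $t$ of $\gamma_a$ and $\omega_a$ at $T$ makes an angle $\angle TLM$ with the chord $TM$, and then apply the converse of the tangent--chord angle theorem to $\odot TLM$. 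Your argument is shorter and more direct given Theorem~\ref{thm:result1} as a black box (note that the paper proves Theorem~\ref{thm:result1} from Theorem~\ref{thm:result5} and Protasov's Theorem, so the underlying ingredients largely coincide); its cost is the orientation bookkeeping you flag at the end --- you must check that $\angle ILT+\angle TLM=90\degrees$ with the correct sign and that $t$ lies on the alternate-segment side of $TM$, since two lines through $T$ make the angle $\angle TLM$ with that chord and only one is the tangent. The paper's identification argument trades that bookkeeping for the construction of $\Gamma$ and the uniqueness of the angle bisector, which pins down the circle without needing the tangent--chord converse. Both are sound.
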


\begin{figure}[ht]
\centering
\includegraphics[scale=0.5]{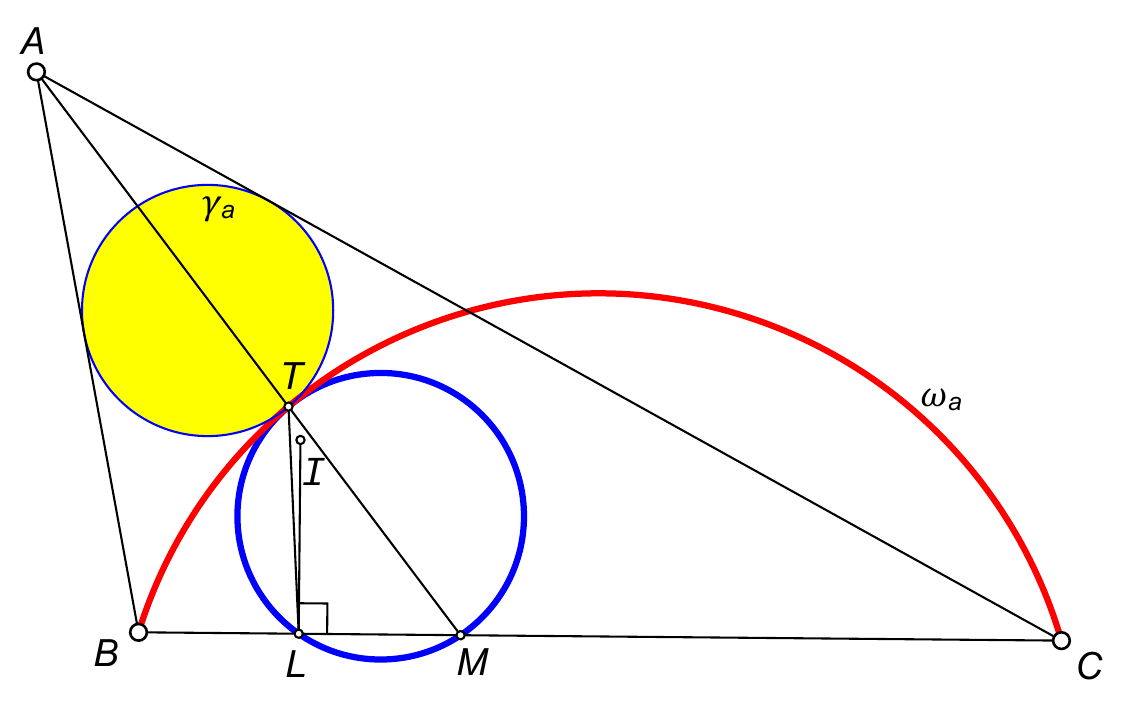}
\caption{three circles touch at $T$}
\label{fig:result15}
\end{figure}

\begin{proof}
Let $\Gamma$ be the circle tangent to $\omega_a$ at $T$ and passing through $L$.
Let $LC$ meet $\Gamma$ again at $M'$ as shown in Figure~\ref{fig:result15proof}.
By Lemma~\ref{lemma:touchingCircles},
$$\angle BTL=\angle M'TC.$$
These are labeled ``1'' in the figure.
By Protasov's Theorem,
$$\angle BTI=\angle ITC.$$
Subtracting shows that
$$\angle 2=\angle 3.$$
So $TI$ bisects $\angle LTM'$.
But by Theorem~\ref{thm:result5}, $TI$ bisects $\angle LTM$.
This implies that $M'=M$, so $\Gamma=\odot(TLM)$ and we are done.
\end{proof}

\begin{figure}[ht]
\centering
\includegraphics[scale=0.5]{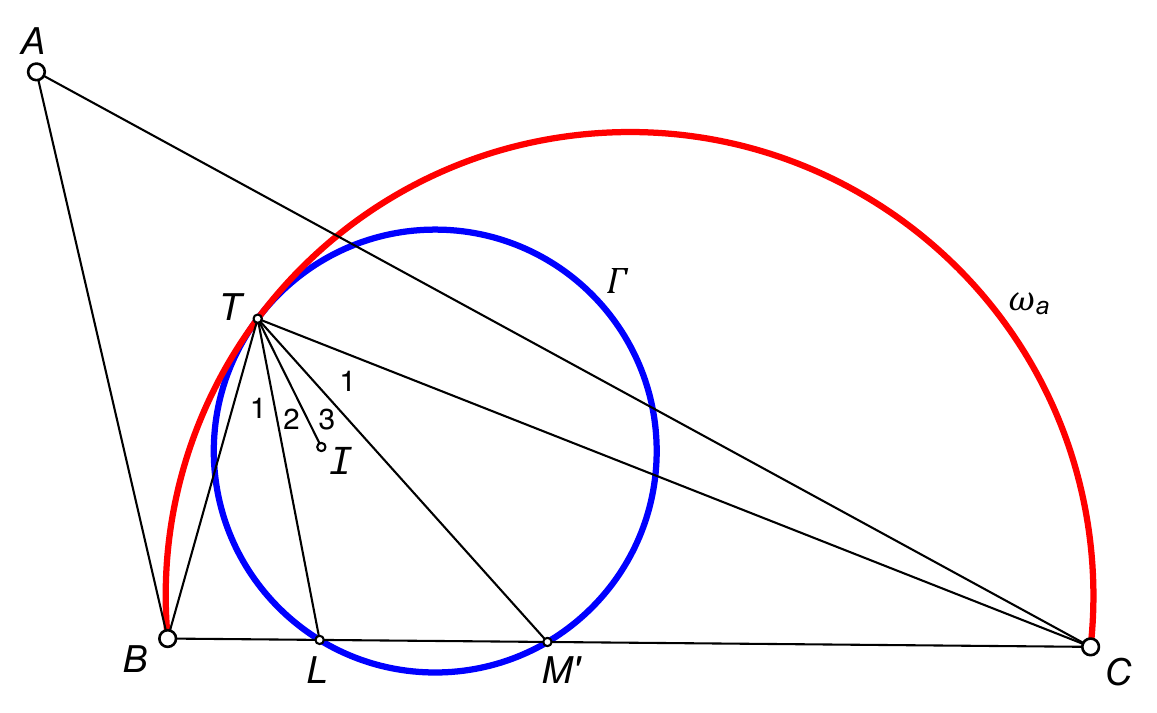}
\caption{}
\label{fig:result15proof}
\end{figure}


\begin{theorem}\label{thm:result7}
We have $\angle DTI=\angle TIL$ (Figure~\ref{fig:result7}).
\end{theorem}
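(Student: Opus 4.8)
The plan is to reduce the claim to a short angle chase in triangle $ITL$, exploiting two facts that are already available: Theorem~\ref{thm:result1} and Theorem~\ref{thm:result5}. The starting observation is that $D$ and $I$ both lie on the bisector of $\angle BAC$, so $A$, $D$, $I$ are collinear, with $D$ between $A$ and $I$ (the circle $\gamma_a$, being inscribed in $\angle BAC$ and externally tangent to $\omega_a$, sits nearer $A$ than the incircle). Consequently, viewed from $T$, the ray $TD$ falls between the rays $TA$ and $TI$, and therefore $\angle ATI=\angle ATD+\angle DTI$, i.e. $\angle ATD=\angle ATI-\angle DTI$. This is the device that lets $\angle DTI$ enter the computation.

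First I would record the two input identities. By Theorem~\ref{thm:result1}, $\angle ILT=\angle ATD$. Next, letting $AT$ meet $BC$ at $M$, Theorem~\ref{thm:result5} says $TI$ bisects $\angle LTM$, so $\angle LTI=\angle ITM$; since $T$ lies between $A$ and $M$ on line $AM$, the angles $\angle ATI$ and $\angle ITM$ are supplementary, and hence
$$\angle ATI+\angle LTI=180^{\circ}.$$
I would then finish with the angle sum of triangle $ITL$, namely $\angle TIL+\angle ILT+\angle LTI=180^{\circ}$. Substituting $\angle ILT=\angle ATD=\angle ATI-\angle DTI$ turns this into $\angle TIL+\angle ATI-\angle DTI+\angle LTI=180^{\circ}$; grouping gives $\angle TIL-\angle DTI+(\angle ATI+\angle LTI)=180^{\circ}$, and since the parenthesised sum is $180^{\circ}$, we are left with exactly $\angle TIL=\angle DTI$, which is the claim.

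The only real obstacle is the orientation bookkeeping: one must be sure that $D$ genuinely lies between $A$ and $I$ (so that $TD$ is interior to $\angle ATI$ and the combination is a sum, not a difference) and that $T$ lies between $A$ and $M$ (so that $\angle ATI$ and $\angle ITM$ are supplementary). Both are forced by the configuration, but they are the steps where an incorrect reading of the figure would flip a sign and turn the final equality into $\angle TIL+\angle DTI=180^{\circ}$. As an independent check on these choices, one can rederive the result from Theorem~\ref{thm:result9}: letting $T'$ be the second intersection of $IT$ with $\gamma_a$, that theorem gives $T'D\perp BC$, hence $DT'\parallel IL$, while $DT=DT'$ makes $\triangle DTT'$ isosceles. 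Chasing these two facts along the collinear points $I$, $T$, $T'$ reproduces $\angle DTI=\angle TIL$ and confirms the orientations used above.
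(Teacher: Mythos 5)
Your proof is correct and follows essentially the same route as the paper's: both arguments combine Theorem~\ref{thm:result1} ($\angle ATD=\angle ILT$), Theorem~\ref{thm:result5} ($\angle ITM=\angle LTI$), the angle sum of $\triangle TIL$, and the decomposition of the straight angle at $T$ along line $ATM$ (the paper writes this as $\angle DTI=180\degrees-\angle ITM-\angle ATD$, which is your $\angle ATI=\angle ATD+\angle DTI$ combined with $\angle ATI+\angle ITM=180\degrees$). Your explicit attention to the betweenness of $D$ on segment $AI$ is a reasonable precaution that the paper leaves implicit.
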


\begin{figure}[ht]
\centering
\includegraphics[scale=0.5]{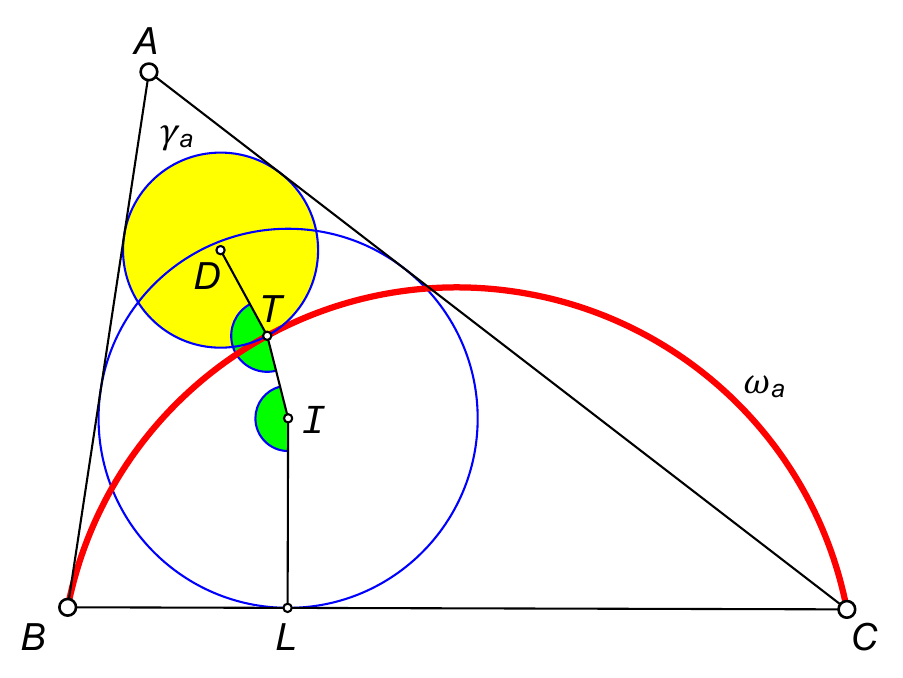}
\caption{green angles are equal}
\label{fig:result7}
\end{figure}

\begin{minipage}{2.8in}
\begin{proof}
Let $AT$ meet $BC$ at $M$.
The sum of the angles of $\triangle TIL$ is $180\degrees$, so
\begin{equation}
180\degrees-\angle LTI=\angle TIL+\angle TLI.\label{eq:7}
\end{equation}

Since $ATM$ is a straight line, we have
$$\angle DTI=180\degrees-\angle ITM-\angle ATD.$$
By Theorem~\ref{thm:result5}, $\angle ITM=\angle LTI$, so
$$\angle DTI=180\degrees-\angle LTI-\angle ATD.$$
From equation (\ref{eq:7}), we get
$$\angle DTI=\angle TIL+\angle TLI-\angle ATD.$$
From Theorem~\ref{thm:result1}, $\angle TLI=\angle ATD$. Hence
$\angle DTI=\angle TIL$.
\end{proof}
\end{minipage}
\hfill
\raisebox{-1in}{\includegraphics[scale=0.4]{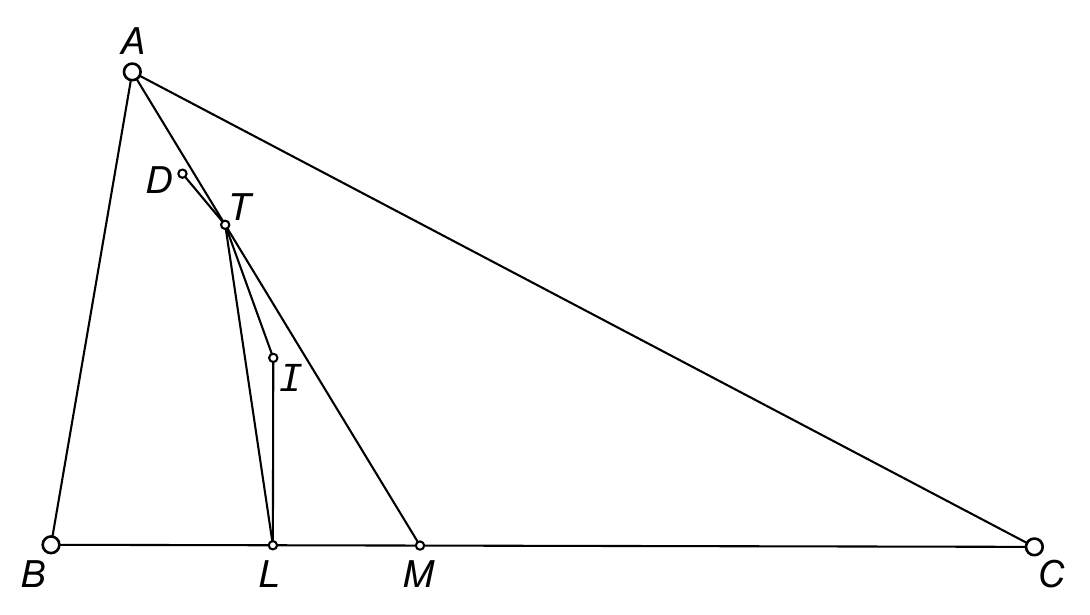}}


\begin{lemma}\label{lemma:result21}
Let $\gamma_a$ be any circle inscribed in $\angle BAC$.
Let $T$ be any point on $\gamma_a$ on the other side of $AL$ from $B$.
Let $AL$ meet $\gamma_a$ at $X$ (nearer $A$).
Let $AT$ meet the incircle at $Y'$.
Then $X$, $Y'$, $T$, and $L$ are concyclic (Figure~\ref{fig:result21}).
\end{lemma}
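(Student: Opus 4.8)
The plan is to exploit the homothety $h$ centered at $A$ that carries $\gamma_a$ onto the incircle, and then convert the concyclicity into a single power-of-a-point identity at $A$. Since $\gamma_a$ and the incircle are both inscribed in $\angle BAC$, this homothety has positive ratio and fixes $A$; moreover, as already used in the proofs of Theorems~\ref{thm:result24} and~\ref{thm:result25}, it maps $L'$ to $L$ and maps $Y$ to $Y'$, where $Y'$ is the intersection of line $AT$ with the incircle nearer $A$. Writing $k$ for the ratio, this gives $\overline{AL}=k\,\overline{AL'}$ and $\overline{AY'}=k\,\overline{AY}$.

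First I would record the power of $A$ with respect to $\gamma_a$. The line $AL$ meets $\gamma_a$ at $X$ (nearer $A$) and at $L'$, while the line $AT$ meets $\gamma_a$ at $Y$ (nearer $A$) and at $T$, so the two secants from $A$ give
$$\overline{AX}\cdot\overline{AL'}=\overline{AY}\cdot\overline{AT}.$$
Next I would simply rescale this identity by $k$ using the two homothety relations:
$$\overline{AX}\cdot\overline{AL}=k\,\overline{AX}\cdot\overline{AL'}=k\,\overline{AY}\cdot\overline{AT}=\overline{AY'}\cdot\overline{AT}.$$
Here $X$ and $L$ lie on line $AL$ and $Y'$ and $T$ lie on line $AT$, and because $h$ has positive ratio each image lies on the same ray from $A$ as its preimage, so every directed length is positive and no sign issues arise. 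The resulting equality $\overline{AX}\cdot\overline{AL}=\overline{AY'}\cdot\overline{AT}$ is precisely the converse-of-power-of-a-point condition for two secants through $A$, whence $X$, $L$, $Y'$, and $T$ lie on a common circle.

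I expect the only real subtlety to be bookkeeping rather than geometry: one must confirm that $Y'$ is the image $h(Y)$ (the near-$A$ incircle point on line $AT$) and not the other incircle intersection $h(T)$, and that $X$ is the near-$A$ point on $AL$, so that on each secant the four points pair one point of $\gamma_a$ with one point of the incircle correctly. Once the homothety correspondences $L'\mapsto L$ and $Y\mapsto Y'$ are pinned down against the figure, the computation above is immediate, and there is no genuine obstacle beyond keeping the near/far labeling consistent.
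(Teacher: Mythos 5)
Your proposal is correct and is essentially the paper's own proof: both compute the power of $A$ with respect to $\gamma_a$ via the secants $AXL'$ and $AYT$, then rescale by the $A$-centered homothety (which sends $L'\mapsto L$ and $Y\mapsto Y'$) to obtain $AX\cdot AL=AY'\cdot AT$ and conclude by the converse of the power of a point. The near/far bookkeeping you flag is handled the same way in the paper, which takes $Y'$ to be the image of the near-$A$ intersection $Y$.
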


\begin{figure}[ht]
\centering
\includegraphics[scale=0.35]{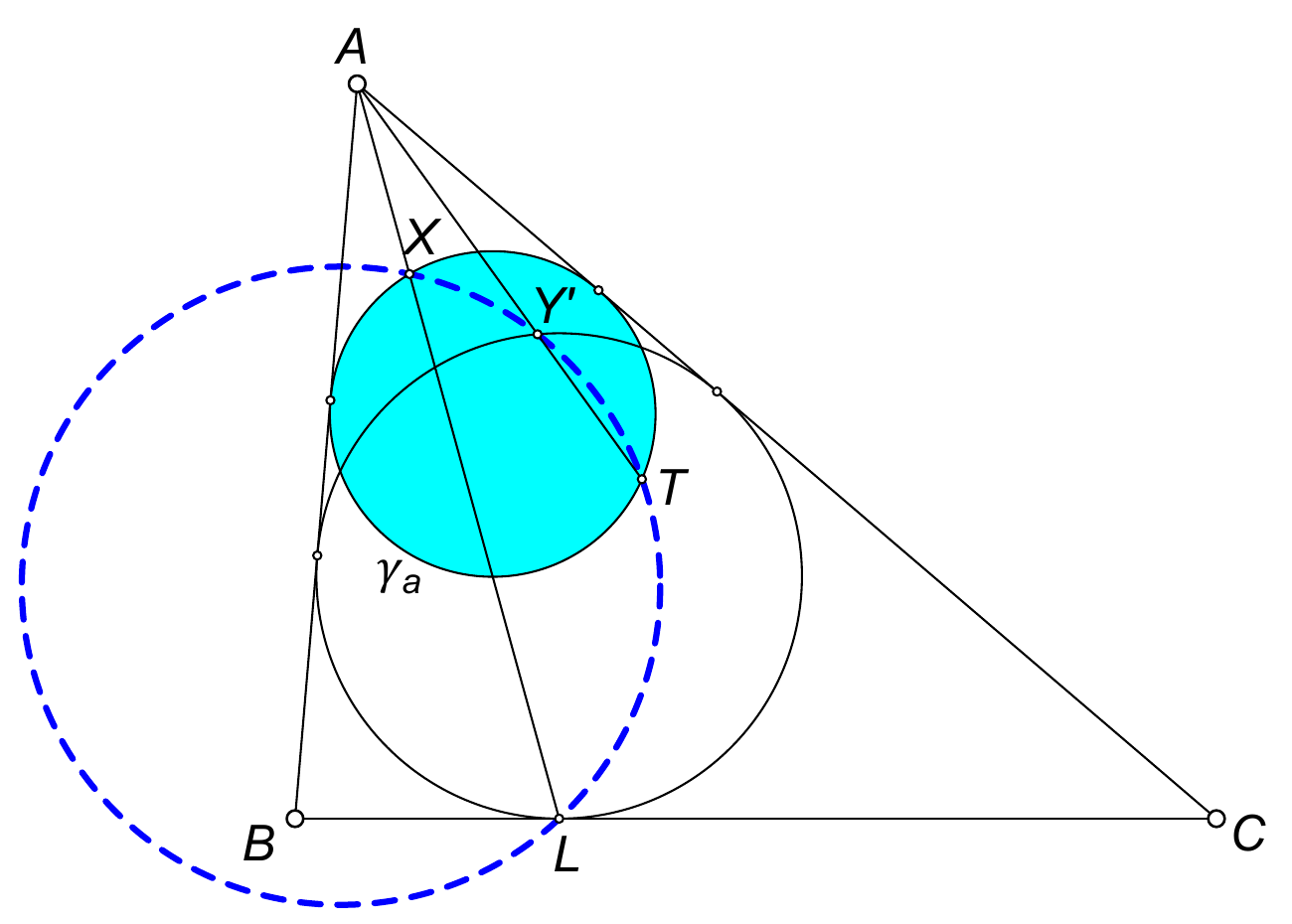}
\caption{four points lie on a circle}
\label{fig:result21}
\end{figure}

\begin{minipage}{2.9in}
\begin{proof}
Let $L'$ be the point (nearer $L$) where $AL$ meets $\gamma_a$
as shown in the figure to the right.
Lines $AXL'$ and $AYT$ are both secants to $\gamma_a$,
so $$AX\cdot AL'=AY\cdot AT.$$
Note that the incircle and circle $\gamma_a$ are homothetic with $A$ as the center of the homothety.
This homothety maps $L'$ to $L$ and maps $Y$ to $Y'$.
Therefore,
$$\frac{AL'}{AL}=\frac{AY}{AY'}.$$
Hence
$$\frac{AX}{AT}=\frac{AY}{AL'}=\frac{AY'}{AL}.$$

\bigskip
Thus $AX\cdot AL=AY'\cdot AT$ which implies that $X$, $Y'$, $T$, and $L$ lie on a circle.
\end{proof}
\end{minipage}
\raisebox{-1in}{\includegraphics[scale=0.4]{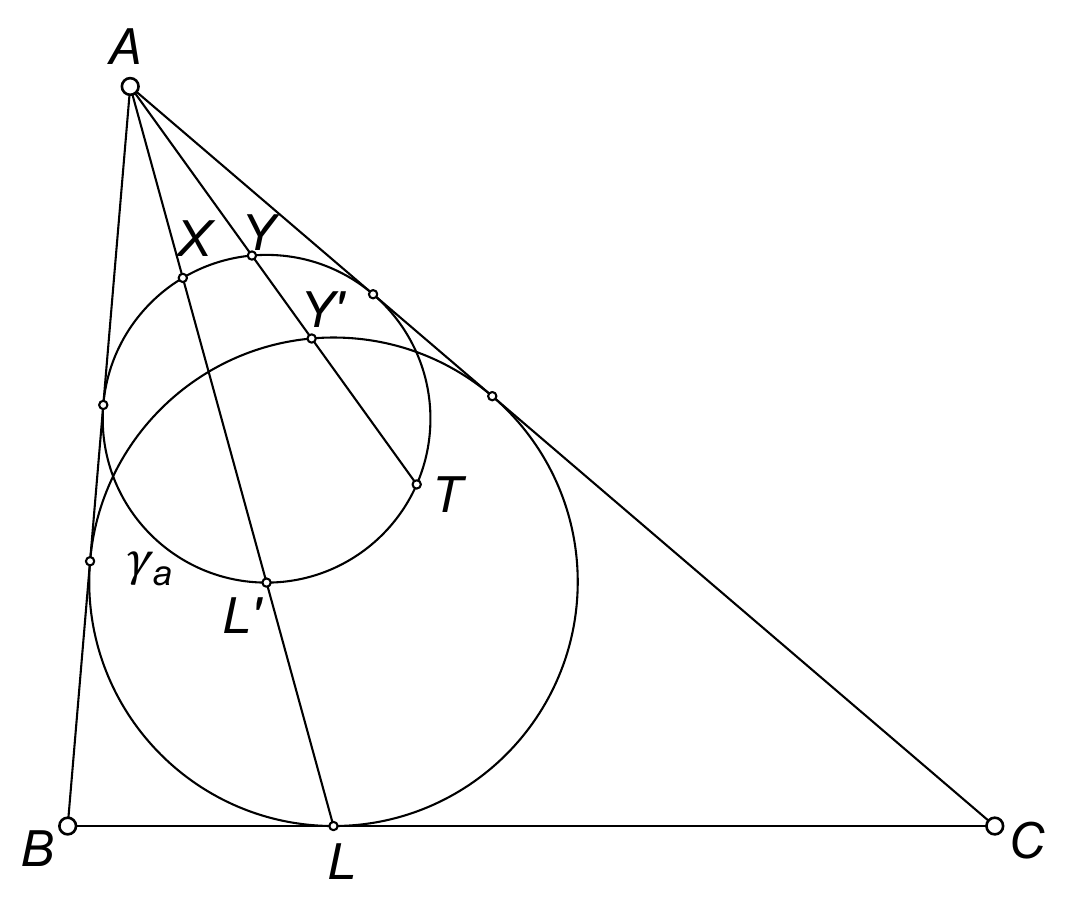}}


The following result comes from \cite{Suppa13249}.

\begin{theorem}\label{thm:result20}
We have $IT\perp TL'$ (Figure~\ref{fig:result20}).
\end{theorem}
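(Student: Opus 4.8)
The plan is to reduce this to the single classical fact that the internal and external bisectors of an angle are mutually perpendicular. The configuration already supplies both bisectors of $\angle BTC$ through the point $T$, so once I identify them I am essentially done.

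First I would recall Protasov's Theorem (Theorem~\ref{thm:Protasov}): the segment $TI$ bisects $\angle BTC$. Thus $TI$ is precisely the \emph{internal} bisector of $\angle BTC$ in $\triangle BTC$. Next I would invoke Theorem~\ref{thm:extAngBisector}, which states that the line $TL'$ is the \emph{exterior} angle bisector of $\angle BTC$. (This in turn rests on Lemmas~\ref{angleBisectorLemma} and~\ref{LTNlemma}, where $N$ is the midpoint of the arc $\arc{BC}$ of $\omega_a$ opposite $T$; there $L'$, $T$, $N$ are shown collinear and $TN$ is the external bisector.)

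Having both bisectors emanating from $T$, I would finish by noting that at any point the internal and external bisectors of a given angle are perpendicular to one another. Applying this to the pair $TI$ (internal) and $TL'$ (external) of $\angle BTC$ yields $IT \perp TL'$ immediately.

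I do not expect a genuine obstacle here, since the two hard facts — Protasov's identification of the internal bisector and the arc-midpoint identification of $TL'$ as the external bisector — are already available in the excerpt. The only point requiring a moment's care is making sure $T$, $L'$, and $N$ really are collinear so that the line $TL'$ coincides with the external-bisector line $TN$; this is exactly the content of Lemma~\ref{LTNlemma}, so the argument is sound.
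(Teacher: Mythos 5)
Your proof is correct and is essentially identical to the paper's own argument: both cite Protasov's Theorem for $TI$ as the internal bisector of $\angle BTC$, Theorem~\ref{thm:extAngBisector} for $TL'$ as the external bisector, and conclude by the perpendicularity of internal and external bisectors. No gaps.
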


\begin{figure}[ht]
\centering
\includegraphics[scale=0.6]{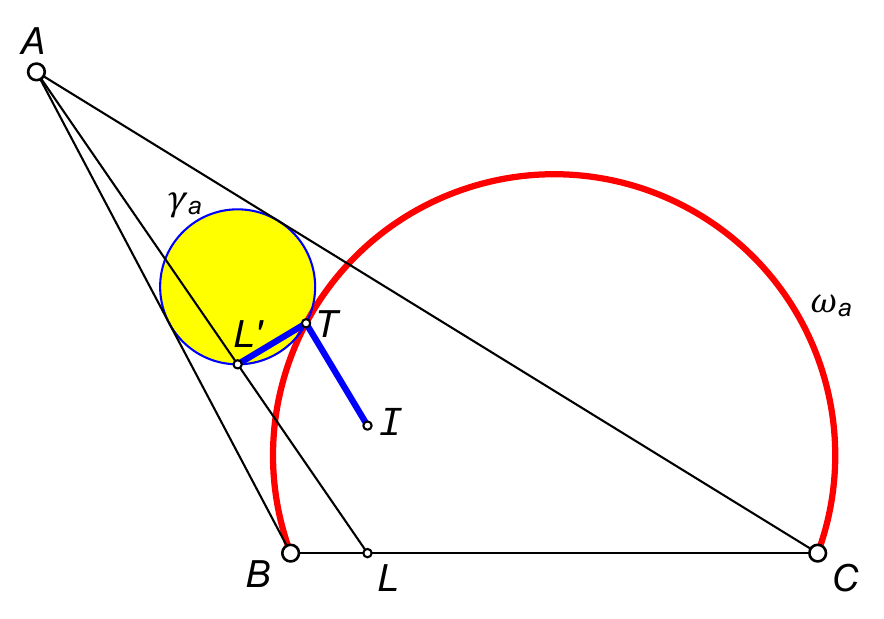}
\caption{blue lines are perpendicular}
\label{fig:result20}
\end{figure}

\begin{proof}
Line $TL'$ is the external angle bisector of $\angle BTC$ by
Theorem \ref{thm:extAngBisector}.
Line  $TI$ is the internal angle bisector of $\angle BTC$ by Protasov's Theorem.
Thus, $IT\perp TL'$.
\end{proof}


\begin{theorem}\label{thm:result22}
Let $\gamma_a$ be any circle inscribed in $\angle BAC$.
Let $AL$ meet $\gamma_a$ at $X$ (closer to $A$).
Let $\gamma_a$ touch $AC$ and $AB$ at $E$ and $F$, respectively.
Let $AI$ meet $EF$ at $G$.
Then $X$, $G$, $I$, and $L$ lie on a circle (Figure~\ref{fig:result22}).
\end{theorem}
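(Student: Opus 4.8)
The plan is to prove the concyclicity by the converse of the power of a point, using the vertex $A$ as pivot. The four points split naturally into two secants through $A$: the points $X$ and $L$ both lie on the cevian $AL$, while $G$ and $I$ both lie on the bisector $AI$ (recall $AI$ passes through the center $D$ of $\gamma_a$, since $\gamma_a$ is inscribed in $\angle BAC$). Hence $X$, $G$, $I$, $L$ are concyclic precisely when $AX\cdot AL=AG\cdot AI$, and the whole problem reduces to verifying this single length identity. Because $X,L$ lie on the same ray from $A$, as do $G,I$, the point $A$ is exterior to the prospective circle, so the unsigned products may be used.

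First I would evaluate the power of $A$ with respect to $\gamma_a$ along the line $AL$. Since $AL$ meets $\gamma_a$ at $X$ (near $A$) and $L'$ (far from $A$), and since $AE=AF$ is the tangent length from $A$, this gives $AX\cdot AL'=AE^2$. Next I would handle $AG$: because $E$ and $F$ are symmetric across the bisector $AI$, the chord $EF$ is perpendicular to $AI$ and $G=AI\cap EF$ is its midpoint; equivalently $G$ is the foot of the perpendicular from $E$ to $AI$. In the right triangle $ADE$ (right-angled at $E$, as $DE\perp AE$), the segment $EG$ is the altitude to the hypotenuse $AD$, so the geometric-mean relation yields $AE^2=AG\cdot AD$. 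Combining the two displays gives $AX\cdot AL'=AG\cdot AD$.

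Finally I would invoke the homothety centered at $A$ that carries $\gamma_a$ to the incircle, already used in the proofs of Theorems~\ref{thm:result24} and~\ref{thm:result25}; it sends $D\mapsto I$ and $L'\mapsto L$, whence $AI/AD=AL/AL'$. Multiplying the identity $AX\cdot AL'=AG\cdot AD$ by this common ratio converts it into $AX\cdot AL=AG\cdot AI$, which is exactly the concyclicity condition, and the converse of the power of a point then gives that $X$, $G$, $I$, $L$ lie on a circle.

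The main obstacle is not any single computation but pinning down the configuration so that the two standard facts apply cleanly. I must confirm that $L'\mapsto L$ is the correct correspondence under the homothety, i.e.\ that $L$ is the \emph{far} intersection of line $AL$ with the incircle matching $L'$ as the far intersection with $\gamma_a$, and I must confirm that $G$ genuinely coincides with the foot of the altitude from $E$ so that the geometric-mean relation $AE^2=AG\cdot AD$ is legitimate. Once the ordering of the points along each secant is verified, everything reduces to the elementary length identity above.
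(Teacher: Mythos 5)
Your proposal is correct and follows essentially the same route as the paper: reduce to $AX\cdot AL=AG\cdot AI$, obtain $AX\cdot AL'=AE^2$ from the tangent--secant relation, obtain $AE^2=AG\cdot AD$ from the right-triangle altitude relation in $\triangle ADE$, and convert $AL'$, $AD$ into $AL$, $AI$ via the homothety at $A$ (the paper quotes $DL'\parallel IL$ from Theorem~\ref{thm:result24}, which is the same fact). No gaps; the configuration points you flag are exactly the ones the paper implicitly relies on as well.
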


\begin{figure}[ht]
\centering
\includegraphics[scale=0.6]{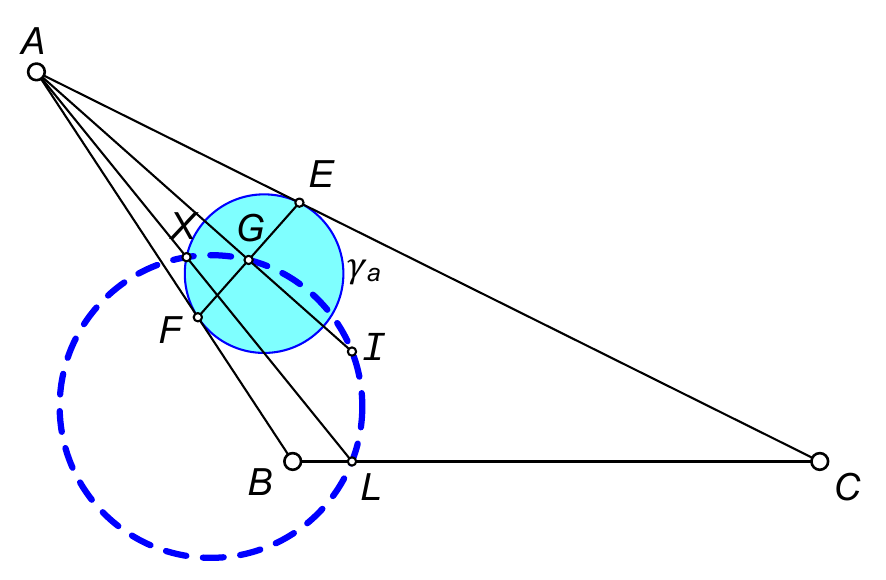}
\caption{four points lie on a circle}
\label{fig:result22}
\end{figure}

\begin{proof}
Let $XL$ meet $\gamma_a$ again at $L'$.
Line $AXL'$ is a secant to circle $\gamma_a$, and $AE$ a tangent.
So $AX\cdot AL'=(AE)^2$.

\begin{minipage}{3in}
Triangles $AGE$ and $AED$ are similar right triangles,
so $$\frac{AE}{AG}=\frac{AD}{AE}\quad\hbox{or}\quad(AE)^2=AD\cdot AG.$$
Thus,
$$AX\cdot AL'=AD\cdot AG\quad\hbox{or}\quad\frac{AX}{AG}=\frac{AD}{AL'}.$$
From Theorem~\ref{thm:result24}, $DL'\parallel IL$, so
$$\frac{AD}{AL'}=\frac{AI}{AL}.$$
\end{minipage}
\raisebox{-1in}{\includegraphics[scale=0.5]{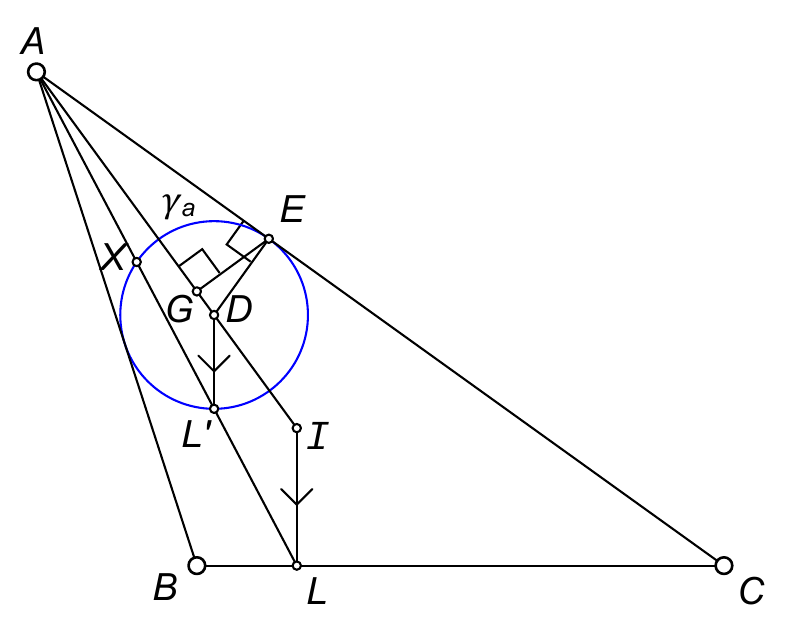}}

Therefore,
$$\frac{AX}{AG}=\frac{AI}{AL}\quad\hbox{or}\quad AX\cdot AL=AG\cdot AI.$$

This implies that $X$, $G$, $I$, and $L$ lie on a circle.
\end{proof}


\begin{theorem}\label{thm:result23}
Let $\gamma_a$ touch $AC$ and $AB$ at $E$ and $F$, respectively.
Let $AI$ meet $EF$ at $G$.
Let $EF$ meet $CB$ at $K$.
Then $X$, $G$, $Y'$, $T$, $I$, $L$, and $K$ lie on a circle
with diameter $KI$ (Figure~\ref{fig:result23}).
\end{theorem}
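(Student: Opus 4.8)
The plan is to use the standard characterization that a point $P$ lies on the circle with diameter $KI$ if and only if $\angle KPI=90\degrees$. Rather than treat all seven points uniformly, I would first establish directly that three of them---$L$, $T$, and $G$---subtend a right angle over $KI$, and then bootstrap the remaining two points, $X$ and $Y'$, from the concyclicities already proved in Theorem~\ref{thm:result22} and Lemma~\ref{lemma:result21}.

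For the three direct points, I would argue as follows. Since $IL\perp BC$ and $K$ lies on line $CB$, the segment $LK$ runs along $BC$, so $\angle ILK=90\degrees$ and $L$ lies on the circle with diameter $KI$. For $T$: by Theorem~\ref{thm:result18} the lines $EF$, $TL'$, and $CB$ concur at $K$, so $K$ lies on line $TL'$; by Theorem~\ref{thm:result20} we have $IT\perp TL'$, hence $IT\perp TK$ and $\angle ITK=90\degrees$, placing $T$ on the circle. For $G$: because $\gamma_a$ is inscribed in $\angle BAC$, its center $D$ lies on the bisector $AI$, and the chord of contact $EF$ (the polar of $A$ with respect to $\gamma_a$) is perpendicular to $AD$, hence perpendicular to $AI$. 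Since $G=AI\cap EF$ and $K\in EF$, the segment $GI$ lies along $AI$ while $GK$ lies along $EF$, so $\angle IGK=90\degrees$ and $G$ lies on the circle.

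Having placed $G$, $I$, and $L$ on the circle with diameter $KI$, I would invoke Theorem~\ref{thm:result22}, which says $X$, $G$, $I$, $L$ are concyclic: three non-collinear points determine a unique circle, so the circle through $G$, $I$, $L$ must be the circle with diameter $KI$, and therefore $X$ lies on it as well. Finally, with $X$, $T$, and $L$ now known to lie on the circle with diameter $KI$, I would apply Lemma~\ref{lemma:result21} (the points $X$, $Y'$, $T$, $L$ are concyclic) in the same way to conclude that $Y'$ lies on the circle too. This completes the proof that all seven points $X$, $G$, $Y'$, $T$, $I$, $L$, $K$ lie on one circle with diameter $KI$.

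The main thing to watch is the non-degeneracy used in the bootstrap steps: the triples $\{G,I,L\}$ and $\{X,T,L\}$ must be non-collinear for ``three points determine a circle'' to apply. In the generic configuration this holds, but it is worth confirming that none of these triples degenerates (for instance, $G$, $I$, $L$ becoming collinear would force $EF$ to pass through $L$, which does not happen for an Ajima circle lying strictly inside the triangle). Everything else is a direct application of the right-angle characterization together with the two previously established concyclicity results, so I expect no computational difficulty.
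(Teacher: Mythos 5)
Your proposal is correct and follows essentially the same route as the paper: establish $\angle ITK$, $\angle ILK$, and $\angle IGK$ as right angles directly (using Theorems~\ref{thm:result18} and \ref{thm:result20} for $T$ and the perpendicularity of $AI$ to the chord of contact $EF$ for $G$), then bootstrap the remaining points from earlier concyclicity results. The only difference is that you obtain $X$ from Theorem~\ref{thm:result22} (circle through $X$, $G$, $I$, $L$) where the paper uses Theorem~\ref{thm:result17} (circle through $X$, $T$, $L$, $K$); both are valid, and your attention to the non-degeneracy of the determining triples is a reasonable extra care.
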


\begin{figure}[ht]
\centering
\includegraphics[scale=0.5]{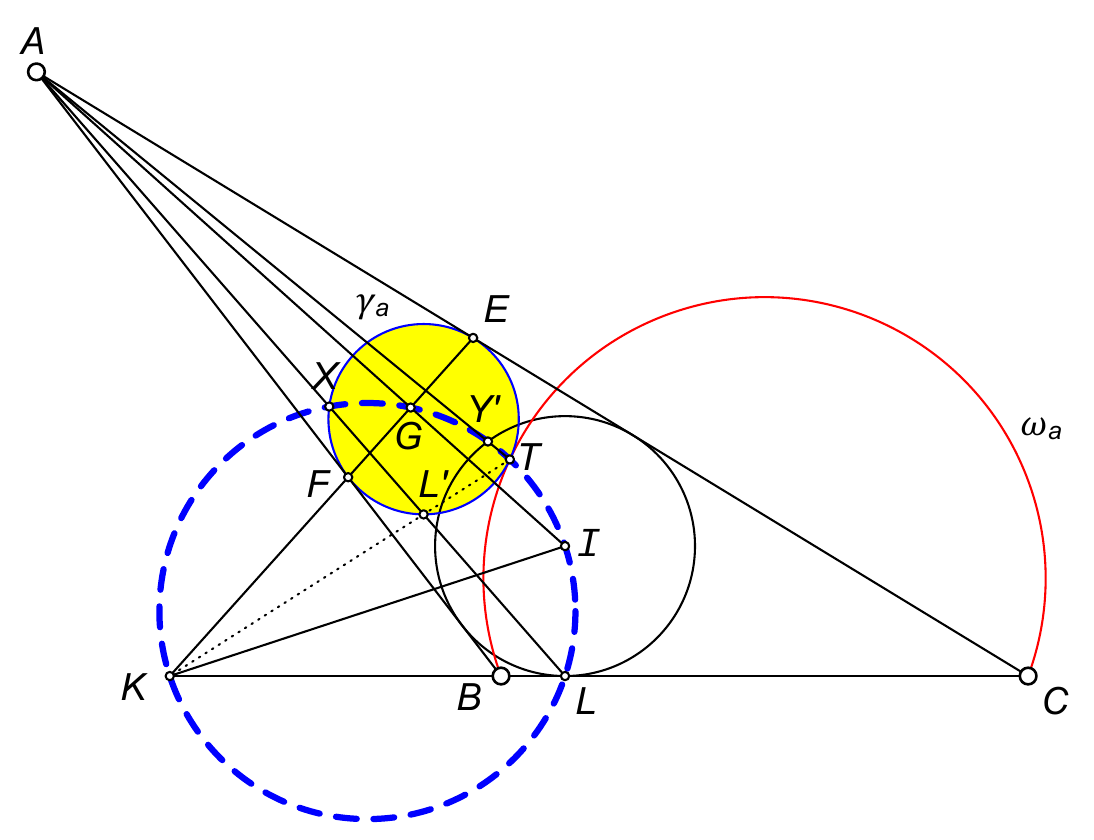}
\caption{seven points lie on a circle}
\label{fig:result23}
\end{figure}

\begin{proof}
From Theorem~\ref{thm:result18}, $TK$ passes through $L'$. But from Theorem~\ref{thm:result20}, $TL'\perp TI$,
so $\angle KTI$ is a right angle. This means that $T$ lies on the circle with diameter $KI$.
Since $\angle ILK$ is also a right angle, this means that $L$ is also on this circle.

From Theorem~\ref{thm:result17}, the circle through $T$, $L$, and $K$ also passes through $X$.


From Lemma~\ref{lemma:result21}, the circle through $X$, $T$, and $L$ passes through $Y'$.

Since $AI$ bisects $\angle BAC$, $G$ is the midpoint of $EF$ and $AG\perp EF$.
Hence $\angle KGI$ is a right angle.
Thus, $G$ lies on the circle with diameter $KI$.

Therefore, all seven points lie on the blue circle shown in Figure~\ref{fig:result23}.
\end{proof}

See also \cite{Chuong} for a proof that $X$, $G$, $T$, $I$, $L$, and $K$ lie on a circle.

\newpage

\begin{theorem}
We have $\angle ATD=\angle IXT$ (Figure~\ref{fig:result26}).
\end{theorem}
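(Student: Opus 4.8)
The plan is to reduce the statement to Theorem~\ref{thm:result1}, which already asserts $\angle ATD=\angle ILT$. Given this, it suffices to establish that $\angle ILT=\angle IXT$, and the whole proof collapses to a concyclicity argument together with an inscribed-angle comparison.

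First I would invoke Theorem~\ref{thm:result23}, which shows that $X$, $T$, $I$, and $L$ all lie on a single circle (the one with diameter $KI$). On that circle both $\angle ILT$ and $\angle IXT$ are inscribed angles standing on the common chord $IT$: the first has vertex $L$, the second has vertex $X$. The governing principle is simply that inscribed angles subtending the same chord from the same arc are equal.

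Next I would verify the one genuinely configurational point: that $X$ and $L$ lie on the same arc of this circle relative to the chord $IT$. From the figure, $X$ is the intersection of $AL$ with $\gamma_a$ nearer $A$, while $L$ is the incircle's touch point on $BC$; both vertices fall on the same side of line $IT$, so the two inscribed angles intercept the same arc $\arc{IT}$ and are therefore equal, giving $\angle IXT=\angle ILT$. Combining this with Theorem~\ref{thm:result1} yields
$$\angle ATD=\angle ILT=\angle IXT,$$
which is exactly the claim.

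The main obstacle is not computational at all — once Theorems~\ref{thm:result1} and~\ref{thm:result23} are in hand the argument is immediate. The only point requiring care is the arc check: one must confirm that $X$ and $L$ sit on the same arc cut off by $IT$ in the circle of Theorem~\ref{thm:result23}, so that the two inscribed angles come out equal rather than supplementary. This is a matter of reading off the relative positions in the configuration rather than a substantive difficulty.
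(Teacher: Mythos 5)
Your proposal is correct and follows essentially the same route as the paper: it cites Theorem~\ref{thm:result23} for the concyclicity of $X$, $T$, $I$, $L$, deduces $\angle IXT=\angle ILT$ as inscribed angles on the chord $IT$, and finishes with Theorem~\ref{thm:result1}. The paper's proof is identical in substance, differing only in that it does not spell out the same-arc check you flag.
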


\begin{figure}[ht]
\centering
\includegraphics[scale=0.5]{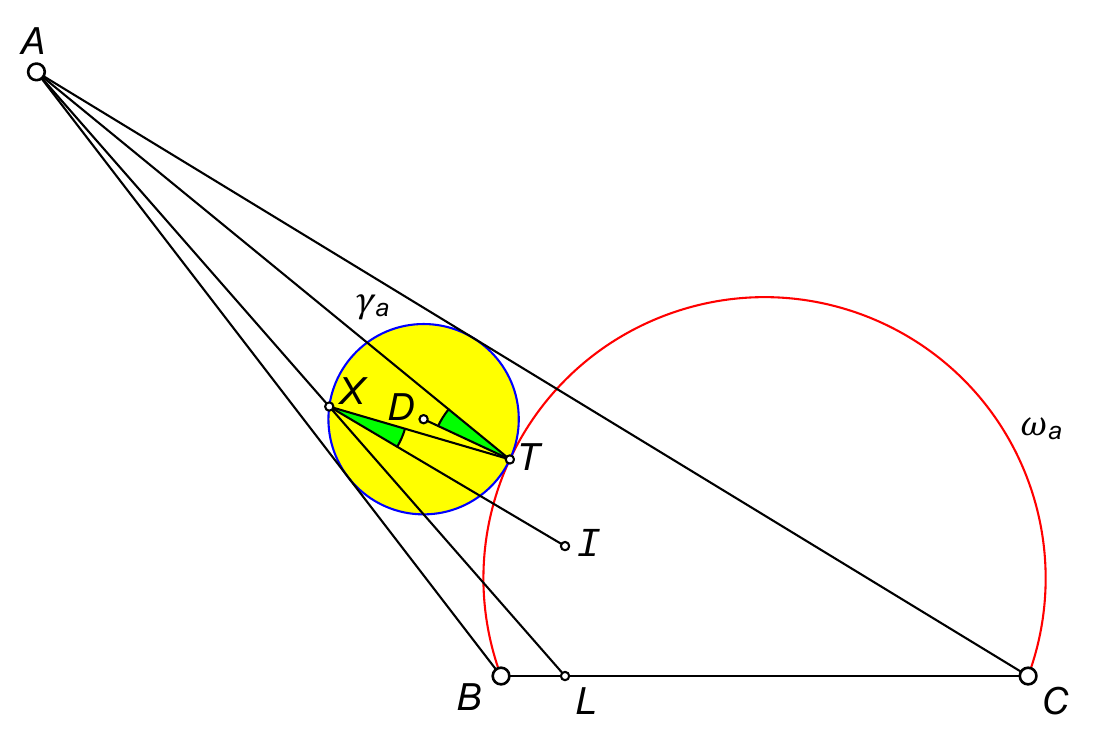}
\caption{green angles are equal}
\label{fig:result26}
\end{figure}

\begin{minipage}{2in}
\begin{proof}
By Theorem~\ref{thm:result23}, $X$, $T$, $I$, and $L$ lie on a circle
as shown in the figure to the right.
Then $$\angle IXT=\angle ILT$$ because both subtend arc $\arc{TI}$.
By Theorem~\ref{thm:result1},
$$\angle ILT=\angle ATD$$
where $D$ is the center of $\gamma_a$ as seen in Figure~\ref{fig:result26}.
Thus, $\angle ATD=\angle IXT$.
\end{proof}
\end{minipage}
\hspace{-0.22in}
\hfill
\raisebox{-1in}{\includegraphics[scale=0.55]{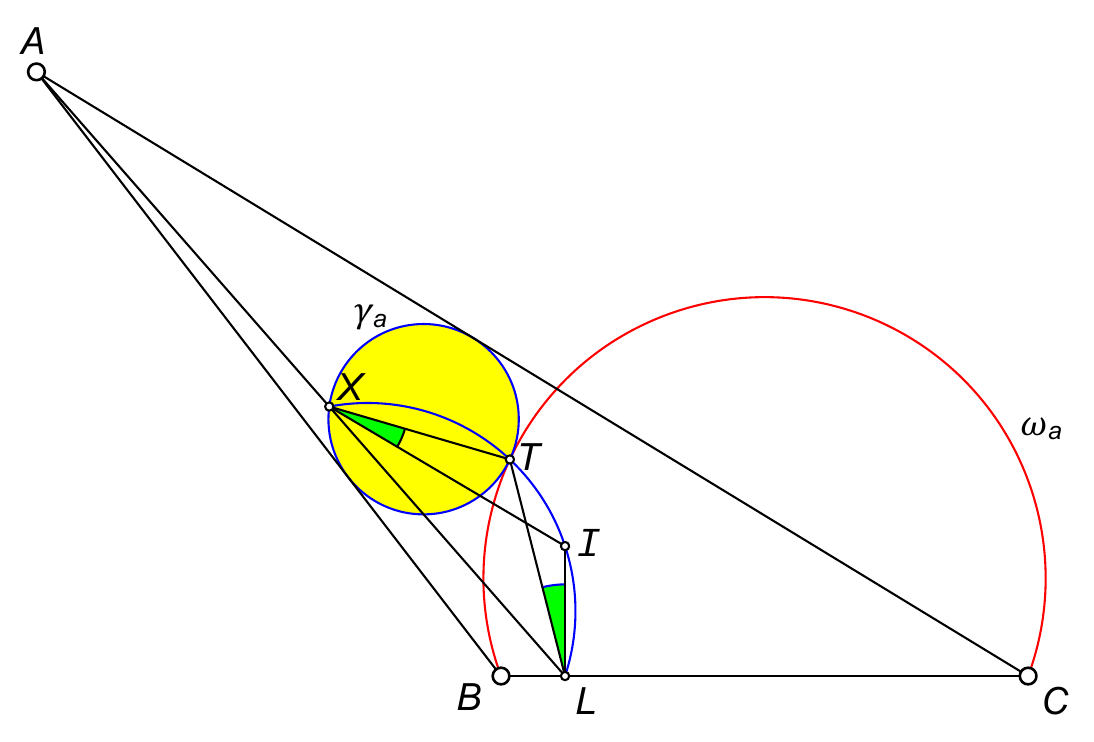}}


\section{Arcs with a Given Angular Measure}

We can generalize many of the results in \cite{RabSup} by replacing the
semicircles with arcs having the same angular measure.
Let $\omega_a$, $\omega_b$, and $\omega_c$ be arcs with the same angular measure $\theta$
erected internally on the sides of $\triangle ABC$ as shown in Figure~\ref{fig:genArcs}.

\begin{figure}[ht]
\centering
\includegraphics[scale=0.4]{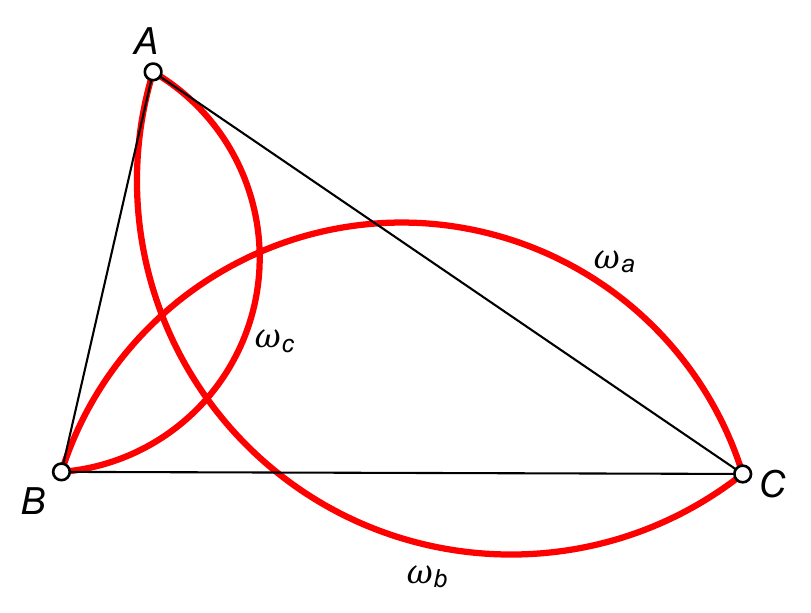}
\caption{arcs have same angular measure}
\label{fig:genArcs}
\end{figure}

Throughout this section, we will use the symbols shown in the following table.

\begin{center}
\begin{tabular}{|c|l|}
\hline
\multicolumn{2}{|c|}{\color{red} \textbf{Notation for this Section}}\\ \hline
\textbf{Symbol}&\textbf{Description} \\ \hline
$\theta$&angular measure of arc $\arc{BTC}$\\ \hline
$t$&$\tan(\theta/4)$\\ \hline
$\rho_a$&radius of $\gamma_a$\\ \hline
$\omega_a$&arc with angular measure $\theta$ that passes through $B$ and $C$\\ \hline
$R_a$&radius of $\omega_a$\\ \hline
$H$&point where incircle touches $AC$\\ \hline
$K$&point where $\gamma_a$ touches $AC$\\ \hline
$E$&point where $AC$ meets $\omega_a$ again\\ \hline
$O_a$&center of $\omega_a$\\ \hline
$L$&point where incircle touches $BC$\\ \hline
$L'$&point closer to $L$ where $AL$ meets $\gamma_a$\\ \hline
$X$&point closer to $A$ where $AL$ meets $\gamma_a$\\ \hline
\end{tabular}
\end{center}


\bigskip

\begin{theorem}\label{thm:thetaOver2}
Let the line through $I$ parallel to $BE$ meet $AC$ at $F$
(Figure~\ref{fig:IF}).
Then
$$\angle IFA=\frac{\theta}{2}.$$
\end{theorem}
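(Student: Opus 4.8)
The plan is to reduce the target angle to an inscribed angle in the circle carrying $\omega_a$ and then read that inscribed angle off directly from $\theta$. The single governing observation is that, since $IF\parallel BE$ and the line $AC$ is a transversal common to both $FI$ and $EB$, the angle $\angle IFA$ equals the angle that the chord $BE$ makes with $AC$; and since $B$, $E$, $C$ all lie on the circle $\omega_a$, that angle is pinned down by the arc $\arc{BTC}$. So the whole theorem is really one inscribed-angle computation dressed up by a parallel-lines step.

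First I would establish $\angle IFA=\angle BEA$. The incenter $I$ is interior to $\triangle ABC$, hence on the same side of line $AC$ as $B$, so the parallel rays $FI$ and $EB$ point to the same side of the transversal $AC$; and the rays $FA$ and $EA$ point the same way (both toward $A$). Hence $FI\parallel EB$ yields equal corresponding angles, $\angle IFA=\angle BEA$.

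Next I would compute $\angle BEA$ from $\theta$. Since $E$ lies on segment $AC$ strictly between $A$ and $C$ (as in the configuration of Theorem~\ref{thm:Thailand}), the rays $EA$ and $EC$ are opposite, so $\angle BEA=180\degrees-\angle BEC$. Now $E$ is on the interior side of line $BC$ — that is precisely the side on which the arc of angular measure $\theta$ is erected — so $E$ lies on the same arc $\arc{BTC}$ as $T$. Consequently $\angle BEC=\angle BTC$, the common inscribed angle subtending chord $BC$ from that arc, and since $\arc{BTC}$ has measure $\theta$ this inscribed angle subtends the complementary arc of measure $360\degrees-\theta$; thus $\angle BEC=\tfrac12(360\degrees-\theta)=180\degrees-\theta/2$. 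Combining, $\angle BEA=180\degrees-(180\degrees-\theta/2)=\theta/2$, and therefore $\angle IFA=\theta/2$. As a sanity check, the semicircle case $\theta=180\degrees$ makes $E$ the foot of the altitude from $B$, so $BE\perp AC$, $IF\perp AC$, and indeed $\angle IFA=90\degrees=\theta/2$.

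The mathematical content sits entirely in the inscribed-angle step; the only real pitfalls are configurational — confirming that $E$ lies strictly between $A$ and $C$, that it lies on the same arc as $T$, and that $I$ and $B$ lie on the same side of $AC$ so that \emph{corresponding} (not alternate or supplementary) angles are equal. I expect this bookkeeping to be the main obstacle. The cleanest way to dispose of it is to run the argument with directed angles modulo $180\degrees$, where both $\angle IFA=\angle BEA$ and $\angle BEC=\angle BTC$ hold with no case analysis, reducing the theorem to the single scalar identity $\angle BTC=180\degrees-\theta/2$.
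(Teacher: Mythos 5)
Your proof is correct and follows essentially the same route as the paper's: both compute $\angle BEC=180\degrees-\theta/2$ as the inscribed angle subtending the arc of measure $360\degrees-\theta$, pass to the supplementary angle $\angle AEB=\theta/2$, and transfer it to $\angle IFA$ via the parallel $IF\parallel BE$. Your added attention to the configurational cases (where $E$ falls relative to $A$ and $C$) corresponds to the paper's Notes 1 and 2 following the theorem.
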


\begin{figure}[ht]
\centering
\includegraphics[scale=0.5]{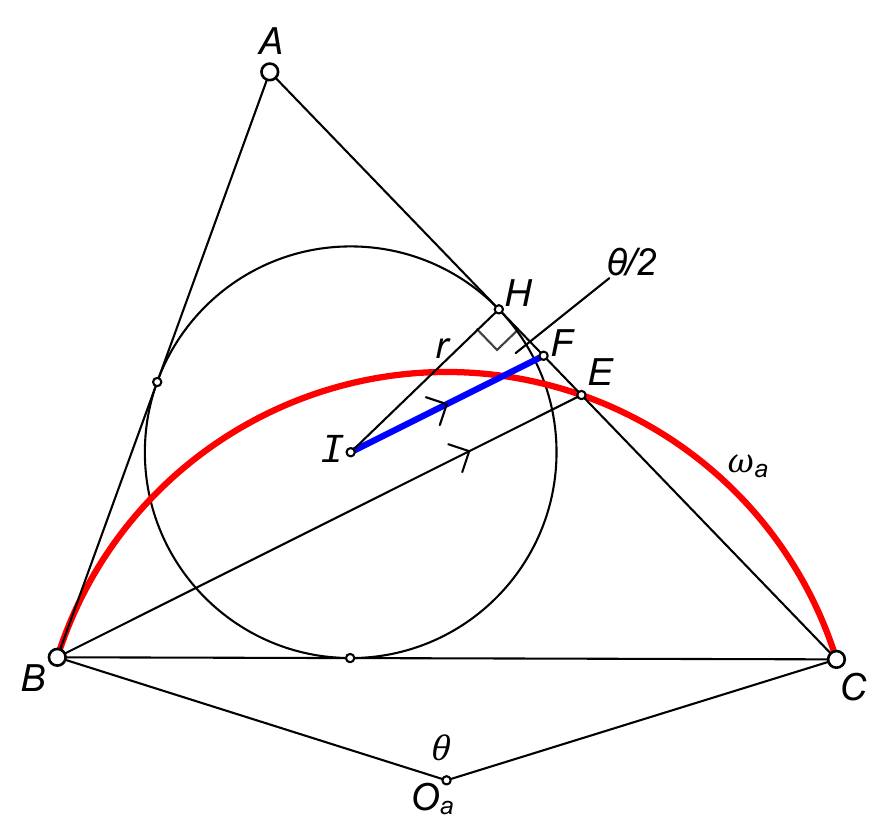}
\caption{}
\label{fig:IF}
\end{figure}

\begin{proof}
Since arc $\arc{BC}$ has measure $\theta$, $\angle CO_aB=\theta$ and the remaining arc on the circle $(O_a)$ outside the triangle
must have measure $360\degrees-\theta$. An inscribed angle is measured by half
its intercepted arc, so $\angle BEC=180\degrees-\theta/2$.
Consequently, $\angle AEB=\theta/2$ and since $BE\parallel IF$, we have $\angle AFI=\theta/2$.
\end{proof}

\newpage

\textbf{Note 1.} If $\theta<2C$, the figure looks different (Figure~\ref{fig:IF2}).
In this case, the arc (extended) meets $AC$ at a point $E$ such that $C$ lies between $A$ and $E$.
In this case, $\angle CEB$ is measured by half arc $\arc{BC}$ and
$BE\parallel IF$ implies that $\angle AFI=\theta/2$.

\begin{figure}[ht]
\centering
\includegraphics[scale=0.37]{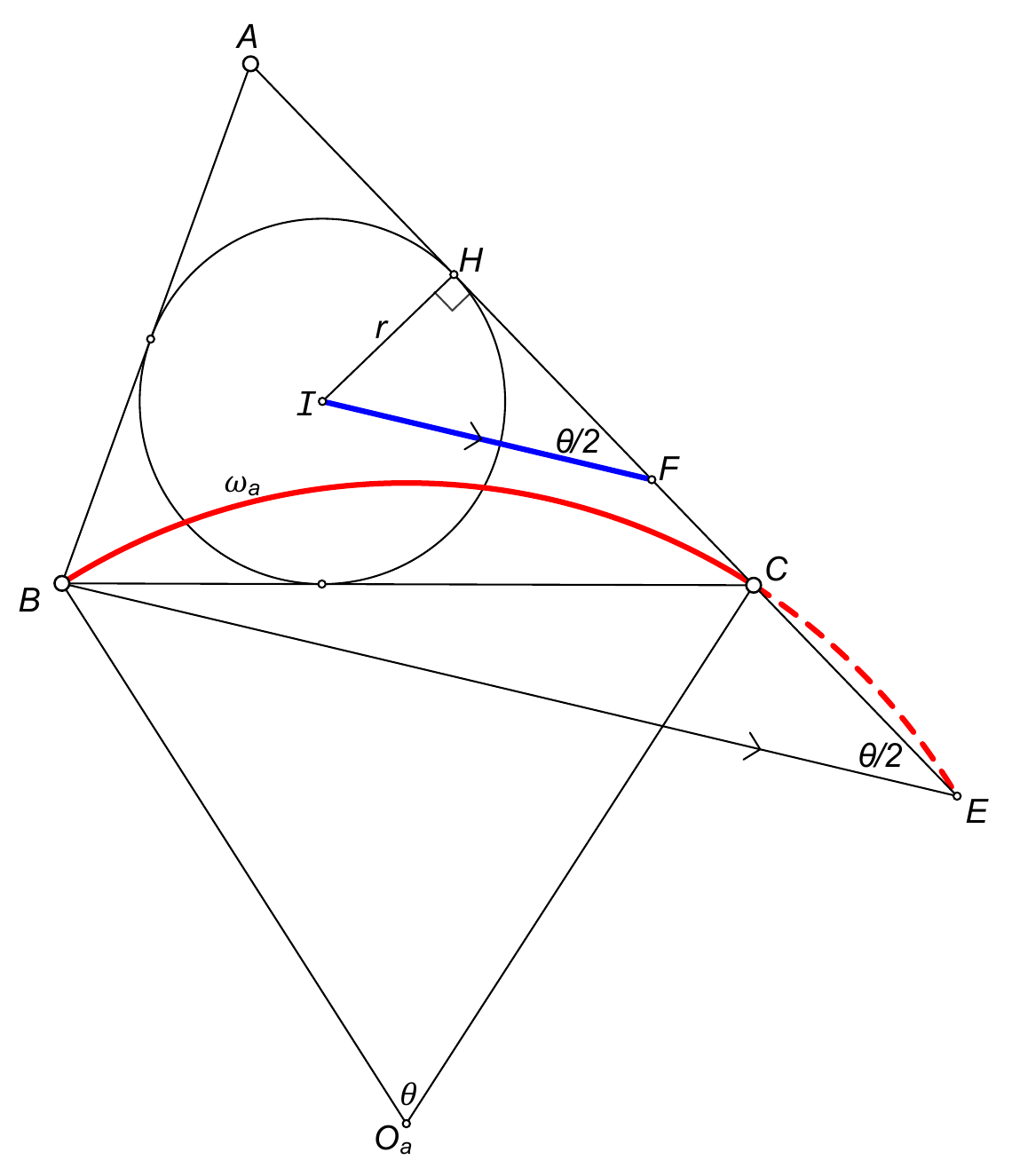}
\caption{Case $\theta<2C$}
\label{fig:IF2}
\end{figure}

\textbf{Note 2.} If $F$ lies between $A$ and $H$ or if $\theta>2(180\degrees-A)$ which causes
$A$ to lie between $E$ and $F$, the figure also looks different (Figure~\ref{fig:IF3}).
In this case, the arc meets $AC$ (possibly extended) at a point $E$ such that $F$
lies between $E$ and $H$.
In this case, the red arc has measure $\theta$ and the remaining arc (below $BC$)
has measure $360\degrees-\theta$. Then $\angle BEC$ is measured by half that arc
and so $\angle BEC=180\degrees-\theta/2$.
So $BE\parallel IF$ implies that $\angle AFI=\theta/2$.

\begin{figure}[ht]
\centering
\includegraphics[scale=0.4]{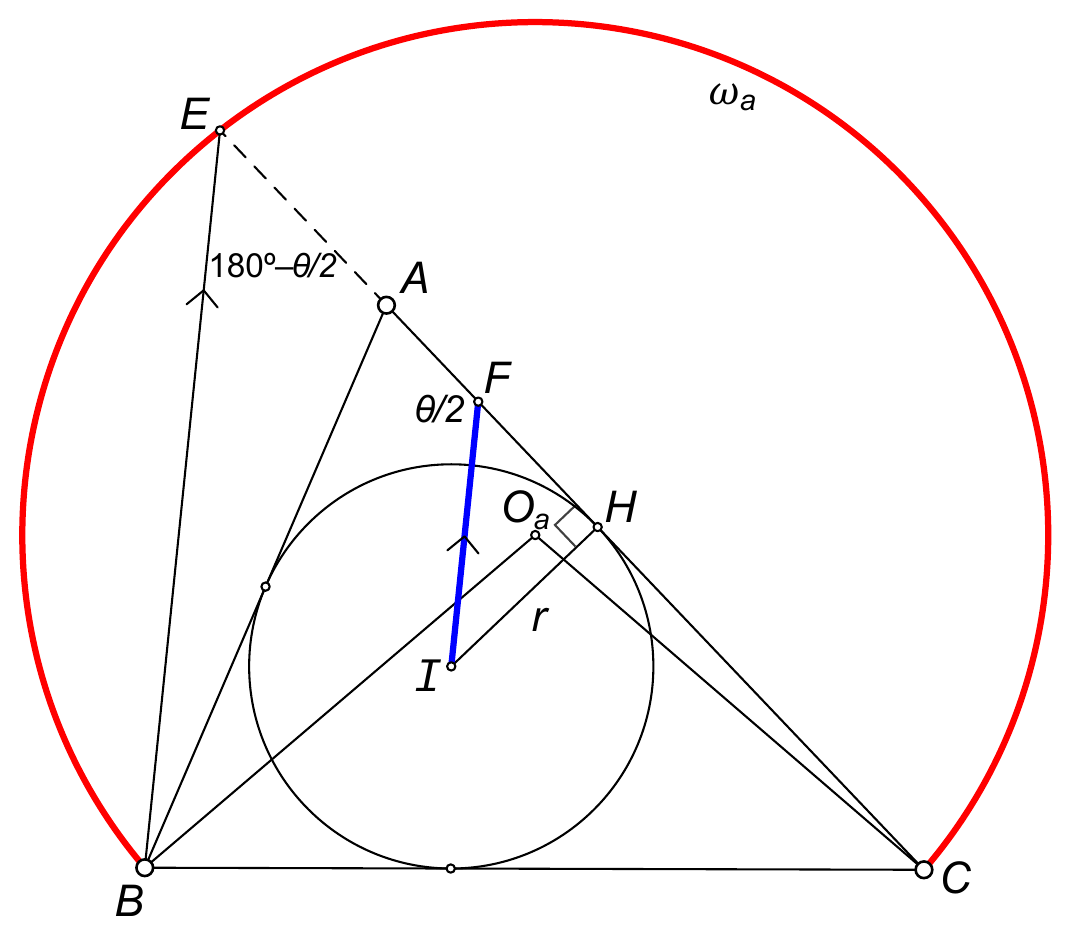}
\caption{Case $\theta>2(180\degrees-A)$}
\label{fig:IF3}
\end{figure}

\begin{corollary}\label{thm:IF}
Let the line through $I$ parallel to $BE$ meet $AC$ at $F$.
Then $$IF=r\csc\frac{\theta}{2}.$$
\end{corollary}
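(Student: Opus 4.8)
The plan is to reduce everything to a single right triangle and apply Theorem~\ref{thm:thetaOver2}. The essential observation is that the point $H$ where the incircle touches $AC$ gives us a perpendicular of known length: since $IH$ is a radius drawn to a point of tangency, we have $IH \perp AC$ and $IH = r$. Because $F$ also lies on line $AC$, the three points $I$, $H$, $F$ form a right triangle with the right angle at $H$.

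First I would invoke Theorem~\ref{thm:thetaOver2} to record that $\angle IFA = \theta/2$. Since $F$ and $H$ both lie on line $AC$, the angle $\angle IFH$ in triangle $IHF$ is exactly this angle $\angle IFA = \theta/2$. Then, working in the right triangle $IHF$ with right angle at $H$, the leg opposite the angle at $F$ is $IH = r$ and the hypotenuse is $IF$, so
$$\sin\frac{\theta}{2} = \sin(\angle IFH) = \frac{IH}{IF} = \frac{r}{IF}.$$
Solving for $IF$ gives $IF = r/\sin(\theta/2) = r\csc(\theta/2)$, which is the claim.

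The trigonometry here is immediate, so there is no computational obstacle to speak of. The only point requiring care is the angle identification $\angle IFH = \angle IFA$, which depends on $H$ and $A$ lying on the same side of $F$ along $AC$ (equivalently, on $F$ not lying between $A$ and $H$). As the Notes following Theorem~\ref{thm:thetaOver2} indicate, there are degenerate configurations (for instance when $F$ falls between $A$ and $H$) in which this identification must be reinterpreted; in those cases $\angle IFH$ is the supplement of $\angle IFA$, but since $\sin(\theta/2) = \sin(180\degrees - \theta/2)$ the same relation $\sin(\angle IFH) = r/IF$ still holds and the conclusion is unaffected. Thus the result follows uniformly across all the configurations discussed.
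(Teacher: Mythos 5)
Your proof is correct and is essentially identical to the paper's: both work in the right triangle $IHF$ with $IH=r$ and $\angle IFH=\theta/2$ from Theorem~\ref{thm:thetaOver2}, giving $\sin(\theta/2)=r/IF$. Your extra remark about the configuration where $F$ lies between $A$ and $H$ (handled by $\sin x=\sin(180\degrees-x)$) is a small but welcome refinement the paper leaves implicit.
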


\begin{proof}
From right triangle $IHF$, we have
$$\sin\frac{\theta}{2}=\frac{IH}{IF}=\frac{r}{IF},$$
so $IF=r\csc\frac{\theta}{2}$.
\end{proof}

Let $\gamma_a$ be the circle inside $\triangle ABC$ tangent to sides $AB$ and $AC$
and also tangent to $\omega_a$.
The radii of circles $\gamma_a$, $\gamma_b$, and $\gamma_c$ are denoted by $\rho_a$, $\rho_b$, and $\rho_c$, respectively.

\begin{theorem}\label{thm:thetaOver4}
We have (Figure~\ref{fig:thetaOver4})
$$\angle HIK = \frac{\theta}{4}.$$
\end{theorem}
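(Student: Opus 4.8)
The plan is to work entirely on the line $AC$, where the three relevant points $H$ (incircle touch point), $K$ (touch point of $\gamma_a$) and the auxiliary point $F$ all lie, and to convert the statement into an angle-chase in two triangles sharing the vertex $I$. Recall that $IH\perp AC$ (since $H$ is the incircle's touch point) and that $F$ is, by definition in this section, the point where the line through $I$ parallel to $BE$ meets $AC$. The key input is the generalization of Theorem~\ref{thm:RG13066}: since the arc $\omega_a$ lies on a circle through $B$ and $C$, that theorem applies directly and gives $FI=FK$, so triangle $FIK$ is isosceles with apex at $F$.

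First I would pin down the apex angle. By Theorem~\ref{thm:thetaOver2}, $\angle IFA=\theta/2$. Because $K$ lies on segment $AF$ (see below), the ray $FK$ coincides with the ray $FA$, so $\angle IFK=\angle IFA=\theta/2$. The isosceles triangle $FIK$ then has equal base angles $\angle FIK=\angle FKI=90\degrees-\theta/4$. Next I would use the right triangle $IHF$: its right angle is at $H$, and its angle at $F$ is again $\angle IFH=\angle IFA=\theta/2$ (since $H$ too lies on ray $FA$), so $\angle HIF=90\degrees-\theta/2$. From the order $A,K,H,F$ along $AC$, the ray $IH$ lies between the rays $IK$ and $IF$, whence
$$\angle HIK=\angle FIK-\angle FIH=\left(90\degrees-\frac{\theta}{4}\right)-\left(90\degrees-\frac{\theta}{2}\right)=\frac{\theta}{4}.$$

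The step that really needs care is the configuration, i.e.\ establishing the order $A,K,H,F$ on $AC$ (equivalently, that $K$ is nearer $A$ than $H$ while $F$ is beyond $H$); this is where the three special cases flagged in Notes~1 and~2 must be reconciled with the generic picture. A clean way to settle it is a short length computation: using Corollary~\ref{thm:IF} one gets $FH=IF\cos(\theta/2)=r\cot(\theta/2)>0$, so $F$ is beyond $H$, and taking $K$ nearer $A$ (so $AK=AF-FK$) yields $AK=AH-r\tan(\theta/4)<AH$, confirming the ordering.

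Indeed that same computation gives $HK=r\tan(\theta/4)$, which furnishes an alternative endgame that sidesteps the ray-betweenness bookkeeping: in the right triangle $IHK$ (right angle at $H$), one simply has $\tan\angle HIK=HK/IH=r\tan(\theta/4)/r=\tan(\theta/4)$, and hence $\angle HIK=\theta/4$. I would present the isosceles-triangle angle-chase as the main argument and keep the tangent computation in reserve to justify the configuration; the only genuine obstacle is making sure the ordering argument survives the degenerate and near-degenerate cases of $\theta$.
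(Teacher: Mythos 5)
Your proposal is correct and is essentially the paper's own proof: both use $\angle IFA=\theta/2$ from Theorem~\ref{thm:thetaOver2}, the isosceles triangle $FIK$ with $IF=FK$ from Theorem~\ref{thm:RG13066}, and the right triangle $FHI$ to get $\angle HIK=\angle FIK-\angle FIH=\theta/4$. Your added verification of the ordering $A,K,H,F$ and the alternative computation of $HK=r\tan\frac{\theta}{4}$ are sound extras (the latter matching Corollary~\ref{cor:KH}) that the paper leaves implicit.
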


\begin{figure}[ht]
\centering
\includegraphics[scale=0.4]{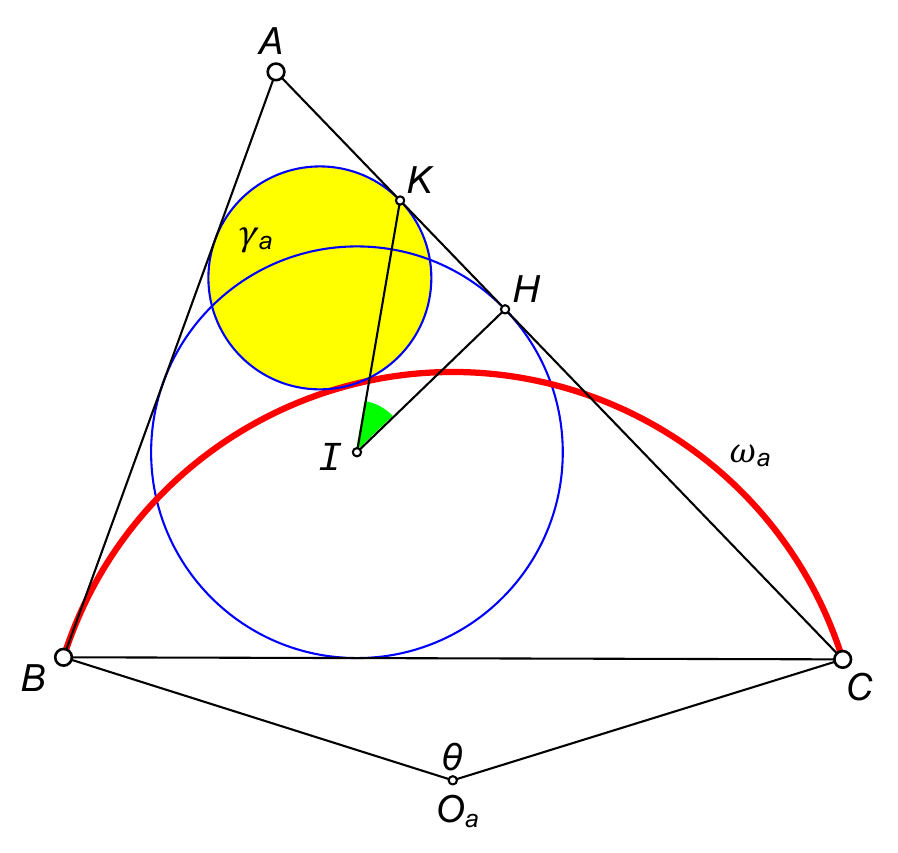}
\caption{green angle = $\theta/4$}
\label{fig:thetaOver4}
\end{figure}

\begin{proof}
A line through $I$ parallel to $BE$ meets $AC$ at $F$ (Figure~\ref{fig:thetaOver4proof}).
\begin{figure}[ht]
\centering
\includegraphics[scale=0.5]{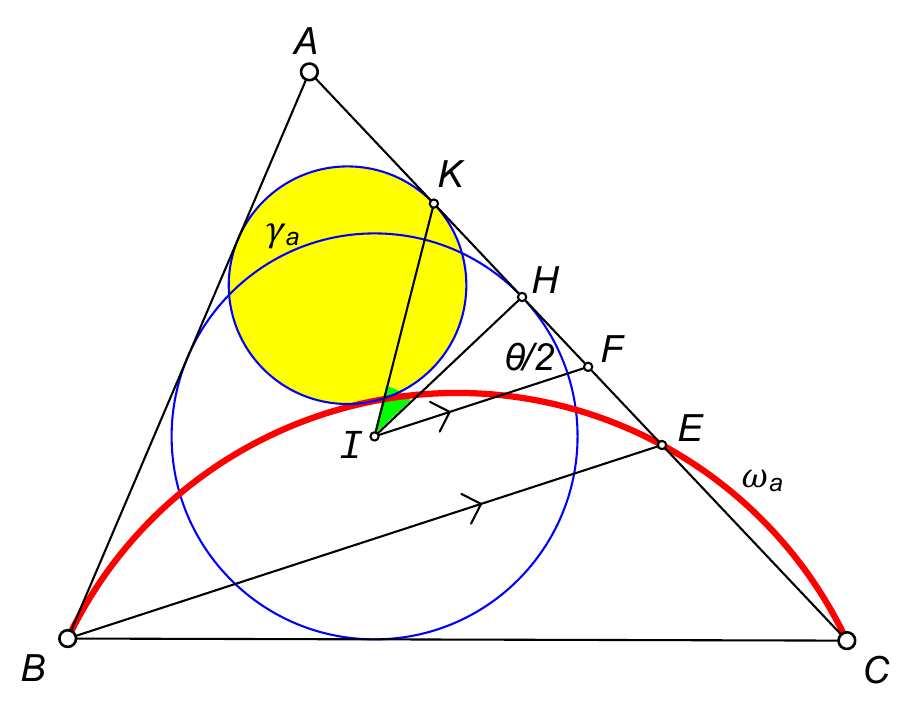}
\caption{}
\label{fig:thetaOver4proof}
\end{figure}

From Theorem~\ref{thm:thetaOver2}, $\angle AFI=\theta/2$.
From Theorem~\ref{thm:RG13066}, $IF=FK$, so $\triangle KFI$ is isosceles
and $\angle FIK=\angle IKF=(180\degrees-\theta/2)/2=90\degrees-\theta/4$.
From right triangle $FHI$, we see that $\angle FIH=90\degrees-\theta/2$.

Thus, $\angle HIK=\angle FIK-\angle FIH=(90\degrees-\theta/4)-(90\degrees-\theta/2)=\theta/4$.
\end{proof}

\newpage

\begin{theorem}\label{thm:result11}
We have $\angle DKI=\theta/4$ (Figure~\ref{fig:result11}).
\end{theorem}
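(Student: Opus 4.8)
The plan is to reduce the claim to Theorem~\ref{thm:thetaOver4}, which already gives $\angle HIK=\theta/4$, by showing that $\angle DKI$ and $\angle HIK$ are equal. The bridge between them is the pair of parallel perpendiculars dropped onto side $AC$.

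First I would record the two tangency conditions. Since $\gamma_a$ is tangent to $AC$ at $K$, the radius $DK$ is perpendicular to $AC$; since the incircle is tangent to $AC$ at $H$, the radius $IH$ is perpendicular to $AC$. Hence $DK\parallel IH$, both being perpendicular to the common line $AC$. Next I would treat $KI$ as a transversal of these two parallel lines. Because $D$ lies off $AC$ on the interior side of the triangle while $H$ lies on $AC$ itself, the points $D$ and $H$ fall on opposite sides of line $KI$; consequently $\angle DKI$ and $\angle HIK$ are the alternate interior pair determined by this transversal, and are therefore equal. Combining this with Theorem~\ref{thm:thetaOver4}, which asserts $\angle HIK=\theta/4$, yields $\angle DKI=\theta/4$ as required.

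The only point requiring care is the configuration bookkeeping in the transversal step: one must confirm that $DK$ and $IH$, viewed as rays, point in opposite directions ($D$ lies above $K$ while $H$ lies below $I$), so that the relevant angles form the alternate interior pair rather than a supplementary pair. This is immediate from the fact that $I$ and $D$ are interior centers with feet $H$ and $K$ on $AC$, but it should be stated explicitly. If one wishes to avoid any appeal to the figure, a one-line coordinate check settles it: placing $AC$ on the $x$-axis with $K=(0,0)$, $D=(0,\rho_a)$, $H=(d,0)$, and $I=(d,r)$, one computes $\cos\angle DKI=r/\sqrt{d^2+r^2}=\cos\angle HIK$ directly. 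I expect no genuine obstacle here, since all the substantive content has been absorbed into Theorem~\ref{thm:thetaOver4}; the present theorem is essentially a transfer of that angle across the parallel pair $DK\parallel IH$.
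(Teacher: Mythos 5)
Your proposal is correct and follows essentially the same route as the paper: observe that $DK$ and $IH$ are both perpendicular to $AC$, hence parallel, and transfer the angle $\angle HIK=\theta/4$ from Theorem~\ref{thm:thetaOver4} across the transversal $KI$. The extra care you take with the alternate-interior-angle bookkeeping (and the coordinate check) is a reasonable addition but not a departure from the paper's argument.
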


\begin{figure}[ht]
\centering
\includegraphics[scale=0.5]{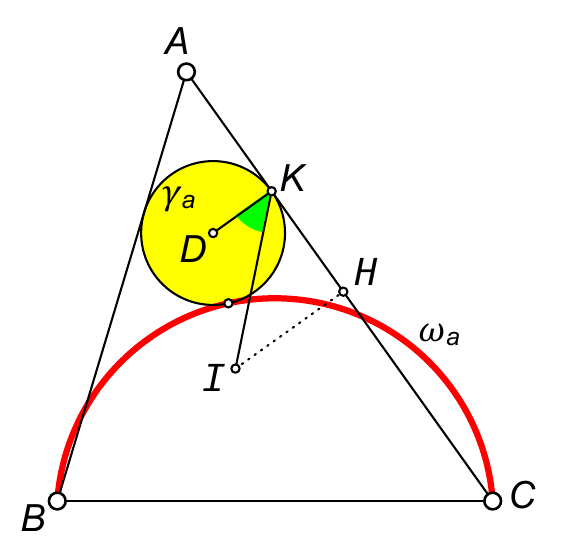}
\caption{$\angle DKI=\theta/4$}
\label{fig:result11}
\end{figure}

\begin{proof}
Since $DK$ and $IH$ are both perpendicular to $AC$, we have $DK\parallel IH$.
Thus $\angle DKI=\angle HIK=\theta/4$.
\end{proof}

\begin{corollary}\label{cor:KH}
The length of the common external tangent between $\rho_a$ and the incircle is $r\tan\frac{\theta}{4}$.
\end{corollary}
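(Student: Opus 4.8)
The plan is to recognize that the common external tangent in question is simply the segment $HK$ cut off along line $AC$. Both the incircle and $\gamma_a$ are tangent to side $AC$, touching it at $H$ and $K$ respectively, and both circles lie on the same side of $AC$ (the interior of the triangle). Hence $AC$ is itself a common external tangent to the two circles, and the length of the common external tangent between the points of contact is exactly the distance $HK$. This observation converts the problem from a statement about tangent lengths into a routine length computation along $AC$.

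First I would set up the right triangle $IHK$. Since $IH$ is the inradius drawn to the touch point $H$, we have $IH = r$ and $IH \perp AC$, so $\angle IHK = 90\degrees$. The remaining ingredient is the angle at $I$, and this is supplied directly by Theorem~\ref{thm:thetaOver4}, which states $\angle HIK = \theta/4$. With the right angle located at $H$ and the opposite side being $HK$, the tangent ratio gives
$$\tan\frac{\theta}{4}=\frac{HK}{IH}=\frac{HK}{r},$$
so $HK = r\tan\frac{\theta}{4}$, which is the claimed length.

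There is essentially no analytic obstacle here; the entire content of the corollary is the geometric identification of the common external tangent with the chord $HK$ on $AC$, after which Theorem~\ref{thm:thetaOver4} does all the work. The only point requiring a moment of care is confirming that $AC$ is genuinely a \emph{common} tangent to both circles on the same side, so that $HK$ measures the external tangent (rather than, say, an internal tangent or a secant arrangement). Given that $\gamma_a$ is inscribed in $\angle BAC$ and the incircle touches $AC$ from the same interior side, this is immediate from the configuration, and the result follows as a short corollary of Theorem~\ref{thm:thetaOver4}.
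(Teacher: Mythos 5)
Your proof is correct and follows essentially the same route as the paper: identify $HK$ along $AC$ as the common external tangent, then read off $HK = r\tan\frac{\theta}{4}$ from the right triangle $IHK$ using $IH=r$ and $\angle HIK=\theta/4$ from Theorem~\ref{thm:thetaOver4}. Your extra remark justifying that $AC$ is a common external tangent to both circles is a welcome clarification that the paper leaves implicit.
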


\begin{proof}
In Figure~\ref{fig:thetaOver4}, we see that $HK$ is the common external tangent
between $\rho_a$ and the incircle. Since $I$ is the incenter, $IH=r$.
From right triangle $IHK$, we see that 
$$\tan \angle HIK=\tan\frac{\theta}{4}=\frac{HK}{IH}=\frac{HK}{r}$$
and the result follows.
\end{proof}

\textbf{Note.}
If the arc $\omega_a$ gets large enough, point $A$ will lie inside $\omega_a$ and
circle $\gamma_a$ will not exist. However, if we expand the definition of $\gamma_a$ in that case
so that it refers to the circle outside $\triangle ABC$, tangent to sides $AB$ and $AC$ extended,
and tangent \emph{internally} to $\omega_a$ as shown in Figure~\ref{fig:thetaOver4a}, then
Theorem~\ref{thm:thetaOver4} still holds.

\begin{figure}[ht]
\centering
\includegraphics[scale=0.5]{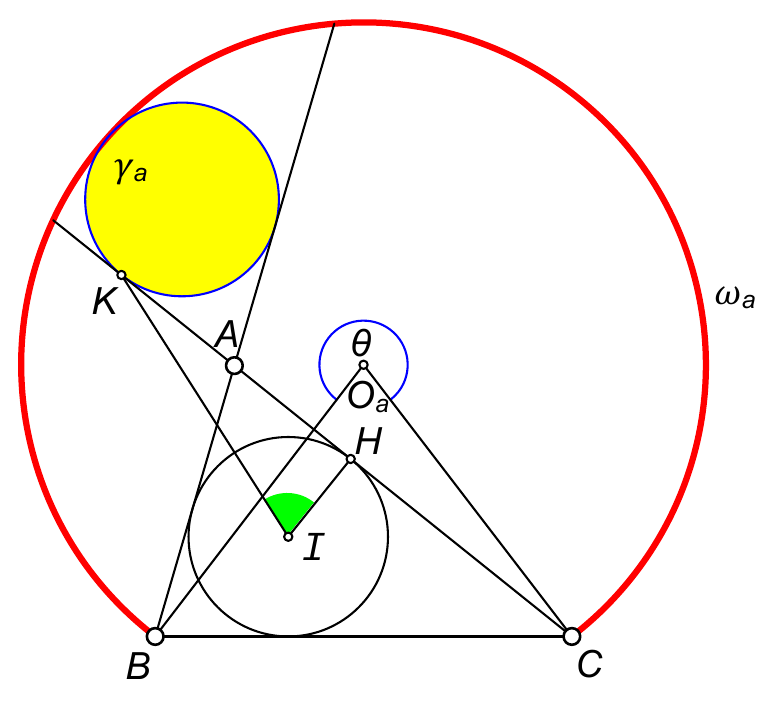}
\caption{green angle = $\theta/4$}
\label{fig:thetaOver4a}
\end{figure}

\newpage

\begin{theorem}[Ajima's Theorem]\label{thm:rho1}
We have
\begin{equation}
\rho_a=r\left(1-\tan\frac{A}{2}\tan\frac{\theta}{4}\right).\label{eq:rho1}
\end{equation}
\end{theorem}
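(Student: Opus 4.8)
The plan is to express the radius $\rho_a$ of $\gamma_a$ in terms of the quantities already computed in this section, using the right-triangle relations between the incenter $I$, the touch points, and the center $D$ of $\gamma_a$. The key observation is that both the incircle and $\gamma_a$ are inscribed in $\angle BAC$, so their centers $I$ and $D$ both lie on the bisector $AI$, and their touch points $H$ and $K$ with side $AC$ give two similar right triangles sharing the vertex $A$. First I would record that from right triangle $AHI$ we have $AH = r\cot(A/2)$, and from right triangle $AKD$ we have $AK = \rho_a\cot(A/2)$; subtracting gives $HK = (r-\rho_a)\cot(A/2)$.

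Next I would bring in Corollary~\ref{cor:KH}, which states that the common external tangent length between $\gamma_a$ and the incircle is exactly $HK = r\tan(\theta/4)$. Since $H$ and $K$ both lie on $AC$ and the two circles are tangent to $AC$ on the same side, $HK$ along the line $AC$ is precisely this external tangent segment, so the two expressions for $HK$ must agree:
\begin{equation*}
(r-\rho_a)\cot\frac{A}{2} = r\tan\frac{\theta}{4}.
\end{equation*}
Solving this single linear equation for $\rho_a$ yields
\begin{equation*}
\rho_a = r\left(1 - \tan\frac{A}{2}\tan\frac{\theta}{4}\right),
\end{equation*}
which is exactly \eqref{eq:rho1}.

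The main obstacle I anticipate is not the algebra, which is a one-line rearrangement, but justifying that $HK$ equals the difference $AH-AK$ with the correct sign, i.e.\ that $K$ lies between $A$ and $H$ so that the two touch points appear in the expected order along $AC$. Under the standing assumption $AB<AC$ and the hypothesis that $\gamma_a$ is genuinely smaller than the incircle (so $\rho_a<r$), the point $K$ where $\gamma_a$ touches $AC$ is nearer to $A$ than $H$, making $HK=AH-AK$ positive and consistent with Corollary~\ref{cor:KH}. If one wishes to avoid any ordering subtlety, the same conclusion follows directly from right triangle $IHK$ together with Theorem~\ref{thm:result11} ($\angle DKI=\theta/4$) and $\angle HIK=\theta/4$ of Theorem~\ref{thm:thetaOver4}: the vertical segment $IH=r$ and the horizontal projection relate $\rho_a$ and $r$ through the angle $\theta/4$ at $I$, again giving the stated formula. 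This makes the result robust to the configuration cases flagged in the earlier Notes.
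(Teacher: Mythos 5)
Your proposal is correct and is essentially the paper's own proof: both rest on $HK=r\tan\frac{\theta}{4}$ from Corollary~\ref{cor:KH} together with the right triangles $AHI$ and $AKD$ along the bisector $AI$, differing only in whether one solves $AK=AH-HK$ first and then multiplies by $\tan\frac{A}{2}$ (as the paper does) or equates the two expressions for $HK$ directly (as you do). Your extra remark about the order of $K$ and $H$ on $AC$ is a reasonable refinement of a point the paper leaves to the figure.
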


\begin{figure}[ht]
\centering
\includegraphics[scale=0.5]{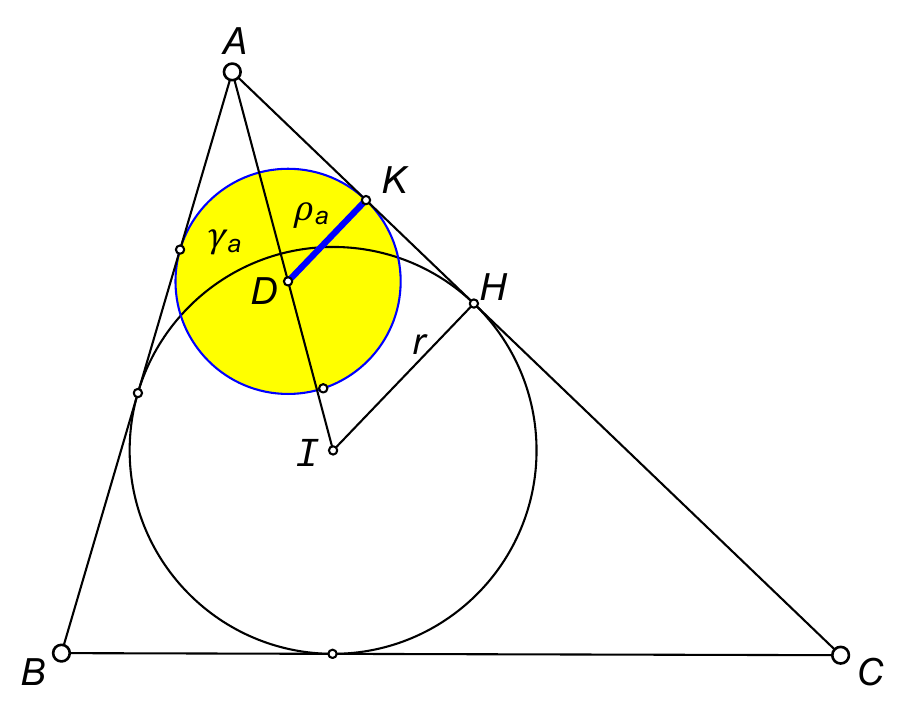}
\caption{}
\label{fig:rho}
\end{figure}

\begin{proof}
See Figure~\ref{fig:rho}.
By Corollary~\ref{cor:KH},
$$HK=r\tan\frac{\theta}{4}.$$
In right triangle $AIH$, we have $\angle IAH=A/2$, so
$$AH=r\cot\frac{A}{2}.$$
Therefore,
$$AK=AH-HK=r\cot\frac{A}{2}-r\tan\frac{\theta}{4}.$$
From right triangle $AKD$ with $DK=\rho_a$, we have
$$\rho_a=AK\tan\frac{A}{2}=\left(r\cot\frac{A}{2}-r\tan\frac{\theta}{4}\right)\tan\frac{A}{2}$$
which is the desired result by the identity $\tan x\cot x=1$.
\end{proof}

The wasan geometer Naonobu Ajima
found this result in 1781 (see \cite[p.~32]{Fukagawa-Rigby}).
More info about Ajima's Theorem can be found in \cite[pp.~96--97]{Fukagawa-Rigby}.
It has been said \cite[p.~103]{Fukagawa-Pedoe} that this result is of great importance because
it is used in the solution of many Japanese temple geometry problems.

\begin{corollary}\label{cor:rho}
We have
$$\rho_a=r\left(1-\frac{rt}{p-a}\right).$$
\end{corollary}

\begin{proof}
This follows immediately from the well-known fact \cite{WikiIncircle} that in Figure~\ref{fig:thetaOver4},
$AH=p-a$, so $\tan(A/2)=r/(p-a)$. 
\end{proof}

\begin{corollary}\label{cor:rho1}
We have
$$\rho_a=\frac{\Delta-(p-b)(p-c)t}{p}=r-\frac{(p-b)(p-c)t}{p}.$$
\end{corollary}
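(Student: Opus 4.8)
The plan is to reduce everything to the single previous identity in Corollary~\ref{cor:rho}, namely
$$\rho_a=r\left(1-\frac{rt}{p-a}\right)=r-\frac{r^2t}{p-a},$$
and then to eliminate the explicit appearance of $r$ and $p-a$ using two utterly standard facts about the triangle: the area formula $\Delta=rp$ (equivalently $r=\Delta/p$) and Heron's formula $\Delta^2=p(p-a)(p-b)(p-c)$.

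First I would rewrite $r^2$ in a form that cancels the offending $p-a$ in the denominator. From $r=\Delta/p$ we have $r^2=\Delta^2/p^2$, and substituting Heron's formula gives
$$r^2=\frac{p(p-a)(p-b)(p-c)}{p^2}=\frac{(p-a)(p-b)(p-c)}{p}.$$
The key observation is that this $r^2$ carries an explicit factor of $(p-a)$, which is exactly what is needed.

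Next I would substitute this into the correction term of Corollary~\ref{cor:rho}. The factor $p-a$ cancels cleanly:
$$\frac{r^2t}{p-a}=\frac{(p-a)(p-b)(p-c)t}{p(p-a)}=\frac{(p-b)(p-c)t}{p}.$$
Plugging back in yields $\rho_a=r-\dfrac{(p-b)(p-c)t}{p}$, which is the second of the two claimed expressions. Finally, replacing the leading $r$ by $\Delta/p$ and combining over the common denominator $p$ gives
$$\rho_a=\frac{\Delta}{p}-\frac{(p-b)(p-c)t}{p}=\frac{\Delta-(p-b)(p-c)t}{p},$$
which is the first expression, completing the chain.

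There is no real obstacle here: the entire argument is a short algebraic manipulation, and the only nonroutine ingredient is recognizing that Heron's formula is what supplies the cancelling factor of $(p-a)$ hidden inside $r^2$. I would simply cite $r=\Delta/p$ and Heron's formula as well-known and present the two substitutions, keeping both equalities of the corollary visible so the reader sees that they differ only by the rewriting $r=\Delta/p$.
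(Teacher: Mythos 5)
Your proposal is correct and matches the paper's own proof essentially line for line: both start from Corollary~\ref{cor:rho}, substitute $r=\Delta/p$ and Heron's formula to cancel the factor $p-a$, and then rewrite the leading $r$ as $\Delta/p$ to obtain the first form. No further comment is needed.
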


\begin{proof}
We use the well-known formulas $r=\frac{\Delta}{p}$ and $\Delta=\sqrt{(p(p-a)(p-b)(p-c)}$.
From Corollary~\ref{cor:rho}, we have
\begin{align*}
\rho_a&=r-\frac{r^2t}{p-a}\\
&=r-\left(\frac{\Delta^2}{p^2}\right)\frac{t}{p-a}\\
&=r-\left(\frac{p(p-a)(p-b)(p-c)}{p^2}\right)\frac{t}{p-a}\\
&=r-\frac{(p-b)(p-c)t}{p}\\
&=\frac{\Delta-(p-b)(p-c)t}{p}.\qedhere
\end{align*}
\end{proof}

Theorem~\ref{thm:rho1} remains true, with a sign change, if we allow the extended position for $\gamma_a$.

\begin{theorem}\label{thm:rho1-a}
Let $\omega_a$ be an arc of a circle with angular measure $\theta$
that passes through points $B$ and $C$ of $\triangle ABC$.
Suppose $\theta>2(180\degrees-A)$ so that $A$ lies inside $\omega_a$.
Let $\gamma_a$ be the circle outside the triangle tangent to sides $AB$ and $AC$ extended
and also internally tangent to $\omega_a$ as shown in Figure~\ref{fig:rho-a}.
Let $\rho_a$ be the radius of $\gamma_a$.
Then
$$\rho_a=-r\left(1-\tan\frac{A}{2}\tan\frac{\theta}{4}\right).$$
\end{theorem}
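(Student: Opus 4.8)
The theorem I need to prove is essentially Ajima's Theorem (Theorem~\ref{thm:rho1}) in the \emph{extended} configuration: the arc $\omega_a$ is now so large that $A$ lies inside it, and $\gamma_a$ is redefined as the circle \emph{outside} the triangle, tangent to lines $AB$ and $AC$ extended beyond $A$, and \emph{internally} tangent to $\omega_a$. The claim is that the radius formula picks up exactly one sign: $\rho_a = -r\left(1-\tan\frac{A}{2}\tan\frac{\theta}{4}\right)$. Since in this regime $\theta > 2(180^\circ - A)$ forces $\tan\frac{A}{2}\tan\frac{\theta}{4} > 1$, the bracketed quantity is negative and $\rho_a$ comes out positive, as a radius must.

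Let me think about what actually changes. In the original proof, the key ingredients were Corollary~\ref{cor:KH} ($HK = r\tan\frac{\theta}{4}$, the length of the common external tangent between $\gamma_a$ and the incircle along line $AC$), the relation $AH = r\cot\frac{A}{2}$ from right triangle $AIH$, and $\rho_a = AK\tan\frac{A}{2}$ from right triangle $AKD$. The note right before this theorem already asserts that Theorem~\ref{thm:thetaOver4} ($\angle HIK = \theta/4$) still holds in the extended position, so by the same argument Corollary~\ref{cor:KH} still gives $HK = r\tan\frac{\theta}{4}$ as the incircle–$\gamma_a$ external tangent length.

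**The plan.** I would run the original Ajima computation with signed lengths along line $AC$, keeping track of the fact that $K$ (touch point of $\gamma_a$) now lies on the far side of $H$ from $A$. In the original figure $K$ is between $A$ and $H$, so $AK = AH - HK$; in the extended figure the order of points along $AC$ has reversed so that $H$ lies between $A$ and $K$, giving $AK = AH + HK$. First I would invoke the pre-stated extension of Theorem~\ref{thm:thetaOver4} to get $\angle HIK = \theta/4$ and hence $HK = r\tan\frac{\theta}{4}$. Next I would record $AH = r\cot\frac{A}{2}$, which is unchanged since $I$ and $H$ are still the incenter and incircle touch point. Then, reading the corrected segment order off the extended figure (Figure~\ref{fig:rho-a}), I would write
$$
AK = AH + HK = r\cot\frac{A}{2} + r\tan\frac{\theta}{4}.
$$
Finally, from the right triangle $AKD$ with legs $AK$ and $DK=\rho_a$ and the half-angle $\angle KAD = A/2$ (the bisector of the exterior angle configuration still subtends $A/2$), I would get
$$
\rho_a = AK\tan\frac{A}{2} = \left(r\cot\frac{A}{2} + r\tan\frac{\theta}{4}\right)\tan\frac{A}{2} = r + r\tan\frac{A}{2}\tan\frac{\theta}{4},
$$
which equals $-r\left(1-\tan\frac{A}{2}\tan\frac{\theta}{4}\right)$ precisely because the bracket is negative in this range. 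So the sign change is not an artifact inserted by hand but the honest output of the segment-addition $AK = AH + HK$ replacing the subtraction.

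**The main obstacle.** The genuine difficulty here is not computational but configurational: I must verify carefully that in the extended position the touch point $K$ of $\gamma_a$ really does lie beyond $H$ along line $AC$ (so that $AK = AH + HK$ rather than $|AH - HK|$), and that the angle $\angle HIK$ is still $+\theta/4$ rather than acquiring a sign. The cleanest way to sidestep case-chasing is to commit to a signed-length convention on line $AC$ from the outset: treat $H$, $K$ as signed coordinates, note that Theorem~\ref{thm:thetaOver4}'s extended validity fixes the signed value of $HK$, and let the arithmetic produce the sign automatically. The one point I would double-check against Figure~\ref{fig:rho-a} is that $\gamma_a$'s center $D$ lies on the bisector of angle $A$ extended through $A$ (the external bisector of the triangle's angle at $A$ coincides with the internal bisector of the vertical angle formed by the extensions of $AB$, $AC$), so that $\angle KAD = A/2$ survives intact; this is what guarantees $\rho_a = AK\tan\frac{A}{2}$ with the same half-angle as before.
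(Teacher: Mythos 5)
The paper itself gives no proof of this theorem (it is stated as a remark following Ajima's Theorem, supported only by the earlier note that Theorem~\ref{thm:thetaOver4} extends to this configuration), so your strategy of rerunning the Ajima computation with the extended value $HK=r\tan\frac{\theta}{4}$ is the natural one and is surely what the authors intend. However, your execution contains a genuine error. You assert that in the extended figure $H$ lies between $A$ and $K$, so that $AK=AH+HK$. This is the wrong point order: when $\theta>2(180\degrees-A)$ the circle $\gamma_a$ sits in the vertical angle at $A$ (forced by continuity from the degenerate case $\theta=2(180\degrees-A)$, where $\gamma_a$ shrinks to the point $A$ and then grows outward on the far side), so its touch point $K$ lies on the extension of $CA$ beyond $A$, and the order along the line is $K$, $A$, $H$, $C$. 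Hence $A$ lies between $K$ and $H$, and the correct relation is $AK=HK-AH=r\tan\frac{\theta}{4}-r\cot\frac{A}{2}$.

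The error then propagates and is not self-correcting. Your value $AK=r\cot\frac{A}{2}+r\tan\frac{\theta}{4}$ yields $\rho_a=r+r\tan\frac{A}{2}\tan\frac{\theta}{4}$, and you claim this ``equals'' $-r\left(1-\tan\frac{A}{2}\tan\frac{\theta}{4}\right)=-r+r\tan\frac{A}{2}\tan\frac{\theta}{4}$. These two expressions differ by $2r$; the negativity of the bracket is irrelevant, so the final identification is simply false. With the corrected decomposition everything works: $\rho_a=AK\tan\frac{A}{2}=r\tan\frac{A}{2}\tan\frac{\theta}{4}-r$, which is exactly the claimed formula. (Your observation that $\angle KAD=A/2$ survives because the vertical angle shares the same bisector line is correct and is the other ingredient needed.) Note that the theorem statement itself forces the point order $K$, $A$, $H$: the order you assumed would give $\rho_a=r\left(1+\tan\frac{A}{2}\tan\frac{\theta}{4}\right)$, contradicting the asserted result, so the configurational check you flagged as ``the main obstacle'' is precisely the step that went wrong.
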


\begin{figure}[ht]
\centering
\includegraphics[scale=0.45]{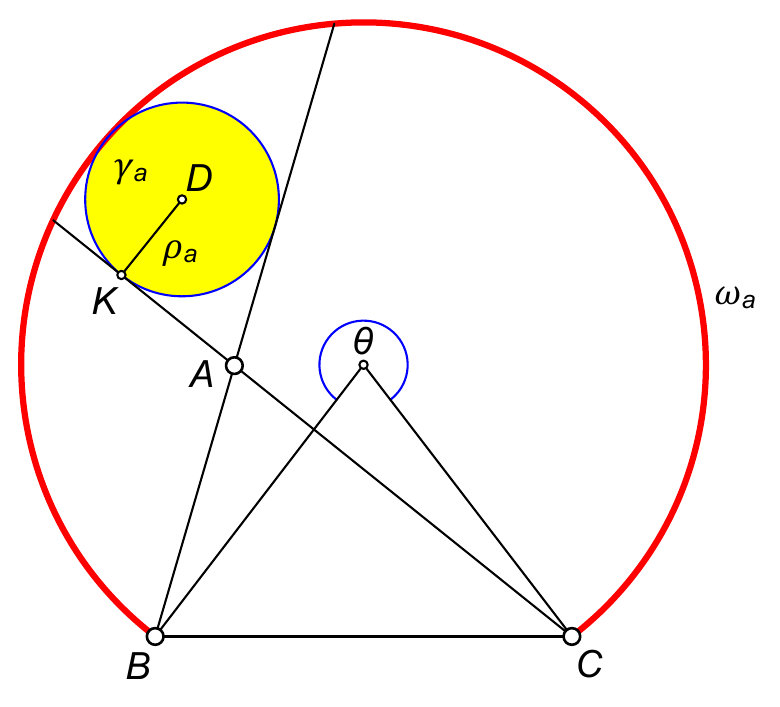}  
\caption{}
\label{fig:rho-a}
\end{figure}

In all cases, we could say that
$$\rho_a=r\left|1-\tan\frac{A}{2}\tan\frac{\theta}{4}\right|.$$

A unifying discussion about circles tangent to arcs with a given angular measure can be found
in \cite{Protassov2}.

\newpage

\begin{lemma}\label{lemma:sin2x}
For any $x$,
$$\sin 2x=\frac{2\tan x}{\tan^2 x+1}.$$
\end{lemma}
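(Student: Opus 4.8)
The plan is to reduce both sides to sines and cosines and then invoke the Pythagorean identity together with the double-angle formula. The right-hand side is the natural starting point, since it is the more complicated expression, so I would first rewrite $\tan x$ as $\sin x/\cos x$, turning the fraction into a compound fraction in $\sin x$ and $\cos x$, and then clear the inner denominators.

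Concretely, substituting $\tan x = \sin x/\cos x$ gives
$$\frac{2\tan x}{\tan^2 x + 1} = \frac{2\sin x/\cos x}{\sin^2 x/\cos^2 x + 1}.$$
Multiplying numerator and denominator by $\cos^2 x$ collapses this to
$$\frac{2\sin x\cos x}{\sin^2 x + \cos^2 x}.$$
The Pythagorean identity $\sin^2 x + \cos^2 x = 1$ now reduces the denominator to $1$, leaving $2\sin x\cos x$, which is precisely $\sin 2x$ by the double-angle formula. This chain of equalities establishes the identity.

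I do not expect any genuine obstacle, as this is an elementary trigonometric identity; the only point deserving a word of care is that $\tan x$ must be defined, i.e. $\cos x \neq 0$, which is implicit in the statement. As a slightly quicker alternative, one could observe that $\tan^2 x + 1 = \sec^2 x$ and rewrite the right-hand side as $2\tan x\cos^2 x = 2(\sin x/\cos x)\cos^2 x = 2\sin x\cos x$, arriving at the same conclusion.
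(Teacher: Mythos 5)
Your proposal is correct and follows essentially the same route as the paper, which also reduces the right-hand side to sines and cosines (via $\tan^2 x+1=\sec^2 x$, the very alternative you mention at the end) and concludes with the double-angle formula. No issues.
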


\begin{proof}
We have
\begin{equation*}
\frac{2\tan x}{\tan^2 x+1}=\frac{2\tan x}{\sec^2 x}=\frac{2(\sin x)/(\cos x)}{1/\cos^2 x}
=2\sin x\cos x=\sin 2x.\qedhere
\end{equation*}
\end{proof}


\begin{theorem}[Radius of $\omega_a$]\label{thm:Ra}
We have
$$R_a=\frac{a}{2}\csc\frac{\theta}{2}.$$
\end{theorem}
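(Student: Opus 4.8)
The plan is to extract $R_a$ directly from the isosceles triangle $BO_aC$, using nothing more than the definition of the angular measure of an arc together with right-triangle trigonometry. I would first record the basic data of the configuration: since $O_a$ is the center of $\omega_a$ and $B,C$ lie on $\omega_a$, we have $O_aB=O_aC=R_a$, so $\triangle BO_aC$ is isosceles. By definition, the angular measure of the arc $\arc{BTC}$ is the central angle it subtends, so $\angle BO_aC=\theta$; this is exactly the relation $\angle CO_aB=\theta$ already used in the proof of Theorem~\ref{thm:thetaOver2}.

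Next I would drop the perpendicular from $O_a$ to the chord $BC$, meeting it at a point $M$. Because $\triangle BO_aC$ is isosceles with apex $O_a$, this foot $M$ is the midpoint of $BC$ and $O_aM$ bisects the apex angle. Hence $BM=a/2$ and $\angle BO_aM=\theta/2$. Applying the sine ratio in the right triangle $BMO_a$ gives
$$\sin\frac{\theta}{2}=\frac{BM}{O_aB}=\frac{a/2}{R_a},$$
and solving for $R_a$ yields $R_a=\dfrac{a}{2}\csc\dfrac{\theta}{2}$, as claimed.

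There is no genuine obstacle here; the argument is the standard ``chord $=2R\sin(\text{half the central angle})$'' computation. The one point that deserves a remark is the case $\theta>180^\circ$ (arcs larger than a semicircle, as in Figure~\ref{fig:rho-a}): there the central angle standing on the chord $BC$ on the side of $O_a$ is $360^\circ-\theta$ rather than $\theta$, so that $\angle BO_aM=180^\circ-\theta/2$. Since $\sin(180^\circ-\theta/2)=\sin(\theta/2)$, the displayed computation is unchanged and the formula $R_a=\tfrac{a}{2}\csc\tfrac{\theta}{2}$ continues to hold in every case.
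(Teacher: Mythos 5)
Your proposal is correct and follows essentially the same route as the paper: drop the perpendicular from $O_a$ to $BC$ at the midpoint $M$, note $\angle CO_aM=\theta/2$, and read off $\sin(\theta/2)=(a/2)/R_a$ from the right triangle. Your extra remark about the case $\theta>180\degrees$ is a harmless and reasonable addition that the paper's proof leaves implicit.
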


\begin{proof}
Let $M$ be the foot of the perpendicular
from $O_a$ to $BC$ (Figure~\ref{fig:Ra}).

\begin{figure}[ht]
\centering
\includegraphics[scale=0.7]{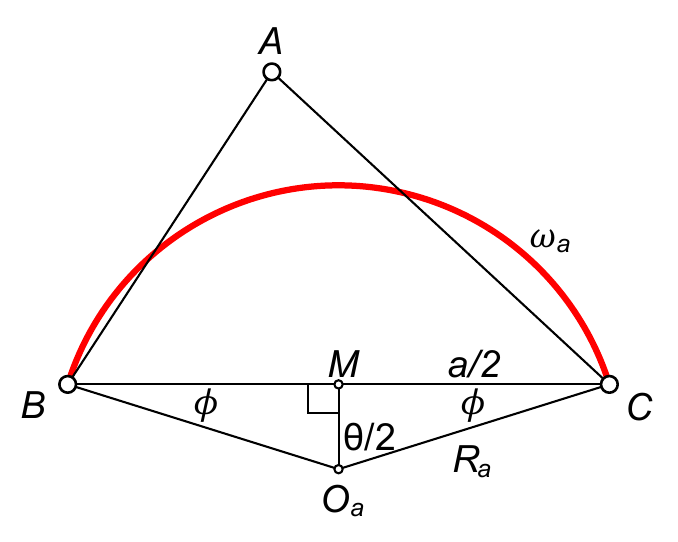}
\caption{}
\label{fig:Ra}
\end{figure}

Then $O_aC=R_a$ and $MC=a/2$.
We have $\angle CO_aB=\theta$ since the angular measure of the arc is $\theta$.
Thus $\angle CO_aM=\theta/2$ and hence $\sin(\theta/2)=(a/2)/R_a$ and the result follows.
\end{proof}

\begin{corollary}\label{thm:Ra2}
We have
\begin{equation}
R_a=\frac{a(t^2+1)}{4t}\label{eq:Ra2}.
\end{equation}
\end{corollary}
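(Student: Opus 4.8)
The plan is to combine the two immediately preceding results: Theorem~\ref{thm:Ra}, which expresses $R_a=\frac{a}{2}\csc\frac{\theta}{2}$, and Lemma~\ref{lemma:sin2x}, which rewrites a sine of a doubled angle in terms of the tangent of the single angle. Since the target formula is stated in terms of $t=\tan(\theta/4)$, the natural move is to apply the lemma with $x=\theta/4$ so that $2x=\theta/2$ matches the angle appearing in Theorem~\ref{thm:Ra}.

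Concretely, I would first invoke Theorem~\ref{thm:Ra} to write $R_a=\frac{a}{2}\csc\frac{\theta}{2}=\dfrac{a}{2\sin(\theta/2)}$. Next, I would substitute $x=\theta/4$ into Lemma~\ref{lemma:sin2x} to obtain
$$\sin\frac{\theta}{2}=\frac{2\tan(\theta/4)}{\tan^2(\theta/4)+1}=\frac{2t}{t^2+1},$$
using the definition $t=\tan(\theta/4)$ recorded in the section's notation table. Taking reciprocals gives $\csc\frac{\theta}{2}=\dfrac{t^2+1}{2t}$.

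Finally, I would substitute this reciprocal back into the expression from Theorem~\ref{thm:Ra}:
$$R_a=\frac{a}{2}\cdot\frac{t^2+1}{2t}=\frac{a(t^2+1)}{4t},$$
which is precisely equation~(\ref{eq:Ra2}). There is no real obstacle here — the corollary is a direct algebraic consequence of the two cited results, and the only thing to be careful about is matching the half-angle $\theta/2$ in Theorem~\ref{thm:Ra} with the quarter-angle $\theta/4$ hidden inside $t$, which the choice $x=\theta/4$ in the lemma handles cleanly. No case analysis on the size of $\theta$ is needed, since both inputs hold for the arc of angular measure $\theta$ regardless of configuration.
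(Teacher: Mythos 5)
Your proposal is correct and is exactly the paper's argument: the paper's proof is the one-line ``This follows from Lemma~\ref{lemma:sin2x},'' and your write-up simply fills in the substitution $x=\theta/4$ and the reciprocal step. Nothing further is needed.
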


\begin{proof}
This follows from Lemma~\ref{lemma:sin2x}.
\end{proof}

\begin{corollary}\label{thm:Ra3}
We have
\begin{equation}
R_a=\frac{R(t^2+1)\sin A}{2t}\label{eq:Ra3}.
\end{equation}
\end{corollary}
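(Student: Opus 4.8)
The plan is to derive this directly from the preceding Corollary~\ref{thm:Ra2}, which already expresses the radius of $\omega_a$ in terms of the side length $a$ as
$$R_a=\frac{a(t^2+1)}{4t}.$$
The only additional ingredient needed is the Law of Sines for $\triangle ABC$, namely $\dfrac{a}{\sin A}=2R$, which rewrites the side length as $a=2R\sin A$.

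First I would recall the formula from Corollary~\ref{thm:Ra2}. Then I would substitute $a=2R\sin A$ into it, obtaining
$$R_a=\frac{a(t^2+1)}{4t}=\frac{(2R\sin A)(t^2+1)}{4t}.$$
Finally I would cancel the factor of $2$ between numerator and denominator to reach the claimed expression
$$R_a=\frac{R(t^2+1)\sin A}{2t}.$$

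Since every step is an elementary substitution, there is no genuine obstacle here; the proof is a one-line consequence of combining Corollary~\ref{thm:Ra2} with the Law of Sines. The only point requiring a moment's care is to invoke the standard relation $a=2R\sin A$ (rather than, say, $a=2R\sin(A/2)$), and to keep track of the constant so that the $4t$ in the denominator becomes $2t$ after cancellation. Everything else follows mechanically.

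\begin{proof}
By Corollary~\ref{thm:Ra2}, we have
$$R_a=\frac{a(t^2+1)}{4t}.$$
By the Law of Sines, $a=2R\sin A$. Substituting,
$$R_a=\frac{(2R\sin A)(t^2+1)}{4t}=\frac{R(t^2+1)\sin A}{2t}.\qedhere$$
\end{proof}
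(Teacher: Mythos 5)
Your proof is correct and is essentially identical to the paper's: both substitute $a=2R\sin A$ from the (Extended) Law of Sines into the formula of Corollary~\ref{thm:Ra2} and simplify. Nothing further is needed.
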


\begin{proof}
From the Extended Law of Sines, we have $a/\sin A=2R$.
Substituting $a=2R\sin A$ into equation~(\ref{eq:Ra2}) gives the desired result.
\end{proof}

\newpage

\begin{theorem}\label{thm:AL'L}
We have (Figure~\ref{fig:AL'L})
$$\frac{AL'}{AL}=\frac{\rho_a}{r}.$$
\end{theorem}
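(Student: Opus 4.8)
The plan is to leverage the homothety between $\gamma_a$ and the incircle that was already exploited in Theorem~\ref{thm:result24}. Both circles are inscribed in $\angle BAC$, hence tangent to both lines $AB$ and $AC$; since those two tangent lines meet at $A$, there is a homothety centered at $A$ carrying one circle onto the other. I would take the homothety $h$ that maps $\gamma_a$ onto the incircle. Because a homothety scales every length by the absolute value of its ratio, and $h$ sends a circle of radius $\rho_a$ to one of radius $r$, the ratio of $h$ must be $r/\rho_a$.

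Next I would identify the images of the relevant points. As recorded in the proof of Theorem~\ref{thm:result24}, $h$ sends the center $D$ of $\gamma_a$ to the incenter $I$ and sends $L'$ to $L$ (this is exactly why $DL'\parallel IL$ there). In particular $L$ is the image of $L'$ under a homothety centered at $A$, so $A$, $L'$, and $L$ are collinear, consistent with the fact that $L'$ was defined as the intersection of line $AL$ with $\gamma_a$.

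Finally, since $h$ has center $A$ and ratio $r/\rho_a$ and sends $L'\mapsto L$, the lengths measured from $A$ along line $AL$ satisfy $AL=(r/\rho_a)\,AL'$; rearranging gives $AL'/AL=\rho_a/r$, as claimed. There is essentially no computational obstacle here. The only point demanding care is to apply the homothety in the correct direction, namely from $\gamma_a$ to the incircle so that the ratio is $r/\rho_a$ rather than its reciprocal, and to confirm that $L'$ is indeed the point mapping to $L$; but both facts are already established in Theorem~\ref{thm:result24}, so the argument is immediate.
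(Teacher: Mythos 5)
Your proposal is correct and is essentially identical to the paper's own proof, which likewise cites the homothety centered at $A$ carrying $\gamma_a$ to the incircle and the fact that $L'$ and $L$ are corresponding points under it. You simply spell out the ratio bookkeeping that the paper leaves implicit.
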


\begin{figure}[ht]
\centering
\includegraphics[scale=0.5]{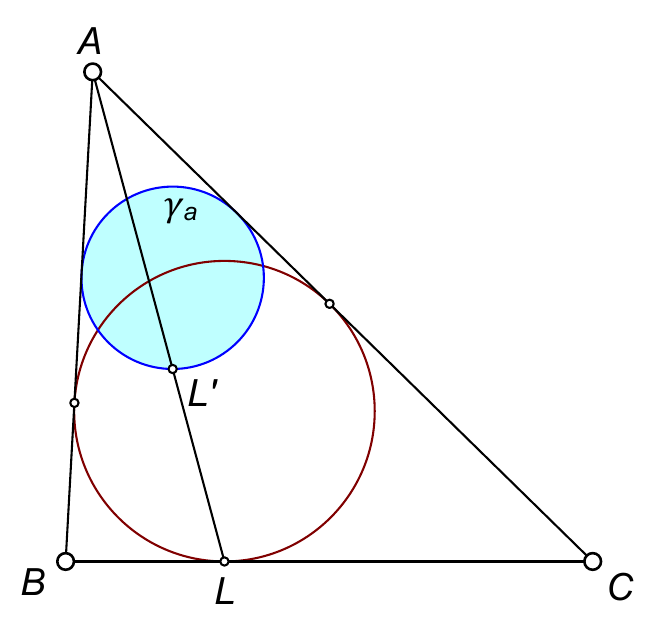}
\caption{}
\label{fig:AL'L}
\end{figure}

\begin{proof}
This follows from the fact that $L$ and $L'$ are corresponding points
in the homothety with center $A$ that maps $\gamma_a$ into the incircle.
\end{proof}

\begin{theorem}[Length of $AL'$]\label{thm:lengthOfAL'}
We have (Figure~\ref{fig:lengthOfAL'})
$$AL'=\left(1-\frac{rt}{p-a}\right)\sqrt{\frac{(p-a)[ap-(b-c)^2]}{a}}.$$
\end{theorem}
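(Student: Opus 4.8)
The plan is to combine Theorem~\ref{thm:AL'L} with a direct computation of the cevian length $AL$. By Theorem~\ref{thm:AL'L} we have $AL'=(\rho_a/r)\,AL$, and by Corollary~\ref{cor:rho} we have $\rho_a/r=1-rt/(p-a)$. This immediately supplies the first factor in the claimed formula, so the problem reduces to proving that
\[
AL=\sqrt{\frac{(p-a)\bigl[ap-(b-c)^2\bigr]}{a}}.
\]

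To compute $AL$, I would first recall the standard tangent-length facts for the incircle: since $L$ is the point where the incircle touches $BC$, the tangent lengths give $BL=p-b$ and $CL=p-c$ (consistent with $BL+CL=a$). Thus $AL$ is the cevian from $A$ to the point that divides $BC$ into the segments $p-b$ (next to $B$) and $p-c$ (next to $C$). Applying Stewart's Theorem to $\triangle ABC$ with this cevian yields
\[
b^2(p-b)+c^2(p-c)=a\bigl(AL^2+(p-b)(p-c)\bigr),
\]
so that
\[
AL^2=\frac{b^2(p-b)+c^2(p-c)}{a}-(p-b)(p-c).
\]

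It then remains to simplify this to the target. The main (though entirely routine) obstacle is verifying the polynomial identity
\[
b^2(p-b)+c^2(p-c)-a(p-b)(p-c)=(p-a)\bigl[ap-(b-c)^2\bigr],
\]
which I would check either by expanding both sides as polynomials in $a,b,c$ after substituting $p=(a+b+c)/2$, or more compactly by writing $p-a=(b+c-a)/2$, $p-b=(a-b+c)/2$, $p-c=(a+b-c)/2$ and collecting terms. Once the identity is confirmed, dividing by $a$ and taking the positive square root gives the stated value of $AL$; multiplying by the factor $1-rt/(p-a)$ from the first paragraph then produces the desired expression for $AL'$ and completes the proof. (A sanity check in the equilateral case $a=b=c$ reduces $AL$ to the median $\tfrac{\sqrt3}{2}a$, as expected, since there $L$ is the midpoint of $BC$.)
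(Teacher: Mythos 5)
Your proof is correct and takes essentially the same route as the paper: both reduce the problem to $AL'=(\rho_a/r)\,AL$ via the homothety centered at $A$ (Theorem~\ref{thm:AL'L}) and then substitute the value of $\rho_a/r$ from Corollary~\ref{cor:rho}. The only difference is that the paper cites the known Gergonne-cevian length formula for $AL$ from \cite{GergonneFormulary}, whereas you rederive it from Stewart's Theorem; your Stewart setup with $BL=p-b$, $CL=p-c$ and the polynomial identity $b^2(p-b)+c^2(p-c)-a(p-b)(p-c)=(p-a)\bigl[ap-(b-c)^2\bigr]$ both check out.
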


\begin{figure}[ht]
\centering
\includegraphics[scale=0.6]{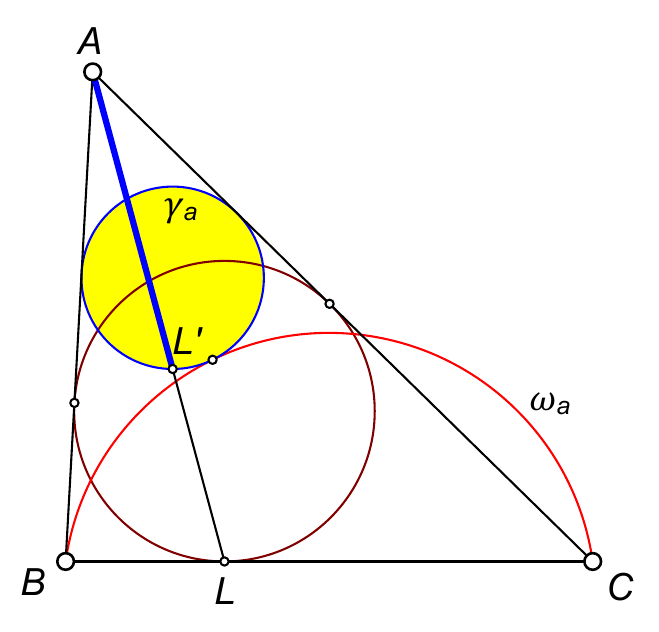}
\caption{}
\label{fig:lengthOfAL'}
\end{figure}

\begin{proof}
Since $L$ is the point where the incircle touches $BC$, $AL$ is a Gergonne cevian of $\triangle ABC$.
The length of a Gergonne cevian is known. From Property 3.1.3 in \cite{GergonneFormulary}, we have
\begin{equation}
AL=\sqrt{\frac{(p-a)[ap-(b-c)^2]}{a}}.\label{eq:AL}
\end{equation}
Circle $\gamma_a$ and the incircle are homothetic, with $A$ being the center of the homothety.
Since $L'$ and $L$ are corresponding points of the homothety, we have
$$\frac{AL'}{AL}=\frac{\rho_a}{r}.$$
Thus, $AL'=(\rho_a/r)\cdot AL$.
Combining this with the value of $\rho_a/r$ from Corollary~\ref{cor:rho} gives us our result.
\end{proof}


\begin{lemma}\label{lemma:tau}
We have $AK=p-a-rt$.
\end{lemma}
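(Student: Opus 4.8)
The plan is to compute $AK$ directly as a difference of two already-known lengths along the line $AC$, namely the tangent length $AH$ from vertex $A$ to the incircle touch point $H$, and the separation $HK$ between the two relevant touch points on $AC$. Both of these quantities have been established earlier in the paper, so the argument should be a one-line subtraction once the relative positions of $A$, $K$, and $H$ are pinned down.

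First I would recall the standard tangent-length fact (already invoked in the proof of Corollary~\ref{cor:rho}) that the incircle touches $AC$ at the point $H$ with $AH=p-a$. Next I would apply Corollary~\ref{cor:KH}, which states that the common external tangent between $\gamma_a$ and the incircle has length $r\tan\frac{\theta}{4}$; since $HK$ is precisely this external tangent segment cut off along $AC$ and since $t=\tan\frac{\theta}{4}$, we have $HK=rt$. Because $\gamma_a$ is the smaller circle seated nearer vertex $A$ (it is the image of the incircle under the homothety centered at $A$ with ratio $\rho_a/r<1$ in the configuration of Figure~\ref{fig:thetaOver4}), its touch point $K$ lies between $A$ and $H$ on segment $AC$. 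Hence $AK=AH-HK=(p-a)-rt$, which is the claim.

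The only point requiring care — and thus the main (minor) obstacle — is verifying the order of the points, i.e.\ that $K$ lies between $A$ and $H$ so that the two lengths subtract rather than add. For the externally-tangent Ajima configuration this is immediate from the homothety argument above, since $A$, $K$, $H$ correspond to $A$, (touch point of $\gamma_a$), (touch point of incircle) with contraction ratio less than one. (In the degenerate or extended positions noted after Theorem~\ref{thm:thetaOver4}, where $A$ may lie inside $\omega_a$, one would instead absorb the orientation into a signed length, exactly as the paper does for $\rho_a$ in Theorem~\ref{thm:rho1-a}; but for the standard case no sign issue arises.) Everything else is routine, so I expect the formal proof to be only two or three sentences long.
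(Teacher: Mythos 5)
Your proposal is correct and follows essentially the same route as the paper: the paper likewise writes $AK=AH-HK$ with $AH=p-a$ and $HK=rt$ from Corollary~\ref{cor:KH}. Your extra remark justifying that $K$ lies between $A$ and $H$ is a small but welcome addition that the paper leaves implicit.
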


\begin{figure}[ht]
\centering
\includegraphics[scale=0.5]{rhoProof.pdf}
\caption{}
\label{fig:rhodup}
\end{figure}

\begin{proof}
It is well known that $AH=p-a$ (Figure~\ref{fig:rhodup}).
From Corollary~\ref{cor:KH}, we have $HK=rt$.
Thus, $AK=AH-HK=p-a-rt$.
\end{proof}


\begin{theorem}[Length of $AX$]\label{thm:lengthOfAX}
We have (Figure~\ref{fig:lengthOfAX}).
$$AX=\frac{(p-a-rt)\sqrt{a(p-a)}}{\sqrt{ap-(b-c)^2}}.$$
\end{theorem}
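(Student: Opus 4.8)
The plan is to compute $AX$ by using the power of the point $A$ with respect to circle $\gamma_a$. The line $AL$ is a secant to $\gamma_a$ meeting it at $X$ (nearer $A$) and at $L'$, while $AK$ is a tangent segment from $A$ to $\gamma_a$, since $K$ is the point where $\gamma_a$ touches $AC$. Hence the power of $A$ with respect to $\gamma_a$ gives
$$AX\cdot AL'=AK^2,$$
so that $AX=AK^2/AL'$. The whole proof reduces to substituting the two lengths $AK$ and $AL'$, both of which have already been determined, and then simplifying.

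First I would insert the known values. By Lemma~\ref{lemma:tau}, $AK=p-a-rt$. By Theorem~\ref{thm:lengthOfAL'},
$$AL'=\left(1-\frac{rt}{p-a}\right)\sqrt{\frac{(p-a)[ap-(b-c)^2]}{a}}=\frac{p-a-rt}{p-a}\sqrt{\frac{(p-a)[ap-(b-c)^2]}{a}}.$$
Forming the quotient $AX=AK^2/AL'$ and cancelling one factor of $(p-a-rt)$ yields
$$AX=\frac{(p-a-rt)^2}{AL'}=\frac{(p-a-rt)(p-a)}{\sqrt{\frac{(p-a)[ap-(b-c)^2]}{a}}}.$$
Rewriting the radical as $\sqrt{p-a}\,\sqrt{ap-(b-c)^2}/\sqrt{a}$ and using $(p-a)/\sqrt{p-a}=\sqrt{p-a}$, the expression collapses to
$$AX=\frac{(p-a-rt)\sqrt{a(p-a)}}{\sqrt{ap-(b-c)^2}},$$
which is exactly the claimed formula.

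There is essentially no obstacle here. The only geometric input is the recognition that $AK$ is the tangent length from $A$ and that $AX\cdot AL'$ is the corresponding secant product; everything else is a routine algebraic simplification of quantities already established earlier in the paper. If one prefers not to invoke Theorem~\ref{thm:lengthOfAL'} directly, the same computation goes through by combining Theorem~\ref{thm:AL'L}, which gives $AL'=(\rho_a/r)\,AL$, together with the Gergonne-cevian length~(\ref{eq:AL}) and the ratio $\rho_a/r=(p-a-rt)/(p-a)$ coming from Corollary~\ref{cor:rho}; substituting these into $AX=AK^2/AL'$ produces the identical simplification.
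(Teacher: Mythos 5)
Your proposal is correct and follows the paper's own proof exactly: both use the power of the point $A$ with respect to $\gamma_a$ to write $AX\cdot AL'=(AK)^2$, then substitute $AK=p-a-rt$ from Lemma~\ref{lemma:tau} and the value of $AL'$ from Theorem~\ref{thm:lengthOfAL'}, and simplify. No gaps.
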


\begin{figure}[ht]
\centering
\includegraphics[scale=0.5]{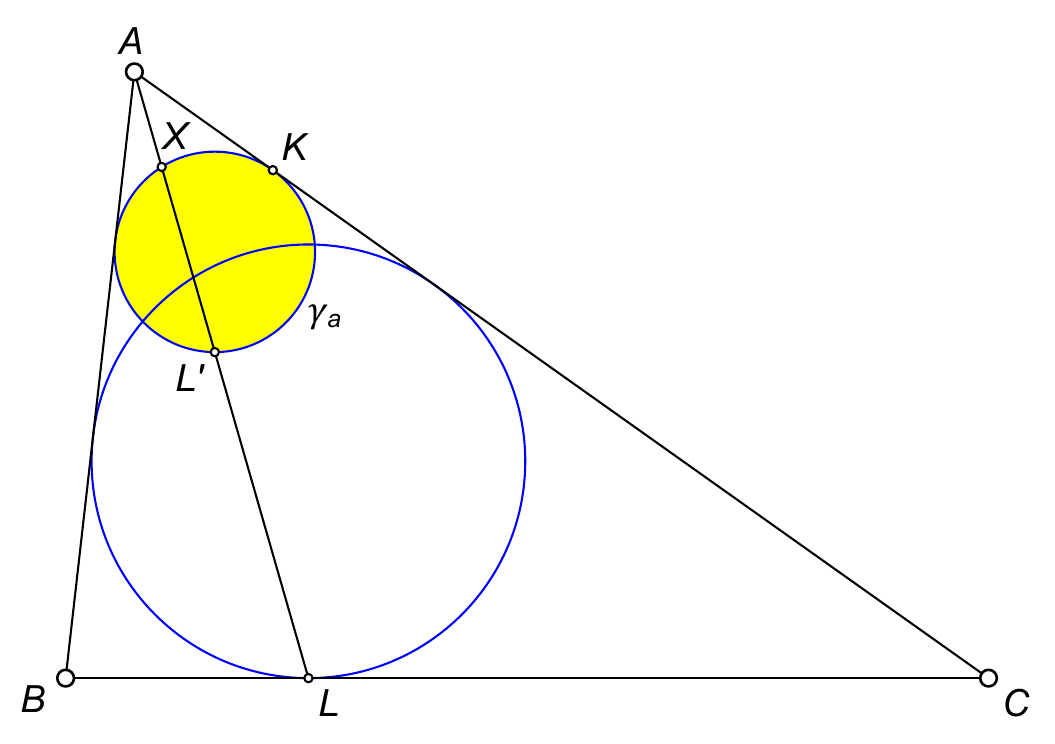}
\caption{}
\label{fig:lengthOfAX}
\end{figure}

\begin{proof}
Since $AXL'$ is a secant to $\gamma_a$ and $AK$ is a tangent, we have $AX\cdot AL'=(AK)^2$.
From Lemma~\ref{lemma:tau},
$$AK=p-a-rt.$$
From Theorem~\ref{thm:lengthOfAL'}, we have
$$AL'=\left(\frac{p-a-rt}{p-a}\right)\sqrt{\frac{(p-a)[ap-(b-c)^2]}{a}}.$$
So
\begin{align*}
AX&=\frac{(AK)^2}{AL'}\\
&=\frac{(p-a-rt)(p-a)}{\sqrt{\frac{(p-a)[ap-(b-c)^2]}{a}}}\\
&=\frac{(p-a-rt)\sqrt{a(p-a)}}{\sqrt{ap-(b-c)^2}}.\qedhere
\end{align*}
\end{proof}

\begin{corollary}\label{cor:AX/AL'}
We have
$$\frac{AX}{AL'}=\frac{a(p-a)}{ap-(b-c)^2}.$$
\end{corollary}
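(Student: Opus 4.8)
The plan is to read the ratio directly off the two lengths computed in the immediately preceding results, since both $AX$ and $AL'$ have already been put in closed form. From Theorem~\ref{thm:lengthOfAX} we have $AX=(p-a-rt)\sqrt{a(p-a)}/\sqrt{ap-(b-c)^2}$, and from Theorem~\ref{thm:lengthOfAL'}, after rewriting the factor $1-rt/(p-a)$ as $(p-a-rt)/(p-a)$ via Corollary~\ref{cor:rho}, we have $AL'=\frac{p-a-rt}{p-a}\sqrt{(p-a)[ap-(b-c)^2]/a}$. Forming the quotient, the common factor $p-a-rt$ cancels at once, and what remains is a pure simplification of nested radicals.

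A cleaner route, which avoids carrying the square roots through the division, is to reuse the power-of-a-point relation $AX\cdot AL'=(AK)^2$ that already appeared in the proof of Theorem~\ref{thm:lengthOfAX}. Writing $AX/AL'=(AX\cdot AL')/(AL')^2=(AK)^2/(AL')^2$ reduces the task to squaring a single length. By Lemma~\ref{lemma:tau}, $(AK)^2=(p-a-rt)^2$, while squaring the expression for $AL'$ gives $(AL')^2=\frac{(p-a-rt)^2[ap-(b-c)^2]}{a(p-a)}$. Dividing, the factor $(p-a-rt)^2$ cancels and the stated formula $a(p-a)/[ap-(b-c)^2]$ falls out immediately.

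I expect no genuine obstacle here; the only thing to watch is the bookkeeping of the nested radical in $AL'$, namely that the factors of $\sqrt{p-a}$ in numerator and denominator, together with the single $a$ inside the square roots, recombine to leave exactly $a(p-a)$ over $ap-(b-c)^2$. The power-of-a-point formulation makes this transparent, since after squaring everything is rational and the cancellation is visible by inspection; for that reason I would present the second version to keep the argument short and free of square-root manipulation.
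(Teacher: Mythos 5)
Your proposal is correct. The paper gives no explicit proof of this corollary; the intended argument is exactly your first route, dividing the closed form for $AX$ in Theorem~\ref{thm:lengthOfAX} by the one for $AL'$ in Theorem~\ref{thm:lengthOfAL'}, after which $p-a-rt$ cancels and the radicals recombine to $a(p-a)/[ap-(b-c)^2]$. Your second route, writing $AX/AL'=(AK)^2/(AL')^2$ via the power of the point $A$ with respect to $\gamma_a$ and Lemma~\ref{lemma:tau}, uses only ingredients already established in the proof of Theorem~\ref{thm:lengthOfAX} and is a genuinely cleaner presentation, since everything becomes rational after squaring; either version is a complete proof.
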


\newpage

\section{Barycentric Coordinates}\label{section:bary}

In this section, we will find the barycentric coordinates for various points associated with
our configuration.

\begin{theorem}[Coordinates for $D$]\label{thm:baryD}
The barycentric coordinates for $D$ are
$$D=\Bigl(ap(p-a)+(b+c)t\Delta):bp(p-a)-bt\Delta:cp(p-a)-ct\Delta\Bigr)$$
where $\Delta$ denotes the area of $\triangle ABC$, $p$ denotes the semiperimeter,
and $t=\tan\frac{\theta}{4}$.
\end{theorem}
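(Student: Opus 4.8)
The plan is to locate $D$ on the angle bisector $AI$ and then pin down its exact position along that bisector using the radius formula already established. Since $\gamma_a$ is inscribed in $\angle BAC$, its center $D$ lies on the internal bisector from $A$, which is the line $AI$; in barycentrics $A=(1:0:0)$ and $I=(a:b:c)$. The homothety centered at $A$ that carries $\gamma_a$ to the incircle (the same one used throughout Section~\ref{section:L}) sends $D$ to $I$ and has ratio equal to the ratio of the radii, so $AD/AI=\rho_a/r$. Writing $k=\rho_a/r$ and working with normalized barycentric coordinates, this yields the affine combination $D=(1-k)A+kI$.

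First I would normalize $I$ to $\frac{1}{2p}(a,b,c)$ and compute $D=(1-k)(1,0,0)+\frac{k}{2p}(a,b,c)$, then scale by $2p$ to obtain the homogeneous representative $D\propto\bigl(2p(1-k)+ka:kb:kc\bigr)$. At this stage the second and third coordinates already carry the common factor $k$, which will match the factor $p(p-a)-t\Delta$ visible in the target, so the whole problem reduces to expressing $k$ and $1-k$ in terms of the side data.

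The key computation is to simplify $1-k=1-\rho_a/r$. By Corollary~\ref{cor:rho1}, $\rho_a=r-\frac{(p-b)(p-c)t}{p}$, so $1-k=\frac{(p-b)(p-c)t}{pr}$. Substituting $r=\Delta/p$ together with $(p-b)(p-c)=\Delta^2/\bigl(p(p-a)\bigr)$, which follows from $\Delta^2=p(p-a)(p-b)(p-c)$, collapses this to the clean expression $1-k=\frac{t\Delta}{p(p-a)}$, and hence $k=\frac{p(p-a)-t\Delta}{p(p-a)}$.

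Finally I would substitute these into $\bigl(2p(1-k)+ka:kb:kc\bigr)$ and multiply through by $p(p-a)$. The second and third coordinates become $bp(p-a)-bt\Delta$ and $cp(p-a)-ct\Delta$ at once, while the first becomes $ap(p-a)+2pt\Delta-at\Delta$; using $2p=a+b+c$ rewrites $2pt\Delta-at\Delta=(b+c)t\Delta$, giving $ap(p-a)+(b+c)t\Delta$ as required. The only genuine obstacle is the algebraic bookkeeping in this last simplification; conceptually everything rests on the single observation that $D$ divides $AI$ in the ratio $\rho_a:r$, after which the coordinates follow mechanically.
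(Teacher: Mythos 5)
Your proof is correct, but it follows a genuinely different route from the paper's. The paper computes the barycentric coordinates of $D$ directly as the area ratios $[DBC]:[DCA]:[DAB]$: the distances from $D$ to $CA$ and $AB$ are both $\rho_a$, and the distance $y$ to $BC$ is extracted from $ay+b\rho_a+c\rho_a=2\Delta$, giving $D=(2\Delta-(b+c)\rho_a:b\rho_a:c\rho_a)$ before substituting the value of $\rho_a$. You instead exploit the collinearity of $A$, $D$, $I$ and the homothety centered at $A$ carrying $\gamma_a$ to the incircle, reducing everything to the single scalar $k=AD/AI=\rho_a/r$ and the affine combination $D=(1-k)A+kI$ in normalized coordinates. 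Both arguments ultimately hinge on the same input, namely the radius formula of Corollary~\ref{cor:rho1}, and your identity $(p-b)(p-c)=\Delta^2/(p(p-a))$ together with $pr=\Delta$ makes the final simplification arguably cleaner than the paper's chain of substitutions. What the paper's method buys is independence from the collinearity of $A$, $D$, $I$ (it works for any circle whose distances to the three sides are known); what yours buys is a one-dimensional computation along the bisector that reuses the homothety already set up in Section~\ref{section:L}. Your algebra checks out: $1-k=t\Delta/(p(p-a))$, and clearing the denominator $p(p-a)$ from $(2p(1-k)+ka:kb:kc)$ gives exactly the stated coordinates after rewriting $2p-a=b+c$.
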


\begin{proof}
Let $y$ be the distance between $D$ and $BC$.
Summing the areas of triangles $DBC$, $DCA$ and $DAB$ we obtain
$$ay+b\rho_a +c\rho_a=2\Delta.$$
Thus,
$$ay=2\Delta-(b+c)\rho_a.$$
Letting $[XYZ]$ denote the area of $\triangle XYZ$, we find that
the barycentric coordinates for $D$ are therefore
\begin{align*}
   D&=\Bigl([DBC]:[DCA]:[DAB]\Bigr)=\left(ay:b\rho_a:c\rho_a\right)\\
	  &=\left(2\Delta-(b+c)\rho_a:b\rho_a:c\rho_a\right)\\
	  &=\left(\frac{2\Delta}{\rho_a}-(b+c):b:c\right).
\end{align*}
Replacing $\rho_a$ by its value given by Corollary~\ref{cor:rho}, we get
\begin{align*}
D&=\left(\frac{2\Delta}{r\left(1-\frac{rt}{p-a}\right)}-(b+c):b:c\right)\\
&=\left(\frac{2(p-a)\Delta}{r\left(p-a-rt\right)}-(b+c):b:c\right)\\
&=\Bigl((b+c-a)\Delta-(b+c)r(p-a-rt):br(p-a-rt):cr(p-a-rt)\Bigr).
\end{align*}
Replacing $r$ by $\Delta/p$, then multiplying all coordinates by $p^2/\Delta$ gives
$$D=\Bigl(ap(b+c-p)+(b+c)t\Delta:bp(p-a)-bt\Delta:cp(p-a)-ct\Delta\Bigr).$$
Finally, noting that $b+c-p=p-a$, gives the desired result.
\end{proof}

\begin{theorem}[Coordinates for $O_a$]\label{thm:baryOa}
The barycentric coordinates for $O_a$ are
$$O_a=\left(-a^2:S_c+S\cot\phi:S_b+S\cot\phi\right).$$
where $\phi=90\degrees-\theta/2$, $S=2\Delta$,
$S_b=(c^2+a^2-b^2)/2$, and $S_c=(a^2+b^2-c^2)/2$.

\end{theorem}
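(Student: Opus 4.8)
The plan is to characterize $O_a$ by two conditions and then convert them into areal coordinates. Since $\omega_a$ passes through $B$ and $C$, its center satisfies $O_aB=O_aC=R_a$, so $O_a$ lies on the perpendicular bisector of $BC$; moreover the radius $R_a=\tfrac{a}{2}\csc\tfrac{\theta}{2}$ is already known from Theorem~\ref{thm:Ra}. These two facts pin $O_a$ down completely, so I would compute its signed distance to $BC$ and then read off the coordinates $[O_aBC]:[O_aCA]:[O_aAB]$.

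Concretely, I would place $B=(0,0)$, $C=(a,0)$, and $A=(x_A,y_A)$ with $y_A=S/a>0$, so that the midpoint of $BC$ is $(a/2,0)$ and $O_a=(a/2,k)$ sits on the perpendicular bisector $x=a/2$. From $O_aB^2=R_a^2$ together with Theorem~\ref{thm:Ra} one gets $k^2=\tfrac{a^2}{4}\cot^2\tfrac{\theta}{2}$, and the geometry of the internally erected arc forces $O_a$ onto the opposite side of $BC$ from $A$ when $\theta<180\degrees$, giving $k=-\tfrac{a}{2}\cot\tfrac{\theta}{2}$. I would also record $x_A=S_b/a$ and $a-x_A=S_c/a$, which follow at once from $c^2=x_A^2+y_A^2$ and $b^2=(x_A-a)^2+y_A^2$.

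Writing $O_a=\alpha A+\beta B+\gamma C$ in normalized barycentrics, the $y$-coordinate relation $\alpha y_A=k$ gives $\alpha=ak/S=-\tfrac{a^2}{2S}\cot\tfrac{\theta}{2}$, while $\alpha x_A+\gamma a=a/2$ and $\alpha+\beta+\gamma=1$ yield $\gamma=\tfrac12+\tfrac{\cot(\theta/2)}{2S}S_b$ and $\beta=\tfrac12+\tfrac{\cot(\theta/2)}{2S}S_c$. Clearing the common factor $2S/\cot\tfrac{\theta}{2}$ turns this into $\bigl(-a^2:S_c+S\tan\tfrac{\theta}{2}:S_b+S\tan\tfrac{\theta}{2}\bigr)$, and since $\phi=90\degrees-\theta/2$ gives $\cot\phi=\tan\tfrac{\theta}{2}$, this is exactly the claimed triple.

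The main difficulty here is bookkeeping rather than depth: fixing the sign of $k$ (equivalently, verifying the first coordinate is $-a^2$ and that the formula degenerates correctly to the midpoint $(0:1:1)$ as $\theta\to180\degrees$, where $\cot\phi\to\infty$ swamps $-a^2$), and correctly matching the half-angle $\theta/2$ in $R_a$ with the complementary angle $\phi$ in the statement. As a consistency check I would confirm directly that $(-a^2:S_c+S\cot\phi:S_b+S\cot\phi)$ satisfies the perpendicular-bisector equation $a^2(z-y)=(b^2-c^2)x$, which holds because $z-y=S_b-S_c=c^2-b^2$.
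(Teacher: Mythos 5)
Your proposal is correct, but it takes a different route from the paper. The paper's entire proof is a one-line appeal to Conway's formula (the standard barycentric formula $\bigl(-a^2:S_C+S\cot\gamma:S_B+S\cot\beta\bigr)$ for a point seen from $B$ and $C$ under given swing angles), applied with both swing angles equal to $\phi=90\degrees-\theta/2$, the base angle of the isosceles triangle $BCO_a$. You instead rederive the result from scratch in Cartesian coordinates: your identities $x_A=S_b/a$, $a-x_A=S_c/a$, $y_A=S/a$ are right, the signed offset $k=-\tfrac{a}{2}\cot\tfrac{\theta}{2}$ correctly places $O_a$ on the far side of $BC$ from $A$ for $\theta<180\degrees$ (and the same formula automatically flips the side for $\theta>180\degrees$), and the resulting normalized coordinates $\alpha=-\tfrac{a^2}{2S}\cot\tfrac{\theta}{2}$, $\beta=\tfrac12+\tfrac{S_c}{2S}\cot\tfrac{\theta}{2}$, $\gamma=\tfrac12+\tfrac{S_b}{2S}\cot\tfrac{\theta}{2}$ rescale to exactly the stated triple once you use $\cot\phi=\tan\tfrac{\theta}{2}$. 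In effect you have reproved Conway's formula in the special symmetric case needed here. What the paper's approach buys is brevity and a clean reference; what yours buys is self-containedness, an explicit treatment of the orientation/sign issue that the citation hides, and the two sanity checks (the perpendicular-bisector equation $a^2(z-y)=(b^2-c^2)x$ and the limit $(0:1:1)$ as $\theta\to180\degrees$), both of which check out.
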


\begin{proof}
The result follows from Conway's Formula \cite[p.~34]{Yiu}.
\end{proof}

\newpage

\begin{theorem}[Coordinates for $T$]\label{thm:baryT}
The barycentric coordinates for $T$ are\\$(T_x:T_y:T_z)$ where
\begin{equation*}
\begin{aligned}
T_x&=2 a \sin \frac{\theta }{4} \left(au \cos\frac{\theta }{2} +(b+c)
   u+2 a S \sin \frac{\theta }{2}\right),\\
T_y&=- u
   \left(2\left(a^2-bc-c^2\right) \cos \frac{\theta }{2} +a^2+2 ab-(b+c)^2\right)\sin \frac{\theta}{4}\\
   &+2 S\left(a^2+bc-c^2\right) \cos \frac{3 \theta }{4} +2 b S (2 a+b-c)\cos \frac{\theta}{4} ,\\
T_z&=-u
   \left(2\left(a^2-bc-b^2\right) \cos \frac{\theta}{2} +a^2+2 a c-(b+c)^2\right)\sin \frac{\theta }{4} \\
   &+2 S\left(a^2+bc-b^2\right) \cos \frac{3 \theta }{4} +2 c S(2 a-b+c) \cos \frac{\theta }{4},\\
\end{aligned}
\end{equation*}
where $S=2\Delta$ and $u=a^2-(b-c)^2$.
\end{theorem}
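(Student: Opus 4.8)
The plan is to exploit that external tangency of $\gamma_a$ and $\omega_a$ at $T$ forces the centers and the touch point to be collinear: $D$, $T$, $O_a$ lie on a line with $T$ on the segment $DO_a$ at distance $\rho_a$ from $D$ and $R_a$ from $O_a$. Hence $T$ divides $DO_a$ internally in the ratio $\rho_a:R_a$, so in normalized barycentric coordinates (triples rescaled to sum to $1$) we have the affine combination
$$T=\frac{R_a\,\widehat D+\rho_a\,\widehat O_a}{\rho_a+R_a},$$
where $\widehat D$ and $\widehat O_a$ are the normalizations of the coordinate triples supplied by Theorems~\ref{thm:baryD} and~\ref{thm:baryOa}. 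Clearing the normalizations, this says that in homogeneous barycentrics
$$T=R_a\,\sigma_{O_a}\,D+\rho_a\,\sigma_D\,O_a,$$
where $\sigma_D$ and $\sigma_{O_a}$ denote the sums of the coordinates of $D$ and $O_a$. The whole theorem then reduces to computing and simplifying the right-hand side.

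First I would compute the two coordinate sums. For $D$ the $t\Delta$ terms cancel and $\sigma_D=(a+b+c)\,p(p-a)=2p^2(p-a)$. For $O_a$, using $S_b+S_c=a^2$ together with $\cot\phi=\cot(90^\circ-\theta/2)=\tan(\theta/2)$, the leading $-a^2$ is absorbed and $\sigma_{O_a}=2S\tan(\theta/2)$. Next I would substitute $R_a=\tfrac{a}{2}\csc(\theta/2)$ from Theorem~\ref{thm:Ra} and the value of $\rho_a$ from Theorem~\ref{thm:rho1} (equivalently Corollary~\ref{cor:rho1}) into the homogeneous formula, and rewrite every occurrence of $\tan(\theta/2)$, $\csc(\theta/2)$, and $t=\tan(\theta/4)$ through the half-angle relations $\tan(\theta/2)=2t/(1-t^2)$ and $\sin(\theta/2)=2t/(1+t^2)$ (the latter being Lemma~\ref{lemma:sin2x}). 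After this substitution a uniform common factor, a power of $(1+t^2)$ together with $(p-a)$, can be cancelled across all three coordinates.

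The remaining task, and the main obstacle, is the bookkeeping of the simplification: each coordinate of $T$ becomes a sum of several products of side-length polynomials with trigonometric factors, which must be collected and re-expressed in the compact mixed form of the statement. This means using multiple-angle identities to convert the accumulated products into the single factors $\cos\frac{\theta}{2}$, $\cos\frac{3\theta}{4}$, $\cos\frac{\theta}{4}$, and $\sin\frac{\theta}{4}$ that appear in $T_x$, $T_y$, $T_z$, and recognizing $u=a^2-(b-c)^2$ as the recurring side combination. I would verify the final grouping by checking homogeneity in the side lengths and by testing the semicircle case $\theta=180^\circ$ (where $t=1$), in which the formulas must reduce to the known touch point of the original Ajima configuration. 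Because this last stage is long but entirely mechanical, I expect to carry it out with a computer algebra system; the genuine content of the argument is the reduction to the affine-combination formula displayed above.
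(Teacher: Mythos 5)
Your proposal is correct and follows essentially the same route as the paper: the paper's proof likewise assembles the coordinates of $D$ and $O_a$ from Theorems~\ref{thm:baryD} and~\ref{thm:baryOa}, the radii from Corollary~\ref{cor:rho1} and Theorem~\ref{thm:Ra}, and then uses the fact that $T$ divides $DO_a$ in the ratio $\rho_a:R_a$, delegating the final simplification to \textsc{Mathematica}. Your version simply makes explicit the normalization step (the coordinate sums $\sigma_D=2p^2(p-a)$ and $\sigma_{O_a}=2S\tan\frac{\theta}{2}$, both computed correctly) that the paper leaves implicit.
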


\begin{proof}
The barycentric coordinates for $D$ were found in Theorem~\ref{thm:baryD}. This can
be simplified to $D=(D_x:D_y:D_z)$ where
\begin{equation*}
\begin{aligned}
D_x&=a^3-a (b+c)^2-2 S (b+c) t,\\
D_y&=-b \left(-a^2+b^2+2 b c+c^2-2 S t\right),\\
D_z&=-c \left(-a^2+b^2+2 b c+c^2-2 S t\right)
\end{aligned}
\end{equation*}
by using the substitutions $r=S/(a+b+c)$, $p=(a+b+c)/2$, and $\Delta=S/2$.

The barycentric coordinates for $O_a$ were found in Theorem~\ref{thm:baryOa}, namely
$$O_a=\left(-a^2:S_c+S\cot\phi:S_b+S\cot\phi\right)$$
where $\phi=90\degrees-\theta/2$,
$S_b=(c^2+a^2-b^2)/2$, and $S_c=(a^2+b^2-c^2)/2$.

From Corollary~\ref{cor:rho1}, we have
$$\rho_a=\frac{S-2(p-b)(p-c)t}{2p}.$$
The radius of $\omega_a$ was found in Theorem~\ref{thm:Ra}, namely
$$R_a=\frac{a}{2}\csc\frac{\theta}{2}.$$

The touch point $T$ divides the segment $DO_a$ in the ratio $\rho_a:R_a$.
This fact allows us to use \textsc{Mathematica} to find the barycentric coordinates for $T$
from the known barycentric coordinates for $D$ and $O_a$.
\end{proof}

\newpage

\section{Properties of Three Ajima Circles}

Let $\omega_a$, $\omega_b$, and $\omega_c$, be arcs of angular measure $\theta$
erected internally on the sides of $\triangle ABC$.
Let $\gamma_a$ be the circle inscribed in $\angle BAC$ and tangent externally to $\omega_a$.
Define $\gamma_b$ and $\gamma_c$ similarly.
The three circles, $\gamma_a$, $\gamma_b$ and $\gamma_c$ will be called a \emph{general triad of circles}
associated with $\triangle ABC$ (Figure~\ref{fig:genTriad}).

\begin{figure}[ht]
\centering
\includegraphics[scale=0.5]{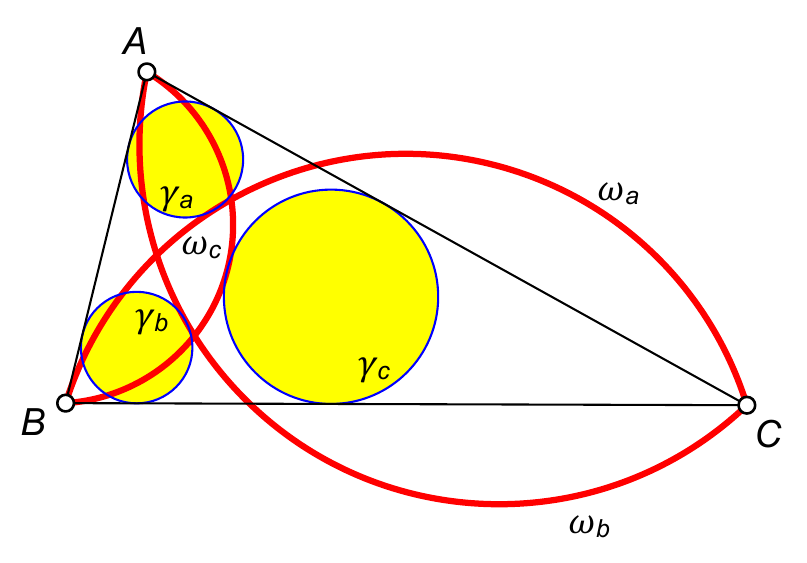}
\caption{general triad of circles}
\label{fig:genTriad}
\end{figure}

For the remainder of this paper, we will assume that the three circles
$\gamma_a$, $\gamma_b$ and $\gamma_c$ all lie inside $\triangle ABC$.
An equivalent condition is that all angles of $\triangle ABC$ have measure less
than $180\degrees-\frac{\theta}{2}$.

\begin{theorem}\label{thm:genTangents}
The common external tangents to any pair of circles in a general triad are congruent (Figure~\ref{fig:genTangents}).
The common length is $2r\tan\frac{\theta}{4}$.
\end{theorem}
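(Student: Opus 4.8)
The plan is to exploit the observation that any two circles of the triad share one full side of the triangle as a common tangent line, and to measure the common external tangent segment directly along that side rather than through the distance-formula $\sqrt{d^2-(\rho_a-\rho_b)^2}$. Consider the pair $\gamma_a$ and $\gamma_b$. Since $\gamma_a$ is inscribed in $\angle BAC$ and $\gamma_b$ in $\angle ABC$, both are tangent to the line $AB$; and since, under the standing assumption, both circles lie inside $\triangle ABC$, they sit on the same side of $AB$. Hence $AB$ is a common \emph{external} tangent, and the segment of $AB$ between the two points of tangency is exactly the common external tangent we must measure (the blue segment in Figure~\ref{fig:genTangents}).

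First I would locate the touch point of $\gamma_a$ on $AB$. By Lemma~\ref{lemma:tau}, the tangent length $AK$ from $A$ to $\gamma_a$ equals $p-a-rt$, with $t=\tan\frac{\theta}{4}$; since tangent segments from $A$ are equal, $\gamma_a$ touches $AB$ at distance $p-a-rt$ from $A$. Next I would apply the same lemma at vertex $B$ — the construction is symmetric in the three vertices because all three arcs have the common angular measure $\theta$, so the same $t$ occurs — giving tangent length $p-b-rt$ from $B$ to $\gamma_b$. Converting this into a distance measured from $A$ along the side of length $c$ yields $c-(p-b-rt)=(p-a)+rt$, where I use the identity $b+c-p=p-a$.

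Subtracting the two distances from $A$, the two touch points are separated by
$$\bigl[(p-a)+rt\bigr]-\bigl[(p-a)-rt\bigr]=2rt=2r\tan\frac{\theta}{4},$$
which is the asserted common length; note in passing that the incircle's own contact point on $AB$, at distance $p-a$ from $A$, is the midpoint of this segment, recovering the semicircle case $\theta=180\degrees$ where the length is $2r$. Finally, relabeling the vertices cyclically gives the identical computation for the pairs $(\gamma_b,\gamma_c)$ along $BC$ and $(\gamma_a,\gamma_c)$ along $CA$, so all three common external tangents have length $2r\tan\frac{\theta}{4}$.

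The only real subtlety — more bookkeeping than genuine obstacle — is converting the tangent length measured from $B$ into a distance from $A$, and confirming that the relevant common tangent is the external one lying along the shared side (which holds precisely because all three circles lie inside the triangle and hence on the same side of each shared line). Everything else reduces to the already-established tangent-length formula of Lemma~\ref{lemma:tau} together with the standard incircle contact lengths $AY=AZ=p-a$.
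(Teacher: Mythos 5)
Your proposal is correct and is essentially the paper's own argument: the paper simply observes that the common tangent length is twice $HK=r\tan\frac{\theta}{4}$ (Corollary~\ref{cor:KH}), and your computation via Lemma~\ref{lemma:tau} — placing the two touch points at distances $(p-a)-rt$ and $(p-a)+rt$ from $A$ — is the same fact written out in coordinates along the shared side, since Lemma~\ref{lemma:tau} is itself just $AH-HK$ with $AH=p-a$. Your remark that the incircle contact point is the midpoint of the segment is exactly the content of Theorem~\ref{thm:midpoints}.
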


\begin{figure}[ht]
\centering
\includegraphics[scale=0.5]{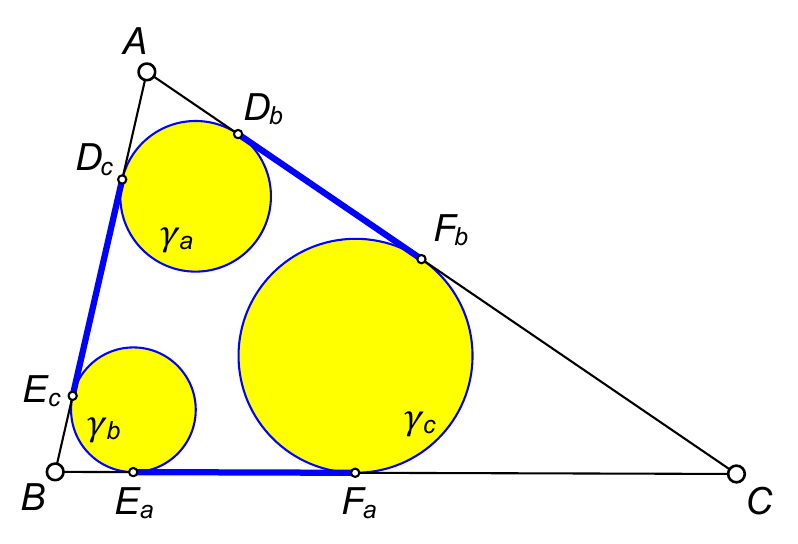}
\caption{blue lines are congruent}
\label{fig:genTangents}
\end{figure}

\begin{proof}
The common length is twice $KH$ (Figure~\ref{fig:thetaOver4}) whose value is given by Corollary~\ref{cor:KH}.
\end{proof}

\newpage

\textbf{Note.} The theorem remains true if some or all of the yellow circles are outside of $\triangle ABC$
as shown in Figure~\ref{fig:genTangents-123}.

\begin{figure}[ht]
\centering
\includegraphics[scale=0.4]{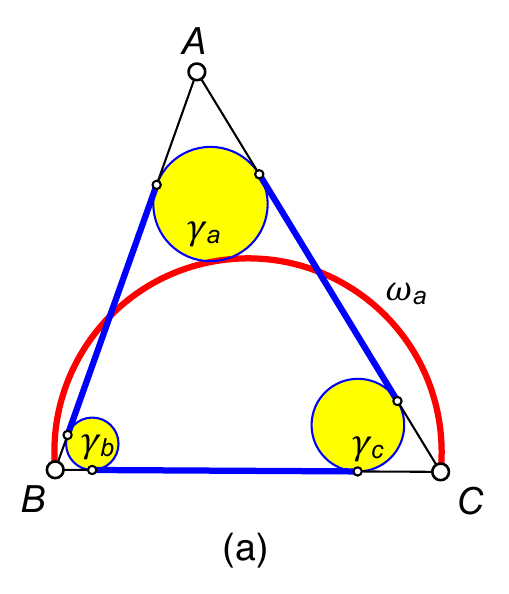}
\includegraphics[scale=0.4]{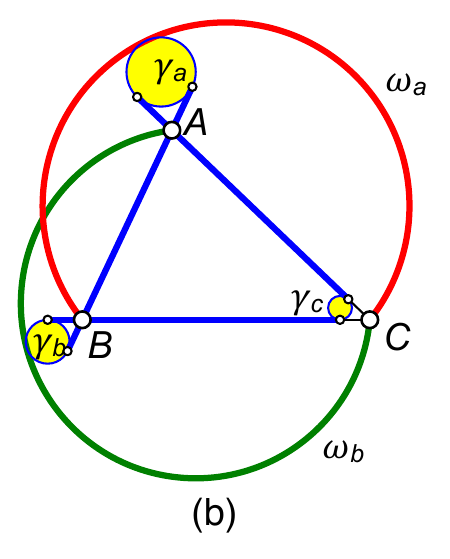}
\includegraphics[scale=0.4]{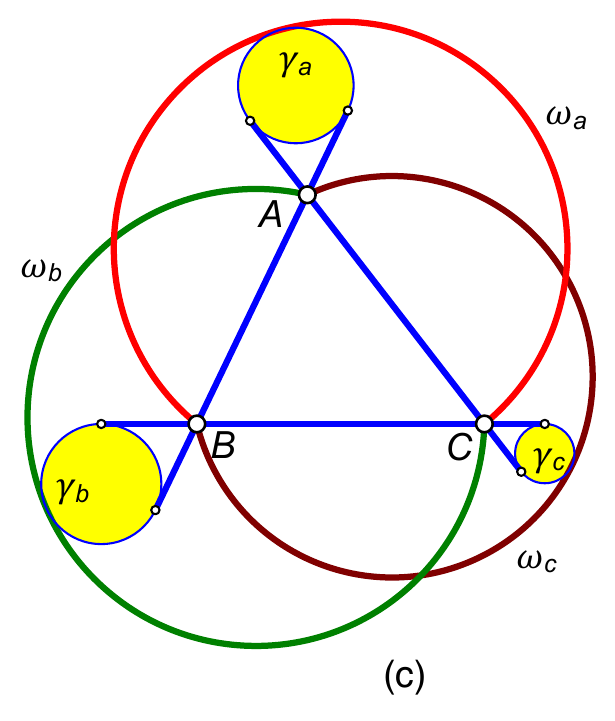}
\caption{blue lines are congruent}
\label{fig:genTangents-123}
\end{figure}


\begin{theorem}\label{thm:midpoints}
Let $M_a$, $M_b$, and $M_c$ be the midpoints of the common tangents (lying along the sides of $\triangle ABC$) to
a general triad of circles associated with that triangle.
Then $M_a$, $M_b$, and $M_c$ are the touch points of the incircle of $\triangle ABC$
with the sides of the triangle (Figure \ref{fig:midpoints}).
\end{theorem}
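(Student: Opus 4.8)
The plan is to work side by side: on each side of $\triangle ABC$ I would locate the two touch points of the triad circles with that side and show they straddle the incircle's touch point symmetrically, so that their midpoint is exactly that touch point. I will carry out the argument on side $BC$; the other two sides then follow by relabelling the vertices.

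First I would identify which circles of the triad are tangent to $BC$. These are $\gamma_b$, inscribed in $\angle ABC$, and $\gamma_c$, inscribed in $\angle ACB$, since each circle is tangent to the two sides meeting at its vertex. Let $P$ and $Q$ be their respective touch points on $BC$, and let $L$ be the point where the incircle touches $BC$, so that $BL=p-b$ and $CL=p-c$ by the standard tangent-length formula.

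The key step is to measure $P$ and $Q$ against $L$ using Lemma~\ref{lemma:tau} (equivalently Corollary~\ref{cor:KH}, which gives the incircle-to-Ajima-circle tangent length $HK=r\tan\frac{\theta}{4}=rt$). This says that the touch point of a triad circle on a side is displaced from the incircle's touch point on that same side by exactly $rt$, toward the vertex in whose angle the circle is inscribed. Applying this at vertex $B$ gives $BP=(p-b)-rt$, so $P$ lies between $B$ and $L$ with $LP=rt$; applying it at vertex $C$ gives $CQ=(p-c)-rt$, so $Q$ lies between $C$ and $L$ with $LQ=rt$. Crucially, the displacement $rt$ is the same at both vertices because $r$ and $\theta$ are common to the whole configuration. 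Hence along $BC$ the points fall in the order $B,P,L,Q,C$ with $LP=LQ=rt$, so $L$ is the midpoint of the common tangent $PQ$; that is, $M_a=L$. As a consistency check, $PQ=2rt$, matching Theorem~\ref{thm:genTangents}.

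I do not expect a genuine obstacle here. The one point needing care is justifying that Lemma~\ref{lemma:tau}, stated for $\gamma_a$ at vertex $A$, transfers verbatim to $\gamma_b$ and $\gamma_c$ at vertices $B$ and $C$, and in particular that the two displacements point toward their own (distinct) vertices, so that $P$ and $Q$ genuinely straddle $L$ rather than landing on the same side of it. Once the directions are pinned down, repeating the identical computation on sides $CA$ and $AB$ shows that $M_b$ and $M_c$ are the remaining two incircle touch points, which completes the proof.
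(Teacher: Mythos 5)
Your proposal is correct and is essentially the paper's own argument: the paper's proof is the one-line remark that the result follows from Corollary~\ref{cor:KH}, and you have simply written out the details, namely that on each side the two triad touch points lie at the common distance $rt=r\tan\frac{\theta}{4}$ from the incircle's touch point, one toward each endpoint, so that touch point is the midpoint. Your care about the directions of the two displacements (so that $P$ and $Q$ straddle $L$) is exactly the point the paper leaves implicit.
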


\begin{figure}[ht]
\centering
\includegraphics[scale=0.55]{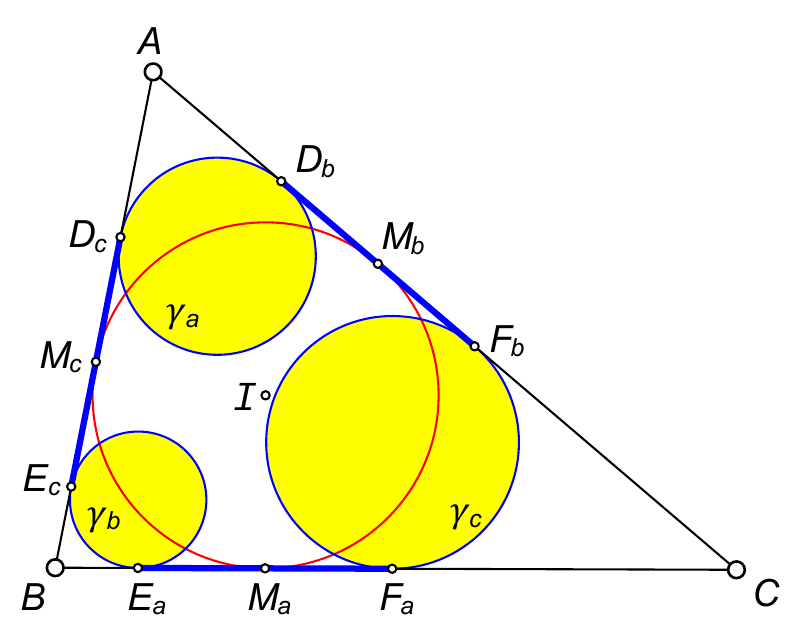}  
\caption{}
\label{fig:midpoints}
\end{figure}

\begin{proof}
This follows from Corollary~\ref{cor:KH}.
\end{proof}

\newpage

\begin{theorem}\label{radixAxis}
Let $\gamma_a$, $\gamma_b$, and $\gamma_c$ be
a general triad of circles associated with triangle $\triangle ABC$.
Let $M_a$, $M_b$, and $M_c$ be the points where the incircle of $\triangle ABC$ touches the sides. 
Then the radical axis of $\gamma_b$, and $\gamma_c$ is $AM_a$
(Figure~\ref{fig:radicalAxis}).
\end{theorem}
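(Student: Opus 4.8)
The plan is to exhibit two explicit points that lie on the radical axis of $\gamma_b$ and $\gamma_c$, namely the vertex $A$ and the incircle touch point $M_a$ on $BC$. Since the radical axis of two non-concentric circles is a line and $A\neq M_a$, it suffices to check that each of these two points has equal power with respect to $\gamma_b$ and $\gamma_c$; the radical axis is then forced to be the line $AM_a$.

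First I would record the relevant tangent lengths. All three circles of the triad share the same value $t=\tan\frac{\theta}{4}$ because the arcs $\omega_a,\omega_b,\omega_c$ have equal angular measure $\theta$, and $r$ is the common inradius. By the version of Lemma~\ref{lemma:tau} applied at vertex $B$, the tangent length from $B$ to $\gamma_b$ is $p-b-rt$, and at vertex $C$ the tangent length from $C$ to $\gamma_c$ is $p-c-rt$. I would also use the standard incircle touch distances $BM_a=p-b$ and $CM_a=p-c$.

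Next I would compute the two powers. Since $\gamma_b$ is tangent to line $AB$, its touch point on $AB$ sits at distance $p-b-rt$ from $B$, hence at distance $c-(p-b-rt)=(p-a)+rt$ from $A$; thus the power of $A$ with respect to $\gamma_b$ is $\bigl((p-a)+rt\bigr)^2$. Symmetrically, the touch point of $\gamma_c$ on $AC$ lies at distance $b-(p-c-rt)=(p-a)+rt$ from $A$, so the power of $A$ with respect to $\gamma_c$ is the same. Hence $A$ lies on the radical axis. For $M_a$ I would work along line $BC$: the touch point of $\gamma_b$ on $BC$ is at distance $p-b-rt$ from $B$, so its distance from $M_a$ is $rt$, giving power $(rt)^2$; likewise the touch point of $\gamma_c$ on $BC$ is at distance $rt$ from $M_a$, again giving power $(rt)^2$. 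Therefore $M_a$ also lies on the radical axis, and I may conclude that the radical axis of $\gamma_b$ and $\gamma_c$ is precisely the line $AM_a$.

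These computations are elementary, so I do not anticipate a genuine obstacle. The only points requiring care are (a) transcribing Lemma~\ref{lemma:tau} correctly to the vertices $B$ and $C$, so that $p-a-rt$ becomes $p-b-rt$ and $p-c-rt$, together with the algebraic simplification $b+c-p=p-a$; and (b) confirming that the configuration places each touch point on the expected side of the relevant vertex and of $M_a$, so that the powers really are the squares of the computed distances. Both conditions hold whenever the triad lies inside $\triangle ABC$, which is the standing assumption of this section.
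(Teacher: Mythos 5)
Your proposal is correct and follows essentially the same route as the paper: both arguments show that $A$ and $M_a$ each have equal tangent lengths (hence equal powers) to $\gamma_b$ and $\gamma_c$, and conclude that the radical axis is the line $AM_a$. The only cosmetic difference is that you recompute the tangent lengths explicitly from the analog of Lemma~\ref{lemma:tau} (getting $(p-a)+rt$ from $A$ and $rt$ from $M_a$), whereas the paper cites Theorems~\ref{thm:midpoints} and \ref{thm:genTangents}, which encode the same quantities.
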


\begin{figure}[ht]
\centering
\includegraphics[scale=0.5]{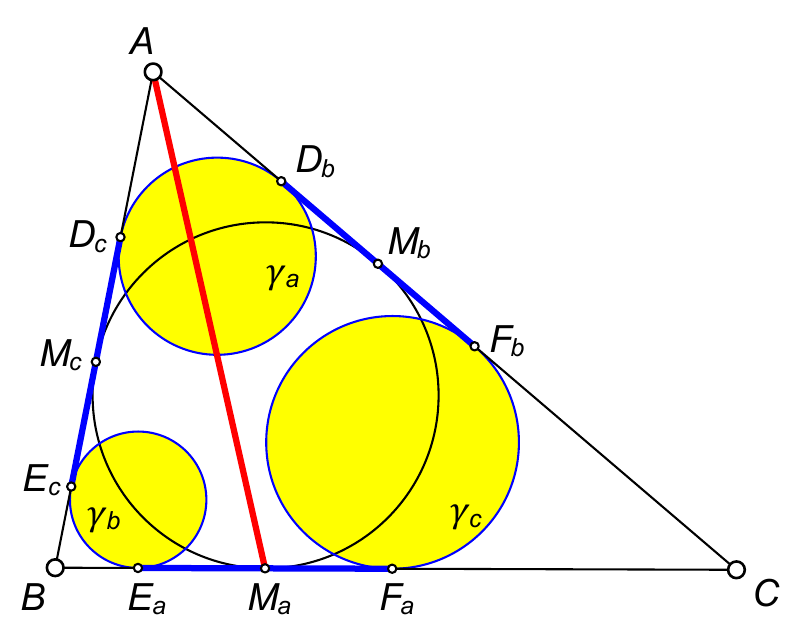}  
\caption{}
\label{fig:radicalAxis}
\end{figure}

\begin{proof}
From Theorem~\ref{thm:midpoints}, $M_aE_a=M_aF_a$.
Thus, the tangents from $M_a$ to $\gamma_b$ and $\gamma_c$ are equal.
Since $AD_c=AD_b$ and $D_cE_c=D_bF_b$ (Theorem~\ref{thm:genTangents}), this means $AE_c=AF_b$.
Hence the tangents from $A$ to $\gamma_b$ and $\gamma_c$ are equal.
The radical axis of circles $\gamma_b$ and $\gamma_c$ is the locus of points
such that the lengths of the tangents to the two circles from that point are equal.
The radical axis of two circles is a straight line.
Therefore, the radical axis of circles $\gamma_b$ and $\gamma_c$ is $AM_a$,
the Gergonne cevian from $A$.
\end{proof}


\begin{theorem}\label{thm:radicalCenter}
Let $\gamma_a$, $\gamma_b$, and $\gamma_c$ be
a general triad of circles associated with triangle $\triangle ABC$.
Then the radical center of the three circles of the triad is the Gergonne point of $\triangle ABC$ (Figure~\ref{fig:radicalCenterGe}).
\end{theorem}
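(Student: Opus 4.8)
The plan is to deduce this theorem directly from Theorem~\ref{radixAxis}, which has already established that the radical axis of any two of the three circles is the Gergonne cevian from the opposite vertex. First I would recall the standard fact that the radical center of three circles is the unique common point of their three pairwise radical axes, provided the three centers are not collinear. By Theorem~\ref{radixAxis}, the radical axis of $\gamma_b$ and $\gamma_c$ is the line $AM_a$, the radical axis of $\gamma_c$ and $\gamma_a$ is $BM_b$, and the radical axis of $\gamma_a$ and $\gamma_b$ is $CM_c$, where $M_a$, $M_b$, $M_c$ are the points where the incircle touches the sides (Theorem~\ref{thm:midpoints}).

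Next I would observe that the three cevians $AM_a$, $BM_b$, $CM_c$ are precisely the Gergonne cevians of $\triangle ABC$, so their common point of concurrency is by definition the Gergonne point $G_e$. Hence the three pairwise radical axes all pass through $G_e$, which forces the radical center to equal $G_e$. In effect the proof is a one-line synthesis: apply Theorem~\ref{radixAxis} three times (once for each pair, cycling the roles of the vertices), and invoke the definition of the Gergonne point as the concurrence point of the cevians joining each vertex to the incircle touch point on the opposite side.

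The only genuine subtlety, and what I would guard against, is the degeneracy clause in the radical-center theorem: three pairwise radical axes meet in a single point exactly when the three circle centers are non-collinear (otherwise the axes are parallel and there is no finite radical center). So I would want a sentence noting that the centers $D_a$, $D_b$, $D_c$ of a general triad are not collinear—intuitively clear since each lies on a distinct internal angle bisector of the triangle—and that the concurrency of the Gergonne cevians (a classical consequence of Ceva's theorem, using $AM_a$ etc.\ as tangent-length cevians) is guaranteed. With non-collinearity in hand, the radical center exists and must coincide with the already-identified common point $G_e$. I do not expect any real obstacle here; the heavy lifting was done in Theorem~\ref{radixAxis}, and this result is essentially its immediate corollary.
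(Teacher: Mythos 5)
Your proposal is correct and follows exactly the paper's own argument: apply Theorem~\ref{radixAxis} to each pair of circles to identify the three radical axes with the Gergonne cevians, which concur at $G_e$ by definition. The extra sentence you would add about non-collinearity of the centers is a reasonable precaution but does not change the route.
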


\begin{figure}[ht]
\centering
\includegraphics[scale=0.5]{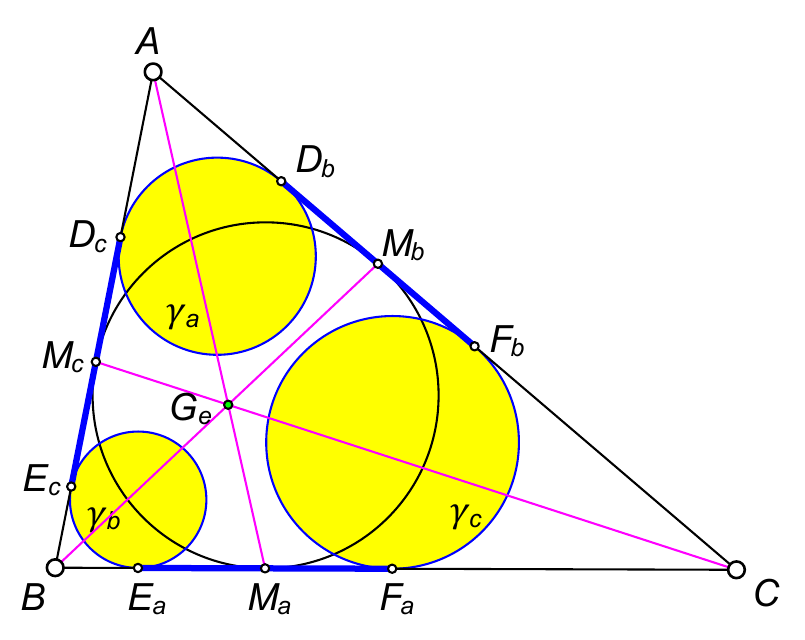}
\caption{}
\label{fig:radicalCenterGe}
\end{figure}

\begin{proof}
By Theorem~\ref{radixAxis}, the radical axis of circles $\gamma_b$ and $\gamma_c$ is $AM_a$,
the Gergonne cevian from $A$.
Similarly, the radical axis of circles $\gamma_a$ and $\gamma_c$ is the Gergonne
cevian from $B$ and the radical axis of circles $\gamma_a$ and $\gamma_b$ is the Gergonne
cevian from $C$. Hence, the radical center of the general triad of circles is the intersection point
of the three Gergonne cevians, namely, the Gergonne point of $\triangle ABC$.
\end{proof}

\newpage

\begin{theorem}
The six points of contact of a general triad of circles lie on a circle with center $I$,
the incenter of $\triangle ABC$ (Figure \ref{fig:genConcyclic}).
\end{theorem}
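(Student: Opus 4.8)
The plan is to show that all six contact points are equidistant from $I$, reducing the whole statement to a single right-triangle computation that is identical at each of the six points. First I would organize the six points by side. Each circle of the triad is inscribed in one angle of the triangle, so it is tangent to the two sides bounding that angle; consequently side $BC$ carries the touch points of $\gamma_b$ and $\gamma_c$, side $CA$ carries those of $\gamma_c$ and $\gamma_a$, and side $AB$ carries those of $\gamma_a$ and $\gamma_b$ — two points per side, six in all.

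Next I would pin down the position of these points along each side using the results already established. By Theorem~\ref{thm:midpoints}, the incircle touch point on a given side — call it $M$, so that $IM$ is perpendicular to that side and $IM=r$ — is exactly the midpoint of the segment joining the two triad touch points lying on that side. By Theorem~\ref{thm:genTangents}, that segment, being the common external tangent of the two circles, has length $2r\tan\frac{\theta}{4}=2rt$. Hence each of the two touch points on the side lies at distance $rt$ from $M$ measured along the side.

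Finally I would compute the distance from $I$ to a typical touch point $P$. Since $M$ is the foot of the perpendicular from $I$ to the side, triangle $IMP$ is right-angled at $M$, so
$$IP^2 = IM^2 + MP^2 = r^2 + (rt)^2 = r^2(1+t^2).$$
Thus $IP = r\sqrt{1+t^2}=r\sec\frac{\theta}{4}$, a value that does not depend on which of the six points $P$ is. Therefore all six contact points lie on the circle centered at $I$ of radius $r\sec\frac{\theta}{4}$, which is the assertion.

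I do not expect a serious obstacle here: the two previously proved facts (midpoint $=$ incircle contact, common-tangent length $=2rt$) do essentially all the work, and what remains is one application of the Pythagorean theorem. The only step demanding care is the bookkeeping in the first paragraph — correctly matching each circle to the two sides it touches, and confirming that the two triad touch points on a side are precisely the endpoints of the common external tangent whose midpoint Theorem~\ref{thm:midpoints} identifies as the incircle contact point.
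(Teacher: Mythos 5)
Your proof is correct and follows essentially the same route as the paper: the paper deduces $ID_b=\cdots=IF_b=r\sec\frac{\theta}{4}$ from the right triangle $IHK$ via Theorem~\ref{thm:thetaOver4} (equivalently Corollary~\ref{cor:KH}, which gives $HK=rt$), and your Pythagorean computation $IP^2=r^2+(rt)^2$ is the same right triangle packaged through Theorems~\ref{thm:genTangents} and~\ref{thm:midpoints}, which are themselves consequences of that corollary.
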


\begin{figure}[ht]
\centering
\includegraphics[scale=0.5]{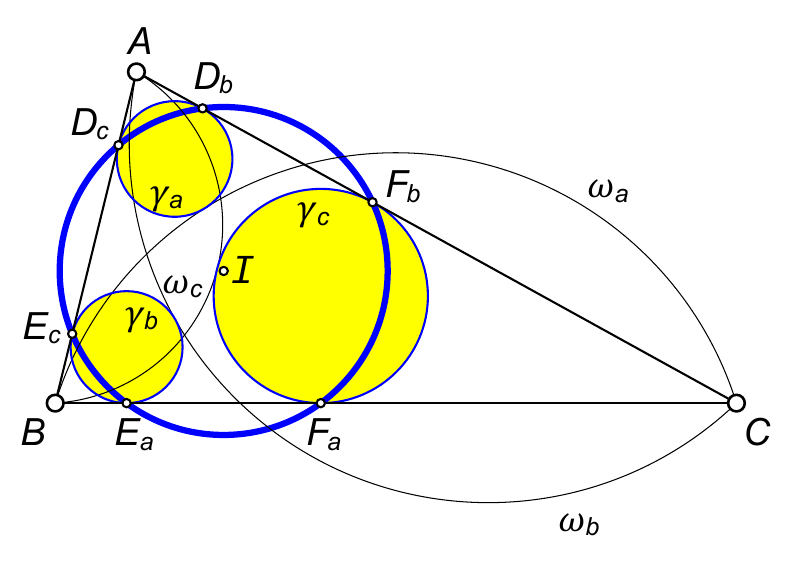}
\caption{touch points are concyclic}
\label{fig:genConcyclic}
\end{figure}

\begin{proof}
This follows from Theorem~\ref{thm:thetaOver4} from which we can deduce that
\begin{equation*}
ID_b=ID_c=IE_a=IE_c=IF_a=IF_b=r\sec\frac{\theta}{4}.\qedhere
\end{equation*}
\end{proof}

\void{
The following result is well known \cite{Knot}.

\begin{lemma}[Trigonometric Form of Ceva's Theorem]\label{lemma:CevaTrig}
Let $P_a$, $P_b$, and $P_c$ be three points inside $\triangle ABC$
and number the angles formed by the sides of the triangle with $AP_a$, $BP_b$, and $CP_c$
as shown in Figure~\ref{fig:CevaTrig}.
Then $AP_a$, $BP_b$, and $CP_c$ are concurrent if and only if
$$\sin\angle1\sin\angle3\sin\angle5=\sin\angle2\sin\angle4\sin\angle6.$$
\end{lemma}

\begin{figure}[ht]
\centering
\includegraphics[scale=0.5]{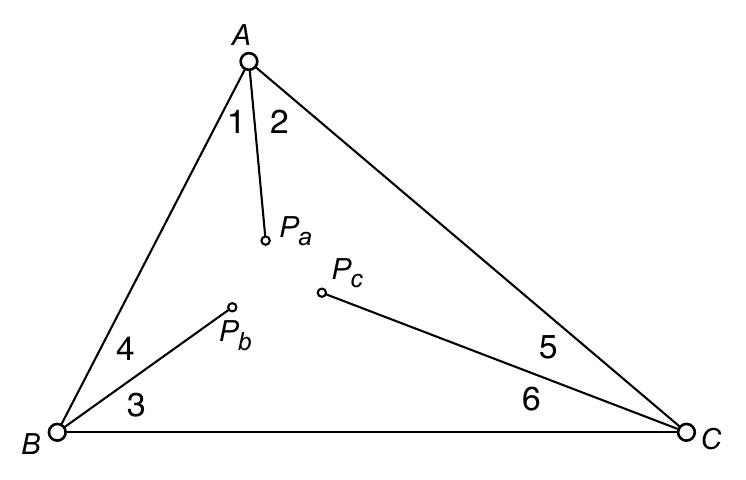}
\caption{}
\label{fig:CevaTrig}
\end{figure}

\begin{lemma}[Exact Trilinear Form of Ceva's Theorem]\label{lemma:CevaTrilinear}
Let $P_a$, $P_b$, and $P_c$ be three points inside $\triangle ABC$.
Let the distances from these points to the sides of the triangle be
$h_1$ through $h_6$
as shown in Figure~\ref{fig:CevaTrilinear}.
Then $AP_a$, $BP_b$, and $CP_c$ are concurrent if and only if
$$h_1h_3h_5=h_2h_4h_6.$$
\end{lemma}

\begin{figure}[ht]
\centering
\includegraphics[scale=0.5]{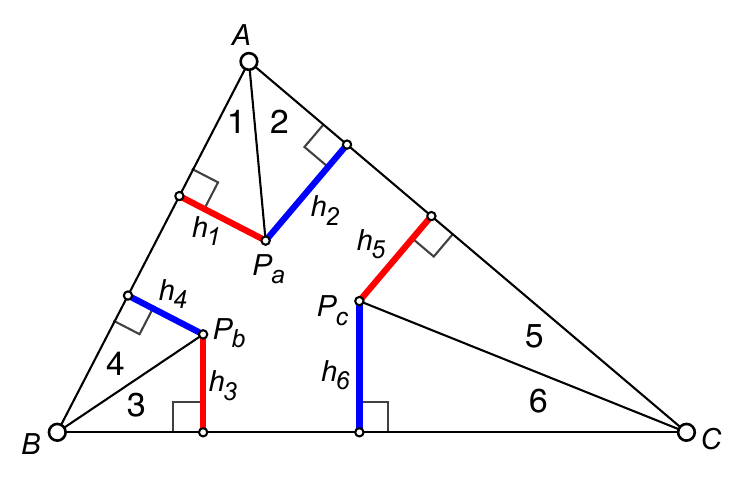}
\caption{}
\label{fig:CevaTrilinear}
\end{figure}

\begin{proof}
We have
\begin{equation*}
\begin{aligned}
h_1h_3h_5&=(AP_a\cdot\sin\angle1)(AP_b\cdot\sin\angle3)(AP_c\cdot\sin\angle5)\\
&=(AP_a\cdot\sin\angle2)(AP_b\cdot\sin\angle4)(AP_c\cdot\sin\angle6)\\
&=h_2h_4h_6\\
\end{aligned}
\end{equation*}
so Lemmas \ref{lemma:CevaTrig} and \ref{lemma:CevaTrilinear} are equivalent.
\end{proof}

\begin{corollary}[Trilinear Form of Ceva's Theorem]\label{cor:CevaTrilinear}
Let $P_a$, $P_b$, and $P_c$ be three points inside $\triangle ABC$.
Suppose the trilinear coordinates for these points are as follows.
$$P_a=(a_x:a_y:a_z),\quad P_b=(b_x:b_y:b_z),\quad P_c=(c_x:c_y:c_z).$$
Then $AP_a$, $BP_b$, and $CP_c$ are concurrent if and only if
$$\frac{a_z}{a_y}\cdot\frac{b_x}{b_z}\cdot\frac{c_y}{c_x}\cdot=1.$$
\end{corollary}

\begin{proof}
Note that the exact trilinear coordinates for these points are
$$P_a=(d_a:h_2:h_1),\quad P_b=(h_3:d_b:h_4),\quad P_c=(h_6:h_5:d_c)$$
where the $h$'s are as given in Lemma~\ref{lemma:CevaTrilinear}
and $d_a$ is the distance of $P_a$ from $BC$ with analogous definitions for $d_b$ and $d_c$.
By Lemma~\ref{lemma:CevaTrilinear}, $AP_a$, $BP_b$, and $CP_c$ are concurrent if and only if
$$h_1h_3h_5=h_2h_4h_6.$$
Since the trilinear coordinates of a point are proportional to the exact trilinear coordinates,
this means that $AP_a$, $BP_b$, and $CP_c$ are concurrent if and only if
$$a_zb_xc_y=a_yb_zc_x$$
which is equivalent to the desired result.
\end{proof}

\begin{corollary}[Barycentric Form of Ceva's Theorem]\label{cor:CevaBary}
Let $P_a$, $P_b$, and $P_c$ be three points inside $\triangle ABC$.
Suppose the barycentric coordinates for these points are as follows.
$$P_a=(a'_x:a'_y:a'_z),\quad P_b=(b'_x:b'_y:b'_z),\quad P_c=(c'_x:c'_y:c'_z).$$
Then $AP_a$, $BP_b$, and $CP_c$ are concurrent if and only if
$$\frac{a'_z}{a'_y}\cdot\frac{b'_x}{b'_z}\cdot\frac{c'_y}{c'_x}=1.$$
\end{corollary}

\begin{proof}
This follows from the relationship between barycentric coordinates and trilinear coordinates.
We have $a'_x=aa_x$, etc. Thus
$$\frac{a'_z}{a'_y}\cdot\frac{b'_x}{b'_z}\cdot\frac{c'_y}{c'_x}
=\frac{ca_z}{ba_y}\cdot\frac{ab_x}{cb_z}\cdot\frac{bc_y}{ac_x}
=\frac{a_z}{a_y}\cdot\frac{b_x}{b_z}\cdot\frac{c_y}{c_x}$$
and the result follows from Corollary~\ref{cor:CevaBary}.
\end{proof}
}

\begin{theorem}\label{thm:Jacobi}
Let the centers of $\omega_a$, $\omega_b$, and $\omega_b$, be $O_a$, $O_b$, and $O_c$, respectively.
Then $AO_a$, $BO_b$, and $CO_c$ are concurrent (Figure~\ref{fig:Jacobi}).
\end{theorem}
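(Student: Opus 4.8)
The plan is to argue entirely in barycentric coordinates, exploiting the fact that $O_a$, $O_b$, and $O_c$ are the apexes of three similar isosceles triangles erected on the sides of $\triangle ABC$, all sharing the same base angle $\phi=90\degrees-\theta/2$. By Theorem~\ref{thm:baryOa},
$$O_a=\left(-a^2:S_c+S\cot\phi:S_b+S\cot\phi\right),$$
and because the three arcs have the common angular measure $\theta$, the quantity $k:=S\cot\phi$ is the \emph{same} constant for all three centers. First I would record the coordinates of $O_b$ and $O_c$. These are obtained from the formula for $O_a$ by the cyclic substitution $a\to b\to c\to a$ (equivalently $S_a\to S_b\to S_c\to S_a$) together with the matching cyclic shift of the coordinate slots, giving
$$O_b=\left(S_c+k:-b^2:S_a+k\right),\qquad O_c=\left(S_b+k:S_a+k:-c^2\right).$$
As a check, each point is fixed (projectively) under the reflection swapping the two endpoints of its side — e.g. $O_b$ is invariant under $A\leftrightarrow C$ — which confirms it lies on the correct perpendicular bisector.

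Next I would apply the barycentric form of Ceva's Theorem to the cevians $AO_a$, $BO_b$, and $CO_c$. Writing $O_a=(x_1:y_1:z_1)$, $O_b=(x_2:y_2:z_2)$, and $O_c=(x_3:y_3:z_3)$, these cevians are concurrent exactly when
$$\frac{z_1}{y_1}\cdot\frac{x_2}{z_2}\cdot\frac{y_3}{x_3}=1.$$
Substituting the coordinates above turns the left-hand side into
$$\frac{S_b+k}{S_c+k}\cdot\frac{S_c+k}{S_a+k}\cdot\frac{S_a+k}{S_b+k},$$
which telescopes to $1$. Hence the three cevians concur, as claimed.

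The computation is immediate, so there is no genuine obstacle; the only care needed is transcribing the cyclic permutation correctly and matching the Ceva ratio to the chosen ordering of the coordinates. The conceptual heart of the argument is that all three centers share the \emph{single} constant $k=S\cot\phi$ — precisely because all three arcs have equal angular measure — so that the three Ceva factors cancel in pairs. I would close with the remark that this identifies the common point as a member of the Kiepert family: $AO_a$, $BO_b$, and $CO_c$ meet at the Kiepert perspector with base angle $\phi=90\degrees-\theta/2$. As a sanity check, the semicircle case $\theta=180\degrees$ forces $\phi=0$, so $k\to\infty$ and $O_a\to(0:1:1)$, the midpoint of $BC$; the three cevians then become the medians, concurring at the centroid, as expected.
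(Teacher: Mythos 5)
Your proof is correct, but it takes a genuinely different route from the paper. The paper's proof is a two-line synthetic argument: the isosceles triangles $BCO_a$, $CAO_b$, $ABO_c$ are similar (all having base angle $90\degrees-\theta/2$), so the concurrence is immediate from Jacobi's theorem, which the paper simply cites. You instead compute: starting from Theorem~\ref{thm:baryOa} you write down $O_b$ and $O_c$ by cyclic permutation (correctly, and your reflection-invariance check is a good sanity test), and then the barycentric Ceva condition reduces to the telescoping product $\frac{S_b+k}{S_c+k}\cdot\frac{S_c+k}{S_a+k}\cdot\frac{S_a+k}{S_b+k}=1$ with the single constant $k=S\cot\phi$. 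The two arguments are really the same fact seen from two sides --- Jacobi's theorem is itself usually proved by a Ceva computation of exactly this telescoping shape --- but yours is self-contained (modulo the standard Ceva formula, which the paper does not state in its published text), whereas the paper's is shorter at the cost of importing a named theorem. Your approach also buys something the paper's does not: the explicit identification of the concurrency point as the Kiepert perspector with base angle $\phi=90\degrees-\theta/2$, since the cevian $AO_a$ consists of the points $(t:S_c+k:S_b+k)$ and therefore passes through $\left(\frac{1}{S_a+k}:\frac{1}{S_b+k}:\frac{1}{S_c+k}\right)$; the $\theta=180\degrees$ degeneration to the centroid is a correct check. One cosmetic caveat: the Ceva ratio form you quote presumes the denominators $S_a+k$, $S_b+k$, $S_c+k$ are nonzero; it is cleaner to state the condition in the product form $z_1x_2y_3=y_1z_2x_3$, which holds identically here and avoids any degenerate cases.
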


\begin{figure}[ht]
\centering
\includegraphics[scale=0.8]{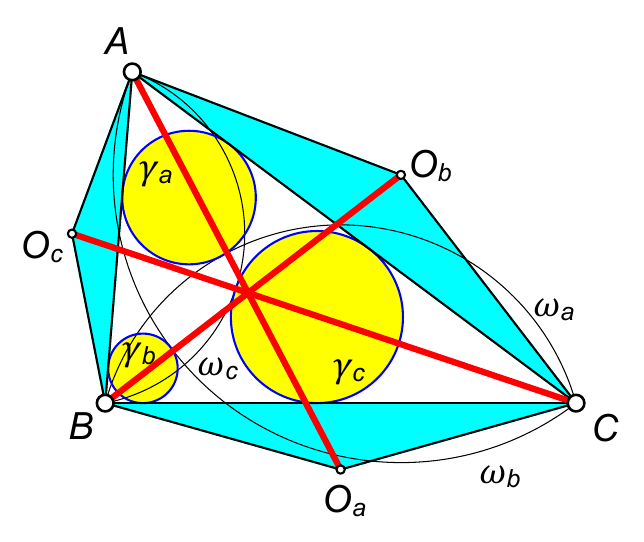}
\caption{red lines are concurrent}
\label{fig:Jacobi}
\end{figure}

\begin{proof}
Note that isosceles triangles $BCO_a$, $CAO_b$, and $ABO_c$ are similar.
Therefore $AO_a$, $BO_b$, and $CO_c$ are concurrent by Jacobi's Theorem \cite{Jacobi}.
\end{proof}

\newpage

\begin{theorem}[Paasche Analog]\label{thm:Paasche}
Let $\gamma_a$, $\gamma_b$, and $\gamma_c$ be
a general triad of circles associated with triangle $\triangle ABC$.
Let $T_a$, $T_b$, and $T_c$ be the points where they touch the three arcs
having the same angular measure (Figure~\ref{fig:Paasche}).
Then $AT_a$, $BT_b$, and $CT_c$ are concurrent.
\end{theorem}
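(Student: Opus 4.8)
The plan is to establish the concurrency through Ceva's theorem, extracting the three cevian ratios from data the paper has already assembled. The whole configuration is invariant under the cyclic relabeling $A\to B\to C$, $a\to b\to c$, so the barycentric coordinates of $T_b$ and $T_c$ are the cyclic images of those of $T_a=(T_x:T_y:T_z)$ found in Theorem~\ref{thm:baryT}; note that $T_z$ is exactly $T_y$ with $b$ and $c$ interchanged. If a cevian from $A$ through a point with barycentrics $(x:y:z)$ meets $BC$ at $P_a$, then $BP_a/P_aC=z/y$, so by Ceva the three cevians concur precisely when $\prod_{\mathrm{cyc}}T_z(a,b,c)/T_y(a,b,c)=1$, a trigonometric identity in $\theta$ and the side lengths that can be verified by the same computer-algebra reduction used to obtain Theorem~\ref{thm:baryT}.

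A more transparent route, which I would run in parallel to isolate the one genuinely nontrivial step, uses the synthetic ratios directly. Writing $P_a=AT_a\cap BC$ and computing $BP_a/P_aC=[ABT_a]/[ACT_a]$ by the area form gives
$$\frac{BP_a}{P_aC}=\frac{c}{b}\cdot\frac{BT_a}{CT_a}\cdot\frac{\sin\angle ABT_a}{\sin\angle ACT_a}.$$
By Lemma~\ref{lemma:Suppa'sLemma} (as used in Theorem~\ref{thm:result12}) one has $BT_a/CT_a=BF/CE$, where $F,E$ are the touch points of $\gamma_a$ on $AB,AC$; since $AF=AE=p-a-rt$ by Lemma~\ref{lemma:tau}, this ratio equals $(p-b+rt)/(p-c+rt)$. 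Forming the cyclic product, the side factors $\tfrac{c}{b}\cdot\tfrac{a}{c}\cdot\tfrac{b}{a}$ cancel and the tangent ratios $\tfrac{p-b+rt}{p-c+rt}\cdot\tfrac{p-c+rt}{p-a+rt}\cdot\tfrac{p-a+rt}{p-b+rt}$ telescope to $1$. Concurrency therefore reduces to the single residual identity
$$\frac{\sin\angle ABT_a}{\sin\angle ACT_a}\cdot\frac{\sin\angle BCT_b}{\sin\angle BAT_b}\cdot\frac{\sin\angle CAT_c}{\sin\angle CBT_c}=1.$$

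The main obstacle is exactly this residual identity, which is not a formal cancellation. Its base angles are constrained by the inscribed-angle relation $\angle T_aBC+\angle T_aCB=\theta/2$ together with $\sin\angle T_aCB:\sin\angle T_aBC=(p-b+rt):(p-c+rt)$ (and cyclically), so each $\angle ABT_a=B-\angle T_aBC$ is pinned down by $\cot\angle T_aBC=\bigl((p-b+rt)/(p-c+rt)+\cos\tfrac{\theta}{2}\bigr)/\sin\tfrac{\theta}{2}$. I expect to discharge the identity by substituting these solved angles (equivalently, the explicit $T_y,T_z$ from Theorem~\ref{thm:baryT}) and simplifying with \textsc{Mathematica}, clearing the common factors of $\sin\tfrac{\theta}{4}$ and $\cos\tfrac{\theta}{4}$. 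The value of the synthetic reduction is that it proves a priori that the side-length dependence enters only through the symmetric quantities $p-a+rt$, $p-b+rt$, $p-c+rt$, which is precisely the feature that forces the cyclic product to collapse and makes the final verification short.
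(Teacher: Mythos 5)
Your first route is essentially the paper's own proof: the paper takes the barycentric coordinates of $T_a$ from Theorem~\ref{thm:baryT}, forms the lines $AT_a$, $BT_b$, $CT_c$, and checks the concurrency determinant in \textsc{Mathematica}; your Ceva product $\prod T_z/T_y=1$ is the same computation in a different normal form, and is equally valid. Your second, partly synthetic route is a genuinely different decomposition and is correct as far as it goes: the reduction of $BT_a/CT_a$ to $(p-b+rt)/(p-c+rt)$ via Lemma~\ref{lemma:Suppa'sLemma} and Lemma~\ref{lemma:tau} is sound, and the cyclic cancellation of the side factors and tangent-length factors is real. What it buys is a clean isolation of the nontrivial content in the single residual identity $\prod\sin\angle ABT_a/\sin\angle ACT_a=1$; what it does not buy is a completed proof, since that identity is exactly where the difficulty lives and you still propose to discharge it by machine. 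Be careful with the closing claim that the side-length dependence enters only through $p-a+rt$, $p-b+rt$, $p-c+rt$: the residual identity involves $\sin(B-\angle T_aBC)$ and $\sin(C-\angle T_aCB)$, so it depends on the full shape of the triangle (the angles $A$, $B$, $C$ and the relation $r=(p-a)\tan\frac{A}{2}$), not merely on those three symmetric quantities; the final verification is therefore not obviously ``short,'' and in substance it is the same \textsc{Mathematica} computation the paper performs. Since the paper itself leaves this step to computer algebra (and explicitly poses the existence of a purely geometric proof as an open question), your proposal is at the same level of rigor, but the residual identity must actually be carried out for the argument to be complete.
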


\begin{figure}[ht]
\centering
\includegraphics[scale=0.5]{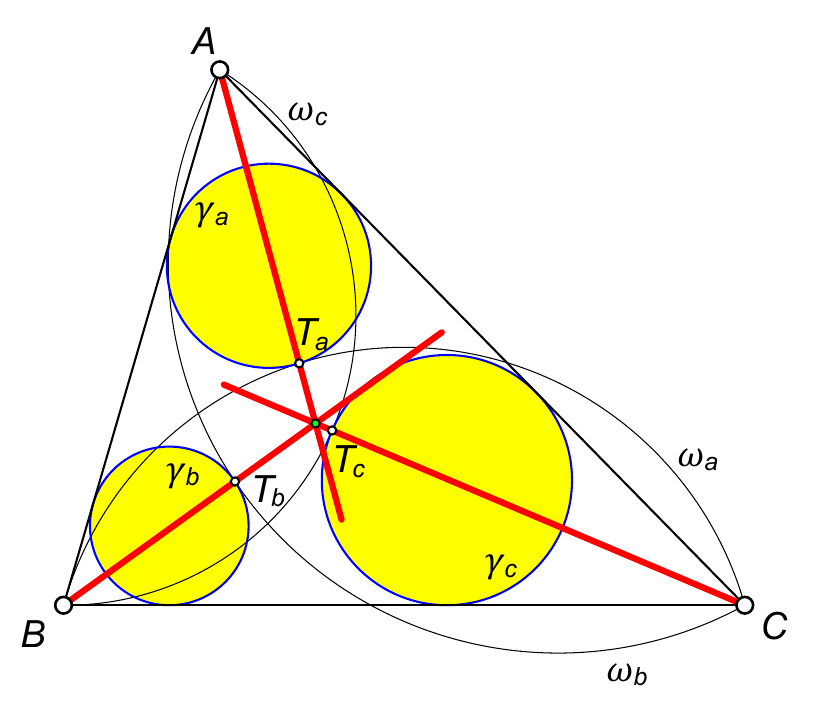}
\raise 0.15in\hbox{\includegraphics[scale=0.5]{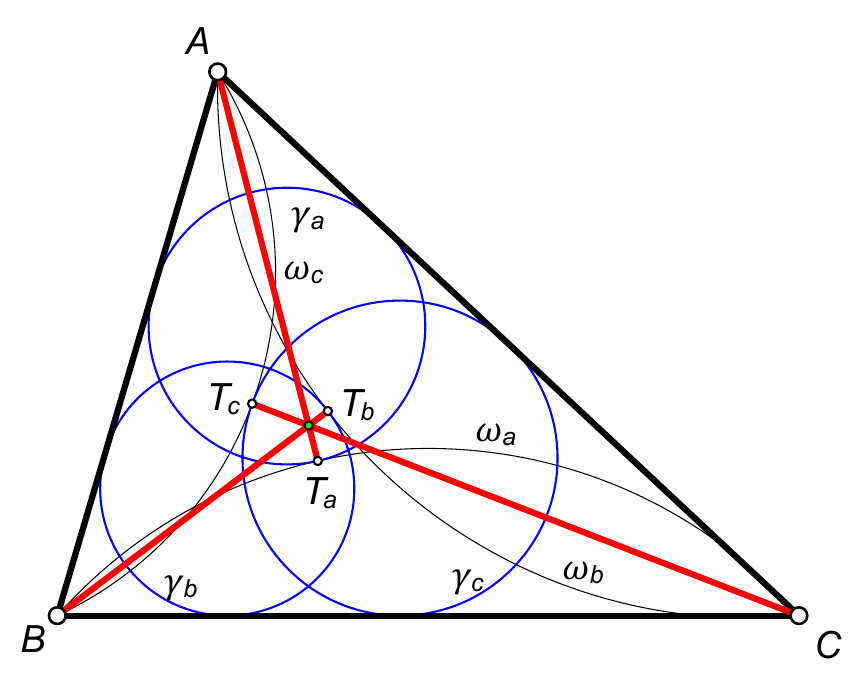}}
\caption{red lines are concurrent}
\label{fig:Paasche}
\end{figure}

\begin{proof}
The barycentric coordinates for $T_a$ were found in
Theorem~\ref{thm:baryT}. The barycentric coordinates for $A$ are $(1:0:0)$.
We can thus find the equation of the line $AT_a$ using formula (3) from \cite{Grozdev}.
Similarly, we can find the equations for the lines $BT_b$ and $CT_c$.
Then, using \textsc{Mathematica}, we can use the condition that three lines are concurrent (formula (6) from \cite{Grozdev}) to prove that $AT_a$, $BT_b$ and $CT_c$ are concurrent.
\end{proof}

We call this theorem the Paasche Analog because when $\theta=180\degrees$, the point of
concurrence is the Paasche point of the triangle \cite{ETC1123}.

\begin{open}
Is there a purely geometric proof for Theorem~\ref{thm:Paasche}?
\end{open}

The coordinates for the point of concurrence are complicated and we do not give them here.
However, we did find the following interesting result.

\begin{theorem}
When $\theta=120\degrees$, the point of concurrence of $AT_a$, $BT_b$, and $CT_c$ is
the isogonal conjugate of $X_{7005}$.
When $\theta=240\degrees$, the point of concurrence of $AT_a$, $BT_b$, and $CT_c$ is
$X_{14358}$.
\end{theorem}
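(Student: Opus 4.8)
The plan is to establish the result by direct computation, taking as input the barycentric coordinates for the touch point $T = T_a$ furnished by Theorem~\ref{thm:baryT}. Writing $T_a = (T_x : T_y : T_z)$, the coordinates of $T_b$ and $T_c$ are obtained by applying the cyclic substitution $(a,b,c)\to(b,c,a)\to(c,a,b)$ together with the matching cyclic permutation of the three coordinate slots, since the whole configuration is invariant under relabeling the vertices. First I would record $T_a$, $T_b$, $T_c$ explicitly in \textsc{Mathematica}, keeping $S = 2\Delta$ and $u = a^2-(b-c)^2$ as abbreviations.

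Next I would specialize $\theta$. For $\theta = 120\degrees$ we have $\theta/4 = 30\degrees$, so the trigonometric factors in Theorem~\ref{thm:baryT} collapse to $\sin(\theta/4) = 1/2$, $\cos(\theta/2) = 1/2$, $\sin(\theta/2) = \sqrt{3}/2$, $\cos(3\theta/4) = 0$, and $\cos(\theta/4) = \sqrt{3}/2$. For $\theta = 240\degrees$ we have $\theta/4 = 60\degrees$, giving $\sin(\theta/4) = \sqrt{3}/2$, $\cos(\theta/2) = -1/2$, $\sin(\theta/2) = \sqrt{3}/2$, $\cos(3\theta/4) = -1$, and $\cos(\theta/4) = 1/2$. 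After these substitutions each of $T_a$, $T_b$, $T_c$ becomes a barycentric vector whose entries are polynomials in $a,b,c$ and the area $\Delta$, with $\sqrt{3}$ as the only numerical surd (it enters precisely through the $\Delta$-terms). I would then form the lines $AT_a$, $BT_b$, $CT_c$ as cross products of the relevant coordinate vectors and compute the point of concurrence $P(\theta)$ as the cross product of two of the resulting line-vectors. Theorem~\ref{thm:Paasche} already guarantees the three lines meet, so the third line is redundant and $P(\theta)$ is well defined; clearing the common scalar factor leaves homogeneous coordinates for $P(120\degrees)$ and $P(240\degrees)$.

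The remaining step is identification against the Encyclopedia of Triangle Centers. I would retrieve the barycentrics of $X_{7005}$, form its isogonal conjugate by the rule $(x:y:z)\mapsto(a^2/x : b^2/y : c^2/z)$ and clear denominators, then check that the result is proportional to $P(120\degrees)$; likewise I would compare $P(240\degrees)$ directly with the listed coordinates of $X_{14358}$. Each comparison reduces to showing that a $3\times 2$ array of barycentric entries has rank $1$, i.e. that its three $2\times 2$ minors vanish identically. These minors are polynomials in $a,b,c,\Delta,\sqrt{3}$, and I would reduce them using $(\sqrt{3})^2 = 3$ and the Heron relation $16\Delta^2 = 2a^2b^2 + 2b^2c^2 + 2c^2a^2 - a^4 - b^4 - c^4$, so that \texttt{Simplify} can confirm they are zero.

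The main obstacle is not conceptual but the bulk of the intermediate expressions: the coordinates in Theorem~\ref{thm:baryT} are already long, and the cross products producing the lines and their intersection multiply this size considerably. The essential difficulty is making the final proportionality identities collapse to zero cleanly, which in practice means choosing normalizations carefully — factoring out the uniform $\sqrt{3}$ and $\Delta$ contributions and reducing even powers of $\Delta$ through Heron — so that the symbolic simplification terminates in a recognizably zero result rather than an unreduced mass of monomials.
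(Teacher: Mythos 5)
Your plan is sound: the trigonometric specializations are all correct ($\theta/4=30\degrees$ resp.\ $60\degrees$, $\cos(3\theta/4)=0$ resp.\ $-1$, etc.), and building the lines as cross products from Theorem~\ref{thm:baryT}, intersecting two of them, and matching against the ETC barycentrics (taking the isogonal conjugate via $(x:y:z)\mapsto(a^2yz:b^2zx:c^2xy)$ for the $120\degrees$ case) is exactly the kind of \textsc{Mathematica} verification the paper relies on. Note that the paper actually prints no proof of this theorem at all --- it presents the identification as a computational finding alongside the Paasche Analog, whose proof uses the same ingredients (the coordinates of $T$ and the Grozdev--Dekov line and concurrency formulas) --- so your proposal supplies in detail the argument the authors only imply, and it takes essentially the route they must have taken.
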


\void{
\begin{proof}
From Theorem~\ref{thm:baryT}, we found that the barycentric coordinates for $D_a$ are of the form
$(A_x:A_y:A_z)$ where
$$\frac{A_y}{A_z}=-\frac{\cot \theta \Bigl((b c-\text{SA})
  \left[u+a b \sec ^2\frac{\theta }{4}\right]+2 a b S \csc\frac{\theta }{2}\Bigr)-2 \text{SC}
  \left(t(\text{SA}-b c)+S\right)}{\cot \theta \Bigl((b c-\text{SA})
  \left[u-a c \sec^2\frac{\theta }{4}\right]+2 a c S \csc\frac{\theta }{2}\Bigr)+2 \text{SB}
  \left(t(\text{SA}-b c)+S\right)}$$
where $S$, SA, SB, and SC are as given in Theorem~\ref{thm:baryT}, and where $t=\tan(\theta/4)$ and
$u=2 b c-2 S t+2 \text{SA}$.

If $D_b=(B_x:B_y:B_z)$ and $D_c=(C_x:C_y:C_z)$, then we can find similar expressions
for $B_z/B_x$ and $C_x/C_y$. Using \textsc{Mathematica}, we find that
$$\frac{A_y}{A_z}\cdot \frac{B_z}{B_x}\cdot \frac{C_x}{C_y}=1,$$
Thus, by Corollary~\ref{cor:CevaBary}, $AT_a$, $BT_b$, and $CT_c$ concur.
\end{proof}
}

\void{
Note that the expression for $A_y/A_x$ has the form
$$\frac{A_y}{A_z}=-\frac{f(a,b,c)}{f(a,c,b)}.$$
In other words, exchanging $b$ and $c$ in the numerator of the fraction gives the denominator.

By symmetry, we get similar results for the barycentric coordinates for $D_b$ and~$D_c$.
For $D_b=(B_x:B_y:B_z)$, we find
$$\frac{B_z}{B_x}=-\frac{f(a,b,c)}{f(c,b,a)}$$
and for $D_c=(C_x:C_y:C_z)$, we find
$$\frac{C_x}{C_y}=-\frac{f(a,b,c)}{f(b,a,c)}.$$

Since
$$\frac{A_y}{A_z}\cdot \frac{B_z}{B_x}\cdot \frac{C_x}{C_y}=1,$$
we see from Corollary~\ref{cor:CevaBary}, that $AT_a$, $BT_b$, and $CT_c$ concur.
}

\newpage

\section{Some Metric Identities}

Throughout this section, we will let
$$t=\tan\frac{\theta}{4}$$
and
$$\W=\frac{4R+r}{p}.$$

The following three identities were given in \cite[Lemma~3]{Suppa} and we will need them here as well.

\begin{lemma}\label{lemma:1}
Let $A$, $B$, and $C$ be the angles of a triangle with inradius $r$, circumradius $R$,
and semiperimeter $p$. Then
$$\tan\frac{A}{2}+\tan\frac{B}{2}+\tan\frac{C}{2}=\W.$$
\end{lemma}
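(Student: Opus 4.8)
The plan is to reduce everything to the half-angle tangent formula and then to Heron's formula. First I would use the standard fact already invoked in the proof of Corollary~\ref{cor:rho} (via $AH=p-a$ and $IH=r$) that $\tan\frac{A}{2}=\frac{r}{p-a}$, together with its two cyclic analogues, to rewrite the left-hand side as
$$\tan\frac{A}{2}+\tan\frac{B}{2}+\tan\frac{C}{2}=r\left(\frac{1}{p-a}+\frac{1}{p-b}+\frac{1}{p-c}\right).$$
The problem then becomes the purely symmetric one of evaluating the bracketed sum in terms of $R$, $r$, and $p$.

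Next I would combine the three fractions over the common denominator $(p-a)(p-b)(p-c)$. By Heron's formula $\Delta^2=p(p-a)(p-b)(p-c)$ together with $\Delta=rp$, this denominator equals $\Delta^2/p=r^2p$. For the numerator I would expand, using $a+b+c=2p$,
$$\sum_{\mathrm{cyc}}(p-b)(p-c)=3p^2-2p(a+b+c)+(ab+bc+ca)=ab+bc+ca-p^2.$$
Thus the bracketed sum is $\dfrac{ab+bc+ca-p^2}{r^2p}$, and the whole left-hand side becomes $\dfrac{ab+bc+ca-p^2}{rp}$.

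Finally I would invoke the classical symmetric-function identity $ab+bc+ca=p^2+r^2+4Rr$, which turns the numerator into $r^2+4Rr=r(4R+r)$. Dividing,
$$r\cdot\frac{r(4R+r)}{r^2p}=\frac{4R+r}{p}=\W,$$
as desired. The only real obstacle is the identity $ab+bc+ca=p^2+r^2+4Rr$; every other step is routine bookkeeping. I would either quote it as standard or establish it in one line from the elementary symmetric functions of the sides, using $abc=4Rrp$, $a+b+c=2p$, and Heron's formula.
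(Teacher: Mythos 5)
Your argument is correct. Note that the paper itself offers no proof of this lemma: it is simply quoted from \cite[Lemma~3]{Suppa}, so there is no internal argument to compare against. Your route — writing each half-angle tangent as $r/(p-a)$ (the same fact the paper uses in Corollary~\ref{cor:rho}), summing over the common denominator $(p-a)(p-b)(p-c)=r^2p$, and reducing the numerator via $ab+bc+ca=p^2+r^2+4Rr$ — is a standard and complete derivation. The one identity you flag as the ``only real obstacle'' does follow in one line exactly as you suggest: expanding $(p-a)(p-b)(p-c)=r^2p$ and substituting $a+b+c=2p$ and $abc=4Rrp$ gives $-p^2+(ab+bc+ca)-4Rr=r^2$, which is the required relation. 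So the proposal is a valid, self-contained proof of a statement the paper leaves to an external reference.
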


\begin{lemma}\label{lemma:2}
Let $A$, $B$, and $C$ be the angles of a triangle. Then
$$\tan\frac{A}{2}\tan\frac{B}{2}+\tan\frac{B}{2}\tan\frac{C}{2}+\tan\frac{C}{2}\tan\frac{A}{2}=1.$$
\end{lemma}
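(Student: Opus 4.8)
The plan is to prove the identity
$$\tan\tfrac{A}{2}\tan\tfrac{B}{2}+\tan\tfrac{B}{2}\tan\tfrac{C}{2}+\tan\tfrac{C}{2}\tan\tfrac{A}{2}=1$$
by exploiting the single constraint that $A$, $B$, and $C$ are the angles of a triangle, namely $A+B+C=180\degrees$, which is equivalent to $\tfrac{A}{2}+\tfrac{B}{2}+\tfrac{C}{2}=90\degrees$. The key observation is that this identity is really a statement about three half-angles summing to a right angle, so I would reduce everything to the tangent addition formula. First I would set $x=\tfrac{A}{2}$, $y=\tfrac{B}{2}$, $z=\tfrac{C}{2}$, so that $x+y+z=90\degrees$, and rewrite the goal as $\tan x\tan y+\tan y\tan z+\tan z\tan x=1$.

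The central step is to notice that $x+y=90\degrees-z$, so that $\tan(x+y)=\tan(90\degrees-z)=\cot z=1/\tan z$. Applying the tangent addition formula to the left side gives
$$\frac{\tan x+\tan y}{1-\tan x\tan y}=\frac{1}{\tan z}.$$
Cross-multiplying yields $\tan z(\tan x+\tan y)=1-\tan x\tan y$, which rearranges directly to $\tan x\tan z+\tan y\tan z+\tan x\tan y=1$. This is exactly the claimed identity after substituting back $x=\tfrac{A}{2}$, $y=\tfrac{B}{2}$, $z=\tfrac{C}{2}$. The whole argument is a single application of the complementary-angle identity together with the tangent addition formula, so no serious obstacle arises.

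The only point requiring mild care is the possibility of degenerate values: if one of the half-angles equals $90\degrees$ then the corresponding tangent is undefined, but this cannot happen for a genuine triangle since each angle of a triangle is strictly between $0\degrees$ and $180\degrees$, forcing each half-angle strictly between $0\degrees$ and $90\degrees$. Likewise $\tan x\tan y\neq 1$ because that would force $x+y=90\degrees$ and hence $z=0\degrees$, which is excluded. Thus all quantities are well-defined and the cross-multiplication is legitimate. I expect the write-up to be only a few lines; the ``hard part'' is merely recognizing that the statement is the classical half-angle tangent identity in disguise, after which the computation is immediate.
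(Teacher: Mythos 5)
Your proof is correct and complete. The paper itself offers no proof of this lemma — it simply cites it as Lemma~3 of the reference \cite{Suppa} — so there is nothing to compare against; your argument (setting $x+y+z=90^\circ$, using $\tan(x+y)=\cot z$ via the addition formula, and cross-multiplying, with the degenerate cases correctly ruled out) is the standard derivation and needs no changes.
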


\begin{lemma}\label{lemma:3}
Let $A$, $B$, and $C$ be the angles of a triangle with inradius $r$ and semiperimeter $p$. Then
$$\tan\frac{A}{2}\tan\frac{B}{2}\tan\frac{C}{2}=\frac{r}{p}.$$
\end{lemma}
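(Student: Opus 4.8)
The plan is to reduce everything to the half-angle tangent formula $\tan\frac{A}{2}=\frac{r}{p-a}$ (together with its cyclic analogues), which the paper has already invoked in the proof of Corollary~\ref{cor:rho}, and then to clear the resulting expression using Heron's formula. First I would record the three identities
$$\tan\frac{A}{2}=\frac{r}{p-a},\qquad \tan\frac{B}{2}=\frac{r}{p-b},\qquad \tan\frac{C}{2}=\frac{r}{p-c},$$
each of which follows from dropping perpendiculars from the incenter to the sides and using the standard fact that the tangent length from $A$ to the incircle equals $p-a$.

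Multiplying the three together gives
$$\tan\frac{A}{2}\tan\frac{B}{2}\tan\frac{C}{2}=\frac{r^3}{(p-a)(p-b)(p-c)},$$
so the lemma is equivalent to the identity $(p-a)(p-b)(p-c)=r^2p$. To obtain this I would use Heron's formula $\Delta=\sqrt{p(p-a)(p-b)(p-c)}$ in the form $(p-a)(p-b)(p-c)=\Delta^2/p$, combined with $r=\Delta/p$ (both of which are used in the proof of Corollary~\ref{cor:rho1}). Since $r=\Delta/p$ gives $\Delta^2=r^2p^2$, substituting yields $(p-a)(p-b)(p-c)=r^2p$, whence the right-hand side collapses to $r^3/(r^2p)=r/p$, as claimed.

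There is essentially no analytic obstacle here: the final step is a one-line substitution. The only conceptual point worth flagging is that this product cannot be extracted from the angle-sum relation alone. Writing $x=\tan\frac{A}{2}$, $y=\tan\frac{B}{2}$, $z=\tan\frac{C}{2}$, the condition $A+B+C=\pi$ yields only the single symmetric relation $xy+yz+zx=1$ of Lemma~\ref{lemma:2}, which pins down neither $x+y+z$ (that being the content of Lemma~\ref{lemma:1}) nor the product $xyz$. Thus a genuinely metric input — the value $\tan\frac{A}{2}=r/(p-a)$, or equivalently Heron's formula — is unavoidable, and supplying it is exactly the step that carries the proof.
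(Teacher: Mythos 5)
Your proof is correct and complete. Note that the paper itself does not prove this lemma at all; it simply cites it (together with Lemmas~\ref{lemma:1} and \ref{lemma:2}) from an external source, so there is no in-paper argument to compare against. Your route — multiplying the three half-angle identities $\tan\frac{A}{2}=r/(p-a)$ and reducing $(p-a)(p-b)(p-c)=r^2p$ via Heron's formula and $r=\Delta/p$ — is the standard derivation, every step checks out, and it is consistent with the machinery the paper already uses elsewhere (the identity $\tan\frac{A}{2}=r/(p-a)$ appears in the proof of Corollary~\ref{cor:rho}, and Heron's formula together with $r=\Delta/p$ appears in Corollary~\ref{cor:rho1}). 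Your closing remark that the angle-sum condition alone yields only $xy+yz+zx=1$ and cannot determine the product $xyz$ is a correct and worthwhile observation, though not needed for the proof itself.
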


From Theorem~\ref{thm:rho1},
$$\rho_a=r\left(1-t\tan\frac{A}{2}\right),$$
so
\begin{equation}
r-\rho_a=rt\tan\frac{A}{2}\label{eq:frac1}
\end{equation}
with similar formulas for $r-\rho_b$ and $r-\rho_c$.
Also,
\begin{equation}
\tan\frac{A}{2}=\frac{r-\rho_a}{rt}\label{eq:frac}
\end{equation}
with similar formulas for $\tan\frac{B}{2}$ and $\tan\frac{C}{2}$.
Using equation (\ref{eq:frac1}) gives us the following corollary
to these lemmas.

\begin{corollary}
For a general triad of circles associated with $\triangle ABC$, we have
\begin{align}
\sum(r-\rho_a)&=rt\W,\label{eq:x1}\\
\sum(r-\rho_a)(r-\rho_b)&=r^2t^2,\label{eq:x2}\\
\prod(r-\rho_a)&=\frac{r^4t^3}{p}\label{eq:x3}.
\end{align}
\end{corollary}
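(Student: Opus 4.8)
The plan is to reduce each of the three symmetric expressions in $r-\rho_a$, $r-\rho_b$, $r-\rho_c$ to the corresponding elementary symmetric function of the half-angle tangents, whose values are recorded in Lemmas~\ref{lemma:1}, \ref{lemma:2}, and \ref{lemma:3}. The key input is equation~(\ref{eq:frac1}), which, together with its analogues for the other two vertices, gives
$$r-\rho_a=rt\tan\frac{A}{2},\qquad r-\rho_b=rt\tan\frac{B}{2},\qquad r-\rho_c=rt\tan\frac{C}{2}.$$
Since each factor carries exactly one power of the common constant $rt$, the degree-one, degree-two, and degree-three symmetric expressions factor out $rt$, $(rt)^2$, and $(rt)^3$ respectively, leaving pure trigonometric sums to which the lemmas apply directly.

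First I would establish (\ref{eq:x1}): summing the three displayed identities and factoring out $rt$ yields $\sum(r-\rho_a)=rt\bigl(\tan\frac{A}{2}+\tan\frac{B}{2}+\tan\frac{C}{2}\bigr)$, and Lemma~\ref{lemma:1} identifies the bracket as $\W$, giving $rt\W$. Next, for (\ref{eq:x2}) I would note that each pairwise product has the form $(r-\rho_a)(r-\rho_b)=r^2t^2\tan\frac{A}{2}\tan\frac{B}{2}$, so summing over the three pairs factors out $r^2t^2$ and leaves exactly the expression evaluated in Lemma~\ref{lemma:2}, namely $1$; hence the pairwise sum equals $r^2t^2$. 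Finally, for (\ref{eq:x3}) the single product is $(r-\rho_a)(r-\rho_b)(r-\rho_c)=r^3t^3\tan\frac{A}{2}\tan\frac{B}{2}\tan\frac{C}{2}$, and Lemma~\ref{lemma:3} replaces the tangent product by $r/p$, producing $r^3t^3\cdot\frac{r}{p}=\frac{r^4t^3}{p}$, as claimed.

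There is essentially no obstacle here: once (\ref{eq:frac1}) and its analogues are in hand, all three identities are immediate consequences of the three lemmas via a common factoring step. The only point meriting a word of justification is that the analogues of (\ref{eq:frac1}) hold with the \emph{correct} vertex angle in each case; this follows from the symmetry of the construction, since $\gamma_b$ and $\gamma_c$ are defined relative to vertices $B$ and $C$ exactly as $\gamma_a$ is defined relative to $A$, so that Theorem~\ref{thm:rho1} applied at each vertex gives $r-\rho_b=rt\tan\frac{B}{2}$ and $r-\rho_c=rt\tan\frac{C}{2}$.
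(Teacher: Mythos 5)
Your proposal is correct and follows exactly the route the paper intends: substitute $r-\rho_a=rt\tan\frac{A}{2}$ (equation~(\ref{eq:frac1})) and its analogues, factor out the appropriate power of $rt$, and apply Lemmas~\ref{lemma:1}, \ref{lemma:2}, and \ref{lemma:3} to the resulting elementary symmetric functions of the half-angle tangents. The paper leaves this computation implicit, so your write-up is simply a fuller version of the same argument.
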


\begin{theorem}\label{thm:1}
For a general triad of circles associated with $\triangle ABC$, we have
$$\rho_a+\rho_b+\rho_c=3r-rt\W.$$
\end{theorem}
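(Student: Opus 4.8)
The plan is to derive the identity directly from equation~(\ref{eq:x1}), which the preceding corollary has already established. That equation states
$$\sum(r-\rho_a)=rt\W,$$
where the sum ranges cyclically over the three circles of the triad. First I would expand the left-hand side as a sum of three terms:
\begin{equation*}
(r-\rho_a)+(r-\rho_b)+(r-\rho_c)=3r-(\rho_a+\rho_b+\rho_c).
\end{equation*}
Setting this equal to $rt\W$ and solving for the sum of the radii immediately yields
$$\rho_a+\rho_b+\rho_c=3r-rt\W,$$
which is exactly the desired statement.

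The only genuine content here is the justification of equation~(\ref{eq:x1}) itself, but that has already been supplied: it follows from the Ajima radius formula in Theorem~\ref{thm:rho1}, rewritten as equation~(\ref{eq:frac1}) in the form $r-\rho_a=rt\tan\frac{A}{2}$, combined with Lemma~\ref{lemma:1}, which gives $\tan\frac{A}{2}+\tan\frac{B}{2}+\tan\frac{C}{2}=\W$. Factoring the common $rt$ out of the cyclic sum produces $rt\W$. Since I am permitted to assume every result stated earlier in the excerpt, I would simply cite equation~(\ref{eq:x1}) rather than re-deriving it.

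There is essentially no obstacle to overcome: the theorem is a one-line algebraic rearrangement of a corollary proved a few lines earlier. The entire proof is the observation that $\sum(r-\rho_a)=3r-\sum\rho_a$, so the main care required is purely bookkeeping — making sure the three cyclic terms are expanded correctly and the sign on transferring $\sum\rho_a$ to the other side is right. I would present it as a short two-sentence proof invoking equation~(\ref{eq:x1}) and the elementary expansion above.
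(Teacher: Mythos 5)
Your proposal is correct and follows exactly the paper's route: the paper's proof of this theorem is the single sentence "This follows immediately from equation~(\ref{eq:x1})," and your expansion of $\sum(r-\rho_a)=3r-\sum\rho_a$ is precisely the implicit algebra. Your additional remarks tracing equation~(\ref{eq:x1}) back to Theorem~\ref{thm:rho1} and Lemma~\ref{lemma:1} match the derivation the paper gives just before the corollary.
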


\begin{proof}
This follows immediately from equation (\ref{eq:x1}).
\end{proof}

When $\theta=180\degrees$, the arcs become semicircles, $t=1$, and this result agrees with formula (6) in \cite{Suppa}.

\begin{theorem}\label{thm:2p}
For a general triad of circles associated with $\triangle ABC$, we have
$$3r^2-2r\sum\rho_a+\sum\rho_a\rho_b=r^2t^2.$$
\end{theorem}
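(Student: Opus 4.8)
The plan is to obtain this identity as an immediate algebraic consequence of equation~(\ref{eq:x2}), which already asserts $\sum(r-\rho_a)(r-\rho_b)=r^2t^2$. The only work is to expand the left-hand side of that equation and match it against the expression appearing in the theorem; no new geometry is needed, since equation~(\ref{eq:x2}) has already distilled the geometric content via Ajima's Theorem and Lemma~\ref{lemma:2}.

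First I would expand a single factor, $(r-\rho_a)(r-\rho_b)=r^2-r(\rho_a+\rho_b)+\rho_a\rho_b$, and then form the cyclic sum over the three pairs $\{a,b\}$, $\{b,c\}$, $\{c,a\}$. The constant term contributes $r^2$ three times, giving $3r^2$. For the linear term, I would note that each of $\rho_a$, $\rho_b$, $\rho_c$ occurs in exactly two of the three pairs, so $\sum(\rho_a+\rho_b)=2(\rho_a+\rho_b+\rho_c)=2\sum\rho_a$; hence the linear part totals $-2r\sum\rho_a$. The quadratic term simply reproduces $\sum\rho_a\rho_b$. Assembling these gives $\sum(r-\rho_a)(r-\rho_b)=3r^2-2r\sum\rho_a+\sum\rho_a\rho_b$, and equating with the right-hand side of~(\ref{eq:x2}) yields the claim.

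There is no genuine obstacle here; the one point that requires care is the coefficient on the middle term, where the factor of $2$ arises precisely because each radius is shared between two of the three pairs, so a careless count would produce an incorrect $-r\sum\rho_a$. It is worth observing that this identity is the ``second elementary symmetric'' companion to Theorem~\ref{thm:1} (which came from~(\ref{eq:x1})): indeed, all three of~(\ref{eq:x1})--(\ref{eq:x3}) can be read as statements about the elementary symmetric functions of the quantities $r-\rho_a$, $r-\rho_b$, $r-\rho_c$, and the present theorem is just~(\ref{eq:x2}) rewritten in terms of the symmetric functions of the $\rho$'s themselves.
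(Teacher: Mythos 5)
Your proposal is correct and follows exactly the paper's approach: the paper's proof is the one-line statement that expanding the left side of equation~(\ref{eq:x2}) gives the result, and your expansion (including the careful count producing the coefficient $2$ on the middle term) is just that computation carried out explicitly.
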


\begin{proof}
Expanding the left side of equation (\ref{eq:x2}) gives the desired result.
\end{proof}

\begin{theorem}\label{thm:2}
For a general triad of circles associated with $\triangle ABC$, we have
$$\rho_a\rho_b+\rho_b\rho_c+\rho_c\rho_a=r^2\left(t^2-2t\W+3\right).$$
\end{theorem}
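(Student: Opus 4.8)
The plan is to obtain this identity as a purely algebraic consequence of the two immediately preceding results, without reopening the trigonometric computations behind them. The key observation is that Theorem~\ref{thm:2p} is a linear relation connecting the quantity $\sum\rho_a\rho_b$ that I want with the quantity $\sum\rho_a$ that Theorem~\ref{thm:1} already evaluates. So the first step is simply to isolate the product sum in Theorem~\ref{thm:2p}:
$$\rho_a\rho_b+\rho_b\rho_c+\rho_c\rho_a = r^2t^2 - 3r^2 + 2r\bigl(\rho_a+\rho_b+\rho_c\bigr).$$

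Second, I would substitute the value $\rho_a+\rho_b+\rho_c = 3r - rt\W$ from Theorem~\ref{thm:1} into the right-hand side. The term $2r(3r - rt\W) = 6r^2 - 2r^2t\W$ then combines with $-3r^2 + r^2t^2$ to give $r^2t^2 + 3r^2 - 2r^2t\W$, and factoring out $r^2$ yields exactly $r^2(t^2 - 2t\W + 3)$, as claimed.

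Equivalently, one could bypass Theorem~\ref{thm:2p} and expand equation~(\ref{eq:x2}) directly: writing $(r-\rho_a)(r-\rho_b) = r^2 - r(\rho_a+\rho_b) + \rho_a\rho_b$ and summing over the three pairs gives $3r^2 - 2r\sum\rho_a + \sum\rho_a\rho_b = r^2t^2$, since each radius occurs in exactly two of the three pairs; this recovers Theorem~\ref{thm:2p}, and the same substitution from Theorem~\ref{thm:1} then finishes the argument.

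There is really no serious obstacle here: the content of the theorem is already carried by equations~(\ref{eq:x1}) and~(\ref{eq:x2}), and all that remains is careful bookkeeping of signs and of the factors of $r$ when clearing the intermediate expressions. The only point that warrants a moment's attention is the elementary fact that each $\rho$ appears in precisely two of the three unordered pairs, which is what makes the linear term line up correctly; beyond that the computation is immediate.
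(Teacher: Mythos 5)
Your proposal is correct and follows essentially the same route as the paper: the paper likewise solves Theorem~\ref{thm:2p} for $\sum\rho_a\rho_b$, substitutes $\rho_a+\rho_b+\rho_c=3r-rt\W$ from Theorem~\ref{thm:1}, and simplifies to $r^2(t^2-2t\W+3)$. Your algebra checks out, and your aside about expanding equation~(\ref{eq:x2}) directly is just the paper's own proof of Theorem~\ref{thm:2p}.
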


\begin{proof}
From Theorem~\ref{thm:2p}, we have
$$3r^2-2r\sum\rho_a+\sum\rho_a\rho_b=r^2t^2.$$

Using Theorem~\ref{thm:1}, we get
$$3r^2-2r\left(3r-rt\W\right)+\sum\rho_a\rho_b=r^2t^2.$$
Thus,
$$\sum\rho_a\rho_b=r^2t^2+2r\left(3r-rt\W\right)-3r^2$$
which simplifies to
$$\sum\rho_a\rho_b=r^2t^2-2r^2t\W+3r^2$$
as desired.
\end{proof}

When $\theta=180\degrees$, the arcs become semicircles and this result agrees with formula (7) in \cite{Suppa}.

\begin{theorem}\label{thm:3}
We have
$$\rho_a^2+\rho_b^2+\rho_c^2=r^2\left[3-2t\W+\left(\W^2-2\right)t^2\right]$$
\end{theorem}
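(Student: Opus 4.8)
The plan is to reduce the sum of squares to the two symmetric functions of the radii that have already been established. The key algebraic fact is the Newton-type identity
$$\rho_a^2+\rho_b^2+\rho_c^2=\left(\rho_a+\rho_b+\rho_c\right)^2-2\left(\rho_a\rho_b+\rho_b\rho_c+\rho_c\rho_a\right),$$
so the entire theorem follows once I feed in the values of the power sum $\sum\rho_a$ and the elementary symmetric sum $\sum\rho_a\rho_b$.

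First I would invoke Theorem~\ref{thm:1} to write $\rho_a+\rho_b+\rho_c=3r-rt\W=r(3-t\W)$, and Theorem~\ref{thm:2} to write $\rho_a\rho_b+\rho_b\rho_c+\rho_c\rho_a=r^2\left(t^2-2t\W+3\right)$. Substituting both into the identity above gives
$$\rho_a^2+\rho_b^2+\rho_c^2=r^2(3-t\W)^2-2r^2\left(t^2-2t\W+3\right).$$

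Next I would expand and collect terms. Since $(3-t\W)^2=9-6t\W+t^2\W^2$ and $2\left(t^2-2t\W+3\right)=2t^2-4t\W+6$, subtracting the second from the first cancels the constant down to $3$, combines the $t\W$ terms to $-2t\W$, and leaves $t^2\W^2-2t^2=(\W^2-2)t^2$. Factoring out $r^2$ yields exactly
$$\rho_a^2+\rho_b^2+\rho_c^2=r^2\left[3-2t\W+\left(\W^2-2\right)t^2\right],$$
which is the claimed formula.

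I do not expect any genuine obstacle here: the proof is purely a substitution into a standard symmetric-function identity followed by routine expansion, with the only care being careful bookkeeping of the $t\W$ and $t^2$ coefficients when the two expanded expressions are combined. The conceptual work has all been done upstream in Theorems~\ref{thm:1} and~\ref{thm:2} (and ultimately in Ajima's Theorem~\ref{thm:rho1} together with Lemmas~\ref{lemma:1}--\ref{lemma:3}), so this result is best presented as an immediate corollary of those two formulas.
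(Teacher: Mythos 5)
Your proof is correct and follows exactly the paper's own route: the Newton identity $\left(\sum\rho_a\right)^2=\sum\rho_a^2+2\sum\rho_a\rho_b$ combined with the values from Theorems~\ref{thm:1} and~\ref{thm:2}, then routine expansion. The algebra checks out, so there is nothing to add.
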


\begin{proof}
Using the identity
$$\left(\sum\rho_a\right)^2=\sum\rho_a^2+2\sum\rho_a\rho_b,$$
we find that
$$\rho_a^2+\rho_b^2+\rho_c^2=\left(3r-rt\W\right)^2
-2r^2\left(t^2-2t\W+3\right).$$
Simplifying gives
$$\rho_a^2+\rho_b^2+\rho_c^2=r^2\left[3-2t\W+\left(\W^2-2\right)t^2\right]$$
which is the desired result.
\end{proof}

When $\theta=180\degrees$, the arcs become semicircles and this result agrees with formula (8) in \cite{Suppa}.

\begin{theorem}\label{thm:4}
For a general triad of circles associated with $\triangle ABC$, we have
$$\rho_a\rho_b\rho_c=r^3\left(1-t\W+t^2-\frac{r}{p}t^3\right).$$
\end{theorem}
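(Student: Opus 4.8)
The plan is to expand the product $\rho_a\rho_b\rho_c$ in terms of the three symmetric quantities supplied by equations (\ref{eq:x1}), (\ref{eq:x2}), and (\ref{eq:x3}), in the same spirit that Theorems~\ref{thm:1}--\ref{thm:3} exploited those identities. First I would set $u_a=r-\rho_a$ (and $u_b,u_c$ analogously), so that $\rho_a=r-u_a$, and observe that the corollary's three equations are precisely the elementary symmetric functions of $u_a,u_b,u_c$:
$$u_a+u_b+u_c=rt\W,\qquad \sum u_au_b=r^2t^2,\qquad u_au_bu_c=\frac{r^4t^3}{p}.$$

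Next I would expand the product and substitute. Writing
$$\rho_a\rho_b\rho_c=(r-u_a)(r-u_b)(r-u_c)=r^3-r^2\!\sum u_a+r\!\sum u_au_b-u_au_bu_c,$$
and inserting the three values above, I obtain
$$\rho_a\rho_b\rho_c=r^3-r^3t\W+r^3t^2-\frac{r^4t^3}{p},$$
after which factoring out $r^3$ gives the stated identity.

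There is essentially no obstacle here beyond the bookkeeping of a degree-three expansion; the substantive work was already carried out in deriving the corollary from Lemmas~\ref{lemma:1}--\ref{lemma:3}. As a sanity check I would also note a completely parallel route that bypasses $u_a,u_b,u_c$: applying Ajima's Theorem (Theorem~\ref{thm:rho1}) directly, one has $\rho_a=r(1-t\tan\tfrac{A}{2})$ with analogous formulas for the other two, so
$$\rho_a\rho_b\rho_c=r^3\prod\left(1-t\tan\tfrac{A}{2}\right)=r^3\left(1-t\!\sum\tan\tfrac{A}{2}+t^2\!\sum\tan\tfrac{A}{2}\tan\tfrac{B}{2}-t^3\!\prod\tan\tfrac{A}{2}\right),$$
and Lemmas~\ref{lemma:1}, \ref{lemma:2}, and \ref{lemma:3} turn the three symmetric sums into $\W$, $1$, and $r/p$ respectively, reproducing the same answer.
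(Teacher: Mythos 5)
Your proposal is correct and follows essentially the same route as the paper: the paper also expands $\prod(r-\rho_a)$ from equation (\ref{eq:x3}) and substitutes the known values of the lower symmetric functions (via Theorems~\ref{thm:1} and \ref{thm:2}, which encode equations (\ref{eq:x1}) and (\ref{eq:x2})) before solving for $\rho_a\rho_b\rho_c$. Your substitution $u_a=r-\rho_a$ and the alternative check via Ajima's Theorem with Lemmas~\ref{lemma:1}--\ref{lemma:3} are just tidier packagings of the same computation.
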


\begin{proof}
Start with equation (\ref{eq:x3}). Expand and use Theorems \ref{thm:1} and \ref{thm:2}
to substitute known values for $\sum \rho_a$ and $\sum\rho_a\rho_b$. Solving for $\rho_a\rho_b\rho_c$ then gives
the desired formula.
\end{proof}

\void{
\begin{theorem}\label{thm:5}
For a general triad of circles associated with $\triangle ABC$, we have
$$r^2(3-t^2)+r(\rho_a\rho_b+\rho_b\rho_c+\rho_c\rho_a)=2(\rho_a+\rho_b+\rho_c).$$
\end{theorem}

\begin{proof}
From equation (\ref{eq:frac}), we have
\begin{equation*}
\begin{aligned}
\tan\frac{A}{2}&=\frac{r-\rho_a}{rt}\\
\tan\frac{B}{2}&=\frac{r-\rho_b}{rt}\\
\tan\frac{C}{2}&=\frac{r-\rho_c}{rt}.
\end{aligned}
\end{equation*}
Using Lemma~\ref{lemma:2}, we get
$$\left(\frac{r-\rho_a}{rt}\right)\left(\frac{r-\rho_b}{rt}\right)
+\left(\frac{r-\rho_b}{rt}\right)\left(\frac{r-\rho_c}{rt}\right)+
\left(\frac{r-\rho_c}{rt}\right)\left(\frac{r-\rho_a}{rt}\right)=1.$$
Expanding out gives the desired result.
\end{proof}
}

\goodbreak
The following result was found empirically using the program ``OK Geometry''.\footnote{
OK Geometry is a tool for analyzing dynamic geometric constructions, developed by Zlatan Magajna which can be freely downloaded from \url{https://www.ok-geometry.com/}.}

\begin{theorem}\label{thm:formula}
We have
$$a^2 \rho_a ^2 (2 r-\rho_a )^2+16 r (r-\rho_a ) (r R-r R_a-\rho_a  R) (r R-r \text{Ra}+\rho_a R_a)=0.$$
\end{theorem}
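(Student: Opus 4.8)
The plan is to collapse the stated relation into a pure algebraic identity in two parameters by substituting the explicit radius formulas established earlier. I would write $\tau=\tan\frac{A}{2}$ and retain $t=\tan\frac{\theta}{4}$. By Ajima's Theorem (Theorem~\ref{thm:rho1}) we have $\rho_a=r(1-t\tau)$, so the two factors built from $\rho_a$ alone simplify at once to
$$r-\rho_a=rt\tau,\qquad 2r-\rho_a=r(1+t\tau).$$
For the quantities involving $a$ and $R_a$ I would pass everything through $R$, $\tau$, and $t$: the half-angle formula gives $\sin A=\frac{2\tau}{1+\tau^2}$, so the Extended Law of Sines yields $a=\frac{4R\tau}{1+\tau^2}$, and Corollary~\ref{thm:Ra2} then gives $R_a=\frac{R\tau(t^2+1)}{t(1+\tau^2)}$.

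The core of the argument is to reduce the two ``mixed'' factors. I would compute
$$rR-rR_a-\rho_a R=r(t\tau R-R_a)=\frac{rR\tau(t^2\tau^2-1)}{t(1+\tau^2)}$$
and
$$rR-rR_a+\rho_a R_a=r(R-t\tau R_a)=\frac{rR(1-t^2\tau^2)}{1+\tau^2},$$
both simplifications resting on the single cancellation $t^2(1+\tau^2)-(t^2+1)=t^2\tau^2-1$ that surfaces after clearing denominators. With these substitutions the first summand becomes $\frac{16R^2r^4\tau^2(1-t^2\tau^2)^2}{(1+\tau^2)^2}$, using $(1-t\tau)(1+t\tau)=1-t^2\tau^2$, while the second summand becomes $-\frac{16R^2r^4\tau^2(1-t^2\tau^2)^2}{(1+\tau^2)^2}$, the sign reversal arising from $(t^2\tau^2-1)(1-t^2\tau^2)=-(1-t^2\tau^2)^2$. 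The two summands are negatives of one another, so their sum vanishes, which is the claim.

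I do not anticipate any genuine obstacle: since each factor is governed by a closed-form radius formula proved above, the statement is ultimately an algebraic identity whose verification is mechanical once the substitutions are in place. The only mild subtlety is bookkeeping, namely carrying the common denominator $(1+\tau^2)^2$ and noticing that both mixed factors share the single quantity $1-t^2\tau^2$; recognizing this shared structure is exactly what turns the cancellation into something transparent rather than accidental. Since the result was found empirically, a symbolic computation would confirm it instantly, but the hand reduction sketched here is short enough to be presented in full.
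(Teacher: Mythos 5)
Your proposal is correct, and every step checks out: with $\tau=\tan\frac{A}{2}$ and $t=\tan\frac{\theta}{4}$ one indeed gets $a=\frac{4R\tau}{1+\tau^2}$, $R_a=\frac{R\tau(t^2+1)}{t(1+\tau^2)}$, $rR-rR_a-\rho_aR=\frac{rR\tau(t^2\tau^2-1)}{t(1+\tau^2)}$ and $rR-rR_a+\rho_aR_a=\frac{rR(1-t^2\tau^2)}{1+\tau^2}$, so the two summands are $\pm\frac{16R^2r^4\tau^2(1-t^2\tau^2)^2}{(1+\tau^2)^2}$ and cancel. The paper takes the same overall strategy — substitute the closed-form radius formulas and verify an algebraic identity — but executes it in the raw side-length variables, replacing $\rho_a$, $R$, $r$, $\Delta$, $p$, and $R_a$ by their expressions in $a$, $b$, $c$, $\theta$ and then invoking \textsc{Mathematica} to confirm the expression vanishes; no human-readable cancellation is exhibited. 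Your parametrization by $(R,r,\tau,t)$ is the genuinely better route: it isolates the single quantity $1-t^2\tau^2$ shared by both summands, reduces the verification to a half-page hand computation, and explains \emph{why} the identity holds rather than merely certifying that it does. The only cost is that you must quietly use the Extended Law of Sines and the half-angle identity of Lemma~\ref{lemma:sin2x} to eliminate $a$, which the paper's brute-force substitution avoids; that is a trivial price for a proof that no longer depends on a computer algebra system.
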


\begin{proof}
Starting with the left side of the equation, we make the following substitutions, in succession.
\begin{align*}
\rho_a&=r\left(1-\frac{r}{p-a}\tan\frac{\theta}{4}\right)\\
R&=\frac{abc}{4\Delta}\\
r&=\frac{\Delta}{p}\\
\Delta&=\sqrt{p(p-a)(p-b)(p-c)}\\
p&=\frac{a+b+c}{2}\\
R_a&=\frac{a}{2\cos(90\degrees-\frac{\theta}{2})}
\end{align*}
Simplifying the resulting expression using \textsc{Mathematica}, we find that the expression is equal to 0.
\end{proof}

In some special cases, this formula can be simplified.

\begin{theorem}
If $\theta=360\degrees-4A$, then
$$\rho_a=\frac{2rR_a}{R+2R_a}.$$
\end{theorem}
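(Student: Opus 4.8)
The plan is to reduce both sides of the claimed identity to the single closed form $\rho_a = r/(1+\cos A)$, using the two radius formulas already established. The key observation that triggers everything is that the hypothesis $\theta = 360\degrees - 4A$ is equivalent to $\theta/4 = 90\degrees - A$, so that
$$t = \tan\frac{\theta}{4} = \tan(90\degrees - A) = \cot A.$$
This one substitution is what makes the special case collapse into a clean expression.

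First I would evaluate the left-hand side with Ajima's Theorem (Theorem~\ref{thm:rho1}), which gives $\rho_a = r\left(1 - \tan\frac{A}{2}\tan\frac{\theta}{4}\right) = r\left(1 - \tan\frac{A}{2}\cot A\right)$. The point is to simplify the product $\tan\frac{A}{2}\cot A$ by writing $\cot A$ through the tangent double-angle formula, $\cot A = \bigl(1-\tan^2\frac{A}{2}\bigr)\big/\bigl(2\tan\frac{A}{2}\bigr)$; the factor $\tan\frac{A}{2}$ cancels and the product becomes $\bigl(1-\tan^2\frac{A}{2}\bigr)/2$. Hence $\rho_a = r\bigl(1+\tan^2\frac{A}{2}\bigr)/2 = r/\bigl(2\cos^2\frac{A}{2}\bigr) = r/(1+\cos A)$, using $2\cos^2\frac{A}{2} = 1+\cos A$.

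Next I would evaluate $R_a$ with Corollary~\ref{thm:Ra3}, namely $R_a = R(t^2+1)\sin A/(2t)$. With $t=\cot A$ we have $t^2+1 = \csc^2 A$, and a short simplification yields $R_a = R/(2\cos A)$. Substituting this into the right-hand side gives $2rR_a = rR/\cos A$ and $R + 2R_a = R(1+\cos A)/\cos A$, so the common factor $\cos A$ cancels and $2rR_a/(R+2R_a) = r/(1+\cos A)$. Comparing this with the value of $\rho_a$ obtained above proves the identity. There is no genuine obstacle in this argument; the only place demanding care is the half-angle bookkeeping in the step $\rho_a = r/(1+\cos A)$, where one must route $\cot A$ through the tangent double-angle identity rather than expanding it in terms of $\sin A$ and $\cos A$ prematurely.
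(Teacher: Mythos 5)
Your proof is correct, but it takes a genuinely different route from the paper's. The paper disposes of this theorem in one line by saying the proof is the same as that of Theorem~\ref{thm:formula}: substitute the closed forms of $\rho_a$, $R$, $r$, $\Delta$, $p$, and $R_a$ in terms of the side lengths and let \textsc{Mathematica} verify that the identity reduces to $0$. You instead exploit the hypothesis directly: $\theta=360\degrees-4A$ gives $t=\cot A$, and then Ajima's Theorem (Theorem~\ref{thm:rho1}) collapses to $\rho_a=r/(1+\cos A)$ while Corollary~\ref{thm:Ra3} collapses to $R_a=R/(2\cos A)$, after which the right-hand side also reduces to $r/(1+\cos A)$. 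All the trigonometric steps check out (the double-angle manipulation $\tan\frac{A}{2}\cot A=\bigl(1-\tan^2\frac{A}{2}\bigr)/2$ and the cancellation of $\cos A$ on the right are both fine), and the hypothesis forces $0<A<90\degrees$ for a nondegenerate arc, so $\cos A\ne 0$ and no division is illegitimate. What your approach buys is a short, human-checkable derivation with two attractive intermediate identities ($\rho_a=r/(1+\cos A)$ and $R_a=R/(2\cos A)$) that explain \emph{why} the special case simplifies; what the paper's approach buys is uniformity, since the same mechanical substitution handles both Theorem~\ref{thm:formula} and this corollary-like special case without any case-specific insight.
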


\begin{proof}
The proof is the same as the proof of Theorem~\ref{thm:formula}.
\end{proof}

For a fixed $\theta$, we can find a relationship between $r$, $R$, $R_a$ and $\rho_a$,
not involving $a$.

\begin{theorem}
We have
\begin{equation}
\frac{R_a}{rR}=\frac{(r-\rho_a)(1+t^2)}{(r-\rho_a)^2+r^2t^2}.\label{eq:RarR}
\end{equation}
\end{theorem}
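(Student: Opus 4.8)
The plan is to eliminate the angle $A$ entirely and reduce both sides to a common closed form in $r$, $R$, $t$, and $\sin A$, by invoking the two earlier results that already express $\rho_a$ and $R_a$ through the half-angle $A/2$. The key inputs are equation~(\ref{eq:frac1}), namely $r-\rho_a=rt\tan\frac{A}{2}$, which rewrites the quantity $r-\rho_a$ appearing on the right, and Corollary~\ref{thm:Ra3}, which gives $R_a=\frac{R(t^2+1)\sin A}{2t}$ for the left.

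First I would work on the right-hand side. Substituting $r-\rho_a=rt\tan\frac{A}{2}$ into the denominator yields $(r-\rho_a)^2+r^2t^2=r^2t^2\tan^2\frac{A}{2}+r^2t^2=r^2t^2\sec^2\frac{A}{2}$, using the Pythagorean identity $\tan^2+1=\sec^2$. The numerator becomes $rt\tan\frac{A}{2}(1+t^2)$. Cancelling one factor of $rt$ and writing $\tan\frac{A}{2}\big/\sec^2\frac{A}{2}=\sin\frac{A}{2}\cos\frac{A}{2}=\tfrac12\sin A$, the right-hand side collapses to $\frac{(1+t^2)\sin A}{2rt}$.

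Next I would evaluate the left-hand side. Dividing the formula of Corollary~\ref{thm:Ra3} by $rR$ gives $\frac{R_a}{rR}=\frac{(t^2+1)\sin A}{2rt}$, which is precisely the expression obtained for the right-hand side. Comparing the two quantities establishes the identity.

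There is essentially no serious obstacle here: the result is a direct algebraic consequence of Ajima's Theorem (in the form of equation~(\ref{eq:frac1})) together with the radius formula for $\omega_a$ (Corollary~\ref{thm:Ra3}). The only point requiring slight care is the simplification $\tan\frac{A}{2}\big/\sec^2\frac{A}{2}=\tfrac12\sin A$, which converts the half-angle expression on the right into the full-angle $\sin A$ that appears naturally on the left; once this bridge is in place, everything else is routine cancellation.
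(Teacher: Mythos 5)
Your proof is correct and follows essentially the same route as the paper, which likewise eliminates $\tan\frac{A}{2}$ between Ajima's formula and Corollary~\ref{thm:Ra3}, using the identity of Lemma~\ref{lemma:sin2x} (equivalent to your step $\tan\frac{A}{2}\big/\sec^2\frac{A}{2}=\tfrac12\sin A$). You have simply written out the algebra that the paper leaves implicit.
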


\begin{proof}
This follows by eliminating $\tan(A/2)$ from equations (\ref{eq:rho1}) and (\ref{eq:Ra3}).
The expression $\sin A$ is expressed in terms of $\tan(A/2)$ using Lemma~\ref{lemma:sin2x}.
\end{proof}

Solving for $t^2$ in equation (\ref{eq:RarR}) gives us the following.

\begin{theorem}
We have
\begin{equation*}
t^2=\frac{(r-\rho_a)(\rho_aR_a+rR-rR_a)}{r(R\rho_a+rR_a-rR)}.
\end{equation*}
\end{theorem}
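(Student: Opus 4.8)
The plan is to treat this statement as a purely algebraic consequence of equation (\ref{eq:RarR}), exactly as the sentence preceding the theorem suggests: solve that relation for $t^2$. First I would clear the fraction by cross-multiplying, obtaining
$$R_a\left[(r-\rho_a)^2+r^2t^2\right]=rR(r-\rho_a)(1+t^2).$$

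Next I would expand both sides and collect every term containing $t^2$ on the left, sending the $t^2$-free terms to the right, which yields
$$t^2\left[R_ar^2-rR(r-\rho_a)\right]=rR(r-\rho_a)-R_a(r-\rho_a)^2.$$
The right-hand side factors as $(r-\rho_a)\left[rR-R_a(r-\rho_a)\right]$, so after dividing I get $t^2$ expressed as a single quotient whose numerator carries the factor $(r-\rho_a)$.

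The last step is to recognize the two bracketed expressions as the forms claimed in the theorem. Expanding shows $rR-R_a(r-\rho_a)=\rho_aR_a+rR-rR_a$, which is the second numerator factor, and $R_ar^2-rR(r-\rho_a)=r(R\rho_a+rR_a-rR)$ once a common factor of $r$ is pulled out, which is exactly the denominator. Assembling these gives the stated identity.

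Since this is elementary algebra with no geometric input beyond the already-established equation (\ref{eq:RarR}), I do not expect any serious obstacle; the only care required is sign bookkeeping when isolating the $t^2$ terms, together with a brief remark that for an admissible configuration the denominator $r(R\rho_a+rR_a-rR)$ is nonzero so that the division is legitimate. I expect the complete argument to occupy only a few lines.
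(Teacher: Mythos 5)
Your proposal is correct and is exactly the route the paper takes: the paper offers no written proof beyond the remark that the theorem follows by solving equation~(\ref{eq:RarR}) for $t^2$, and your cross-multiplication, collection of the $t^2$ terms, and factoring of $rR-R_a(r-\rho_a)$ and $R_ar^2-rR(r-\rho_a)$ carry that out correctly. Nothing further is needed.
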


\newpage

\void{
Let $T$ denote the configuration consisting of the three circles, $\gamma_a$, $\gamma_b$,
and $\gamma_c$ and their common external tangents as shown in Figure~\ref{fig:triad}.
Let $T^*$ denote the same configuration and same shape triangle when $\theta=180\degrees$.

\begin{figure}[ht]
\centering
\includegraphics[scale=0.7]{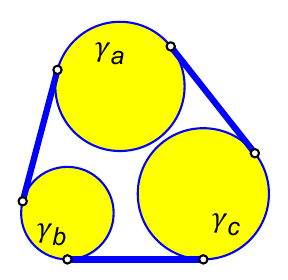}
\caption{Configuration $T$ \note{Do we need this?}}
\label{fig:triad}
\end{figure}
}

\void{

\begin{theorem}
Configurations $T$ and $T^*$ can be placed so that they are homothetic. The homothetic ratio is $t=\tan\frac{\theta}{2}$
and the center of the homothety is the incenter, $I$.
\end{theorem}

\note{This isn't quite right. I can't seem to express what I want to say.}

}

\void{
Note that the configuration is uniquely determined (up to congruence) by specifying the radii of the circles
and the length of the common tangent.
}

\newpage

\section{Apollonius Circles of the Three Ajima Circles}

A circle that is tangent to three given circles is called an \emph{Apollonius circle} of those three circles.

If all three circles lie inside an Apollonius circle, then the Apollonius circle
is called the \emph{outer Apollonius circle} of the three circles.
The outer Apollonius circle surrounds the three circles and is internally tangent to all three.

If all three circles lie outside an Apollonius circle, then the Apollonius circle
is called the \emph{inner Apollonius circle} of the three circles.
The inner Apollonius circle will either be internally tangent to the three given circles
or it will be externally tangent to all the circles. Figure~\ref{fig:innerApollonius}
shows various configurations. In each case, the red circle is the inner Apollonius circle of
the three blue circles.

\begin{figure}[ht]
\centering
\includegraphics[scale=0.38]{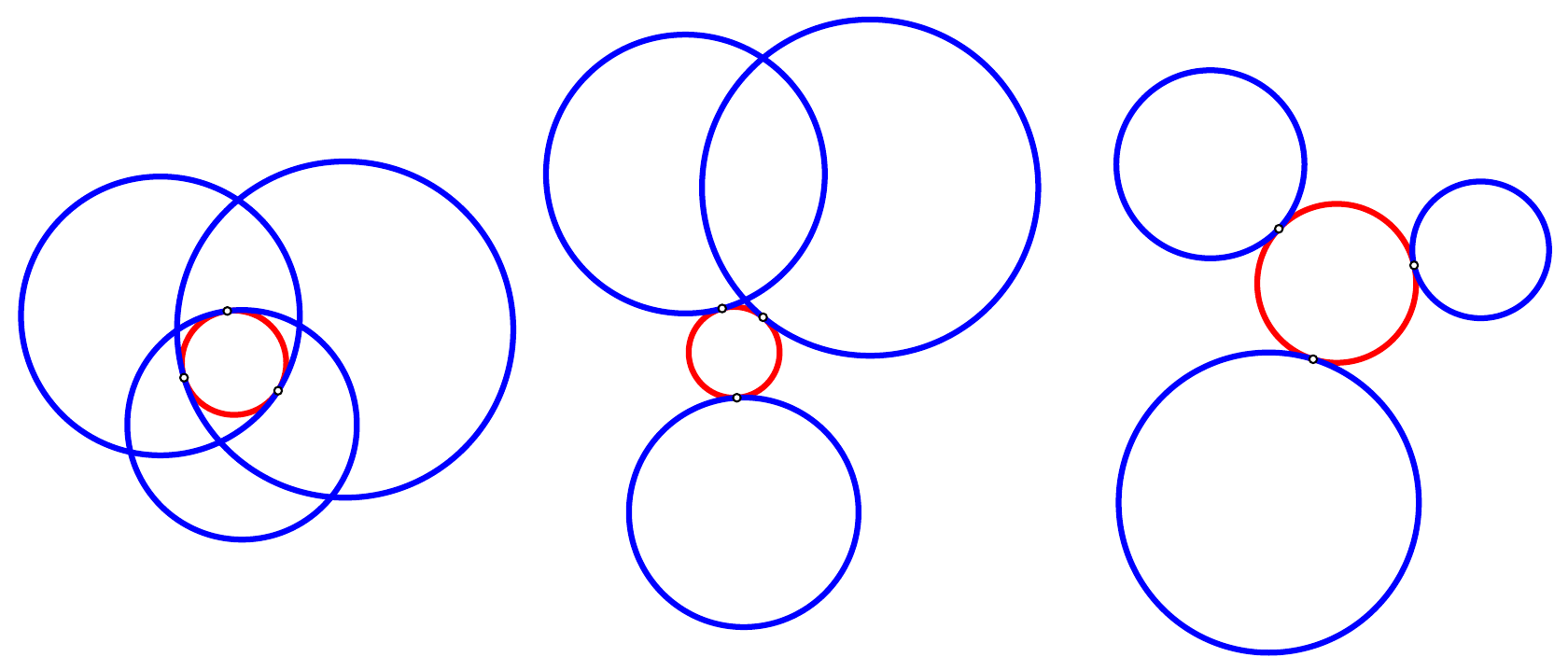}
\caption{inner Apollonius circle of three circles}
\label{fig:innerApollonius}
\end{figure}

We will be looking at the inner and outer Apollonius circles of a general triad
of circles associated with $\triangle ABC$.
But first, let us review some known facts about tangent circles.


\begin{lemma}\label{lemma:twoCircles}
Let $U(r_1)$ and $V(r_2)$ be two circles in the plane.
Let $S$ be a center of similarity of the two circles (Figure~\ref{fig:twoCircles}).
Then
$$\frac{US}{SV}=\frac{r_1}{r_2}.$$
\end{lemma}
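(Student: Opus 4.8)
The plan is to use the defining property of a center of similarity, namely that $S$ is the center of a homothety carrying one circle onto the other, and to read off the ratio of distances directly from the ratio of radii. I would organize this through similar triangles so that both the external and the internal center of similarity are handled by the same picture.

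First I would recall that a center of similarity $S$ of the two circles lies on the line $UV$ joining their centers, and that it is characterized by the existence of parallel radii through $S$. Concretely, I would draw a radius $UP$ of the first circle together with the radius $VQ$ of the second circle parallel to it, choosing the same direction for the external center and the opposite direction for the internal center. By the definition of a center of similarity, the three points $S$, $P$, and $Q$ are then collinear.

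Next I would compare triangles $\triangle SUP$ and $\triangle SVQ$. Since $UP \parallel VQ$, while $S,U,V$ are collinear and $S,P,Q$ are collinear, corresponding angles are equal and the two triangles are similar. Reading off the ratio of corresponding sides gives
$$\frac{SU}{SV}=\frac{UP}{VQ}=\frac{r_1}{r_2},$$
which, since $US=SU$, is exactly the claimed identity.

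The argument is essentially definitional, so there is no serious obstacle; the only point requiring care is to state the characterization of a center of similarity precisely enough that the collinearity of $S$, $P$, $Q$ is available, and to observe that reversing the sense of the parallel radii covers the internal center without changing the ratio. As a slick alternative I would phrase the whole thing through the homothety $h$ with center $S$ sending $U(r_1)$ to $V(r_2)$: it scales radii by $|k|$, so $r_2=|k|\,r_1$, and it sends $U$ to $V$, so $SV=|k|\,SU$; dividing these gives $US/SV=1/|k|=r_1/r_2$.
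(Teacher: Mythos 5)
Your proposal is correct and matches the paper's approach: the paper likewise argues that $S$ lies on the line of centers and that corresponding distances under the similarity (whose ratio is $r_1/r_2$) give $SU/SV=r_1/r_2$, which is exactly your homothety formulation. Your parallel-radii similar-triangle version merely spells out the same argument in more elementary detail.
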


\begin{figure}[ht]
\centering
\includegraphics[scale=0.35]{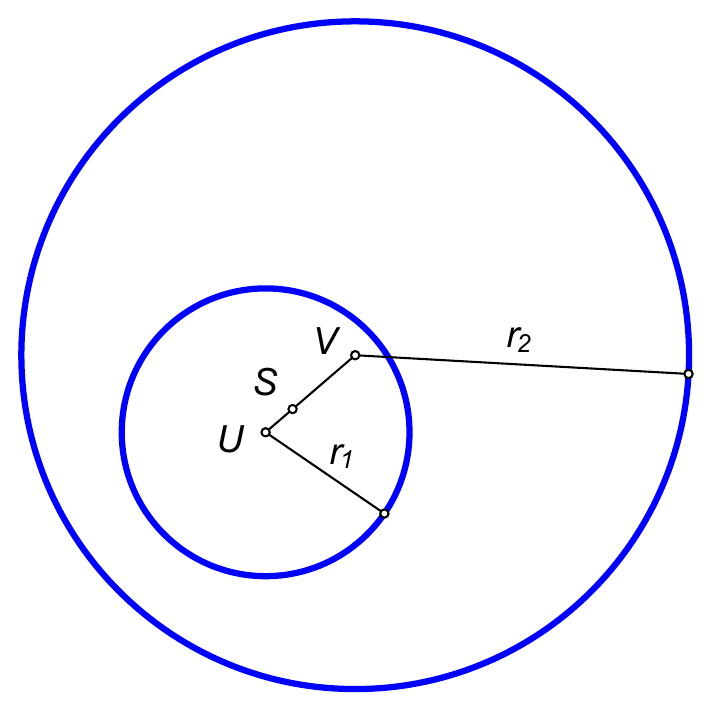}
\caption{}
\label{fig:twoCircles}
\end{figure}

\begin{proof}
The line of centers of two circles points passes through the
center of similitude. So $S$ lies on $UV$.
In a similarity, corresponding distances in two similar figures are in proportion to their ratio of similitude.
Their ratio of similitude is the ratio of their radii, namely $r_1/r_2$.
So $SU/SV=r_1/r_2$.
\end{proof}

\newpage

When we say that a circle is inscribed in an angle $ABC$, we mean that the circle
is tangent to the rays $\overrightarrow{BA}$ and $\overrightarrow{BC}$.

The following result comes from \cite[Theorem~2]{Rabinowitz}.

\begin{lemma}\label{lemma:triConcur}
Let $C_a$ be an arbitrary circle inscribed in $\angle BAC$ of $\triangle ABC$.
Let $C_b$ be an arbitrary circle inscribed in $\angle CBA$.
Let $C_c$ be an arbitrary circle inscribed in $\angle ACB$.
Let $S$ be the inner (respectively outer) Apollonius circle of $C_a$, $C_b$, and $C_c$.
Let $T_a$ be the point where $C_a$ touches $(S)$. Define $T_b$ and $T_c$ similarly.
Then $AT_a$, $BT_b$, and $CT_c$ are concurrent at a point $P$ (Figure~\ref{fig:triConcur}).
The point $P$ is the internal (external) center of similitude of the incircle of $\triangle ABC$
and circle $(S)$.
\end{lemma}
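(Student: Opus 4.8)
The plan is to translate the entire configuration into the language of centers of similitude (homothety centers) and then invoke the Monge--d'Alembert three-circle theorem. The three circles I would feed into that theorem are $C_a$, the incircle $\Gamma$ of $\triangle ABC$, and the Apollonius circle $(S)$. The first key observation is that $A$ is a center of similitude of $C_a$ and $\Gamma$: both circles are inscribed in $\angle BAC$, so each is tangent to the two rays from $A$ and lies on the same side of them, which means the homothety centered at $A$ that carries $C_a$ to $\Gamma$ has positive ratio. By Lemma~\ref{lemma:twoCircles} its center lies on the line of centers, and because the ratio is positive, $A$ is precisely the \emph{external} center of similitude of $C_a$ and $\Gamma$. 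The identical statement holds at $B$ and $C$ for the pairs $(C_b,\Gamma)$ and $(C_c,\Gamma)$.

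Next I would exploit the tangency at $T_a$. The point where two tangent circles touch is always one of their centers of similitude: it is the external center when the tangency is internal and the internal center when the tangency is external. Thus for the outer Apollonius circle, where $(S)$ is internally tangent to each $C_i$, the point $T_a$ is the external center of similitude of $C_a$ and $(S)$; for the inner Apollonius circle, where $(S)$ is externally tangent to each $C_i$, the point $T_a$ is the internal center.

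Now I would apply the Monge--d'Alembert theorem to the triple $\{C_a,\Gamma,(S)\}$ in its sharp form: the three pairwise centers of similitude are collinear exactly when an even number of them are internal centers. The center of $(C_a,\Gamma)$ is the external center $A$, and the center of $(C_a,S)$ is $T_a$, so the line $AT_a$ must pass through a center of similitude $P$ of $\Gamma$ and $(S)$ whose type is forced by parity: $P$ is the external center when $T_a$ is external (the outer case) and the internal center when $T_a$ is internal (the inner case). Repeating the argument verbatim at $B$ and $C$ yields the \emph{same} point $P$, since $\Gamma$ and $(S)$ possess a unique external and a unique internal center of similitude, and the tangency type of $(S)$ with every $C_i$ is the same. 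Hence $AT_a$, $BT_b$, and $CT_c$ all pass through $P$, the external (respectively internal) center of similitude of the incircle and $(S)$, which is the assertion.

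The main obstacle is the orientation bookkeeping rather than any computation: one must correctly certify in each configuration that $A$ is the \emph{external} center of $(C_a,\Gamma)$ and determine whether $T_a$ is the internal or external center of $(C_a,S)$, and then check that the parity criterion of Monge--d'Alembert selects the center of $(\Gamma,S)$ of exactly the type named in the statement. Degenerate situations---a tangency point falling on a vertex, or $(S)$ being congruent to some $C_i$ so that a homothety degenerates to a translation---must be excluded or treated separately, but away from these the homothety-center argument is entirely synthetic and uniform across the inner and outer cases.
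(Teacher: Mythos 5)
The paper does not actually prove this lemma; it is imported from \cite[Theorem~2]{Rabinowitz} with no argument supplied, so there is no internal proof to compare against. Your Monge--d'Alembert argument is correct and is, in essence, the standard proof of results of this type (and the method used in the cited source): $A$ is the external center of similitude of $C_a$ and the incircle because both are inscribed in $\angle BAC$ and the homothety at $A$ carrying one to the other has positive ratio; $T_a$ is the external or internal center of similitude of $C_a$ and $(S)$ according as the tangency is internal or external; and the parity form of Monge--d'Alembert then forces the line $AT_a$ through the center of similitude of the incircle and $(S)$ of the type claimed --- the same point for all three vertices, since that center is unique. The one place where your ``orientation bookkeeping'' genuinely matters is the configuration, shown among the cases of Figure~\ref{fig:innerApollonius}, in which the inner Apollonius circle is \emph{internally} tangent to the three given circles: there $T_a$ becomes the external center of $(C_a,S)$ and your parity computation lands on the \emph{external} center of the incircle and $(S)$, so the statement as worded only survives under the signed-radius convention the paper adopts later (where $\rho_i<0$ in that case). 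Away from that, and from the degeneracies you already flag ($T_a$ coinciding with a vertex, or equal radii turning a homothety into a translation), the argument is complete and needs no computation.
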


\begin{figure}[ht]
\centering
\includegraphics[scale=0.5]{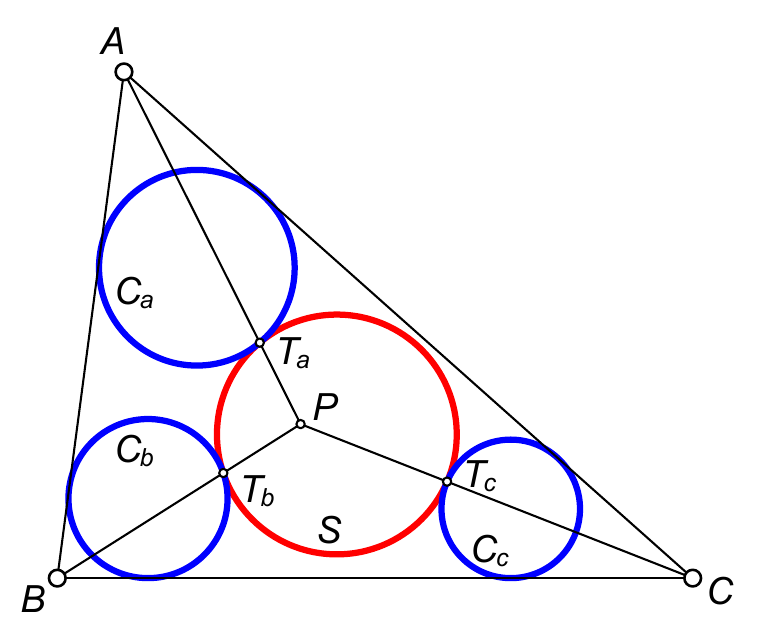}
\caption{}
\label{fig:triConcur}
\end{figure}

The following lemma comes from \cite[p.~85]{Casey}.

\begin{lemma}\label{lemma:Casey}
If two circles touch two others, then the radical axis of either pair
passes through a center of similitude of the other pair (Figure~\ref{fig:Casey}).
\end{lemma}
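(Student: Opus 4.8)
The plan is to reduce the lemma to a single computation about the power of a center of similitude. Write $\gamma(r_1)$ and $\delta(r_2)$ for the pair whose center of similitude we wish to capture, and let $\alpha,\beta$ be the two circles that each touch both $\gamma$ and $\delta$. It suffices to prove the following sharper statement: if $S$ is the center of similitude of $\gamma$ and $\delta$ determined by the common sense of tangency, then the power $\operatorname{pow}(S,\sigma)$ of $S$ with respect to \emph{any} circle $\sigma$ tangent to both $\gamma$ and $\delta$ is one and the same constant, depending only on $S$, $\gamma$, and $\delta$. Granting this, $\operatorname{pow}(S,\alpha)=\operatorname{pow}(S,\beta)$, so $S$ lies on the radical axis of $\alpha$ and $\beta$; exchanging the roles of the two pairs then yields the companion statement for the radical axis of $\gamma,\delta$.

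To establish the sharper statement, let $\sigma$ touch $\gamma$ at $T_1$ and $\delta$ at $T_2$. The first step is to locate $S$ on the line $T_1T_2$. I would introduce the homothety $h_1$ centered at $T_1$ carrying $\gamma$ to $\sigma$ and the homothety $h_2$ centered at $T_2$ carrying $\sigma$ to $\delta$ (their ratios are $\pm r_\sigma/r_1$ and a nonzero $\lambda_2=\pm r_2/r_\sigma$, with signs dictated by internal versus external tangency, as quantified by Lemma~\ref{lemma:twoCircles}). Their composite $h=h_2\circ h_1$ is a homothety carrying $\gamma$ to $\delta$ with ratio $k=\pm r_2/r_1\neq1$, so its unique center is a center of similitude $S$ of $\gamma$ and $\delta$; since the center of a composite of homotheties lies on the line joining their centers, $S\in T_1T_2$. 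The shared sense in which $\alpha$ and $\beta$ meet the pair fixes the sign of $k$ and hence pins down which similitude center $S$ is, uniformly for both circles.

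The second step is the power computation. Let $T_2'$ be the second intersection of line $T_1T_2$ with $\delta$. Because $h(T_1)=h_2(T_1)$ differs from $T_2$ by $\lambda_2\,\overline{T_2T_1}\neq0$, the image $h(T_1)$ cannot equal $T_2$, so necessarily $h(T_1)=T_2'$; equivalently, $T_1$ and $T_2$ are \emph{antihomologous} with respect to $S$. Consequently $\overline{ST_2'}=k\,\overline{ST_1}$, and since $T_1,T_2\in\sigma$ are collinear with $S$,
\[
\operatorname{pow}(S,\sigma)=\overline{ST_1}\cdot\overline{ST_2}=\frac{1}{k}\,\overline{ST_2'}\cdot\overline{ST_2}=\frac{1}{k}\operatorname{pow}(S,\delta),
\]
which depends only on $S$, $\gamma$, $\delta$ (through $k=\pm r_2/r_1$) and not on $\sigma$. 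Taking $\sigma=\alpha$ and then $\sigma=\beta$ completes the argument.

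I expect the only genuine obstacle to be the antihomologous bookkeeping in the second step: one must verify that under $h$ the tangency point $T_1$ maps to the \emph{second} intersection $T_2'$ rather than to $T_2$ itself, and one must confirm that $\alpha$ and $\beta$ touch $\gamma,\delta$ in a compatible sense so that a single similitude center $S$ serves both, matching the configuration of Figure~\ref{fig:Casey}. Once the sign of $k$ and the correspondence of tangency points are settled, the rest is the routine algebra of homothety ratios and powers of a point.
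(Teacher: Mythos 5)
The paper offers no proof of this lemma at all---it is imported verbatim from Casey \cite[p.~85]{Casey} and used as a black box---so there is no in-paper argument to measure yours against. Your proof is the classical one via antihomologous points and is correct in substance: composing the tangency homotheties shows that the contact points $T_1,T_2$ of any circle $\sigma$ touching both $\gamma$ and $\delta$ are antihomologous for the similitude centre $S$ arising as the centre of $h_2\circ h_1$, whence $\overline{ST_1}\cdot\overline{ST_2}=\tfrac{1}{k}\,\overline{ST_2'}\cdot\overline{ST_2}$ equals $\tfrac1k$ times the power of $S$ with respect to $\delta$, a quantity independent of $\sigma$; applying this to $\alpha$ and $\beta$ places $S$ on their radical axis. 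Two loose ends deserve tightening. First, your assertion $k\neq 1$ is not automatic: if $r_1=r_2$ and the two contacts are of like kind, the composite $h_2\circ h_1$ is a translation, the external similitude centre recedes to infinity, and the statement survives only in the degenerate sense that the radical axis of $\alpha,\beta$ becomes parallel to the line of centres of $\gamma,\delta$; this case should be excluded or remarked upon. Second, the ``compatible sense'' proviso you flag at the end is genuinely part of the theorem, not mere bookkeeping: the sign of $k$ is the product of the two tangency signs, so $\alpha$ selects the external centre when its two contacts are of like kind and the internal centre otherwise, and your argument yields a common point on the radical axis exactly when $\alpha$ and $\beta$ produce the same sign. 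In the one place the paper invokes the lemma (Theorem~\ref{thm:oi}, where $C_1$ and $C_2$ each touch $C_i$ and $C_o$ in the same manner), that condition holds, so your proof fully covers the intended use.
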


\begin{figure}[ht]
\centering
\includegraphics[scale=0.5]{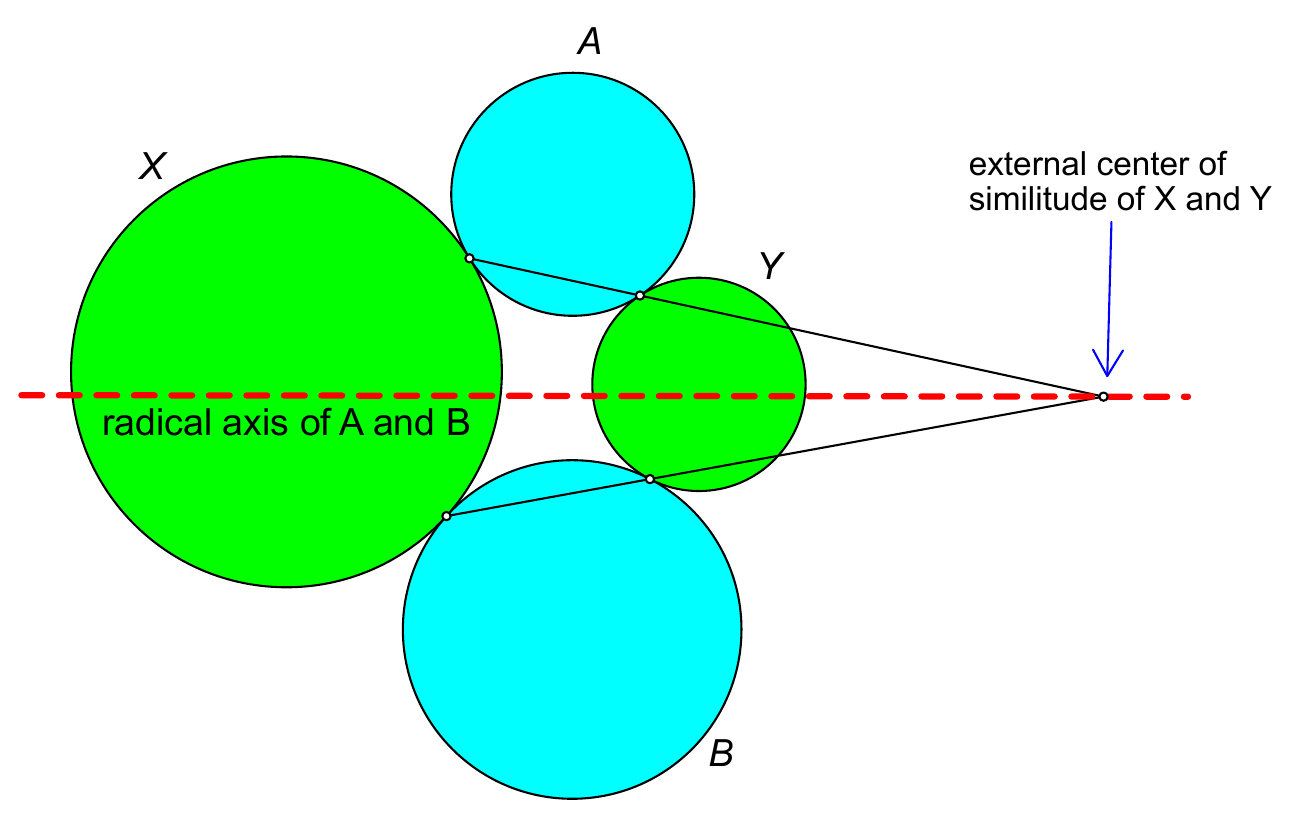}
\caption{}
\label{fig:Casey}
\end{figure}

\goodbreak
The following lemma comes from Gergonne's construction of Apollonius circles.
(See \cite[pp.~159--160]{Dorrie}.)

\begin{lemma}\label{lemma:Gergonne}
Let $(O_1)$, $(O_2)$, and $(O_3)$ be three circles in the plane.
Let $C_i$ be the inner Apollonius circle of the circles $(O_1)$, $(O_2)$, and $(O_3)$.
Let $C_o$ be the outer Apollonius circle of the circles $(O_1)$, $(O_2)$, and $(O_3)$.
Let $U_1$ be the point where $C_i$ touches $(O_1)$. Define $U_2$ and $U_3$ similarly.
Let $V_1$ be the point where $C_o$ touches $(O_1)$. Define $V_2$ and $V_3$ similarly.
Then $V_1U_1$, $V_2U_2$, and $V_3U_3$ are concurrent at the radical center, $R$,
of $(O_1)$, $(O_2)$, and $(O_3)$ (Figure~\ref{fig:radicalCenter}).
\end{lemma}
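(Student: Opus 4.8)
The plan is to identify the radical center $R$ of $(O_1)$, $(O_2)$, $(O_3)$ as a center of similitude of the two Apollonius circles $C_i$ and $C_o$, and then to apply the classical collinearity theorem for centers of similitude (Monge--d'Alembert) to each of the three triples $(O_k)$, $C_i$, $C_o$.

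First I would locate $R$ as a similitude center of the pair $\{C_i, C_o\}$. For each unordered pair $\{(O_j),(O_k)\}$, both $C_i$ and $C_o$ touch $(O_j)$ and $(O_k)$; hence, by Lemma~\ref{lemma:Casey}, the radical axis $r_{jk}$ of $(O_j)$ and $(O_k)$ passes through one of the two centers of similitude of $C_i$ and $C_o$. The three radical axes $r_{12}$, $r_{23}$, $r_{31}$ all pass through the radical center $R$. Since $\{C_i,C_o\}$ has only two similitude centers, by the pigeonhole principle two of these three radical axes pass through the same similitude center $\Sigma$. Two distinct radical axes meet only at $R$, so (excluding the degenerate case in which $O_1$, $O_2$, $O_3$ are collinear and the $r_{jk}$ coincide) we must have $\Sigma = R$. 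Thus $R$ is a center of similitude of $C_i$ and $C_o$.

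Next, fix an index $k$ and consider the three circles $(O_k)$, $C_i$, $C_o$. The tangency point $U_k$ of $C_i$ with $(O_k)$ is a center of similitude of the pair $\{(O_k),C_i\}$ (a point of tangency is always a similitude center of the two tangent circles), and likewise $V_k$ is a center of similitude of $\{(O_k),C_o\}$, while $R$ is a center of similitude of $\{C_i,C_o\}$ by the previous step. By Monge's theorem on the collinearity of centers of similitude --- which can itself be derived from Lemma~\ref{lemma:twoCircles} together with Menelaus' theorem applied to the triangle of centers $O_k P_i P_o$, where $P_i$, $P_o$ are the centers of $C_i$, $C_o$ --- these three similitude centers are collinear. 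Hence $R$ lies on the line $U_kV_k$ for each $k$, and therefore $V_1U_1$, $V_2U_2$, and $V_3U_3$ all pass through $R$.

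The delicate point, and the main obstacle, is the bookkeeping of internal versus external tangency. Monge's theorem guarantees collinearity only for an admissible combination of similitude-center types: the three external centers are collinear, and one external together with two internal centers is collinear, but three internal centers are not. I would therefore need to check that the tangency configuration defining the inner (respectively outer) Apollonius circle forces exactly such an admissible combination at each triple $(O_k)$, $C_i$, $C_o$; this is where the consistent inner/outer nature of $C_i$ and $C_o$ is used, and where Lemma~\ref{lemma:twoCircles} (which fixes the sign of each similitude ratio through the radii) does the real work. The degenerate case of collinear centers $O_1$, $O_2$, $O_3$ should be excluded or treated separately, since then the radical center and the Apollonius circles may fail to be well defined.
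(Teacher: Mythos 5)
The paper itself offers no proof of this lemma: it is quoted from Gergonne's construction of Apollonius circles with a citation to D\"orrie, so there is no internal argument to compare against. Your first step, though, is essentially the argument the paper does give for Theorem~\ref{thm:oi}: applying Lemma~\ref{lemma:Casey} to two of the pairs $\{(O_j),(O_k)\}$ and intersecting the radical axes shows that the radical center $R$ is a center of similitude of $C_i$ and $C_o$; your pigeonhole refinement is a legitimate tightening of that step.

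The gap is the one you flag yourself, and it is genuine rather than cosmetic. Monge--d'Alembert gives collinearity only for an admissible triple of similitude centers, so without the parity check you know only that $U_kV_k$ passes through \emph{one of the two} similitude centers of $C_i$ and $C_o$ --- and nothing in your argument so far identifies that center with $R$. Lemma~\ref{lemma:twoCircles} cannot do this ``real work'' for you, because it is stated with unsigned ratios and does not distinguish the internal from the external center. To finish along your lines you need two sign computations. First, in the configuration at hand $C_o$ contains $(O_k)$ and is internally tangent to it, so $V_k$ is the \emph{external} similitude center of $\{(O_k),C_o\}$ (homothety ratio $+\rho_o/r_k$), while $C_i$ is externally tangent to $(O_k)$, so $U_k$ is the \emph{internal} one (ratio $-\rho_i/r_k$); composing the two homotheties shows that the line $U_kV_k$ meets the line of centers of $C_i$ and $C_o$ at their \emph{internal} similitude center. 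Second, you must invoke the signed form of Casey's lemma to see that the center through which the three radical axes pass is precisely that internal center. Both checks succeed, but neither is automatic, and the tangency pattern of the ``inner'' Apollonius circle can vary (the paper notes it may be internally or externally tangent to the triad), so the bookkeeping must track cases. A cleaner finish, once your first step gives that $R$ has a common power $k$ with respect to all three circles, is the inversion centered at $R$ with power $k$: it fixes each $(O_j)$, interchanges $C_i$ and $C_o$ (each maps to a circle tangent to all three $(O_j)$, and the tangency types force the swap), and therefore sends $U_k$ to $V_k$; since an inversion carries each point to a point on the line through the center of inversion, $R$, $U_k$, $V_k$ are collinear with no case analysis.
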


\begin{figure}[ht]
\centering
\includegraphics[scale=0.35]{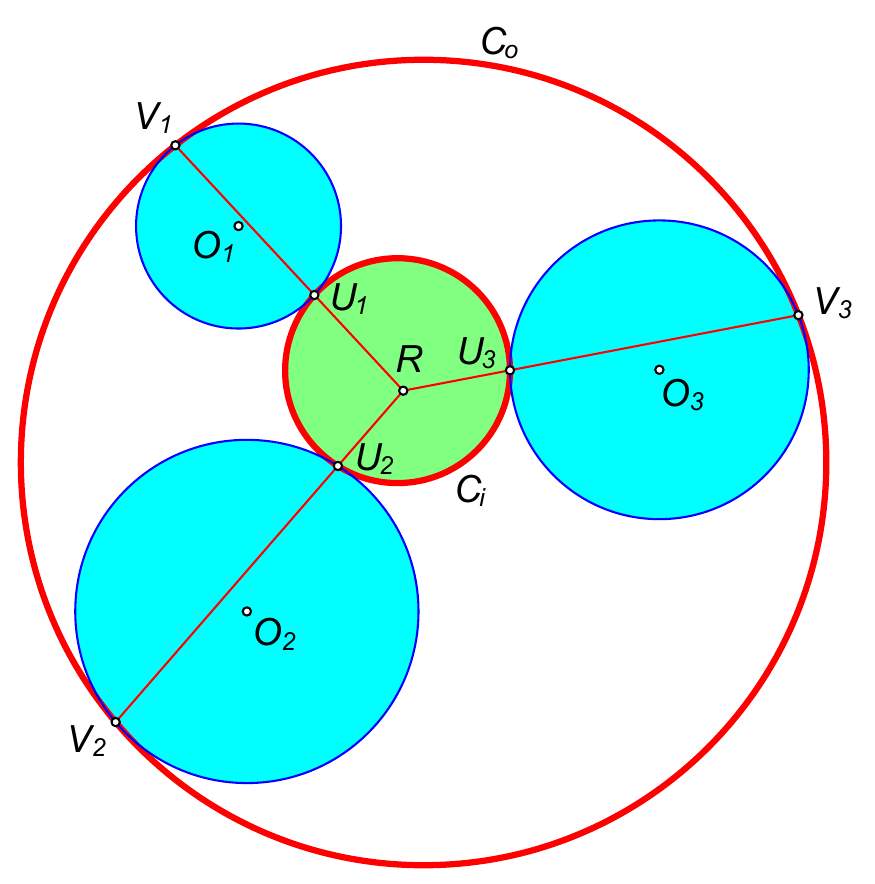}
\caption{}
\label{fig:radicalCenter}
\end{figure}


\begin{theorem}\label{thm:oi}
Let $C_1$, $C_2$, and $C_3$ be three circles as shown in Figure~\ref{fig:genSoddy}.
Let $U(\rho_i)$ and $V(\rho_o)$ be the inner and outer
Apollonius circles of $C_1$, $C_2$, and $C_3$, respectively.
Let $S$ be the radical center of the three circles.
Then $S$ lies on $UV$ and
$$\frac{SU}{SV}=\frac{\rho_i}{\rho_o}.$$
\end{theorem}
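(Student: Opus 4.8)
The plan is to show that the radical center $S$ is a center of similitude of the two Apollonius circles $U(\rho_i)$ and $V(\rho_o)$; once this is established, both conclusions follow at once. Indeed, every center of similitude of two circles lies on their line of centers, so $S$ will lie on $UV$, and the ratio $SU/SV=\rho_i/\rho_o$ is then immediate from Lemma~\ref{lemma:twoCircles}.

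First I would record the tangency structure. Because $U$ and $V$ are both Apollonius circles of the triad, each of them touches all three of $C_1$, $C_2$, $C_3$. In particular, for any pair of indices $\{i,j\}$, the two circles $U$ and $V$ both touch the two circles $C_i$ and $C_j$. This is exactly the hypothesis of Casey's Lemma (Lemma~\ref{lemma:Casey}), applied with $(U,V)$ as the first pair and $(C_i,C_j)$ as the second: it tells us that the radical axis of $C_i$ and $C_j$ passes through a center of similitude of $U$ and $V$. Running this over the three pairs $\{1,2\}$, $\{1,3\}$, and $\{2,3\}$ shows that each of the three pairwise radical axes of the triad passes through one of the two centers of similitude of $U$ and $V$.

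The heart of the argument is then a counting observation. Two circles have exactly two centers of similitude, while the triad has three pairwise radical axes, each forced to pass through one of these two points. By the pigeonhole principle, at least two of the three radical axes pass through the \emph{same} center of similitude $P$ of $(U,V)$. Since the centers of $C_1,C_2,C_3$ are not collinear, the three radical axes are distinct and meet in the single point $S$, the radical center; hence two distinct radical axes through $P$ force $P=S$. Therefore $S$ is a center of similitude of $U(\rho_i)$ and $V(\rho_o)$, so it lies on $UV$, and Lemma~\ref{lemma:twoCircles} gives $SU/SV=\rho_i/\rho_o$.

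I expect the only delicate point to be the pigeonhole step — specifically, justifying that the three pairwise radical axes are genuinely distinct concurrent lines (so that their common point is the single radical center $S$) and checking that Casey's Lemma is invoked in the correct direction, namely that the radical axis of the $C$-pair passes through a similitude center of the $(U,V)$-pair, and not the reverse. Note that Lemma~\ref{lemma:twoCircles} holds for either center of similitude, so there is no need to determine whether $S$ is the internal or the external center; identifying $S$ as \emph{some} center of similitude suffices for the stated (unsigned) ratio.
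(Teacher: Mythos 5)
Your proof is correct and follows essentially the same route as the paper's: apply Casey's Lemma (Lemma~\ref{lemma:Casey}) to pairs of the triad, each of which touches both Apollonius circles, conclude that $S$ is a center of similitude of $U(\rho_i)$ and $V(\rho_o)$, and finish with Lemma~\ref{lemma:twoCircles}. Your pigeonhole step is in fact slightly more careful than the paper's own wording, which simply asserts that the second radical axis passes through the \emph{same} center of similitude $S^*$ without explicitly ruling out the other one.
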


\begin{figure}[ht]
\centering
\includegraphics[scale=0.4]{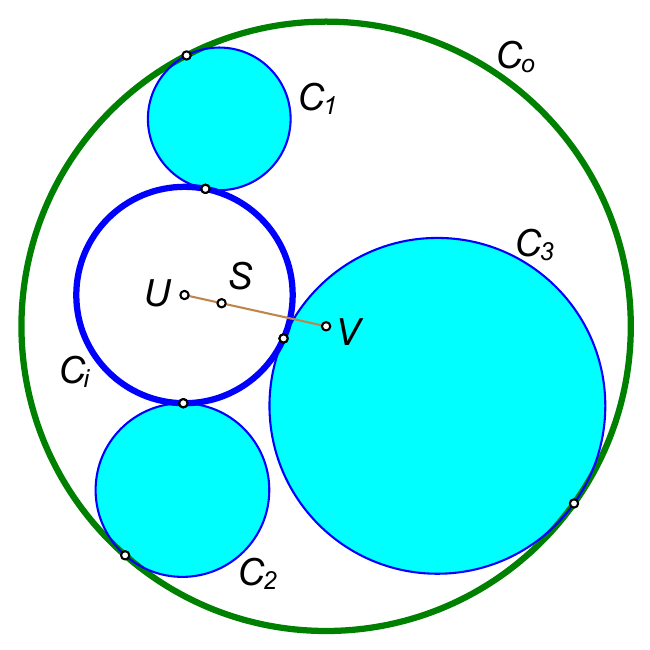}
\caption{$SU/SV=\rho_i/\rho_o$}
\label{fig:genSoddy}
\end{figure}

\begin{proof}
Circles $C_1$ and $C_2$ each touch circles $C_i$ and $C_o$.
By Lemma~\ref{lemma:Casey}, the radical axis of $C_1$ and $C_2$ passes through
a center of similarity, $S^*$, of $C_i$ and $C_o$.
Similarly, the radical axis of $C_2$ and $C_3$ passes through $S^*$.
These two radical axes meet at $S$, so $S=S^*$.

Note that $C_i$ and $C_o$ are two circles with center of similarity $S$.
By Lemma~\ref{lemma:twoCircles}, $S$ lies on $UV$ and $SU/SV=\rho_i/\rho_o$.
\end{proof}

\newpage


The following results were found via complex calculations
carried out with \textsc{Mathematica}. The details are omitted.

\begin{theorem}[Coordinates for $U_a$]\label{thm:coordUa}
Let $\gamma_a$, $\gamma_b$, and $\gamma_c$ be a general triad of circles associated
with $\triangle ABC$.
Let $C_i$ be the inner Apollonius circle of the circles in the triad.
Let $U_a$ be the point where $C_i$ touches $\gamma_a$.
Then the barycentric coordinates for $U_a$ are $(x:y:z)$ where
\begin{align*}
x&=2a(p-b)(p-c)t\\
y&=(p-c)[S-2(p-b)(p-c)t]\\
z&=(p-b)[S-2(p-b)(p-c)t]
\end{align*}
and where $p$ is the semiperimeter of $\triangle ABC$, $S$ is twice the area,
and $t=\tan(\theta/4)$.
\end{theorem}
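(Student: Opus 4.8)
The plan is to carry out the computation that the triad's data make available, working in barycentric coordinates. First I would assemble the raw data: the centers $D_a,D_b,D_c$ from Theorem~\ref{thm:baryD} (together with its two cyclic images) and the radii $\rho_a,\rho_b,\rho_c$ from Corollary~\ref{cor:rho1}. This reduces every object in sight to explicit rational functions of $a,b,c$ and $t=\tan(\theta/4)$, which is the format in which the final answer is phrased.

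Next comes the genuinely hard step: locating the inner Apollonius circle $C_i$. Writing its center as $O_i$ and radius as $\rho_i$, and noting that $C_i$ is externally tangent to each circle of the triad (Figure~\ref{fig:genSoddy}), the three conditions $|O_iD_a|=\rho_i+\rho_a$, $|O_iD_b|=\rho_i+\rho_b$, $|O_iD_c|=\rho_i+\rho_c$ give three equations in the three unknowns. In Cartesian coordinates each squared condition is quadratic, so subtracting pairs to eliminate the quadratic part first and then eliminating $\rho_i$ leaves a system whose relevant solution is the one nestled between the three circles; equivalently one may invoke the Descartes--Soddy relation to produce $\rho_i$ and then back-solve for $O_i$. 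This is where the bookkeeping is heaviest, and where the correct choice of root (inner versus outer Apollonius circle, i.e.\ the sign of the tangency) must be pinned down.

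With $O_i$ and $\rho_i$ in hand the touch point is immediate in principle. Since $\gamma_a$ and $C_i$ are externally tangent, $U_a$ is their internal center of similitude, so by Lemma~\ref{lemma:twoCircles} it divides $O_iD_a$ in the ratio $\rho_i:\rho_a$. Hence, in normalized (actual) barycentric coordinates,
$$U_a=\frac{\rho_i\,D_a+\rho_a\,O_i}{\rho_i+\rho_a}.$$
Substituting the expressions for $D_a$, $O_i$, $\rho_a$, $\rho_i$, clearing denominators, and letting \textsc{Mathematica} simplify, I would then check that the homogeneous triple reduces to the $(x:y:z)$ in the statement; this last comparison is routine though lengthy.

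Finally, I would record a sanity check that also suggests a cleaner route. The stated coordinates satisfy $y:z=(p-c):(p-b)$, which is exactly the condition for $U_a$ to lie on the Gergonne cevian $AM_a$, since $M_a=(0:p-c:p-b)$. This is consistent with Lemma~\ref{lemma:triConcur}, which forces $AU_a,BU_b,CU_c$ to concur, and with Theorem~\ref{thm:radicalCenter}, which identifies the radical center of the triad as the Gergonne point $G_e$. If one can show directly that the concurrence point $P$ of Lemma~\ref{lemma:triConcur} is precisely $G_e$, then the entire Apollonius computation can be bypassed: one simply intersects the known line $AM_a$ with the known circle $\gamma_a$ and selects the interior intersection, which is $U_a$. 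I expect the main obstacle of this cleaner route to be exactly the identification $P=G_e$; absent that, the explicit solution of the Apollonius problem for $C_i$ and the ensuing simplification are the heavy part of the direct approach.
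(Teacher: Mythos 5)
The paper offers no proof of this theorem at all: it merely states that the coordinates ``were found via complex calculations carried out with \textsc{Mathematica}'' and omits the details. Your plan is therefore essentially the same approach as the paper's --- a brute-force coordinate computation whose final simplification is delegated to a CAS --- but spelled out more explicitly: set up the three tangency equations for the inner Apollonius circle from the known centers (Theorem~\ref{thm:baryD}) and radii (Corollary~\ref{cor:rho1}), solve for $O_i$ and $\rho_i$, and then obtain $U_a$ as the point dividing $O_iD_a$ in the ratio $\rho_i:\rho_a$. That blueprint is sound, and your observation that $y:z=(p-c):(p-b)$ places $U_a$ on the Gergonne cevian is a genuinely useful consistency check.

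Two caveats. First, the tangency conditions $|O_iD_a|=\rho_i+\rho_a$ presuppose external tangency; the paper later notes that when $t\W<1$ the inner Apollonius circle is \emph{internally} tangent to the triad, so you must either treat $\rho_i$ as signed or split into cases when pinning down the correct root of the Apollonius system --- you flag the root-selection issue but should make this explicit. Second, your proposed ``cleaner route'' (intersect the cevian $AM_a$ with $\gamma_a$) would be circular if the identification $P=G_e$ were borrowed from Theorems~\ref{thm:AUaGe} or \ref{thm:Ge}, since in the paper those are \emph{consequences} of the present coordinate computation; an independent synthetic proof that $P=G_e$ is exactly what the paper leaves as an open question. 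You correctly identify this as the obstacle, so the remark does not damage the main argument, but the direct computation remains the only non-circular path available within the paper's logical structure.
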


The coordinates for $U_b$ and $U_c$ are similar.

\begin{theorem}[Coordinates for $U$]\label{thm:coordU}
Let $\gamma_a$, $\gamma_b$, and $\gamma_c$ be a general triad of circles associated
with $\triangle ABC$.
Let $C_i$ be the inner Apollonius circle of the circles in the triad.
Let $U$ be the center of $C_i$. Then the barycentric coordinates for $U$ are $(X:Y:Z)$ where
\begin{align*}
X&=\left(-2 a^3+a^2 (b+c)+(b-c)^2 (b+c)\right)t-2a S\\
Y&=\left(a^3-a^2 c+a \left(b^2-c^2\right)+c^3+b^2 c-2b^3\right)t-2 b S\\
Z&=\left(a^3-a^2 b+a\left(c^2-b^2\right)+b^3+b c^2-2 c^3\right)t-2 c S
\end{align*}
and where $S$ is twice the area of $\triangle ABC$ and $t=\tan(\theta/4)$.
\end{theorem}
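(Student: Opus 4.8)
The plan is to pin down $U$ as the common intersection of the three lines of centers. Since $C_i$ is tangent to $\gamma_a$ at $U_a$, the center $U$ of $C_i$, the contact point $U_a$, and the center $D_a = D$ of $\gamma_a$ are collinear, because the line of centers of two tangent circles always passes through their point of contact (this holds whether the contact is internal or external, so the tangency type of the inner Apollonius circle does not matter). The same is true for $\gamma_b$ and $\gamma_c$. Hence $U$ lies on each of the lines $D_aU_a$, $D_bU_b$, and $D_cU_c$, and it is their only common point. The proof therefore reduces to a collinearity check.

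First I would assemble the coordinates already available: $D_a$ from Theorem~\ref{thm:baryD} and $U_a$ from Theorem~\ref{thm:coordUa}, together with their cyclic analogues for the $b$- and $c$-circles. In barycentric coordinates, a point $(X:Y:Z)$ is collinear with $D_a=(d_1:d_2:d_3)$ and $U_a=(u_1:u_2:u_3)$ precisely when the $3\times 3$ determinant having these three triples as rows vanishes. So the whole statement amounts to showing that the claimed triple $(X:Y:Z)$ makes this determinant vanish for the $a$-pair, for the $b$-pair, and for the $c$-pair; since the three lines meet only at $U$, a point lying on all of them must equal $U$.

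Next I would use symmetry to avoid tripling the work. The configuration is equivariant under relabeling the vertices of $\triangle ABC$: the cyclic permutation carrying $A\to B\to C\to A$ sends $D_a,U_a$ to $D_b,U_b$ and fixes $U$ while permuting its coordinates accordingly. One checks directly that the stated $X,Y,Z$ transform this way — for example $Z$ is $Y$ with $b$ and $c$ interchanged, and $X$ is symmetric in $b$ and $c$. Consequently it suffices to verify the single collinearity determinant for $U,D_a,U_a$; the determinants for the $b$- and $c$-lines follow by applying the relabeling symmetry.

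The main obstacle is purely computational. The entries of the determinant are polynomials in $a,b,c,t$ in which $S=2\Delta$ occurs, and after expansion one must reduce using $\Delta^2=p(p-a)(p-b)(p-c)$ (equivalently $S^2=4p(p-a)(p-b)(p-c)$) together with $p=(a+b+c)/2$, then confirm the remaining polynomial in $a,b,c,t$ is identically zero. This bulky symbolic reduction is exactly the \textsc{Mathematica} verification the authors allude to; once the collinearity-of-centers reduction above is set up, no conceptual difficulty remains.
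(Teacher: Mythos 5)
Your proposal is correct in outline, but it cannot be ``the same as the paper's proof'' because the paper offers none: the authors state only that these coordinates ``were found via complex calculations carried out with \textsc{Mathematica}'' and omit all details. Your reduction is a legitimate way to organize that verification. Granting Theorem~\ref{thm:coordUa}, the center $U$ of $C_i$ lies on each line of centers $D_aU_a$, $D_bU_b$, $D_cU_c$ (tangent circles have collinear centers and touch point, regardless of the type of tangency), and since at least two of these lines are distinct for a nondegenerate triangle, their common point is $U$ alone; so showing that the stated $(X:Y:Z)$ makes the $3\times 3$ collinearity determinant with $D_a$ (Theorem~\ref{thm:baryD}) and $U_a$ vanish --- and invoking the cyclic covariance you correctly check, e.g.\ $Y(a,b,c)=X(b,c,a)$ --- pins the point down. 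Two caveats are worth making explicit. First, your proof inherits everything from Theorem~\ref{thm:coordUa}, which the paper also leaves to \textsc{Mathematica}; the geometric content (that those really are the touch points of the inner Apollonius circle) is assumed, not established. Second, you should state that at least two of the three center lines are distinct (true generically since $D_a$, $D_b$, $D_c$ are not collinear), because membership in a single line would not determine $U$. What your route buys is that the residual computation is a single polynomial identity in $a,b,c,t,S$ reduced modulo $S^2=4p(p-a)(p-b)(p-c)$, which is checkable and, in principle, human-verifiable --- a genuine improvement over an unexplained machine computation.
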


\begin{theorem}[Coordinates for $V_a$]\label{thm:coordVa}
Let $\gamma_a$, $\gamma_b$, and $\gamma_c$ be a general triad of circles associated
with $\triangle ABC$.
Let $C_o$ be the outer Apollonius circle of the circles in the triad.
Let $V_a$ be the point where $C_o$ touches $\gamma_a$.
Then the barycentric coordinates for $V_a$ are $(x:y:z)$ where
\begin{align*}
x&=2(p-b)(p-c)[2S+a(p-a)t]\\
y&=(p-a)(p-c)[S-2(p-b)(p-c)t]\\
z&=(p-a)(p-b)[S-2(p-b)(p-c)t]
\end{align*}
and where $p$ is the semiperimeter of $\triangle ABC$, $S$ is twice the area,
and $t=\tan(\theta/4)$.
\end{theorem}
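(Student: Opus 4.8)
The plan is to reduce the tangency condition defining $V_a$ to a single line--circle intersection and then let a CAS finish the algebra, exactly as the authors do for Theorems~\ref{thm:coordUa} and~\ref{thm:coordU}. The key observation is that $U_a$ (already computed in Theorem~\ref{thm:coordUa}) and $V_a$ are the points where the inner and outer Apollonius circles of the triad touch the \emph{same} circle $\gamma_a$. By Lemma~\ref{lemma:Gergonne}, the line $U_aV_a$ passes through the radical center of $\gamma_a,\gamma_b,\gamma_c$, and by Theorem~\ref{thm:radicalCenter} that radical center is the Gergonne point $G_e=\bigl((p-b)(p-c):(p-c)(p-a):(p-a)(p-b)\bigr)$. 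Hence $V_a$ lies on the line $U_aG_e$. Since $V_a$ also lies on $\gamma_a$ and $U_a$ is already common to both the line and the circle, $V_a$ is precisely the \emph{second} intersection of the line $U_aG_e$ with $\gamma_a$. Conceptually this is the statement that inversion centered at $G_e$ with power equal to the common power of $G_e$ with respect to the triad fixes each $\gamma_i$ and interchanges the two Apollonius circles, so it sends $U_a$ to $V_a$; inverting a point of a fixed circle along a chord through the center is exactly ``take the second intersection,'' which re-proves the collinearity as well.

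With this reduction the computation is mechanical. First I would write the barycentric equation of $\gamma_a$ in the standard form $F(x,y,z)=a^2yz+b^2zx+c^2xy-(x+y+z)(ux+vy+wz)=0$, determining $u,v,w$ from the center $D$ (Theorem~\ref{thm:baryD}) and the radius $\rho_a$ (Corollary~\ref{cor:rho1}); equivalently one may force $\gamma_a$ through the two touch points on $AB,AC$ (Lemma~\ref{lemma:tau}) and the touch point $T$ (Theorem~\ref{thm:baryT}), which also lies on $\gamma_a$. Next I would use that $F$ is homogeneous of degree two: writing $B$ for its polar bilinear form, a point $U_a+\lambda\,G_e$ lies on $\gamma_a$ iff $F(U_a)+\lambda\,B(U_a,G_e)+\lambda^2F(G_e)=0$. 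Since $F(U_a)=0$, the nonzero root is $\lambda=-B(U_a,G_e)/F(G_e)$, giving the closed form $V_a\propto F(G_e)\,U_a-B(U_a,G_e)\,G_e$, which I would expand and reduce to the claimed factored coordinates $\bigl(2(p-b)(p-c)[2S+a(p-a)t]:\dots\bigr)$.

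The main obstacle is purely the algebra: the coordinates of $U_a$ and the coefficients $u,v,w$ of $\gamma_a$ are already high-degree polynomials in $a,b,c$ carrying the parameters $t$ and $S$, so the line--circle elimination and the final factorization are best carried out in \textsc{Mathematica}, as the authors indicate. To guard against error I would run two independent checks that need no fresh computation of $V_a$: (i) confirm that the $3\times3$ determinant formed by the coordinates of $U_a$, $G_e$, and the claimed $V_a$ vanishes, verifying collinearity through the radical center; and (ii) confirm that the claimed $V_a$ satisfies $F=0$, verifying that it lies on $\gamma_a$. Passing both checks, together with the geometric reduction above, establishes the formula; as a final sanity check one can specialize to $\theta=180\degrees$ (so $t=1$) and compare with the semicircle configuration of the earlier sections.
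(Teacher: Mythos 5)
Your proposal is correct, but it is worth noting that the paper does not actually prove this theorem: the authors preface Theorems~\ref{thm:coordUa}--\ref{thm:coordV} with the remark that the results ``were found via complex calculations carried out with \textsc{Mathematica}. The details are omitted.'' So you are supplying a derivation where the paper supplies none, and your reduction is a sensible one. The key step --- that $V_a$ is the second intersection of the line $U_aG_e$ with $\gamma_a$ --- is legitimately available at this point in the paper, since it rests only on Lemma~\ref{lemma:Gergonne}, Theorem~\ref{thm:radicalCenter}, and Theorem~\ref{thm:coordUa}, none of which depend on the statement being proved (the paper's own Theorem~\ref{thm:AVaGe}, which establishes the same collinearity, comes later but uses exactly the same ingredients). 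The polarization identity $F(U_a+\lambda G_e)=F(U_a)+\lambda B(U_a,G_e)+\lambda^2F(G_e)$ with $F(U_a)=0$ correctly yields $V_a\propto F(G_e)\,U_a-B(U_a,G_e)\,G_e$, and this closed form even behaves correctly in the degenerate case $F(G_e)=0$ (i.e. $t=1/\W$, when the triad passes through $G_e$). Two small caveats: the inversion remark is only a heuristic (the power of $G_e$ with respect to the triad need not be positive, so the ``inversion'' may be an antiinversion), but nothing in your argument depends on it; and you should note that the reduction presupposes $U_a\neq V_a$, i.e.\ that $U_aG_e$ is not tangent to $\gamma_a$ at $U_a$, which holds whenever the inner and outer Apollonius circles are distinct and nondegenerate. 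What your route buys over the paper's bare CAS assertion is a verifiable structure: given Theorem~\ref{thm:coordUa}, the formula for $V_a$ follows from a one-variable quadratic rather than from an opaque tangency computation, and your two proposed checks (collinearity determinant and $F(V_a)=0$) are exactly the right certificates.
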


The coordinates for $V_b$ and $V_c$ are similar.

\begin{theorem}[Coordinates for $V$]\label{thm:coordV}
Let $\gamma_a$, $\gamma_b$, and $\gamma_c$ be a general triad of circles associated
with $\triangle ABC$.
Let $C_o$ be the outer Apollonius circle of the circles in the triad.
Let $V$ be the center of $C_o$. Then the barycentric coordinates for $V$ are $(X:Y:Z)$ where
\begin{align*}
X&=\left(-2 a^3+a^2 (b+c)+(b-c)^2 (b+c)\right)t+6 aS\\
Y&=\left(a^3-a^2 c+a \left(b^2-c^2\right)+c^3+b^2 c-2b^3\right)t+6 b S\\
Z&=\left(a^3-a^2 b+a\left(c^2-b^2\right)+b^3+b c^2-2 c^3\right)t+6 c S
\end{align*}
and where $S$ is twice the area of $\triangle ABC$ and $t=\tan(\theta/4)$.
\end{theorem}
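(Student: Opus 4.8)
The plan is to use the elementary fact that when two circles are tangent, their two centers and the point of tangency are collinear. The outer Apollonius circle $C_o$ is internally tangent to each of $\gamma_a$, $\gamma_b$, $\gamma_c$, so its center $V$ lies simultaneously on the line $D_aV_a$ joining the center $D_a$ of $\gamma_a$ to the touch point $V_a$, and on the analogous lines $D_bV_b$ and $D_cV_c$. Both ingredients are already available: the barycentric coordinates of the touch points $V_a$ (hence $V_b$, $V_c$ by the cyclic symmetry of the configuration) are given in Theorem~\ref{thm:coordVa}, and the coordinates of the centers $D_a=D$ (hence $D_b$, $D_c$) are given in Theorem~\ref{thm:baryD}. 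Thus $V$ is overdetermined as the common point of three lines, and any two of them determine it.

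Concretely, I would work in homogeneous barycentric coordinates, where the line through two points is the cross product of their coordinate triples and the intersection of two lines is again a cross product. So I would set $\ell_a=D_a\times V_a$ and $\ell_b=D_b\times V_b$ and compute $V=\ell_a\times\ell_b$, factoring out the common scalars that appear (two of the touch-point coordinates in Theorem~\ref{thm:coordVa} share the factor $S-2(p-b)(p-c)t$, which helps keep the intermediate triples small). After substituting $p=(a+b+c)/2$ and $S=2\Delta$ and clearing a common polynomial factor, I expect the first coordinate to reduce to $\left(-2a^3+a^2(b+c)+(b-c)^2(b+c)\right)t+6aS$, with the other two coordinates following either from the same cross product or, more cheaply, by cyclic permutation. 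Realistically this simplification is carried out with \textsc{Mathematica}, matching the style of the preceding coordinate theorems.

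The main obstacle is purely the algebra: the raw cross products are high-degree in $a,b,c,t$ (with $\Delta$ entering through $S$), and recognizing the compact claimed form requires removing a nontrivial common factor. I would guard against error in two ways. First, I would check that the third line $D_cV_c$ also passes through the computed $V$; this concurrency is forced geometrically (all three are lines of centers through $V$), so its algebraic vanishing is a stringent consistency test. Second, I would cross-check against Theorem~\ref{thm:oi}: the radical center of the triad is the Gergonne point by Theorem~\ref{thm:radicalCenter}, and it must divide $UV$ internally in the ratio $\rho_i/\rho_o$, where $U$ is the inner Apollonius center of Theorem~\ref{thm:coordU}. As a sanity check note that the coefficients of $t$ in $U$ and in the claimed $V$ are identical, so $V-U$ is a scalar multiple of $(a:b:c)$; hence $U$, $V$, and the incenter are collinear, a relation that is trivial to verify directly and that the final answer must respect.
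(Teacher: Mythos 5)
The paper gives no proof of this theorem at all: the four coordinate results for $U_a$, $U$, $V_a$, $V$ are prefaced only by the remark that they ``were found via complex calculations carried out with \textsc{Mathematica}'' with details omitted. Your proposal therefore supplies something the paper does not, and the method is sound: internal tangency of $C_o$ with $\gamma_a$ at $V_a$ does force $V$, $D_a$, $V_a$ to be collinear, so $V$ is correctly determined as the intersection of the two lines $D_aV_a$ and $D_bV_b$ (which are distinct for a nondegenerate triangle), computable by cross products from Theorem~\ref{thm:coordVa} and Theorem~\ref{thm:baryD}; this matches the barycentric toolkit the paper already cites from \cite{Grozdev}. Your consistency checks are also apt --- the equality of the $t$-coefficients of $U$ and $V$ does imply $U$, $V$, $I$ collinear independently of normalization, in agreement with Theorem~\ref{thm:Soddy}. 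Two caveats worth stating explicitly: first, your derivation is only as strong as Theorem~\ref{thm:coordVa}, which the paper likewise asserts without proof, so the argument is conditional rather than self-contained (a direct verification that the claimed $V$ is equidistant from $D_a$, $D_b$, $D_c$ with common distance $\rho_o+$ the respective radii differences, or simply distance $\rho_o-\rho_a$ etc., would remove that dependence); second, the actual simplification to the stated compact form is pure computer algebra, so the proposal is an outline of a computation rather than a completed one --- but that is exactly the standard the paper itself applies to this result.
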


\void{
\begin{proof}
Let $P=(x:y:z)$ and $Q=(X:Y:Z)$.
We will prove that $P=U_a$ and $Q=U$.

Let $D$ be the center of $\gamma_a$.
From Theorem~\ref{thm:baryD2}, we know that the barycentric coordinates for $D$ are
$$D=\Bigl(ap(p-a)+(b+c)t\Delta):bp(p-a)-bt\Delta:cp(p-a)-ct\Delta\Bigr).$$
where $S=2\Delta$.

Let $\rho_a$ be the radius of $\gamma_a$.
From Corollary~\ref{cor:rho1}, we know that
$$\rho_a=\frac{\Delta-(p-b)(p-c)t}{p}.$$
Using the formula for the distance between two points in barycentric coordinates \cite[formula (9)]{Grozdev},
we can find the distance $d$ between $P$ and $D$. Using \textsc{Mathematica}, it is straightforward to
show that $d=\rho_a$. Thus $P$ lies on $\gamma_a$.

Using formula (4) from \cite{Grozdev}, which is the condition that three points are collinear,
we can verify, using \textsc{Mathematica}, that $D$, $P$, and $Q$ are collinear.

Calculating the distance between $P$ and $Q$, we find
$$\mathrm{dist}(P,Q)=\frac{2S+(a^2+b^2+c^2-2ab-2bc-2ca)t}{2(a+b+c)}.$$
Since this expression is symmetrical in $a$, $b$ and $c$, this means that
$$\mathrm{dist}(Q,U_a)=\mathrm{dist}(Q,U_b)=\mathrm{dist}(Q,U_b)$$
and so $Q$ is equidistant from $U_a$, $U_b$, and $U_c$.
This proves that $Q$ is the center of an Apollonius circle of $\gamma_a$, $\gamma_b$, and
$\gamma_c$. \note{(Does it?)}
\note{and how do we know that $Q$ is the center of the \underline{inner} Apollonius circle?}
\end{proof}
}

\void{
Let $O_a$ be the center of $\omega_a$.
From Theorem~\ref{thm:baryOa}, we know that the barycentric coordinates for $O_a$ are
$$O_a=\left(-a^2:S_c+S\cot\phi:S_b+S\cot\phi\right).$$
where $\phi=90\degrees-\theta/2$, $S=2\Delta$,
$S_b=(c^2+a^2-b^2)/2$, and $S_c=(a^2+b^2-c^2)/2$.
}

\newpage

\begin{theorem}\label{thm:AUaGe}
Let $\gamma_a$, $\gamma_b$, and $\gamma_c$ be a general triad of circles associated
with $\triangle ABC$.
Let $C_i$ be the inner Apollonius circle of the circles in the triad.
Let $U_a$ be the point where $C_i$ touches $\gamma_a$. Define $U_b$ and $U_c$ similarly.
Then $A$, $U_a$, and $G_e$ are collinear (Figure~\ref{fig:AUaGe}).
Similarly, $B$, $U_b$, and $G_e$ are collinear; and $C$, $U_c$, and $G_e$ are collinear.
\end{theorem}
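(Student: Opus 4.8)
The plan is to read the result directly off the barycentric coordinates already computed in Theorem~\ref{thm:coordUa}. Recall that
$$U_a=\bigl(2a(p-b)(p-c)t:(p-c)[S-2(p-b)(p-c)t]:(p-b)[S-2(p-b)(p-c)t]\bigr),$$
while $A=(1:0:0)$. A line through $A$ is determined entirely by the ratio $y:z$ of the second and third coordinates of any second point on it, so the (complicated) first coordinate of $U_a$ plays no role in deciding which line $AU_a$ is. The crucial observation is that the common factor $S-2(p-b)(p-c)t$ cancels between the last two coordinates of $U_a$, leaving simply $y:z=(p-c):(p-b)$.

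Next I would identify this line. The point where the incircle touches $BC$ has coordinates $M_a=(0:p-c:p-b)$, so the Gergonne cevian $AM_a$ is precisely the locus of points with $y:z=(p-c):(p-b)$. Hence $AU_a$ and $AM_a$ are the same line; equivalently, one checks directly that the Gergonne point $G_e=\bigl(\tfrac{1}{p-a}:\tfrac{1}{p-b}:\tfrac{1}{p-c}\bigr)$ satisfies $(p-b)y-(p-c)z=0$. In either formulation $A$, $U_a$, and $G_e$ are collinear, and the statements for $B,U_b,G_e$ and $C,U_c,G_e$ follow by the cyclic symmetry of the coordinates in Theorem~\ref{thm:coordUa}, since $G_e$ is symmetric in $a,b,c$.

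I expect no serious obstacle once the coordinates of $U_a$ are in hand: the entire argument rests on the single cancellation that reduces $y:z$ to $(p-c):(p-b)$. The only point needing care is the bookkeeping of orientation, namely making sure the touch point $M_a$ and the ratio extracted from $U_a$ refer to $BC$ with the same labelling ($BM_a=p-b$, $M_aC=p-c$). As an independent sanity check, and as an alternative route that avoids the explicit formula for $U_a$, I could invoke Lemma~\ref{lemma:triConcur}: since $\gamma_a,\gamma_b,\gamma_c$ are inscribed in the three angles and $C_i$ is their inner Apollonius circle, the lines $AU_a$, $BU_b$, $CU_c$ already concur at a point $P$, so it would remain only to show $P=G_e$. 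The coordinate computation above is exactly this identification, so the two approaches agree.
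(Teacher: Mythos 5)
Your proposal is correct and follows essentially the same route as the paper: both arguments rest on the barycentric coordinates of $U_a$ from Theorem~\ref{thm:coordUa} together with the collinearity test against $A=(1:0:0)$ and $G_e$. The only difference is that you carry out the verification explicitly — observing that the common factor $S-2(p-b)(p-c)t$ (which is $2p\rho_a$, hence nonzero) cancels to leave $y:z=(p-c):(p-b)$, matching $G_e$ — whereas the paper delegates this to a cited determinant formula and \textsc{Mathematica}; your version is the more transparent write-up of the same computation.
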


\begin{figure}[ht]
\centering
\includegraphics[scale=0.5]{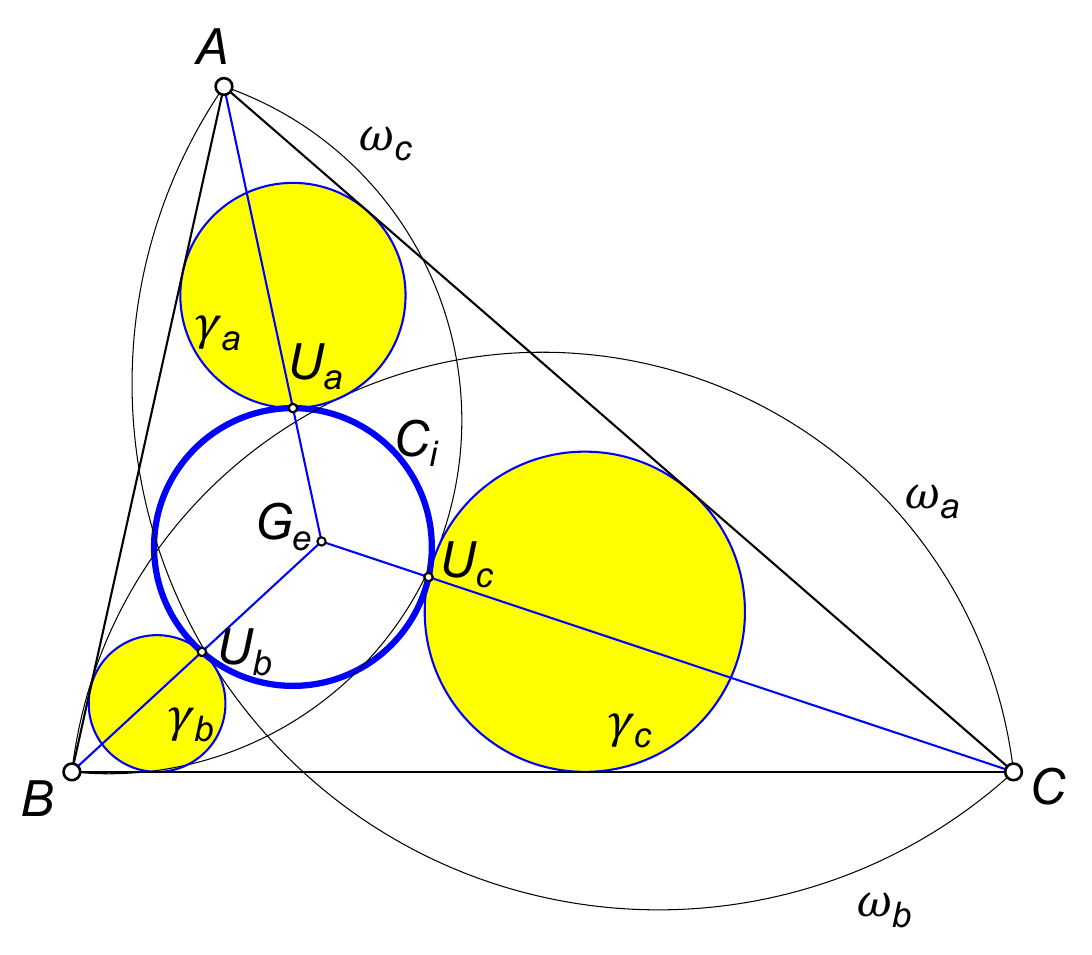}
\caption{lines concur at the Gergonne point}
\label{fig:AUaGe}
\end{figure}

\void{
\begin{proof}
By symmetry, it suffices to prove that $A$, $U_a$, and $G_e$ are collinear.
By Theorem~\ref{thm:result27}, the tangent to $\gamma_a$ at $U_a$ is parallel to $BC$.
Let $L$ be the point where the incircle of $\triangle ABC$ touches $BC$.
\end{proof}
}

\begin{proof}
By symmetry, it suffices to prove that $A$, $U_a$, and $G_e$ are collinear.
The barycentric coordinates for $A$ are $(1:0:0)$.
The barycentric coordinates for $G_e$ are well known to be
$$G_e=\left(\frac{1}{b+c-a}:\frac{1}{c+a-b}:\frac{1}{a+b-c}\right).$$
The barycentric coordinates for $U_a$ were given in Theorem~\ref{thm:coordUa}.
Using these coordinates and the condition for three points to be collinear
(formula (4) from \cite{Grozdev}), it is straightforward to confirm that $A$, $U_a$, and $G_e$ are collinear.
\end{proof}

\begin{open}
Is there a purely geometric proof for Theorem~\ref{thm:AUaGe}?
\end{open}

\begin{corollary}
The point we called $L'$ in Section~\ref{section:L}
(the intersection of $AL$ with $\gamma_a$ nearer $L$)
coincides with $U_a$, the point where the inner Apollonius circle touches $\gamma_a$.
\end{corollary}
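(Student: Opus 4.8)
The plan is to recognize the line $AL$ as the Gergonne cevian from $A$ and then feed this into Theorem~\ref{thm:AUaGe}. Since $L$ is by definition the point where the incircle touches $BC$, the segment $AL$ is precisely the Gergonne cevian from $A$. Consequently $AL$ passes through the Gergonne point $G_e$, so the lines $AL$ and $AG_e$ coincide. This simple reinterpretation is what lets the two ostensibly different constructions of the point meet.

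Next I would place $U_a$ on this line. By Theorem~\ref{thm:AUaGe}, the points $A$, $U_a$, and $G_e$ are collinear, so $U_a$ lies on line $AG_e$, which we have just identified with line $AL$. Since $U_a$ also lies on $\gamma_a$ (being the point where the inner Apollonius circle $C_i$ touches $\gamma_a$), it must be one of the two intersection points of line $AL$ with $\gamma_a$, namely $X$ (the one nearer $A$) or $L'$ (the one nearer $L$).

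It remains to decide which of the two it is, and this is the only delicate point. The touch point $U_a$ of the two tangent circles $\gamma_a$ and $C_i$ lies on the segment joining their centers $D$ and $U$; since $C_i$ is the inner Apollonius circle, its center $U$ lies in the interior of the triangle, on the far side of $\gamma_a$ from the vertex $A$ (the side facing $BC$). Hence $U_a$ lies on the arc of $\gamma_a$ turned away from $A$, which is exactly the intersection $L'$ and not the near intersection $X$. Therefore $U_a = L'$.

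I expect the main obstacle to be making this last distinction fully rigorous: one must genuinely rule out $U_a = X$, rather than merely locating $U_a$ in $AL\cap\gamma_a$. The position argument above can be made precise by comparing the center $U$ (whose barycentric coordinates are recorded in Theorem~\ref{thm:coordU}) with $D$ along the cevian, or, as a direct alternative, by comparing the barycentric coordinates of $U_a$ from Theorem~\ref{thm:coordUa} with those of $L'$ obtained from the homothety ratio $AL'/AL=\rho_a/r$ of Theorem~\ref{thm:AL'L}; either route confirms that the inner tangency occurs at the intersection nearer $L$, completing the identification.
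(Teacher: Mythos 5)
Your argument is correct and follows essentially the route the paper itself takes: the corollary is stated there without proof, as an immediate consequence of Theorem~\ref{thm:AUaGe} together with the observation (made explicit later in Theorem~\ref{thm:Ua=L'}) that $AL$ is the Gergonne cevian, so line $AU_a$ coincides with line $AL$ and $U_a$ must be one of its two intersections with $\gamma_a$. Your extra care in ruling out $U_a=X$ addresses a point the paper silently skips (it simply assigns the near and far intersections to $V_a$ and $U_a$ in the two parallel corollaries), and your proposed fallback of comparing the coordinates of $U_a$ from Theorem~\ref{thm:coordUa} with $AL'/AL=\rho_a/r$ from Theorem~\ref{thm:AL'L} is the right way to make that disambiguation rigorous, since the purely positional argument about the center $U$ is delicate in the case where the inner Apollonius circle is internally tangent to the triad.
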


\newpage

\begin{theorem}\label{thm:AVaGe}
Let $\gamma_a$, $\gamma_b$, and $\gamma_c$ be a general triad of circles associated
with $\triangle ABC$.
Let $C_o$ be the outer Apollonius circle of the circles in the triad.
Let $V_a$ be the point where $C_o$ touches $\gamma_a$. Define $V_b$ and $V_c$ similarly.
Then $A$, $V_a$, and $G_e$ are collinear (Figure~\ref{fig:AVaGe}).
Similarly, $B$, $V_b$, and $G_e$ are collinear; and $C$, $V_c$, and $G_e$ are collinear.
\end{theorem}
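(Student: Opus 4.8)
The plan is to argue exactly as in the proof of Theorem~\ref{thm:AUaGe}, working in barycentric coordinates. By the threefold symmetry of the triad it suffices to prove that $A$, $V_a$, and $G_e$ are collinear; the statements for $B$, $V_b$, $G_e$ and for $C$, $V_c$, $G_e$ then follow by the cyclic permutation $(a,b,c)\mapsto(b,c,a)$ together with the corresponding permutation of the vertices.

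First I would record the three sets of coordinates. We have $A=(1:0:0)$, and the Gergonne point
$$G_e=\left(\frac{1}{p-a}:\frac{1}{p-b}:\frac{1}{p-c}\right)=\Bigl((p-b)(p-c):(p-a)(p-c):(p-a)(p-b)\Bigr),$$
where the second form is obtained by clearing denominators, using $b+c-a=2(p-a)$ and so on. The coordinates $(x:y:z)$ of $V_a$ are supplied by Theorem~\ref{thm:coordVa}.

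Next I would apply the collinearity condition (formula~(4) from \cite{Grozdev}), which requires the $3\times3$ determinant of the three coordinate rows to vanish. The key simplification is that, because $A=(1:0:0)$, this determinant collapses to the single $2\times2$ minor formed by the $y$ and $z$ entries of $V_a$ and $G_e$; equivalently, collinearity with $A$ holds precisely when $V_a$ and $G_e$ share the same $y:z$ ratio. For $G_e$ that ratio is $(p-a)(p-c):(p-a)(p-b)=(p-c):(p-b)$. For $V_a$, the entries $y=(p-a)(p-c)[S-2(p-b)(p-c)t]$ and $z=(p-a)(p-b)[S-2(p-b)(p-c)t]$ share the common factor $(p-a)[S-2(p-b)(p-c)t]$, leaving again $(p-c):(p-b)$. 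Hence the minor vanishes and the three points are collinear.

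The main obstacle is conceptual rather than computational: once one notices that collinearity with the vertex $A$ depends only on the last two barycentric coordinates, the verification is immediate and requires no computer algebra. Geometrically, the common line is the Gergonne cevian $AL$ from $A$ to the contact point $L$ of the incircle on $BC$, so the content of the theorem is that $V_a$ lies on this cevian — mirroring the fact, noted after Theorem~\ref{thm:AUaGe}, that $U_a$ coincides with the point $L'$ on $AL$.
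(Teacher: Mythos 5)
Your proof is correct, but it takes a different route from the paper's. You verify the collinearity directly from the barycentric coordinates of $V_a$ given in Theorem~\ref{thm:coordVa}, observing that collinearity with $A=(1:0:0)$ depends only on the ratio of the last two coordinates, which for $V_a$ reduces (after cancelling the common factor $(p-a)[S-2(p-b)(p-c)t]$) to $(p-c):(p-b)$, the same as for $G_e$ — a clean computation that needs no computer algebra at the final step, exactly parallel to the proof of Theorem~\ref{thm:AUaGe}. The paper instead argues synthetically: by Lemma~\ref{lemma:Gergonne} the lines $U_aV_a$, $U_bV_b$, $U_cV_c$ concur at the radical center of the triad, which by Theorem~\ref{thm:radicalCenter} is $G_e$; hence $V_a$ lies on the line $G_eU_a$, which by Theorem~\ref{thm:AUaGe} already contains $A$. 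The trade-off is this: your argument is self-contained modulo Theorem~\ref{thm:coordVa}, whose derivation the paper describes as an omitted \textsc{Mathematica} computation, so the computational burden is merely relocated; the paper's argument avoids the coordinates of $V_a$ entirely, reusing the $U_a$ result together with Gergonne's classical lemma, and is therefore closer in spirit to the ``purely geometric proof'' the authors ask for in the Open Question following Theorem~\ref{thm:AUaGe}. One pedantic point you may wish to note: your cancellation assumes $S-2(p-b)(p-c)t\neq 0$; in the degenerate case where this factor vanishes, $V_a$ would coincide with $A$ and the collinearity is vacuous, so nothing is lost.
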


\begin{figure}[ht]
\centering
\includegraphics[scale=0.5]{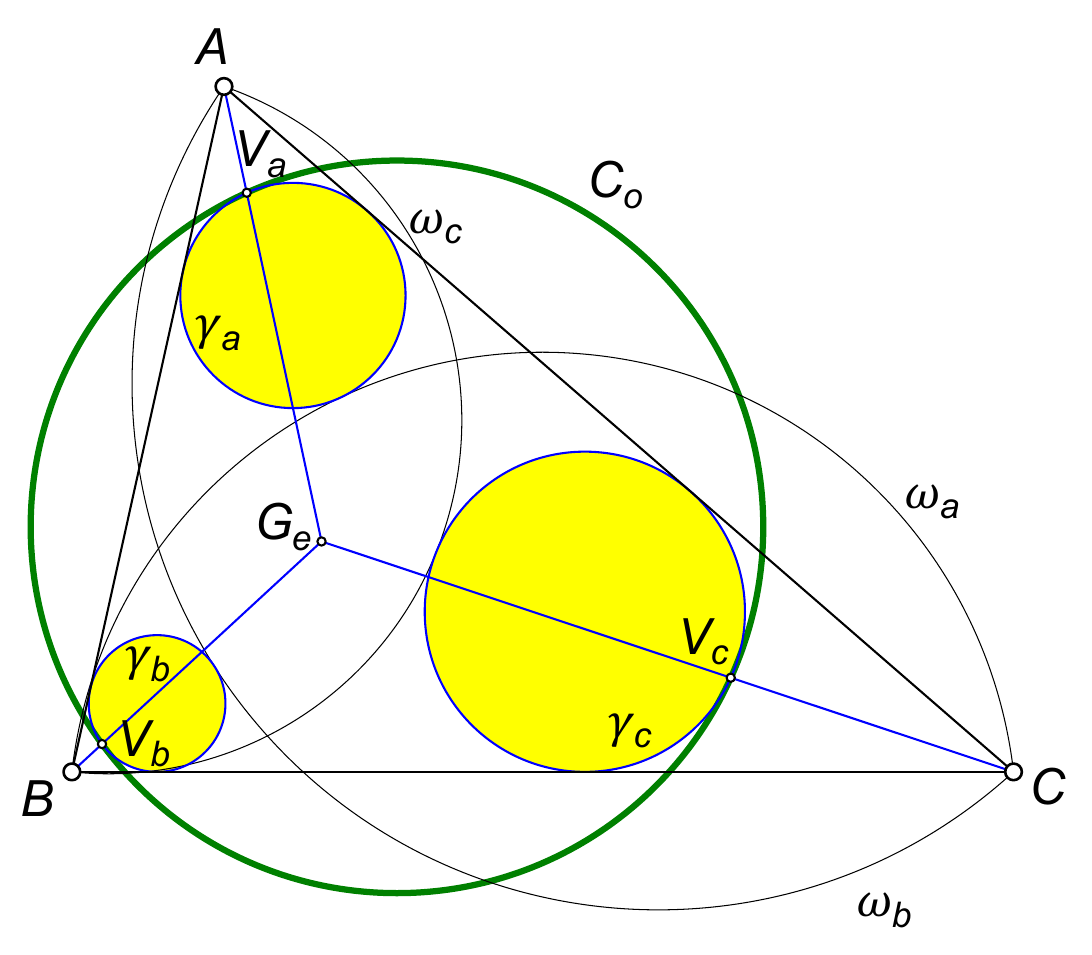}
\caption{lines concur at the Gergonne point}
\label{fig:AVaGe}
\end{figure}

\begin{proof}
By Lemma~\ref{lemma:Gergonne}, $U_aV_a$, $U_bV_b$, and $U_cV_c$ concur
at the radical center of $\gamma_a$, $\gamma_b$, and $\gamma_c$.
By Theorem~\ref{thm:radicalCenter}, this radical center is the Gergonne point of the triangle.
So $V_a$ lies on $G_eU_a$. 
By Theorem~\ref{thm:AUaGe}, $A$, $U_a$, and $G_e$ are collinear.
So $A$ lies on $G_eU_a$.
Since both $A$ and $V_a$ lie on $G_eU_a$,
we see that $V_a$ lies on $AU_a$.
Similarly, $V_b$ lies on $BU_b$ and $V_c$ lies on $CU_c$.
\end{proof}

\begin{corollary}
The point we called $X$ in Section~\ref{section:L}
(the intersection of $AL$ with $\gamma_a$ nearer $A$)
coincides with $V_a$, the point where the outer Apollonius circle touches $\gamma_a$.
\end{corollary}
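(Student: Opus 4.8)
The plan is to reduce the statement to the same line-meets-circle argument that settled the companion corollary identifying $L'$ with $U_a$. First I would recall that $L$, the point where the incircle touches $BC$, is precisely the contact point $M_a$ used in the later sections; hence $AL$ is the Gergonne cevian from $A$ and therefore passes through the Gergonne point $G_e$. In particular $A$, $G_e$, and $L$ are collinear, so the line $AL$ coincides with the line $AG_e$.

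Next I would invoke Theorem~\ref{thm:AVaGe}, which asserts that $A$, $V_a$, and $G_e$ are collinear. Combined with the previous step, this places $V_a$ on the line $AG_e = AL$. Since $V_a$ is by definition the point where the outer Apollonius circle $C_o$ touches $\gamma_a$, it also lies on $\gamma_a$. Thus $V_a$ is a common point of the line $AL$ and the circle $\gamma_a$, and the only such common points are $X$ (nearer $A$) and $L'$ (nearer $L$). Hence $V_a \in \{X, L'\}$.

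To finish, I would rule out $V_a = L'$. By the earlier corollary identifying $L'$ with $U_a$, we have $L' = U_a$, the point where the inner Apollonius circle $C_i$ touches $\gamma_a$. The inner and outer Apollonius circles meet $\gamma_a$ in distinct points, as one sees at once by comparing the barycentric coordinates for $U_a$ and $V_a$ in Theorems~\ref{thm:coordUa} and~\ref{thm:coordVa}. Therefore $V_a \ne L'$, and consequently $V_a = X$; the ``nearer $A$'' characterization is then automatic, since $X$ is by definition the nearer intersection.

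The argument is essentially formal once the collinearity ingredients are assembled, so I do not anticipate a serious obstacle. The one point that must be handled with care is the identification of the line $AL$ with $AG_e$ — that is, verifying that $G_e$ truly lies on $AL$ rather than on some other cevian — but this is immediate from the definition of the Gergonne cevian as the line from $A$ through the incircle's contact point on the opposite side. The only remaining thing to confirm is $U_a \ne V_a$, which the explicit coordinates dispatch instantly.
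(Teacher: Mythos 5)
Your argument is correct and follows the same route the paper intends: the paper states this corollary without proof, leaving it as an immediate consequence of Theorem~\ref{thm:AVaGe} together with the fact that $AL$ is the Gergonne cevian through $G_e$, which is exactly your chain of reasoning. Your explicit disambiguation step (ruling out $V_a=L'$ via $L'=U_a$ and $U_a\neq V_a$, which the coordinates of Theorems~\ref{thm:coordUa} and~\ref{thm:coordVa} confirm unless $S=0$) is a worthwhile detail the paper leaves implicit.
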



Combining Theorems \ref{thm:AUaGe} and \ref{thm:AVaGe} lets us state the following result.

\begin{theorem}\label{thm:Ge}
Let $\gamma_a$, $\gamma_b$, and $\gamma_c$ be a general triad of circles associated
with $\triangle ABC$.
Let $C_i$ be the inner Apollonius circle of the circles in the triad.
Let $C_o$ be the outer Apollonius circle of the circles in the triad.
Let $U_a$ be the point where $C_i$ touches $\gamma_a$. Define $U_b$ and $U_c$ similarly.
Let $V_a$ be the point where $C_o$ touches $\gamma_a$. Define $V_b$ and $V_c$ similarly.
Then $A$, $V_a$, and $U_a$ are collinear.
Similarly, $B$, $V_b$, and $U_b$ and $C$, $V_c$, and $U_c$ are collinear.
The three lines meet at $G_e$, the Gergonne point of $\triangle ABC$
(Figure~\ref{fig:genGe}).
\end{theorem}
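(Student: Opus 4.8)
The plan is to derive this statement as an immediate logical consequence of the two collinearity results that directly precede it, so that no new coordinate computation is needed. First I would apply Theorem~\ref{thm:AUaGe}: it asserts that $A$, $U_a$, and $G_e$ are collinear, which places $U_a$ on the Gergonne cevian through $A$ and $G_e$. Next I would apply Theorem~\ref{thm:AVaGe}: it asserts that $A$, $V_a$, and $G_e$ are collinear, which places $V_a$ on the very same line $AG_e$. Since $A$ and $G_e$ are distinct, the line $AG_e$ is well defined, and both $U_a$ and $V_a$ lie on it; hence $A$, $V_a$, $U_a$, and $G_e$ are all collinear, which is the first assertion.

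The same argument, applied verbatim at vertices $B$ and $C$, shows that $B$, $V_b$, $U_b$, $G_e$ are collinear and that $C$, $V_c$, $U_c$, $G_e$ are collinear. Because each of the three lines passes through $G_e$ by construction, the three lines $AU_a$, $BU_b$, and $CU_c$ are concurrent at $G_e$, which completes the proof.

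I do not expect any genuine obstacle here, since the substantive content of the theorem has already been absorbed into Theorems~\ref{thm:AUaGe} and~\ref{thm:AVaGe}. If one preferred a self-contained verification instead of quoting those two results, the alternative would be to substitute the barycentric coordinates for $U_a$ (Theorem~\ref{thm:coordUa}), for $V_a$ (Theorem~\ref{thm:coordVa}), and for $G_e$ into the three-point collinearity determinant and check that it vanishes; but this merely re-derives the ingredients, so the combinatorial route above is preferable. The one point worth stating explicitly is that the concurrence at $G_e$ is automatic once both Apollonius touch points are known to lie on the Gergonne cevians, because the Gergonne point is by definition the common intersection of those cevians.
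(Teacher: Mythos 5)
Your proposal is correct and is exactly the paper's approach: the paper introduces this theorem with the words ``Combining Theorems~\ref{thm:AUaGe} and \ref{thm:AVaGe} lets us state the following result,'' offering no further argument beyond the observation that both touch points lie on the cevian $AG_e$. Nothing is missing.
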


\begin{figure}[ht]
\centering
\includegraphics[scale=0.43]{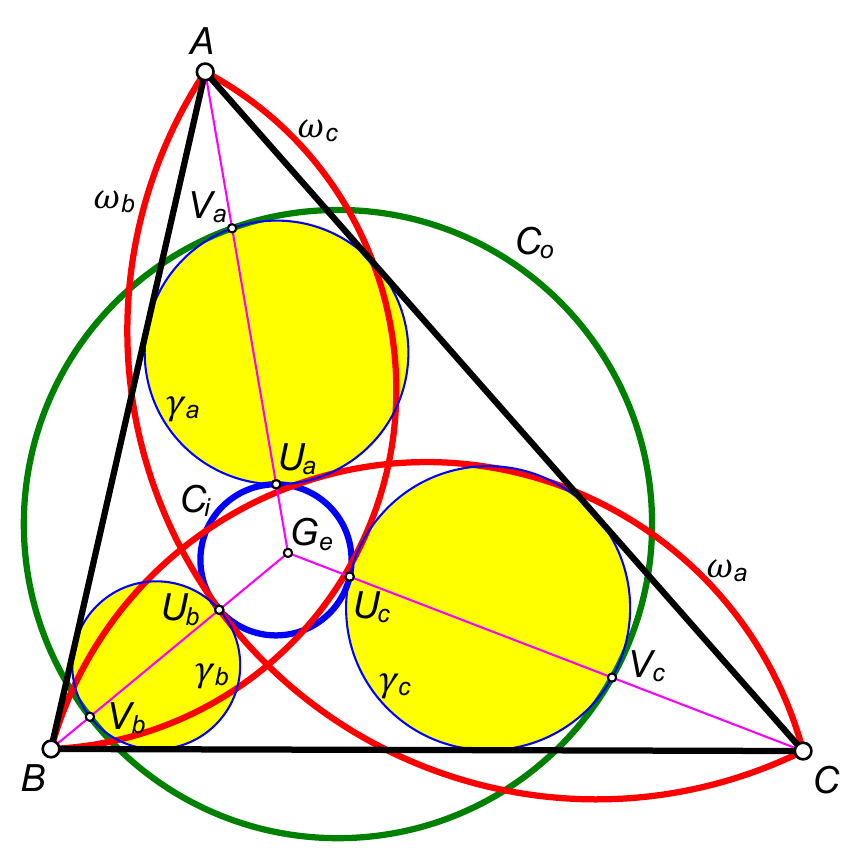}
\includegraphics[scale=0.43]{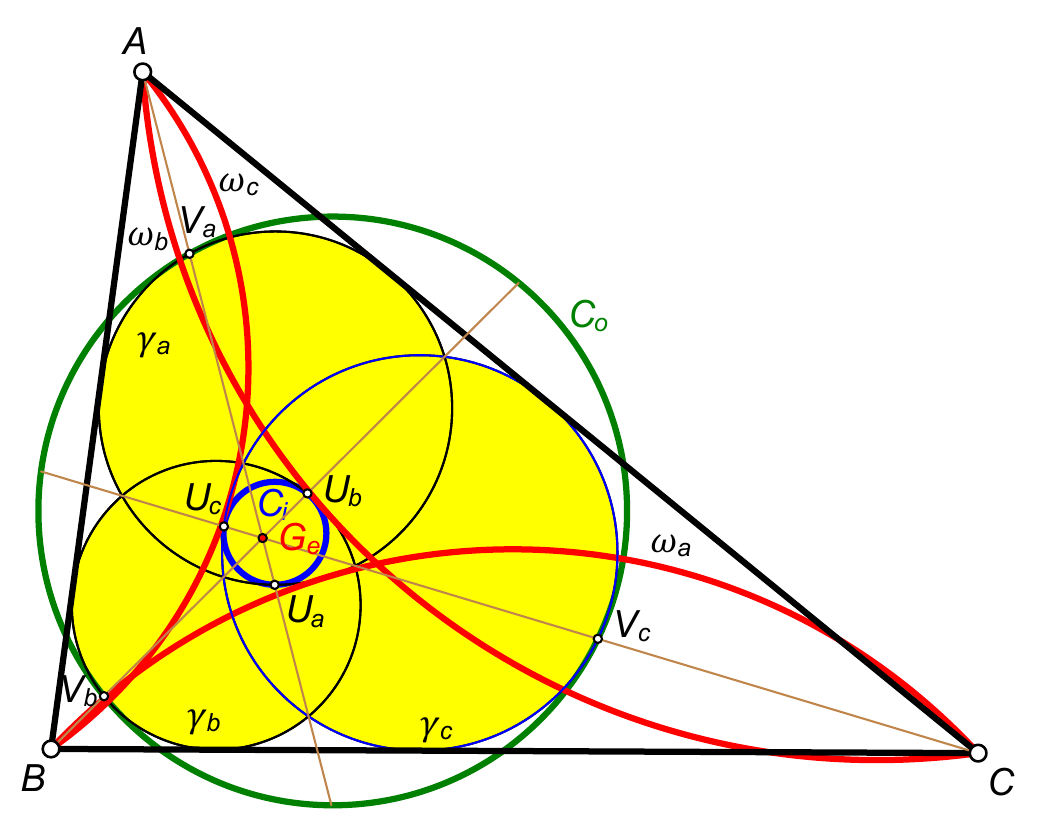}
\caption{lines concur at the Gergonne point}
\label{fig:genGe}
\end{figure}

\void{
\begin{figure}[ht]
\centering
\includegraphics[scale=0.4]{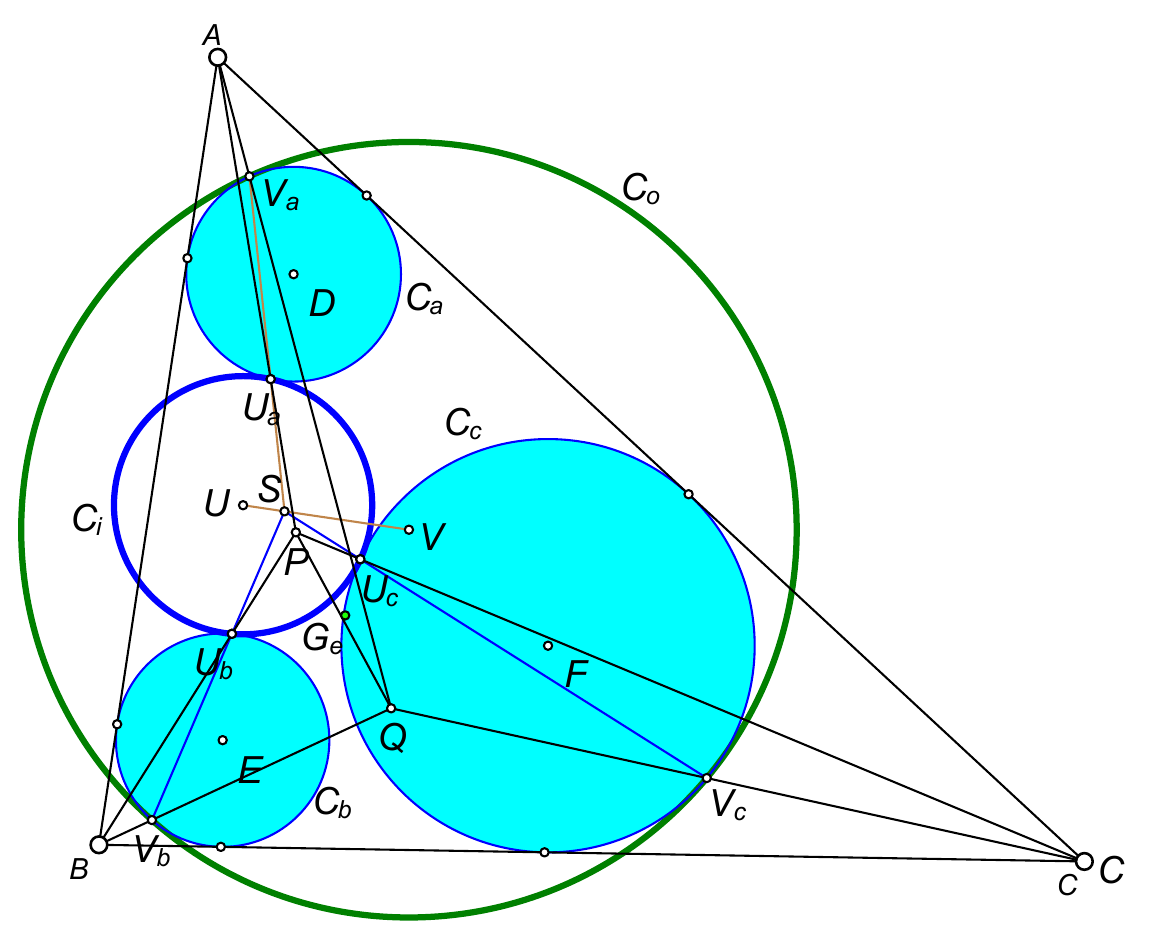}
\caption{counterexample to ``proof''}
\label{fig:counter}
\end{figure}
}

\newpage

\begin{theorem}\label{thm:Ua=L'}
Let $\gamma_a$, $\gamma_b$, and $\gamma_c$ be a general triad of circles associated
with $\triangle ABC$.
Let $C_i$ be the inner Apollonius circle of the circles in the triad.
Let $U_a$ be the point where $C_i$ touches $\gamma_a$.
Let $L$ be the point where the incircle of $\triangle ABC$ touches $BC$ (Figure~\ref{fig:Ua=L'}).
Then $A$, $U_a$, and $L$ are collinear.
\end{theorem}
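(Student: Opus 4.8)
The plan is to reduce this to Theorem~\ref{thm:AUaGe} together with the defining property of the Gergonne point. First I would recall that $G_e$ is, by definition, the common point of the three Gergonne cevians, and that the Gergonne cevian from $A$ is exactly the line $AL$: here $L$ is the point where the incircle touches $BC$, which is the point called $M_a$ in Theorem~\ref{radixAxis}, where $AM_a = AL$ is already identified as the Gergonne cevian from $A$. In particular $G_e$ lies on the line $AL$.

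With this in hand the argument is immediate. By Theorem~\ref{thm:AUaGe}, the three points $A$, $U_a$, and $G_e$ are collinear, so $U_a$ lies on the line $AG_e$. Since $A$, $G_e$, and $L$ all lie on the Gergonne cevian from $A$, the line $AG_e$ coincides with the line $AL$. Hence $U_a$ lies on $AL$, which is precisely the assertion that $A$, $U_a$, and $L$ are collinear.

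As a robustness check I would also verify the statement directly in barycentric coordinates. The contact point of the incircle on $BC$ is $L = (0 : p-c : p-b)$, so every point of line $AL$ has its last two coordinates in the ratio $(p-c):(p-b)$. From the coordinates for $U_a$ in Theorem~\ref{thm:coordUa}, the second and third coordinates are $(p-c)[S-2(p-b)(p-c)t]$ and $(p-b)[S-2(p-b)(p-c)t]$, whose ratio is again $(p-c):(p-b)$ after cancelling the common factor; this confirms $U_a \in AL$ independently of Theorem~\ref{thm:AUaGe}.

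I do not expect a genuine obstacle here: the substantive computation was already carried out in establishing the coordinates of $U_a$ and Theorem~\ref{thm:AUaGe}. The only point requiring care is the clean identification of the line $AG_e$ with the line $AL$, i.e. remembering that the cevian from $A$ through $G_e$ meets $BC$ exactly at the incircle contact point $L$; once that is stated the collinearity follows with no further work. Note that this also recovers the earlier Corollary asserting $U_a = L'$, since $L'$ was defined as the intersection of $AL$ with $\gamma_a$ nearer $L$, and $U_a$ is the point of $\gamma_a$ lying on $AL$ on that side.
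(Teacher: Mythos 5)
Your argument is correct and is essentially the paper's own proof: the paper likewise observes that $AU_a$ passes through $G_e$ (citing Theorem~\ref{thm:Ge}, whose relevant content is exactly Theorem~\ref{thm:AUaGe}) and that $AL$ is by definition the Gergonne cevian through $G_e$, so the two lines coincide. Your supplementary barycentric check of the ratio $(p-c):(p-b)$ is a valid independent confirmation but not needed.
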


\begin{figure}[ht]
\centering
\includegraphics[scale=0.4]{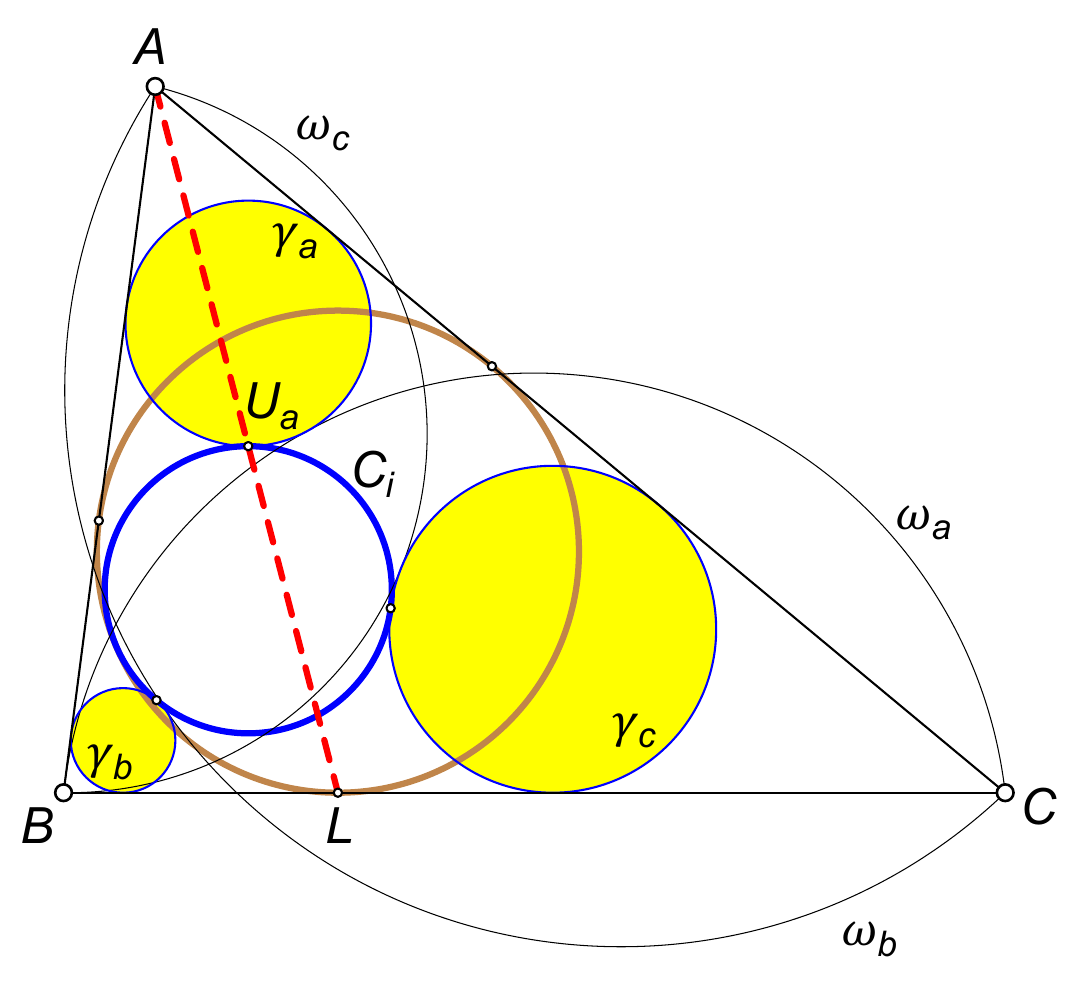}
\caption{$A$, $U_a$, and $L$ are collinear}
\label{fig:Ua=L'}
\end{figure}

\begin{proof}
By Theorem~\ref{thm:Ge}, $AU_a$ passes through $G_e$, the Gergonne point of $\triangle ABC$.
But by definition, $AL$ also passes through $G_e$. Thus, $AU_a$ coincides with $AL$.
\end{proof}

This gives us an easy way to construct the inner Apollonius circle of a general triad
of circles. Let the incircle of $\triangle ABC$ touch $BC$ at $L$.
Then $AL$ meets $\gamma_a$ (closer to $L$) at $U_a$.
Construct $U_b$ and $U_c$ in the same manner.
Then the circumcircle of $\triangle U_aU_bU_c$ is the inner Apollonius circle.

To construct the outer Apollonius circle, find the point $V_a$ where
$AL$ meets $\gamma_a$ (closer to $A$).
Construct $V_b$ and $V_c$ in the same manner.
Then the circumcircle of $\triangle V_aV_bV_c$ is the outer Apollonius circle.

\newpage

\begin{theorem}\label{thm:UaTriangle}
Let $\gamma_a$, $\gamma_b$, and $\gamma_c$ be a general triad of circles associated
with $\triangle ABC$.
Let $C_i$ be the inner Apollonius circle of the circles in the triad.
Let $U_a$ be the point where $C_i$ touches $\gamma_a$.
Let $t_a$ be the tangent to $\gamma_a$ at $U_a$.
Define $t_b$ and $t_c$ similarly (Figure~\ref{fig:UaTriangle}).
Then $t_a$, $t_b$, and $t_c$ form a triangle homothetic to $\triangle ABC$.
The center of the homothety is $G_e$, the Gergonne point of $\triangle ABC$.
\end{theorem}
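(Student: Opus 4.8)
The plan is to reduce everything to two facts already in hand: that the contact point $U_a$ coincides with the point $L'$ of Section~\ref{section:L}, and that the tangent to $\gamma_a$ at $L'$ is parallel to $BC$. These immediately fix the directions of the three tangent lines, and the only real work is locating the center of the homothety.

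First I would establish those directions. By the corollary to Theorem~\ref{thm:AUaGe}, the point $U_a$ where $C_i$ touches $\gamma_a$ is exactly the point $L'$ where the Gergonne cevian $AL$ meets $\gamma_a$ nearer $L$. By Theorem~\ref{thm:result27} the tangent to $\gamma_a$ at $L'$ is parallel to $BC$, so $t_a\parallel BC$; cyclically $t_b\parallel CA$ and $t_c\parallel AB$. Three lines bounding a triangle whose sides are respectively parallel to the sides of $\triangle ABC$ produce a triangle $A^\ast B^\ast C^\ast$ with corresponding sides parallel to those of $\triangle ABC$, and two triangles with pairwise parallel corresponding sides are homothetic. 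This settles the homothety claim; it remains only to identify the center.

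To pin the center at $G_e$, I would first upgrade the tangency: since $\gamma_a$ and $C_i$ are tangent at $U_a$, they share a common tangent line there, so the tangent $t_a$ to $\gamma_a$ at $U_a$ is simultaneously tangent to $C_i$ at $U_a$. Thus $C_i$ is inscribed in $\triangle A^\ast B^\ast C^\ast$, touching $t_a,t_b,t_c$ at $U_a,U_b,U_c$. Next I would invoke Lemma~\ref{lemma:triConcur} with the triad $\gamma_a,\gamma_b,\gamma_c$ and $S=C_i$: the lines $AU_a,BU_b,CU_c$ concur at the internal center of similitude of the incircle and $C_i$, and by Theorem~\ref{thm:AUaGe} these three lines are precisely the Gergonne cevians. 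Hence $G_e$ is that internal center of similitude, so the homothety $\sigma$ centered at $G_e$ with negative ratio $-\rho_i/r$ (here $\rho_i$ is the radius of $C_i$) carries the incircle onto $C_i$.

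Finally I would show $\sigma$ is the required homothety. Under $\sigma$ the incircle's contact point $L$ on $BC$ maps to a point $\sigma(L)$ lying on $C_i$ and, since $\sigma$ is centered at $G_e$, on the line $G_eL=AU_a$; as $U_a$ is the point of $C_i$ on this Gergonne cevian lying on the $A$-side of $G_e$ (the order along the cevian being $A,\,U_a,\,G_e,\,L$, consistent with the negative ratio), one gets $\sigma(L)=U_a$. Therefore $\sigma(BC)$ is the tangent to $C_i$ at $U_a$, namely $t_a$, and cyclically $\sigma(CA)=t_b$, $\sigma(AB)=t_c$, whence $\sigma\bigl(\triangle ABC\bigr)=\triangle A^\ast B^\ast C^\ast$ and the tangent triangle is the image of $\triangle ABC$ under a homothety with center $G_e$. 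The step needing care, and the main obstacle, is the identification $\sigma(L)=U_a$: I must confirm that $U_a$ is the correct one of the (at most two) intersections of the cevian with $C_i$, which follows from the ordering $A,U_a,G_e,L$ along the cevian (equivalently from $|G_eU_a|/|G_eL|=\rho_i/r$), and which can be checked against the explicit coordinates in Theorems~\ref{thm:coordU} and~\ref{thm:coordUa} should a purely metric confirmation be wanted.
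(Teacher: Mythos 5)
Your proposal is correct, and its first half coincides with the paper's: both proofs get the three directions from Theorem~\ref{thm:result27} (tangent to $\gamma_a$ at $U_a=L'$ is parallel to $BC$), conclude that the tangent triangle has sides pairwise parallel to those of $\triangle ABC$, and hence is homothetic to it. Where you diverge is in locating the center. The paper takes the triangle homothety as primary: it maps the incircle of $\triangle ABC$ to the incircle of the tangent triangle (which is $C_i$) and touch points to touch points, so its center is the common point of $LU_a$, $MU_b$, $NU_c$, and these are the Gergonne cevians by Theorem~\ref{thm:Ua=L'}. You instead first establish the circle homothety: via Lemma~\ref{lemma:triConcur} and Theorem~\ref{thm:AUaGe} you identify $G_e$ as the internal center of similitude of the incircle and $C_i$, and then push the tangent line $BC$ through that homothety to recover $t_a$. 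Your route costs an extra ingredient (Lemma~\ref{lemma:triConcur}, which the paper reserves for Theorem~\ref{thm:GeUI}) and forces you to resolve the $\sigma(L)=U_a$ ambiguity by an ordering argument along the cevian; the paper's route gets that correspondence for free from the triangle homothety. On the other hand, your version makes explicit two points the paper glosses over: that $t_a$ is tangent to $C_i$ (not merely to $\gamma_a$) at $U_a$, so that $C_i$ really is the incircle of the tangent triangle, and that the sign of the ratio and the position of $U_a$ relative to $G_e$ need checking (a genuine subtlety, since the paper itself notes $\rho_i$ can change sign). Both arguments are sound; yours is slightly longer but more self-auditing.
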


\begin{figure}[ht]
\centering
\includegraphics[scale=0.35]{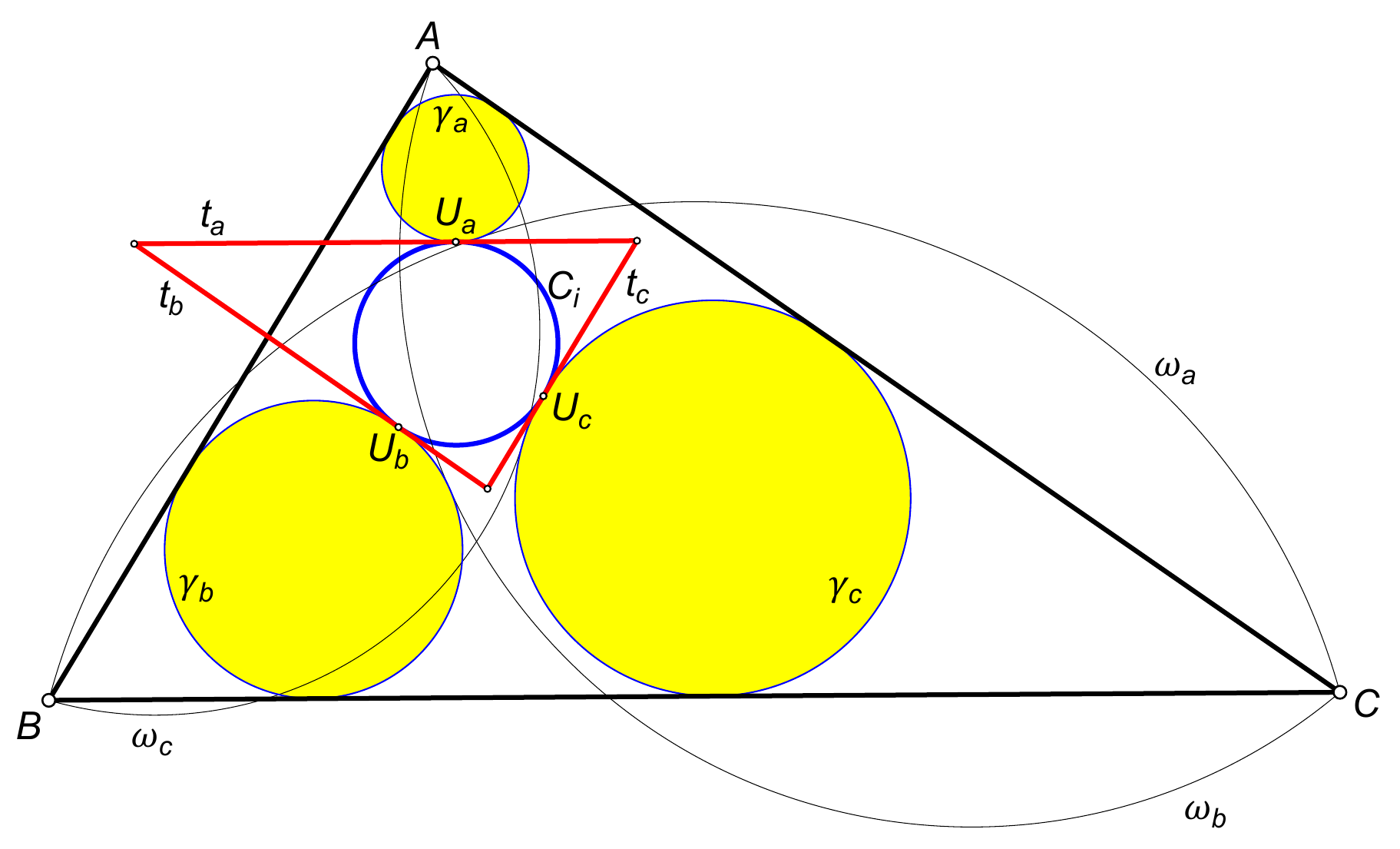}
\caption{red triangle is homothetic to $\triangle ABC$}
\label{fig:UaTriangle}
\end{figure}

\begin{proof}
By Theorem~\ref{thm:result27}, the tangent to $\gamma_a$ at $U_a$ is parallel to $BC$.
So $t_a\parallel BC$, $t_b\parallel CA$, and $t_c\parallel AB$.
Thus, the triangle formed by $t_a$, $t_b$, and $t_c$, is similar to $\triangle ABC$.
Let $A'$, $B'$, and $C'$ be the vertices of this triangle.
By a well-known theorem \cite[Art.~24]{Johnson}, this implies that $\triangle ABC$ is homothetic to $\triangle A'B'C'$
(with $A$ mapping to $A'$, $B$ to $B'$, and $C$ to $C'$).
Let $L$, $M$, and $N$ be the points where the incircle of $\triangle ABC$
touches $BC$, $CA$, and $AB$ (Figure~\ref{fig:incircleOfUaFig}).
\begin{figure}[ht]
\centering
\includegraphics[scale=0.35]{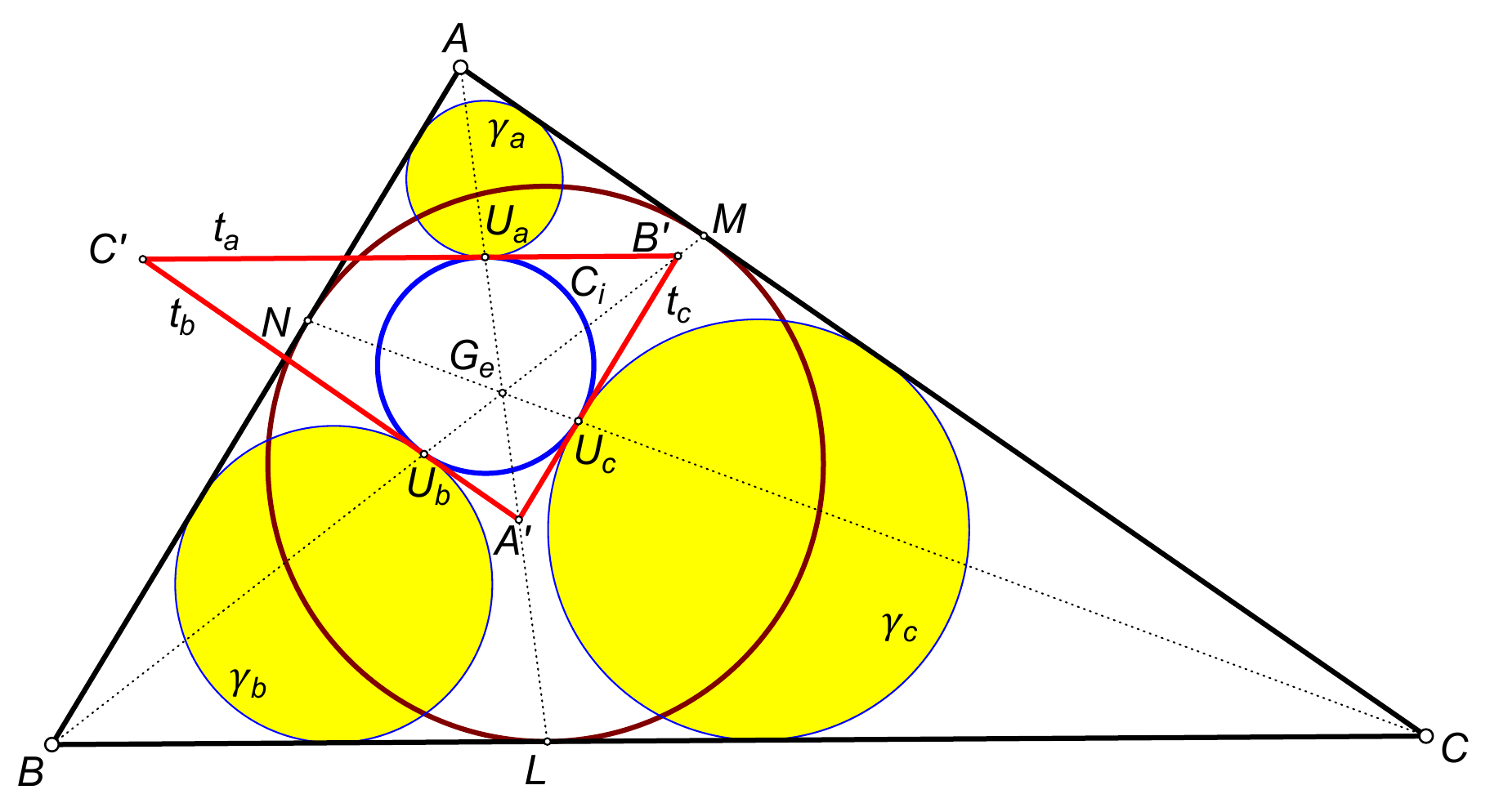}
\caption{}
\label{fig:incircleOfUaFig}
\end{figure}

The homothety maps the incircle of $\triangle ABC$ into the incircle of $\triangle A'B'C'$,
and the touch points into the touch points, i.e. $L$ maps to $U_a$, $M$ maps to $U_b$, and $N$ maps to $U_c$.
Hence, the center of the homothety is the point of concurrence of lines $LU_a$, $MU_b$, and $NU_c$.
By Theorem~\ref{thm:Ua=L'}, the line $LU_a$ coincides with the line $AL$, the line $MU_b$ coincides with $BM$,
and the line $NU_c$ coincides with line $CN$.
Therefore, the center of the homothety is $G_e$, the Gergonne point of $\triangle ABC$.
\void{
By Theorem~\ref{thm:Ge}, $AU_a$, $BU_b$, and $CU_c$ all pass through $G_e$.
Thus, $\triangle ABC$ is perspective with $\triangle A'B'C'$ with perspector $G_e$.
The circle $\odot LMN$ is the incircle of $\triangle ABC$ and the circle $C_i$
is the incircle of $\triangle A'B'C'$.
Consequently, a homothety with center $G_e$ and ratio of similitude $-BC/B'C'$ maps $\triangle A'B'C'$ into $\triangle ABC$
and also maps $C_i$ into $\odot LMN$. Point $U_a$ maps to $L$, $U_b$ maps to $M$, and $U_c$ maps to $N$.
}
\end{proof}

\void{
\textbf{Note.} The triangle formed by $t_a$, $t_b$, and $t_c$ is also homothetic
to the intouch triangle of $\triangle ABC$, with $G_e$ being the center of the homothety.
}

\newpage

\begin{corollary}\label{cor:intouch}
Let $\gamma_a$, $\gamma_b$, and $\gamma_c$ be a general triad of circles associated
with $\triangle ABC$.
Let $C_i$ be the inner Apollonius circle of the circles in the triad.
Then $C_i$ and the incircle of $\triangle ABC$ are homothetic
with $G_e$ as the center of the homothety (Figure~\ref{fig:intouch}).
\end{corollary}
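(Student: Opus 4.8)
The plan is to deduce this immediately from Theorem~\ref{thm:UaTriangle}, which has already done essentially all of the work. In that theorem the tangents $t_a$, $t_b$, $t_c$ to $\gamma_a$, $\gamma_b$, $\gamma_c$ at the touch points $U_a$, $U_b$, $U_c$ were shown to bound a triangle $\triangle A'B'C'$ that is homothetic to $\triangle ABC$, with the Gergonne point $G_e$ as the center of the homothety. So the only thing left to verify is that the image of the incircle of $\triangle ABC$ under this homothety is precisely the inner Apollonius circle $C_i$.

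First I would identify $C_i$ as the incircle of $\triangle A'B'C'$. Since $C_i$ and $\gamma_a$ are tangent at $U_a$, the two circles share a common tangent line there; but that common tangent is exactly $t_a$, the tangent to $\gamma_a$ at $U_a$. Hence $C_i$ is tangent to the side $t_a$ of $\triangle A'B'C'$ at $U_a$, and likewise tangent to $t_b$ at $U_b$ and to $t_c$ at $U_c$. A circle tangent to all three sides of $\triangle A'B'C'$ at points lying on the sides themselves is the inscribed circle of that triangle, so $C_i$ is the incircle of $\triangle A'B'C'$.

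Next I would invoke the fact that a homothety carries the incircle of a triangle to the incircle of its image. The homothety with center $G_e$ supplied by Theorem~\ref{thm:UaTriangle} maps $\triangle ABC$ to $\triangle A'B'C'$, and therefore maps the incircle of $\triangle ABC$ to the incircle of $\triangle A'B'C'$, namely $C_i$. Consequently the incircle of $\triangle ABC$ and $C_i$ are homothetic with $G_e$ as the center of the homothety, which is the assertion of the corollary.

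The only genuinely new point beyond Theorem~\ref{thm:UaTriangle} is the identification of $C_i$ with the incircle of $\triangle A'B'C'$, and even that is routine once one observes that tangency of $C_i$ and $\gamma_a$ at $U_a$ forces them to share the tangent line $t_a$. The subtle part to state carefully is that the contact circle in question is the \emph{incircle} rather than an excircle of $\triangle A'B'C'$; this is guaranteed because all three contact points $U_a$, $U_b$, $U_c$ lie on the sides of $\triangle A'B'C'$, being the points where $C_i$ touches $\gamma_a$, $\gamma_b$, $\gamma_c$ from inside the triad, so $C_i$ sits inside $\triangle A'B'C'$.
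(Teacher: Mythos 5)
Your proposal is correct and follows essentially the same route as the paper, which states this as an immediate consequence of Theorem~\ref{thm:UaTriangle} (whose proof already asserts that the homothety with center $G_e$ carries the incircle of $\triangle ABC$ to the incircle of $\triangle A'B'C'$). The one thing you add is an explicit justification that $C_i$ \emph{is} the incircle of $\triangle A'B'C'$ --- via the shared tangent lines $t_a$, $t_b$, $t_c$ at $U_a$, $U_b$, $U_c$ --- a step the paper leaves implicit, so your write-up is if anything slightly more complete.
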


\begin{figure}[ht]
\centering
\includegraphics[scale=0.4]{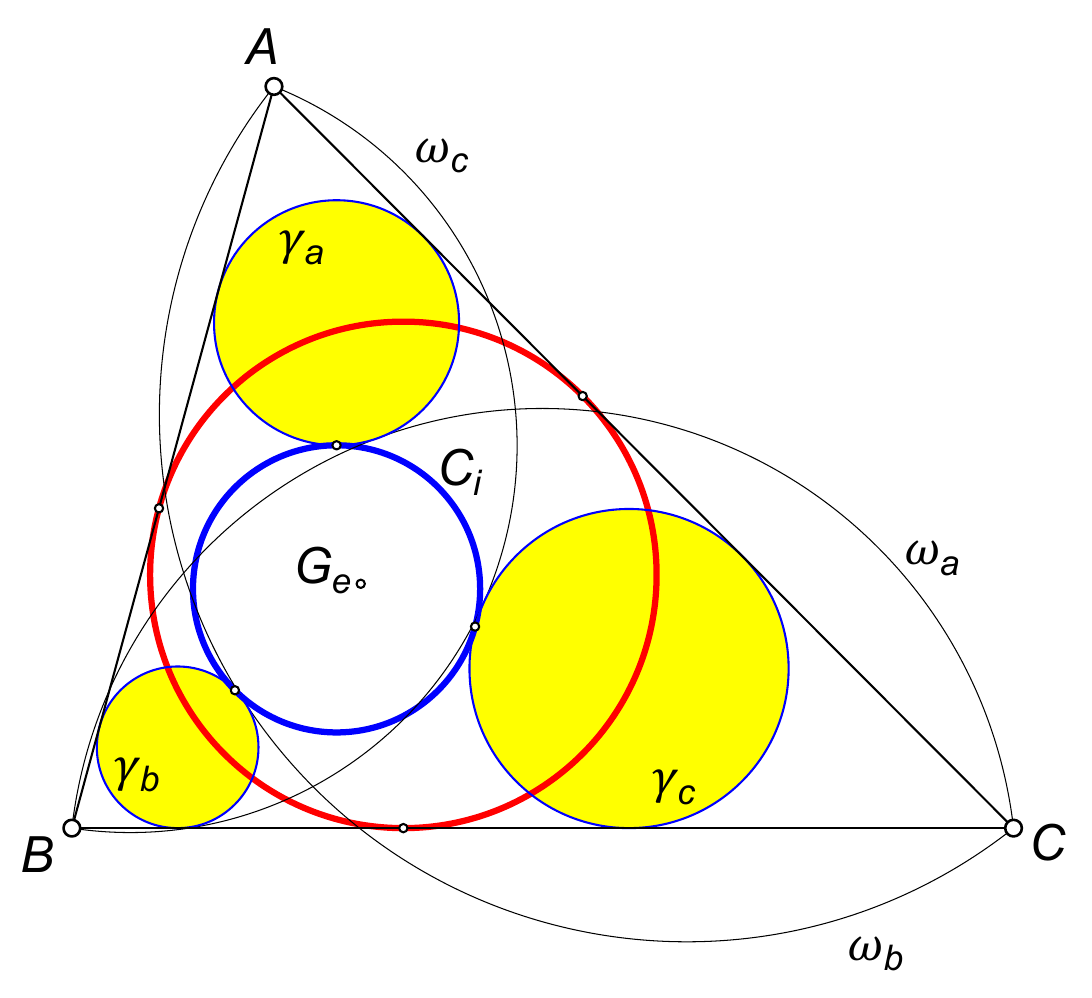}
\caption{$G_e$ is the center of similarity between the incircle and $C_i$}
\label{fig:intouch}
\end{figure}

\begin{theorem}\label{thm:GeUI}
For a general triad of circles associated with $\triangle ABC$,
let $U(\rho_i)$ be the inner Apollonius circle of $\gamma_a$, $\gamma_b$, and $\gamma_c$
(Figure~\ref{fig:rhoir}).
Let $G_e$ be the Gergonne point of $\triangle ABC$.
Then
$$\frac{G_eU}{G_eI}=\frac{\rho_i}{r}.$$
\end{theorem}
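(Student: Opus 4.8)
The plan is to obtain this as an immediate consequence of Corollary~\ref{cor:intouch} together with the elementary proportion recorded in Lemma~\ref{lemma:twoCircles}. By Corollary~\ref{cor:intouch}, the inner Apollonius circle $C_i=U(\rho_i)$ and the incircle $I(r)$ of $\triangle ABC$ are homothetic, with $G_e$ serving as the center of the homothety. In particular $G_e$ is a center of similitude of these two circles, so $G_e$, $I$, and $U$ are collinear and the distances from $G_e$ to the two centers are controlled by the ratio of the radii.

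First I would identify the two circles explicitly: the incircle, with center $I$ and radius $r$, and the inner Apollonius circle $C_i$, with center $U$ and radius $\rho_i$. Their center of similitude is $G_e$ by Corollary~\ref{cor:intouch}. I would then apply Lemma~\ref{lemma:twoCircles} verbatim, letting the incircle play the role of the first circle (with radius $r$), the Apollonius circle $C_i$ play the role of the second circle (with radius $\rho_i$), and taking the center of similarity to be $G_e$. The lemma then yields
$$\frac{IG_e}{G_eU}=\frac{r}{\rho_i},$$
and inverting both sides gives the desired identity $\dfrac{G_eU}{G_eI}=\dfrac{\rho_i}{r}$.

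The only point that requires any attention --- and the one I would flag as the (minor) main obstacle --- is the bookkeeping of which radius appears in the numerator, so that the orientation of the final ratio matches the statement. This is purely a matter of matching the labels in Lemma~\ref{lemma:twoCircles} to the present configuration; once the incircle is assigned radius $r$ and $C_i$ radius $\rho_i$, the lemma's unsigned conclusion delivers exactly $IG_e/G_eU=r/\rho_i$. Since Lemma~\ref{lemma:twoCircles} already works with unsigned distances along the line of centers, no separate analysis of whether $G_e$ is the internal or external center of similitude is needed, and the result follows in one step.
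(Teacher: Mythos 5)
Your proposal is correct and follows essentially the same route as the paper: both arguments reduce the statement to the fact that $G_e$ is a center of similitude of the incircle and $C_i$, and then conclude by Lemma~\ref{lemma:twoCircles}. The only difference is that you source the similitude-center fact from Corollary~\ref{cor:intouch}, while the paper derives it from Lemma~\ref{lemma:triConcur} together with Theorem~\ref{thm:Ge}; both are available at this point in the paper, so your version is equally valid.
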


\begin{figure}[ht]
\centering
\includegraphics[scale=0.45]{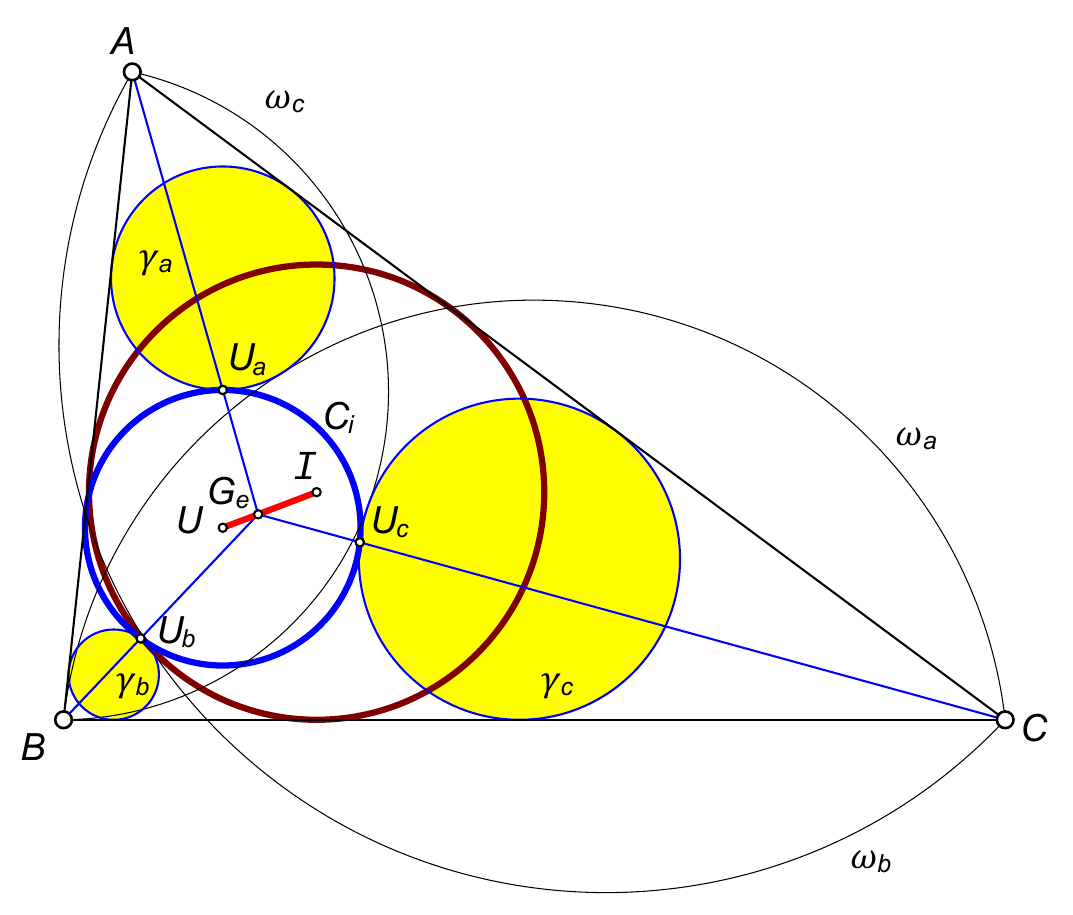}
\caption{$G_eU/G_eI=\rho_i/r$}
\label{fig:rhoir}
\end{figure}

\begin{proof}
Let the touch points of circle $(U)$ with $\gamma_a$, $\gamma_b$, and $\gamma_c$ be
$U_a$, $U_b$, and $U_c$, respectively.
By Lemma~\ref{lemma:triConcur}, $AU_a$, $BU_b$, and $CU_c$ concur at a point $P$
that is a center of similitude of circle $(U)$ and the incircle, $(I)$.
By Theorem~\ref{thm:Ge}, $P=G_e$.
\void{
Corresponding distances in two similar figures are in proportion to their ratio of similitude.
In this case, the two similar figures are circles $(U)$ and $(I)$. Their ratio of similitude is $\rho_i/r$.
Thus, $G_eU:G_eI=\rho_i:r$.
}
Note that $(U)$ and $(I)$ are two circles with center of similarity $G_e$.
By Lemma~\ref{lemma:twoCircles}, $G_eU/G_eI=\rho_i/r$.
\end{proof}


\begin{theorem}\label{thm:GeVI}
For a general triad of circles associated with $\triangle ABC$,
let $V(\rho_o)$ be the outer Apollonius circle of $\gamma_a$, $\gamma_b$, and $\gamma_c$ (Figure~\ref{fig:rhoor}).
Let $G_e$ be the Gergonne point of $\triangle ABC$.
Then
$$\frac{G_eV}{G_eI}=\frac{\rho_o}{r}.$$
\end{theorem}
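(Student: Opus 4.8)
The plan is to mirror the proof of Theorem~\ref{thm:GeUI} almost verbatim, replacing the inner Apollonius circle with the outer one and invoking the ``outer/external'' branch of Lemma~\ref{lemma:triConcur}. First I would let $V_a$, $V_b$, and $V_c$ denote the points where the outer Apollonius circle $(V)$ touches $\gamma_a$, $\gamma_b$, and $\gamma_c$, respectively. Since $\gamma_a$ is inscribed in $\angle BAC$, and similarly for $\gamma_b$ and $\gamma_c$, Lemma~\ref{lemma:triConcur} (in its outer form) applies and tells us that $AV_a$, $BV_b$, and $CV_c$ concur at a point $P$ which is the \emph{external} center of similitude of the incircle $(I)$ and the circle $(V)$.

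Next I would identify $P$. By Theorem~\ref{thm:AVaGe} (equivalently, by Theorem~\ref{thm:Ge}), each of the lines $AV_a$, $BV_b$, and $CV_c$ passes through $G_e$, the Gergonne point of $\triangle ABC$. Since two such lines already determine their common intersection uniquely, we must have $P = G_e$. Hence $G_e$ is a center of similarity of the two circles $(V)$ and $(I)$.

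Finally, with $G_e$ serving as a center of similarity of $(V)$ (radius $\rho_o$) and $(I)$ (radius $r$), I would apply Lemma~\ref{lemma:twoCircles} directly to conclude that $G_eV/G_eI = \rho_o/r$, which is the desired statement.

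I do not expect a genuine obstacle, since the argument is structurally identical to that of Theorem~\ref{thm:GeUI}. The only point requiring a little care is the distinction between the internal and external centers of similitude: for the \emph{outer} Apollonius circle the relevant point is the \emph{external} center of similitude, so one must invoke the outer alternative in Lemma~\ref{lemma:triConcur} rather than the inner one used for Theorem~\ref{thm:GeUI}. Because Lemma~\ref{lemma:twoCircles} is stated for an arbitrary center of similarity and returns the positive ratio of radii, no sign issues arise, and the relation $G_eV/G_eI = \rho_o/r$ follows cleanly.
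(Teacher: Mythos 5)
Your proposal is correct and is essentially the paper's own argument: the paper simply states that the proof is the same as that of Theorem~\ref{thm:GeUI}, i.e.\ apply Lemma~\ref{lemma:triConcur} (outer case) to get a center of similitude $P$ of $(V)$ and $(I)$, identify $P=G_e$ via Theorem~\ref{thm:Ge}, and conclude with Lemma~\ref{lemma:twoCircles}. Your explicit attention to the internal versus external center of similitude is a sensible clarification but does not change the argument.
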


\begin{figure}[ht]
\centering
\includegraphics[scale=0.4]{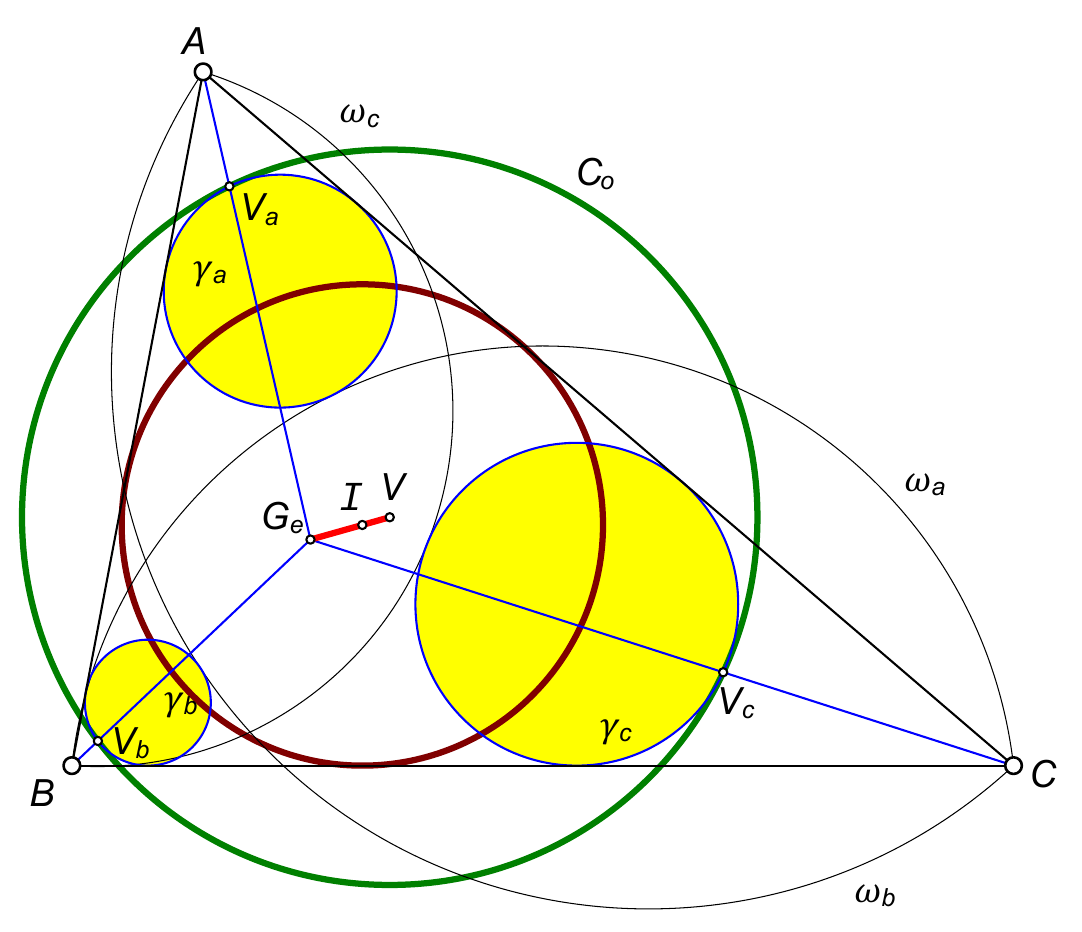}
\caption{$G_eV/G_eI=\rho_o/r$}
\label{fig:rhoor}
\end{figure}

\begin{proof}
The proof is the same as the proof of Theorem~\ref{thm:GeUI}.
\end{proof}


\begin{theorem}
For a general triad of circles associated with $\triangle ABC$,
let $U(\rho_i)$ and $V(\rho_o)$ be the inner and outer
Apollonius circles of $\gamma_a$, $\gamma_b$, and $\gamma_c$, respectively.
Let $G_e$ be the Gergonne point of $\triangle ABC$.
Then
$$\frac{G_eV}{G_eU}=\frac{\rho_o}{\rho_i}.$$
\end{theorem}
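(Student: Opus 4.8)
The plan is to obtain this result by simply dividing the two ratios established in the immediately preceding theorems. By Theorem~\ref{thm:GeUI}, the inner Apollonius circle satisfies $G_eU/G_eI=\rho_i/r$, and by Theorem~\ref{thm:GeVI}, the outer Apollonius circle satisfies $G_eV/G_eI=\rho_o/r$. Both ratios are taken relative to the common quantity $G_eI$, so forming their quotient will eliminate $G_eI$ (and the inradius $r$) entirely.

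First I would note that $G_eI\neq 0$, so that division is legitimate: the Gergonne point $G_e$ does not coincide with the incenter $I$ for a general (non-equilateral) triangle, and both $U$ and $V$ lie on the line through $G_e$ and $I$ (since, by Theorem~\ref{thm:oi} or by the homothety statements in Corollary~\ref{cor:intouch} and its analog, $G_e$ is the center of similarity of the incircle with each Apollonius circle). Having recorded this, the computation is immediate:
$$\frac{G_eV}{G_eU}=\frac{G_eV/G_eI}{G_eU/G_eI}=\frac{\rho_o/r}{\rho_i/r}=\frac{\rho_o}{\rho_i}.$$

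There is essentially no obstacle here, since the conceptual work was already carried out in Theorems~\ref{thm:GeUI} and~\ref{thm:GeVI}, each of which rests on Lemma~\ref{lemma:twoCircles} together with the identification (Theorem~\ref{thm:Ge}) of $G_e$ as the relevant center of similitude. The only point requiring a word of care is the nondegeneracy of $G_eI$ noted above; once that is in hand, the statement follows by a one-line algebraic cancellation. This result can also be read as a direct instance of Theorem~\ref{thm:oi}, with the radical center $S$ of the triad being $G_e$ (by Theorem~\ref{thm:radicalCenter}), which yields the same conclusion $SU/SV=\rho_i/\rho_o$, equivalently $G_eV/G_eU=\rho_o/\rho_i$.
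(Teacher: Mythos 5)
Your proposal is correct and matches the paper's own proof, which likewise cites both routes: division of the ratios from Theorems~\ref{thm:GeUI} and~\ref{thm:GeVI}, and the alternative via Theorem~\ref{thm:oi} with the radical center identified as $G_e$. The added remark on the nondegeneracy of $G_eI$ is a reasonable small refinement but does not change the argument.
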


\begin{figure}[ht]
\centering
\includegraphics[scale=0.45]{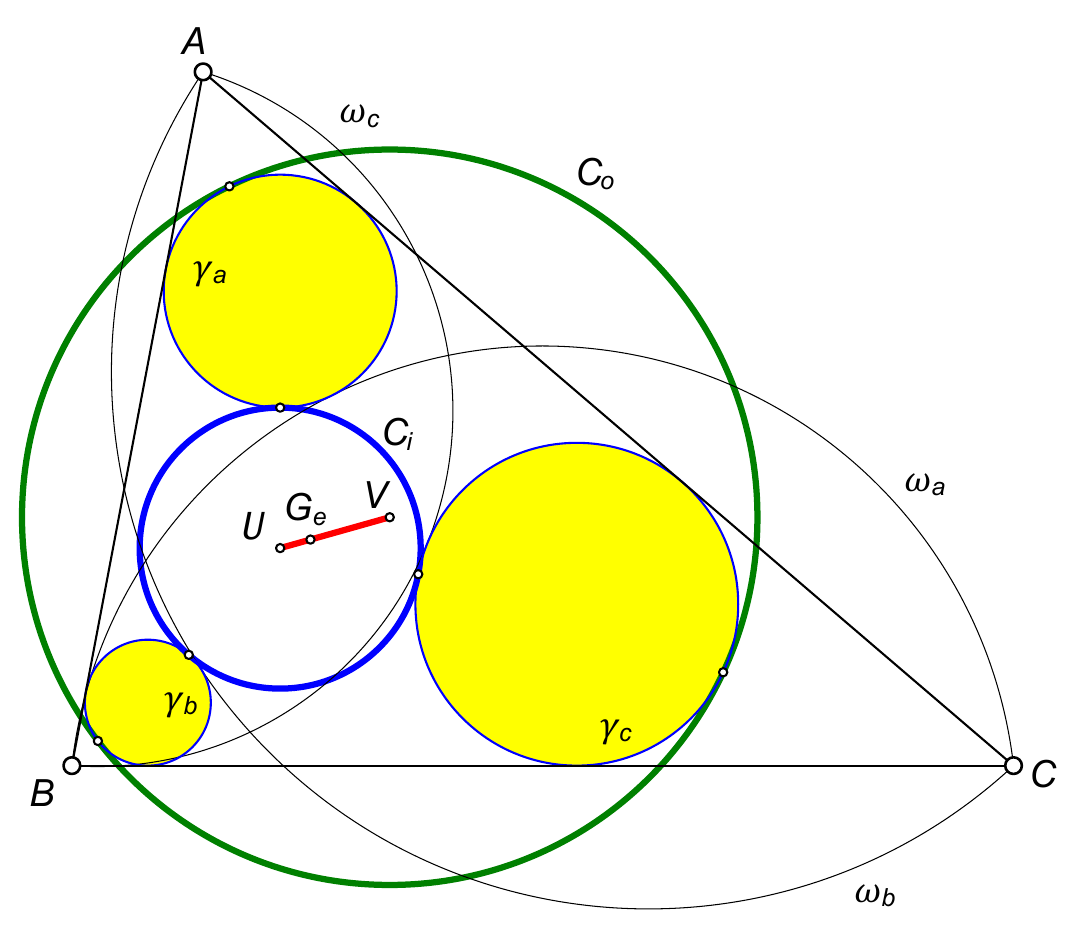}
\caption{$G_eV/G_eU=\rho_o/\rho_i$}
\label{fig:GEUI}
\end{figure}

\begin{proof}
This follows from Theorem~\ref{thm:oi}.
It also follows from Theorems \ref{thm:GeUI} and \ref{thm:GeVI}.
\end{proof}


\begin{theorem}
Let $\gamma_a$, $\gamma_b$, and $\gamma_c$ be
a general triad of circles associated with triangle $\triangle ABC$.
A circle externally tangent to each circle of the triad touches $\gamma_a$ at $U_a$.
Then the tangents from $U_a$ to $\gamma_b$ and $\gamma_c$ have the same length (Figure~\ref{fig:equalTangents}).
\end{theorem}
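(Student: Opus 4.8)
The plan is to translate the equality of tangent lengths into a statement about the power of the point $U_a$, and then to locate $U_a$ on a radical axis that the paper has already identified.

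First I would recall the elementary fact that the length of the tangent from a point $P$ to a circle equals the square root of the power of $P$ with respect to that circle. Consequently, the tangents from $U_a$ to $\gamma_b$ and to $\gamma_c$ have equal length if and only if $U_a$ has equal power with respect to $\gamma_b$ and $\gamma_c$, which is precisely the condition that $U_a$ lie on the radical axis of $\gamma_b$ and $\gamma_c$. This reduces the whole claim to a collinearity statement.

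Next I would invoke Theorem~\ref{radixAxis}, which identifies the radical axis of $\gamma_b$ and $\gamma_c$ as the Gergonne cevian $AM_a$, where $M_a$ is the point at which the incircle of $\triangle ABC$ touches $BC$ (the point called $L$ elsewhere in the paper). Thus it suffices to show that $U_a$ lies on the line $AL$. Now the circle described in the statement — externally tangent to each of $\gamma_a$, $\gamma_b$, $\gamma_c$ — is exactly the inner Apollonius circle $C_i$ of the triad in its externally tangent configuration, so its touch point with $\gamma_a$ is the point $U_a$ studied in Theorem~\ref{thm:Ua=L'}. That theorem asserts that $A$, $U_a$, and $L$ are collinear. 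Hence $U_a$ lies on $AL$, which is the radical axis of $\gamma_b$ and $\gamma_c$, and the two tangent lengths are therefore equal.

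The substantive step is entirely carried by Theorem~\ref{thm:Ua=L'}; once $U_a$ is known to sit on the Gergonne cevian from $A$, equal power (and hence equal tangent length) follows with no computation. The only point requiring a word of care is checking that $U_a$ actually lies outside both $\gamma_b$ and $\gamma_c$, so that the common power is positive and the tangent lengths are genuine real lengths rather than a formal equality of powers. This is immediate, since $U_a$ lies on $\gamma_a$, and in a general triad the circle $\gamma_a$ is externally tangent to (and so disjoint from the interiors of) $\gamma_b$ and $\gamma_c$. I expect this to be the only obstacle, and it is a minor one.
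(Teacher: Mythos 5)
Your proof is correct and follows essentially the same route as the paper: both arguments place $U_a$ on the Gergonne cevian from $A$ (you via Theorem~\ref{thm:Ua=L'}, the paper via Theorem~\ref{thm:Ge}, which are equivalent for this purpose) and then invoke Theorem~\ref{radixAxis} to identify that cevian as the radical axis of $\gamma_b$ and $\gamma_c$. Your added remark verifying that $U_a$ lies outside $\gamma_b$ and $\gamma_c$ is a small but welcome refinement the paper leaves implicit.
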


\begin{figure}[ht]
\centering
\includegraphics[scale=0.34]{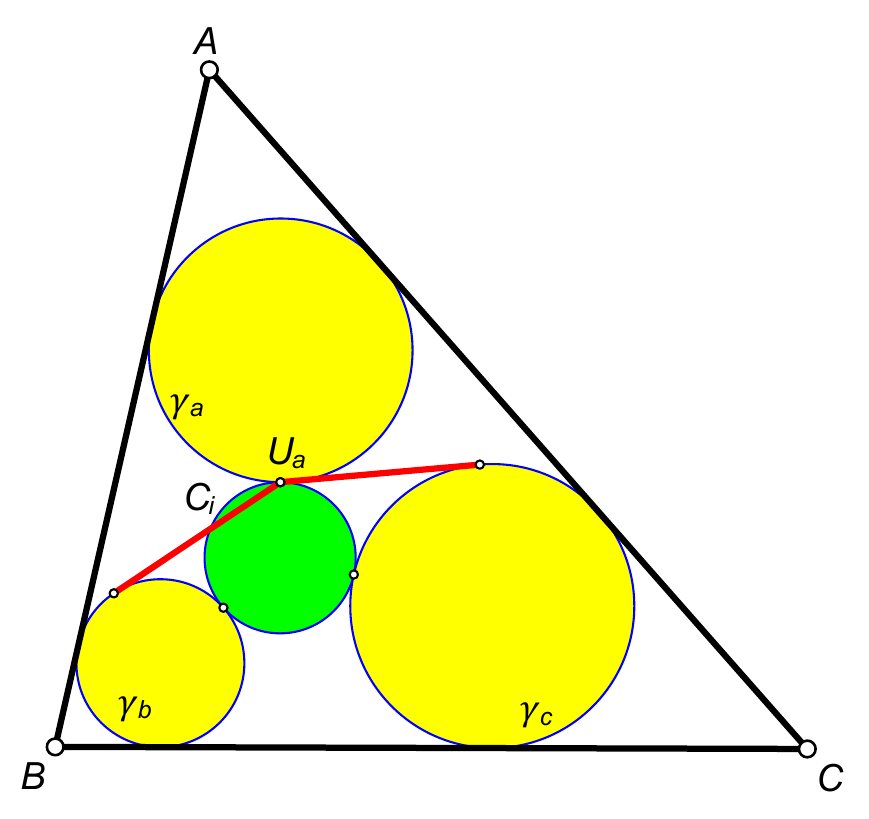}
\caption{red tangent lengths are equal}
\label{fig:equalTangents}
\end{figure}

\begin{proof}
By Theorem~\ref{thm:Ge}, $U_a$ lies on the Gergonne cevian from vertex $A$.
By Theorem~\ref{radixAxis}, this Gergonne cevian is the radical axis of circles $\gamma_b$ and $\gamma_c$.
Thus, the two tangents have the same length.
\end{proof}


\begin{theorem}[Miyamoto Analog]\label{thm:tangentCircles}
For a general triad of circles associated with $\triangle ABC$,
the inner Apollonius circle of $\gamma_a$, $\gamma_b$, $\gamma_c$ (blue circle in Figure~\ref{fig:Miyamoto}), is internally tangent
to the inner Apollonius circle of $\omega_a$, $\omega_b$, $\omega_c$ (green circle in Figure~\ref{fig:Miyamoto}).
\end{theorem}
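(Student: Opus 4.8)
The plan is to verify the tangency by locating both inner Apollonius circles explicitly in barycentric coordinates and then checking that the distance between their centers equals the difference of their radii, in keeping with the computational approach used in Sections~\ref{section:bary} and the Apollonius section. First I would record the data for the inner Apollonius circle $C_i^{\gamma}$ of $\gamma_a,\gamma_b,\gamma_c$: its center $U$ is given by Theorem~\ref{thm:coordU}, and its radius $\rho_i$ follows either from Theorem~\ref{thm:GeUI} (so $\rho_i=r\,G_eU/G_eI$) or, more directly, from the external tangency $|UD|=\rho_i+\rho_a$ together with the known center $D$ (Theorem~\ref{thm:baryD}) and radius $\rho_a$ (Corollary~\ref{cor:rho1}) of $\gamma_a$.

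Next, and this is the crux, I would determine the inner Apollonius circle $C_i^{\omega}$ of $\omega_a,\omega_b,\omega_c$, whose center $W$ and radius $r_\omega$ satisfy the three tangency conditions $|WO_a|=|R_a-r_\omega|$ and cyclically, with $O_a$ taken from Theorem~\ref{thm:baryOa} and $R_a$ from Theorem~\ref{thm:Ra}. To keep this tractable I would first locate the radical center $S$ of $\omega_a,\omega_b,\omega_c$: since $\omega_b$ and $\omega_c$ meet at $A$, $\omega_c$ and $\omega_a$ at $B$, and $\omega_a$ and $\omega_b$ at $C$, each pairwise radical axis passes through a vertex, so $S$ is obtained by intersecting two such axes. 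By Theorem~\ref{thm:oi}, $W$ lies on the line through $S$ and the outer Apollonius center, with the two Apollonius radii proportional to the signed distances from $S$; this structural information reduces the Apollonius system to a single quadratic, which I would solve in \textsc{Mathematica} using the substitutions $t=\tan(\theta/4)$, $R_a=\tfrac a2\csc\tfrac\theta2$, $r=\Delta/p$, and $\Delta=\sqrt{p(p-a)(p-b)(p-c)}$ exactly as in the proof of Theorem~\ref{thm:formula}.

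Finally I would compute $|UW|^2$ from the barycentric distance formula and verify the internal-tangency identity $|UW|^2=(r_\omega-\rho_i)^2$, reading off the sign (internal rather than external) from the figure, i.e.\ from $C_i^{\gamma}$ lying inside $C_i^{\omega}$. The main obstacle is obtaining $C_i^{\omega}$ in a form clean enough that this last identity collapses to $0$ after the substitutions: the reduction via the radical center (Theorem~\ref{thm:oi}) is what makes the final simplification feasible, but it remains a substantial symbolic computation. A purely synthetic alternative would try to name the common tangent point of the two circles and apply Casey's Lemma (Lemma~\ref{lemma:Casey}) to the tangent pairs $(\gamma_a,\omega_a)$, but pinning down that tangent point explicitly appears to be the genuine difficulty, which is why I expect the coordinate route to be the reliable one.
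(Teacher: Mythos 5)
The first thing to note is that the paper does not actually prove this theorem: its entire proof is the sentence that this is a special case of the Miyamoto Generalization (Theorem~\ref{thm:MiyamotoGen}), which is itself only ``stated in'' \cite{ETC52805} and given no proof here. So your coordinate plan is not a variant of the paper's argument --- it is an attempt to supply a proof where the paper supplies only a citation --- and the overall strategy (explicit barycentric data for both inner Apollonius circles, followed by verification of an identity of the form $|UW|^2=(r_\omega\pm\rho_i)^2$ under the standard substitutions) is consistent with how the authors handle Theorems~\ref{thm:coordUa}--\ref{thm:coordV} and Theorem~\ref{thm:formula}.

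As a proof, however, the proposal has a genuine gap at exactly the point you flag yourself: the circle $C_i^{\omega}$ is never actually determined. Everything needed for $C_i^{\gamma}$ is already in the paper (center from Theorem~\ref{thm:coordU}, radius from Theorem~\ref{thm:rhoi}), but for $C_i^{\omega}$ you only write down the defining tangency equations and a dimension reduction via the radical center and Theorem~\ref{thm:oi}, then defer the rest to \textsc{Mathematica}; since the final tangency identity cannot even be stated until $W$ and $r_\omega$ are in hand, what you have is a program rather than a proof. Two further points to watch if you carry it out. First, the tangency conditions defining $C_i^{\omega}$ are external in this configuration, not internal: the arcs are erected inwardly, so the central curvilinear region in which the green circle sits lies \emph{outside} each disk bounded by $\omega_a$, and the correct condition is $|WO_a|=R_a+r_\omega$ (with care about signs, as the paper warns for $\rho_i$); taking $|WO_a|=|R_a-r_\omega|$ will generally select the wrong Apollonius circle. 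Second, knowing that the radical axis of $\omega_b$ and $\omega_c$ passes through the vertex $A$ does not by itself determine that axis; you still need the second common point or the circle equations themselves --- routine in barycentrics, but it should be said, since the radical center of the $\omega$'s, unlike that of the $\gamma$'s (Theorem~\ref{thm:radicalCenter}), is not identified anywhere in the paper.
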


\begin{figure}[ht]
\centering
\includegraphics[scale=0.4]{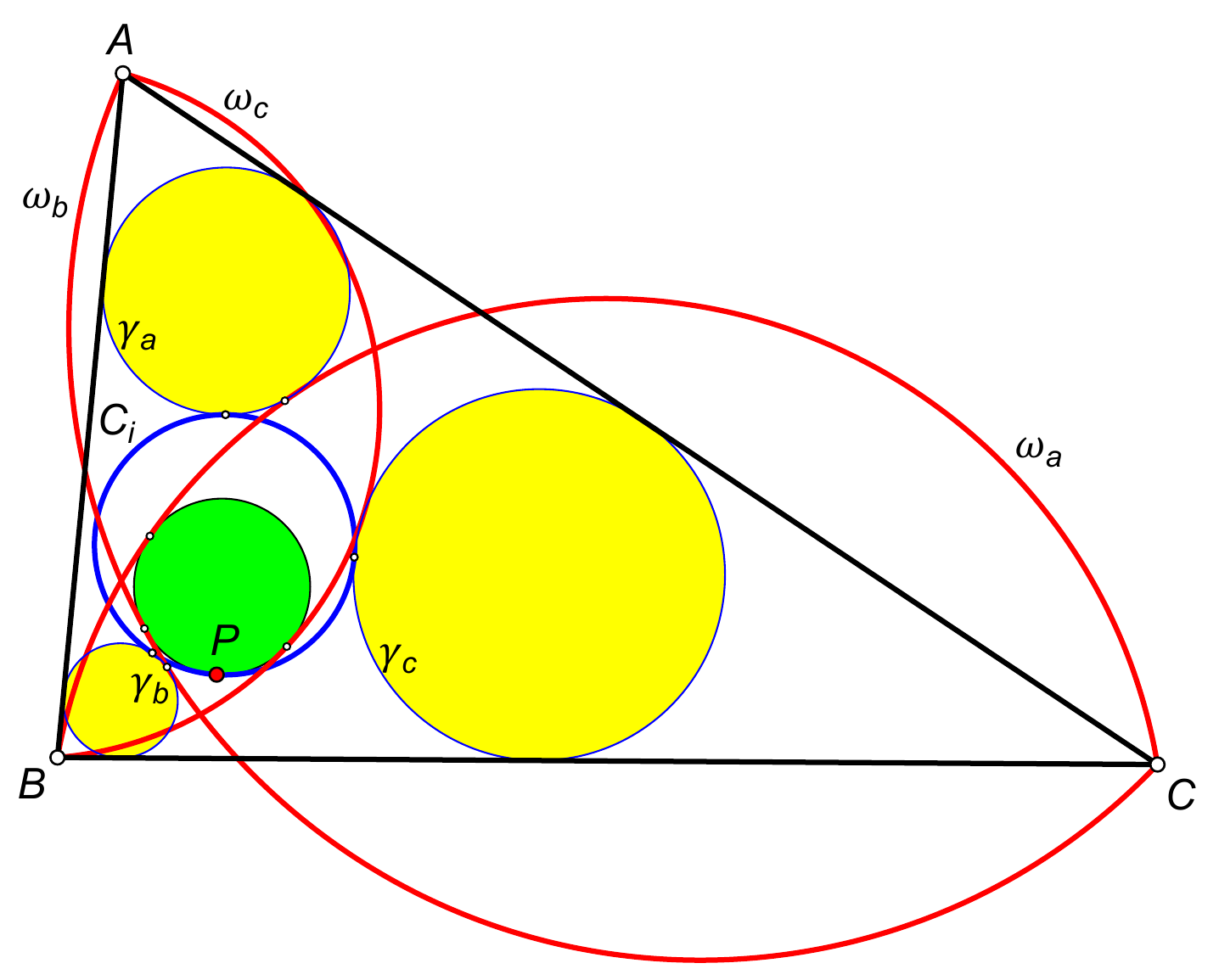}
\caption{green and blue circles touch at $P$}
\label{fig:Miyamoto}
\end{figure}

\begin{proof}
This is a special case of the following theorem which is stated in \cite{ETC52805}.
\end{proof}

\newpage

\begin{theorem}[Miyamoto Generalization]\label{thm:MiyamotoGen}
Let $\omega_a$ be any arc erected internally on side $BC$ of $\triangle ABC$.
Let $\gamma_a$ be the circle that is inside $\triangle ABC$, tangent to $AB$ and $AC$,
and tangent externally to $\omega_a$. Define $\omega_b$, $\omega_c$, $\gamma_b$, and $\gamma_c$
similarly.
Then the inner Apollonius circle of $\gamma_a$, $\gamma_b$, $\gamma_c$ (blue circle in Figure~\ref{fig:MiyamotoGen}), is internally tangent
to the inner Apollonius circle of $\omega_a$, $\omega_b$, $\omega_c$ (green circle in Figure~\ref{fig:MiyamotoGen}).
\end{theorem}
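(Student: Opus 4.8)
The plan is to reduce the claimed internal tangency to a single polynomial identity in the side lengths $a,b,c$ and the three independent arc parameters $t_a=\tan(\theta_a/4)$, $t_b=\tan(\theta_b/4)$, $t_c=\tan(\theta_c/4)$, and then to verify that identity with a computer algebra system, in the same spirit as the proofs of Theorems~\ref{thm:coordU} and~\ref{thm:coordV}. The first step is to record the data of all six circles in barycentric coordinates. Since each $\gamma_a$ and each $\omega_a$ depends only on the arc erected on its own side, the formulas already obtained carry over verbatim once $t$ is replaced by $t_a$ on side $a$: the center $D_a$ and radius $\rho_a$ of $\gamma_a$ come from Theorem~\ref{thm:baryD} and Corollary~\ref{cor:rho1}, while the center $O_a$ and radius $R_a$ of $\omega_a$ come from Theorems~\ref{thm:baryOa} and~\ref{thm:Ra}; the analogous substitutions on sides $b$ and $c$ complete the input data.

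Next I would compute the inner Apollonius circle $C_\gamma=(U,\rho_i)$ of $\gamma_a,\gamma_b,\gamma_c$ and the inner Apollonius circle $C_\omega=(W,\sigma_i)$ of $\omega_a,\omega_b,\omega_c$. Each is obtained by solving its three tangency equations: writing the tangency of the sought circle with each of the three given circles as $|WO_a|=\sigma_i+R_a$ (cyclically in $a,b,c$), and likewise $|UD_a|=\rho_i+\rho_a$, yields three scalar equations in the two Cartesian coordinates of the center and in the radius, which pin down $(U,\rho_i)$ and $(W,\sigma_i)$ once the correct tangency branch is fixed. For the $\gamma$-triad this must recover the coordinates of Theorem~\ref{thm:coordU} in the limit $t_a=t_b=t_c$, which serves as a useful check on the algebra.

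Finally, since internal tangency of $C_\gamma$ and $C_\omega$ is equivalent to $|UW|=|\rho_i-\sigma_i|$, hence to $|UW|^2=(\rho_i-\sigma_i)^2$, the whole theorem collapses to the vanishing of one polynomial in $a,b,c,t_a,t_b,t_c$. Expanding $|UW|^2$ with the standard barycentric displacement formula \cite{Grozdev} and subtracting $(\rho_i-\sigma_i)^2$ should simplify to $0$; specializing to $t_a=t_b=t_c$ must reproduce Theorem~\ref{thm:tangentCircles} as a sanity check.

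The main obstacle is twofold. Computationally, the inner Apollonius circle of three arbitrary circles has no short closed form, so with three free parameters the expressions for $(U,\rho_i)$ and $(W,\sigma_i)$ become large, and one must carefully track which tangency branch (internal or external) $C_\omega$ makes with each $\omega_a$, since this varies across the configurations in Figure~\ref{fig:innerApollonius}. Conceptually, such an identity explains nothing about \emph{why} the two circles touch, and I expect a synthetic proof to be genuinely hard, as the companion concurrency statements are posed as open questions. The most promising synthetic route would be Casey's theorem: regard $\gamma_a,\gamma_b,\gamma_c$ together with $C_\omega$ as four circles and verify the Ptolemy-type relation among their six pairwise common-tangent lengths, which would produce a circle tangent to all four and force it to coincide with $C_\gamma$. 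The sticking point is precisely the three mixed tangent lengths from $\gamma_a$ to $C_\omega$, whose evaluation still presupposes an explicit description of $C_\omega$; so even this approach does not avoid first determining the Apollonius circle of the three arcs.
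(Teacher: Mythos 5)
You should know at the outset that the paper contains no proof of Theorem~\ref{thm:MiyamotoGen}: the statement is quoted from Kimberling's Encyclopedia of Triangle Centers \cite{ETC52805} without argument, and the Miyamoto Analog (Theorem~\ref{thm:tangentCircles}) is in turn ``proved'' only by declaring it a special case of this unproved generalization. So there is nothing in the paper to compare your attempt against; a completed version of your plan would actually supply something the authors omit.

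That said, what you have written is a strategy rather than a proof, and the step you defer is precisely where the content lies. For the equal-$\theta$ triad the paper obtains rational barycentric coordinates for the inner Apollonius circle (Theorems~\ref{thm:coordUa} and~\ref{thm:coordU}) only because of special structure: the common tangents are all equal (Theorem~\ref{thm:genTangents}), hence the radical center is the Gergonne point (Theorem~\ref{thm:radicalCenter}) and the touch point $U_a$ is the point $L'$ on the Gergonne cevian. None of this survives when $t_a$, $t_b$, $t_c$ are independent. Solving your three tangency equations $|UD_a|=\rho_i+\rho_a$ by subtracting pairs expresses $U$ affinely in $\rho_i$ but leaves a genuine quadratic for $\rho_i$, so $(U,\rho_i)$ and $(W,\sigma_i)$ live in quadratic extensions of $\mathbb{Q}(a,b,c,t_a,t_b,t_c)$; the assertion that the theorem ``collapses to the vanishing of one polynomial'' is therefore not justified as stated. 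You would need either to show that the radicals cancel in $|UW|^2-(\rho_i-\sigma_i)^2$, or to recast the verification as an elimination/resultant computation over the defining ideals, together with a sign analysis that selects the inner (not outer) root of each quadratic and the correct internal/external branch of tangency with each $\omega_a$. You list these as ``obstacles,'' but they are the proof, not peripheral difficulties; until they are carried out the argument is a plausible plan, not a verification. Your closing observation about Casey's theorem is the more promising synthetic direction, though, as you correctly note, it still presupposes an explicit handle on the Apollonius circle of the three arcs.
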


\begin{figure}[ht]
\centering
\includegraphics[scale=0.5]{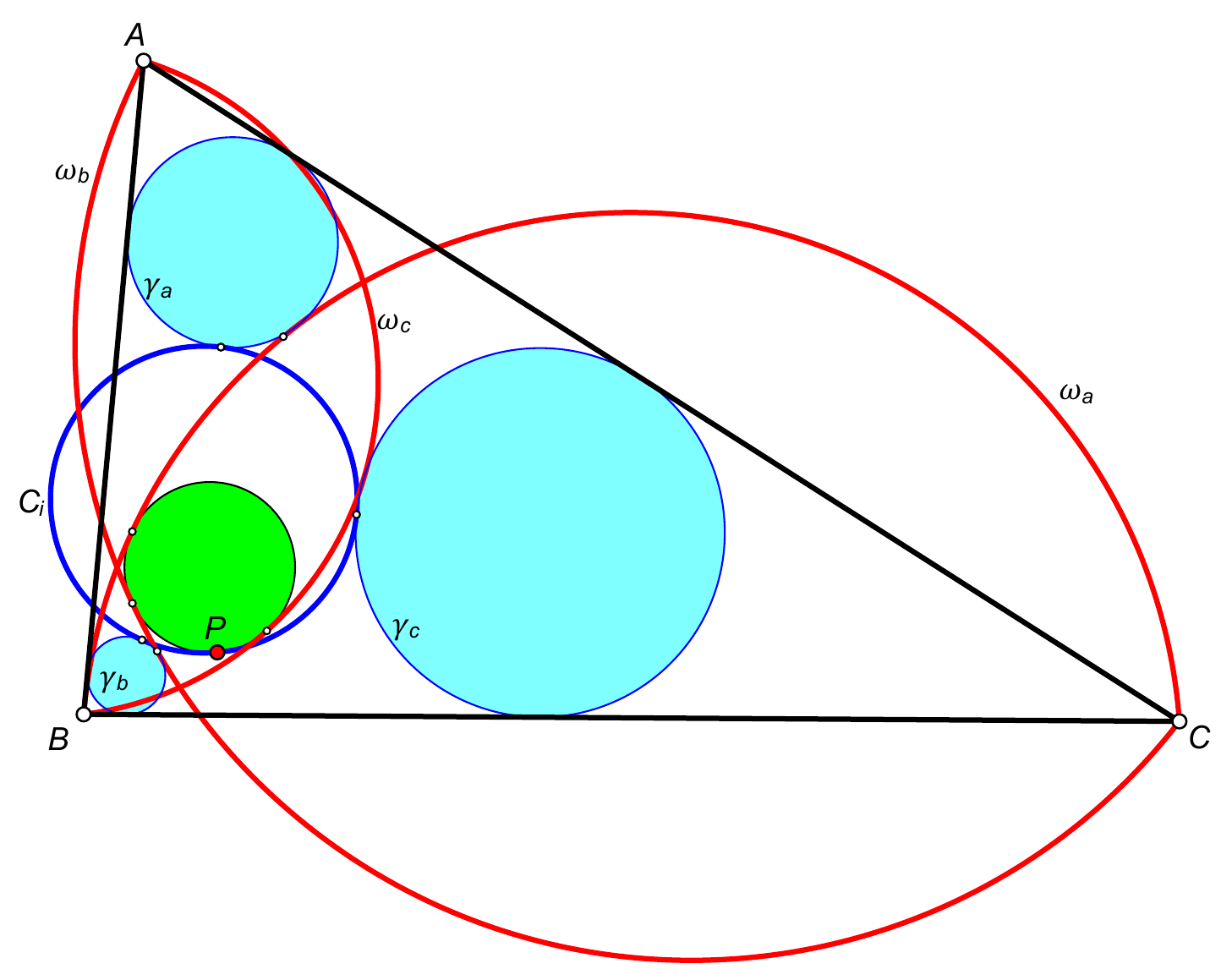}
\caption{green and blue circles touch at $P$. Red arcs have different angular measures.}
\label{fig:MiyamotoGen}
\end{figure}

\void{
Alternatively, from Theorem~\ref{thm:GeUI}
$$\frac{G_eU}{G_eI}=\frac{\rho_i}{r}.$$
Similarly,
$$\frac{G_eV}{G_eI}=\frac{\rho_o}{r}.$$
Dividing gives
$$\frac{G_eV}{G_eU}=\frac{\rho_o}{\rho_i}.$$
}


\begin{lemma}\label{lemma:rW}
We have
$$r\W=\frac{2 a b+2 b c+2 ca-a^2-b^2-c^2}{2(a+b+c)}.$$
\end{lemma}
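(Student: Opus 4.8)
The plan is to express everything through the well-known formulas $r=\Delta/p$, $R=abc/(4\Delta)$, and Heron's formula $\Delta=\sqrt{p(p-a)(p-b)(p-c)}$, all of which have already been invoked in the proofs of Corollary~\ref{cor:rho1} and Theorem~\ref{thm:formula}. Recalling the definition $\W=(4R+r)/p$, I would first write
$$r\W=\frac{r(4R+r)}{p}=\frac{4Rr+r^2}{p}.$$
The purpose of this rearrangement is that the two pieces $Rr$ and $r^2$ have clean symmetric-function descriptions: from the formulas above, $Rr=\frac{abc}{4\Delta}\cdot\frac{\Delta}{p}=\frac{abc}{4p}$, so $4Rr=abc/p$, while $r^2=\Delta^2/p^2=(p-a)(p-b)(p-c)/p$. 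Substituting both gives
$$r\W=\frac{abc+(p-a)(p-b)(p-c)}{p^2}.$$

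The remaining work is purely algebraic simplification of the numerator. Writing $s=a+b+c=2p$ and using $p-a=(s-2a)/2$ (and cyclically), I would expand
$$(p-a)(p-b)(p-c)=\frac{(s-2a)(s-2b)(s-2c)}{8}=\frac{-s^3+4s(ab+bc+ca)-8abc}{8}.$$
Adding $abc$ cancels the $8abc$ term exactly — this is the decisive cancellation in the whole computation — and leaves
$$abc+(p-a)(p-b)(p-c)=\frac{s\bigl[4(ab+bc+ca)-s^2\bigr]}{8}.$$
I would then substitute $s^2=a^2+b^2+c^2+2(ab+bc+ca)$ so that the bracket collapses to $2(ab+bc+ca)-(a^2+b^2+c^2)$.

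Finally, dividing by $p^2=s^2/4$ and cancelling one factor of $s=a+b+c$ yields
$$r\W=\frac{2(ab+bc+ca)-(a^2+b^2+c^2)}{2(a+b+c)}=\frac{2ab+2bc+2ca-a^2-b^2-c^2}{2(a+b+c)},$$
which is the claimed identity. There is no genuine conceptual obstacle; the lemma is entirely a symmetric-function calculation, and the only place demanding care is tracking signs through the expansion of $(s-2a)(s-2b)(s-2c)$ so that the $abc$ contributions cancel cleanly. An equally valid alternative would be to substitute $p=(a+b+c)/2$ directly into $abc+(p-a)(p-b)(p-c)$ and expand everything at once, but factoring out $s$ first keeps the intermediate expressions shorter and makes the cancellation transparent.
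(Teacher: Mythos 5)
Your proposal is correct and follows essentially the same route as the paper: both reduce $r\W$ to $\bigl(abc+(p-a)(p-b)(p-c)\bigr)/p^2$ via $r=\Delta/p$, $R=abc/(4\Delta)$, and Heron's formula, then substitute $p=(a+b+c)/2$ and simplify. Your write-up merely makes explicit the final expansion that the paper compresses into ``simplifying,'' and the algebra checks out.
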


\begin{proof}
Recall that $\W=(4R+r)/p$.
We use the well-known identities $r=\Delta/p$, $R=abc/(4\Delta)$,
and $\Delta=\sqrt{p(p-a)(p-b)(p-c)}$.
Then we have
\begin{align*}
r\W&=r\left(\frac{4R+r}{p}\right)\\
&=\left(\frac{\Delta}{p}\right)\left(4\cdot\frac{abc}{4\Delta}+\frac{\Delta}{p}\right)\Bigm/p\\
&=\left(\frac{abc}{p}+\frac{\Delta^2}{p^2}\right)\Bigm/p\\
&=\frac{1}{p}\left(\frac{abc}{p}+\frac{p(p-a)(p-b)(p-c)}{p^2}\right).
\end{align*}
Letting $p=(a+b+c)/2$ and simplifying, gives
\begin{equation*}
r\W=\frac{2 a b+2 b c+2 ca-a^2-b^2-c^2}{2 (a+b+c)}.\qedhere
\end{equation*}
\end{proof}

\newpage

\begin{lemma}[Length of $AG_e$]\label{lemma:AGe}
We have
$$AG_e=\frac{(p-a)\sqrt{a(p-a)[ap-(b-c)^2]}}{pr\W}.$$
\end{lemma}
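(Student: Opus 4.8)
The plan is to locate $G_e$ on the Gergonne cevian $AL$, write $AG_e$ as a definite fraction of the already-known length $AL$, and then recognize the resulting denominator as $pr\W$ by way of Lemma~\ref{lemma:rW}.

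First I would pass to barycentric coordinates. Abbreviating $u=p-a$, $v=p-b$, $w=p-c$ (so that $u+v+w=p$ and $v+w=a$), we have $A=(1:0:0)$, the incircle touch point $L=(0:p-c:p-b)=(0:w:v)$, and the Gergonne point $G_e=(1/u:1/v:1/w)=(vw:uw:uv)$ after clearing denominators (this is the form of $G_e$ recorded in the proof of Theorem~\ref{thm:AUaGe}). Normalizing these and writing $G_e=(1-t)A+tL$, the first coordinate gives $1-t=vw/\sigma$ with $\sigma=uv+vw+wu$, so that
\[
t=\frac{AG_e}{AL}=\frac{\sigma-vw}{\sigma}=\frac{u(v+w)}{\sigma}=\frac{a(p-a)}{\sigma}.
\]

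Next I would invoke the Gergonne cevian length from equation~(\ref{eq:AL}), namely $AL=\sqrt{(p-a)[ap-(b-c)^2]/a}$, and multiply:
\[
AG_e=\frac{a(p-a)}{\sigma}\sqrt{\frac{(p-a)[ap-(b-c)^2]}{a}}=\frac{(p-a)\sqrt{a(p-a)[ap-(b-c)^2]}}{\sigma}.
\]
This already has the desired shape, so the only substantive remaining task is to show $\sigma=pr\W$, where $\sigma=(p-a)(p-b)+(p-b)(p-c)+(p-c)(p-a)$.

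The final step is the identification $\sigma=pr\W$. Expanding each factor via $p-a=(-a+b+c)/2$, and so on, each product is a difference of squares and the sum collapses to $\sigma=(2ab+2bc+2ca-a^2-b^2-c^2)/4$. Lemma~\ref{lemma:rW} expresses $r\W$ as exactly this numerator divided by $2(a+b+c)=4p$, whence $pr\W=p\cdot r\W=\sigma$. Substituting $\sigma=pr\W$ into the displayed expression yields the claimed value of $AG_e$. I expect the main obstacle to be bookkeeping rather than anything deep: getting the division ratio $AG_e/AL$ right (orientation, and reading it off the correct normalized coordinate) and carrying the expansion of $\sigma$ through cleanly; every other ingredient is a direct quotation of Theorem~\ref{thm:lengthOfAL'} and Lemma~\ref{lemma:rW}.
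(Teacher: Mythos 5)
Your argument is correct, but it reaches the formula by a different route than the paper. The paper's proof simply quotes the closed form for the vertex-to-Gergonne-point distance (Property 2.1.1 of \cite{GergonneFormulary}), rewrites its denominator $2ab+2bc+2ca-a^2-b^2-c^2$ as $4pr\W$ via Lemma~\ref{lemma:rW}, and substitutes $b+c-a=2(p-a)$; it is a two-line bookkeeping exercise resting entirely on the external reference. You instead build the length from two pieces: the division ratio $AG_e/AL=a(p-a)/\sigma$ with $\sigma=\sum(p-a)(p-b)$, obtained from barycentric coordinates, and the Gergonne cevian length $AL$ from equation~(\ref{eq:AL}), followed by the identification $\sigma=pr\W$ (which is correct: $\sigma=ab+bc+ca-p^2=\bigl(2ab+2bc+2ca-a^2-b^2-c^2\bigr)/4=p\cdot r\W$ by Lemma~\ref{lemma:rW}). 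Your ratio is exactly the content of the paper's later Lemma~\ref{lemma:GeRatios} and Corollary~\ref{cor:GeRatios}, there proved by Menelaus; so your proof effectively front-loads that lemma and makes Lemma~\ref{lemma:AGe} a consequence of it together with Theorem~\ref{thm:lengthOfAL'}'s ingredient (\ref{eq:AL}). What this buys is a more self-contained derivation, depending on the external formulary only for $AL$ rather than for $AG_e$ itself; what it costs is a slightly longer computation where the paper needs only a substitution. All steps check out.
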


\begin{proof}
The distance from a vertex of a triangle to its Gergonne point is known. From Property 2.1.1 in \cite{GergonneFormulary}, we have
\begin{equation}
AG_e=\frac{(b+c-a)\sqrt{a(b+c-a)[2ap-2(b-c)^2]}}{2 a b+2 b c+2 ca-a^2-b^2-c^2}.
\end{equation}
From Lemma~\ref{lemma:rW}, this can be written as
\begin{equation*}
AG_e=\frac{(b+c-a)\sqrt{a(b+c-a)[2ap-2(b-c)^2]}}{4pr\W}.
\end{equation*}
Noting that $b+c-a=2(p-a)$, gives us our result.
\end{proof}

\void{

\begin{lemma}\label{lemma:w}
We have
$$r^2p=(p-a)(a^2-ap+pr\W).$$
\end{lemma}

\begin{proof}
Since $W=(4R+r)/p$, we find
\begin{align*}
(p-a)(a^2-ap+pr\W)&=(p-a)[a^2-ap+r(4R+r)]\\
&=(p-a)\left[a^2-ap+r\left(\frac{abc}{K}+r\right)\right]\\
&=(p-a)\left[a^2-ap+r\left(\frac{abc}{rp}+\frac{K}{p}\right)\right]\\
&=(p-a)\left[a^2-ap+\frac{abc}{p}+\frac{rK}{p}\right]\\
&=(p-a)\left[a^2-ap+\frac{abc}{p}+K^2\right]\\
&=(p-a)\left[a^2-ap+\frac{abc}{p}+p(p-a)(p-b)(p-c)\right]\\
&=\frac18(a-b+c)(a+b-c)(b+c-a)\\
&=\frac18(a+b+c)(a-b+c)(a+b-c)(b+c-a)/(2p)\\
&=K^2/p\\
&=(rp)^2/p\\
&=r^2p\qedhere
\end{align*}
\end{proof}

}

\begin{lemma}\label{lemma:GeRatios}
Let the touch points of the incircle of $\triangle ABC$ with its sides be $L$, $M$, and $N$,
as shown in Figure~\ref{fig:GeRatios}.
Then
$$\frac{LG_e}{AG_e}=\frac{(p-b)(p-c)}{a(p-a)}.$$
\end{lemma}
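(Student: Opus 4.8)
The plan is to compute the ratio directly from the two length formulas just established, using the fact that $G_e$ lies on the Gergonne cevian $AL$ strictly between $A$ and $L$ (it is interior to the triangle). Because $G_e$ lies on segment $AL$, we have $LG_e = AL - AG_e$, so the target can be rewritten as
$$\frac{LG_e}{AG_e} = \frac{AL}{AG_e} - 1,$$
and everything reduces to computing the single ratio $AL/AG_e$.

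First I would take the Gergonne cevian length from equation~(\ref{eq:AL}), namely $AL = \sqrt{(p-a)[ap-(b-c)^2]/a}$, together with the value of $AG_e$ from Lemma~\ref{lemma:AGe}, namely $AG_e = (p-a)\sqrt{a(p-a)[ap-(b-c)^2]}\,/\,(pr\W)$. Both expressions share the common radical $\sqrt{(p-a)[ap-(b-c)^2]}$, so on dividing, the square roots cancel cleanly and I expect the pleasant closed form
$$\frac{AG_e}{AL} = \frac{a(p-a)}{pr\W}, \qquad \text{equivalently} \qquad \frac{AL}{AG_e} = \frac{pr\W}{a(p-a)}.$$
Substituting into the rewriting above gives $LG_e/AG_e = \bigl(pr\W - a(p-a)\bigr)/\bigl(a(p-a)\bigr)$.

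It then remains to identify the numerator, and the one genuinely computational step is the identity
$$pr\W = a(p-a) + (p-b)(p-c).$$
I would verify this by invoking Lemma~\ref{lemma:rW} to replace $r\W$ by $(2ab+2bc+2ca-a^2-b^2-c^2)/(2(a+b+c))$, so that $pr\W = (2ab+2bc+2ca-a^2-b^2-c^2)/4$, and then expanding $a(p-a)+(p-b)(p-c)$ via $p-a=(-a+b+c)/2$, $p-b=(a-b+c)/2$, $p-c=(a+b-c)/2$; this is a short symmetric-polynomial check. With the identity in hand, the numerator $pr\W - a(p-a)$ collapses to $(p-b)(p-c)$, giving $LG_e/AG_e = (p-b)(p-c)/(a(p-a))$.

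The main obstacle is bookkeeping rather than anything conceptual: ensuring the radicals cancel correctly and that the quadratic identity for $pr\W$ is expanded without sign errors. As an independent cross-check that bypasses the cevian-length formulas entirely, I would recompute in barycentric coordinates: with $A=(1:0:0)$, $L=(0:p-c:p-b)$, and $G_e=\left(\tfrac{1}{p-a}:\tfrac{1}{p-b}:\tfrac{1}{p-c}\right)$, one sees at once that $G_e$ lies on $AL$ (the ratio of its last two coordinates is $(p-c):(p-b)$), and reading off the affine parameter from the normalized first coordinate yields directly
$$\frac{LG_e}{AG_e} = \frac{1/(p-a)}{1/(p-b)+1/(p-c)} = \frac{(p-b)(p-c)}{a(p-a)},$$
since $1/(p-b)+1/(p-c) = a/[(p-b)(p-c)]$. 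This matches the formula and confirms the computation.
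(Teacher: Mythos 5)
Your proposal is correct, but it takes a genuinely different route from the paper. The paper's proof is a two-line application of Menelaus' Theorem to $\triangle ALC$ with transversal $BM$: since the touch-point distances $BL=p-b$, $CM=p-c$, $AM=p-a$ are known, the relation $AG_e\cdot BL\cdot CM=LG_e\cdot BC\cdot AM$ gives the ratio immediately, with no metric formulas and no algebraic identity needed. Your primary route instead divides the explicit length formulas for $AL$ (equation~(\ref{eq:AL})) and $AG_e$ (Lemma~\ref{lemma:AGe}) — both of which do precede this lemma, so the dependency is legitimate — and then reduces the answer via the identity $pr\W=a(p-a)+(p-b)(p-c)$, which is correct (it is essentially the computation redone in Lemma~\ref{lemma:w2} later in the paper) and which you verify properly from Lemma~\ref{lemma:rW}. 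The radicals do cancel as you claim, so $AG_e/AL=a(p-a)/(pr\W)$ is right, and your use of $LG_e=AL-AG_e$ is justified because $G_e$ is interior to the triangle. What the paper's approach buys is economy and independence from the imported cevian-length formulas; what yours buys is a consistency check tying together Lemma~\ref{lemma:AGe}, Lemma~\ref{lemma:rW}, and the Gergonne cevian length. Your barycentric cross-check, with $L=(0:p-c:p-b)$ and $G_e=\left(\frac{1}{p-a}:\frac{1}{p-b}:\frac{1}{p-c}\right)$, is also correct and is arguably the cleanest of the three arguments, since $\frac{1}{p-b}+\frac{1}{p-c}=\frac{a}{(p-b)(p-c)}$ delivers the ratio in one line.
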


\begin{figure}[ht]
\centering
\includegraphics[scale=0.5]{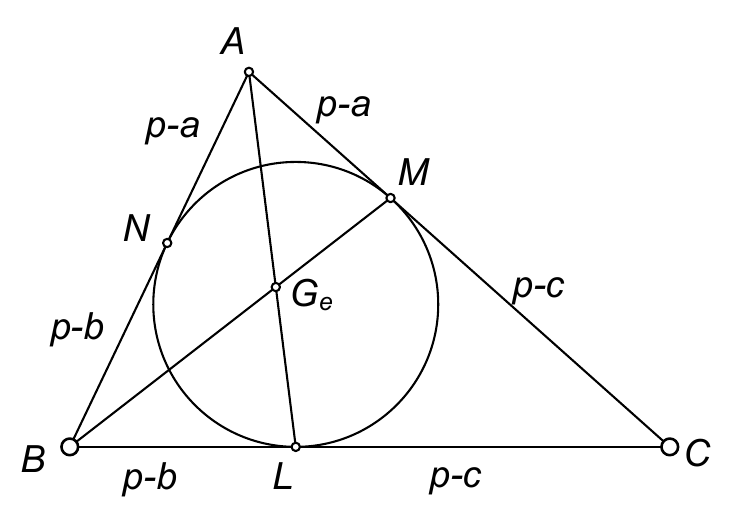}
\caption{}
\label{fig:GeRatios}
\end{figure}

\begin{proof}
By definition, $G_e$ is the intersection of $AL$ and $BM$.
Applying Menelaus' Theorem to $\triangle ALC$ with transversal $BM$ gives
$$AG_e\cdot BL\cdot CM=LG_e\cdot BC\cdot AM$$
or
$$(AG_e)(p-b)(p-c)=(LG_e)(a)(p-a)$$
which is equivalent to our desired result.
\end{proof}

\begin{corollary}\label{cor:GeRatios}
With the same terminology,
$$\frac{AL}{AG_e}=\frac{AG_e+LG_e}{AG_e}=1+\frac{LG_e}{AG_e}=1+\frac{(p-b)(p-c)}{a(p-a)}.$$
\end{corollary}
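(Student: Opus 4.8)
The plan is to recognize that this is a one-line consequence of the preceding lemma together with the basic fact that the Gergonne point lies on the Gergonne cevian. Since $L$ is the point where the incircle touches $BC$, the segment $AL$ is precisely the Gergonne cevian from vertex $A$, and by definition $G_e$ lies on this cevian. Because $G_e$ is interior to $\triangle ABC$, it lies strictly between $A$ and $L$, so the segment addition $AL = AG_e + LG_e$ holds.

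From here the computation is immediate: I would divide the identity $AL = AG_e + LG_e$ through by $AG_e$ to obtain
$$\frac{AL}{AG_e} = \frac{AG_e + LG_e}{AG_e} = 1 + \frac{LG_e}{AG_e}.$$
The final step substitutes the value of the ratio $LG_e/AG_e$ supplied by Lemma~\ref{lemma:GeRatios}, namely $(p-b)(p-c)/[a(p-a)]$, yielding the desired
$$\frac{AL}{AG_e} = 1 + \frac{(p-b)(p-c)}{a(p-a)}.$$

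There is essentially no obstacle here; the only thing that genuinely requires justification beyond Lemma~\ref{lemma:GeRatios} is the segment-addition relation, and that reduces to the geometric observation that $G_e$ lies between $A$ and $L$ on the cevian $AL$. The whole corollary is really just a rewriting of the lemma, so the chain of equalities displayed in the statement can stand as the proof with only a brief sentence identifying $AL$ as the Gergonne cevian and invoking the lemma for the last ratio.
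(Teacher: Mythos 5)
Your proposal is correct and is essentially the paper's own (implicit) argument: the displayed chain of equalities in the corollary is exactly the segment-addition relation $AL = AG_e + LG_e$ divided by $AG_e$, followed by substituting the ratio from Lemma~\ref{lemma:GeRatios}. Your added remark that $G_e$ lies between $A$ and $L$ on the Gergonne cevian is the only justification needed, and it matches the paper's intent.
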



\begin{lemma}\label{lemma:w2}
We have
$$\W=\frac{r}{p-a}\left(\frac{a(p-a)}{(p-b)(p-c)}+1\right).$$
\end{lemma}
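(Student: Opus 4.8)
The plan is to reduce the claimed identity to a single polynomial identity in the side lengths $a,b,c$, using the already-computed value of $r\W$ from Lemma~\ref{lemma:rW} together with the standard relation between the inradius and the side lengths. No genuinely geometric input is needed beyond what is quoted earlier.

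First I would massage the right-hand side. Combining the two summands inside the parentheses over the common denominator $(p-b)(p-c)$ gives
$$\frac{r}{p-a}\left(\frac{a(p-a)}{(p-b)(p-c)}+1\right)=\frac{r\bigl[a(p-a)+(p-b)(p-c)\bigr]}{(p-a)(p-b)(p-c)}.$$
Next I would invoke the well-known facts $\Delta=rp$ and $\Delta=\sqrt{p(p-a)(p-b)(p-c)}$, which together yield $(p-a)(p-b)(p-c)=pr^2$. Substituting this into the denominator collapses the right-hand side to $\dfrac{a(p-a)+(p-b)(p-c)}{pr}$, so the lemma becomes equivalent to the cleaner statement
$$pr\W=a(p-a)+(p-b)(p-c).$$

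To finish, I would plug in the value supplied by Lemma~\ref{lemma:rW}, namely $r\W=\dfrac{2ab+2bc+2ca-a^2-b^2-c^2}{2(a+b+c)}$, and use $a+b+c=2p$ to obtain $pr\W=\tfrac14\bigl(2ab+2bc+2ca-a^2-b^2-c^2\bigr)$. It then remains only to verify the purely algebraic identity
$$4\bigl[a(p-a)+(p-b)(p-c)\bigr]=2ab+2bc+2ca-a^2-b^2-c^2,$$
which drops out after expanding $a(p-a)$ and $(p-b)(p-c)$ with $p=(a+b+c)/2$. The computation is entirely routine; the only point needing care is the clearing of the denominator via $(p-a)(p-b)(p-c)=pr^2$, which is exactly what makes the powers of $r$ balance.

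There is no real obstacle here. As an alternative route that bypasses Lemma~\ref{lemma:rW}, one could instead combine Corollary~\ref{cor:GeRatios} with the lengths $AL$ from (\ref{eq:AL}) and $AG_e$ from Lemma~\ref{lemma:AGe}: these give $AL/AG_e=pr\W/[a(p-a)]$ on one hand and $[a(p-a)+(p-b)(p-c)]/[a(p-a)]$ on the other, leading to the same intermediate identity $pr\W=a(p-a)+(p-b)(p-c)$ and then finishing again through $(p-a)(p-b)(p-c)=pr^2$. I would present the first route, since it is the shortest and self-contained.
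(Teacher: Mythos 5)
Your proposal is correct and is essentially the paper's own argument: the paper likewise combines the two summands over the common denominator $(p-a)(p-b)(p-c)$, uses $\Delta=rp$ and $\Delta^2=p(p-a)(p-b)(p-c)$ (equivalently your $(p-a)(p-b)(p-c)=pr^2$) to reduce the expression to $\bigl[a(p-a)+(p-b)(p-c)\bigr]/\Delta$, expands that numerator to $\tfrac14\bigl(2ab+2bc+2ca-a^2-b^2-c^2\bigr)$, and closes with Lemma~\ref{lemma:rW}. The only difference is organizational (you meet in the middle at $pr\W=a(p-a)+(p-b)(p-c)$ rather than simplifying the right-hand side all the way to $\W$), which changes nothing of substance.
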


\begin{proof}
Using the well known formulas $\Delta^2=p(p-a)(p-b)(p-c)$ and $r=\Delta/p$, we get
\begin{align*}
\frac{r}{p-a}\left(\frac{a(p-a)}{(p-b)(p-c)}+1\right)&=\frac{ar}{(p-b)(p-c)}+\frac{r}{p-a}\\
&=r\cdot \frac{a(p-a)+(p-b)(p-c)}{(p-a)(p-b)(p-c)}\\
&=\frac{\Delta}{p}\cdot \frac{a(p-a)+(p-b)(p-c)}{(p-a)(p-b)(p-c)}\\
&=\frac{a(p-a)+(p-b)(p-c)}{\Delta}\\
&=\frac{a\cdot \frac{b+c-a}{2}+\frac{(a-b+c)(a+b-c)}{4}}{\Delta}\\
&=\frac{2ab+2bc+2ac-a^2-b^2-c^2}{4\Delta}\\
&=\frac{4pr\W}{4\Delta}\hspace{1.9in}\hbox{(by Lemma~\ref{lemma:rW})}\\
&=\W.\qedhere
\end{align*}
																		
\void{
Expanding the right side, we get
\begin{align*}
\frac{r}{p-a}\left(\frac{a(p-a)}{(p-b)(p-c)}+1\right)&=\frac{ar}{(p-b)(p-c)}+\frac{r}{p-a}\\
&=\frac{ar(p-a)+r(p-b)(p-c)}{(p-a)(p-b)(p-c)}\\
&=\frac{pr[a(p-a)+(p-b)(p-c)]}{p(p-a)(p-b)(p-c)}\\
&=\frac{pr[ap-a^2+p^2-pb-pc+bc]}{K^2}\\
&=\frac{pr[p^2-a^2+ap-pb-pc+bc]}{K^2}\\
&=\frac{pr[p^2-a^2+p(b+c-a)+bc]}{K^2}\\
&=\frac{pr[p^2-a^2+2p(p-a)+bc]}{K^2}\\
\end{align*}
Expanding the left side, we get
\begin{align*}
\W=\frac{4R+r}{p}=\frac{\frac{abc}{K}+r}{p}=\frac{abc+rK}{pK}=\frac{\frac{abc}{K}+r}{p}=\frac{abc+r\frac{K}{p}}{pK}
\end{align*}
}
\end{proof}


\begin{theorem}[Radius of Inner Apollonius Circle]\label{thm:rhoi}
Let $\rho_i$ be the radius of the inner Apollonius circle of $\gamma_a$, $\gamma_b$, and $\gamma_c$.
Then
$$\rho_i=rt\W-r.$$
\end{theorem}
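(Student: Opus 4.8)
The plan is to work entirely on the Gergonne cevian $AL$ and to exploit the homothety between the incircle and the inner Apollonius circle $C_i$. Recall that by the corollary to Theorem~\ref{thm:AUaGe} the touch point $U_a$ of $C_i$ with $\gamma_a$ coincides with $L'$, the point where $AL$ meets $\gamma_a$ nearer $L$; and by Corollary~\ref{cor:intouch} (established through Theorem~\ref{thm:UaTriangle}) the incircle $(I)$ and $C_i=(U)$ are homothetic with center $G_e$, this homothety carrying $I\mapsto U$ and $L\mapsto U_a=L'$. Writing $k$ for the signed ratio of this homothety, we have $\rho_i=|k|\,r$, so it suffices to compute $k$.

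First I would introduce a coordinate on the line $AL$ with $A$ at the origin. Since $L$ and $L'$ are corresponding points of the homothety centred at $G_e$, the ratio is $k=\dfrac{AL'-AG_e}{AL-AG_e}$. Now Theorem~\ref{thm:AL'L} gives $AL'=(\rho_a/r)\,AL$, while Corollary~\ref{cor:GeRatios} gives $\dfrac{AL}{AG_e}=1+Q$, where $Q=\dfrac{(p-b)(p-c)}{a(p-a)}$. Dividing numerator and denominator of $k$ by $AG_e$ removes the two unknown lengths and leaves
$$k=\frac{\frac{\rho_a}{r}(1+Q)-1}{Q}.$$

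Next I would substitute $\rho_a/r=1-\dfrac{rt}{p-a}$ from Corollary~\ref{cor:rho}, so that the numerator becomes $Q-\dfrac{rt}{p-a}(1+Q)$ and hence
$$k=1-\frac{rt}{p-a}\cdot\frac{1+Q}{Q}=1-t\cdot\frac{r}{p-a}\left(\frac{a(p-a)}{(p-b)(p-c)}+1\right).$$
The decisive step is to recognise, by Lemma~\ref{lemma:w2}, that $\dfrac{r}{p-a}\left(\dfrac{a(p-a)}{(p-b)(p-c)}+1\right)=\W$, which collapses the bracket to give $k=1-t\W$.

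Finally, $\rho_i=|k|\,r=r\,|1-t\W|=rt\W-r$, where the absolute value is resolved by noting that a genuine inner Apollonius circle has $\rho_i>0$, equivalently $t\W>1$ (indeed $k$ is negative because $G_e$ lies between $L'$ and $L$, reflecting the negative homothety already recorded in the proof of Theorem~\ref{thm:UaTriangle}). The main obstacle I anticipate is bookkeeping rather than depth: correctly identifying which points correspond under the $G_e$-homothety and that its magnitude is $\rho_i/r$, keeping the signs consistent along $AL$, and spotting that the unwieldy rational factor is exactly the quantity $\W$ packaged in Lemma~\ref{lemma:w2}.
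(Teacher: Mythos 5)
Your proposal is correct and follows essentially the same route as the paper's own proof: the $G_e$-homothety between the incircle and $C_i$, the ratio $AL'/AL=\rho_a/r$ from Theorem~\ref{thm:AL'L}, the ratio $AL/AG_e$ from Corollary~\ref{cor:GeRatios}, the substitution from Corollary~\ref{cor:rho}, and the collapse to $\W$ via Lemma~\ref{lemma:w2}. The only cosmetic difference is that you carry a signed homothety ratio $k=1-t\W$ and take an absolute value at the end, whereas the paper computes $\rho_i/r=-k$ directly and keeps $\rho_i$ signed (negative when $t\W<1$), as noted in its remark following the proof.
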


\begin{proof}
By Corollary~\ref{cor:intouch}, $C_i$ and the incircle are homothetic
with $G_e$ being the external center of similitude.
Under this homothety, $U_a$ maps to $L$.
Thus
$$\frac{G_eU_a}{G_eL}=\frac{\rho_i}{r}.$$
We can write this as
$$\frac{\rho_i}{r}=\frac{AG_e-AU_a}{AL-AG_e}=\frac{AG_e-AL\cdot\frac{\rho_a}{r}}{AL-AG_e}
=\frac{1-\frac{AL}{AG_e}\cdot\frac{\rho_a}{r}}{\frac{AL}{AG_e}-1}$$
because $AU_a=AL\cdot\frac{\rho_a}{r}$ (from Theorem~\ref{thm:AL'L}).
Now $$\frac{AL}{AG_e}=\frac{AG_e+LG_e}{AG_e}=1+\frac{LG_e}{AG_e}.$$
From Lemma~\ref{lemma:GeRatios}, we have
$$\frac{LG_e}{AG_e}=\frac{(p-b)(p-c)}{a(p-a)}$$
so
$$\frac{\rho_i}{r}=\frac{1-(1+\frac{(p-b)(p-c)}{a(p-a)})\cdot\frac{\rho_a}{r}}{\frac{(p-b)(p-c)}{a(p-a)}}$$
which is equivalent to
$$\frac{\rho_i}{r}+1=\left(1-\frac{\rho_a}{r}\right)\left(\frac{a(p-a)}{(p-b)(p-c)}+1\right).$$

We also know that
$$1-\frac{\rho_a}{r}=\frac{rt}{p-a}$$
from Corollary~\ref{cor:rho}.
Thus
$$\frac{\rho_i}{r}+1=\frac{rt}{p-a}\left(\frac{a(p-a)}{(p-b)(p-c)}+1\right).$$
By Lemma~\ref{lemma:w2}, this reduces to
\begin{equation*}
\frac{\rho_i}{r}+1=t\W,
\end{equation*}
so $\rho_i/r=t\W-1$ or $\rho_i=rt\W-r$.
\end{proof}

\void{
\begin{proof}
By Corollary~\ref{cor:intouch}, $C_i$ and the incircle are homothetic
with $G_e$ being the external center of similitude.
Under this homothety, $U_a$ maps to $L$.
Thus
$$\frac{G_eU_a}{G_eL}=\frac{\rho_i}{r}.$$
We can write this as
$$\frac{\rho_i}{r}=\frac{AG_e-AU_a}{AL-AG_e}=\frac{AG_e-AL\cdot\frac{\rho_a}{r}}{AL-AG_e}$$
because $AU_a=AL\cdot\frac{\rho_a}{r}$.
The value of $AG_e$ is known from Lemma~\ref{lemma:AGe} and
the value of $AL$ is known from equation (\ref{eq:AL}).
Thus,
$$\frac{\rho_i}{r}=\frac{x-y(\rho_a/r)}{y-x}=\frac{x/y-\rho_a/r}{1-x/y}$$
where
$$x=AG_e=\frac{(p-a)\sqrt{a(p-a)[ap-(b-c)^2]}}{pr\W}$$
and
$$y=AL=\sqrt{\frac{(p-a)[ap-(b-c)^2]}{a}}.$$
Because of common factors inside the radicals, the expression $x/y$ simplifies.
$$\frac{x}{y}=\frac{a(p-a)}{pr\W}$$
We also know that
$$\frac{\rho_a}{r}=1-\frac{rt}{p-a}$$
from Corollary~\ref{cor:rho}.
Putting these all together gives us
$$\frac{\rho_i}{r}=\frac{k-(1-\frac{rt}{p-a})}{1-k}=-1+\frac{1}{1-k}\cdot\frac{rt}{p-a}$$
where $k=x/y$.
Replacing $k$ by $a(p-a)/(pr\W)$, we get
$$\frac{\rho_i}{r}=-1+\frac{1}{1-a(p-a)/(pr\W)}\cdot\frac{rt}{p-a}$$
or
$$\frac{\rho_i}{r}+1=\frac{r^2pt\W}{(p-a)(a^2-ap+pr\W)}.$$
By Lemma~\ref{lemma:w}, this reduces to
\begin{equation*}
\frac{\rho_i}{r}+1=t\W,
\end{equation*}
so $\rho_i/r=t\W-1$.
\end{proof}
}

\void{
\note{original proof}
\begin{proof}
The radius of the inner Apollonius circle is equal to the distance between
points $U_a$ and $U$. The barycentric coordinates for these points
were found in Theorems \ref{thm:coordUa} and \ref{thm:coordU}, respectively.
Using the formula for the distance between two points, we find that
\begin{equation}
r_i= \frac{\left(2 a b+2 b c+2 ca-a^2-b^2-c^2\right)t-2S}{2 (a+b+c)}
\end{equation}
where $S$ is twice the area of $\triangle ABC$ and $t=\tan(\theta/4)$.
The computation was performed using \textsc{Mathematica} and the details are omitted.
To put the result into the required form, first note that
$2S/(2(a+b+c))=4\Delta/(4p)=\Delta/p=r$.
Second, observe that, by Lemma~\ref{lemma:rW},
\begin{equation*}
r\W=\frac{2 a b+2 b c+2 ca-a^2-b^2-c^2}{2 (a+b+c)},
\end{equation*}
which completes the proof.
\end{proof}
}

Note that $r_i$ will be negative if the inner Apollonius circle is internally tangent
to $\gamma_a$, $\gamma_b$, and $\gamma_c$. This will happen if $t\W<1$.

\begin{open}
Is there a simpler proof of Theorem~\ref{thm:rhoi}?
\end{open}

\begin{corollary}
We have
$$\rho_i=2r-(\rho_a+\rho_b+\rho_c).$$
\end{corollary}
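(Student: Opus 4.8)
The plan is to derive this corollary directly from two results already established: the formula for the radius of the inner Apollonius circle (Theorem~\ref{thm:rhoi}) and the formula for the sum of the radii of the triad (Theorem~\ref{thm:1}). No new geometry is needed; the identity falls out of algebraic bookkeeping once both expressions are written in terms of the common quantities $r$, $t$, and $\W$.

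First I would recall from Theorem~\ref{thm:rhoi} that
$$\rho_i = rt\W - r.$$
Next I would invoke Theorem~\ref{thm:1}, which states
$$\rho_a + \rho_b + \rho_c = 3r - rt\W.$$
Both expressions are built from the same three ingredients, so they can be combined without introducing any side lengths or the angle $\theta$ explicitly.

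The final step is the substitution itself. Starting from the right-hand side of the claim, I would compute
$$2r - (\rho_a + \rho_b + \rho_c) = 2r - (3r - rt\W) = -r + rt\W = rt\W - r,$$
which is exactly the expression for $\rho_i$ supplied by Theorem~\ref{thm:rhoi}. Hence $\rho_i = 2r - (\rho_a + \rho_b + \rho_c)$, as desired.

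I do not anticipate any genuine obstacle here: the content lies entirely in the two theorems being cited, both of which reduce $\rho_i$ and $\sum \rho_a$ to the single shared quantity $rt\W$. The corollary is therefore a one-line consequence, and the only thing to be careful about is matching signs correctly when expanding $-(3r - rt\W)$.
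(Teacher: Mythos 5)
Your proposal is correct and follows exactly the same route as the paper: combine $\rho_i = rt\W - r$ from Theorem~\ref{thm:rhoi} with $\rho_a+\rho_b+\rho_c = 3r - rt\W$ from Theorem~\ref{thm:1} and simplify. The algebra checks out, so there is nothing to add.
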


\begin{proof}
From Theorem~\ref{thm:1}, we have
$rt\W=3r-(\rho_a+\rho_b+\rho_c).$
Therefore, we have
$\rho_i=rt\W-r=3r-(\rho_a+\rho_b+\rho_c)-r=2r-(\rho_a+\rho_b+\rho_c)$.
\end{proof}


\begin{theorem}
The circles $\gamma_a$, $\gamma_b$, and $\gamma_c$ meet in a point (Figure~\ref{fig:tripleConcurGamma})
if and only if $t=1/\W$.
\end{theorem}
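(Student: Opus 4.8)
The plan is to recognize that the three circles $\gamma_a$, $\gamma_b$, $\gamma_c$ concur exactly when their inner Apollonius circle degenerates to a point, and then to read off the condition directly from the radius formula of Theorem~\ref{thm:rhoi}. So the whole argument reduces to one geometric equivalence plus one algebraic substitution.

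First I would recall that the inner Apollonius circle $C_i$, with center $U$ and signed radius $\rho_i$, is tangent to each of $\gamma_a$, $\gamma_b$, and $\gamma_c$. I would then establish the key equivalence: the three circles pass through a common point if and only if $\rho_i=0$. For the direction $\rho_i=0 \Rightarrow$ concurrence, observe that the tangency conditions force the distance from $U$ to the center of each $\gamma$ to equal that circle's radius (the contribution $\pm\rho_i$ vanishes when $\rho_i=0$); hence $U$ lies on all three circles and they concur at $U$. For the converse, if the three circles share a point $P$, then $P$ is a radius-zero circle tangent to all three and lying in the region they bound, so it is the inner Apollonius circle; by uniqueness of $C_i$ this gives $\rho_i=0$.

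Second, I would invoke Theorem~\ref{thm:rhoi}, which states $\rho_i=rt\W-r=r(t\W-1)$. Since $r\neq0$, the condition $\rho_i=0$ is equivalent to $t\W=1$, that is, to $t=1/\W$. Combining this with the equivalence of the first step finishes the proof.

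The main obstacle is making the equivalence ``concurrence $\iff \rho_i=0$'' fully rigorous, because the inner Apollonius circle may be internally or externally tangent to the triad (as the note following Theorem~\ref{thm:rhoi} records, $\rho_i$ is negative in the internally-tangent case). I would handle this by treating $\rho_i$ as a signed quantity: in both tangency types the distance from $U$ to the center of $\gamma_a$ equals $|\rho_a\pm\rho_i|$, which reduces to $\rho_a$ precisely when $\rho_i=0$, so the value $\rho_i=0$ is exactly the transition at which $C_i$ collapses onto the common point of the three circles. The linear dependence $\rho_i=r(t\W-1)$ then pins down the unique value of $t$ at which this collapse occurs, namely $t=1/\W$.
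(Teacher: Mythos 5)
Your proposal is correct and follows essentially the same route as the paper: both reduce concurrence to the vanishing of the inner Apollonius radius and then read off $t=1/\W$ from Theorem~\ref{thm:rhoi}. The only difference is that you spell out the equivalence ``concurrence $\iff \rho_i=0$'' (including the signed-radius subtlety), which the paper simply asserts.
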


\begin{figure}[ht]
\centering
\includegraphics[scale=0.55]{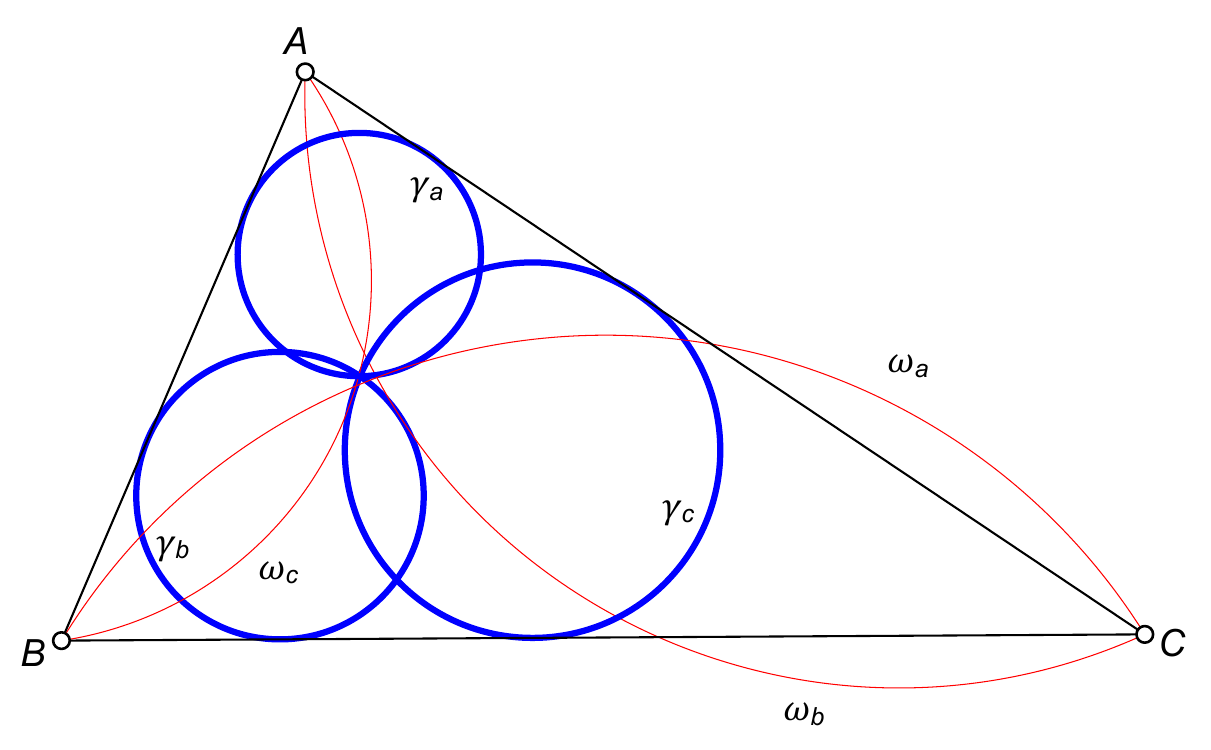}
\caption{$\gamma_a$, $\gamma_b$, $\gamma_c$ concur}
\label{fig:tripleConcurGamma}
\end{figure}

\begin{proof}
The three circles concur if and only if the radius of the inner Apollonius circle is 0, that is, when $\rho_i=0$.
By Theorem~\ref{thm:rhoi}, $\rho_i=rt\W-r.$ So $\rho_i=0$ if and only if $r=rt\W$ or $1=t\W$ since $r>0$.
In other words, when $t=1/\W$.
\end{proof}

\begin{corollary}
If $t=1/\W$, the circles $\gamma_a$, $\gamma_b$, and $\gamma_c$  all pass through
$G_e$, the Gergonne point of $\triangle ABC$.
\end{corollary}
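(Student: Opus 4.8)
The plan is to identify the common point of the three circles with the radical center of the triad, which Theorem~\ref{thm:radicalCenter} has already pinned down as $G_e$. First I would invoke the preceding theorem: since $t=1/\W$, the circles $\gamma_a$, $\gamma_b$, and $\gamma_c$ meet in a single point, which I will call $P$. Because $P$ lies on each of the three circles, the power of $P$ with respect to each of them is zero; in particular $P$ has equal power (namely zero) with respect to all three circles simultaneously.

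Next I would recall that the radical center of three circles is precisely the point whose powers with respect to the three circles are all equal, equivalently the common intersection of the three pairwise radical axes, and that this point is unique whenever the three centers are not collinear. Since $P$ has equal power with respect to $\gamma_a$, $\gamma_b$, and $\gamma_c$, the point $P$ must be the radical center of the triad. By Theorem~\ref{thm:radicalCenter}, the radical center of a general triad $\gamma_a$, $\gamma_b$, $\gamma_c$ is $G_e$, the Gergonne point of $\triangle ABC$. Hence $P=G_e$, so the three circles all pass through the Gergonne point, as claimed.

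There is no serious obstacle here; the argument is a one-line consequence of the defining property of the radical center together with the already-established Theorem~\ref{thm:radicalCenter}. The only point deserving a word of care is the well-definedness of the radical center, i.e.\ that the centers of $\gamma_a$, $\gamma_b$, $\gamma_c$ are not collinear. Since these centers lie on the three distinct internal bisectors from $A$, $B$, and $C$ respectively, for a nondegenerate triangle they are three distinct non-collinear points, so the radical center is a genuine single point and the identification $P=G_e$ is unambiguous.
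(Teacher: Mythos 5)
Your argument is correct and is essentially the paper's own proof: both identify the common point of the three circles with their radical center (the paper via common chords being radical axes, you via the point having zero, hence equal, power with respect to all three circles) and then invoke Theorem~\ref{thm:radicalCenter} to conclude that this radical center is $G_e$. The extra remark about non-collinearity of the centers is unnecessary but harmless, since the point already lies on all three pairwise radical axes, which Theorem~\ref{thm:radicalCenter} shows meet only at $G_e$.
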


\begin{proof}The common chord of each pair of circles is the radical axis of those two circles.
Since the three common chords meet at the point of concurrence of the three circles,
this point must be the radical center of the three circles.
By Theorem~\ref{thm:radicalCenter}, this is the Gergonne point of $\triangle ABC$.
\end{proof}

\begin{theorem}
The circles $\omega_a$, $\omega_b$, and $\omega_c$ meet in a point (Figure~\ref{fig:tripleConcurOmega})
if and only if $\theta=120\degrees$.
\end{theorem}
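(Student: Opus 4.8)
The plan is to reduce the concurrency to a condition on the angle that a common point subtends over the three sides. The key fact, essentially established in the proof of Theorem~\ref{thm:thetaOver2}, is that every point $P$ lying on the internal arc of $\omega_a$ sees the chord $BC$ under the inscribed angle $\angle BPC = 180\degrees - \theta/2$; by the symmetry of the construction the same holds for $\omega_b$ and $\omega_c$ with the sides $CA$ and $AB$. This single observation drives both implications.

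For the ``if'' direction I would argue as follows. Suppose $\theta = 120\degrees$, so that $180\degrees - \theta/2 = 120\degrees$. Under the standing assumption of this section every angle of $\triangle ABC$ is less than $180\degrees - \theta/2 = 120\degrees$, which is exactly the condition guaranteeing that the first isogonic (Fermat) point $F$ lies inside the triangle and satisfies $\angle BFC = \angle CFA = \angle AFB = 120\degrees$. Since $\angle BFC = 120\degrees = 180\degrees - \theta/2$, the point $F$ lies on the internal arc of $\omega_a$, and likewise on those of $\omega_b$ and $\omega_c$; hence the three circles pass through the common point $F$.

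For the ``only if'' direction, suppose the three circles share a common point $P$. Because the arcs are erected internally and (by the standing assumption) lie inside the triangle, $P$ is an interior point and therefore lies on the internal arc of each circle, so the three angles $\angle BPC$, $\angle CPA$, $\angle APB$ each equal $180\degrees - \theta/2$. Being interior, these three angles partition the full turn about $P$, giving
\[
\angle BPC + \angle CPA + \angle APB = 360\degrees,
\]
that is $3\!\left(180\degrees - \theta/2\right) = 360\degrees$, which forces $\theta = 120\degrees$.

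The step I expect to be the main obstacle is pinning down that the common point is interior and lies on the \emph{internal} arc of each circle, rather than on the complementary arc (where the subtended angle would be $\theta/2$) or at one of the vertices (which each lie on only two of the circles). The interiority furnished by the standing assumption settles this cleanly; alternatively one can phrase the whole argument with directed angles modulo $180\degrees$, where the telescoping identity $\angle(PB,PC)+\angle(PC,PA)+\angle(PA,PB)\equiv 0$ yields $3\phi \equiv 0 \pmod{180\degrees}$ for the common inscribed angle $\phi \equiv -\theta/2$. That version exposes a second formal solution $\theta = 240\degrees$, corresponding to a point on the complementary arcs, which is excluded precisely by the interiority hypothesis in force here.
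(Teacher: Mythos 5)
Your proposal is correct, and it is actually more complete than the paper's own argument. For the ``only if'' direction the two proofs are the same computation in different clothing: the paper takes the common point $P$, writes the three inscribed-angle relations at the \emph{vertices} (e.g.\ $2\angle CBP+2\angle PCB=\theta$ for $\omega_a$), and adds them to get $3\theta=2(A+B+C)=360\degrees$; you instead observe that each of $\angle BPC$, $\angle CPA$, $\angle APB$ equals $180\degrees-\theta/2$ and sum the angles around $P$. These are equivalent, and your version makes the geometric content (the common inscribed angle) slightly more visible. The genuine difference is that the paper proves only this direction and never establishes the converse, whereas you supply it by identifying the concurrence point as the first isogonic (Fermat) point when $\theta=120\degrees$, correctly invoking the standing assumption that all angles of $\triangle ABC$ are less than $180\degrees-\theta/2=120\degrees$ to guarantee that this point is interior and sees each side under $120\degrees$. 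Your closing remark about why the common point must lie on the internal arcs (and the spurious directed-angle solution $\theta=240\degrees$) addresses a subtlety the paper passes over silently. In short: same key mechanism for the forward implication, plus a needed extra argument for the reverse one.
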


\begin{figure}[ht]
\centering
\includegraphics[scale=0.35]{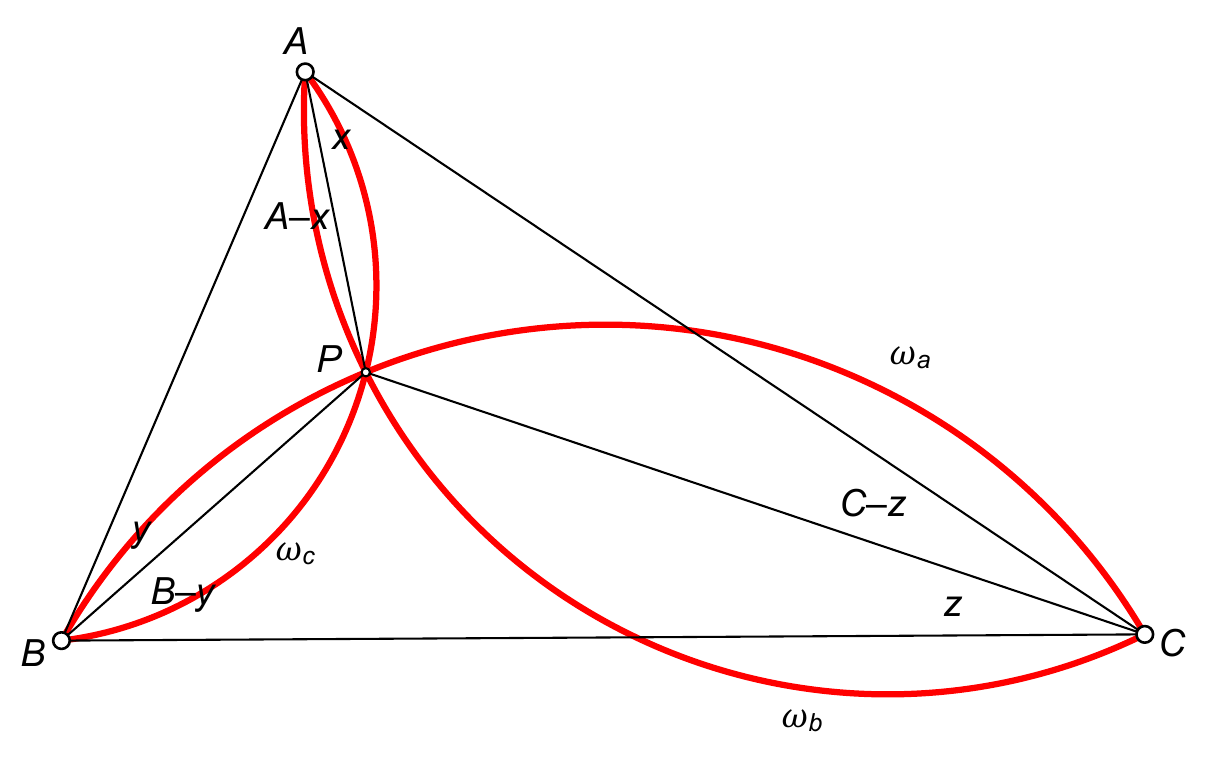}
\caption{$\omega_a$, $\omega_b$, $\omega_c$ concur}
\label{fig:tripleConcurOmega}
\end{figure}

\begin{proof}
Suppose the three arcs meet at $P$.
Let $\angle PAC=x$, $\angle PBA=y$, and $\angle PCB=z$.
Then $\angle BAP=A-x$, $\angle CBP=B-y$, and $\angle ACP=C-z$.
An angle inscribed in a circle is measured by half its intercepted arc.
So
\begin{align*}
2(A-x)+2y&=\theta,\\
2(B-y)+2z&=\theta,\\
2(C-z)+2x&=\theta.
\end{align*}
Adding these three equations gives
$$3\theta=2(A+B+C)=2(180\degrees)=360\degrees$$
or $\theta=120\degrees$.
\end{proof}


\begin{theorem}[Radius of Outer Apollonius Circle]\label{thm:rhoo}
Let $\rho_o$ be the radius of the outer Apollonius circle of $\gamma_a$, $\gamma_b$, and $\gamma_c$.
Then
$$\rho_o=\frac{r}{3}t\W+r.$$
\end{theorem}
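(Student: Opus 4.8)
The plan is to mirror the synthetic proof of Theorem~\ref{thm:rhoi}, replacing the inner touch point by the outer one. By Theorem~\ref{thm:GeVI}, $G_e$ is a center of similitude of the incircle $(I)$ and the outer Apollonius circle $C_o$, so there is a homothety $h$ centered at $G_e$ carrying $(I)$ onto $C_o$ with absolute ratio $\rho_o/r$. The outer circle touches $\gamma_a$ at the point the paper calls $V_a$, which by the corollary to Theorem~\ref{thm:AVaGe} equals $X$, the intersection of the Gergonne cevian $AL$ with $\gamma_a$ nearer $A$. The first task is to identify the point of $(I)$ that corresponds to $V_a$ under $h$.

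To do this I would use the homothety $h_A$ centered at $A$ that maps $\gamma_a$ onto $(I)$ (the same homothety used in Theorems \ref{thm:result24} and \ref{thm:AL'L}). It carries the chord of $\gamma_a$ along $AL$ to the chord of $(I)$ along $AL$; write $X'$ for the image of $X$, so $X'$ is the intersection of $AL$ with $(I)$ nearer $A$ and $AX=(\rho_a/r)\,AX'$. Because a homothety preserves directions, the tangent to $(I)$ at $X'$ is parallel to the tangent to $\gamma_a$ at $X$, which is the common tangent of $\gamma_a$ and $C_o$ at $V_a=X$. Since $X'$ and $V_a$ lie on the line $AL$ through the center $G_e$ and have parallel tangents, $X'$ is precisely the point of $(I)$ that $h$ sends to $V_a$ (the other point of $AL\cap(I)$ is the touch point $L$ on $BC$, whose tangent is $BC$ and does not match; this is what selects $X'$ rather than $L$). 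Consequently $\dfrac{\rho_o}{r}=\dfrac{G_eV_a}{G_eX'}=\dfrac{AG_e-AX}{AG_e-AX'}$, the last equality holding because $X$ and $X'$ both lie between $A$ and $G_e$.

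It then remains to evaluate lengths along $AL$. The power of $A$ with respect to $(I)$ is $(AH)^2=(p-a)^2$, so $AX'\cdot AL=(p-a)^2$, giving $AX'=(p-a)^2/AL$; together with equation (\ref{eq:AL}), $AL^2=(p-a)[ap-(b-c)^2]/a$. The elementary identity $a^2-(b-c)^2=4(p-b)(p-c)$ yields $ap-(b-c)^2=a(p-a)+4(p-b)(p-c)$, hence $AL^2=(p-a)^2(1+4\lambda)$ where $\lambda=(p-b)(p-c)/\bigl(a(p-a)\bigr)$, while Corollary~\ref{cor:GeRatios} gives $AL/AG_e=1+\lambda$. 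A short computation then gives $AX'/AG_e=(1+\lambda)/(1+4\lambda)$, so that $\dfrac{\rho_o}{r}-1=\dfrac{AX'-AX}{AG_e-AX'}=\dfrac{AX'\,(1-\rho_a/r)}{AG_e-AX'}=\dfrac{1+\lambda}{3\lambda}\Bigl(1-\dfrac{\rho_a}{r}\Bigr)$. Finally Corollary~\ref{cor:rho} gives $1-\rho_a/r=rt/(p-a)$, and Lemma~\ref{lemma:w2} gives $\dfrac{1+\lambda}{\lambda}\cdot\dfrac{r}{p-a}=\W$, whence $\dfrac{\rho_o}{r}-1=\tfrac13 t\W$ and $\rho_o=r+\tfrac{r}{3}t\W$, as claimed.

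The main obstacle is the first step: correctly identifying which of the two intersections of the Gergonne cevian with the incircle corresponds to the outer touch point $V_a$ under the $G_e$-homothety. Unlike the inner case, where the corresponding point is the touch point $L$ because the tangent to $\gamma_a$ at $U_a=L'$ is parallel to $BC$ (Theorem~\ref{thm:result27}), here the tangent to $\gamma_a$ at $X$ is generally not parallel to $BC$, so the correspondence is with $X'$ rather than $L$; the tangent-direction argument above is what makes this rigorous. Once the correspondence is pinned down the remaining work is routine algebra with the cited lemmas. As a consistency check, the resulting value $\rho_o=\frac{r}{3}t\W+r$ together with Theorem~\ref{thm:1} gives $\rho_o=2r-\tfrac13(\rho_a+\rho_b+\rho_c)$, the natural analogue of the displayed corollary to Theorem~\ref{thm:rhoi}.
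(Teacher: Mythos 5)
Your proposal is correct and follows essentially the same route as the paper's proof: a homothety centered at $G_e$ carrying the incircle to $C_o$, the identification of the incircle point corresponding to $V_a$ as the intersection of the Gergonne cevian with the incircle nearer $A$ (the paper's $W_a$, your $X'$), and the evaluation of $\rho_o/r$ as a ratio of signed distances along $AL$ reduced via Corollary~\ref{cor:GeRatios}, Corollary~\ref{cor:rho}, and Lemma~\ref{lemma:w2}. Your version is slightly cleaner in two respects --- you get $AX'$ directly from the power of $A$ with respect to the incircle instead of chaining Corollary~\ref{cor:AX/AL'} with two $A$-homothety ratios, and the substitution $\lambda=(p-b)(p-c)/(a(p-a))$ avoids the radicals --- and you also supply a tangent-direction justification for the correspondence $V_a\leftrightarrow X'$ that the paper simply asserts.
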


\begin{proof}
Let $G_e$ be the Gergonne point of $\triangle ABC$.
Similar to Corollary~\ref{cor:intouch}, $C_o$ and the incircle are homothetic
with $G_e$ being the external center of similitude.
Let $U_a$ and $V_a$ be the points where $AG_e$ meets $\gamma_a$, with $V_a$ closer to $A$.
Let $W_a$ be the point where $U_aV_a$ intersects the incircle (Figure~\ref{fig:Va=X}).
\begin{figure}[ht]
\centering
\includegraphics[scale=0.3]{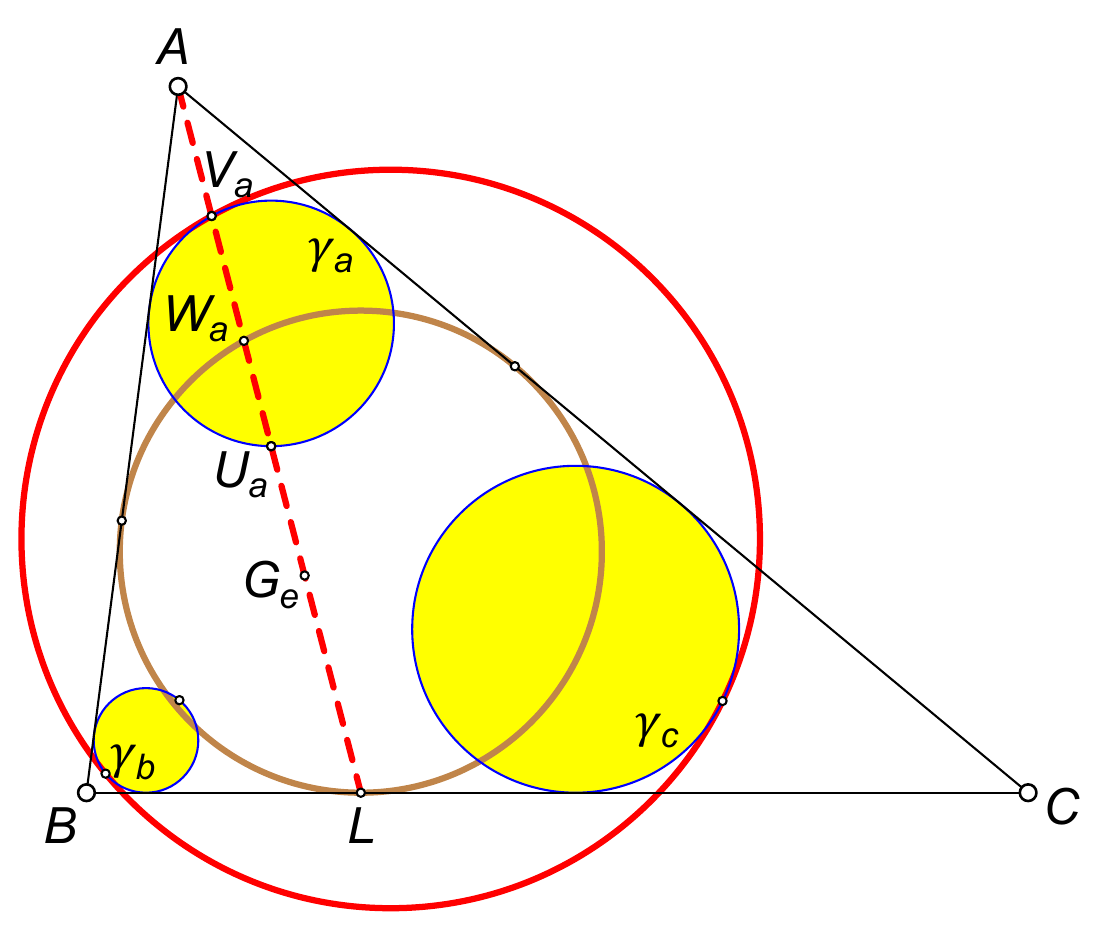}
\caption{brown and red circles are homothetic at $G_e$}
\label{fig:Va=X}
\end{figure}

\void{
Under this homothety, $V_a$ maps to $W_a$.
Thus
$$\frac{G_eV_a}{G_eW_a}=\frac{\rho_o}{r}.$$
We have
\begin{equation}
\frac{\rho_o}{r}=\frac{G_eV_a}{G_eW_a}=\frac{AG_e-AV_a}{AG_e-AW_a}.\label{eq:difs}
\end{equation}
From Theorem~\ref{thm:lengthOfAX}
$$AV_a=\frac{(p-a-rt)\sqrt{a(p-a)}}{\sqrt{ap-(b-c)^2}}.$$
The incircle and $\gamma_a$ are homothetic with homothetic center $A$.
Under this homothety, $V_a$ maps to $W_a$. Thus,
$$\frac{AW_a}{AV_a}=\frac{r}{\rho_a}.$$
Hence, $$AW_a=\frac{r\cdot AV_a}{\rho_a}.$$
From Lemma~\ref{lemma:AGe}
$$AG_e=\frac{(p-a)\sqrt{a(p-a)[ap-(b-c)^2]}}{pr\W}.$$
Substituting the values of $AG_e$, $AV_a$, and $AW_a$ into equation (\ref{eq:difs}) gives
\begin{align*}
\frac{\rho_o}{r}&=\frac{AG_e-AV_a}{AG_e-AW_a}\\
&=\frac{\frac{(p-a)\sqrt{a(p-a)[ap-(b-c)^2]}}{pr\W}-\frac{(p-a-rt)\sqrt{a(p-a)}}
{\sqrt{ap-(b-c)^2}}}{\frac{(p-a)\sqrt{a(p-a)[ap-(b-c)^2]}}{pr\W}-\frac{r}{\rho_a}\cdot\frac{(p-a-rt)\sqrt{a(p-a)}}{\sqrt{ap-(b-c)^2}}}\\
\end{align*}
}

Under this homothety, $V_a$ maps to $W_a$.
Thus
\begin{align}
\frac{\rho_o}{r}&=\frac{G_eV_a}{G_eW_a}=\frac{AG_e-AV_a}{AG_e-AW_a}\notag\\
&=\frac{AG_e/AL-AV_a/AL}{AG_e/AL-AW_a/AL}\label{eq:ratios},
\end{align}
From Theorem~\ref{thm:lengthOfAX},
$$AV_a=\frac{(p-a-rt)\sqrt{a(p-a)}}{\sqrt{ap-(b-c)^2}}.$$
From Corollary~\ref{cor:GeRatios},
$$\frac{AL}{AG_e}=1+\frac{(p-b)(p-c)}{a(p-a)}$$
so
$$\frac{AG_e}{AL}=\frac{a(p-a)}{a(p-a)+(p-b)(p-c)}.$$
From the homothety, center $A$ that maps $\gamma_a$ to the incircle,
$$\frac{AU_a}{AL}=\frac{\rho_a}{r}.$$
From Corollary~\ref{cor:AX/AL'}, we have
$$\frac{AV_a}{AU_a}=\frac{a(p-a)}{ap-(b-c)^2}$$
Multiplying the previous two equations gives
$$\frac{AV_a}{AL}=\frac{a(p-a)}{ap-(b-c)^2}\cdot\frac{\rho_a}{r}.$$
\void{
Multiplying this by the formula for $AL/AG_e$ gives
\begin{align*}
\frac{AV_a}{AG_e}&=\frac{a(p-a)}{ap-(b-c)^2}\cdot\frac{\rho_a}{r}\cdot\left(\frac{a(p-a)+(p-b)(p-c)}{a(p-a)}\right)\\
&=\frac{\rho_a}{ap-(b-c)^2}\left(\frac{a(p-a)+(p-b)(p-c)}{ar(p-a)}\right).
\end{align*}
}
From the homothety, center $A$ that maps $\gamma_a$ to the incircle,
$$\frac{AW_a}{AV_a}=\frac{r}{\rho_a}.$$
Multiplying the previous two equations gives
$$\frac{AW_a}{AL}=\frac{a(p-a)}{ap-(b-c)^2}.$$

Substituting the values for the ratios found into equation (\ref{eq:ratios}) gives
\begin{align*}
\frac{\rho_o}{r}&=\frac{AG_e/AL-AV_a/AL}{AG_e/AL-AW_a/AL},\\
&=\frac{\frac{a(p-a)}{a(p-a)+(p-b)(p-c)}-\frac{a(p-a)}{ap-(b-c)^2}\cdot\frac{\rho_a}{r}}{\frac{a(p-a)}{a(p-a)+(p-b)(p-c)}-\frac{a(p-a)}{ap-(b-c)^2}}.\\
\end{align*}
Simplifying this algebraically gives
$$\frac{\rho_o}{r}=1+\frac{2rt}{3}\cdot\frac{2ab+2bc+2ca-a^2-b^2-c^2}{(a+b-c)(b+c-a)(c+a-b)}.$$
Applying Lemma~\ref{lemma:rW} gives
\begin{align*}
\frac{\rho_o}{r}-1&=\frac{2rt}{3}\cdot\frac{4rp\W}{(a+b-c)(b+c-a)(c+a-b)}\\
&=\frac{2rt}{3}\cdot\frac{4rp\W}{8(p-c)(p-a)(p-b)}\\
&=\frac{t}{3}\cdot\frac{(rp)^2\W}{p(p-c)(p-a)(p-b)}\\
&=\frac{t}{3}\cdot\frac{\Delta^2\W}{\Delta^2}\\
&=\frac{t\W}{3},
\end{align*}
using the well-known formulas $\Delta=rp$ and $\Delta=\sqrt{p(p-a)(p-b)(p-c)}$.
Thus, $\rho_o=rt\W/3+r$.
\end{proof}

\begin{open}
Is there a simpler proof of Theorem~\ref{thm:rhoo}?
\end{open}

\void{
\begin{proof}
From Theorem~\ref{thm:1}, we have
$$\rho_a+\rho_b+\rho_c=3r-rt\left(\frac{4R+r}{p}\right)$$
where $t=\tan(\theta/4)$.
Substituting this value into Theorem~\ref{thm:A2} and using the value for $\rho_i$
found in Theorem~\ref{thm:rhoi}, gives
$$3\rho_o=2\left[3r-rt\left(\frac{4R+r}{p}\right)\right]+\frac{3r}{p}\Bigl[(4R+r)t-p\Bigr].$$
This simplifies to
$$3r_o=\frac{r}{p}\Bigl((4R+r)t+3p\Bigr)$$
and the result follows.
\end{proof}
}

\begin{corollary}\label{cor:roir}
We have
$$\frac{\rho_i+r}{\rho_o-r}=3.$$
\end{corollary}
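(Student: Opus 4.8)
The plan is to reduce everything to the two radius formulas already established: Theorem~\ref{thm:rhoi}, which gives $\rho_i=rt\W-r$, and Theorem~\ref{thm:rhoo}, which gives $\rho_o=\frac{r}{3}t\W+r$. Each of these was obtained by relating the Apollonius circle to the incircle through the homothety centered at $G_e$, so there is no further geometry to do here; the corollary is purely a matter of combining the two closed forms.

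First I would compute the numerator. Adding $r$ to the expression for $\rho_i$ cancels the $-r$ term, leaving
$$\rho_i+r=(rt\W-r)+r=rt\W.$$
Next I would compute the denominator. Subtracting $r$ from the expression for $\rho_o$ cancels the $+r$ term, leaving
$$\rho_o-r=\left(\tfrac{r}{3}t\W+r\right)-r=\tfrac{r}{3}t\W.$$
It is exactly this matched ``$\pm r$'' structure in the two theorems that makes the corollary clean: the additive constants are chosen so that $\rho_i+r$ and $\rho_o-r$ are each a pure multiple of $rt\W$.

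Finally I would form the ratio and cancel the common factor $rt\W$:
$$\frac{\rho_i+r}{\rho_o-r}=\frac{rt\W}{\tfrac{r}{3}t\W}=3.$$
The only thing to check for rigor is that the cancellation is legitimate, i.e.\ that $rt\W\neq0$; this holds because $r>0$ for any genuine triangle, $\W=(4R+r)/p>0$, and $t=\tan(\theta/4)>0$ for $0<\theta<360\degrees$. I do not anticipate any real obstacle, since all the substantive work is inherited from Theorems~\ref{thm:rhoi} and~\ref{thm:rhoo}; the corollary merely records the striking numerical consequence that the two affinely shifted radii stand in a fixed $3:1$ ratio independent of the triangle and of $\theta$.
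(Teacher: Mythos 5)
Your proposal is correct and matches the paper's argument exactly: the paper also derives the corollary immediately from Theorems~\ref{thm:rhoi} and~\ref{thm:rhoo}, and your computation simply spells out the cancellation the paper leaves implicit. The added check that $rt\W\neq 0$ is a reasonable touch of rigor but changes nothing substantive.
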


\begin{proof}
This follows immediately from Theorems \ref{thm:rhoi} and \ref{thm:rhoo}.
\end{proof}

\void{
\begin{open}
Does this equation have any geometrical significance?
\end{open}
}

Remember when applying this result, that $\rho_i$ is to be considered negative when the inner
Apollonius circle is internally tangent to $\gamma_a$, $\gamma_b$, and $\gamma_c$, as
shown in Figure~\ref{fig:SoddyInverse}.


The line through the incenter of a triangle and the Gergonne point of that triangle
is called the \emph{Soddy line} of the triangle.

\begin{theorem}\label{thm:Soddy}
For a general triad of circles associated with $\triangle ABC$,
let $U$ be the center of the inner Apollonius circle of $\gamma_a$, $\gamma_b$, and $\gamma_c$.
Let $V$ be the center of the outer Apollonius circle of $\gamma_a$, $\gamma_b$, and $\gamma_c$.
Then $U$ and $V$ lie on the Soddy line of the triangle and $UI:IV=3:1$ (Figure~\ref{fig:Soddy}).
\end{theorem}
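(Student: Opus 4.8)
The plan is to place both Apollonius centers on the line $IG_e$ (the Soddy line) using the center‑of‑similitude results already established, and then read off the ratio $UI:IV$ from the radius formulas through Corollary~\ref{cor:roir}. The two claims — collinearity and the $3:1$ ratio — will come out of Theorems~\ref{thm:GeUI} and~\ref{thm:GeVI} combined with the internal/external information provided by Lemma~\ref{lemma:triConcur}.

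First I would show that $U$ and $V$ lie on the Soddy line. By Theorem~\ref{thm:GeUI}, $G_e$ is a center of similitude of the inner Apollonius circle $(U)$ and the incircle $(I)$; since the line joining the centers of two circles passes through each of their centers of similitude, $U$ lies on line $G_eI$. Applying the same reasoning to Theorem~\ref{thm:GeVI} places $V$ on line $G_eI$ as well. Because the Soddy line is by definition the line through $I$ and $G_e$, both centers lie on it.

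Next I would fix the positions with signs. By Lemma~\ref{lemma:triConcur}, $G_e$ is the \emph{internal} center of similitude of the incircle and the inner Apollonius circle, and the \emph{external} center of similitude of the incircle and the outer Apollonius circle. Hence the homothety centered at $G_e$ carrying the incircle to $(U)$ has signed ratio $-\rho_i/r$ (its magnitude being $\rho_i/r$ by Theorem~\ref{thm:GeUI}), while the homothety carrying the incircle to $(V)$ has signed ratio $+\rho_o/r$ (Theorem~\ref{thm:GeVI}). Since these homotheties send $I$ to $U$ and to $V$ respectively, working with directed segments along the Soddy line gives
$$\overrightarrow{G_eU}=-\frac{\rho_i}{r}\,\overrightarrow{G_eI},\qquad \overrightarrow{G_eV}=\frac{\rho_o}{r}\,\overrightarrow{G_eI}.$$
Subtracting $\overrightarrow{G_eI}$ from each yields
$$\overrightarrow{IU}=-\frac{\rho_i+r}{r}\,\overrightarrow{G_eI},\qquad \overrightarrow{IV}=\frac{\rho_o-r}{r}\,\overrightarrow{G_eI}.$$

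Finally I would take the ratio of these two directed segments and invoke Corollary~\ref{cor:roir}:
$$\overrightarrow{IU}=-\frac{\rho_i+r}{\rho_o-r}\,\overrightarrow{IV}=-3\,\overrightarrow{IV}.$$
The negative factor shows that $U$ and $V$ lie on opposite sides of $I$, so $I$ is between them, and the factor $3$ gives $UI:IV=3:1$, as claimed. I expect the only genuine difficulty to be the sign bookkeeping — deciding on which side of $G_e$, and hence of $I$, each Apollonius center falls — and this is exactly what the internal/external dichotomy of Lemma~\ref{lemma:triConcur} settles. Phrasing the argument with directed segments rather than unsigned lengths makes the reduction to Corollary~\ref{cor:roir} automatic, keeps the computation valid even in the degenerate case $\rho_i<0$ flagged after Theorem~\ref{thm:rhoi}, and simultaneously delivers the betweenness of $I$.
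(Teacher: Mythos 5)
Your proposal is correct and follows essentially the same route as the paper: both arguments rest on Theorems~\ref{thm:GeUI} and~\ref{thm:GeVI} to express $G_eU$ and $G_eV$ in terms of $\rho_i/r$ and $\rho_o/r$, then use a signed-segment (Chasles) computation along the Soddy line and finish with Corollary~\ref{cor:roir}. The only difference is that you make explicit, via Lemma~\ref{lemma:triConcur} and the line-of-centers fact, the collinearity and the sign conventions that the paper's proof leaves implicit in its phrase ``all distances along the Soddy line will be signed'' --- a worthwhile clarification, but not a different proof.
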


\begin{figure}[ht]
\centering
\includegraphics[scale=0.27]{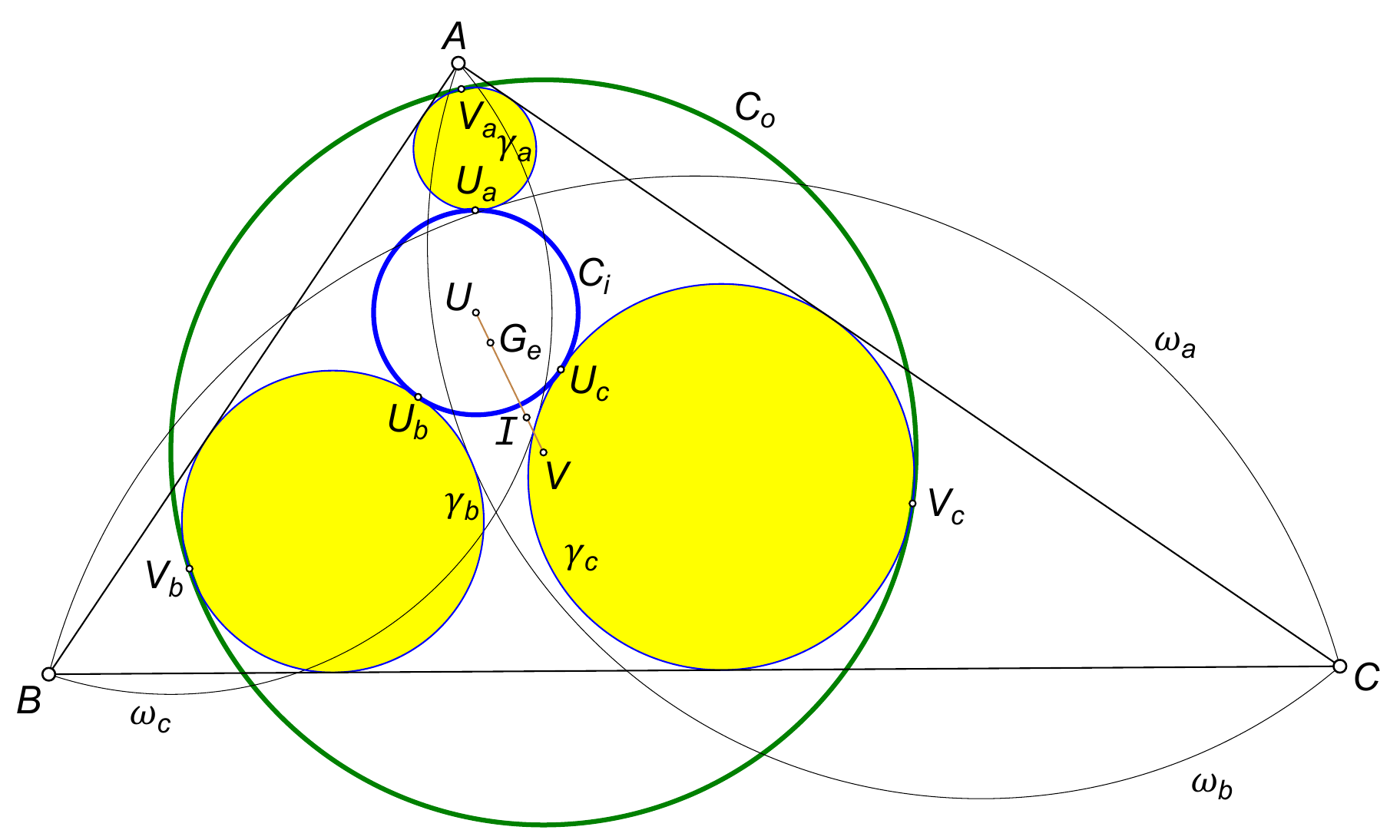}
\caption{$UI:IV=3:1$}
\label{fig:Soddy}
\end{figure}

\begin{proof}
All distances along the Soddy line will be signed.
We have
$$\frac{UI}{G_eI}=\frac{UG_e+G_eI}{G_eI}=\frac{UG_e}{G_eI}+1=\frac{\rho_i}{r}+1=\frac{\rho_i+r}{r}$$
and
$$\frac{IV}{G_eI}=\frac{G_eV-G_eI}{G_eI}=\frac{G_eV}{G_eI}-1=\frac{\rho_o}{r}-1=\frac{\rho_o-r}{r}.$$
Dividing gives
$$\frac{UI}{IV}=\frac{\rho_i+r}{\rho_o-r}.$$
The result now follows from Corollary~\ref{cor:roir}.
\end{proof}

\begin{open}
Is there a simple geometric proof that $UI/IV=3$?
\end{open}

\begin{figure}[ht]
\centering
\includegraphics[scale=0.55]{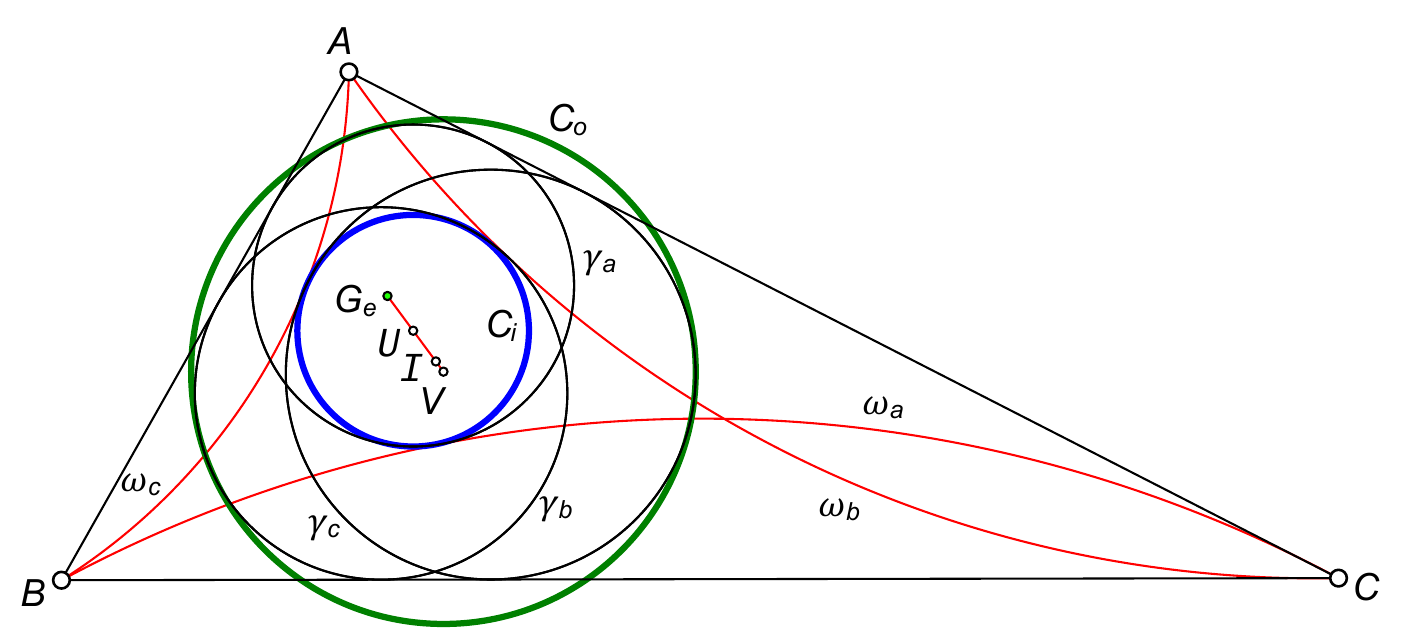}
\caption{$UI:IV=3:1$}
\label{fig:SoddyInverse}
\end{figure}


If the inner Apollonius circle is internally tangent to the three circles as in Figure~\ref{fig:SoddyInverse},
then $U$ and $V$ still lie on the Soddy line, but the points on that line
occur in the order $G_e$, $U$, $I$, $V$.

\void{
\begin{corollary}\label{cor:ugei}
We have
$$\frac{UG_e}{G_eI}=\frac{\rho_i}{r}.$$
\end{corollary}

\begin{corollary}\label{cor:vgei}
We have
$$\frac{G_eV}{G_eI}=\frac{\rho_o}{r}.$$
\end{corollary}
}

\begin{corollary}\label{cor:geiv}
We have
$$\frac{G_eI}{IV}=\frac{r}{\rho_o-r}.$$
\end{corollary}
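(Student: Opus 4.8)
The plan is to obtain the identity as an immediate reciprocal of a relation already established while proving Theorem~\ref{thm:Soddy}. The key input is Theorem~\ref{thm:GeVI}, which states that $G_eV/G_eI=\rho_o/r$, together with the convention that all distances along the Soddy line are signed. Since $V$ lies on the far side of $I$ from $G_e$ (in the generic configuration), the point $I$ lies between $G_e$ and $V$, so the signed segment $IV$ can be written as $G_eV-G_eI$. I will simply compute the ratio $IV/G_eI$ and then invert.

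First I would recall from Theorem~\ref{thm:GeVI} that
$$\frac{G_eV}{G_eI}=\frac{\rho_o}{r}.$$
Then, writing $IV=G_eV-G_eI$ with signed distances, I would compute
$$\frac{IV}{G_eI}=\frac{G_eV-G_eI}{G_eI}=\frac{G_eV}{G_eI}-1=\frac{\rho_o}{r}-1=\frac{\rho_o-r}{r},$$
which is exactly the chain of equalities displayed in the proof of Theorem~\ref{thm:Soddy}. Taking the reciprocal of both sides then yields
$$\frac{G_eI}{IV}=\frac{r}{\rho_o-r},$$
which is the desired result.

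The only point requiring any care is the sign/orientation convention: one must be consistent about the ordering of $G_e$, $I$, and $V$ along the Soddy line so that $IV=G_eV-G_eI$ holds with the correct sign. Fortunately this is precisely the convention fixed at the start of the proof of Theorem~\ref{thm:Soddy}, and the authors already note (in the discussion accompanying Figure~\ref{fig:SoddyInverse}) that when the inner Apollonius circle is internally tangent the points occur in the order $G_e, U, I, V$, so the relation for $IV$ is unaffected. Thus there is essentially no genuine obstacle here; the corollary is a one-line consequence of Theorem~\ref{thm:GeVI} and the signed-distance bookkeeping of Theorem~\ref{thm:Soddy}, and the proof amounts to inverting the ratio $IV/G_eI=(\rho_o-r)/r$.
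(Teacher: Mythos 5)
Your proposal is correct and follows exactly the paper's implicit argument: the corollary is stated without a separate proof precisely because the chain $\frac{IV}{G_eI}=\frac{G_eV}{G_eI}-1=\frac{\rho_o-r}{r}$ already appears verbatim in the proof of Theorem~\ref{thm:Soddy}, and inverting it gives the result. Your attention to the signed-distance convention matches the paper's setup as well.
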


\begin{corollary}\label{cor:ugeiv}
We have the extended proportion
$$UG_e:G_eI:IV=\rho_i:r:\rho_o-r.$$
\end{corollary}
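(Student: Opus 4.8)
The plan is to assemble the extended proportion from two ordinary two-term ratios that already appear earlier in this section, each of which shares the same middle term $G_eI$. Throughout, all three segments $UG_e$, $G_eI$, and $IV$ are read as signed lengths along the Soddy line, using the orientation fixed in the proof of Theorem~\ref{thm:Soddy}.

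First I would record the ratio of the first two entries. By Theorem~\ref{thm:GeUI} we have $G_eU/G_eI=\rho_i/r$; written in the signed form used in the proof of Theorem~\ref{thm:Soddy}, this is $UG_e:G_eI=\rho_i:r$. Next I would record the ratio of the last two entries, which is supplied directly by Corollary~\ref{cor:geiv}: $G_eI:IV=r:(\rho_o-r)$. No new geometry is needed, since both facts are already established.

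The key observation is that these two ratios overlap in the single entry $G_eI$, and in each of them that entry carries the coefficient $r$. Because the coefficient of the shared term agrees, the two two-term ratios can be concatenated without any rescaling into the single extended proportion
$$UG_e:G_eI:IV=\rho_i:r:(\rho_o-r),$$
which is precisely the assertion of the corollary.

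The only genuine obstacle is the bookkeeping of signs: the three segments must be read off the Soddy line with one consistent orientation, so that the shared entry $G_eI$ is literally the same signed quantity in both sub-ratios. This is exactly the subtlety flagged in the discussion following Theorem~\ref{thm:Soddy} and illustrated in Figure~\ref{fig:SoddyInverse}, where the order of the points $G_e$, $U$, $I$, $V$ changes once the inner Apollonius circle becomes internally tangent to the triad and $\rho_i$ turns negative. Once the orientation is chosen so that Theorem~\ref{thm:GeUI} and Corollary~\ref{cor:geiv} are phrased with compatible signs, the concatenation is immediate and no further calculation is required.
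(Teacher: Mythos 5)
Your proof is correct and follows essentially the same route the paper intends: the corollary is stated without proof precisely because it is the concatenation of $UG_e:G_eI=\rho_i:r$ (from the signed computation in the proof of Theorem~\ref{thm:Soddy}, equivalently Theorem~\ref{thm:GeUI}) with $G_eI:IV=r:(\rho_o-r)$ from Corollary~\ref{cor:geiv}, glued along the common middle term $r$. Your attention to the signed-length convention on the Soddy line is exactly the right caveat and matches the paper's remark about the order of the points when $\rho_i$ is negative.
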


It should be noted that the distance from $G_e$ to $I$ in terms of parts of the triangle
is known. From \cite[p.~184]{Rabinowitz2},
we have the following result.

\begin{theorem}
We have
$$G_eI=\frac{r}{4R+r}\sqrt{(4R+r)^2-3p^2}=r\sqrt{1-3/\W^2}.$$
\end{theorem}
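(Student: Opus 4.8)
The plan is to treat the two displayed equalities separately, since the second is purely algebraic and the first carries all the content. Because $\W=(4R+r)/p$, we have $3/\W^2=3p^2/(4R+r)^2$, so
$$r\sqrt{1-\frac{3}{\W^2}}=r\sqrt{\frac{(4R+r)^2-3p^2}{(4R+r)^2}}=\frac{r}{4R+r}\sqrt{(4R+r)^2-3p^2}.$$
Thus it suffices to prove $G_eI=r\sqrt{1-3/\W^2}$, equivalently $G_eI^2=r^2\bigl(1-3/\W^2\bigr)$.

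For the main equality I would use the barycentric distance formula employed elsewhere in the paper (\cite{Grozdev}). The key simplifying device is to set $x=p-a$, $y=p-b$, $z=p-c$, so that $a=y+z$, $b=z+x$, $c=x+y$ and $x+y+z=p$. In normalized barycentrics the incenter is $I=\tfrac{1}{2p}(y+z,\,z+x,\,x+y)$, while $G_e=\bigl(\tfrac{1}{p-a}:\tfrac{1}{p-b}:\tfrac{1}{p-c}\bigr)$ becomes $G_e=\tfrac{1}{\sigma}(yz,\,zx,\,xy)$ with $\sigma=xy+yz+zx$. The displacement $(u,v,w)=I-G_e$ satisfies $u+v+w=0$, so $G_eI^2=-(a^2vw+b^2wu+c^2uv)$ with $a=y+z$, etc.

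Carrying out the substitution and collecting terms, I expect the outcome to be the symmetric expression
$$G_eI^2=\frac{xyz\bigl[(xy+yz+zx)^2-3(x+y+z)\,xyz\bigr]}{(x+y+z)(xy+yz+zx)^2}.$$
To finish, I would convert the elementary symmetric functions of $x,y,z$ into triangle data via the standard identities $x+y+z=p$, $xyz=(p-a)(p-b)(p-c)=\Delta^2/p=pr^2$, and (from Lemma~\ref{lemma:1}, since $\tan\tfrac{A}{2}=r/(p-a)$, or directly from Lemma~\ref{lemma:rW}) $\sigma=xy+yz+zx=pr\W=r(4R+r)$. Substituting these three values collapses the fraction to $r^2(\W^2-3)/\W^2=r^2\bigl(1-3/\W^2\bigr)$, and the first displayed form then follows from $\W=(4R+r)/p$.

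The hard part will be the intermediate algebra: the distance formula produces a rational function with denominators $2p$ and $\sigma$, and one must clear these and verify that the numerator factors exactly as $xyz\bigl[(xy+yz+zx)^2-3(x+y+z)xyz\bigr]$. This is routine but error-prone, so I would organize it through the elementary symmetric polynomials $e_1=p$, $e_2=\sigma$, $e_3=xyz$ from the outset (giving $G_eI^2=e_3(e_2^2-3e_1e_3)/(e_1e_2^2)$) or confirm it with \textsc{Mathematica}, as is done elsewhere in the paper. The equilateral case provides a useful check: there $x=y=z$ and $G_e=I$, which forces the bracket $(xy+yz+zx)^2-3(x+y+z)xyz$ to vanish, consistent with $G_eI=0$.
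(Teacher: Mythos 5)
The paper does not actually prove this theorem: it simply quotes the formula from \cite[p.~184]{Rabinowitz2}, so any derivation you give is necessarily a different route. Your plan is sound and, as far as I can check, every formula you assert is correct. The algebraic equivalence of the two displayed forms via $\W=(4R+r)/p$ is immediate; the normalized barycentrics $I=\tfrac{1}{2e_1}(y+z,z+x,x+y)$ and $G_e=\tfrac{1}{\sigma}(yz,zx,xy)$ are right; the displacement formula $G_eI^2=-(a^2vw+b^2wu+c^2uv)$ is the standard one; and your claimed intermediate identity
$$G_eI^2=\frac{e_3\bigl(e_2^2-3e_1e_3\bigr)}{e_1e_2^2},\qquad e_1=x+y+z,\ e_2=xy+yz+zx,\ e_3=xyz,$$
does hold (it checks numerically, e.g.\ for the $3$--$4$--$5$ triangle both sides give $13/121$, and it vanishes in the equilateral case as you note). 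The conversions $e_1=p$, $e_3=(p-a)(p-b)(p-c)=pr^2$, and $e_2=r(4R+r)=pr\W$ (the last being exactly Lemma~\ref{lemma:rW} in disguise) then collapse the expression to $r^2(1-3/\W^2)$ exactly as you say. The one thing you have not actually done is the expansion of $-(a^2vw+b^2wu+c^2uv)$ into the displayed symmetric form --- you write ``I expect the outcome to be'' --- so strictly speaking the central identity is asserted rather than derived; since it is a finite polynomial identity in $x,y,z$ it should be carried out (or machine-verified, as the paper does elsewhere) before the proof is complete. What your approach buys over the paper's bare citation is a self-contained derivation that also exposes the clean structural fact $G_eI^2=e_3(e_2^2-3e_1e_3)/(e_1e_2^2)$ in the Ravi substitution, which is arguably more illuminating than the closed form itself.
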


This allows us to express any of the distances between the points $U$, $V$, $G_e$, and $I$ in terms
of $R$, $r$, and $p$ by using Corollary~\ref{cor:ugeiv}.

\begin{theorem}\label{thm:A1}
For a general triad of circles associated with $\triangle ABC$,
the inradius $r$ and the radii $\rho_i$ and $\rho_o$ of the Apollonius circles satisfy the relation
\begin{equation*}
   3\rho_o=\rho_i+4r.
\end{equation*}
\end{theorem}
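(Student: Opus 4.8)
The plan is to derive Theorem~\ref{thm:A1} as an immediate algebraic consequence of the two closed-form radius formulas established just above, namely Theorem~\ref{thm:rhoi} giving $\rho_i = rt\W - r$ and Theorem~\ref{thm:rhoo} giving $\rho_o = \tfrac{r}{3}t\W + r$. Since all the geometric work has already been carried out in proving those two theorems, nothing here requires new configuration analysis; the relation $3\rho_o = \rho_i + 4r$ is purely a matter of matching two linear expressions in the single quantity $t\W$.

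First I would substitute the formula for $\rho_o$ to compute the left-hand side:
\begin{equation*}
3\rho_o = 3\left(\frac{r}{3}t\W + r\right) = rt\W + 3r.
\end{equation*}
Next I would substitute the formula for $\rho_i$ to compute the right-hand side:
\begin{equation*}
\rho_i + 4r = (rt\W - r) + 4r = rt\W + 3r.
\end{equation*}
Since both sides reduce to the same expression $rt\W + 3r$, the identity follows.

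Alternatively, and perhaps more cleanly for the reader, I would invoke Corollary~\ref{cor:roir}, which already records that $\dfrac{\rho_i + r}{\rho_o - r} = 3$. Cross-multiplying gives $\rho_i + r = 3(\rho_o - r) = 3\rho_o - 3r$, and rearranging yields $3\rho_o = \rho_i + 4r$ directly. Either route is essentially a one-line verification.

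The honest assessment is that there is no real obstacle here: the content of the theorem lies entirely in the two preceding radius computations (Theorems \ref{thm:rhoi} and \ref{thm:rhoo}), whose proofs rely on the homothety at $G_e$ and the lemmas about $\W$ and the Gergonne distances. Once those are in hand, the only care needed is to keep the coefficient $\tfrac{1}{3}$ in $\rho_o$ straight so that the $t\W$ terms cancel correctly after multiplying by $3$. I would therefore present the proof simply as a substitution, citing Theorems \ref{thm:rhoi} and \ref{thm:rhoo} (or Corollary~\ref{cor:roir}) and displaying the two evaluations above.
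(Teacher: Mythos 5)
Your proposal is correct and matches the paper's argument: the paper proves Theorem~\ref{thm:A1} by noting it is algebraically equivalent to Corollary~\ref{cor:roir}, which is exactly your second route, and your first route (direct substitution of Theorems \ref{thm:rhoi} and \ref{thm:rhoo}) is the same computation one step earlier in the chain. Nothing further is needed.
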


\begin{proof}
This is algebraically equivalent to Corollary~\ref{cor:roir}.
\end{proof}

\begin{theorem}\label{thm:A2}
For a general triad of circles associated with $\triangle ABC$,
the radii $\rho_i$ and $\rho_o$ of the Apollonius circles and the radii $\rho_a$, $\rho_b$, and $\rho_c$
of the three circles in the triad satisfy the relation
\begin{equation*}
   3\rho_o=2(\rho_a+\rho_b+\rho_c)+3\rho_i.
\end{equation*}
\end{theorem}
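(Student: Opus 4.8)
The plan is to reduce everything to the radius formulas already established and to observe that the claimed identity is a purely linear consequence of them. Concretely, I have three inputs: Theorem~\ref{thm:1}, which gives $\rho_a+\rho_b+\rho_c = 3r - rt\W$; Theorem~\ref{thm:rhoi}, which gives $\rho_i = rt\W - r$; and Theorem~\ref{thm:rhoo}, which gives $\rho_o = \tfrac{r}{3}t\W + r$. Since each of these expresses the relevant radius as an affine function of the single quantity $rt\W$, the statement $3\rho_o = 2(\rho_a+\rho_b+\rho_c) + 3\rho_i$ should collapse to an identity once all three substitutions are made.

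First I would evaluate the right-hand side directly: $2(\rho_a+\rho_b+\rho_c) + 3\rho_i = 2(3r - rt\W) + 3(rt\W - r) = 3r + rt\W$. Then I would evaluate the left-hand side: $3\rho_o = 3\left(\tfrac{r}{3}t\W + r\right) = rt\W + 3r$. The two sides agree, so the identity holds; no further simplification is needed, because the $t\W$ terms and the constant $r$ terms match separately.

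Alternatively --- and this is the route I would actually write up, since it avoids reintroducing $t$ and $\W$ --- I would chain together two results that are already phrased purely in terms of the radii. Theorem~\ref{thm:A1} states $3\rho_o = \rho_i + 4r$, and the corollary to Theorem~\ref{thm:rhoi} states $\rho_i = 2r - (\rho_a+\rho_b+\rho_c)$. From the latter, $2r = (\rho_a+\rho_b+\rho_c) + \rho_i$, hence $4r = 2(\rho_a+\rho_b+\rho_c) + 2\rho_i$. Substituting this into $3\rho_o = \rho_i + 4r$ gives $3\rho_o = \rho_i + 2(\rho_a+\rho_b+\rho_c) + 2\rho_i = 2(\rho_a+\rho_b+\rho_c) + 3\rho_i$, which is exactly the claim.

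There is essentially no obstacle here: the theorem is an algebraic corollary of results proved earlier, so the only genuine choice is which combination is cleanest to present. The one subtlety worth flagging is the sign convention --- $\rho_i$ must be read as signed (negative when the inner Apollonius circle is internally tangent to the triad, as in the discussion surrounding Theorem~\ref{thm:A1}) --- so that the linear relations among $r$, $\rho_i$, $\rho_o$, and the $\rho_a,\rho_b,\rho_c$ remain valid in all configurations.
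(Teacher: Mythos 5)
Your proposal is correct and matches the paper's proof in substance: the paper likewise substitutes the formulas from Theorems \ref{thm:1}, \ref{thm:rhoi}, and \ref{thm:rhoo}, computing $3\rho_o-3\rho_i=6r-2rt\W=2(\rho_a+\rho_b+\rho_c)$, which is your first computation rearranged. Your alternative route via Theorem \ref{thm:A1} and the corollary $\rho_i=2r-(\rho_a+\rho_b+\rho_c)$ is only a repackaging of the same linear identities, so it is not a genuinely different argument.
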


\begin{proof}
From Theorem~\ref{thm:1}, we have
$$\rho_a+\rho_b+\rho_c=3r-rt\W.$$
From Theorems \ref{thm:rhoi} and \ref{thm:rhoo}, we have
$$3\rho_o-3\rho_i=6r-2rt\W=2(\rho_a+\rho_b+\rho_c)$$
as desired.
\end{proof}

\begin{theorem}
Let $\rho_i$ be the radius of the inner Apollonius circle of $\gamma_a$, $\gamma_b$, and $\gamma_c$.
Then
$$\rho_i^2=\rho_a^2+\rho_b^2+\rho_c^2+2r^2\left(t^2-1\right).$$
\end{theorem}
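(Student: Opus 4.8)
The plan is to combine the explicit formula for $\rho_a^2+\rho_b^2+\rho_c^2$ given in Theorem~\ref{thm:3} with the expression $\rho_i=rt\W-r$ from Theorem~\ref{thm:rhoi}, thereby reducing the stated identity to routine algebra in the two scalars $t$ and $\W$. Since both quantities on the two sides are already known as polynomials in $t$ and $\W$ times $r^2$, no geometry is needed beyond citing these two earlier results.

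First I would record, from Theorem~\ref{thm:3},
$$\rho_a^2+\rho_b^2+\rho_c^2=r^2\left[3-2t\W+(\W^2-2)t^2\right].$$
Adding the correction term $2r^2(t^2-1)$ and expanding $(\W^2-2)t^2=\W^2t^2-2t^2$ gives
$$\rho_a^2+\rho_b^2+\rho_c^2+2r^2(t^2-1)=r^2\left[3-2t\W+\W^2t^2-2t^2+2t^2-2\right].$$
The key observation is that the two $t^2$ contributions cancel, leaving
$$r^2\left[\W^2t^2-2t\W+1\right]=r^2(t\W-1)^2.$$

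Finally I would invoke Theorem~\ref{thm:rhoi}, which gives $\rho_i=rt\W-r=r(t\W-1)$, so that $\rho_i^2=r^2(t\W-1)^2$. Comparing this with the previous display yields the desired identity
$$\rho_i^2=\rho_a^2+\rho_b^2+\rho_c^2+2r^2(t^2-1).$$

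There is no genuine obstacle here: the entire content is recognizing that the right-hand side collapses to a perfect square once the $-2t^2$ coming from Theorem~\ref{thm:3} is neutralized by the $+2t^2$ supplied by the correction term, at which point the square matches $\rho_i^2$ exactly. The only point requiring care is keeping the bookkeeping of the $t^2$ coefficients correct when expanding $(\W^2-2)t^2$, so that the cancellation is carried out faithfully.
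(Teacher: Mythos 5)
Your proposal is correct and is exactly the paper's approach: the paper's proof simply states that the identity "follows algebraically by combining Theorems~\ref{thm:rhoi} and \ref{thm:3}," and your computation verifying that the right-hand side collapses to $r^2(t\W-1)^2=\rho_i^2$ is the intended algebra, carried out accurately.
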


\begin{proof}
This follows algebraically by combining Theorems~\ref{thm:rhoi} and \ref{thm:3}.
\end{proof}

When $\theta=180\degrees$, the arcs become semicircles, $t=1$, and this result agrees with Theorem~6.1 in \cite{RabSup}.

\section{Relationship with Semicircles}

Many of the elements of our configuration are proportional to the corresponding elements
when $\omega_a$, $\omega_b$, and $\omega_c$ are semicircles (i.e. when $\theta=180\degrees$ or $t=1$).

If $x$ is any measurement or object, let $x^*$ denote the same measurement or object when $\theta=180\degrees$, i.e. when the
arcs are semicircles.

In \cite{RabSup}, it was found that
\begin{align*}
\rho_a^*&=r\left(1-\tan\frac{A}{2}\right)\\
\rho_b^*&=r\left(1-\tan\frac{B}{2}\right)\\
\rho_c^*&=r\left(1-\tan\frac{C}{2}\right)
\end{align*}
and in Theorem~\ref{thm:rho1}, we found that
\begin{align*}
\rho_a&=r\left(1-\tan\frac{A}{2}\tan\frac{\theta}{4}\right)\\
\rho_b&=r\left(1-\tan\frac{B}{2}\tan\frac{\theta}{4}\right)\\
\rho_c&=r\left(1-\tan\frac{C}{2}\tan\frac{\theta}{4}\right)
\end{align*}



In other words, if $t=\tan\frac{\theta}{4}$, then we have the following results.

\begin{theorem}\label{thm:rhoMinusR}
The following identities are true.
\begin{align*}
\rho_a-r&=t(\rho_a^*-r)\\
\rho_b-r&=t(\rho_b^*-r)\\
\rho_c-r&=t(\rho_c^*-r)
\end{align*}
\end{theorem}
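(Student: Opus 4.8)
The plan is to read both sides of each identity directly off Ajima's Theorem, so that the three equalities collapse to a one-line algebraic observation. From Theorem~\ref{thm:rho1} (equivalently from equation~(\ref{eq:frac1})) we have
\begin{equation*}
\rho_a = r\left(1 - t\tan\frac{A}{2}\right),
\qquad\text{hence}\qquad
\rho_a - r = -\,rt\tan\frac{A}{2}.
\end{equation*}
Specializing to $\theta = 180\degrees$, i.e. $t = 1$, gives the starred quantity
\begin{equation*}
\rho_a^* = r\left(1 - \tan\frac{A}{2}\right),
\qquad\text{so}\qquad
\rho_a^* - r = -\,r\tan\frac{A}{2}.
\end{equation*}
Comparing the two displays, $\rho_a - r = t\bigl(\rho_a^* - r\bigr)$, which is the first identity.

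The second and third identities follow by exactly the same computation with $A$ replaced by $B$ and $C$ respectively, using the companion formulas for $\rho_b,\rho_c$ and $\rho_b^*,\rho_c^*$ already listed just above the theorem. The key structural point to record is that in the difference $\rho_a - r = -rt\tan\frac{A}{2}$ the factor $\tan\frac{A}{2}$ depends only on the triangle while $t$ carries all the dependence on $\theta$; setting $t=1$ removes precisely that factor, so the two differences are proportional with constant $t$.

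There is essentially no obstacle here: all the geometric content has already been expended in establishing Ajima's Theorem, and the present statement is a purely formal consequence. The only thing worth flagging for the reader is the interpretation of the $^*$ notation (the $\theta = 180\degrees$, $t=1$ case), which the surrounding text has just fixed. I would therefore write the proof as a short direct deduction citing Theorem~\ref{thm:rho1} and noting the cyclic symmetry for the remaining two lines, rather than attempting any further simplification.
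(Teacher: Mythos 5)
Your proposal is correct and matches the paper's own (implicit) argument exactly: the paper states the theorem as an immediate consequence of the displayed formulas $\rho_a=r\left(1-\tan\frac{A}{2}\tan\frac{\theta}{4}\right)$ and $\rho_a^*=r\left(1-\tan\frac{A}{2}\right)$, which is precisely your one-line subtraction $\rho_a-r=-rt\tan\frac{A}{2}=t(\rho_a^*-r)$. Nothing further is needed.
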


\begin{theorem}\label{thm:rhoMinusRho}
The following identities are true.
\begin{align*}
\rho_a-\rho_b&=t(\rho_a^*-\rho_b^*)\\
\rho_b-\rho_c&=t(\rho_b^*-\rho_c^*)\\
\rho_c-\rho_a&=t(\rho_c^*-\rho_a^*)
\end{align*}
\end{theorem}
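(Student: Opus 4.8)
The plan is to derive Theorem~\ref{thm:rhoMinusRho} directly from Theorem~\ref{thm:rhoMinusR} by simple subtraction, with no new geometry required. The key observation is that the inradius $r$ of $\triangle ABC$ is a property of the triangle alone and does not depend on the angular measure $\theta$ of the arcs; hence $r$ is the common quantity appearing in both the general identities and their starred ($\theta=180\degrees$) counterparts. This lets $r$ cancel when we difference the three relations in Theorem~\ref{thm:rhoMinusR}.

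Concretely, the first step is to invoke Theorem~\ref{thm:rhoMinusR}, which gives $\rho_a-r=t(\rho_a^*-r)$ and $\rho_b-r=t(\rho_b^*-r)$. Subtracting the second from the first yields
\begin{align*}
\rho_a-\rho_b&=(\rho_a-r)-(\rho_b-r)\\
&=t(\rho_a^*-r)-t(\rho_b^*-r)\\
&=t\bigl[(\rho_a^*-r)-(\rho_b^*-r)\bigr]\\
&=t(\rho_a^*-\rho_b^*).
\end{align*}
The remaining two identities, $\rho_b-\rho_c=t(\rho_b^*-\rho_c^*)$ and $\rho_c-\rho_a=t(\rho_c^*-\rho_a^*)$, follow by the identical argument applied to the corresponding pairs, or equivalently by cyclic permutation of $a$, $b$, $c$.

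There is essentially no obstacle here: the result is an immediate algebraic consequence of the previous theorem, and the only conceptual point worth stating explicitly is that $r$ carries no star. As a sanity check one can substitute the closed forms $\rho_a-r=-rt\tan\frac{A}{2}$ and $\rho_a^*-r=-r\tan\frac{A}{2}$ from Theorem~\ref{thm:rho1} and its starred analog, which gives $\rho_a-\rho_b=rt\bigl(\tan\frac{B}{2}-\tan\frac{A}{2}\bigr)$ and $\rho_a^*-\rho_b^*=r\bigl(\tan\frac{B}{2}-\tan\frac{A}{2}\bigr)$, confirming the factor of $t$ at once. Thus the entire proof is one subtraction together with a remark on cyclic symmetry.
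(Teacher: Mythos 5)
Your proposal is correct and matches the paper's (implicit) argument: the paper states both Theorem~\ref{thm:rhoMinusR} and Theorem~\ref{thm:rhoMinusRho} as immediate consequences of the closed forms $\rho_a=r\bigl(1-\tan\frac{A}{2}\tan\frac{\theta}{4}\bigr)$ and $\rho_a^*=r\bigl(1-\tan\frac{A}{2}\bigr)$, which is exactly the computation in your sanity check. Deriving the second theorem by subtracting the relations of the first is a clean and valid way to make that explicit.
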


Let $T_{bc}$ denote the length of the common external tangent between circles $\gamma_b$ and $\gamma_c$.
Define $T_{ab}$ and $T_{ca}$ similarly.

In \cite{RabSup}, it was found that $T_{ab}^*=T_{bc}^*=T_{ca}^*=2r$.
In Theorem~\ref{thm:genTangents}, we found that $T_{ab}=T_{bc}=T_{ca}=2rt$.
This gives us the following result.

\begin{theorem}\label{thm:T}
The following identities are true.\begin{align*}
T_{ab}&=tT_{ab}^*\\
T_{bc}&=tT_{bc}^*\\
T_{ca}&=tT_{ca}^*
\end{align*}
\end{theorem}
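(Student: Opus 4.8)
The plan is to invoke the two explicit tangent-length formulas already established and substitute directly. By Theorem~\ref{thm:genTangents}, the common external tangent between any two circles of a general triad has the single value
$$T_{ab}=T_{bc}=T_{ca}=2rt,$$
where $t=\tan\frac{\theta}{4}$ as throughout this section. On the other hand, the result quoted from \cite{RabSup} tells us that in the semicircle case ($\theta=180\degrees$, so $t=1$) the corresponding tangents have length
$$T_{ab}^*=T_{bc}^*=T_{ca}^*=2r.$$

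With both values in hand, each of the three asserted identities is immediate: for the first one I would simply write $T_{ab}=2rt=t\cdot 2r=t\,T_{ab}^*$, and the other two are obtained the same way since all three starred quantities equal $2r$ and all three unstarred quantities equal $2rt$. So the proof reduces to a one-line substitution repeated symmetrically in the three side-pairs.

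The only point worth a remark, rather than an obstacle, is that because $T_{ab}=T_{bc}=T_{ca}$ and $T_{ab}^*=T_{bc}^*=T_{ca}^*$, the three displayed identities are in fact the same identity stated once for each pair of circles; there is no independent content to verify in the second and third lines beyond the first. Accordingly, I do not anticipate any genuine difficulty: this theorem is a corollary packaging Theorem~\ref{thm:genTangents} together with the semicircle computation of \cite{RabSup}, and the proof can be stated in a single sentence noting that $2rt=t\cdot 2r$.
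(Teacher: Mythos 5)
Your proposal is correct and matches the paper exactly: the paper states $T_{ab}^*=T_{bc}^*=T_{ca}^*=2r$ from \cite{RabSup} and $T_{ab}=T_{bc}=T_{ca}=2rt$ from Theorem~\ref{thm:genTangents} immediately before the theorem, and the identities follow by the same one-line substitution $2rt=t\cdot 2r$. Your observation that the three lines carry no independent content is also accurate.
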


Using these, we can prove the following new result.

\begin{theorem}\label{thm:D}
Let $D_{bc}$ denote the distance between the centers of $\gamma_b$ and $\gamma_c$.
Define $D_{ab}$ and $D_{ca}$ similarly.
Then the following identities are true.
\begin{align*}
D_{ab}&=tD_{ab}^*\\
D_{bc}&=tD_{bc}^*\\
D_{ca}&=tD_{ca}^*
\end{align*}
\end{theorem}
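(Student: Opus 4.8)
The plan is to express the center distance $D_{bc}$ in terms of the two quantities whose scaling behavior under $\theta \mapsto 180\degrees$ is already known, namely the external tangent length $T_{bc}$ and the radius difference $\rho_b-\rho_c$. For two circles of radii $\rho_b$ and $\rho_c$ whose centers are a distance $D_{bc}$ apart, the length of the common external tangent obeys the standard relation
$$T_{bc}^2 = D_{bc}^2 - (\rho_b-\rho_c)^2,$$
which rearranges to
$$D_{bc}^2 = T_{bc}^2 + (\rho_b-\rho_c)^2.$$
The identical identity in the semicircle case gives $(D_{bc}^*)^2 = (T_{bc}^*)^2 + (\rho_b^*-\rho_c^*)^2$.

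Next I would substitute the two scaling laws already established in the excerpt. By Theorem~\ref{thm:T}, $T_{bc}=tT_{bc}^*$, so $T_{bc}^2 = t^2(T_{bc}^*)^2$; and by Theorem~\ref{thm:rhoMinusRho}, $\rho_b-\rho_c = t(\rho_b^*-\rho_c^*)$, so $(\rho_b-\rho_c)^2 = t^2(\rho_b^*-\rho_c^*)^2$. Feeding both into the expression for $D_{bc}^2$ yields
$$D_{bc}^2 = t^2(T_{bc}^*)^2 + t^2(\rho_b^*-\rho_c^*)^2 = t^2\left[(T_{bc}^*)^2 + (\rho_b^*-\rho_c^*)^2\right] = t^2 (D_{bc}^*)^2.$$
Since $D_{bc},\,D_{bc}^*>0$ and $t>0$, taking square roots gives $D_{bc}=tD_{bc}^*$. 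Applying the same argument verbatim to the pairs $(\gamma_a,\gamma_b)$ and $(\gamma_c,\gamma_a)$ produces the remaining two identities.

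The computation is essentially mechanical once the tangent-length relation is in hand, so I do not expect a genuine obstacle. The one point requiring care is the geometric bookkeeping behind $T_{bc}^2 = D_{bc}^2 - (\rho_b-\rho_c)^2$: this is precisely the formula for the \emph{common external} tangent referenced in Theorem~\ref{thm:genTangents}, and it presumes the two circles are disjoint so that the external tangent exists and the usual right-triangle decomposition of the line of centers is valid. Because the triad circles lie inside the triangle and are mutually non-overlapping in the configurations under consideration, this hypothesis holds and the square-root step is unambiguous.
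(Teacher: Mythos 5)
Your proposal is correct and is essentially the paper's own argument: the paper draws the right triangle with legs $T_{bc}$ and $|\rho_c-\rho_b|$ and hypotenuse $D_{bc}$, cites Theorems~\ref{thm:T} and \ref{thm:rhoMinusRho} to scale both legs by $t$, and concludes by similarity of the right triangles, which is just the geometric phrasing of your Pythagorean computation $D_{bc}^2=T_{bc}^2+(\rho_b-\rho_c)^2=t^2(D_{bc}^*)^2$.
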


\begin{proof}
By symmetry, it suffices to prove the result for $D_{bc}$.
Let $E$ be the center of $\gamma_b$ and let $F$ be the center of $\gamma_c$.
Let the common external tangent along $BC$ be $XY$ as shown in Figure~\ref{fig:centers}.
Let the foot of the perpendicular from $E$ to $FY$ be $H$.
Then in right triangle $EHF$, we have $EH=XY=T_{bc}=tT_{bc}^*$ by Theorem~\ref{thm:T}.
We also have $FH=|\rho_c-\rho_b|$.
By Theorem~\ref{thm:rhoMinusRho}, $FH=t|\rho_c^*-\rho_a^*|$.
Since $\triangle EHF\sim\triangle E^*H^*F^*$, we must therefore have $D_{bc}=tD_{bc}^*$.
\end{proof}

\begin{figure}[ht]
\centering
\includegraphics[scale=0.35]{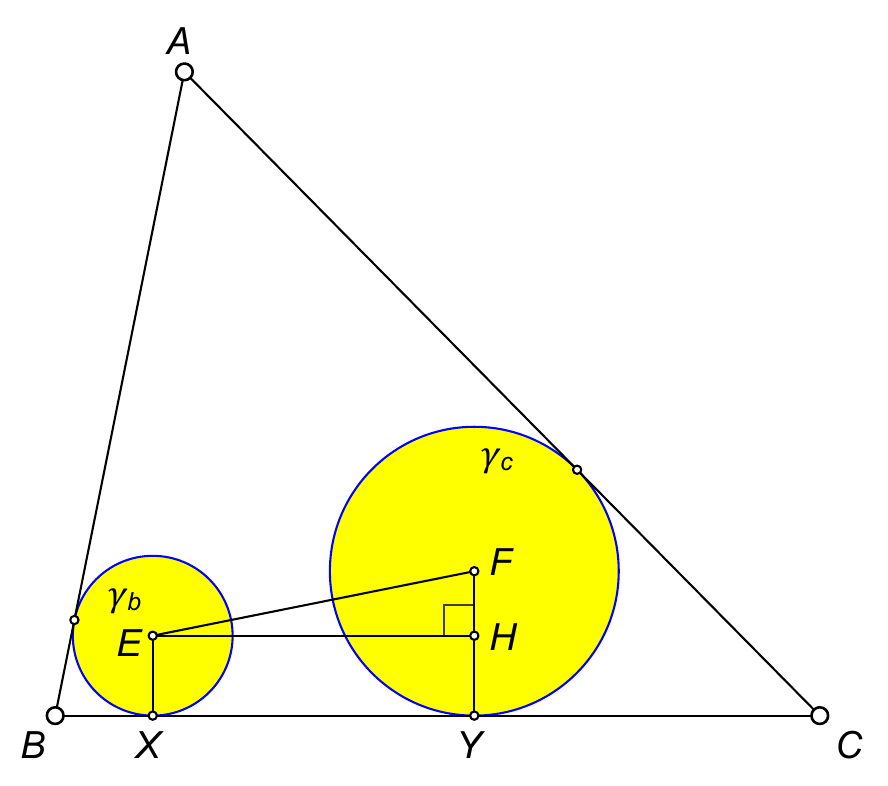}
\caption{case where $H$ lies between $F$ and $Y$}
\label{fig:centers}
\end{figure}

\begin{corollary}
The triangles formed by the centers of $\gamma_a$, $\gamma_b$, $\gamma_c$
and $\gamma_a^*$, $\gamma_b^*$, $\gamma_c^*$ are similar.
\end{corollary}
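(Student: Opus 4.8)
The plan is to read off the three side lengths of each triangle and invoke the SSS similarity criterion. Let $\triangle D_aD_bD_c$ denote the triangle whose vertices are the centers of $\gamma_a$, $\gamma_b$, $\gamma_c$, and let $\triangle D_a^*D_b^*D_c^*$ be the analogous triangle of centers in the semicircle case $\theta=180\degrees$. By definition, the side lengths of $\triangle D_aD_bD_c$ are precisely the pairwise distances $D_{bc}$, $D_{ca}$, $D_{ab}$ introduced in Theorem~\ref{thm:D}, and the side lengths of $\triangle D_a^*D_b^*D_c^*$ are the corresponding distances $D_{bc}^*$, $D_{ca}^*$, $D_{ab}^*$.

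First I would apply Theorem~\ref{thm:D} directly, which gives the three relations $D_{ab}=t\,D_{ab}^*$, $D_{bc}=t\,D_{bc}^*$, and $D_{ca}=t\,D_{ca}^*$, all with the \emph{same} scaling factor $t=\tan\frac{\theta}{4}$. Thus each side of $\triangle D_aD_bD_c$ is the same multiple $t$ of the corresponding side of $\triangle D_a^*D_b^*D_c^*$, so the three pairs of corresponding sides are in constant proportion:
$$\frac{D_{ab}}{D_{ab}^*}=\frac{D_{bc}}{D_{bc}^*}=\frac{D_{ca}}{D_{ca}^*}=t.$$
By the side–side–side similarity criterion, the two triangles are similar (indeed with ratio of similitude $t$), which is the assertion of the corollary.

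There is essentially no obstacle here: the corollary is an immediate consequence of Theorem~\ref{thm:D}, since that theorem already supplies all three side ratios and, crucially, guarantees they share the single common value $t$. The only point worth stating explicitly in the write-up is that the three quantities $D_{ab}$, $D_{bc}$, $D_{ca}$ really are the three sides of the triangle of centers, so that the SSS criterion applies verbatim.
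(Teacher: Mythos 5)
Your proof is correct and matches the paper's intent exactly: the corollary is stated immediately after Theorem~\ref{thm:D} with no written proof precisely because it follows from the three relations $D_{ab}=tD_{ab}^*$, $D_{bc}=tD_{bc}^*$, $D_{ca}=tD_{ca}^*$ by the SSS similarity criterion, which is exactly your argument.
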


\begin{theorem}\label{thm:ri+r}
We have
$$\rho_i+r=t(\rho_i^*+r).$$
\end{theorem}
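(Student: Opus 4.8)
The plan is to reduce everything to the closed form for the inner Apollonius radius already established in Theorem~\ref{thm:rhoi}, namely $\rho_i = rt\W - r$, together with the observation that the quantities $r$ and $\W = (4R+r)/p$ are intrinsic to $\triangle ABC$ and do not depend on the angular measure $\theta$. Only the factor $t = \tan(\theta/4)$ carries the $\theta$-dependence, so the starred quantity $\rho_i^*$ is obtained simply by specializing the formula of Theorem~\ref{thm:rhoi} to the semicircle case $\theta = 180\degrees$, where $t = 1$.

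First I would rewrite Theorem~\ref{thm:rhoi} as
\begin{equation*}
\rho_i + r = rt\W.
\end{equation*}
Next I would apply the same theorem with $t$ replaced by $1$ (equivalently $\theta = 180\degrees$), which gives $\rho_i^* = r\W - r$, and hence
\begin{equation*}
\rho_i^* + r = r\W.
\end{equation*}

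Finally I would combine the two displays: multiplying the second by $t$ yields $t(\rho_i^* + r) = rt\W$, which is exactly the right-hand side of the first display. Therefore
\begin{equation*}
\rho_i + r = rt\W = t(\rho_i^* + r),
\end{equation*}
as claimed. I do not anticipate any genuine obstacle here; the entire content is the recognition that $r$ and $\W$ are $\theta$-independent, so that passing from $\rho_i$ to $\rho_i^*$ amounts only to setting $t = 1$. The sole point worth stating explicitly is this independence, since it is what licenses reading off $\rho_i^* + r = r\W$ directly from the general formula.
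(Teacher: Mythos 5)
Your proposal is correct and is exactly the paper's argument: the paper proves this theorem by citing Theorem~\ref{thm:rhoi}, and your write-up simply makes explicit the specialization $t=1$ and the $\theta$-independence of $r$ and $\W$ that the paper leaves implicit. Nothing further is needed.
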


\begin{proof}
This follows immediately from Theorem~\ref{thm:rhoi}.
\end{proof}

\begin{theorem}\label{thm:ro-r}
We have
$$\rho_o-r=t(\rho_o^*-r).$$
\end{theorem}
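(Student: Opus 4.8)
The plan is to derive this directly from the closed form for $\rho_o$ established in Theorem~\ref{thm:rhoo}, exactly as Theorem~\ref{thm:ri+r} was obtained from Theorem~\ref{thm:rhoi}. The key observation is that the quantities $r$ and $\W=(4R+r)/p$ depend only on the shape and size of $\triangle ABC$ and are completely independent of the angular measure $\theta$; only the factor $t=\tan(\theta/4)$ carries the dependence on $\theta$. This is what makes the starred (semicircle) quantity differ from the general one by precisely a factor of $t$.

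First I would invoke Theorem~\ref{thm:rhoo}, which states $\rho_o=\frac{r}{3}t\W+r$, and rewrite it as
\begin{equation*}
\rho_o-r=\frac{r}{3}t\W.
\end{equation*}
Next, I would specialize to the semicircle case $\theta=180\degrees$, where $t=\tan 45\degrees=1$. Since $r$ and $\W$ are unchanged in this specialization, the same formula yields
\begin{equation*}
\rho_o^*-r=\frac{r}{3}\W.
\end{equation*}
Combining the two displays gives $\rho_o-r=t\cdot\frac{r}{3}\W=t(\rho_o^*-r)$, which is the desired identity.

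There is essentially no obstacle here: the result is a one-line corollary of Theorem~\ref{thm:rhoo}, and the only point requiring care is the justification that $r$ and $\W$ are $\theta$-independent so that the starred value is obtained merely by setting $t=1$. The genuine work in this circle of ideas has already been discharged in the proof of Theorem~\ref{thm:rhoo}.

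\begin{proof}
By Theorem~\ref{thm:rhoo}, $\rho_o=\frac{r}{3}t\W+r$, so that $\rho_o-r=\frac{r}{3}t\W$. Both $r$ and $\W=(4R+r)/p$ depend only on $\triangle ABC$ and not on $\theta$; hence setting $t=1$ (the case $\theta=180\degrees$) gives $\rho_o^*-r=\frac{r}{3}\W$. Therefore $\rho_o-r=t\cdot\frac{r}{3}\W=t(\rho_o^*-r)$.
\end{proof}
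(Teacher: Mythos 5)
Your proposal is correct and is exactly the argument the paper intends: the paper's proof is the one-liner ``this follows immediately from Theorem~\ref{thm:rhoo},'' and you have simply written out that deduction, including the (worth-noting) observation that $r$ and $\W$ are independent of $\theta$ so the starred case is just $t=1$. Nothing further is needed.
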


\begin{proof}
This follows immediately from Theorem~\ref{thm:rhoo}.
\end{proof}

\begin{theorem}\label{thm:d}
Let $\gamma_a$, $\gamma_b$, and $\gamma_c$ be a general triad of circles associated
with $\triangle ABC$.
Let $U$ be the center of the inner Apollonius circle of the circles in the triad.
Let $d_{a}$ denote the distance between $U$ and the center of $\gamma_a$.
Define $d_b$ and $d_c$ similarly.
Then the following identities are true.
\begin{align*}
d_{a}-d_b&=t(d_a^*-d_b^*)\\
d_{b}-d_c&=t(d_b^*-d_b^*)\\
d_{c}-d_a&=t(d_c^*-d_a^*)
\end{align*}
\end{theorem}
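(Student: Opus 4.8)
The plan is to show that the radius $\rho_i$ of the inner Apollonius circle cancels out of each difference $d_a-d_b$, thereby reducing the claim to Theorem~\ref{thm:rhoMinusRho}. By the cyclic symmetry of the statement it suffices to establish the first identity, $d_a-d_b=t(d_a^*-d_b^*)$, and then permute $a,b,c$.

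First I would record the tangency relation between the inner Apollonius circle $(U,\rho_i)$ and the circle $\gamma_a$. Since these two circles are tangent, the distance between their centers is $d_a=\rho_a+\rho_i$, where $\rho_i$ is taken as a \emph{signed} radius in the convention introduced in the remark following Theorem~\ref{thm:rhoi} (positive when the tangency is external, negative when it is internal). The same relation holds for $\gamma_b$, giving $d_b=\rho_b+\rho_i$. Subtracting eliminates the common term $\rho_i$:
$$d_a-d_b=\rho_a-\rho_b.$$

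Next I would apply the identical computation to the semicircle configuration ($\theta=180\degrees$, $t=1$). There the common radius $\rho_i^*$ again cancels, yielding $d_a^*-d_b^*=\rho_a^*-\rho_b^*$; note that because $\rho_i$ drops out it is irrelevant whether $\rho_i$ and $\rho_i^*$ agree in sign or magnitude, and the tangency type may even differ between the two configurations. Combining these two facts with Theorem~\ref{thm:rhoMinusRho}, which states $\rho_a-\rho_b=t(\rho_a^*-\rho_b^*)$, gives
$$d_a-d_b=\rho_a-\rho_b=t(\rho_a^*-\rho_b^*)=t(d_a^*-d_b^*),$$
and the remaining two identities follow by cyclically permuting $a,b,c$.

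The one point requiring care --- and the main obstacle --- is justifying the single formula $d_a=\rho_a+\rho_i$ uniformly across both tangency types. When $t\W>1$ the inner Apollonius circle lies in the gap between the three $\gamma$'s and is externally tangent to each, so $d_a=\rho_a+\rho_i$ with $\rho_i>0$. When $t\W<1$ the circle is internally tangent and has true radius $-\rho_i$; here one must check that this circle sits \emph{inside} $\gamma_a$ rather than containing it, in which case $d_a=\rho_a-(-\rho_i)=\rho_a+\rho_i$ once more. Verifying that this is indeed the sub-case that occurs (using, for instance, that $|\rho_i|=r(1-t\W)$ is small compared with $\rho_a$) is the only genuinely geometric step; once it is settled, the signed relation $d_a=\rho_a+\rho_i$ holds in every configuration and the argument becomes purely algebraic.
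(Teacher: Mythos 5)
Your proof is correct and follows essentially the same route as the paper: write $d_a=\rho_i+\rho_a$ and $d_b=\rho_i+\rho_b$, cancel $\rho_i$ in the difference, and invoke Theorem~\ref{thm:rhoMinusRho}. Your additional care in justifying the signed relation $d_a=\rho_a+\rho_i$ across both tangency types is a point the paper passes over silently, but it does not change the argument.
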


\begin{proof}
Let $\rho_i$ be the radius of the inner Apollonius circle.
Then $d_a=\rho_i+\rho_a$ (Figure~\ref{fig:inner}).

\begin{figure}[ht]
\centering
\includegraphics[scale=0.5]{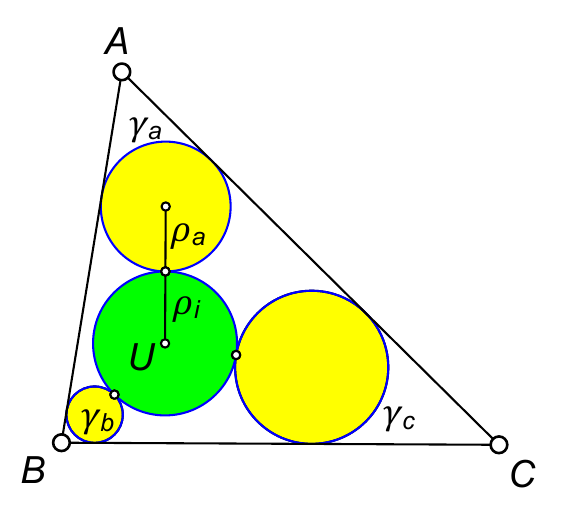}
\caption{$d_a=\rho_i+\rho_a$}
\label{fig:inner}
\end{figure}

Similarly, $d_b=\rho_i+\rho_b$.
Thus
$d_a-d_b=\rho_a-\rho_b$.
By Theorem~\ref{thm:rhoMinusRho},
$$d_a-d_b=t(\rho_a^*-\rho_b^*)=t(d_a^*-d_b^*).$$
The same argument works for $d_b-d_c$ and $d_c-d_a$.
\end{proof}



\begin{theorem}\label{lemma1}
If $u=EF$, $v=DF$, $w=DE$ are the distances between the centers of the circles $\gamma_a$, $\gamma_b$, and $\gamma_c$,
we have
\begin{align*}
u^2&=\frac{a(p-a)[ap-(b-c)^2]t^2}{p^2},\\
v^2&=\frac{b(p-b)[bp-(c-a)^2]t^2}{p^2},\\
w^2&=\frac{c(p-c)[ap-(a-b)^2]t^2}{p^2}.	
\end{align*}
\end{theorem}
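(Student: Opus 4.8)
The plan is to compute $u=EF$, the distance between the centers of $\gamma_b$ and $\gamma_c$, directly, and then to obtain $v$ and $w$ by the cyclic substitution $a\to b\to c\to a$. The key observation is that both $\gamma_b$ and $\gamma_c$ are tangent to the common side $BC$, so their line of centers can be analyzed by projecting onto $BC$. Placing $BC$ on a horizontal axis, the center $E$ of $\gamma_b$ sits at height $\rho_b$ above its touch point with $BC$, and the center $F$ of $\gamma_c$ sits at height $\rho_c$ above its touch point. Hence $u^2=\ell^2+(\rho_b-\rho_c)^2$, where $\ell$ is the distance between the two touch points on $BC$.

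First I would identify $\ell$. Since $BC$ is a common external tangent of $\gamma_b$ and $\gamma_c$, the segment joining their touch points is exactly the common external tangent of Theorem~\ref{thm:genTangents}, whose length is $2rt$; Theorem~\ref{thm:midpoints} even locates its midpoint at the incircle touch point, confirming the configuration. Therefore $\ell=2rt$ and
\begin{equation*}
u^2=4r^2t^2+(\rho_b-\rho_c)^2.
\end{equation*}
Next I would evaluate $\rho_b-\rho_c$. By Theorem~\ref{thm:rho1}, $\rho_b=r\bigl(1-t\tan\tfrac{B}{2}\bigr)$ and $\rho_c=r\bigl(1-t\tan\tfrac{C}{2}\bigr)$, so $\rho_b-\rho_c=rt\bigl(\tan\tfrac{C}{2}-\tan\tfrac{B}{2}\bigr)$. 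Substituting the standard values $\tan\tfrac{B}{2}=r/(p-b)$ and $\tan\tfrac{C}{2}=r/(p-c)$ gives $\rho_b-\rho_c=r^2t(c-b)/[(p-b)(p-c)]$, and hence
\begin{equation*}
u^2=r^2t^2\left[4+\frac{r^2(b-c)^2}{(p-b)^2(p-c)^2}\right].
\end{equation*}

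The last, and main, step is the algebraic reduction. Clearing $r^2=(p-a)(p-b)(p-c)/p$ throughout, the quantity $r^2$ times the bracket becomes $(p-a)\bigl[4p(p-b)(p-c)+(p-a)(b-c)^2\bigr]/p^2$, so it remains to verify the polynomial identity
\begin{equation*}
4p(p-b)(p-c)+(p-a)(b-c)^2=a\bigl[ap-(b-c)^2\bigr].
\end{equation*}
I expect this to be the only real obstacle, though it is routine: combining the two $(b-c)^2$ terms on the left into $p(b-c)^2$ and dividing by $p$ collapses the identity to $4(p-b)(p-c)+(b-c)^2=a^2$, which then reduces, via $(p-b)(p-c)=ap+bc-p^2$ and $4bc+(b-c)^2=(b+c)^2=(2p-a)^2$, to $a^2=a^2$. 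This yields $u^2=a(p-a)[ap-(b-c)^2]t^2/p^2$ exactly; the expressions for $v^2$ and $w^2$ then follow verbatim by the cyclic permutation $a\to b\to c\to a$.
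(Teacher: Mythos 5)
Your proof is correct, but it takes a genuinely different route from the paper's. The paper's own proof is a one-liner: it invokes the scaling relation of Theorem~\ref{thm:D} ($D_{bc}=tD_{bc}^*$) and imports the already-simplified semicircle values $u^*$, $v^*$, $w^*$ from \cite{RabSup}, so all the content is outsourced to the $\theta=180\degrees$ case. You instead compute $EF$ from scratch by the Pythagorean decomposition along the common tangent line $BC$: horizontal leg $2rt$ (the common external tangent, Theorems~\ref{thm:genTangents} and \ref{thm:midpoints}) and vertical leg $\rho_b-\rho_c$ from Ajima's formula with $\tan\frac{B}{2}=r/(p-b)$. This is essentially the same right triangle $EHF$ the paper sets up when proving Theorem~\ref{thm:D}, but carried through to an explicit closed form rather than deferring to the external reference; your reduction to the identity $4(p-b)(p-c)+(b-c)^2=a^2$ is valid, since $4(p-b)(p-c)=(a-(b-c))(a+(b-c))$. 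What your approach buys is a self-contained derivation inside this paper; what the paper's buys is brevity and reuse of the $t$-scaling machinery it has already built. One incidental payoff of your direct computation: the cyclic permutation $a\to b\to c\to a$ yields $w^2=c(p-c)[cp-(a-b)^2]t^2/p^2$, which shows that the $ap$ appearing in the third bracket of the stated theorem is a typographical error for $cp$.
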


\begin{proof}
This follows from Theorem~\ref{thm:D} and the simplified values of $u^*$, $v^*$, and $w^*$ found in \cite{RabSup}.
\end{proof}

\newpage

\section{Variations}

We have studied the case where Ajima circle $\gamma_a$ is inscribed in $\angle BAC$ and
is inside $\triangle ABC$ and is externally tangent to circle $\omega_a$.

There are actually four circles that are inscribed in $\angle BAC$ and
are tangent to circle $\omega_a$. These circles are shown in Figure~\ref{fig:4circles}.

\begin{figure}[ht]
\centering
\includegraphics[scale=0.5]{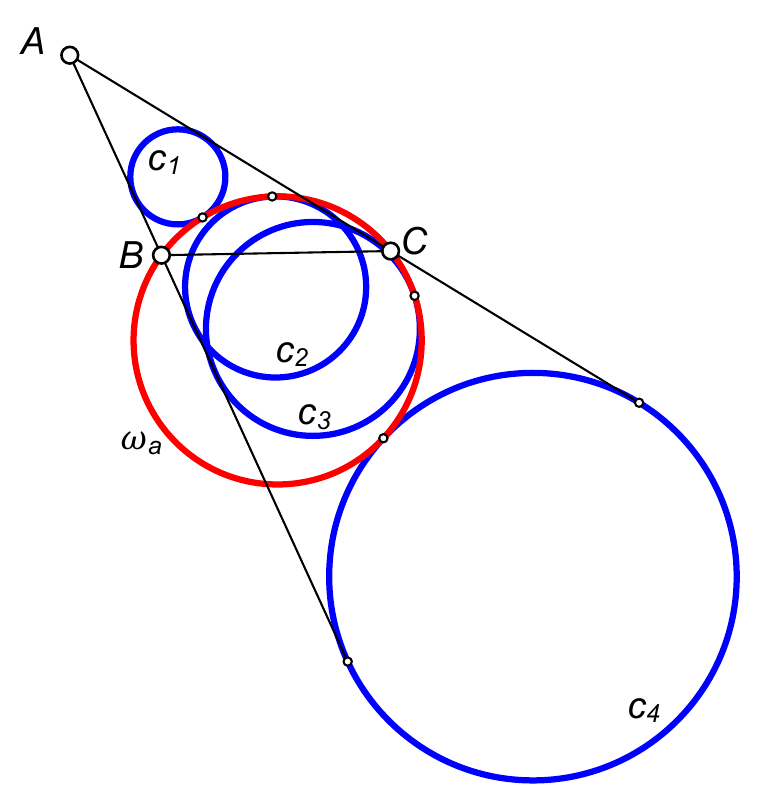}
\caption{circles inscribed in $\angle A$ and tangent to red circle}
\label{fig:4circles}
\end{figure}

Circle $c_1$ is variation 1 and is the variation already studied.
Note that circle $c_4$ is inscribed in $\angle BAC$ and
is \emph{outside} $\triangle ABC$ as well as being externally tangent to circle $\omega_a$.
Circle $c_2$ and $c_3$ are \emph{internally} tangent to $\omega_a$.
The touch point of $c_2$ and $\omega_a$ is inside $\triangle ABC$ while
the touch point of $c_3$ and $\omega_a$ is outside $\triangle ABC$.

Many of the results we found for variation 1 work for the other variations as well.
We present below a few of these results. Proofs are omitted because they
are similar to the proofs given for variation 1.
Variants 2, 3, and 4, are shown in Figure~\ref{fig:3variants}.

\begin{figure}[ht]
\centering
\includegraphics[scale=0.5]{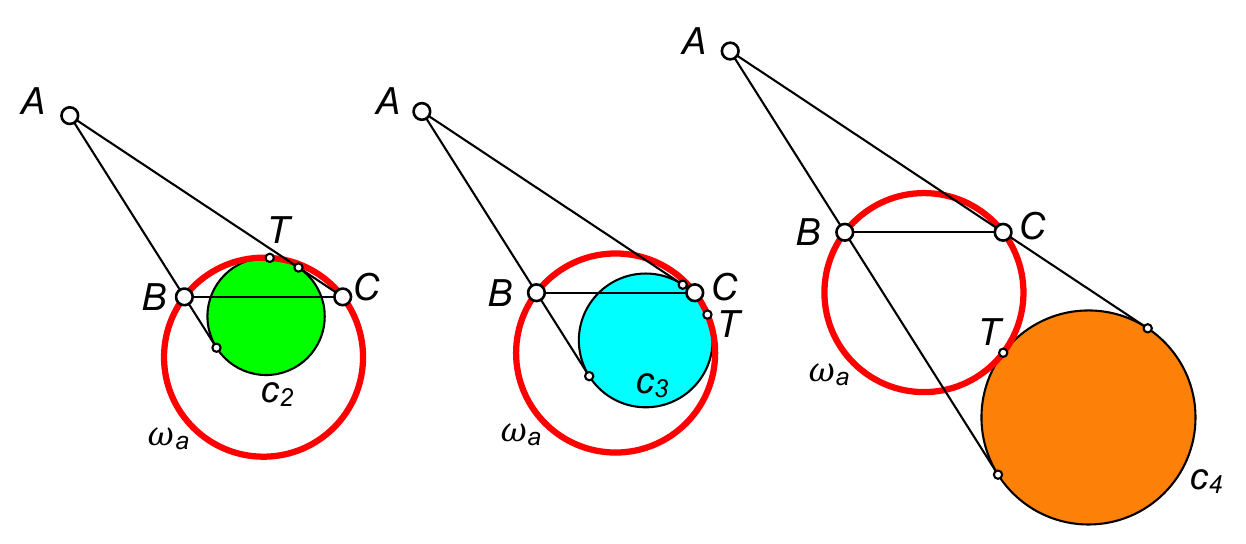}
\caption{variants 2 (green), 3 (blue), and 4 (orange)}
\label{fig:3variants}
\end{figure}

\newpage

\textbf{\large The Catalytic Lemma}

The Catalytic Lemma remains true except in some cases where the incenter is
replaced by an excenter.
Figure~\ref{fig:3variants-catalytic} shows variants 2, 3, and 4.
In each case, $B$, $K$, $T$, and an incenter/excenter lie on a circle.
\void{
For example, Figure~\ref{fig:outerCatalytic} shows
the case where $\gamma_a=c_4$ is external to the triangle and externally tangent
to $\omega_a$. In this case, $C$, $K$, $T$, and $I_a$ are concyclic where
$I_a$ is the excenter of $\triangle ABC$ opposite vertex $A$.
}

\begin{figure}[ht]
\centering
\includegraphics[scale=0.5]{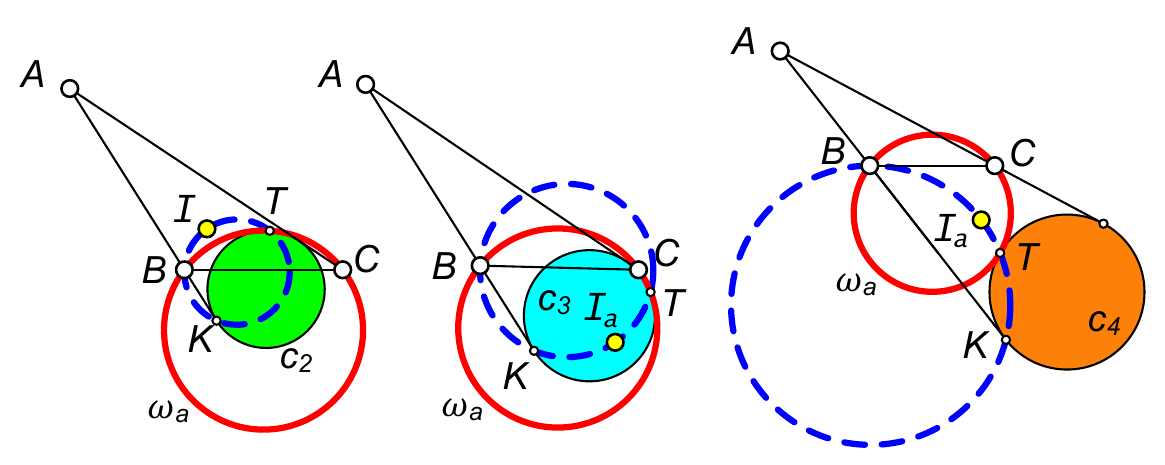}
\caption{four points lie on a circle}
\label{fig:3variants-catalytic}
\end{figure}

\textbf{\large Protasov's Theorem}

Protasov's Theorem remains true except in some cases where the incenter is
replaced by an excenter.
Figure~\ref{fig:3variants-Protasov} shows variants 2, 3, and 4.
In each case, the blue line bisects the angle formed by the dashed lines.

\begin{figure}[ht]
\centering
\includegraphics[scale=0.6]{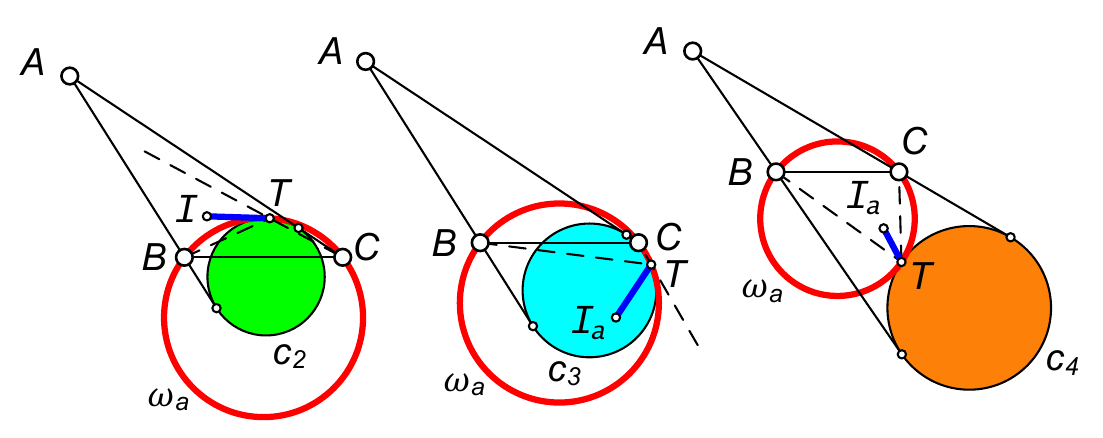}
\caption{blue line bisects angle formed by dashed lines}
\label{fig:3variants-Protasov}
\end{figure}

\void{
For example, in Figure~\ref{fig:outerProtassov},
$I_a$ is the excenter of $\triangle ABC$ opposite $A$. In this case, $TI_a$ bisects $\angle BTC$.
\begin{figure}[ht]
\centering
\includegraphics[scale=0.55]{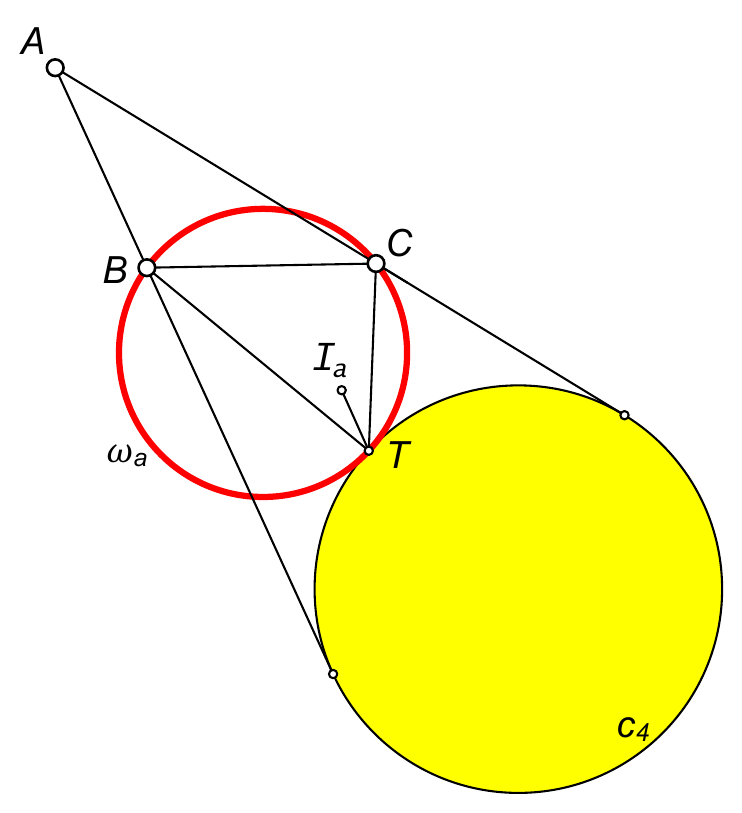}  
\caption{$TI_a$ bisects $\angle BTC$}
\label{fig:outerProtassov}
\end{figure}
}

\textbf{\large Theorem~\ref{thm:RG13066}}

Theorem~\ref{thm:RG13066} remains true except in some cases where the incenter is
replaced by an excenter.
Figure~\ref{fig:3variants-parallel} shows variants 2, 3, and 4.
In each case, the parallel to $BE$ through an incenter or excenter meets $AC$ at $F$,
where $E$ is the point where $\omega_a$ meets $AC$. Then $IF=FK$ where $K$ is the point
where $\gamma_a$ touches $AC$.

\begin{figure}[ht]
\centering
\includegraphics[scale=0.45]{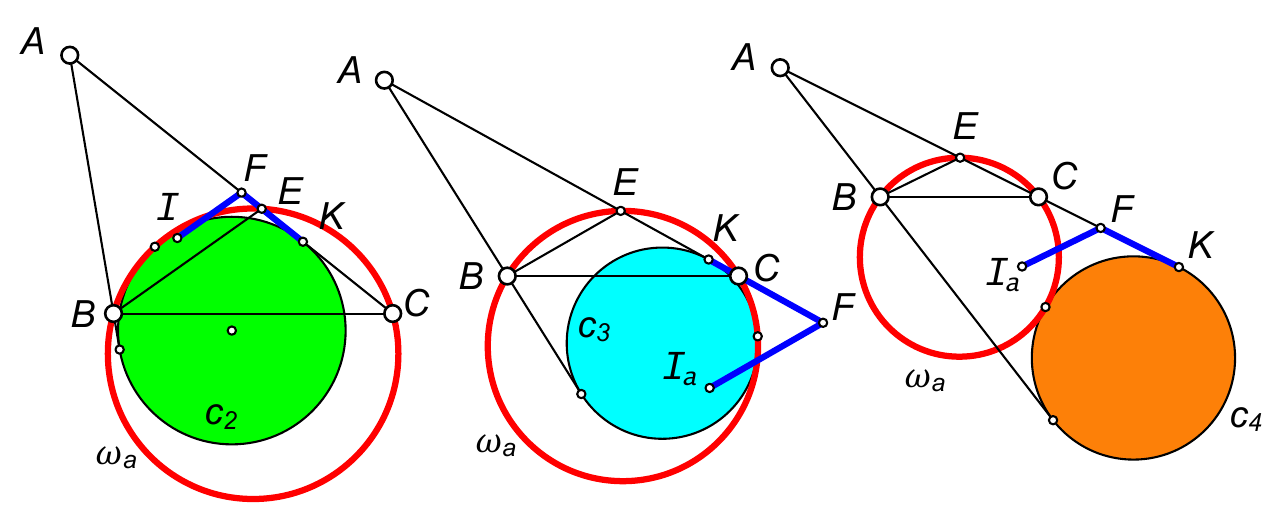}
\caption{blue lines are congruent}
\label{fig:3variants-parallel}
\end{figure}

\textbf{\large Theorem~\ref{thm:RG13069}}

Theorem~\ref{thm:RG13069} remains true except in some cases where the incenter is
replaced by an excenter.
Figure~\ref{fig:3variants-parallel} shows variants 2, 3, and 4.
In each case, the line through $T$ and an incenter or excenter meets $\omega_a$ at a point
on the perpendicular bisector of $BC$ (opposite $T$).

\begin{figure}[ht]
\centering
\includegraphics[scale=0.6]{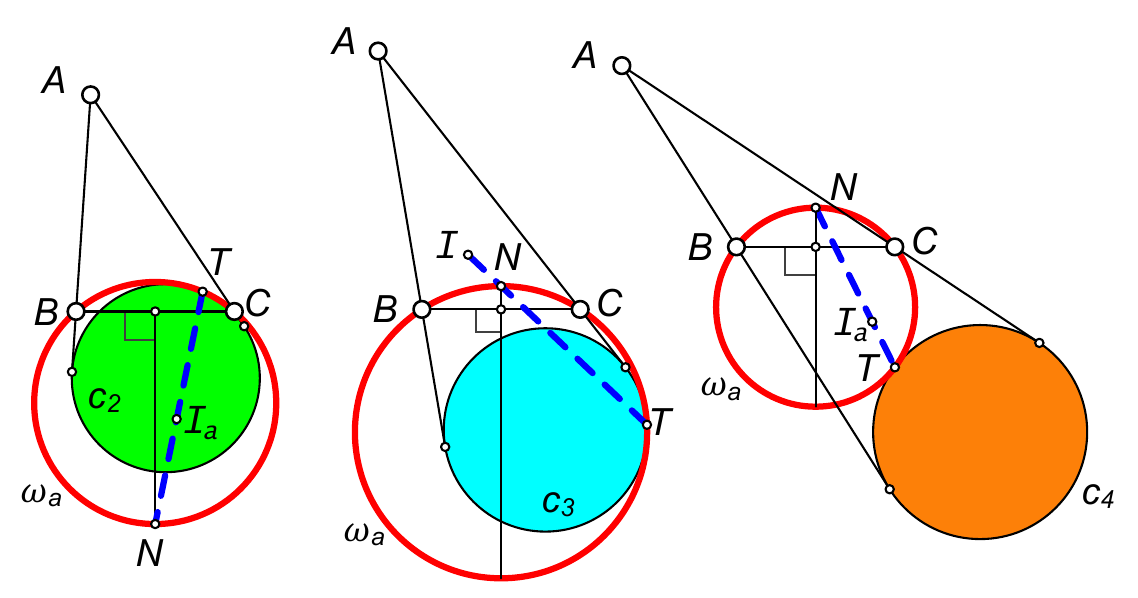}
\caption{$T$, $N$, and an incenter or excenter lie on a line.}
\label{fig:3variants-perpendicular}
\end{figure}

\bigskip
\textbf{\large Ajima's Theorem}

Similar formulas for the radii of the variant circles can be found
similar to Ajima's Theorem. These are shown in Figure~\ref{fig:3variants-Ajima},
where $r_a$ denotes the radius of the $A$-excircle of $\triangle ABC$.

\begin{figure}[ht]
\centering
\includegraphics[scale=0.55]{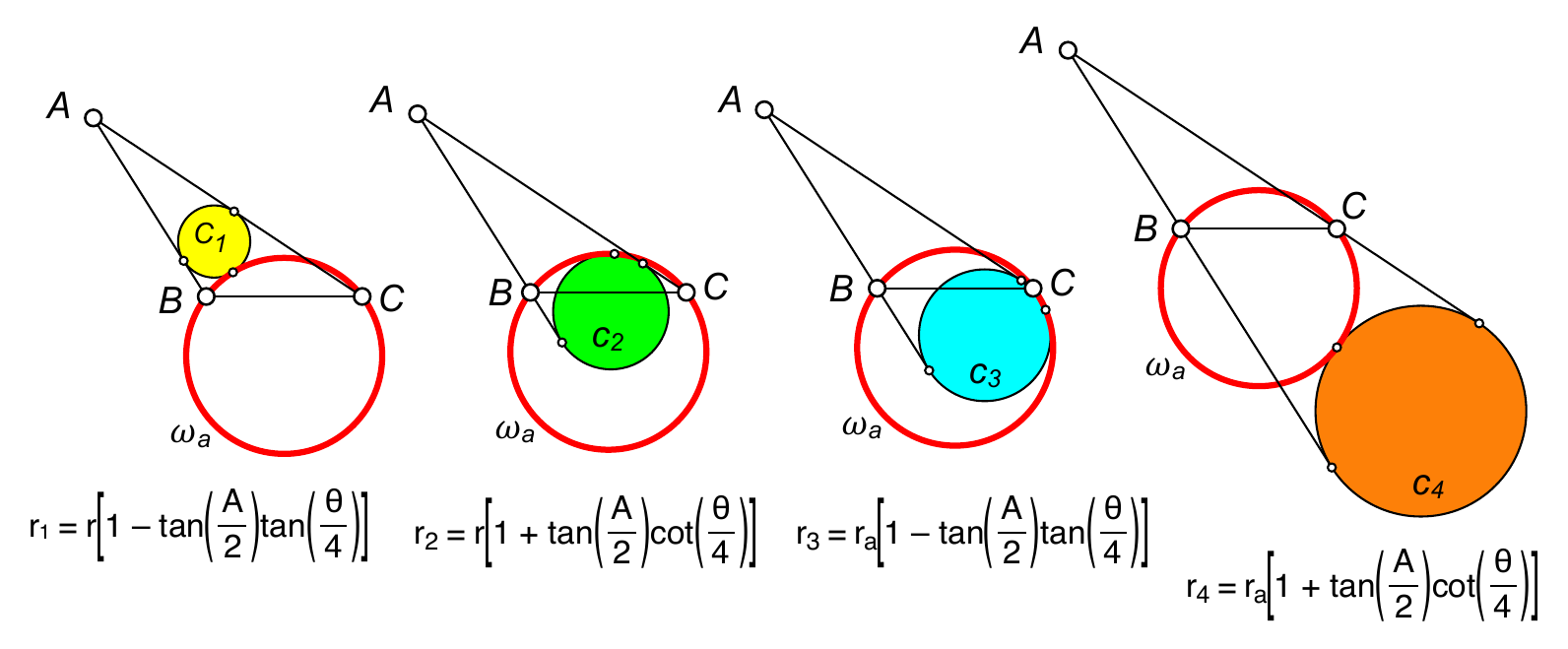}
\caption{}
\label{fig:3variants-Ajima}
\end{figure}

\bigskip

\newpage
\textbf{\large The Paasche Analog}

The Paasche Analog (Theorem~\ref{thm:Paasche}) remains true.
If $\gamma_a$ is any one of these variant circles,
and if $T_a$ is the touch point of $\gamma_a$ with $\omega_a$,
with $T_b$ and $T_c$ defined similarly, then $AT_a$, $BT_b$, and $CT_c$ are concurrent.
See Figure~\ref{fig:outerPaasche} for one case.

\begin{figure}[ht]
\centering
\includegraphics[scale=0.5]{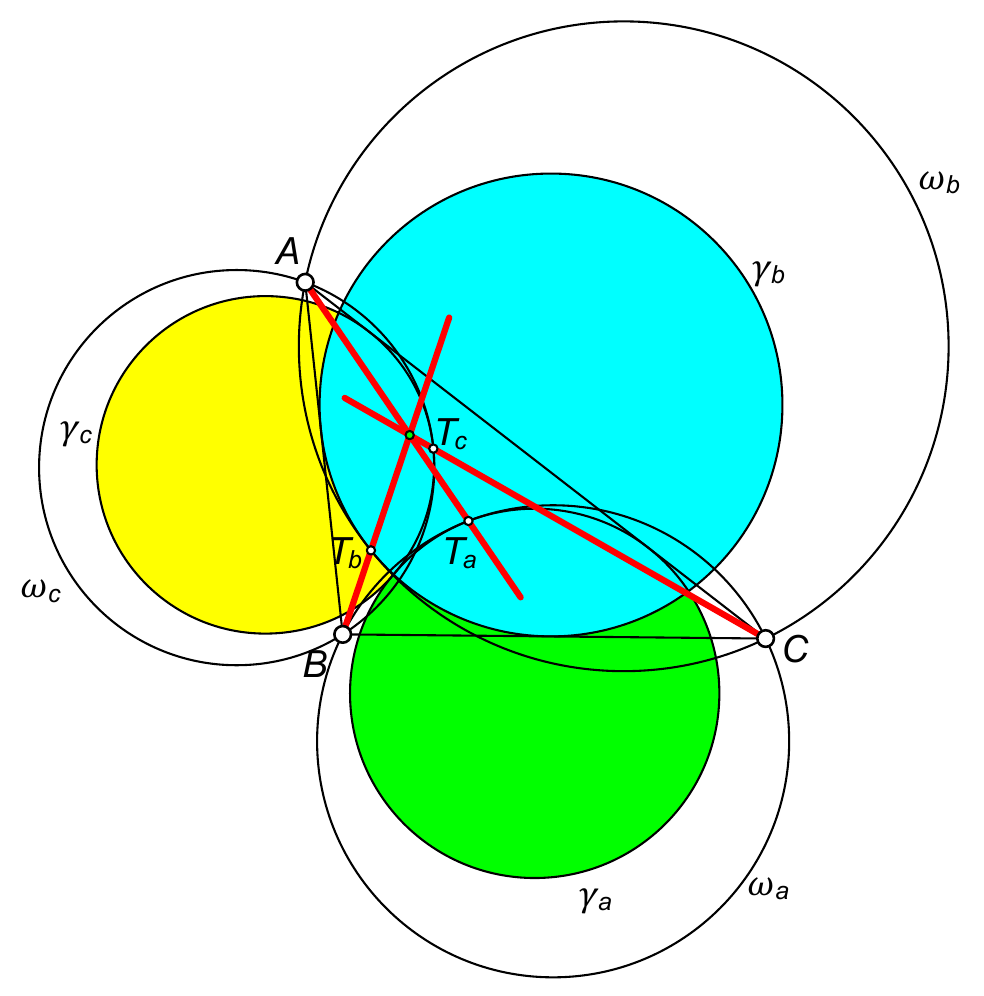}
\caption{}
\label{fig:outerPaasche}
\end{figure}

\void{
\begin{figure}[ht]
\centering
\includegraphics[scale=0.55]{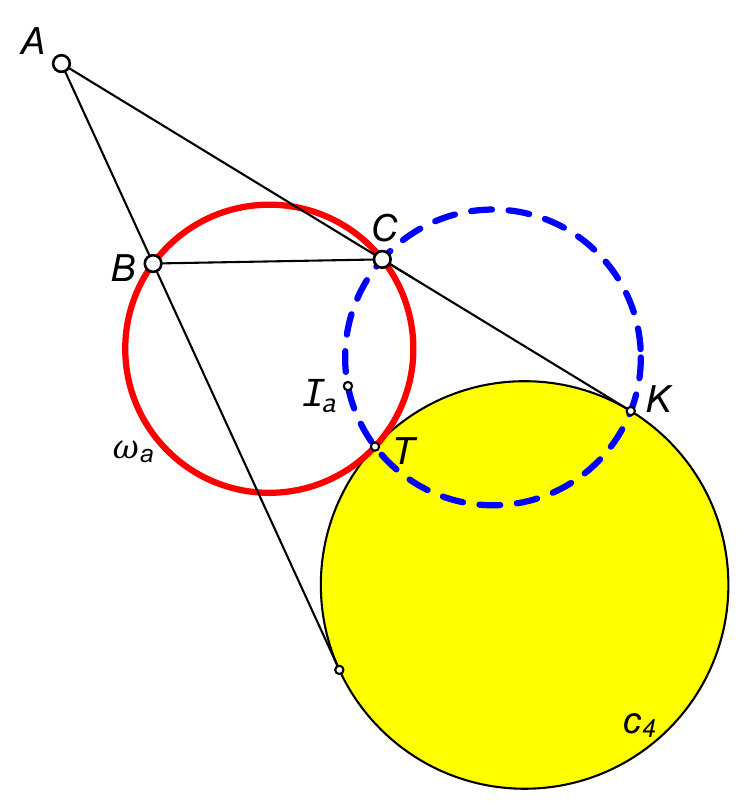}  
\caption{$C, K, T, I_a$ are concyclic}
\label{fig:outerCatalytic}
\end{figure}
}


\goodbreak

\end{document}